\newtheorem{theorem}{Theorem}[section]
\newtheorem{lemma}[theorem]{Lemma}
\newtheorem{proposition}[theorem]{Proposition}
\newtheorem{definition}[theorem]{Definition}
\theoremstyle{remark}
\newtheorem{remark}[theorem]{\it \bf{Remark}\/}
\numberwithin{equation}{section}
\def\section{\@startsection{section}{1}%
  \z@{1.5\linespacing\@plus\linespacing}{.5\linespacing}%
  {\normalfont\bfseries\large\centering}}
\newcommand{\be}{\begin{equation}}
\newcommand{\ee}{\end{equation}}
\newcommand{\ba}{\begin{array}}
\newcommand{\ea}{\end{array}}
\newcommand{\la}{\label}
\newcommand{\bee}{\begin{eqnarray*}}
\newcommand{\eee}{\end{eqnarray*}}
\newcommand{\bea}{\begin{eqnarray}}
\newcommand{\eea}{\end{eqnarray}}
\def\RR{\mathbb{R}}
\def\fref#1{{\rm (\ref{#1})}}
\def\supess{\mathop{\operator@font Sup\,ess}}
\def\RR{\mathbb{R}}
\def\bar#1{{\overline #1}}
\def\fref#1{{\rm (\ref{#1})}}
\def\R2+{\RR ^2_+}
\def\lim{\mathop{\rm lim}}
\def\log{{\rm log}}
\def\te{\text{}}
\def\para{\parallel}
\title[]{Non radial type II blow up for the energy supercritical semilinear heat equation}
\author[C.Collot]{Charles Collot}
\address{Laboratoire J.A. Dieudonn\'e, Universit\'e de Nice-Sophia Antipolis, France}
\email{ccollot@unice.fr}
\keywords{blow-up, heat, soliton, ground state, nonlinear, non radial, supercritical}
\subjclass[2010]{primary, 35B44, secondary 35K58 35B20}
\begin{document}

\begin{abstract}

We consider the semilinear heat equation in large dimension $d\geq 11$
$$
\partial_t u =\Delta u+|u| ^{p-1}u, \ \ p=2q+1, \ \ q\in \mathbb N
$$
on a smooth bounded domain $\Omega\subset \mathbb R^d$ with Dirichlet boundary condition. In the supercritical range $p\geq p(d)>1+\frac{4}{d-2}$ we prove the existence of a countable family $(u_\ell)_{\ell \in \mathbb N}$ of solutions blowing-up at time $T>0$ with type II blow up:
$$
\para u_{\ell}(t) \para_{L^{\infty}} \sim C (T-t)^{-c_\ell} 
$$
with blow-up speed $c_\ell>\frac{1}{p-1}$. They concentrate the ground state $Q$ being the only radially and decaying solution of $\Delta Q+Q^p=0$:
$$
u(x,t)\sim \frac{1}{\lambda (t)^{\frac{2}{p-1}}}Q\left(\frac{x-x_0}{\lambda (t)} \right), \ \lambda\sim C(u_n)(T-t)^{\frac{c_\ell(p-1)}{2}}
$$
at some point $x_0\in \Omega$. The result generalizes previous works on the existence of type II blow-up solutions, either constructive  \cite{Velas,Velas2,Mizo1} or nonconstructive \cite{MaM3,Mizo5}, which only existed in the radial setting and relied on parabolic arguments. The present proof uses robust nonlinear tools instead, based on energy methods and modulation techniques in the continuity of \cite{Co,MRRod2}. This is the first non-radial construction of a solution blowing up by concentration of a stationary state in the supercritical regime, and provides a general strategy to prove similar results for dispersive equations or parabolic systems and to extend it to multiple blow ups.

\end{abstract}

\maketitle

\tableofcontents

%%%%%%%%%%%%%%%%%%%%%%%%%%%%%%%%%%%%%%%%%%%%%%%%%%%%%%%%%%
%%%%%%%%%%%%%%%%%%%%%%%%%%%%%%%%%%%%%%%%%%%%%%%%%%%%%%%%%%

\section{Introduction}

\subsection{The semilinear heat equation}

We study solutions of
\be \la{eq:NLH}
(NLH) \ \ \left\{ \ba{l l}
\partial_t u=\Delta u+|u|^{p-1},  \\
u(0)=u_0, \ u=0 \ \text{on} \ \partial \Omega ,
\ea \right.
\ee
where $u$ is real valued, $p$ is analytic $p=2q+1$, $q\in \mathbb N$, and $\Omega \subset \mathbb R^d$ is a smooth bounded open domain. For smooth enough initial data $u_0$ satisfying some compatibility conditions at the border $\partial \Omega$, the Cauchy problem is well posed and there exists a unique maximal solution $u\in C((0,T),L^{\infty}(\Omega))$. If $T<+\infty$ the solution is said to blow-up and necessarily
$$
\underset{t\rightarrow T}{\text{lim}}\para u(t)\para_{L^{\infty}(\Omega)} =+\infty .
$$
This paper adresses the general issue of the asymptotic behavior $t\rightarrow T$. In the case $\Omega=\mathbb R^d$, there is a natural scale invariance, namely if $u$ is a solution then so is:
\be \la{intro:scaling}
u_{\lambda}(\lambda^2 t,x):=\lambda^{\frac{2}{p-1}}u(\lambda^2t,\lambda x).
\ee
The Sobolev space that is invariant for this scale change is:
\be \la{intro:def sc}
\dot H^{s_c}(\mathbb R^d):= \left\{u, \ \int_{\mathbb R^d} |\xi|^{2s_c}|\hat u|^2d\xi <+\infty \right\}, \ \ s_c:=\frac{d}{2}-\frac{2}{p-1}
\ee
where $\hat u$ stands for the Fourier transform of $u$. Two particular solutions arise, the constant in space blow-up solution
\be \la{intro:type I}
u(t,x)=\pm \frac{\kappa (p)}{(T-t)^{\frac{1}{p-1}}}, \ \ \kappa(p):=\left(\frac{1}{p-1}\right)^{\frac{1}{p-1}}
\ee
and the unique (up to translation and scale change) radially decaying stationary solution $Q$, see \cite{YiLi} and references therein, solving the stationary elliptic equation 
\be \la{intro:def Q}
\Delta Q+Q^p=0.
\ee

%%%%%%%%%%%%%%%%%%%%%%%%%%%%%%%%%%%%%%%%%%%%
%%%%%%%%%%%%%%%%%%%%%%%%%%%%%%%%%%%%%%%%%%%%

\subsection{Blow-up for (NLH)}

Being one of the model nonlinear evolution equation, the blow-up dynamics has attracted a great amount of work (see \cite{QS} for a review). A comparison argument with the constant in space blow-up solution \fref{intro:type I} implies the lower bound
$$
\underset{t\rightarrow T}{\text{lim} \ \text{sup}} \ \para u(t)\para_{L^{\infty}}(T-t)^{\frac{1}{p-1}}\geq \kappa(p) 
$$
and leads to the following distinction between type I and type II blow-up \cite{MaM2}:
\bee
u \ \text{blows} \ \text{up} \ \text{with} \ \text{type} \ \text{I} \ \text{if}: \ \ \underset{t\rightarrow T}{\text{lim} \ \text{sup}} \ \para u(t)\para_{L^{\infty}}(T-t)^{\frac{1}{p-1}}<+\infty ,\\
u \ \text{blows} \ \text{up} \ \text{with} \ \text{type} \ \text{II} \ \text{if}: \ \ \underset{t\rightarrow T}{\text{lim} \ \text{sup}} \ \para u(t)\para_{L^{\infty}}(T-t)^{\frac{1}{p-1}}=+\infty .\\
\eee
The ODE blow-up \fref{intro:type I} does not see the dissipative term in \fref{eq:NLH} whereas type II blow-up involves an interplay between dissipation and nonlinearity, and therefore its existence and properties may change according to $d$ and $p$. In the series of work \cite{Giga,Gi1,Gi2,Gi3,Gi4,MeZa1,MeZa2}, the authors show that in the energy subcritical range $1<p<\frac{d+2}{d-2}$ all blow-up solution are of type I and match the constant in space solution \fref{intro:type I}:
$$
\underset{t\rightarrow T}{\text{lim} \ \text{sup}} \ \para u(t)\para_{L^{\infty}}(T-t)^{\frac{1}{p-1}}=\kappa (p) .
$$
In the energy critical case $p=\frac{d+2}{d-2}$, $d=4$, Schweyer constructed in \cite{Sc} a radial type II blow-up solution, following the analysis of critical problems \cite{MR1,MR4,MR5,RSc1,RSc2,RaphRod,MRRod1}. In that case, the scale invariance \fref{intro:scaling} implies that there exists a one dimensional continuum of ground states $\left(\frac{1}{\lambda^{\frac{2}{p-1}}}Q\left(\frac{x}{\lambda}\right)\right)_{\lambda>0}$. The properties of the ground state \fref{intro:def Q} then allow the existence of a solution $u$ that stays close to this manifold, $u=\frac{1}{\lambda(t)^{\frac{2}{p-1}}}Q\left(\frac{x}{\lambda(t)}\right)+\varepsilon$, $\para \varepsilon \para \ll 1$, such that the scale goes to $0$ in finite time $T>0$:
$$
\lambda (t)\rightarrow 0 \ \ \text{as} \ t\rightarrow T,
$$
the ground state shrinks and the solution blows-up. This blow-up scenario is not always possible as it heavily relies on the asymptotic behavior of the ground state, and is impossible in dimension $d\geq 7$, \cite{CMR}. \\

\noindent In the radial energy supercritical case $p>\frac{d+2}{d-2}$ the Joseph-Lundgren exponent \cite{Jo}
\be \la{intro:eq:def pJL}
p_{JL}:= \left\{ \ba{l l} +\infty \ \ \text{if} \ d\leq 10, \\ 1+\frac{4}{d-4-2\sqrt{d-1}} \ \ \text{if} \ d\geq 11, \ea \right.
\ee
dictates the existence of type II blow-up solutions. For $\frac{d+2}{d-2}<p<p_{JL}$, type II blow-up solutions do not exist \cite{MaM2,Mizo4}. For $p>p_{JL}$ type II blow-up solutions are completely classified. In \cite{Velas} the authors predicted the existence of a countable family of solutions $u_{\ell}$ such that:
$$
\para u(t)\para_{L^{\infty}} \sim C(u_n(0)) (T-t)^{\frac{\ell}{\alpha (d,p)}\frac{2}{p-1}}, \ \ \ell \in \mathbb N, \ \ell >\frac{\alpha}{2},
$$
($\alpha$ is defined in \fref{intro:eq:def alpha}), which are the same speeds as in the present paper. The rigorous proof was first made in an unpublished paper \cite{Velas2} and then in \cite{Mizo1}. In the series of work \cite{Ma,MaM1,Mizo2,Mizo3} any type II blow-up solution was proved to have one of the above blow-up rate. These works have the powerful advantage that they deal with large solutions, but strongly rely on comparison principles that are only available for radial parabolic problems.

%%%%%%%%%%%%%%%%%%%%%%%%%%%%%%%%%%%%%%%%%%%%
%%%%%%%%%%%%%%%%%%%%%%%%%%%%%%%%%%%%%%%%%%%%

\subsection{Outlook on blow-up for other problems}

Many model nonlinear equations share similar features with (NLH). The construction of solutions concentrating a stationary state for the energy supercritical Schr\"odinger and wave equations has been done in \cite{Co,MRRod2}, and recently for the harmonic heat flow in \cite{Bi}. These concentration scenarios happen on a central manifold near the continuum of ground states $\left(\frac{1}{\lambda^{\frac{2}{p-1}}}Q\left(\frac{x}{\lambda}\right)\right)_{\lambda>0}$ whose topological and dynamical properties has been a popular subject of studies in the past years \cite{Sch,KNS}. The possibility of various blow-up speeds is linked to the regularity of the solutions and this is why parabolic problems are more rigid, thanks to the regularizing effect, than dispersive problems, for which a wider range of concentration scenarios exists \cite{KST}.\\

\noindent A major goal is the study of blow-up for general data, where non radial stationary states can appear as blow-up profiles \cite{Du}. The solution may also not be a small perturbation of it. One thus needs robust tools for the perturbative study of special nonlinear profiles as well as a better understanding of the set of stationary solutions. The present work is a step toward this general aim.

%%%%%%%%%%%%%%%%%%%%%%%%%%%%%%%%%%%%%%%%%%%%
%%%%%%%%%%%%%%%%%%%%%%%%%%%%%%%%%%%%%%%%%%%%

\subsection{Statement of the result}

We revisit the result of \cite{Velas,Mizo1} with the techniques employed in \cite{RaphRod} to address the non radial setting. From \cite{YiLi}, for $p>p_{JL}$ (defined in \fref{intro:eq:def pJL}) the radially decaying ground state $Q$, solution of \fref{intro:def Q}, admits the asymptotic:
\be \la{eq:def Q}
Q(x)=\frac{c_{\infty}}{|x|^{\frac{2}{p-1}}}+\frac{a_1}{|x|^{\gamma}}+o(|x|^{-\gamma}) \ \ \text{as} \ |x|\rightarrow +\infty, \ a_1\neq 0,
\ee
with
\be \label{intro:eq:def cinfty}
c_{\infty}:=\left[\frac{2}{p-1}\left( d-2-\frac{2}{p-1}\right) \right]^{\frac{1}{p-1}},
\ee
\be \label{intro:eq:def gamma}
\gamma := \frac{1}{2}(d-2-\sqrt{\triangle}), \ \triangle:=(d-2)^2-4pc_{\infty}^{p-1} \ \ (\triangle>0 \ \text{iff} \ p>p_{JL}),
\ee
and we define
\be \label{intro:eq:def alpha}
\alpha := \gamma-\frac{2}{p-1}.
\ee
For $n\in \mathbb N$ we define the following numbers ($\triangle_n>0 \ \text{if} \ p>p_{JL}$):
$$
- \gamma_{n}:=\frac{-(d-2) + \sqrt{\triangle_n}}{2}, \ \triangle_n:=(d-2)^2-4cp_{\infty}+4n(d+n-2) .
$$
The above numbers are directly linked with the existence and the number of instability directions of type II blow-up solutions concentrating $Q$. Our result is the existence and precise description of some localized type II blow-up solutions in any domain with smooth boundary.

\begin{theorem}[Existence of non radial type II blow-up for the energy supercritical heat equation] \label{thmmain}

Let $d\geq 11$, $p=2q+1>p_{JL}$, $q\in \mathbb N$, where $p_{JL}$ is given by \fref{intro:eq:def pJL}. Let $Q$, $\gamma$, $\alpha$, $\gamma_n$ and $s_c$ be given by \fref{eq:def Q}, \fref{intro:eq:def gamma}, \fref{intro:eq:def alpha}, \fref{intro:eq:def gamman} and \fref{intro:def sc} and $\epsilon>0$. Let $\Omega\subset \mathbb R^d$ be a smooth open bounded domain. For $x_0\in \Omega$ let $\chi (x_0)$ be a smooth cut-off function around $x_0$ with support in $\Omega$. Pick $\ell \in \mathbb N$ satisfying $2 \ell>\alpha$. Then, there exists a large enough regularity exponent:
$$
s_+=s_+(\ell)\in 2\mathbb N, \ s_+\gg 1
$$
such that under the non degeneracy condition:
\be
\label{conditiongamma}
\left(\frac d2-\gamma\right)\notin 2 \mathbb N \ \text{for} \ \text{all} \ n\in \mathbb N \ \text{such} \ \text{that} \ d-2\gamma_{n}\leq s_+,
\ee
there exists a solution $u$ of \fref{eq:NLH} with $u_0\in H^{s_+}(\Omega)$ (which can be chosen smooth and compactly supported) blowing up in finite time $0<T<+\infty$ by concentration of the ground state at a point $x_0'\in \Omega$ with $|x_0'-x_0|\leq \epsilon$:
\be
\label{concnenergy}
u(t,x)=\chi_{x_0}(x) \frac{1}{\lambda(t)^{\frac 2{p-1}}}Q\left(\frac{x-x_0'}{\lambda(t)}\right)+v
\ee
with:
\noindent{\em (i) Blow-up speed}: 
\be
\label{intro:bd Linfty}
\para u \para_{L^{\infty}} =c(u_0) (T-t)^{-\frac{2\ell}{\alpha (p-1)}} (1+o(1)), \ \ \text{as} \ t\rightarrow T, \ c(u_0)>0 ,
\ee
\be
\label{Pexciitedlaw}
\lambda(t)=c'(u_0)(1+o_{t\rightarrow T}(1))(T-t)^{\frac{\ell}{\alpha}}, \ \ \text{as} \ t\rightarrow T, \ c'(u_0)>0 .
\ee
\noindent{\em (ii) Asymptotic stability above scaling in renormalized variables}: 
\be 
\label{intro:eq:convergence surcritique}
\lim_{t\rightarrow T}\left\Vert \lambda (t)^{\frac{2}{p-1}}w\left(t,x_0+\lambda(t) x \right) \right\Vert_{H^s\left(\lambda(t)^{-1}(\Omega-\{x_0\} \right)}=0  \ \ \mbox{for all}\ \ s_c<s\leq s_+.
\ee
\noindent{\em (iii) Boundedness below scaling}:
\be
\label{intro:eq:bornitude sous critique}
\limsup_{t\rightarrow T}\para u(t)\para_{H^s(\Omega)}<+\infty, \ \ \text{for} \ \text{all} \ 0\leq s<s_c .
\ee
\noindent{\em (iv) Asymptotic of the critical norm}:
\be
\label{intro:eq:comportement norme critique}
\para u(t)\para_{H^{s_c}(\Omega)} =c(d,p)\sqrt{\ell} \sqrt{|\log(T-t)|}(1+o(1)), \ \ \text{as} \ t\rightarrow T, \ c(d,p)>0 .
\ee

\end{theorem}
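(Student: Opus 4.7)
The plan is to follow the modulated soliton scheme of Merle--Raphael--Rodnianski developed for the energy supercritical NLS and NLW \cite{Co,MRRod2}, adapted to the parabolic and non-radial setting. First I would construct an approximate blow-up profile $Q_b(y)$ by a tail expansion around the ground state: set $b = (b_1,\ldots,b_\ell)$ and look for
\[
Q_b(y) = Q(y) + \sum_{i=1}^{\ell} b_i T_i(y) + \text{higher order},
\]
where each $T_i$ is obtained by iteratively solving $\mathcal{L} T_{i+1} = -\partial_{b_i}(\ldots)$ with $\mathcal{L} = -\Delta - p Q^{p-1}$ and where the tails are slowly decaying, matching the $|y|^{-\gamma}$ behavior from \eqref{eq:def Q}. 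This expansion is designed so that, at the formal level, the residual error is as small as possible once $(b_i)$ obeys an explicit ODE system. Using self-similar variables $y=(x-x_0')/\lambda$, $\frac{ds}{dt}=\lambda^{-2}$, the requirement that $\chi_{x_0}(Q_b)_{\lambda,x_0'}$ solves \eqref{eq:NLH} up to negligible terms yields the modulation ODE
\[
-\frac{\lambda_s}{\lambda} = b_1, \qquad b_{i,s} + (2i-\alpha)\, b_1 b_i = b_{i+1}, \quad b_{\ell+1} = 0,
\]
whose explicit unstable solution $b_i(s) \sim c_i^{(\ell)} s^{-i}$ integrates back to the blow-up laws \eqref{intro:bd Linfty}--\eqref{Pexciitedlaw}; its linearization has exactly $\ell$ unstable directions.

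Next I would decompose $u = \chi_{x_0}(Q_b)_{\lambda,x_0'} + v$ and impose $d+1+\ell$ orthogonality conditions on $v$ to extract a closed system of modulation equations for $(\lambda,x_0',b)$ together with a perturbed heat equation for $v$. The translation parameter $x_0'(t)$ is necessary to kill the $d$ translation modes in the non-radial setting; it converges to a point $\epsilon$-close to $x_0$. The heart of the argument is then a pair of energy estimates at low and high Sobolev regularity for $v$: a local weighted $L^2$ bound at the concentration scale (closing via the subcoercivity of $\mathcal{L}$ outside finitely many modes), and a global bound at level $H^{s_+}$ for some large even integer $s_+$. The high-regularity bound is obtained by commuting with $\mathcal{L}^{s_+/2}$ and exploiting spectral coercivity, up to finitely many eigendirections indexed by the spherical harmonic numbers $n$ with eigenvalues related to $\gamma_n$ --- this is precisely where the non-degeneracy condition \eqref{conditiongamma} is used to avoid logarithmic resonances. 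Hardy and Strauss-type inequalities, together with the dissipative sign of the heat semigroup, control the nonlinear terms and allow one to close a bootstrap regime in which $v$ remains much smaller than the main profile at the relevant scales.

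With the bootstrap in place, the remaining dynamics live on a finite-dimensional manifold carrying the $\ell$ unstable directions of the $b$-system. A Brouwer-type topological shooting argument picks an initial datum for which none of these unstable directions exits the bootstrap regime, yielding a trapped trajectory along which the modulation ODE is followed to blow-up. The cut-off $\chi_{x_0}$ localizes everything inside $\Omega$, so the boundary condition is satisfied trivially, and parabolic smoothing ensures that the initial datum can be chosen smooth and compactly supported. The asymptotic norm laws \eqref{intro:eq:convergence surcritique}--\eqref{intro:eq:comportement norme critique} follow from the decomposition: the critical norm of $(Q_b)_\lambda$ generates the $\sqrt{|\log(T-t)|}$ divergence, subcritical norms of a concentrating profile remain bounded, and supercritical norms of $v$ in self-similar variables vanish thanks to the high-regularity estimate.

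The main obstacle, and the genuine novelty compared to \cite{Velas,Mizo1,MaM3,Mizo5}, is the treatment of the non-radial directions without any parabolic comparison principle. The linearized operator $\mathcal{L}$ acquires on each spherical harmonic sector of degree $n$ a tail asymptotic governed by $\gamma_n$, and these sectors interact through the nonlinearity and through the localization $\chi_{x_0}$. One must perform a spherical harmonic decomposition, verify coercivity of the iterated operator $\mathcal{L}^{s_+/2}$ sector by sector modulo its finite-dimensional kernel, and show that the extra (non-radial) directions where coercivity fails are either absorbed by the modulation (translations) or stable under the flow --- it is there that the hypothesis $d-2\gamma_n \leq s_+ \Rightarrow d/2 - \gamma \notin 2\mathbb{N}$ is essential, as it rules out logarithmic growth in the Green's function representation of the inverse. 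Handling these modes uniformly in $n$, and merging the resulting bounds with the radial modulation scheme, is the technical core of the proof.
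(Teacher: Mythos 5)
Your overall architecture (tail expansion of an approximate profile, renormalized variables, modulation, low/high Sobolev bootstrap, Brouwer shooting on the unstable modes) is the same as the paper's, but there is a genuine gap in your treatment of the non-radial sectors, which is precisely the core of this result. You build $Q_b$ with only the radial parameters $b_1,\dots,b_\ell$, impose only $d+1+\ell$ orthogonality conditions on $v$, and then assert that the remaining non-radial directions where coercivity of $\mathcal L^{s_+/2}$ fails are ``either absorbed by the modulation (translations) or stable under the flow.'' Both halves of that assertion fail. First, coercivity of the iterated operator at the level $\int |H^{s_L}\varepsilon|^2$ requires orthogonality to the \emph{whole} truncated generalized kernel, i.e.\ to all $T^{(n,k)}_i$ with $0\leq n\leq n_0$, $1\leq k\leq k(n)$, $0\leq i\leq L_n$ (see \fref{intro:bd coercivite} and the orthogonality set \fref{trap:eq:ortho}); these are $\sum_{n\le n_0} k(n)(L_n+1)$ conditions, far more than $d+1+\ell$, and without them the high-regularity Lyapunov estimate does not close, since the non-radial kernel elements are slowly decaying and cannot be treated as perturbative error. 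Second, among these non-radial directions finitely many are genuinely \emph{unstable}: on the harmonic $n$ the projected dynamics has $\max(E[i_n]+\delta_{n\geq 2},0)$ non-negative eigenvalues with $i_n=\ell-\frac{\gamma-\gamma_n}{2}$ as in \fref{intro:eq:def in} (e.g.\ already for $n=1$ one has $i_1=\ell-\frac{\alpha-1}{2}>0$, giving instabilities beyond the translation mode), and there is no symmetry that allows these to be gauged away by modulating $x_0'$.

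The paper's resolution, which your scheme is missing, is to enlarge the approximate blow-up profile into a full center-manifold-type expansion $Q+\sum_{(n,k,i)\in\mathcal I} b^{(n,k)}_i T^{(n,k)}_i+\sum S_i$ with the index set \fref{intro:eq:def mathcalI}, i.e.\ to give every non-radial coercivity-obstructing direction its own parameter, to derive the corresponding enlarged modulation system (whose radial block reproduces your ODE but whose blocks $\tilde A_n$ carry the extra non-radial instabilities), and to run the Brouwer argument in dimension $m=\ell-1+d(E[i_1]-\delta_{i_1\in\mathbb N})+\sum_{n\geq 2}k(n)(E[i_n]+1-\delta_{i_n\in\mathbb N})$ rather than $\ell$. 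With only $\ell$ shooting parameters your topological argument cannot trap the trajectory, and with only $d+1+\ell$ orthogonality conditions your bootstrap cannot even be formulated at the level $H^{s_+}$; also note that condition \fref{conditiongamma} is used in the \emph{construction} of the profile and the admissible-function calculus (avoiding logarithmic losses when inverting $H$ on each harmonic), not only in the coercivity step as you suggest. A smaller inaccuracy: the radial block has $\ell-1$ unstable eigenvalues (the eigenvalue $-1$ is stable), not $\ell$.
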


\noindent {\it Comments on Theorem \ref{thmmain}}\\

\noindent{\it 1. On the assumptions}. First, the assumption $p>p_{JL}$ is not just technical as radial type II blow-up is impossible for $\frac{d+2}{d-2}<p<p_{JL}$ \cite{MaM2,Mizo4}. Non radial type II blow-up solutions in this latter range, if they exist, must have a very different dynamical description. Next, if $p$ is not an odd integer, then the nonlinearity $x\mapsto |x|^{p-1}x$ is singular at the origin, yielding regularity issues. In that case the techniques used in the present paper could only be applied for a certain range of integers $\ell$. Eventually, the condition \fref{conditiongamma} is purely technical, as it avoids the presence of logarithmic corrections in some inequalities that we use. It could be removed since the analysis relies on gains that are polynomial and not logarithmic, but would weighs the already long proof. Note that a large number of couples $(p,\ell)$ satisfy this condition. Indeed, only finitely many integer $n$ are concerned from \fref{intro:eq:proprietes gamman}, and the value of $\gamma_n$ is very rarely a rational number from \fref{intro:eq:def gamman}.\\

\noindent{\it 2. Blow-up by concentration at any point and manifold of type II blow-up solutions}. For any $x_0\in \Omega$, Theorem \ref{thmmain} provides a solution that concentrates at a point that can be arbitrarily close to $x_0$. In fact there exists a solution that concentrates exactly at $x_0$, meaning that this blow-up can happen at any point of $\Omega$. To show that, one needs an additional continuity argument in addition to the informations contained in the proof, to be able to reason as in \cite{PR,Me} for exemple. This continuity property amounts to prove that the set of type II blow-up solutions that we construct is a Lipschitz manifold with exact codimension in a suitable functional space. This was proved in the radial setting in \cite{Co} and the analysis could be adapted here using the non radial analysis provided in the present paper. However a precise and rigorous proof of this fact would be too lengthy to be inserted in this paper. Let us stress that the solutions built here possess an explicit number of linear non radial instabilities.  An interesting question is then whether or not these new instabilities can be used, with the help of resonances through the nonlinear term, to produce new type II blow-up mechanisms around $Q$ in the non radial setting.\\

\noindent{\it 3. Multiple blow-ups and continuation after blow-up}. As in our analysis we are able to cut and localize the approximate blow-up profile, there should be no problems in constructing a solution blowing up with this mechanism at several points simultaneously as in \cite{Me}. Cases where the blow-up bubbles really interact can lead to very different dynamics, see \cite{MaRa,Je} for recent results. From the construction, as $t\rightarrow T$, $u$ admits a strong limit in $H^{s_c}_{\text{loc}}(\Omega \backslash \{ x_0\})$. One could investigate the properties of this limit in order to continue the solution $u$ beyond blow-up time, which is a relevant question for blow-up issues \cite{MaM1}, especially for hamiltonian equations where a subcritical norm is under control. \\

\noindent{\bf Acknowledgment}. The author is supported by the ERC 2014-COG 646650 advanced grant SingWave. This paper is part of the author PhD, and I would like to thank my advisor P. Rapha\"el for his guidance and advice during the preparation of this work.\\

%%%%%%%%%%%%%%%%%%%%%%%%%%%%%%%%%%%%%%%%%%%%
%%%%%%%%%%%%%%%%%%%%%%%%%%%%%%%%%%%%%%%%%%%%

\subsection{Notations}

We collect here the main notations. In the analysis the notation $C$ will stand for a constant whose value just depends on $d$ and $p$ which may vary from one line to another. The notation $a\lesssim b$ means that $a\leq Cb$ for such a constant $C$, and $a=O(b)$ means $|a|\lesssim b$. \\

\noindent \underline{Supercritical numerology:} for $d\geq 11$ the condition $p>p_{JL}$ where $p_{JL}$ is defined by \fref{intro:eq:def pJL} is equivalent to $2+\sqrt{d-1}<s_c<\frac{d}{2}$. We define the sequences of numbers describing the asymptotic of particular zeros of $H$ for $n\in \mathbb{N}$:
\be \label{intro:eq:def gamman}
- \gamma_{n}:=\frac{-(d-2) + \sqrt{\triangle_n}}{2}, \ \ \triangle_n:=(d-2)^2-4cp_{\infty}+4n(d+n-2),
\ee
\be 
\alpha_n:= \gamma_n-\frac{2}{p-1} 
\ee
where $\triangle_n>0$ for $p>p_{JL}$. We will use the following facts in the sequel:
\be \label{intro:eq:proprietes gamman}
\gamma_0=\gamma, \ \gamma_1=\frac{2}{p-1}+1, \ \gamma_n<\frac{2}{p-1} \ \text{for} \ n\geq 2 \ \text{and} \ \gamma_n \sim - n,
\ee
see Lemma \ref{annexe:lem:proprietes EDO asymptotiques} (where $\gamma $ is defined in \fref{intro:eq:def gamma}). In particular $\alpha_0=\alpha$, $\alpha_1=1$ and $\alpha_n<0$ for $n\geq 2$. A computation yields the bound:
$$
2<\alpha<\frac{d}{2}-1
$$
(see \cite{MRRod2}). We let:
\be \label{intro:eq:def g}
g:= \text{min}(\alpha,\triangle)-\epsilon, \ \ g':=\frac{1}{2}\text{min}(g,1,\delta_0-\epsilon)
\ee
where $0<\epsilon \ll 1$ is a very small constant just here to avoid to track some logarithmic terms later on. For $n\in \mathbb{N}$ we define\footnote{$E[x]$ stands for the entire part: $x-1<E[x]\leq x$.}:
\be \label{intro:eq:def mn}
m_n:=E\left[\frac{1}{2}(\frac{d}{2}-\gamma_n)\right]
\ee
and denote by $\delta_n$ the positive real number $0\leq \delta_n<1$ such that:
\be \label{intro:eq:def deltan}
d=2\gamma_n+4m_n+4\delta_n.
\ee
For $1\ll L$ a very large integer we define the Sobolev exponent:
\be \la{intro:eq:def sL}
s_L:=m_0+L+1
\ee
In this paper we assume the technical condition \fref{conditiongamma} for $s_+=s_L$ which means:
\be \label{intro:eq:condition deltan}
0<\delta_n<1
\ee
for all integer $n$ such that $d-2\gamma_n\leq 4s_L$ (there is only a finite number of such integers from \fref{intro:eq:proprietes gamman}). We let $n_0$ be the last integer to satisfy this condition:
\be \label{intro:eq:def n0}
n_0 \in \mathbb{N}, \ \ d-2\gamma_{n_0}\leq 4s_L \ \ \text{and} \ \ d-2\gamma_{n_0+1}> 4s_L
\ee
and we define:
\be
\delta_0':=\underset{0\leq n \leq n_0}{\text{max}} \delta_n \in (0,1).
\ee
For all integer $n\leq n_0$ we define the integer: 
\be \label{intro:eq:def Ln}
L_n:=s_L-m_n-1
\ee
and in particular $L_0=L$. Given an integer $\ell>\frac{\alpha}{2}$ (that will be fixed in the analysis later on), for $0\leq n\leq n_0$ we define the real numbers:
\be \label{intro:eq:def in}
i_n=\ell-\frac{\gamma-\gamma_n}{2}.
\ee
\\
\noindent \underline{Notations for the analysis:} For $R\geq 0$ the euclidian sphere and ball are denoted by:
$$
\begin{array}{ll}
\mathcal S^{d-1}(R):=\left\{ x\in \mathbb R^d, \ \sum_1^d x_i^2= R^2 \right\}, \\
\mathcal B^d(R):=\left\{ x\in \mathbb R^d, \ \sum_1^d x_i^2\leq R^2 \right\}.
\end{array}
$$
We use the Kronecker delta-notation:
$$
\delta_{i,j}:=\left\{ \begin{array}{l l} 0 \ \ \text{if} \ i\neq j \\
1 \ \text{if} \ \ i=j,\end{array} \right.
$$
for $i,j\in \mathbb N$. We let:
$$
F(u):=\Delta u +f(u), \ \ f(u):=|u|^{p-1}u
$$
so that \fref{eq:NLH} writes:
$$
\partial_t u=F(u).
$$
When using the binomial expansion for the nonlinearity we use the constants
$$
f(u+v)=\sum_{l=0}^p C^p_l u^lv^{p-l}, \ \ C^p_l:=\begin{pmatrix} p \\ l \end{pmatrix} .
$$
The linearized operator close to $Q$ (defined in \fref{intro:def Q}) is:
\be
H u:=-\Delta u -pQ^{p-1}u
\ee
so that $F(Q+\varepsilon)\sim -H\varepsilon$. We introduce the potential
\be \label{intro:eq:def V}
V:=-pQ^{p-1}
\ee
so that $H=-\Delta+V$. Given a strictly positive real number $\lambda>0$ and function $u:\mathbb{R}^d\rightarrow \mathbb R$, we define the rescaled function:
\be \label{intro:eq:def ulambda}
u_{\lambda}(x)=\lambda^{\frac{2}{p-1}}u(\lambda x).
\ee
This semi-group has the infinitesimal generator:
$$
\Lambda u:=\frac{\partial}{\partial \lambda}(u_{\lambda})_{|\lambda=1}= \frac{2}{p-1}u+x.\nabla u .
$$
The action of the scaling on \fref{eq:NLH} is given by the formula:
$$
F(u_{\lambda}):=\lambda^2(F(u))_{\lambda}.
$$
For $z\in \mathbb R^d$ and $u:\mathbb{R}^d\rightarrow \mathbb R$, the translation of vector $z$ of $u$ is denoted by:
\be \label{intro:eq:def tauzu}
\tau_z u(x):=u(x-z).
\ee
This group has the infinitesimal generator:
$$
\left[ \frac{\partial }{\partial z} (\tau_z u)\right]_{|z=0}= - \nabla u .
$$
The original space variable will be denoted by $x\in \Omega$ and the renormalized one by $y$, related through $x=z+\lambda y$. The number of spherical harmonics of degree $n$ is:
$$
k(0):=1, \ k(1):=d, \ k(n):=\frac{2n+p-2}{n} \begin{pmatrix} n+p-3\\ n-1 \end{pmatrix} \ \text{for} \ n\geq 2
$$
The Laplace-Beltrami operator on the sphere $\mathcal{S}^{d-1}(1)$ is self-adjoint with compact resolvent and its spectrum is $\left\{ n(d+n-2), \ n\in \mathbb N\right\}$. For $n\in \mathbb{N}$ the eigenvalue $n(d+2-n)$ has geometric multiplicity $k(n)$, and we denote by $(Y^{(n,k)})_{n\in \mathbb N, \ 1\leq k \leq k(n)}$ an associated orthonormal Hilbert basis of $L^2(\mathbb S^d)$:
$$
L^2(\mathcal{S}^{d-1}(1))= \underset{n=0}{\overset{+\infty}{\oplus}}^{\perp} \text{Span}\left(Y^{(n,k)}, \ 1\leq k \leq k(n)\right),
$$
\be \label{intro:eq:def Ynk}
\Delta_{S^{d-1}(1)}Y^{(n,k)}=n(d+n-2)Y^{(n,k)}, \ \ \int_{S^{d-1}(1)} Y^{(n,k)}Y^{(n',k')}=\delta_{(n,k),(n',k')},
\ee
with the special choices:
\be \label{intro:eq:def Y0}
Y^{(0,1)}(x)=C_{0}, \ \ \ Y^{1,k}(x)=-C_1x_k
\ee
where $C_0$ and $C_1$ are two renormalization constants. The action of $H$ on each spherical harmonics is described by the family of operators on radial functions
\be \label{intro:eq:def Hn}
H^{(n)}:=-\partial_{rr}-\frac{d-1}{r}\partial_r+\frac{n(d+n-2)}{r^2}-pQ^{p-1}
\ee
for $n\in \mathbb N$ as for any radial function $f$ they produce the identity
\be \la{intro:id Hn}
H \left(x\mapsto f(|x|)Y^{(n,k)}\left( \frac{x}{|x|}\right)\right)=x\mapsto (H^{(n)}(f))(|x|)Y^{(n,k)}\left( \frac{x}{|x|}\right).
\ee
For two strictly positive real number $b_1^{(0,1)}>0$ and $\eta>0$ we define the scales:
\be \label{intro:eq:def B1}
M\gg 1 \ \ \ B_0=|b_1^{(0,1)}|^{-\frac{1}{2}}, \ \ \ B_1=B_0^{1+\eta} ,
\ee
The blow-up profile of this paper will is an excitation of several direction of stability and instability around the soliton $Q$. Each one of these directions of perturbation, denoted by $T_i^{(n,k)}$ will be associated to a triple $(n,k,i)$, meaning that it is the $i$-th perturbation located on the spherical harmonics of degree $(n,k)$. For each $(n,k)$ with $n\leq n_0$, there will be $L_n+1$ such perturbations for $i=0,...,L_n$ except for the cases $n=0$, $k=1$, and $n=1$, $k=1,...,d$, where there will be $L_n$ perturbations for $i=1,...,L_n$ ($n=1,2$). Hence the set of triple $(n,k,i)$ used in the analysis is:
\be \label{intro:eq:def mathcalI}
\ba{r c l}
\mathcal I & := & \left\{ (n,k,i)\in \mathbb N^3, \ 0\leq n\leq n_0, \ 1\leq k \leq k(n),  \ 0\leq i \leq L_n  \right\} \\
&&\backslash (\{ (0,1,0)\} \cup \{ (1,1,0),...,(1,d,0)\})
\ea
\ee
with cardinal
\be \label{intro:eq:def diesemathcalI}
\# \mathcal I := \sum_{n=0}^{n_0} k(n)(L_n+1)-d-1.
\ee
For $j\in \mathbb N$ and a $n$-tuple of integers $\mu=(\mu_i)_{1\leq i \leq j}$ the usual length is denoted by:
$$
|\mu|:=\sum_{i=1}^j\mu_i.
$$
If $j=d$ and $h$ is a smooth function on $\mathbb R^d$ then we use the following notation for the differentiation:
$$
\partial^{\mu} h:= \frac{\partial^{|\mu|}}{\partial^{\mu_1}_{x_1}...\partial^{\mu_d}_{x_d}}h.
$$
For $J$ is a $\#\mathcal I$-tuple of integers we introduce two others weighted lengths:
\be \label{cons:eq:def J2}
|J|_2=\sum_{n,k,i} (\frac{\gamma-\gamma_n}{2}+i)J^{(n,k)}_i,
\ee
\be \label{cons:eq:def J3}
|J|_3=\sum_{i=1}^{L} iJ_i^{(0,1)}+\sum_{1\leq i\leq L_1, \ 1\leq k \leq d} iJ_i^{(1,k)}+ \sum_{(n,k,i)\in \mathcal I, \ 2\leq n} (i+1)J_i^{(n,k)}.
\ee
To localize some objects we will use a radial cut-off function $\chi\in C^{\infty}(\mathbb R^d)$:
\be \label{intro:eq:def chi}
0\leq \chi \leq 1, \ \ \chi(|x|)=1 \ \text{for} \ |x|\leq 1, \ \ \chi(|x|)=0 \ \text{for} \ |x|\geq 2
\ee
and for $B>0$, $\chi_B$ will denote the cut-off around $\mathcal B^d(0,B)$:
$$
\chi_B(x):=\chi\left(\frac{x}{B} \right).
$$

%%%%%%%%%%%%%%%%%%%%%%%%%%%%%%%%%%%%%%%%%%%%
%%%%%%%%%%%%%%%%%%%%%%%%%%%%%%%%%%%%%%%%%%%%

\subsection{Strategy of the proof}

We now describe the main ideas behind the proof of Theorem \ref{thmmain}. Without loss of generality, via scale change and translation in space one can assume that $x_0=0$ and $\mathcal B^d(7)\subset \Omega$.\\

\noindent \emph{(i) Linear analysis and tail computations:} The linearized operator near $Q$ is $H=-\Delta-pQ^{p-1}$ and its generalized kernel is:
$$
\{ f, \ \exists j\in \mathbb N, \ H^jf=0\}=\text{Span} \left(T^{(n,k)}_i \right)_{(n,i)\in \mathbb N^2, \ 1\leq k \leq k(n)}, 
$$
where $T^{(n,k)}_i(x)=T^{(n)}_i(|x|)Y^{(n,k)}\left(\frac{x}{|x|}\right)$, $T^{(n)}_i$ being radial, is located on the spherical harmonics of degree $(n,k)$, with 
\be \la{intro:id Tnki}
T^{(0,1)}_0=\Lambda Q, \ \ T^{(1,k)}_0=\partial_{x_k} Q, \ \ HT^{(n,k)}_0=0, \ \ HT^{(n,k)}_{i+1}=-T^{(n,k)}_i
\ee
For any $L\in \mathbb N$, defining $s_L$, $n_0(L)$ and $L_n(L)$ by \fref{intro:eq:def sL}, \fref{intro:eq:def n0} and \fref{intro:eq:def Ln}, $H^{s_L}$ is coercive for functions that are not in the suitably truncated generalized kernel:
\be \la{intro:bd coercivite}
\int \varepsilon H^{s_L} \varepsilon \gtrsim \para \nabla^{s_L} \varepsilon \para_{L^2}^2 + \para \varepsilon \para_{\text{loc}}^2\ \ \text{if} \ \varepsilon \in \text{Span} \left(T^{(n,k)}_i \right)_{0\leq n\leq n_0, \ 1\leq k \leq k(n), \ 0\leq i \leq L_n}^{\perp}
\ee
where $\para \varepsilon \para_{\text{loc}}^2$ means any norm of $\varepsilon$ on a compact set involving derivatives up to order $2s_L$. A scale change for these profiles produces the following identity:
\be \la{intro:id LambdaTnki}
\frac{\partial}{\partial \lambda} (T_i^{(n,k)})_{|\lambda=1}(x)=\Lambda T^{(n,k)}_i(x) \sim (2i-\alpha_n)T^{(n,k)}_i(x) \ \ \text{as} \ |x|\rightarrow +\infty .
\ee

\noindent \emph{(ii) The renormalized flow:} For $u$ a solution, $\lambda:(0,T)\rightarrow \mathbb R$ and $z:(0,T)\rightarrow \mathbb R^d$, we define the renormalized time:
\be \la{strat:def s}
\frac{ds}{dt}=\frac{1}{\lambda^2}, \ \ s(0)=s_0 .
\ee
$v=(\tau_{-z}u)_{\lambda}$ then solves the following renormalized equation:
\be \la{strat:flot renormalise}
\partial_s v-\frac{\lambda_s}{\lambda}\Lambda v-\frac{z_s}{\lambda}.\nabla v-F(v)=0.
\ee

\noindent \emph{(iii) The dynamical system for the coordinates on the center manifold:} Let $\mathcal I$ be defined by \fref{intro:eq:def mathcalI}. For an approximate solution of \fref{eq:NLH} under the form
\be \la{strat:approximate}
\ba{r c l}
u & = & \Bigl( Q+\sum_{(n,k,i)\in \mathcal I} b^{(n,k)}_iT^{(n,k)}_i \Bigr)_{z,\frac{1}{\lambda}} 
\ea
\ee
described by some parameters $b^{(n,k)}_i\in \mathbb R$ one has the identity from \fref{intro:id Tnki} and \fref{intro:bd coercivite}:
\be \la{strat:dynamique approchee}
\ba{r c l}
& -z_t .\nabla u-\frac{\lambda_t}{\lambda}\Lambda u+\Bigl( \sum_{(n,k,i)\in \mathcal I} b^{(n,k)}_{i,t}T^{(n,k)}_i \Bigr)_{z,\frac{1}{\lambda}}=\partial_t u\approx F(u) \\
=& \frac{b^{(1,\cdot)}_1}{\lambda}.\nabla u+\frac{b^{(0,1)}_1}{\lambda^2} \Lambda u+\Bigl( \sum_{(n,k,i)\in \mathcal I} \frac{b_{i+1}^{(n,k)}-(2i-\alpha_n)b^{(1,0)}_1b^{(n,k)}_i}{\lambda^2}T^{(n,k)}_i \Bigr)_{z,\frac{1}{\lambda}}+\psi
\ea
\ee
where $b^{(1,\cdot)}_1=(b^{(1,1)}_1,...,b^{(1,d)}_1)$ and with the convention $b_{L_n+1}^{(n,k)}=0$. The error term $\psi$ is negligible under a size assumption on the parameters. Identifying the terms in the above identity yields the following finite dimensional dynamical system\footnote{Again, with the convention $b_{L_n+1}^{(n,k)}=0$.}:
\be \la{strat:systeme dynamique}
\left\{ \begin{array}{l l}
\lambda_t =-\frac{b_1^{(0,1)}}{\lambda}, \ \ z_t=-\frac{b_1^{(1,\cdot)}}{\lambda},\\
b_{i,t}^{(n,k)}=-\frac{1}{\lambda^2}(2i-\alpha_n)b_1^{(0,1)}b_i^{(n,k)}+\frac{1}{\lambda^2}b_{i+1}^{(n,k)}, \ \  \forall (n,k,i)\in \mathcal I .
\end{array}
\right.
\ee

\noindent \emph{(iv) The approximate blow-up profile:} \fref{strat:systeme dynamique} admits for any $\ell \in \mathbb N$ with $2\ell>\alpha$ an explicit special solution $(\bar \lambda,\bar z, \bar b^{(n,k)}_i)$ such that $\bar z=0$ and $\bar \lambda \sim (T-t)^{\frac{\ell}{\alpha}}$ for some $T>0$. Moreover, when linearizing \fref{strat:systeme dynamique} around this solution, one finds an explicit number $m$ of directions of linear instability and $\#\mathcal I-m$ directions of stability. In addition, for the renormalized time $s$ associated to $\bar \lambda$ one has:
\be \la{strat:barb}
\underset{t\rightarrow T}{\text{lim}} \ s(t)=+\infty, \ \ |\bar b^{(i,n)}_k(s)|\lesssim s^{-\frac{\gamma-\gamma_n}{2}-i} .
\ee
$(Q+\sum_{(n,k,i)\in \mathcal I} \bar b^{(n,k)}_i(t)T^{(n,k)}_i)_{\bar z(t),\frac{1}{\bar \lambda (t)}}$ is then our approximate blow-up profile.\\

\noindent \emph{(v) The blow-up ansatz:} Following \emph{(iv)}, we study solutions of the form:
\be \la{strat:id u}
u=\chi \Bigl( Q+\sum_{(n,k,i)\in \mathcal I} b^{(n,k)}_iT^{(n,k)}_i \Bigr)_{z,\frac{1}{\lambda}} +w
\ee
and decompose the remainder $w$ according to:
\be \la{strat:def w}
w_{\text{int}}:=\chi_3 w, \ \ w_{\text{ext}}:=(1-\chi_3)w, \ \ \varepsilon:= (\tau_{-z}w_{\text{int}})_{\lambda}.
\ee
$w_{\text{ext}}$ is the remainder outside the blow-up zone, $w_{\text{int}}$ the remainder inside the blow-up zone, and $\varepsilon$ is the renormalization of the remainder inside the blow-up zone corresponding to the scale and central point of the ground state $Q_{z,\frac{1}{\lambda}}$. $w$ is orthogonal to the suitably truncated center manifold:
\be \la{strat:ortho}
\varepsilon \in \text{Span} \left(T^{(n,k)}_i \right)_{0\leq n\leq n_0, \ 1\leq k \leq k(n), \ 0\leq i \leq L_n}^{\perp}
\ee
which fixes in a unique way the value of the parameters $b^{(n,k)}_i$, $\lambda$ and $z$. We then define the renormalized time $s$ associated to $\lambda$ via \fref{strat:def s}. We take $b$, $\lambda$ and $z$ to be perturbations of $\bar b$, $\bar \lambda$ and $\bar z$ for the renormalized time:
\be \la{strat:id b}
b^{(n,k)}_i(s)=\bar b^{(n,k)}_i(s)+b^{'(n,k}_i(s), \ \lambda (s)=\bar \lambda (s)+\lambda'(s), \ z(s)=\bar z(s)+z'(s) 
\ee
We define four norms for the remainder in \fref{strat:id u} and \fref{strat:def w}:
$$
\mathcal E_{\sigma}:=\para \nabla^{\sigma} \varepsilon \para_{L^2}^2(\mathbb R^d), \ \ \mathcal E_{2s_L}:=\int_{\mathbb R^d} |H^{s_L}\varepsilon |^2, \ \ \para w_{\text{ext}} \para_{H^{\sigma}(\Omega)} \ \ \text{and} \ \ \para w_{\text{ext}} \para_{H^{s_L}(\Omega)}
$$
where $\sigma$ is a slightly supercritical regularity exponent
\be \la{strat:def sigma}
0<\sigma-s_c\ll 1 .
\ee
One has that $\mathcal E_{2s_L}\gtrsim \para \nabla^{2s_L} \varepsilon \para_{L^2}$ from \fref{intro:bd coercivite}. \\

\noindent \underline{Interpretation:} We decompose a solution near the set of localized and concentrated ground states $\chi (Q_{z,\frac{1}{\lambda}})$ according to \fref{strat:id u}. A part, $\chi\Bigl(\sum_{(n,k,i)\in \mathcal I} b^{(n,k)}_iT^{(n,k)}_i \Bigr)_{z,\frac{1}{\lambda}}$, is located on the truncated center manifold; it decays slowly \fref{strat:barb} while interacting \fref{strat:systeme dynamique} with the ground state and is responsible for the blow-up by concentration, and one has an explicit behavior of the coordinates, \fref{strat:systeme dynamique}. The other part, $w$, is orthogonal to the truncated center manifold \fref{strat:ortho}; it is expected to decay faster as $H$ is more coercive \fref{intro:bd coercivite} on this set, and not to perturb the blow-up dynamics. The change of variables \fref{strat:def s} and \fref{strat:flot renormalise} transforms the blow-up problem into a long time asymptotic problem from \fref{strat:barb}.\\

\noindent \underline{Bootstrap method in a trapped regime:} We study solutions that are close to the approximate blow-up profile for the renormalized time, i.e. that satisfy:
\be \la{strat:bd w}
\mathcal E_{\sigma}+\para w_{\text{ext}}\para_{H^{\sigma}(\Omega)}^2\lesssim 1, \ \ \mathcal E_{2s_L}+\para w_{\text{ext}} \para_{H^{s_L}(\Omega)}\lesssim \frac{1}{\lambda^{2(2s_L-s_c)}s^{L+(1-\delta_0)+\nu}},
\ee
\be \la{strat:bd bnki}
|b^{'(n,k)}_i|\lesssim s^{-\frac{\gamma-\gamma_n}{2}-i}, \ \ |\lambda |+|z| \ll 1 .
\ee
$\frac{1}{\lambda^{2(2s_L-s_c)}s^{L+(1-\delta_0')}}$ is the size of the excitation $\chi \Bigl( \sum_{(n,k,i)\in \mathcal I} b^{(n,k)}_iT^{(n,k)}_i \Bigr)_{z,\frac{1}{\lambda}} $ and $\nu>0$ in \fref{strat:bd w} then quantify some gain describing how smaller is the remainder $w$.\\

\noindent \emph{(v) The bootstrap regime:} From \fref{eq:NLH} and \fref{strat:dynamique approchee}, the evolution of the solution under the decomposition \fref{strat:id u} and \fref{strat:def w} has the form
\be \la{strat:evolution wext}
\partial_t w_{\te{ext}}=\Delta w_{\te{ext}}+\Delta \chi_3 w+2\nabla \chi_3.\nabla w+(1-\chi_3)w^p,
\ee
\be \la{strat:evolution wint}
\ba{r c l}
\partial_t w_{\te{int}} & = &-H_{z,\frac 1 \lambda}w_{\te{int}} +\chi \psi+NL\\
&&+\chi \left( (\frac{b^{(1,\cdot)}_1}{\lambda^2}+\frac{z_t}{\lambda}).\nabla (Q+\sum_{(n,k,i)\in \mathcal I} b_i^{(n,k)}T^{(n,k)}_i )\right)_{z,\frac{1}{\lambda}}  \\
&& +\chi \left( (\frac{b^{(0,1)}_1}{\lambda^2}+\frac{\lambda_t}{\lambda})\Lambda (Q+\sum_{(n,k,i)\in \mathcal I} b_i^{(n,k)}T^{(n,k)}_i )\right)_{z,\frac{1}{\lambda}}  \\
&& +\chi \left( \sum_{(n,k,i)\in \mathcal I} \left(-b_{i,t}^{(n,k)}-\frac{(2i-\alpha_n)b^{(0,1)}_1b^{(n,k)}_1+b^{(n,k)}_{i+1}}{\lambda^2}\right) T^{(n,k)}_i )\right)_{z,\frac{1}{\lambda}}  
\ea
\ee
where $H_{z,\lambda}=-\Delta -pQ_{z,\frac{1}{\lambda}}^{p-1}$ and $NL$ stands for the purely nonlinear term. 

\noindent \underline{Modulation}. The evolution of the parameters is computed using the orthogonality directions related to the decomposition, i.e. by taking the scalar product between \fref{strat:evolution wint} and $(T^{(n,k)}_i)_{z,\frac{1}{\lambda}}$ for $0\leq n \leq n_0$, $1\leq k \leq k(n)$ and $0\leq i \leq L_n$, yielding in renormalized time an estimate of the form\footnote{With the convention $b^{(n,k)}_{L_n+1}=0$.}:
\be \la{strat:modulation}
\ba{r c l}
&\left|\frac{\lambda_s}{\lambda} +b^{(0,1)}_1\right|+\left|\frac{z_s}{\lambda} +b^{(1,\cdot)}_1\right|+\underset{(n,k,i)\in \mathcal I}{\sum} \left|b^{(n,k)}_{i,s}+(2i-\alpha_n)b^{(n,k)}_ib^{(0,1)}_1+b^{(n,k)}_{i+1} \right| \\
\lesssim & \sqrt{\mathcal E_{2s_L}}+s^{-L-3} .
\ea
\ee
These estimates hold because the error produced by the approximate dynamics is very small ($s^{L-3}$) and compact sets, and on the other hand the remainder $\varepsilon$ is also very small on compact sets and located far away from the origin from \fref{strat:bd w} and the coercivity \fref{intro:bd coercivite}.\\

\noindent \underline{Lyapunov monotonicity for the remainder}. From the evolution equations \fref{strat:evolution wext} and \fref{strat:evolution wint}, in the bootstrap regime \fref{strat:bd w} one performs energy estimates of the form:
\be \la{strat:bd sigma}
\frac{d}{dt}\left(\frac{1}{\lambda^{2(\sigma-s_c)}}\mathcal E_{\sigma}+\para w_{\text{ext}}\para_{H^{\sigma}(\Omega)} \right)\lesssim \frac{1}{\lambda^2s^{1+\kappa'}}+\frac{1}{\lambda^{(\sigma-s_c)}}\sqrt{\mathcal E_{\sigma}}\para \nabla^{\sigma} \psi\para_{L^2},
\ee
\be \la{strat:bd 2s_L}
\ba{r c l}
\frac{d}{dt}\left(\frac{1}{\lambda^{2(2s_L-s_c)}}\mathcal E_{2s_L}+\para w_{\text{ext}}\para_{H^{2s_L}(\Omega)} \right)&\lesssim& \frac{1}{\lambda^{2(2s_L-s_c)+2}s^{L+2-\delta_0+\nu+\kappa}}\\
&&+\frac{1}{\lambda^{2s_L-s_c}}\sqrt{\mathcal E_{2s_L}}\para H_{z,\frac{1}{\lambda}}^{s_L} \psi \para_{L^2},
\ea
\ee
where $\kappa>0$ represents a gain. The key properties yielding these estimates are the following. The control of a slightly supercritical norm \fref{strat:def sigma} and another high regularity norm allows to control precisely the energy transfer between low and high frequencies and to control the nonlinear term. The dissipation in \fref{strat:evolution wext} and \fref{strat:evolution wint} (for the second equation it is a consequence of the coercivity \fref{intro:bd coercivite}) erases the border terms and smaller order local interactions. Finally, the approximate blow-up profile is in fact a refinement of \fref{strat:approximate} where the error in the approximate dynamics is well localized in the self-similar zone $|x-z|\sim \sqrt{T-t}$, by the addition of suitable corrections via inverting elliptic equations and by precise cuts.\\

\noindent \emph{(vi) Existence via a topological argument:}. In the bootstrap regime close to the approximate blow-up profile described by \fref{strat:bd w} and \fref{strat:bd bnki}, one has precise bounds for the error term $\psi$. Reintegrating the energy estimates \fref{strat:bd sigma} and \fref{strat:bd 2s_L} then leads to the bounds:
$$
\mathcal E_{\sigma}+\para w_{\text{ext}}\para_{H^{\sigma}(\Omega)}^2\ll 1, \ \ \mathcal E_{2s_L}+\para w_{\text{ext}} \para_{H^{s_L}(\Omega)}\ll \frac{1}{\lambda^{2(2s_L-s_c)}s^{L+(1-\delta_0)+\nu}},
$$
which are an improvement of \fref{strat:bd w}. Therefore, a solution ceases to be in the bootstrap regime if and only if the bound \fref{strat:bd bnki} describing the proximity of the parameters with respect to the special blow-up parameters $(\bar b,\bar \lambda,\bar z)$ are violated. From \emph{(iv)} the parameters admit $(\bar \lambda,\bar z,\bar b)$ as an hyperbolic orbit with $m$ directions of instability and $\#\mathcal I-m$ of instability. From the modulation equations \fref{strat:modulation} the remainder $w$ perturbs this dynamics only at lower order. Therefore, an application of Brouwer fixed point theorem yields the persistence of an orbit similar to $(\bar \lambda,\bar z,\bar b)$ for the full nonlinear equation, i.e. with a perturbation along the parameters that stays small for all time. This gives the existence of a true solution of \fref{eq:NLH} that stays close to the approximate blow-up profile for all renormalized times, implying blow-up by concentration of $Q$ with a precise asymptotics.\\

The paper is organized as follows. In Section \ref{sec:Q} we recall the known properties of the ground state in Lemma \ref{lem:Q} and describe the kernel of the linearized operator $H$ in Lemma \ref{cons:lem:noyau H}. This provides a formula to invert elliptic equations of the form $Hu=f$, stated in Lemma \ref{cons:def:Hn-1} and allows to describe the generalized kernel of $H$ in Lemma \ref{cons:lem:Tni}. The blow-up profile is built on functions depending polynomially on some parameters and with explicit asymptotic at infinity, and we introduce the concept of homogeneous functions in Definition \ref{cons:def:fonctions homogenes} and Lemma \ref{cons:lem:proprietes fonctions homogenes} to track these informations easily. With these tools, in Section \ref{sec:Qb} we construct a first approximate blow-up profile for which the error is localized at infinity in Proposition \ref{cons:pr:Qb} and we cut it in the self-similar zone in Proposition \ref{cons:pr:tildeQb}. The evolution of the parameters describing the approximate blow-up profile is an explicit dynamical system with special solutions given in Lemma \ref{cons:lem:sol} for which the linear stability is investigated in Lemma \ref{cons:lem:linearisation}. In Section \ref{sec:main} we define a bootstrap regime for solutions of the full equation close to the approximate blow-up profile. We give a suitable decomposition for such solutions, using orthogonality conditions that are provided by Definition \ref{bootstrap:def:PhiM} and Lemma \ref{bootstrap:lem:conditions dorthogonalite}, in Lemma \ref{trap:lem:projection}. They must satisfy in addition some size assumption, and all the conditions describing the bootstrap regime are given in Definition  \ref{trap:def:trapped solution}. The main result of the paper is Proposition \ref{trap:pr:bootstrap}, stating the existence of a solution staying for all times in the boostrap regime, whose proof is relegated to the next Section. With this result we end the proof of Theorem \ref{thmmain} in Subsection \ref{sub:end}. To to this, the modulation equations are computed in Lemma \ref{trap:lem:modulation}, yielding that solutions staying in the bootstrap regime must concentrate in Lemma \ref{trap:lem:concentration} with an explicit asymptotic for Sobolev norm in Lemma \ref{trap:lem:normes2}. In Section \ref{sec:proof} we prove the main Proposition \ref{trap:pr:bootstrap}. For solutions in the boostrap regime, an improved modulation equation is established in Lemma \ref{pro:lem:modulation bLn}, and Lyapunov type monotonicity formulas are established in Propositions \ref{pro:pr:mathcalEsigma} and \ref{pro:pr:lowsobowext} for the low regularity Sobolev norms of the remainder, and in Propositions \ref{pro:pr:mathcalE2sL} and \ref{pro:pr:highsobowext} for the high regularity norms. With this analysis one can characterize the conditions under which a solution leaves the boostrap regime in Lemma \ref{pro:lem:exit}, and with a topological argument provided in Lemma \ref{pro:lem:f} one ends the proof of Proposition \ref{trap:pr:bootstrap} in Proof \ref{pro:pro:main}.\\

\noindent The appendix is organized as follows. In Section \ref{annexe:section:noyau H} we give the proof of Lemma \ref{cons:lem:noyau H} describing the kernel of $H$. In Section \ref{sec:hardy} we recall some Hardy and Rellich type estimates, among which the most useful is given in Lemma \ref{annexe:lem:hardy frac a poids}. In Section \ref{annexe:section:coercivite} we investigate the coercivity of $H$ in Lemmas \ref{annexe:lem:coercivite H} and \ref{annexe:lem:coercivite norme adaptee}. In Section \ref{sec:bd} we prove some bounds for solutions in the bootstrap regime. In Section \ref{sec:decomposition} we give the proof of the decomposition Lemma \ref{trap:lem:projection}.

%%%%%%%%%%%%%%%%%%%%%%%%%%%%%%%%%%%%%%%%%%%%%%%%%%%%%%%%%%%%%
%%%%%%%%%%%%%%%%%%%%%%%%%%%%%%%%%%%%%%%%%%%%%%%%%%%%%%%%%%%%%
%%%%%%%%%%%%%%%%%%%%%%%%%%%%%%%%%%%%%%%%%%%%%%%%%%%%%%%%%%%%%

\section{Preliminaries on $Q$ and $H$} \la{sec:Q}

We first summarize the content and ideas of this section. The instabilities near $Q$ underlying the blow up that we study result from the excitement of modes in the generalized kernel of $H$. We first describe this set. $H$ being radial, we use a decomposition into spherical harmonics: restricted to spherical harmonics of degree $n$, see \fref{intro:id Hn}, it becomes the operator $H^{(n)}$ on radial functions defined by \fref{intro:eq:def Hn}. Using ODE techniques, the kernel is described in Lemma \ref{cons:lem:noyau H} and the inversion of $H^{(n)}$ is given by Definition \ref{cons:def:Hn-1} and \fref{cons:lem:inversion H}. By inverting successively the elements in the kernel of $H^{(n)}$ one obtains the generators of the generalized kernel $\cup_j \text{Ker}((H^{(n)})^j)$ of this operator in Lemma \ref{cons:lem:Tni}.\\

\noindent To track the asymptotic behavior and the dependance in some parameters of various profiles during the construction of the approximate blow up profile in the next section, we introduce the framework of "homogeneous" functions in Definition \ref{cons:def:fonctions homogenes} and Lemma \ref{cons:lem:proprietes fonctions homogenes}.

\subsection{Properties of the ground state and of the potential}

Any positive smooth radially symmetric solution to:
$$
-\Delta \phi - \phi^p=0,
$$
is a dilate of a given normalized ground state profile $Q$:
$$
\phi=Q_{\lambda}, \ \lambda>0, \ \left\{ \begin{array}{l l}
-\Delta Q-Q^p=0 \\
Q(0)=1
\end{array}
\right.
$$
see \cite{YiLi} and references therein. The following lemma describes the asymptotic behavior of $Q$. We refer to \cite{Di} for an earlier work.

\begin{lemma}[Asymptotics of the ground state, \cite{YiLi} Lemma 4.3 and \cite{Ka} Lemma 5.4] \label{lem:Q}

Let $p>p_{JL}$ (defined in \fref{intro:eq:def pJL}). We recall that $g>0$, $c_{\infty}$ and $\gamma $ are defined in \fref{intro:eq:def gamma} and \fref{intro:eq:def g}. One has the asymptotics:

\bea
\la{cons:eq:asymptotique Q} && Q= \frac{c_{\infty}}{r^{\frac{2}{p-1}}}+\frac{a_1}{r^{\gamma}}+O\left( \frac{1}{r^{\gamma+g}}\right), \ \text{as} \ r \rightarrow +\infty, \ a_1\neq 0 \\
\la{cons:eq:asymptotique V} && V= - \frac{pc_{\infty}^{p-1}}{r^2}+O\left( \frac{1}{r^{2+\alpha}}\right), \ \text{as} \ r \rightarrow +\infty, \\
\la{cons:eq:degenerescence scaling} && \frac{d}{d\lambda} [(Q_{\lambda})^{p-1}]_{|\lambda=1} = O\left(\frac{1}{r^{2+\alpha}}\right) \ \text{as} \ r \rightarrow +\infty ,
\eea
and these identities propagate for the derivatives. There exists $\delta (p)>0$ such that there holds the pointwise bounds for all $y\in \mathbb R^d$:
\bea
\la{cons:eq:position asymptotique Q} && 0<Q(y)<\frac{c_{\infty}}{|y|^{\frac{2}{p-1}}}, \\
\la{cons:eq:positivite H} && -\frac{(d-2)^2}{4|y|^2}+\frac{\delta (p)}{|y|^2}\leq V(y)<0.
\eea

\end{lemma}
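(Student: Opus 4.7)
The plan is to reduce everything to an analysis of the radial ODE satisfied by $Q$. Writing $Q(x) = Q(r)$ with $r = |x|$, equation \fref{intro:def Q} becomes
\[
Q'' + \frac{d-1}{r} Q' + Q^p = 0,
\]
and one checks that $Q_s(r) := c_\infty r^{-2/(p-1)}$, with $c_\infty$ as in \fref{intro:eq:def cinfty}, is the explicit singular solution. I would introduce the difference $\phi := Q - Q_s$, which for large $r$ satisfies
\[
\phi'' + \frac{d-1}{r}\phi' + \frac{p c_\infty^{p-1}}{r^2}\phi = N(\phi,r),
\]
with nonlinear remainder $N(\phi,r) = O(Q_s^{p-2}\phi^2)$. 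The homogeneous linear part is Cauchy--Euler with indicial equation $\lambda^2 + (d-2)\lambda + pc_\infty^{p-1} = 0$: under $p > p_{JL}$ (equivalently $\triangle > 0$) both roots are real and distinct, equal to $-\gamma$ and $-(d-2-\gamma)$, the former yielding the slower decaying branch $r^{-\gamma}$.

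I would then set up a contraction mapping in a weighted sup-norm space adapted to the expected behaviour $\phi \sim a r^{-\gamma}$. For each parameter $a \in \mathbb R$, one constructs a solution of the nonlinear ODE on $[R_0, \infty)$ in the ball $\{ \sup_{r \geq R_0} r^{\gamma + g} |\phi(r) - a r^{-\gamma}| \leq C \}$, where the gain $g = \min(\alpha,\triangle) - \epsilon$ reflects the two possible sources of the next-order correction: the nonlinear contribution (at scale $r^{-\gamma - \alpha}$, obtained by inserting the leading term into $N$) and the second fundamental solution of the linearisation (at scale $r^{-\gamma - \triangle}$). A shooting/continuation argument matching the expansion at $r = +\infty$ with the unique (up to scaling) smooth decaying solution generated from $r = 0$ forces $Q$ to correspond to a specific value $a = a_1$ of the parameter, giving \fref{cons:eq:asymptotique Q}. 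The main obstacle is the non-vanishing $a_1 \neq 0$: if $a_1 = 0$, then $\phi$ is forced onto the faster-decaying branch $r^{-(d-2-\gamma)}$, and an Emden--Fowler analysis combined with ODE uniqueness forces $Q \equiv Q_s$, contradicting the smoothness of $Q$ at the origin. This non-degeneracy is precisely the content of \cite{YiLi}, which I would quote directly.

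From this central expansion all remaining assertions follow. For \fref{cons:eq:asymptotique V}, write $V = -pQ^{p-1} = -pQ_s^{p-1}(1 + \phi/Q_s)^{p-1}$ and expand, using $\phi/Q_s = O(r^{-\alpha})$, to obtain $-pc_\infty^{p-1}/r^2$ with error $O(r^{-2-\alpha})$. For \fref{cons:eq:degenerescence scaling}, scale invariance of $Q_s$ gives $\Lambda Q_s = 0$, hence $\Lambda Q = \Lambda \phi = -a_1 \alpha r^{-\gamma} + O(r^{-\gamma - g})$; then $(p-1)Q^{p-2}\Lambda Q$ yields the announced $O(r^{-2-\alpha})$ (the exponents match because $\gamma + 2(p-2)/(p-1) = 2 + \alpha$). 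Propagation to all derivatives is obtained by differentiating the ODE and iterating the contraction in higher-regularity weighted spaces. The strict inequality $0 < Q < Q_s$ is a Sturm/maximum-principle comparison based on the sign of $Q^p - Q_s^p$. Finally for \fref{cons:eq:positivite H}: $V < 0$ since $Q > 0$, and the pointwise lower bound reduces after multiplication by $|y|^2$ to $p|y|^2 Q^{p-1}(y) < (d-2)^2/4$, which is immediate from $Q < Q_s$ combined with the strict inequality $pc_\infty^{p-1} < (d-2)^2/4$ (i.e. $\triangle > 0$), giving the explicit gap $\delta(p) = \triangle/4$.
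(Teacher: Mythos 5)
The paper does not prove this lemma at all: it is quoted verbatim from the cited references (\cite{YiLi} Lemma 4.3 and \cite{Ka} Lemma 5.4), so there is no internal proof to compare against. Your sketch reconstructs the standard ODE argument behind those references, and the parts you derive yourself are sound: the identification of $Q_s=c_\infty r^{-2/(p-1)}$, the indicial roots $-\gamma$ and $-(d-2-\gamma)$ of the Cauchy--Euler linearisation, the deduction of \fref{cons:eq:asymptotique V} from \fref{cons:eq:asymptotique Q} via $\phi/Q_s=O(r^{-\alpha})$, the use of $\Lambda Q_s=0$ together with the exponent identity $\gamma+\tfrac{2(p-2)}{p-1}=2+\alpha$ to get \fref{cons:eq:degenerescence scaling}, and the observation that \fref{cons:eq:positivite H} follows from \fref{cons:eq:position asymptotique Q} with the explicit gap $\delta(p)=\triangle/4$, which is exactly where $p>p_{JL}$ enters.

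Two caveats. First, your stated mechanism for $a_1\neq 0$ is not correct as written: in the Emden--Fowler variables the fixed point $c_\infty$ is a stable node, and solutions approaching it along the fast eigendirection (the case $a_1=0$) form a genuine one-parameter family, so $a_1=0$ does not by itself force $Q\equiv Q_s$; excluding that exceptional scenario for the regular solution is precisely the nontrivial content of \cite{YiLi}, and the same remark applies to the global bound $0<Q<Q_s$, which is not a one-line maximum-principle comparison but an intersection-number argument (\cite{GNW,Ka}). Since you explicitly defer both points to the same references the paper itself cites, this is acceptable, but the heuristic should not be presented as a proof. Second, a minor exponent slip: the second fundamental solution of the linearised equation decays like $r^{-(d-2-\gamma)}=r^{-\gamma-\sqrt{\triangle}}$, i.e.\ the linear gap below the leading branch is $\sqrt{\triangle}$, not $\triangle$; with the paper's definition $g=\min(\alpha,\triangle)-\epsilon$ this makes no difference when $\triangle<1$ or when $\alpha$ is the minimum, but your contraction space should be weighted by $r^{\gamma+\min(\alpha,\sqrt{\triangle})-\epsilon}$ if you want the fixed-point argument to close on its own terms.
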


\begin{remark}

The standard Hardy inequality $\int_{\mathbb R^d} |\nabla u|^2\geq \frac{(d-2)^2}{4} \int_{\mathbb R^d} \frac{u^2}{|y|^2}dy$ and \fref{cons:eq:position asymptotique Q} then imply the positivity of $H$ on $\dot H^1(\mathbb R^d)$:
\be \la{cons:eq:positivite H2}
\int_{\mathbb R^d} uHudy\geq \int_{\mathbb R^d} \frac{\delta(p)u^2}{|y|^2}dy.
\ee
It is worth mentioning that the aforementioned expansion \fref{cons:eq:asymptotique Q} is false for $p\leq p_{JL}$. This asymptotics at infinity of $Q$ is decisive for type II blow up via perturbation of it, as from \cite{MaM2,Mizo4} it cannot occur for $\frac{d+2}{d-2}<p<p_{JL}$.

\end{remark}

%%%%%%%%%%%%%%%%%%%%%%%%%%%%%%%%%%%%%%%%%%%%
%%%%%%%%%%%%%%%%%%%%%%%%%%%%%%%%%%%%%%%%%%%%

\subsection{Kernel of $H$}

\begin{lemma}[Kernel of $H^{(n)}$] \label{cons:lem:noyau H}

We recall that the numbers $(\gamma_n)_{n\in \mathbb{N}}$ and $g$ are defined in \fref{intro:eq:def gamman}. Let $n\in \mathbb  N$. There exist $T^{(n)}_0,\Gamma^{(n)}:(0,+\infty)\rightarrow \mathbb R$ two smooth functions such that if $f:(0,+\infty)\rightarrow \mathbb R$ is smooth and satisfies $H^{(n)} f=0$, then $f\in \text{Span}(T^{(n)}_0,\Gamma^{(n)})$. They enjoy the asymptotics:
\be \la{cons:eq:asymptotique T0n}
\left\{
\ba{ll}
T^{(n)}_0(r) \underset{r\rightarrow 0}{=} \sum_{j=0}^l c_j^{(n)} r^{n+2j}+O(r^{n+2+2l}), \ \ \forall l\in \mathbb N, \ \ c^{(n)}_0\neq 0, \\
T_0^{(n)} \underset{r\rightarrow +\infty}{\sim} C_{n} r^{-\gamma_n}+O(r^{-\gamma_n-g}), \ \ C_{n} \neq 0 , \\
\Gamma^{(n)} \underset{r\rightarrow 0}{\sim}  \frac{c'_n}{r^{d-2+n}} \ \ \text{and} \ \ \Gamma^{(n)} \underset{r\rightarrow +\infty}{\sim} \tilde{c}'_nr^{-\gamma_n}, \ \ c_n',\tilde{c}'_n \neq 0 .
\ea \right.
\ee
Moreover, $T^{(n)}_0$ is strictly positive, and for $1\leq k \leq k(n)$ the functions $y \mapsto T^{(n)}_0(|y|)Y_{n,k}\left(\frac{|y|}{y}\right)$ are smooth on $\mathbb{R}^d$. The first two regular and strictly positive zeros are explicit:
\be \la{cons:eq:def T0}
T^{(0)}_0=\frac{1}{C_0}\Lambda Q \ \ \text{and} \ \ T^{(1)}_0=-\frac{1}{C_1}\partial_y Q.
\ee
where $C_0$ and $C_1$ are the renormalized constants defined by \fref{intro:eq:def Y0}.

\end{lemma}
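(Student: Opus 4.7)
The proof combines Frobenius analysis at the two regular singular points $r=0$ and $r=+\infty$ of the ODE $H^{(n)} f = 0$ with a global connection argument and a Sturm-type comparison for positivity.

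Near $r=0$, the indicial equation for the principal part has roots $\sigma = n$ and $\sigma = -(n+d-2)$. Since $Q$ is smooth and radial, $Q^{p-1}$ is an even function of $r$, so the Frobenius recursion at the regular root is solvable in even powers of $r$ and yields the convergent expansion $T_0^{(n)}(r) = \sum_{j\geq 0} c_j^{(n)} r^{n+2j}$ with $c_0^{(n)} \neq 0$; correspondingly, $y \mapsto T_0^{(n)}(|y|) Y^{(n,k)}(y/|y|)$ extends smoothly across the origin as a sum of solid spherical harmonics times even powers of $|y|$. The parity of the recursion rules out logarithmic corrections in the singular solution and produces $\Gamma^{(n)}(r) = c'_n r^{-(n+d-2)}(1+O(r^2))$ with $c'_n \neq 0$. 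Near $r = +\infty$, using \fref{cons:eq:asymptotique V}, $H^{(n)}$ is an $O(r^{-2-g})$ perturbation of an Euler operator whose indicial equation $\sigma^2 - (d-2)\sigma + pc_\infty^{p-1} - n(d+n-2) = 0$ has roots $\gamma_n$ and $d - 2 - \gamma_n$ by \fref{intro:eq:def gamman}. A Volterra fixed-point argument on $[R_0, +\infty)$ in a weighted sup-norm produces two linearly independent solutions $u_\pm$ with asymptotics $r^{-\gamma_n}(1+O(r^{-g}))$ and $r^{-(d-2-\gamma_n)}(1+O(r^{-g}))$.

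The main obstacle is the global connection, namely showing that $C_n \neq 0$ and $\tilde c'_n \neq 0$. By ODE integration, $T_0^{(n)}$ and $\Gamma^{(n)}$ each extend to $(0,+\infty)$ and decompose as a combination of $u_\pm$. For $C_n \neq 0$ I would argue by contradiction: if $T_0^{(n)}$ coincided with $u_-$ at infinity, then since $\triangle_n > 0$ forces $\gamma_n < (d-2)/2$, the function $u(y) := T_0^{(n)}(|y|)Y^{(n,k)}(y/|y|)$ would lie in $\dot H^1(\mathbb R^d) \cap L^2(|y|^{-2}dy)$ thanks to the $\sim r^n$ behavior at the origin and the fast decay at infinity; but then $\int u H u \, dy = 0$ would contradict the strict positivity \fref{cons:eq:positivite H2}. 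The non-vanishing $\tilde c'_n \neq 0$ is obtained by a similar Wronskian computation connecting the local bases at $0$ and $\infty$, using that the Wronskian of $T_0^{(n)}$ and $\Gamma^{(n)}$ equals a nonzero constant times $r^{-(d-1)}$ and that $T_0^{(n)}$ is known to pick up the large mode $u_+$.

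The explicit formulas for $n\in\{0,1\}$ come from differentiating the scaling and translation invariances of $\Delta Q + Q^p = 0$, which yield $H\Lambda Q = 0$ and $H\partial_{x_k}Q = 0$; matching at $r=0$ identifies $T_0^{(0)}$ with $C_0^{-1}\Lambda Q$ and $T_0^{(1)}$ with $-C_1^{-1}\partial_r Q$. Positivity of $T_0^{(0)}$ and $T_0^{(1)}$ then follows from the classical $\Lambda Q > 0$ in the supercritical regime and the radial monotonicity $\partial_r Q < 0$. For $n\geq 2$, I would argue by contradiction: if $R_0 > 0$ is the first zero of $T_0^{(n)}$, integrating the identity $T_0^{(0)} H^{(n)} T_0^{(n)} - T_0^{(n)} H^{(0)} T_0^{(0)} = 0$ against $r^{d-1}\,dr$ on $(0,R_0)$ produces
\[
\bigl[r^{d-1}\bigl(T_0^{(n)} (T_0^{(0)})' - (T_0^{(n)})' T_0^{(0)}\bigr)\bigr]_0^{R_0} = -\int_0^{R_0} \frac{n(d+n-2)}{r^2} T_0^{(n)} T_0^{(0)} r^{d-1}\, dr.
\]
The left side equals $-R_0^{d-1}(T_0^{(n)})'(R_0) T_0^{(0)}(R_0) > 0$, since $T_0^{(n)}$ has a strict local minimum at $R_0$ and $T_0^{(0)}(R_0)>0$, while the right side is strictly negative because $T_0^{(n)}, T_0^{(0)} > 0$ on $(0,R_0)$, yielding the contradiction.
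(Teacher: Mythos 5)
Your overall architecture is sound and runs parallel to the paper's (local bases at $r=0$ and $r=+\infty$, a connection argument exploiting that the potential sits above the Hardy threshold, explicit identification for $n=0,1$); your energy-form contradiction for $C_n\neq 0$ via \fref{cons:eq:positivite H2} is a legitimate substitute for the paper's phase-plane invariant-region argument, and your Sturm comparison for $n\geq 2$ is correct granted $\Lambda Q>0$. However, the step proving $\tilde c'_n\neq 0$ is a genuine gap: the Wronskian cannot see the $r^{-\gamma_n}$ coefficient of $\Gamma^{(n)}$. Indeed the two indicial roots at infinity sum to $d-2$, so for \emph{any} two linearly independent solutions the Wronskian behaves like a nonzero constant times $r^{-(d-1)}$; writing $T_0^{(n)}=C_nu_++cu_-$ and $\Gamma^{(n)}=au_++bu_-$ one gets $W(T_0^{(n)},\Gamma^{(n)})=(C_nb-ac)\,W(u_+,u_-)$, and the nonvanishing of this expression only encodes linear independence — it is perfectly compatible with $a=0$. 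Moreover, for your specific choice of $\Gamma^{(n)}$ (the singular Frobenius branch at the origin, which is in any case only defined modulo multiples of $T_0^{(n)}$) there is no reason why $a\neq 0$: nothing prevents the solution singular at the origin from being exactly the fast-decaying solution $u_-$ at infinity. Since the lemma is an existence statement, the repair is simple and is what the paper does: having shown $C_n\neq 0$, observe that $u_-$ cannot be collinear to $T_0^{(n)}$, hence $u_-$ carries a nontrivial singular component at the origin; then take for $\Gamma^{(n)}$ any solution collinear neither to $T_0^{(n)}$ nor to $u_-$ (for instance $T_0^{(n)}+u_-$), which has both the $r^{-(d-2+n)}$ behavior at the origin and a nonzero $r^{-\gamma_n}$ coefficient at infinity.

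Two smaller points. First, the claim that parity of the recursion excludes logarithms in the singular branch at $r=0$ is only valid when the gap of indicial roots $2n+d-2$ is odd, i.e.\ when $d$ is odd; for even $d$ a resonance at an even step is possible. This is harmless for the lemma, which only asserts the leading behavior $\Gamma^{(n)}\sim c_n'r^{-(d-2+n)}$, but it should not be asserted. Second, at the first zero $R_0$ the correct statement is that $T_0^{(n)}$ crosses transversally, $(T_0^{(n)})'(R_0)<0$, by Cauchy uniqueness for the linear ODE (a first zero reached from positive values is not a local minimum); the inequality you need then follows, using $T_0^{(0)}(R_0)>0$. Note also that you import $\Lambda Q>0$ and the identification $T_0^{(0)}=C_0^{-1}\Lambda Q$, $T_0^{(1)}=-C_1^{-1}\partial_rQ$ from the classical literature, whereas the paper obtains positivity and the non-collinearity of the relevant solutions simultaneously and self-containedly from the single phase-plane argument resting on \fref{cons:eq:positivite H}; citing the classical separation property for $p>p_{JL}$ is acceptable but should be made explicit.
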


\begin{proof}

The proof of this lemma is done in Appendix \ref{annexe:section:noyau H}. 

\end{proof}

\begin{remark}

The presence of the renormalized constants in \fref{cons:eq:def T0} is here to produce the identities $T_0^{(0)}Y^{(0,0)}=\Lambda Q$ and $T_0^{(1)}Y^{(1,k)}=\partial_{x_k}Q$ from \fref{intro:eq:def Y0}. For each $n\in \mathbb{N}$, only one zero, $T^{(n)}_0$, is regular at the origin. We insist on the fact that $-\gamma_n>0$ is a positive number\footnote{This notation seems unnatural but matches the standard notation in the literature.} for $n$ large from \fref{intro:eq:proprietes gamman} making these profile grow as $r\rightarrow +\infty$.

\end{remark}

%%%%%%%%%%%%%%%%%%%%%%%%%%%%%%%%%%%%%%%%%%%%%%%%%%%
%%%%%%%%%%%%%%%%%%%%%%%%%%%%%%%%%%%%%%%%%%%%%%%%%%%

\subsection{Inversion of $H^{(n)}$}

We start by a useful factorization formula for $H^{(n)}$. Let $n\in \mathbb{N}$ and $W^{(n)}$ denote the potential:
\be \la{cons:eq:def W}
W^{(n)}:=\partial_r(\text{log}(T^{(n)}_0)),
\ee
where $T^{(n)}_0$ is defined in \fref{cons:eq:asymptotique T0n} and define the first order operators on radial functions:
\be \la{cons:eq:def A}
A^{(n)}:u\mapsto -\partial_r u+W^{(n)}u, \ A^{(n)*}:u\mapsto \frac{1}{r^{d-1}}\partial_r(r^{d-1}u)+W^{(n)}u.
\ee

\begin{lemma}[Factorization of $H^{(n)}$]\label{lem:factorisation} 

There holds the factorization:
\be \la{cons:eq:factorisation}
H^{(n)}=A^{(n)*}A^{(n)}.
\ee
Moreover one has the adjunction formula for smooth functions with enough decay:
$$
\int_0^{+\infty} (A^{(n)}u)vr^{d-1}dr=\int_0^{+\infty} u(A^{(n)*}v)r^{d-1}dr.
$$

\end{lemma}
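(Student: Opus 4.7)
The plan is a direct computation, exploiting the fact that $W^{(n)}$ is the logarithmic derivative of the kernel element $T_0^{(n)}$ provided by Lemma \ref{cons:lem:noyau H}, which is a classical way of factoring a Schrödinger operator when one positive zero is known.

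First I would expand $A^{(n)*}A^{(n)}u$ using the definitions \fref{cons:eq:def A}. Writing $W=W^{(n)}$ for short and applying the two first-order operators successively gives
\begin{equation*}
A^{(n)*}A^{(n)}u = \frac{1}{r^{d-1}}\partial_r\bigl(r^{d-1}(-\partial_r u + Wu)\bigr) + W(-\partial_r u + Wu) = -\partial_{rr}u - \frac{d-1}{r}\partial_r u + \Bigl( W' + \frac{d-1}{r}W + W^2 \Bigr) u,
\end{equation*}
where the cross terms $W\partial_r u$ cancel. Comparing with \fref{intro:eq:def Hn}, the factorization \fref{cons:eq:factorisation} is equivalent to the scalar identity
\begin{equation*}
W' + \frac{d-1}{r}W + W^2 = \frac{n(d+n-2)}{r^2} - pQ^{p-1}.
\end{equation*}

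Next I would verify this identity. Since $T_0^{(n)}$ is strictly positive by Lemma \ref{cons:lem:noyau H}, the logarithmic derivative $W = (T_0^{(n)})'/T_0^{(n)}$ is well defined and satisfies $W' + W^2 = (T_0^{(n)})''/T_0^{(n)}$. Therefore
\begin{equation*}
W' + \frac{d-1}{r}W + W^2 = \frac{1}{T_0^{(n)}}\Bigl( (T_0^{(n)})'' + \frac{d-1}{r}(T_0^{(n)})'\Bigr).
\end{equation*}
But $T_0^{(n)} \in \ker H^{(n)}$ by Lemma \ref{cons:lem:noyau H}, so $(T_0^{(n)})'' + \frac{d-1}{r}(T_0^{(n)})' = \bigl(\frac{n(d+n-2)}{r^2} - pQ^{p-1}\bigr)T_0^{(n)}$, which gives exactly the required identity and establishes \fref{cons:eq:factorisation}.

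The adjunction formula is then an integration by parts against the radial measure $r^{d-1}dr$. For $u,v$ smooth with sufficient decay at $0$ and $+\infty$,
\begin{equation*}
\int_0^{+\infty} (-\partial_r u)\, v\, r^{d-1}\, dr = -\bigl[u v r^{d-1}\bigr]_0^{+\infty} + \int_0^{+\infty} u\, \partial_r(v r^{d-1})\, dr = \int_0^{+\infty} u \cdot \frac{1}{r^{d-1}}\partial_r(r^{d-1}v)\, r^{d-1}\, dr,
\end{equation*}
the boundary terms vanishing since $v$ decays at infinity and $r^{d-1}$ kills the singularity at $0$. Adding the symmetric contribution of the multiplication operator $Wu$ yields the claim. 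There is no real obstacle here; the only subtle point is choosing $W$ as $\partial_r \log T_0^{(n)}$, which is legitimate precisely because $T_0^{(n)}>0$ on $(0,+\infty)$.
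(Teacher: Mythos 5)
Your proof is correct and follows essentially the same route as the paper: expand $A^{(n)*}A^{(n)}$, identify the resulting potential as $\bigl(\partial_{rr}T_0^{(n)}+\tfrac{d-1}{r}\partial_r T_0^{(n)}\bigr)/T_0^{(n)}$ via $W^{(n)}=\partial_r\log T_0^{(n)}$, conclude using $H^{(n)}T_0^{(n)}=0$, and obtain the adjunction by integration by parts against $r^{d-1}dr$. No gaps.
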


\begin{proof}[Proof of Lemma \ref{lem:factorisation}] 

As $T_0^{(n)}>0$ from \fref{cons:eq:asymptotique T0n}, $W^{(n)}$ is well defined. This factorization is a standard property of Schr\"odinger operators with a non-vanishing zero. We start by computing:
$$
A^{(n)*}A^{(n)} u=-\partial_{rr}u-\frac{d-1}{r}\partial_r u +\left(\frac{d-1}{r}W^{(n)}+\partial_r W^{(n)}+(W^{(n)})^2\right)u.
$$
As $W^{(n)}=\frac{\partial_r T_0^{(n)}}{T_0^{(n)}}$, the potential that appears is nothing but:
$$
\begin{array}{r c l}
\frac{d-1}{r}W^{(n)}+\partial_r W^{(n)}+(W^{(n)})^2&=&\frac{\partial_{rr}T_0^{(n)}+\frac{d-1}{r}T_0^{(n)}}{T_0^{(n)}}=\frac{-H^{(n)}T^{(n)}_0+(\frac{n(d+n-2)}{r^2}+V)T^{(n)}_0}{T_0^{(n)}}\\
&=& \frac{n(d+n-2)}{r^2}+ V,
\end{array}
$$
as $H^{(n)}T_0^{(n)}=0$, which proves the factorization formula \fref{cons:eq:factorisation}. The adjunction formula comes from a direct computation using integration by parts.

\end{proof}

From the asymptotic behavior \fref{cons:eq:asymptotique T0n} of $T^{(n)}_0$ at the origin and at infinity, we deduce the asymptotic behavior of $W^{(n)}$:

\be \la{cons:eq:asymptotique Wn}
W^{(n)}= \left\{ \ba{l l} \frac{n}{r}+O(1) \ \text{as} \ r\rightarrow 0, \\ \frac{-\gamma_n}{r}+O\left( \frac{1}{r^{1+g+j}}\right) \ \text{as} \ r\rightarrow +\infty,  \ea  \right.
\ee
which propagates for the derivatives. Using the factorization \fref{cons:eq:factorisation}, to define the inverse of $H^{(n)}$ we proceed in two times, first we invert $A^{(n)*}$, then $A^{(n)}$.

\begin{definition}[Inverse of $H^{(n)}$] \label{cons:def:Hn-1}

Let $f:(0,+\infty)\rightarrow \mathbb R$ be smooth with $f(r)=O(r^n)$ as $r\rightarrow 0$. We define\footnote{$u$ is well defined because from the decay of $f$ at the origin one deduces $(A^{(n)*})^{-1}f=O(r^{n+1})$ as $y\rightarrow 0$ and so $\frac{u'}{T^n_0}$ is integrable at the origin from the asymptotic behavior \fref{cons:eq:asymptotique T0n}.} the inverses $(A^{(n)*})^{-1}f$ and $(H^{(n)})^{-1}f$ by:
\be \la{cons:eq:formule An}
(A^{(n)*})^{-1}f(r)=\frac{1}{r^{d-1}T^{(n)}_0}\int_0^r f T^{(n)}_0 s^{d-1} ds,
\ee
\be \la{cons:eq:def H-1}
(H^{(n)})^{-1}f (r)= \left\{ \begin{array}{l l} T^{(n)}_0 \int_r^{+\infty} \frac{(A^{(n)*})^{-1}f}{T^{(n)}_0}ds \ \ \text{if} \ \frac{ (A^{(n)*})^{-1}f}{T^{(n)}_0} \ \text{is} \ \text{integrable} \ \text{on} \ (0,+\infty)  ,\\ 
-T^{(n)}_0 \int_0^r \frac{(A^{(n)*})^{-1}f}{T^{(n)}_0}ds \ \ \text{if} \ \frac{(A^{(n)*})^{-1}f}{T^{(n)}_0} \ \text{is} \ \text{not} \ \text{integrable} \ \text{on} \ (0,+\infty). \\
\end{array} \right.
\ee

\end{definition}

Direct computations give indeed $H^{(n)}\circ(H^{(n)})^{-1}=A^{(n)*}\circ(A^{(n)*})^{-1}=\text{Id}$, and $A^{(n)}\circ (H^{(n)})^{-1}=(A^{(n)*})^{-1}$. As we do not have uniqueness for the equation $Hu=f$, one may wonder if this definition is the "right" one. The answer is yes because this inverse has the good asymptotic behavior, namely, if $f\underset{r\rightarrow +\infty}{\approx}r^q$ one would expect $u\underset{r\rightarrow +\infty}{\approx}r^{q+2}$, which will be proven in Lemma \ref{cons:lem:action H et H-1}. To keep track of the asymptotic behaviors at the origin and at infinity, we now introduce the notion of admissible functions.

\begin{definition}[Simple admissible functions] \label{cons:def:fonctions admissibles simples}

Let $n$ be an integer, $q$ be a real number and $f:(0,+\infty)\rightarrow\mathbb R$ be smooth. We say that $f$ is a simple admissible function of degree $(n,q)$ if it enjoys the asymptotic behaviors:
\be \la{cons:eq:asymptotique origine fonction admissible simple}
\forall l \in \mathbb{N}, \ f = \sum_{j=0}^{l} c_j r^{n+2j}+ O(r^{n+2l+2})
\ee
at the origin for a sequence of numbers $(c_l)_{l\in \mathbb N}\in \mathbb R^{\mathbb N}$, and at infinity:
\be \la{cons:eq:asymptotique infini fonction admissible simple}
f=O (r^{q}) \ \ \text{as} \ r\rightarrow +\infty,
\ee
and if the two asymptotics propagate for the derivatives of $f$.

\end{definition}

\begin{remark}

Let $f:(0,+\infty)$ be smooth, we define the sequence of $n$-adapted derivatives of $f$ by induction:
\be \la{cons:eq:def derivees adaptees}
f_{[n,0]} :=f \ \text{and} \ \text{for} \ j\in \mathbb N, \  f_{[n,j+1]}:=\left\{ \ba{l l}
A^{(n)} f_{[n,j]} \ \text{for} \ j \ \text{even} ,\\
A^{(n)*} f_{[n,j]} \ \text{for} \ j \ \text{odd} .
\ea  \right.
\ee
From the definition \fref{cons:eq:def A} of $A^{(n)}$ and $A^{(n)*}$, and the asymptotic behavior \fref{cons:eq:asymptotique Wn} of the potential $W^{(n)}$, one notices that the condition \fref{cons:eq:asymptotique infini fonction admissible simple} on the asymptotic at infinity for a simple admissible function of degree $(n,q)$ and its derivatives is equivalent to the following condition for all $j\in \mathbb N$:
\be \la{cons:eq:asymptotique infini fonction admissible simple2}
f_{[n,j]} =O (r^{q-j}) \ \ \text{as} \ r\rightarrow +\infty
\ee
where the adapted derivatives $(f_{[n,j]})_{j\in \mathbb N}$ are defined by \fref{cons:eq:def derivees adaptees}. We will use this fact many times in the rest of this subsection, as it is more adapted to our problem.

\end{remark}

The operators $H^{(n)}$ and $(H^{(n)})^{-1}$ leave this class of functions invariant, and the asymptotic at infinity is increased by $-2$ and $2$ under some conditions (that will always hold in the sequel) on the coefficient $q$ to avoid logarithmic corrections.

\begin{lemma}[Action of $H^{(n)}$ and $(H^{(n)})^{-1}$ on simple admissible functions] \la{cons:lem:action H et H-1}

Let $n\in \mathbb{N}$ and $f$ be a simple admissible function of degree $(n,q)$ in the sense of Definition \ref{cons:def:fonctions admissibles simples}, with $q> \gamma_n-d$ and $-\gamma_n-2-q\not\in 2\mathbb{N} $. Then for all integer $i\in \mathbb{N}$:
\begin{itemize}
\item[(i)] $(H^{(n)})^i f$ is simple admissible of degree $(n,q-2i)$.
\item[(ii)] $(H^{(n)})^{-i}f$ is simple admissible of degree $(n,q+2i)$.
\end{itemize}

\end{lemma}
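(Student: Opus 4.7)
The plan is to reduce everything to the case $i=1$ via induction (iterating $H^{(n)}$ or $(H^{(n)})^{-1}$ simply chains the degree shifts, and the hypothesis $q>\gamma_n-d$ as well as $-\gamma_n-2-q\notin 2\mathbb{N}$ will be preserved along the iteration since $q$ only changes by even integers). For $i=1$, I would use the factorization $H^{(n)}=A^{(n)*}A^{(n)}$ from Lemma \ref{lem:factorisation} and the two-step inversion $(H^{(n)})^{-1}=(A^{(n)})^{-1}\circ (A^{(n)*})^{-1}$ coming from Definition \ref{cons:def:Hn-1}, and track separately the behavior at the origin and at infinity, observing that by \fref{cons:eq:asymptotique infini fonction admissible simple2} the condition at infinity for a simple admissible function is nothing but $f_{[n,j]}=O(r^{q-j})$ for the iterated adapted derivatives.

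For (i) at infinity, this follows immediately from \fref{cons:eq:asymptotique infini fonction admissible simple2}: $H^{(n)}f=f_{[n,2]}=O(r^{q-2})$, and the asymptotic propagates to all further adapted derivatives $(H^{(n)}f)_{[n,j]}=f_{[n,j+2]}=O(r^{q-2-j})$. Near the origin I would check that $H^{(n)}$ preserves expansions of type $\sum_{j\geq 0}c_j r^{n+2j}$: the crucial cancellation is
\begin{equation*}
-\Delta(r^n)+\frac{n(d+n-2)}{r^2}r^n=0,
\end{equation*}
which eliminates the most singular term produced by the angular part, while $V$ is smooth near $0$ and contributes only in powers of $r^2$.

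For (ii), I first analyse $(A^{(n)*})^{-1}$ via the formula \fref{cons:eq:formule An}. At infinity, the integrand $fT_0^{(n)}s^{d-1}$ behaves like $s^{q+d-1-\gamma_n}$; the hypothesis $q>\gamma_n-d$ makes it non-integrable at infinity, so the primitive $\int_0^r$ grows like $r^{q+d-\gamma_n}/(q+d-\gamma_n)$ with nonzero coefficient, and division by $r^{d-1}T_0^{(n)}\sim r^{d-1-\gamma_n}$ yields $(A^{(n)*})^{-1}f\sim r^{q+1}$. Near the origin, a direct Taylor expansion of the integrand $fT_0^{(n)}s^{d-1}\in s^{2n+d-1}\cdot C^\infty(r^2)$ followed by division by $r^{d-1}T_0^{(n)}\sim c_0^{(n)}r^{n+d-1}(1+O(r^2))$ produces a series in $r^{n+1+2j}$, so $(A^{(n)*})^{-1}f$ is simple admissible of degree $(n,q+1)$. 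I would then apply $(A^{(n)})^{-1}$ via \fref{cons:eq:def H-1}: the ratio $(A^{(n)*})^{-1}f/T_0^{(n)}$ behaves like $r^{q+1+\gamma_n}$ at infinity, and the two cases of \fref{cons:eq:def H-1} precisely correspond to whether this ratio is integrable or not on $(0,+\infty)$. In either case a direct computation gives growth $r^{q+2}$ after multiplying back by $T_0^{(n)}\sim r^{-\gamma_n}$.

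The main technical point, and the reason for the assumption $-\gamma_n-2-q\notin 2\mathbb{N}$, is to handle the full asymptotic expansion at infinity rather than just the leading term: one expands $(A^{(n)*})^{-1}f/T_0^{(n)}$ as $\sum_j \beta_j r^{q+1+\gamma_n-2j}+(\text{remainder})$, and primitivating each power $s^{q+1+\gamma_n-2j}$ is algebraic only if $q+2+\gamma_n-2j\neq 0$ for every $j\geq 0$; an equality for some $j$ would produce a logarithmic correction incompatible with the admissible class. The condition $-\gamma_n-2-q\notin 2\mathbb{N}$ is exactly what rules this out. Once the $i=1$ case is established, the general $i$ follows by iteration, and the propagation of the asymptotics to all $n$-adapted derivatives is automatic since the class is stable under $A^{(n)}$ and $A^{(n)*}$ by construction.
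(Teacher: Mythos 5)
Your route is essentially the paper's: reduce to $i=1$, use the factorization $H^{(n)}=A^{(n)*}A^{(n)}$ and the two-step inversion of Definition \ref{cons:def:Hn-1}, treat origin and infinity separately through the adapted-derivative reformulation \fref{cons:eq:asymptotique infini fonction admissible simple2}, and the computations you sketch (the cancellation $\Delta^{(n)}r^n=0$ at the origin, the $O(r^{q+1})$ bound for $(A^{(n)*})^{-1}f$, the dichotomy in \fref{cons:eq:def H-1}) are the right ones.

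One point is off, however: your account of the hypothesis $-\gamma_n-2-q\notin 2\mathbb{N}$. By Definition \ref{cons:def:fonctions admissibles simples}, a simple admissible function carries only $O$-bounds at infinity (propagating to derivatives), not a full asymptotic expansion; so there is no series $\sum_j \beta_j r^{q+1+\gamma_n-2j}$ to integrate term by term, and the ``resonance at every $j$'' picture you present as the main technical point is neither available nor needed within a single inversion. For $i=1$ the only borderline case is $q+1+\gamma_n=-1$, i.e.\ $q=-\gamma_n-2$, where the non-integrable branch of \fref{cons:eq:def H-1} would produce a logarithm; that single exclusion is all the hypothesis does at this stage, while its full strength $-\gamma_n-2-q\notin 2\mathbb{N}$ is exactly what lets the hypothesis survive the induction $q\mapsto q+2$ in $i$, which you already observed in your opening paragraph. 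Relatedly, since only upper bounds are available you cannot assert that $\int_0^r fT_0^{(n)}s^{d-1}ds$ diverges or has a nonzero leading coefficient; you only need the bound $O(r^{q+d-\gamma_n})$ coming from $q>\gamma_n-d$, and in the integrable branch of \fref{cons:eq:def H-1} one must still split according to $q<-\gamma_n-2$ or $q>-\gamma_n-2$ (giving $u=O(r^{q+2})$ directly, respectively $u=O(r^{-\gamma_n})=O(r^{q+2})$), as the paper does. Finally, $(A^{(n)*})^{-1}f$ is not literally simple admissible of degree $(n,q+1)$, since its origin expansion runs over the shifted powers $r^{n+1+2j}$; your bookkeeping of that expansion is correct, and the final $u$ does land back in the class only after multiplying by $T_0^{(n)}$ and accounting for the homogeneous contribution $aT_0^{(n)}$, a step worth stating explicitly.
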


\begin{proof}[Proof of Lemma \ref{cons:lem:action H et H-1}]

\textbf{step 1} Action of $H^{(n)}$. For each integer $i$ and $j$ one has from \fref{cons:eq:def derivees adaptees} and \fref{cons:eq:factorisation}: $((H^{(n)})^if)_{[n,j]}=f_{[n,j+2i]}$. Using the equivalent formulation \fref{cons:eq:asymptotique infini fonction admissible simple2}, the asymptotic at infinity \fref{cons:eq:asymptotique infini fonction admissible simple} for $H^{i}f$ is then a straightforward consequence of the asymptotic at infinity \fref{cons:eq:asymptotique infini fonction admissible simple} for $f$. Close to the origin, one notices that $H^{(n)}=-\Delta^{(n)}+V$ with $\Delta^{(n)}=\partial_{rr}+\frac{d-1}{r}\partial_r-n(d+n-2)$. If $f$ satisfies \fref{cons:eq:asymptotique origine fonction admissible simple} at the origin, then so does $(\Delta^{(n)})^if$ by a direction computation. As $V$ is smooth at the origin, $(H^{(n)})^i f$ satisfies also \fref{cons:eq:asymptotique origine fonction admissible simple}. Hence $(H^{(n)})^if$ is a simple admissible function of degree $q-2i$.\\

\noindent \textbf{step 2} Action of $(H^{(n)})^{-1}$. We will prove the property for $(H^{(n)})^{-1}f$, and the general result will follow by induction on $i$. Let $u$ denote the inverse by $H^{(n)}$: $u=(H^{(n)})^{-1}f$.

\noindent - \emph{Asymptotic at infinity}. We will prove the equivalent formulation \fref{cons:eq:asymptotique infini fonction admissible simple2} of the asymptotic at infinity \fref{cons:eq:asymptotique infini fonction admissible simple}. From \fref{cons:eq:def derivees adaptees}, \fref{cons:eq:formule An}, \fref{cons:eq:def H-1} and \fref{cons:eq:factorisation}, $u_{[n,j]}=f_{[n,j-2]}$ for $j\geq 2$ so the asymptotic behavior \fref{cons:eq:asymptotique infini fonction admissible simple2} at infinity for the n-adapted derivatives of $u$ are true for $j\geq 2$. Therefore it remains to prove them for $j=0,1$.\\
\noindent \emph{Case $j=1$}. From the definition of the inverse \fref{cons:eq:def H-1} and of the adapted derivatives \fref{cons:eq:def derivees adaptees}, one has:
$$
u_{[n,1]}= \frac{1}{r^{d-1}T^{(n)}_0}\int_0^r f T^{(n)}_0 s^{d-1} ds.
$$
From the asymptotic behaviors \fref{cons:eq:asymptotique infini fonction admissible simple} and \fref{cons:eq:asymptotique T0n} for $f$ and $T^{(n)}_0$ at infinity and the condition $q> \gamma_n-d$, the integral diverges and we get 
\be \la{cons:bd un1}
u_{[n,1]}(r)= O(r^{q+1}) \ \ \text{as} \ r\rightarrow +\infty
\ee
which is the desired asymptotic \fref{cons:eq:asymptotique infini fonction admissible simple2} for $u_{[n,1]}$.

\noindent  \emph{Case $j=0$}. Suppose $\frac{(A^{(n)*})^{-1}f}{T^{(n)}_0}=\frac{u_{[n,1]}}{T^{(n)}_0}$ is integrable on $(0,+\infty)$. In that case:
$$
u=T^{(n)}_0 \int_r^{+\infty} \frac{u_{[n,1]}}{T^{(n)}_0}ds .
$$
If $q>-\gamma_n-2$, then from the integrability of the integrand and \fref{cons:eq:asymptotique T0n} one gets the desired asymptotic $u_{[n,0]}=u =O(r^{-\gamma_n})=O(r^{q+2})$. If $q<-\gamma_n-2$ then from \fref{cons:bd un1} one has $\frac{u_{[n,1]}}{T^{(n)}_0}=O(r^{q+1+\gamma_n})$ and then $\int_r^{+\infty} \frac{u_{[n,1]}}{T^{(n)}_0}ds=O(r^{q+2+\gamma_n})$, from what we get the desired asymptotic $u =O(r^{q+2})$. Suppose now $\frac{u_{[n,1]}}{T^{(n)}_0}$ is not integrable, then we must have $q>-\gamma_n+2$ from \fref{cons:bd un1}. $u$ is then given by:
$$
u=-T^{(n)}_0 \int_0^r \frac{u_{[n,1]}}{T^{(n)}_0}ds
$$
and the integral has asymptotic $O(r^{q+2+\gamma_n})$. We hence get $u=O(r^{q+2})$ at infinity using \fref{cons:eq:asymptotique T0n}. 

\noindent  \emph{Conclusion}. In both cases, we have proven that the asymptotic at infinity \fref{cons:eq:asymptotique infini fonction admissible simple2} holds for $u$.

\noindent - \emph{Asymptotic at the origin}. We have:
$$
u=-T^{(n)}_0 \int_0^{r} \frac{u_{[n,1]}}{T^{(n)}_0}ds +aT^{(n)}_0
$$
where $a=0$ if $\frac{u_{[n,1]}}{T^{(n)}_0}$ is not integrable, and $a=\int_0^{+\infty} \frac{u_{[n,1]}}{T^{(n)}_0}ds$ if it is. From \fref{cons:eq:asymptotique T0n}, $T^{(n)}_0$ satisfies \fref{cons:eq:asymptotique origine fonction admissible simple}. So it remains to prove \fref{cons:eq:asymptotique origine fonction admissible simple} for $-T^{(n)}_0 \int_0^{r} \frac{u_{[n,1]}}{T^{(n)}_0}ds$. We proceed in two steps. First, from \fref{cons:eq:asymptotique origine fonction admissible simple} for $f$ we obtain that for every integers $j,p$:
$$
u_{[n,1]}= \frac{1}{r^{d-1}T^{(n)}_0}\int_0^r f T^{(n)}_0 s^{d-1}ds=\sum_{j=0}^{l} \tilde{c}_j r^{n+1+2j}+ \tilde R_l,
$$
where $\partial_r^k \tilde{R}_l \underset{r\rightarrow 0}{=} O(r^{\text{max}(n+2l+3-k,0)})$ for some coefficients $\tilde{c}_j$ depending on the $c_j$'s and the asymptotic at the origin of $T^n_0$. It then follows that
$$
-T^{(n)}_0 \int_0^{r} \frac{u_{[n,1]}}{T^{(n)}_0}ds = \sum_{j=0}^{l} \hat{c}_j r^{n+2+2j}+ \hat R_l, \ \text{where} \ \partial_r^k \hat R_l \underset{r\rightarrow 0}{=} O(r^{\text{max}(n+2l+4-k,0)})
$$
for some coefficients $\hat{c}_l$. This implies that $u$ satisfies \fref{cons:eq:asymptotique origine fonction admissible simple} at the origin.

\end{proof}

We can now invert the elements in the kernel of $H^{(n)}$ and construct the generalized kernel of this operator.

\begin{lemma}[Generators of the generalized kernel of $H^{(n)}$]  \label{cons:lem:Tni}

Let $n\in \mathbb{N}$, $\gamma_n$, $g'$, $(H^{(n)})^{-1}$ and $T^{(n)}_0$ be defined by \fref{intro:eq:def gamman}, \fref{intro:eq:def g}, Definition \ref{cons:def:Hn-1} and \fref{cons:lem:noyau H}. We denote by $(T^{(n)}_i)_{i\in \mathbb{N}}$ the sequence of profiles given by:
\be \la{cons:eq:def Tni}
T^{(n)}_{i+1}:=-(H^{(n)})^{-1}T^{(n)}_{i}, \ i \in \mathbb{N}.
\ee
Let $(\Theta^{(n)}_i)_{i\in \mathbb{N}}$ be the associated sequence of profiles defined by:
\be \label{cons:eq:def Thetanki}
\Theta^{(n)}_i:=\Lambda T^{(n)}_i-\left(2i+\frac{2}{p-1}-\gamma_n\right)T^{(n)}_i, \ i\in \mathbb{N} .
\ee
Then for each $i\in \mathbb{N}$:
\bea
\la{cons:prop Tnki} &(i)& T^{(n)}_i \  \text{is} \  \text{simple} \ \text{admissible} \ \text{of} \ \text{degree} \ (n,-\gamma_n+2i),\\
\la{cons:prop Thetanki} &(ii)& \Theta^{(n)}_i \  \text{is} \  \text{simple} \ \text{admissible} \ \text{of} \ \text{degree} \ (n,-\gamma_n+2i-g'),
\eea
where simple admissibility is defined in Definition \fref{cons:def:fonctions admissibles simples}.

\end{lemma}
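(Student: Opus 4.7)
Part (i) is a direct induction on $i$ using Lemma~\ref{cons:lem:action H et H-1}(ii). The base case $i=0$ is the content of Lemma~\ref{cons:lem:noyau H}: $T^{(n)}_0$ admits the series expansion at the origin and the asymptotic $T^{(n)}_0=C_n r^{-\gamma_n}+O(r^{-\gamma_n-g})$ at infinity, and both propagate to derivatives by successively differentiating the radial ODE $H^{(n)}T^{(n)}_0=0$. For the inductive step, taking $q=-\gamma_n+2i$, I check the two hypotheses of Lemma~\ref{cons:lem:action H et H-1}(ii): the bound $q>\gamma_n-d$ reduces to $\gamma_n<d/2+i$, which holds since $\gamma_n\leq\gamma=(d-2-\sqrt{\triangle})/2<(d-2)/2$ (strict because $\triangle>0$ for $p>p_{JL}$); and $-\gamma_n-2-q=-2-2i$ is strictly negative, hence not in $2\mathbb{N}$. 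Thus $T^{(n)}_{i+1}=-(H^{(n)})^{-1}T^{(n)}_i$ is simple admissible of degree $(n,-\gamma_n+2i+2)$, closing the induction.

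For part (ii), the key observation is that on radial functions $\Lambda=\tfrac{2}{p-1}+r\partial_r$ acts on a pure monomial as $\Lambda(c r^{-\gamma_n+2i})=c(2i+\tfrac{2}{p-1}-\gamma_n)r^{-\gamma_n+2i}$, so the coefficient $2i+\tfrac{2}{p-1}-\gamma_n$ subtracted in the definition of $\Theta^{(n)}_i$ is precisely what is needed to kill the leading self-similar contribution. To use this, I first sharpen part (i), by induction, into the refined infinity asymptotic
\begin{equation*}
T^{(n)}_i(r) = c_i\, r^{-\gamma_n+2i} + \tilde T^{(n)}_i(r),\qquad \partial_r^k\tilde T^{(n)}_i(r) = O\bigl(r^{-\gamma_n+2i-g'-k}\bigr),\ k\geq 0,
\end{equation*}
for some constants $c_i$. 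The base case is Lemma~\ref{cons:lem:noyau H} with $c_0=C_n$ (using $g\geq g'$). For the inductive step, I apply the explicit formula of Definition~\ref{cons:def:Hn-1}: the integrand $(A^{(n)*})^{-1}T^{(n)}_i/T^{(n)}_0$ behaves like $r^{2i+1}$ at infinity and is not integrable, so one uses the second branch of \fref{cons:eq:def H-1}; plugging in the refined expansions of $T^{(n)}_i$ and $T^{(n)}_0$ and performing the two nested integrations yields the principal coefficient $c_{i+1}=c_i/[(d+2i-2\gamma_n)(2i+2)]$, nonzero by the same bound $\gamma_n<d/2+i$ as in part (i), while the constant of integration only produces a $O(r^{-\gamma_n})$ term that is absorbed into $O(r^{-\gamma_n+2i+2-g'})$ since $2i+2>g'$.

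Granted this refinement, part (ii) is immediate. At the origin, inserting the series $T^{(n)}_i=\sum_j c_j^{(i)} r^{n+2j}+O(r^{n+2l+2})$ and using $\Lambda(r^{n+2j})=(\tfrac{2}{p-1}+n+2j)r^{n+2j}$ yields $\Theta^{(n)}_i=\sum_j c_j^{(i)}(n+2j-2i+\gamma_n) r^{n+2j}+O(r^{n+2l+2})$, of the required form; the analogous statement for derivatives follows by differentiating term by term. At infinity, substituting the refined expansion produces exact cancellation of the $r^{-\gamma_n+2i}$ coefficient, leaving $\Theta^{(n)}_i=O(r^{-\gamma_n+2i-g'})$, and propagation for derivatives follows from the pointwise estimates on $\partial_r^k\tilde T^{(n)}_i$ together with $\Lambda \tilde T^{(n)}_i=\tfrac{2}{p-1}\tilde T^{(n)}_i+r\partial_r\tilde T^{(n)}_i=O(r^{-\gamma_n+2i-g'})$.

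The main obstacle is the refinement step: Lemma~\ref{cons:lem:action H et H-1} by itself only yields the weak bound $T^{(n)}_i=O(r^{-\gamma_n+2i})$, whereas the gain $g'$ in $\Theta^{(n)}_i$ requires first identifying and then cancelling an exact leading coefficient. The bookkeeping through the two nested integrations from Definition~\ref{cons:def:Hn-1} must be done carefully, in particular tracking how the corrections $O(r^{-\gamma_n-g})$ in $T^{(n)}_0$ and the inductive remainder in $T^{(n)}_i$ propagate to $T^{(n)}_{i+1}$ without blowing up, and checking that no resonance kills the denominator $(d+2i-2\gamma_n)(2i+2)$ in the recursion for $c_i$.
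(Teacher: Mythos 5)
Your part (i) is exactly the paper's argument: induction on $i$ via Lemma \ref{cons:lem:action H et H-1}(ii), with the same verification that $-\gamma_n+2i>\gamma_n-d$ and that $-\gamma_n-2-q=-2-2i\notin 2\mathbb N$. For part (ii), however, you take a genuinely different route. The paper never identifies the leading coefficient of $T^{(n)}_i$ at infinity; instead it establishes the commutator identity $H^{(n)}\Lambda=\Lambda H^{(n)}+2H^{(n)}-(2V+y\cdot\nabla V)$ (see \fref{cons:eq:commutateur}), deduces $H^{(n)}\Theta^{(n)}_i=-\Theta^{(n)}_{i-1}-(2V+y\cdot\nabla V)T^{(n)}_i$, and then runs an induction in $i$: the right-hand side is simple admissible of degree $(n,2i-2-\gamma_n-g')$ (using $g'<\alpha$ and \fref{cons:eq:asymptotique V}), so applying $(H^{(n)})^{-1}$ through Lemma \ref{cons:lem:action H et H-1} and removing the two integration constants ($\Gamma^{(n)}$ excluded by regularity at the origin, $T^{(n)}_0$ absorbed since $2i>g'$) gives the gain $g'$ directly, with origin expansion and derivative propagation inherited automatically from the admissible class. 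You instead sharpen (i) into the refined expansion $T^{(n)}_i=c_i r^{-\gamma_n+2i}+O(r^{-\gamma_n+2i-g'})$ by explicit computation in the two nested integrals of Definition \ref{cons:def:Hn-1}, and exploit the fact that the coefficient $2i+\tfrac{2}{p-1}-\gamma_n$ in \fref{cons:eq:def Thetanki} exactly kills the leading monomial; your recursion $c_{i+1}=c_i/[(2i+2)(d+2i-2\gamma_n)]$ is consistent with the indicial equation defining $\gamma_n$, and the non-resonance checks ($d-2\gamma_n>2$, $0<g'<1$, $2i+2>g'$) do hold, so the scheme is sound. The trade-off is as you identify yourself: your route yields more explicit information (the exact leading coefficients, making transparent why $\Theta^{(n)}_i$ decays faster, in the spirit of \fref{intro:id LambdaTnki}), but requires redoing by hand the bookkeeping of remainders, integration constants and derivative propagation that the paper's commutator argument obtains for free by staying inside the admissible class and reusing Lemma \ref{cons:lem:action H et H-1}; note also that the paper's treatment of the base case $\Theta^{(n)}_0$ is essentially your leading-order cancellation, so the genuinely new ingredient on your side is the inductive refinement of the infinity asymptotics rather than the inductive use of the commutator.
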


\begin{proof}[Proof of Lemma \ref{cons:lem:Tni}] 

\textbf{step 1} Admissibility of $T^{(n)}_i$. From the asymptotic behaviors \fref{cons:eq:asymptotique T0n} at infinity and at the origin, $T^{(n)}_0$ is simple admissible of degree $(n,-\gamma_n)$ in the sense of Definition \fref{cons:def:fonctions admissibles simples}. $-\gamma_n>\gamma_n-d$ since $-2\gamma_n+d\geq -2\gamma_0+d=2+\sqrt{\triangle}>0$ from \fref{intro:eq:def gamma} and since $(\gamma_n)_{n\in \mathbb N}$ is decreasing from \fref{intro:eq:def gamman}. One has also $-\gamma_n-2-(-\gamma_n)=-2\notin 2 \mathbb N$. Therefore one can apply Lemma \ref{cons:lem:action H et H-1}: for all $i\in \mathbb N$, $T_i^{(n)}$ given by \fref{cons:eq:def Tni} is an admissible profile of degree $(n,-\gamma_n+2i)$.\\

\noindent \textbf{Step 2} Admissibility of $\Theta_i^{(n)}$. We start by computing the following commutator relations from \fref{intro:eq:def Hn}, \fref{cons:eq:def W} and \fref{cons:eq:def A}:
\be \la{cons:eq:commutateur}
\begin{array}{l l}
A^{(n)}\Lambda=\Lambda A^{(n)}+A^{(n)}-(W^{(n)}+y\partial_yW^{(n)}),\\
H^{(n)}\Lambda =\Lambda H^{(n)} +2H^{(n)} -(2V+y.\nabla V). 
\end{array}
\ee
We now proceed by induction. From the previous equation, and the asymptotic behaviors \fref{cons:eq:asymptotique T0n}, \fref{cons:eq:asymptotique V} and \fref{cons:eq:asymptotique Wn} of the functions $T^{(n)}_0$, $V$ and $W^{(n)}$, we get that $\Theta^{(n)}_0$ is simple admissible of degree $(n,-\gamma_n-g')$. Now let $i\geq 1$ and suppose that the property $(ii)$ is true for $i-1$. Using the previous formula and \fref{cons:eq:def Thetanki} we obtain:
$$
H^{(n)} \Theta_i^n=-\Theta_{i-1}^{(n)}-(2V+y.\nabla V)T_i^{(n)} .
$$
The asymptotic at infinity \fref{cons:eq:asymptotique V} of $V$ yields the decay $2V+y.\nabla V=( y^{-2-\alpha} )$. This, as $T_i^{(n)}$ is simple admissible of degree $(n,2i-\gamma_n)$ and from the induction hypothesis, gives that $H^{(n)} \Theta^{(n)}_i$ is simple admissible of degree $(n,2i-2-\gamma_n-g')$ because $g'<\alpha$ from \fref{intro:eq:def g}. One has $2i-2-\gamma_n-g'>\gamma_n-d$ because
$$
2i-2-2\gamma_n-g'+d\geq -2\gamma_0-g'+d=2+\sqrt{\triangle}-g'>0
$$
as $0<g'<1$, $i\geq 1$, $(\gamma_n)_{n\in \mathbb N}$ is decreasing from \fref{intro:eq:def gamman} and from \fref{intro:eq:def gamma}. Similarly $ -\gamma_n-2-(2i-2-\gamma_n-g')=-2i+g'\notin 2\mathbb N $. Therefore we can apply Lemma \fref{cons:lem:action H et H-1} and obtain that $(H^{(n)})^{-1}H^{(n)}\Theta_{i}^{(n)}$ is of degree $(n,2i-\gamma_n-g')$. From Lemma \fref{cons:lem:noyau H} one has $(H^{(n)})^{-1}H^{(n)}\Theta_i^{(n)}=\Theta_i^{(n)}+aT^{(n)}_0+b\Gamma^{(n)} $, for two integration constants $a,b\in \mathbb{R}$. At the origin $\Gamma^{(n)}$ is singular from \fref{cons:eq:asymptotique T0n}, hence $b=0$. As $T^{(n)}_0$ is of degree $(n,-\gamma_n)$ with $-\gamma_n+2i-g'>-\gamma_n$ (because $i\geq 1$) we get that $\Theta_i^{(n)}$ is of degree $(n,2i-\gamma_n-g')$.

\end{proof}

%%%%%%%%%%%%%%%%%%%%%%%%%%%%%%%%%%%%%%%%%%%%%%
%%%%%%%%%%%%%%%%%%%%%%%%%%%%%%%%%%%%%%%%%%%%%%

\subsection{Inversion of $H$ on non radial functions}

The Definition \ref{cons:def:Hn-1} of the inverse of $H^{(n)}$ naturally extends to give an inverse of $H$ by inverting separately the components onto each spherical harmonics. There will be no problem when summing as for the purpose of the present paper one can restrict to the following class of functions that are located on a finite number of spherical harmonics.

\begin{definition}[Admissible functions] \label{cons:eq:fonctions admissibles}

Let $f:\mathbb{R}^d\rightarrow \mathbb{R}$ be a smooth function, with decomposition $f(y)=\sum_{n,k} f^{(n,k)}(|y|)Y^{(n,k)}\left(\frac{y}{|y|} \right)$, and $q$ be a real number. We say that $f$ is admissible of degree $q$ if there is only a finite number of couples $(n,k)$ such that $f^{(n,k)}\neq 0$, and that for every such couple $f^{(n,k)}$ is a simple admissible function of degree $(n,q)$ in the sense of Definition \ref{cons:def:fonctions admissibles simples}.

\end{definition}

For $f=\sum_{n,k} f^{(n,k)}(|y|)Y^{(n,k)}\left(\frac{y}{|y|} \right)$ an admissible function we define its inverse by $H$ by (the sum being finite):
\be \la{cons:eq:inversion H}
(H^{(-1)}f) (y):=\sum_{n,k} [(H^{(n)})^{-1}f^{(n,k)}(|y|)]Y^{(n,k)}\left(\frac{y}{|y|} \right)
\ee
where $(H^{(n)})^{-1}$ is defined by Definition \ref{cons:def:Hn-1}. For $n$, $k$ and $i$ three integers with $1\leq k\leq k(n)$, we define the profile $T^{(n,k)}_i:\mathbb R^d\rightarrow \mathbb R$ as:
\be \label{cons:eq:def Tnki}
T_i^{(n,k)}(y)=T^{(n)}_i(|y|)Y^{(n,k)}\left(\frac{y}{|y|} \right)
\ee
where the radial function $T_i^{(n)}$ is defined by \fref{cons:eq:def Tni}. From Lemma \ref{cons:lem:Tni}, $T^{(n,k)}_i$ is an admissible function of degree $(-\gamma_n+2i)$ in the sense of Definition \ref{cons:eq:fonctions admissibles}. The class of admissible functions has some structural properties: it is stable under summation, multiplication and differentiation, and its elements are smooth with an explicit decay at infinity. This is the subject of the next lemma.

\begin{lemma}[Properties of admissible functions] \label{cons:lem:proprietes fonctions admissibles}

Let $f$ and $g$ be two admissible functions of degree $q$ and $q'$ in the sense of Definition \ref{cons:eq:fonctions admissibles}, and $\mu\in \mathbb N^d$. Then:
\begin{itemize}
\item[(i)] $f$ is smooth.
\item[(ii)] $fg$ is admissible of degree $q+q'$.
\item[(iii)] $\partial^{\mu} f$ is admissible of degree $q-|\mu|$.
\item[(iv)] There exists a constant $C(f,\mu)$ such that for all $y$ with $|y|\geq 1$:
$$
|\partial^{\mu} f(y)|\leq C(f,\mu) |y|^{q-|\mu|}.
$$
\end{itemize}

\end{lemma}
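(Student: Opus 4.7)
The plan is to reduce each assertion to the corresponding scalar statement for the radial coefficients $f^{(n,k)}$, using that by Definition \ref{cons:eq:fonctions admissibles} the function $f$ is a \emph{finite} sum $f = \sum_{(n,k)\in S} f^{(n,k)}(|y|) Y^{(n,k)}(y/|y|)$ indexed by a finite set $S$. For (i), smoothness away from $y=0$ is clear. Near the origin, truncate each component using \fref{cons:eq:asymptotique origine fonction admissible simple}: each leading term is $|y|^{n+2j} Y^{(n,k)}(y/|y|) = |y|^{2j} P^{(n,k)}(y)$, where $P^{(n,k)}(y) := |y|^n Y^{(n,k)}(y/|y|)$ is the solid harmonic of degree $n$, hence a genuine polynomial in $y$. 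The remainder $R_l(|y|) Y^{(n,k)}(y/|y|)$ has $n+2l+1$ derivatives vanishing at the origin by the derivative propagation in Definition \ref{cons:def:fonctions admissibles simples}, so the sum is $C^k$ for $k$ as large as one wishes by choosing $l$ large.

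For (ii), one uses that the product of two spherical harmonics admits a finite decomposition $Y^{(n,k)} Y^{(n',k')} = \sum_{n'',k''} c^{(n'',k'')}_{(n,k),(n',k')} Y^{(n'',k'')}$ where $n''$ ranges over $\{|n-n'|, |n-n'|+2, \ldots, n+n'\}$. Hence $fg$ remains a finite linear combination of spherical harmonics. For each resulting $(n'',k'')$, the radial coefficient is a finite sum of products $f^{(n,k)}(r) g^{(n',k')}(r)$: each such product is $O(r^{q+q'})$ at infinity with derivatives propagating by Leibniz, and at the origin admits the expansion $\sum c_j r^{n+n'+2j} + O(r^{n+n'+2l+2})$. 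Since $n''$ and $n+n'$ have the same parity with $n'' \leq n+n'$, rewriting $n+n'+2j = n''+2(j+\tfrac{n+n'-n''}{2})$ realizes this as a valid simple admissible expansion of degree $(n'', q+q')$.

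For (iii), iteration reduces matters to $|\mu|=1$, say $\partial_{x_i}$. Factor $f^{(n,k)}(|y|) Y^{(n,k)}(y/|y|) = \phi(|y|) P^{(n,k)}(y)$ with $\phi(r) := r^{-n} f^{(n,k)}(r)$ smooth at $0$ with an even-power Taylor expansion and $O(r^{q-n})$ at infinity, and compute
\begin{equation*}
\partial_{x_i} \bigl( \phi(|y|) P^{(n,k)}(y) \bigr) = \phi(|y|) \partial_{x_i} P^{(n,k)}(y) + \frac{\phi'(|y|)}{|y|} x_i P^{(n,k)}(y).
\end{equation*}
Now $\partial_{x_i} P^{(n,k)}$ is solid harmonic of degree $n-1$, and the standard harmonic decomposition gives $x_i P^{(n,k)}(y) = Q_{n+1}(y) + |y|^2 Q_{n-1}(y)$ with $Q_{n\pm 1}$ solid harmonic of degree $n \pm 1$. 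Reverting from solid to spherical harmonics, one finds a finite sum over spherical harmonics of degree $n+1$ and $n-1$, whose radial coefficients are linear combinations of $|y|^{n-1} \phi(|y|)$ and $|y|^n \phi'(|y|)$. Direct inspection of their Taylor expansions at $0$ (using the even-power expansion of $\phi$ and its derivative) and of the $O(r^{q-1})$ decay at infinity shows these are simple admissible of the appropriate degrees, so $\partial_{x_i} f$ is admissible of degree $q-1$.

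Finally, (iv) follows from (iii): each of the finitely many simple admissible radial components of $\partial^\mu f$ satisfies $|f^{(n,k)}(r)| \leq C\, r^{q-|\mu|}$ on $r \geq 1$ by combining the asymptotic $O(r^{q-|\mu|})$ at infinity with continuity on a compact collar, which yields the pointwise bound. The main bookkeeping difficulty lies in (ii), where one must carefully match parities and minimal vanishing orders at the origin with Definition \ref{cons:def:fonctions admissibles simples}, and in (iii), where the solid harmonic decomposition $x_i P^{(n,k)} = Q_{n+1} + |y|^2 Q_{n-1}$ is the essential ingredient; the remaining verifications are direct.
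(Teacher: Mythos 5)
Your proposal is correct, and its overall strategy is the same as the paper's: reduce to a single spherical harmonic, decompose products and derivatives over finitely many spherical harmonics of nearby degree and matching parity, and check for each resulting radial coefficient the even-power expansion at the origin and the $O(r^{\cdot})$ decay (with derivative propagation) required by Definition \ref{cons:def:fonctions admissibles simples}; parts (ii) and (iv) coincide with the paper essentially verbatim. The only genuine deviations are local: for (i) the paper proves smoothness at the origin by elliptic regularity, noting that $(-\Delta)^i f=((-\Delta^{(n)})^if^{(n,k)})Y^{(n,k)}$ stays bounded near $0$ for every $i$, whereas you argue directly by truncating the expansion into solid harmonics $|y|^{2j}P^{(n,k)}(y)$ plus a remainder of high vanishing order — more elementary and self-contained, at the price of tracking the vanishing order of the remainder and its derivatives; for (iii) the paper writes the derivative via Euler's identity $y\cdot\nabla Y^{(n,k)}=nY^{(n,k)}$ and re-expands $\partial_{y_i}Y^{(n,k)}$ on the sphere, while you factor $f^{(n,k)}(|y|)Y^{(n,k)}(y/|y|)=\phi(|y|)P^{(n,k)}(y)$ and use $x_iP_n=Q_{n+1}+|y|^2Q_{n-1}$, which packages the same spherical-harmonic algebra slightly more cleanly (and makes transparent why the degree-$(n+1)$ coefficient still starts at order $r^{n+1}$ at the origin, since the $j=0$ term of $r^n\phi'$ drops out). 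Both routes prove the same statement with comparable effort; neither requires more than the standard decomposition of homogeneous polynomials into harmonics.
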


\begin{proof}[Proof of Lemma \ref{cons:lem:proprietes fonctions admissibles}]

From the Definition \ref{cons:eq:fonctions admissibles}, $f=\sum_{n,k}f^{(n,k)}(|y|)Y^{(n,k)}\left(\frac{y}{|y|} \right)$ and $g=\sum_{n,k}g^{(n,k)}(|y|)Y^{(n,k)}\left(\frac{y}{|y|} \right)$ and both sums involve finitely many non zero terms. Therefore, without loss of generality, we will assume that $f$ and $g$ are located on only one spherical harmonics: $f=f^{(n,k)}Y^{(n,k)}$ and $g=g^{(n',k')}Y^{(n',k')}$, for $f^{(n,k)}$ and $g^{(n',k')}$ simple admissible of degree $(n,q)$ and $(n',q')$ in the sense of Definition \ref{cons:def:fonctions admissibles simples}. The general result will follow by a finite summation.

\noindent \textbf{Proof of (i)}. $y\mapsto f^{(n,k)}(|y|)$ is smooth outside the origin since $f$ is smooth, and $y\mapsto Y^{(n,k)}\left(\frac{y}{|y|}\right)$ is also smooth outside the origin, hence $f$ is smooth outside the origin. The Laplacian on spherical harmonics is:
$$
(-\Delta)^if=(-\Delta)^i\left(f^{(n,k)}(|y|)Y^{(n,k)}\left(\frac{y}{|y|}\right)\right)= ((-\Delta^{(n)})^if^{(n,k)})(|y|)Y^{(n,k)}
$$
where $-\Delta^{(n)}=-\partial_{rr}-\frac{d-1}{r}\partial r+n(d+n-2)$. From the expansion of $f^{(n,k)}$ \fref{cons:eq:asymptotique origine fonction admissible simple}, $(-\Delta^{(n)})^if^{(n,k)}$ is bounded at the origin for each $i\in \mathbb N$. Therefore $(-\Delta)^if$ is bounded at the origin for each $i$ and $f$ is smooth at the origin from elliptic regularity.\\

\noindent \textbf{Proof of (ii)}. We treat the case where $n+n'$ is even, and the case $n+n'$ odd can be treated with verbatim the same arguments. As the product of the two spherical harmonics $Y^{(n,k)}Y^{n,k'}$ decomposes onto spherical harmonics of degree less than $n+n'$ with the same parity than $n+n'$, the product $fg$ can be written:
$$
fg=\sum_{0\leq \tilde{n}\leq n+n', \ \tilde{n} \ \text{even}, \ 1\leq \tilde{k}\leq k(\tilde{n})} a_{n,k,n',k',\tilde{n},\tilde{k}} f^{(n,k)}g^{(n',k')}Y^{(\tilde{n},\tilde{k})}
$$
with $a_{n,k,n',k',\tilde{n},\tilde{k}}$ some fixed coefficients. Now fix $\tilde{n}$ and $\tilde{k}$ in the sum, one has $n+n'=\tilde n+2i$ for some $i\in \mathbb N$. Using the Leibniz rule, as $\partial_r^j f^{(n,k)}=O(r^{q-j})$ and $\partial_r^j g^{(n,k)}=O(r^{q'-j})$ at infinity, we get that $\partial_r^j(f^{(n,k)}g^{(n',k')})=O(r^{q+q'-j})$ as $y\rightarrow +\infty$, which proves that $f^{(n,k)}g^{(n',k')}$ satisfies the asymptotic at infinity \fref{cons:eq:asymptotique infini fonction admissible simple} of a simple admissible function of degree $(\tilde n,q+q')$. Close to the origin, the two expansions \fref{cons:eq:asymptotique origine fonction admissible simple} for $f^{(n,k)}$ and $g^{(n',k')}$, starting at $r^n$ and $r^{n'}$ respectively, imply the same expansion \fref{cons:eq:asymptotique origine fonction admissible simple} starting at $y^{n+n'}$ for the product $f^{(n,k)}g^{(n',k')}$. As $n+n'=\tilde n+2i$, $f^{(n,k)}g^{(n,k)}$ satisfies the expansion at the origin  \fref{cons:eq:asymptotique origine fonction admissible simple} of a simple admissible function of degree $(\tilde n,q+q')$. Therefore $f^{(n,k)}g^{(n,k)}$ is simple admissible of degree $(\tilde n,q+q')$ and thus $fg$ is simple admissible of degree $q+q'$.\\

\noindent \textbf{Proof of (iii)}. We treat the case where $n$ is even, and the case $n$ odd can be treated with exactly the same reasoning. Let $1\leq i \leq d$, we just have to prove that $\partial_{y_i}f$ is admissible of degree $q-1$ and the result for higher order derivatives will follow by induction. We recall that $Y^{(n,k)}$ is the restriction of an homogenous harmonic polynomial of degree $n$ to the sphere. We will still denote by $Y^{(n,k)}(y)$ this polynomial extended to the whole space $\mathbb{R}^d$ and they are related by $Y^{(n,k)}(y)=|y|^nY^{(n,k)}\left( \frac{y}{|y|}\right)$. This homogeneity implies $y.\nabla (Y^{(n,k)})(y)=nY^{(n,k)}(y)$ and leads to the identity:
\be \label{cons:eq:intermediaire admissible}
\ba{r c l}
\partial_{y_i} \left[f^{(n,k)}(|y|)Y^{(n,k)}\left( \frac{y}{|y|}\right)\right] & = & \left(\partial_r f^{(n,k)}(|y|)-n\frac{f(|y|)}{|y|}\right)\frac{y_i}{|y|}Y^{(n,k)}\left( \frac{y}{|y|}\right)\\
&&+\frac{f(|y|)}{|y|}\partial_{y_i}Y^{(n,k)}\left(\frac{y}{|y|}\right).
\ea
\ee
One has now to prove that the two terms in the right hand side are admissible of degree $q-1$. We only show it for the last term, the proof being the same for the first one. As $\partial_{y_i}Y^{(n,k)}\left( \frac{y}{|y|}\right)$ is an homogeneous polynomial of degree $n-1$ restricted to the sphere, it can be written as a finite sum of spherical harmonics of odd degrees (because $n$ is even) less than $n-1$ and this gives:
$$
\frac{f}{|y|}\partial_{y_i}Y^{(n,k)}\left(\frac{y}{|y|}\right)=\sum_{1\leq n'\leq n-1, \ n' \ \text{odd}, \ 1\leq k\leq k(n')} a_{i,n,k,n',k'} \frac{f}{|y|} Y^{(n',k')}\left(\frac{y}{|y|}\right)
$$
for some coefficients $a_{i,n,k,n',k'}$. Now fix $n',k'$ in the sum. At infinity $a_{i,n,k,n',k'} \frac{f(|y|)}{|y|}$ satisfies the asymptotic behavior \fref{cons:eq:asymptotique infini fonction admissible simple} of a simple admissible function of degree $(n',q-1)$. Close to the origin, one has from \fref{cons:eq:asymptotique origine fonction admissible simple}, the fact that $n'+2j= n-1$ for some $j\in \mathbb N$, that for any $i\in \mathbb N$:
$$
a_{i,n,k,n',k'} \frac{f(r)}{r} = \sum_{l=0}^{i} \tilde{c}_l r^{n-1+2l}+O(r^{n-1+2i+2})=\sum_{l=0}^{i} \hat{c}_l r^{n'+2j+2l}+O(r^{n'+2j+2i+2}),
$$
which is the asymptotic behavior \fref{cons:eq:asymptotique origine fonction admissible simple} of a simple admissible function of degree $(n',q-1)$ close to the origin. Therefore, $a_{i,n,k,n',k'} \frac{f(r)}{r}$ is a simple admissible function of degree $(n',q-1)$. Thus $\frac{f}{|y|}\partial_{y_i}Y^{(n,k)}\left(\frac{y}{|y|}\right)$ is an admissible function of degree $(q-1)$. The same reasoning works for the first term in the right hand side of \fref{cons:eq:intermediaire admissible}, and therefore $\partial_{y_i} \left[f^{(n,k)}(|y|)Y^{(n,k)}\left( \frac{y}{|y|}\right)\right]$ is admissible of degree $q-1$.\\

\noindent \textbf{Proof of (iv)}. We just showed in the last step that $\partial^{\mu}f$ is admissible of degree $q-|\mu|$ for all $\mu\in \mathbb N^d$, we then only have to prove (iv) for the case $\mu=(0,...,0)$. This can be showed via the following brute force bound for $|y|\geq1$:
$$
|f(y)|= \left|f^{(n,k)}(|y|)Y^{(n,k)}\left( \frac{y}{|y|}\right)\right| \leq \parallel Y^{(n,k)} \parallel_{L^{\infty}} |f^{(n,k)}(|y|)|\leq C|y|^q
$$
from \fref{cons:eq:asymptotique infini fonction admissible simple} since $f$ is a simple admissible function of degree $(n,q)$.

\end{proof}

The next Lemma extends Lemma \ref{cons:lem:action H et H-1} to admissible functions. We do not give a proof, as it is a direct consequence of the latter.

\begin{lemma}[Action of $H$ on admissible functions] \label{cons:lem:inversion H}

Let $f$ be an admissible function in the sense of Definition \ref{cons:eq:fonctions admissibles} written as $f(y)=\sum_{n,k} f^{(n,k)}(|y|)Y^{(n,k)}\left(\frac{y}{|y|} \right)$, of degree $q$, with $q> \gamma_n-d$. Assume that for all  $n\in \mathbb N$ such that there exists $k$, $1\leq k\leq k(n)$ with $f^{(n,k)}\neq 0$ $q$ satisfies $-q-\gamma_n-2\not\in 2\mathbb{N}$. Then for all integer $i\in \mathbb{N}$, recalling that $H^{-1}f$ is defined by \fref{cons:eq:inversion H}:
\begin{itemize}
\item[(i)] $H^i f$ is admissible of degree $q-2i$.
\item[(ii)] $H^{-i}f$ is admissible of degree $q+2i$.
\end{itemize}

\end{lemma}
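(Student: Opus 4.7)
The plan is to reduce the statement to the already-established radial result, Lemma \ref{cons:lem:action H et H-1}, by working spherical-harmonic-by-spherical-harmonic. Since $f$ is admissible of degree $q$ in the sense of Definition \ref{cons:eq:fonctions admissibles}, its decomposition $f(y)=\sum_{n,k}f^{(n,k)}(|y|)Y^{(n,k)}\bigl(\tfrac{y}{|y|}\bigr)$ involves only finitely many non-zero components, each $f^{(n,k)}$ being a simple admissible function of degree $(n,q)$. This finiteness will be preserved under the operations $H^{\pm i}$, so there is no convergence issue and the entire argument reduces to treating each component separately.

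First I would recall the key commutation identity \fref{intro:id Hn}, namely $H(g(|y|)Y^{(n,k)}(y/|y|)) = (H^{(n)}g)(|y|)Y^{(n,k)}(y/|y|)$, together with its straightforward analog for $H^{-1}$ via the definition \fref{cons:eq:inversion H}. Iterating yields
\[
H^{\pm i}f \;=\; \sum_{n,k} \bigl[(H^{(n)})^{\pm i} f^{(n,k)}\bigr](|y|)\, Y^{(n,k)}\!\left(\tfrac{y}{|y|}\right),
\]
so the claim reduces to showing that each summand on the right is simple admissible of the appropriate degree.

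Now I would verify that the hypotheses of Lemma \ref{cons:lem:action H et H-1} are met for every nontrivial component. For each $n$ with $f^{(n,k)}\not\equiv 0$ we are given $q>\gamma_n-d$ and $-q-\gamma_n-2\notin 2\mathbb{N}$, which are exactly the two assumptions required to apply Lemma \ref{cons:lem:action H et H-1} to the simple admissible function $f^{(n,k)}$ of degree $(n,q)$. Thus Lemma \ref{cons:lem:action H et H-1}(i) gives that $(H^{(n)})^i f^{(n,k)}$ is simple admissible of degree $(n,q-2i)$ for every $i\in\mathbb{N}$, and Lemma \ref{cons:lem:action H et H-1}(ii) gives that $(H^{(n)})^{-i} f^{(n,k)}$ is simple admissible of degree $(n,q+2i)$. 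Summing the finitely many such contributions and invoking Definition \ref{cons:eq:fonctions admissibles} yields the claims (i) and (ii).

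The only genuine point of care is the non-resonance condition for the inverse direction: one must check that after inverting we do not pick up a forbidden power of $r$, but this is exactly encoded by the assumption $-q-\gamma_n-2\notin 2\mathbb{N}$ imposed for every relevant $n$, so no obstruction arises. I do not expect a main obstacle here—the lemma is essentially bookkeeping on top of Lemma \ref{cons:lem:action H et H-1}, whose content (the integrability thresholds and the propagation of expansions at the origin) does all the analytical work.
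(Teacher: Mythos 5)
Your proposal is correct and follows exactly the route the paper intends: the paper omits the proof, stating the lemma is a direct consequence of Lemma \ref{cons:lem:action H et H-1} applied componentwise on the (finitely many) spherical harmonics via \fref{intro:id Hn} and \fref{cons:eq:inversion H}, which is precisely your argument.
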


%%%%%%%%%%%%%%%%%%%%%%%%%%%%%%%%%%%%%%%%%%%%%%
%%%%%%%%%%%%%%%%%%%%%%%%%%%%%%%%%%%%%%%%%%%%%%

\subsection{Homogeneous functions}

The approximate blow up profile we will build in the following subsection will look like $Q+\sum b^{(n,k)}_iT_i^{(n,k)}$ for some coefficients $b_i^{(n,k)}$ ($T_i^{(n,k)}$ being defined in \fref{cons:eq:def Tnki}). The nonlinearity in the semilinear heat equation \fref{eq:NLH} will then produce terms that will be products of the profiles $T_i^{(n,k)}$ and coefficients $b_i^{(n,k)}$. Such non-linear terms are admissible functions multiplied by monomials of the coefficients $b_i^{(n,k)}$. The set of triples $(n,k,i)$ for wich we will make a perturbation along $T_i^{(n,k)}$ is $\mathcal I$, defined in \fref{intro:eq:def mathcalI}. Hence the vector $b$ representing the perturbation will be:
\be \label{cons:eq:def b}
b=(b_i^{(n,k)})_{(n,k,i)\in \mathcal I}=(b_1^{(0,1)},...,b_L^{(0,1)},b_1^{(1,1)},...,b_{L_1}^{(1,1)},...,b_0^{(n_0,k(n_0))},...,b_{L_{n_0}}^{(n_0,k(n_0))})
\ee
We will then represent a monomial in the coefficients $b_i^{(n,k)}$ by a tuple of $\# \mathcal I$ integers:
$$
J=(J_i^{(n,k)})_{(n,k,i)\in \mathcal I}=(J_1^{(0,1)},...,J_L^{(0,1)},J_1^{(1,1)},...,J_{L_1}^{(1,1)},...,J_0^{(n_0,k(n_0))},...,J_{L_{n_0}}^{(n_0,k(n_0))})
$$
through the formula:
\be \la{cons:id bJ}
b^J:=(b_1^{(0,1)})^{J_1^{(0,1)}}\times ...\times(b_{L_{n_0}}^{(n_0,k(n_0))})^{J_{L_{n_0}}^{(n_0,k(n_0))}}
\ee
We associate three different lengths to $J$ for the analysis. The first one, $|J|:=\sum J_i^{(n,k)}$, represents the number of parameters $b_i^{(n,k)}$ that are multiplied in the above formula, counted with multiplicity, i.e. the standard degree of $b^J$. In the analysis the coefficients $b_i^{(nk)}$ will have the size $|b_i^{(n,k)}|\lesssim |b_1^{(0,1)}|^{\frac{\gamma-\gamma_n}{2}+i}$. The second length, $|J|_2:=\sum_{n,k,i} (\frac{\gamma-\gamma_n}{2}+i)J^{(n,k)}_i$ is tailor made to produce the following identity if these latter bounds hold:
$$
|b^J|\lesssim (b_1^{(0,1)})^{|J|_2},
$$
i.e. $|J|_2$ encodes the "size" of the real number $b^J$. For the construction of the approximate blow up profile, we will invert several times some elliptic equations, and the $i$-th inversion will be related to the following third length, $|J|_3:=\sum_{i=1}^{L} iJ_i^{(0,1)}+\sum_{1\leq i\leq L_1, \ 1\leq k \leq d} iJ_i^{(1,k)}+ \sum_{(n,k,i)\in \mathcal I, \ 2\leq n} (i+1)J_i^{(n,k)}$. To track information about of the non-linear terms generated by the semilinear heat equation \fref{eq:NLH} we eventually introduce the class of homogeneous functions.

\begin{definition}[Homogeneous functions] \label{cons:def:fonctions homogenes}

Let $b$ denote a $\# \mathcal I$-tuple under the form \fref{cons:eq:def b}, $m\in \mathbb N$ and $q\in \mathbb R$. We recall that $|J|_2$ and $|J|_3$ are defined by \fref{cons:eq:def J2} \fref{cons:eq:def J3} and $b^J$ is given by \fref{cons:id bJ}. We say that a function $S:\mathbb R^{\mathcal I}\times \mathbb R^d\rightarrow \mathbb R$ is homogeneous of degree $(m,q)$ if it can be written as a finite sum:
$$
S(b,y)=\sum_{J\in \mathcal{J}} b^J S_J(y), 
$$
$\# \mathcal{J}<+\infty$, where for each tuple $J\in \mathcal{J}$, one has that $|J|_3=m$ and that the function $S_J$ is admissible of degree $2|J|_2+q$ in the sense of Definition \ref{cons:eq:fonctions admissibles}.

\end{definition}

As a direct consequence of the Lemma \ref{cons:lem:proprietes fonctions admissibles}, and so we do not write here the proof, we obtain the following properties for homogeneous functions.

\begin{lemma}[Calculus on homogeneous functions] \label{cons:lem:proprietes fonctions homogenes}

Let $S$ and $S'$ be two homogeneous functions of degree $(m,q)$ and $(m',q')$ in the sense of Definition \ref{cons:def:fonctions homogenes}, and $\mu\in \mathbb N^d$. Then:
\begin{itemize}
\item[(i)] $\partial^{\mu} S$ is homogeneous of degree $(m,q-|\mu|)$.
\item[(ii)] $SS'$ is homogeneous of degree $(m+m',q+q')$.
\item[(iii)] If, writing $S=\sum_{J\in \mathcal{J}} b^J \sum_{n,k} S_{J}^{(n,k)}Y^{(n,k)}$, one has that $2|J|_2+q> \gamma_n-d$ and $-2|J|_2-q-\gamma_n-2\not\in 2\mathbb{N}$ for all $n,J$ such that there exists $k$, $1\leq k\leq k(n)$ with $S^{(n,k)}_J\neq 0$, then for all $i\in \mathbb{N}$, $H^{-i}(S)$ (given by \fref{cons:eq:inversion H}) is homogeneous of degree $(m,q+2i)$.
\end{itemize}

\end{lemma}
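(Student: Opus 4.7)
The plan is to reduce each claim to its counterpart in Lemma \ref{cons:lem:proprietes fonctions admissibles} (for (i) and (ii)) and Lemma \ref{cons:lem:inversion H} (for (iii)), by acting term-by-term on the finite sum $S(b,y)=\sum_{J\in\mathcal J}b^J S_J(y)$ that defines a homogeneous function. The two key bookkeeping facts I will use throughout are that the maps $J\mapsto|J|_2$ and $J\mapsto|J|_3$, defined by \fref{cons:eq:def J2}--\fref{cons:eq:def J3}, are additive (linear) functionals of the tuple $J$, so that $|J+J'|_2=|J|_2+|J'|_2$ and $|J+J'|_3=|J|_3+|J'|_3$. This additivity is the only non-trivial combinatorial ingredient.

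For (i), writing $\partial^\mu S=\sum_{J\in\mathcal J}b^J\partial^\mu S_J$ keeps the index set $\mathcal J$ unchanged and in particular preserves the condition $|J|_3=m$. Each $\partial^\mu S_J$ is admissible of degree $2|J|_2+q-|\mu|$ by Lemma \ref{cons:lem:proprietes fonctions admissibles}(iii), which is exactly the requirement to conclude that $\partial^\mu S$ is homogeneous of degree $(m,q-|\mu|)$. For (ii), expanding $SS'=\sum_{J,J'}b^{J+J'}\,S_J S'_{J'}$ and re-indexing by the new tuple $K=J+J'$ (so that the index set becomes a finite union of products $\mathcal J\times\mathcal J'$), Lemma \ref{cons:lem:proprietes fonctions admissibles}(ii) gives that $S_JS'_{J'}$ is admissible of degree $2|J|_2+q+2|J'|_2+q'=2|K|_2+(q+q')$, while additivity of $|\cdot|_3$ yields $|K|_3=m+m'$.

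For (iii), the computation is $H^{-i}S=\sum_J b^J\,H^{-i}S_J$, so again the index set and the $|J|_3=m$ constraint are untouched. The hypotheses $2|J|_2+q>\gamma_n-d$ and $-2|J|_2-q-\gamma_n-2\notin 2\mathbb N$, imposed exactly on those pairs $(n,J)$ for which $S_J^{(n,k)}\neq 0$, are the precise conditions needed to apply Lemma \ref{cons:lem:inversion H}(ii) to each $S_J$. That lemma then tells us $H^{-i}S_J$ is admissible of degree $2|J|_2+q+2i$, matching the target degree $2|J|_2+(q+2i)$.

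The only step where there is anything to watch is (ii): one must verify that the finite sum $SS'=\sum_{K}b^K\big(\sum_{J+J'=K}S_JS'_{J'}\big)$ satisfies the admissibility requirement for each coefficient function, which is immediate because a finite sum of admissible functions of the same degree is again admissible of that degree. Beyond this mild repackaging, no obstacle arises — the whole lemma is essentially a transfer of the previously established calculus on admissible functions to the homogeneous framework via the linearity of the length functionals $|\cdot|_2$ and $|\cdot|_3$, and the hypotheses of (iii) are tailored precisely so that Lemma \ref{cons:lem:inversion H} applies to every term.
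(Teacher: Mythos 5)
Your proposal is correct and matches the paper's intent exactly: the paper omits the proof of this lemma, stating it is a direct consequence of Lemma \ref{cons:lem:proprietes fonctions admissibles} (and, for (iii), Lemma \ref{cons:lem:inversion H}), which is precisely the term-by-term reduction you carry out using the additivity of $|\cdot|_2$ and $|\cdot|_3$. Your remark that finite sums of admissible functions of the same degree remain admissible is the only detail worth recording, and you handle it correctly.
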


%%%%%%%%%%%%%%%%%%%%%%%%%%%%%%%%%%%%%%%%%%%%%%
%%%%%%%%%%%%%%%%%%%%%%%%%%%%%%%%%%%%%%%%%%%%%%
%%%%%%%%%%%%%%%%%%%%%%%%%%%%%%%%%%%%%%%%%%%%%%

\section{The approximate blow-up profile} \la{sec:Qb}

\subsection{Construction}

We first summarize the content and ideas of this section. We construct an approximate blow-up profile relying on a finite number of parameters close to the set of functions $(\tau_z(Q_{\lambda}))_{\lambda>0, \ z\in \mathbb R^d}$. It is built on the generalized kernel of $H$, $\text{Span}((T_i^{(n,k)})_{n,i\in \mathbb N, \ 1\leq k \leq k(n)})$ defined by \fref{cons:eq:def Tnki}, and can therefore be seen as a part of a center manifold. The profile is built on the whole space $\mathbb R^d$ for the moment and will be localized later.\\

\noindent In Proposition \ref{cons:pr:Qb} we construct a first approximate blow up profile. The procedure generates an error terms $\psi$, and by inverting elliptic equations, i.e. adding the term $H^{-1}\psi$ to our approximate blow up profile, one can always convert this error term into a new error term that is localized far away from the origin. We apply several times this procedure to produce an error term that is very small close to the origin. Then, in Proposition \ref{cons:pr:tildeQb} we localize the approximate blow-up profile to eliminate the error terms that are far away from the origin. We will cut in the zone $|y|\approx B_1=B_0^{1+\eta}$ where $\eta\ll 1$ is a very small parameter. In this zone, the perturbation in the approximate blow-up profile has the same size than $\Lambda Q$, being the reference function for scale change. It will correspond to the self-similar zone $|x|\sim \sqrt{T-t}$ for the true blow-up function, where $T$ will be the blow-up time.\\

\noindent The blow-up profile is described by a finite number of parameters whose evolution is given by the explicit dynamical system \fref{cons:eq:bs}. In Lemma \ref{cons:lem:sol} we show the existence of special solutions describing a type II blow up with explicit blow-up speed. The linear stability of these solutions is investigated in Lemma \ref{cons:lem:linearisation}.\\

\noindent There is a natural renormalized flow linked to the invariances of the semilinear heat equations \fref{eq:NLH}. For $u$ a solution of \fref{eq:NLH}, $\lambda:[0,T(u_0))\rightarrow \mathbb R^*_+$ and $z:[0,T(u_0))\rightarrow \mathbb R^d$ two $C^1$ functions, if one defines for $s_0\in \mathbb R$ the renormalized time:
\be \label{cons:eq:def s}
s(t):=s_0+\int_0^t \frac{1}{\lambda(t')^2} dt'
\ee
and the renormalized function:
$$
v(s,\cdot):=(\tau_{-z}u(t,\cdot))_{\lambda},
$$
then from a direct computation $v$ is a solution of the renormalized equation:
\be \label{cons:eq:flot renormalise}
\partial_s v-\frac{\lambda_s}{\lambda}\Lambda v-\frac{z_s}{\lambda}.\nabla v-F(v)=0.
\ee
Our first approximate blow up profile is adapted to this new flow and is a special perturbation of $Q$.

\begin{proposition}[First approximate blow up profile]\label{cons:pr:Qb}

Let $L\in \mathbb N$, $L\gg 1$, and let $b=(b_i^{(n,k)})_{(n,k,i)\in \mathcal I}$ denote a $\#\mathcal I$-tuple of real numbers with $b_1^{(0,1)}>0$. There exists a $\# \mathcal I$-dimensional manifold of $C^{\infty}$ functions $(Q_{b})_{b\in \mathbb{R}_+^* \times \mathbb{R}^{ \# \mathcal{I} -1}}$ such that:
\be \label{cons:eq:F(Qb)}
F(Q_b)= b_1^{(0,1)}\Lambda Q_b+b_1^{(1,\cdot)}.\nabla Q_b + \sum_{(n,k,i)\in \mathcal I} \left(-(2i-\alpha_n)b_1^{(0,1)}b_i^{(n,k)}+b_{i+1}^{(n,k)}\right) \frac{\partial Q_b}{\partial b_i^{(n,k)}} -\psi_b,
\ee
where $b_1^{(1,\cdot)}$ denotes the $d$-tuple of real numbers $(b_1^{(1,1)},...,b_1^{(1,d)})$ and where we used the convention $b_{L_n+1}^{(n,k)}=0$. $\psi_b$ is an error term. Let $B_1$ be defined by \fref{intro:eq:def B1}. If the parameters satisfy the size conditions\footnote{This means that under the bounds $|b_i^{(n,k)}|\leq K |b_1^{(0,1)}|^{\frac{\gamma-\gamma_n}{2}+i}$ for some $K>0$, there exists $b^*(K)$ such that the estimates that follow hold if $b_1^{(0,1)}\leq b^*(K)$ with constants depending on $K$. $K$ will be fixed independently of the other important constants in what follows.} $b_1^{(0,1)}\ll 1$ and $|b_i^{(n,k)}|\lesssim |b_1^{(0,1)}|^{\frac{\gamma-\gamma_n}{2}+i}$ for all $(n,k,i)\in \mathcal I$, then $\psi_b$ enjoys the following bounds:
\begin{itemize}
\item[(i)] \emph{Global\footnote{The zone $y\leq B_1$ is called global because in the next proposition we will cut the profile $Q_b$ in the zone $|y|\sim B_1$.} bounds:} For $0\leq j \leq s_L$,
\be \label{cons:eq:estimation globale psib}
\parallel H^{j} \psi_b \parallel_{L^2(|y|\leq 2B_1)}^2 \leq C(L) (b_1^{(0,1)})^{2(j-m_0)+2(1-\delta_0)+g'-C(L)\eta} ,
\ee
\be \label{cons:eq:estimation globale nablapsib}
\parallel \nabla^{j} \psi_b \parallel_{L^2(|y|\leq 2B_1)}^2 \leq C(L) (b_1^{(0,1)})^{2(\frac{j}{2}-m_0)+2(1-\delta_0)+g'-C(L)\eta} 
\ee
where $C(L)$ is a constant depending on $L$ only.
\item[(ii)]\emph{Local bounds:}
\be \label{cons:eq:estimations locales psib}
\forall j\geq 0, \ \forall B >1, \ \int_{|y|\leq B} |\nabla^{j}\psi_b|^2dy \leq C(j,L)B^{C(j,L)} (b_1^{(0,1)})^{2L+6}.
\ee
where $C(L,j)$ is a constant depending on $L$ and $j$ only.
\end{itemize}
\noindent The profile $Q_b$ is of the form:
\be \label{cons:eq:def Qb}
Q_b:=Q+\alpha_b, \ \ \ \alpha_b:=\sum_{(n,k,i)\in \mathcal I} b_i^{(n,k)} T_i^{(n,k)} + \sum_{i=2}^{L+2} S_i ,
\ee
where $T_i^{(n,k)}$ is given by \fref{cons:eq:def Tnki}, and the profiles $S_i$ are homogeneous functions in the sense of definition \ref{cons:def:fonctions homogenes} with:
\be \label{cons:eq:degre Si}
\text{deg}(S_i)=(i,-\gamma-g') \\
\ee
and with the property that for all $2\leq j\leq L+2$, $\frac{\partial S_j}{\partial b_i^{(n,k)}}=0$ if $j\leq i$ for $n=0,1$ and if $j\leq i+1$ for $n\geq 2$.

\end{proposition}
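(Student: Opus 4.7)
My plan is to build $Q_b$ by iteratively adding corrections $S_i$ of increasing $|J|_3$-degree that successively cancel the error in \fref{cons:eq:F(Qb)}. Starting from $Q_b^{(0)} := Q + \sum_{(n,k,i)\in \mathcal I} b_i^{(n,k)} T_i^{(n,k)}$ and writing $F(Q_b^{(0)}) = -H\alpha_b^{(0)} + NL(\alpha_b^{(0)})$ with $NL$ the nonlinear remainder of $f(Q+\cdot)-f(Q)-pQ^{p-1}(\cdot)$, I would use the kernel identities $HT_{i+1}^{(n,k)} = -T_i^{(n,k)}$, $HT_0^{(n,k)} = 0$, $\Lambda Q = T_0^{(0,1)}$, $\partial_{x_k} Q = T_0^{(1,k)}$, together with $\Lambda T_i^{(n,k)} = (2i-\alpha_n) T_i^{(n,k)} + \Theta_i^{(n,k)}$ from \fref{cons:eq:def Thetanki}, to check that the $|J|_3$-degree-one part of \fref{cons:eq:F(Qb)} is automatically satisfied, with the cross terms $(2i-\alpha_n)b_1^{(0,1)}b_i^{(n,k)} T_i^{(n,k)}$ cancelling between $b_1^{(0,1)}\Lambda Q_b^{(0)}$ and the modulation sum, and with the convention $b_{L_n+1}^{(n,k)} = 0$ absorbing the top indices.

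The residual $E^{(0)}$ then splits as $b_1^{(0,1)} \sum b_i^{(n,k)} \Theta_i^{(n,k)}$ plus $b_1^{(1,\cdot)}$-terms acting on $\alpha_b^{(0)}$ plus $NL(\alpha_b^{(0)})$; by \fref{cons:prop Thetanki} and the product rule of Lemma \ref{cons:lem:proprietes fonctions admissibles}, each piece is a finite sum of homogeneous functions in the sense of Definition \ref{cons:def:fonctions homogenes}, with all $|J|_3$-degrees $\geq 2$ and admissibility exponent $-\gamma-2-g'$. I would then inductively define, for $k = 2, \dots, L+2$, the correction $S_k := -H^{-1} E_k^{(k-1)}$, where $E_k^{(k-1)}$ is the $|J|_3$-degree-$k$ component of the current residual, using the componentwise inverse \fref{cons:eq:inversion H}. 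By Lemma \ref{cons:lem:proprietes fonctions homogenes}(iii), this produces $S_k$ homogeneous of degree $(k, -\gamma-g')$, provided the exclusion $-2|J|_2-q-\gamma_n-2 \notin 2\mathbb N$ holds at each inversion; this is exactly where the non-degeneracy hypothesis \fref{conditiongamma}, equivalent to \fref{intro:eq:condition deltan}, intervenes. The vanishing $\partial S_j/\partial b_i^{(n,k)} = 0$ for $j \leq i$ ($n \leq 1$) or $j \leq i+1$ ($n \geq 2$) follows at once from the definition of $|J|_3$, since any monomial $b^J$ with $J_i^{(n,k)} \geq 1$ contributes at least $i$ (respectively $i+1$) to $|J|_3$.

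After $L+2$ iterations the residual $\psi_b$ consists of terms of $|J|_3$-degree $\geq L+3$, each a homogeneous function. Under the hypothesis $|b_i^{(n,k)}| \lesssim (b_1^{(0,1)})^{(\gamma-\gamma_n)/2 + i}$ every monomial satisfies $|b^J| \lesssim (b_1^{(0,1)})^{|J|_2}$, so combining with the admissible pointwise bound $|\partial^{\mu} S_J(y)| \lesssim \langle y\rangle^{2|J|_2-\gamma-g'-|\mu|}$ from Lemma \ref{cons:lem:proprietes fonctions admissibles}(iv) yields a uniform pointwise estimate on $\psi_b$ and its derivatives. Inserting this into $\int_{|y|\leq 2B_1}|H^j \psi_b|^2$ with $B_1 = (b_1^{(0,1)})^{-(1+\eta)/2}$ produces the global bounds \fref{cons:eq:estimation globale psib}, the loss $(b_1^{(0,1)})^{-C(L)\eta}$ reflecting the polynomial volume cost of integrating slowly decaying admissible functions up to the scale $B_1$; the bound \fref{cons:eq:estimation globale nablapsib} follows by the same argument with $\nabla^j$ in place of $H^j$ using admissibility and Leibniz. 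The local estimate \fref{cons:eq:estimations locales psib} follows immediately by integrating the pointwise bound on $\{|y| \leq B\}$, the factor $(b_1^{(0,1)})^{2L+6}$ being an immediate consequence of the minimal $|J|_3 \geq L+3$ translated into a $|J|_2$-gain.

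The main obstacle will be the combinatorial bookkeeping at the inductive step: one must re-expand $NL(\alpha_b^{(k-1)})$ fully at each stage to collect all monomials of $|J|_3$-degree exactly $k$, since cross-products of the $T_i^{(n,k)}$'s and the previously constructed $S_j$'s generate a large number of such terms, and one must check that the admissibility degree of each is precisely $-\gamma-2-g'$ so that the inversion by $H^{-1}$ produces the uniform degree $-\gamma-g'$ demanded by \fref{cons:eq:degre Si}. Simultaneously, one must verify the non-resonance exclusion $-2|J|_2-q-\gamma_n-2 \notin 2\mathbb N$ for each $n \leq n_0$ encountered, which is the technical content of the hypothesis \fref{conditiongamma} and the reason it cannot be dispensed with in this construction.
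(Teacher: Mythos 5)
Your construction is essentially the paper's: start from $Q+\sum b_i^{(n,k)}T_i^{(n,k)}$, organize the residual by the $|J|_3$-length of the monomials $b^J$, iteratively set $S_k:=-H^{-1}$ of the degree-$k$ component using \fref{cons:eq:inversion H} and Lemma \ref{cons:lem:proprietes fonctions homogenes}, read off the vanishing of $\partial S_j/\partial b_i^{(n,k)}$ from the definition of $|J|_3$, and obtain the bounds \fref{cons:eq:estimation globale psib}--\fref{cons:eq:estimations locales psib} from the homogeneity of the leftover terms of degree $\geq L+3$ together with $|b^J|\lesssim (b_1^{(0,1)})^{|J|_2}$ and the scale $B_1$. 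This matches Steps 1--4 of the paper's proof.

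There is, however, one concrete flaw in your justification of the inversion step. You claim that the exclusion $-2|J|_2-q-\gamma_n-2\notin 2\mathbb N$ required by Lemma \ref{cons:lem:proprietes fonctions homogenes} (iii) ``is exactly where the non-degeneracy hypothesis \fref{conditiongamma}, equivalent to \fref{intro:eq:condition deltan}, intervenes.'' This is not correct: \fref{conditiongamma} controls the quantities $\frac d2-\gamma_n=2m_n+2\delta_n$ (i.e. it says $\delta_n\notin\{0,1\}$ for the relevant $n$), whereas the resonance you must exclude involves $-2|J|_2+\gamma+g'-\gamma_n$, where $|J|_2$ is a sum of terms $\frac{\gamma-\gamma_{n'}}{2}+i'$ and is in general not tied to $d/2$ at all; no choice of $\delta_n\in(0,1)$ prevents, say, $\gamma+g'-\gamma_n-2|J|_2$ from landing in $-2\mathbb N$ for some monomial $J$. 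If a resonance does occur, the inversion produces logarithmic corrections and the clean degree $(i,-\gamma-g')$ in \fref{cons:eq:degre Si} is lost, so the step as you justify it could fail. The paper's mechanism is different: $g'$ is defined in \fref{intro:eq:def g} with a free small parameter $\epsilon$, and at each of the finitely many inversions one perturbs $\epsilon$ (hence $g'$) to avoid the finitely many resonant values $-2|J|_2+\gamma+g'-\gamma_n\in 2\mathbb N$. The hypothesis \fref{conditiongamma} is genuinely needed elsewhere (e.g. for the coercivity of the iterates of $H$ in Lemma \ref{annexe:lem:coercivite norme adaptee} and for the exponents $\delta_n$ appearing after the cut in Proposition \ref{cons:pr:tildeQb}), but it is not what makes your inversion legitimate; replacing your appeal to \fref{conditiongamma} by the adjustment of $\epsilon$ in $g'$ repairs the argument.
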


\begin{remark} \label{cons:re:Qb}

The previous proposition is to be understood the following way. We have a special function depending on some parameters $b$ close to $Q$, that it to say at scale $1$ and with concentration point $0$ for the moment. \fref{cons:eq:F(Qb)} means that the force term (i.e. when applying $F$) generated by (NLH) makes it concentrate at speed $b_1^{(0,1)}$ and translate at speed $b_1^{(1,\cdot)}$, while the time evolution of the parameters is an explicit dynamical system given by the third term. These approximations involve an error for which we have some explicit bounds \fref{cons:eq:estimation globale psib} and \fref{cons:eq:estimations locales psib}.\\

\noindent The size of this approximate profile is directly related to the size of the perturbation along $T_1^{(0,1)}$, the first term in the generalized kernel of $H$ responsible for scale variation. Indeed we ask for $|b_i^{(n,k)}|\lesssim |b_1^{(0,1)}|^{\frac{\gamma-\gamma_n}{2}+i}$, and the size of the error is measured via $b_1^{(0,1)}$, see \fref{cons:eq:estimation globale psib}, \fref{cons:eq:estimation globale nablapsib} and \fref{cons:eq:estimations locales psib}. $b_1^{(0,1)}$ will therefore be the the universal order of magnitude in our problem.\\

\noindent Because of the shape of this approximate blow up profile \fref{cons:eq:def Qb}, when including the time evolution of the parameters in \fref{cons:eq:F(Qb)} we get:
\be \label{cons:eq:partialsQb}
\partial_s (Q_b)-F(Q_b)+b_1^{(0,1)}\Lambda Q_b+b_1^{(1,\cdot)}.\nabla Q_b= \text{Mod} (s) +\psi_b ,
\ee
where\footnote{Here $\delta_{n\geq 2}=1$ if $n\geq 2$, and is zero otherwise.}:
\be \label{cons:eq:def Mod}
\text{Mod}(s)=\sum_{(n,k,i)\in \mathcal I} [b_{i,s}^{(n,k)}+(2i-\alpha_n)b_1^{(0,1)}b_i^{(n,k)}-b_{i+1}^{(n,k)}]\left[ T_i^{(n,k)}+\sum_{j=i+1+\delta_{n\geq 2}}^{L+2}\frac{\partial S_j}{\partial b_i^{(n,k)}} \right] .
\ee
For all $2\leq j\leq L+2$, as $S_j$ is homogeneous of degree $(j,-\gamma-g')$ in the sense of Definition \ref{cons:def:fonctions homogenes} from \fref{cons:eq:degre Si}, and from the fact that $\frac{\partial S_j}{\partial b_i^{(n,k)}}=0$ if $j\leq i$ for $n=0,1$ and if $j\leq i+1$ for $n\geq 2$, one has that for all $j,n,k,i$, $\frac{\partial S_j}{\partial b_i^{(0,1)}}$ is either $0$ or is homogeneous of degree $(a,b)$ with $a\geq 1$, meaning that it never contains non trivial constant functions independent of the parameters $b$. Hence, if the bounds $|b_i^{(n,k)}|\lesssim |b_1^{(0,1)}|^{\frac{\gamma-\gamma_n}{2}+i}$ hold, since $|b_1^{(0,1)}|\lesssim 1$ and $-\gamma_n\geq -\gamma$ from \fref{intro:eq:def gamman}, one has in particular that on compact sets for any $2\leq j \leq L+2$ and $(n,k,i)\in \mathcal I$:
\be \label{cons:eq:bound partialSi}
\frac{\partial S_j}{\partial b^{(n,k)}_i}=O(|b_1^{(0,1)}|).
\ee

\end{remark}

\begin{proof}[Proof of Proposition \ref{cons:pr:Qb}]

\textbf{step 1} Computation of $\psi_b$. We first find an appropriate reformulation for the error $\psi_b$ given by \fref{cons:eq:F(Qb)} when $Q_b$ has the form \fref{cons:eq:def Qb}. 

\noindent - \emph{rewriting of $F(Q_b)$ in  \fref{cons:eq:F(Qb)}}. We start by computing:
\be \label{cons:eq:decomposition F 1}
\begin{array}{r c l}
-F(Q_b)&=&H(\alpha_b) -(f(Q_b)-f(Q)-\alpha_bf'(Q)) \\
&=& \sum_{(n,k,i)\in \mathcal I} b_i^{(n,k)}HT_i^{(n,k)} +\sum_{i=2}^{L+2} H(S_i) -(f(Q_b)-f(Q)-\alpha_bf'(Q)) \\
&=& -b_1^{(0,1)}\Lambda Q-b_1^{(1,\cdot)}.\nabla Q \\
&&-\sum_{(n,k,i)\in \mathcal I} b_{i+1}^{(n,k)}T_i^{(n,k)} +\sum_{i=2}^{L+2} H(S_i) -(f(Q_b)-f(Q)-\alpha_bf'(Q))
\end{array}
\ee
where we used the definition of the profiles $T^{(n,k)}_i$ from \fref{cons:eq:def Tnki}, and the convention $b_{L_n+1}^{(n,k)}=0$. Now, for $i=2,...,L$, we regroup the terms that involve the multiplication of $i$ parameters $b_j^{(n,k)}$ in the non linear term $-(f(Q_b)-f(Q)-\alpha_bf'(Q))$. Since $p$ is an odd integer:
\be \label{cons:eq:decomposition f}
\ba{l l l}
&(f(Q_b)-f(Q)-\alpha_bf'(Q)) = \sum_{k=2}^p C_k^p Q^{p-k}\alpha_b^k \\
=& \sum_{k=2}^p C_k^p Q^{p-k} \left[\sum_{|J|_1=k} C_J \prod_{(n,k,i)\in \mathcal I} (b_i^{(n,k)})^{J_i^{(n,k)}}(T^{(n,k)}_k)^{J_i^{(n,k)}} \prod_{i=2}^{L+2} S_i^{J_i}\right],
\ea
\ee
where $J=(J_1^{(0,1)},...,J_{L_{n_0}}^{(n_0,k(n_0))},J_2,...,J_{L+2})$ represents a $(\# \mathcal I+L+1)$-tuple of integers. Anticipating that the profile $S_i$ will be an homogeneous profile of degree $(i,\gamma-g')$, we define for such tuples $J$:
\be \label{cons:eq:def J32}
|J|_3=\sum_{i=1}^{L} iJ_i^{(0,1)}+\sum_{1\leq i\leq L_1, \ 1\leq k \leq d} iJ_i^{(1,k)}+ \sum_{(n,k,i)\in \mathcal I, \ 2\leq n} (i+1)J_i^{(n,k)}+\sum_{i=2}^{L+2} iJ_i.
\ee
We reorder the sum in the previous equation  \fref{cons:eq:decomposition f}, partitioning the $\# \mathcal I +L+1$-tuples $J$ according to their length $|J|_3$ instead of their length $J_1$:
$$
(f(Q_b)-f(Q)-\alpha_bf'(Q)) = \sum_{j=2}^{L+2} P_j+R,
$$
$P_j$ captures the terms with polynomials of the parameters $b_i^{(n,k)}$ of length $|J|_3=j$:
\be \label{cons:eq:def Pi}
P_j= \sum_{k=2}^p C_k Q^{p-k} \left(\sum_{|J|=k,|J|_3=j} C_J \prod_{(n,k,i)\in \mathcal I} (b_i^{(n,k)})^{J_i^{(n,k)}}(T^{(n,k)}_k)^{J_i^{(n,k)}} \prod_{i=2}^{L+2} S_i^{J_i}\right)
\ee
and the remainder contains only terms involving polynomials of the parameters $b_i^{(n,k)}$ of length $|\cdot |_3$ greater or equal to $L+3$:
\be \label{cons:eq:def R}
R= (f(Q_b)-f(Q)-\alpha_bf'(Q)) -\sum_{i=2}^{L+2} P_i.
\ee
From \fref{cons:eq:decomposition F 1} we end up with the final decomposition :
\be \label{cons:eq:decomposition F}
-F(Q_b)=-b_1^{(0,1)}\Lambda Q-b_1^{(1,\cdot)}.\nabla Q -\sum_{(n,k,i)\in \mathcal I} b_{i+1}^{(n,k)}T_i^{(n,k)} +\sum_{i=2}^L H(S_i) -\sum_{i=2}^{L+2} P_i -R.
\ee

\noindent - \emph{rewriting of the other terms in \fref{cons:eq:F(Qb)}}. One has from the form of $Q_b$ \fref{cons:eq:def Qb}:
\be \label{cons:eq:decomposition LambdaQb}
b_1^{(0,1)}\Lambda Q_b=b_1^{(0,1)}\Lambda Q+\sum_{(n,k,i)\in \mathcal I}b_1^{(0,1)}b_i^{(n,k)}\Lambda T_i^{(n,k)}+\sum_{i=2}^{L+2}b_1^{(0,1)}\Lambda S_i,
\ee
\be \label{cons:eq:decomposition nablaQb}
b_1^{(1,\cdot)}.\nabla Q_b=b_1^{(1,\cdot)}.\nabla Q+\sum_{j=1}^d  \left( \sum_{(n,k,i)\in \mathcal I}b_1^{(1,j)}b_i^{(n,k)}\partial_{x_j} T_i^{(n,k)}+\sum_{i=2}^{L+2}b_1^{(1,j)}\partial_{x_j}  S_i \right),
\ee
\be \label{cons:eq:decomposition partialQb}
\begin{array}{r c l}
& \sum_{(n,k,i)\in \mathcal I} \left(-(2i-\alpha_n)b_1^{(0,1)}b_i^{(n,k)}+b_{i+1}^{(n,k)}\right) \frac{\partial Q_b}{\partial b_i^{(n,k)}}\\
=& \sum_{(n,k,i)\in \mathcal I}  \left(-(2i-\alpha_n)b_1^{(0,1)}b_i^{(n,k)}+b_{i+1}^{(n,k)}\right) \left(T_i^{(n,k)}+\sum_{j=2}^{L+2} \frac{\partial S_j}{\partial b_i^{(n,k)}} \right)  .
\end{array}
\ee

\noindent - \emph{Expression of the error term $\psi_b$}. We define from \fref{cons:eq:def Thetanki}:
$$
\Theta_i^{(n,k)}(y):=\Theta_i^{(n)}(|y|)Y^{(n,k)}\left( \frac{y}{|y|} \right).
$$
From \fref{cons:eq:decomposition F}, \fref{cons:eq:decomposition LambdaQb}, \fref{cons:eq:decomposition nablaQb} and \fref{cons:eq:decomposition partialQb}, $\psi_b$ given by \fref{cons:eq:F(Qb)} is a sum of terms that are polynomials in $b$, and, denoting a monomial by $b^J$, we rearrange them according to the value $|J|_3$:
\be \label{cons:eq:decomposition psib}
\begin{array}{r c l}
\psi_b&=& \sum_{i=2}^{L+2} [\Phi_i+H(S_i)]+b_1^{(0,1)}\Lambda S_{L+2}+\sum_{j=1}^d b_1^{(1,j)}\partial_{x_j}S_{L+2} \\
&&+\sum_{(n,k,i)\in \mathcal I} (-(2i-\alpha_n)b_1^{(0,1)}b_i^{(n,k)}+b_{i+1}^{(n,k)})\frac{\partial S_{L+2}}{\partial b_i^{(n,k)}}-R,
\end{array}
\ee
where the profiles $\Phi_i$ are given by the following formulas:
\be \label{cons:eq:def Phi2}
\begin{array}{r c l}
\Phi_2&:=& (b_1^{(0,1)})^2 \Theta_1^{(0,1)}+\sum_{k=1}^d b_1^{(0,1)}b_1^{(1,k)} \Theta_1^{(1,k)}\\
&&+\sum_{j=1}^d \left(b_1^{(1,j)}b_1^{(0,1)}\partial_{x_j}T_1^{(0,1)}+\sum_{k=1}^d b_1^{(1,j)}b_1^{(1,k)} \partial_{x_j}T_1^{(1,k)}\right) \\
&& +\sum_{(n,k,0)\in \mathcal I, \ n\geq 2} \left( b_1^{(0,1)}b_0^{(n,k)}\Theta_0^{(n,k)}+\sum_{j=1}^d b_1^{(1,j)}b_0^{(n,k)}\partial_{x_j} T_0^{(n,k)}\right) -P_2,
\end{array}
\ee
for $i=3...L+1$:
\be \label{cons:eq:def Phii}
\begin{array}{r c l}
\Phi_i &:=& b_1^{(0,1)}b^{(0,1)}_{i-1}\Theta_{i-1}^{(0,1)}+\sum_{k=1, \ (1,k,i-1)\in \mathcal I}^d b_1^{(0,1)}b_{i-1}^{(1,k)}\Theta_{i-1}^{(1,k)} \\
&&+\sum_{j=1}^d \left(b_1^{(1,j)}b_{i-1}^{(0,1)}\partial_{x_j}T_{i-1}^{(0,1)}+\sum_{k=1, \ (1,k,i-1)\in \mathcal I}^d b_1^{(1,j)}b_{i-1}^{(1,k)} \partial_{x_j}T_1^{(1,k)} \right)\\
&&+\sum_{(n,k,i-2)\in \mathcal I, \ n\geq 2} \left( b_1^{(0,1)}b_{i-2}^{(n,k)}\Theta_{i-2}^{(n,k)}+\sum_{j=1}^d b_1^{(1,j)}b_{i-2}^{(n,k)}\partial_{x_j}T^{(n,k)}_{i-2} \right)   \\
&&+ b_1^{(0,1)}\Lambda S_{i-1}+\sum_{m=1}^d b_1^{(1,m)}\partial_{x_m}S_{i-1}\\
&&+\sum_{(n,k,j)\in \mathcal I} (-(2j-\alpha_n)b_1^{(0,1)}b_j^{(n,k)}+b_{j+1}^{(n,k)})\frac{\partial S_{i-1}}{\partial b_j^{(n,k)}}-P_i,
\end{array}
\ee
\be \la{cons:eq:def PhiL+2}
\ba{r c l}
\Phi_{L+2} &:=&b_1^{(0,1)} \Lambda S_{L+1}+\sum_{m=1}^d b_1^{(1,m)}\partial_{x_m}S_{L+1}\\
&&+\sum_{(n,k,j)\in \mathcal I} (-(2j-\alpha_n)b_1^{(0,1)}b_j^{(n,k)}+b_{j+1}^{(n,k)})\frac{\partial S_{L+1}}{\partial b_j^{(n,k)}}-P_{L+2}
\ea
\ee

\noindent \textbf{step 2} Definition of the profiles $(S_i)_{2\leq i \leq L+2}$ and simplification of $\psi_b$. We define by induction a sequence of couples of profiles $(S_i)_{2\leq i \leq L+2}$ by:
\be \la{cons:eq:expression Si}
\left\{ \ba{l l}
S_2:=-H^{-1}(\Phi_{2})\\
S_i:=-H^{-1}(\Phi_{i}) \ \ \text{for} \ 3\leq i\leq L+2, \  \ \Phi_i \ \text{being} \ \text{defined} \ \text{by} \ \fref{cons:eq:def Phi2}, \ \fref{cons:eq:def Phii}, \ \fref{cons:eq:def PhiL+2}
\ea
\right. 
\ee
where $H^{-1}$ is defined by \fref{cons:eq:inversion H}. In the next step we prove that there is no problem in this construction. The $S_i$'s being defined this way, from \fref{cons:eq:decomposition psib} we get the final expression for the error:
\be \label{cons:eq:expression psib 2}
\ba{r c l}
\psi_b&=& b_1^{(0,1)}\Lambda S_{L+2}+\sum_{j=1}^d b_1^{(1,j)}\partial_{x_j}S_{L+2}\\
&&+\sum_{(n,k,i)\in \mathcal I} (-(2i-\alpha_n)b_1^{(0,1)}b_i^{(n,k)}+b_{i+1}^{(n,k)})\frac{\partial S_{L+2}}{\partial b_i^{(n,k)}}-R .
\ea
\ee

\noindent \textbf{step 3} Properties of the profiles $S_i$. We prove by induction on $i=2,...,L+2$ that $S_i$ is homogeneous of degree $(i,-\gamma-g')$ in the sense of Definition \ref{cons:def:fonctions homogenes}, and that for all $2\leq j\leq L+2$, $\frac{\partial S_j}{\partial b_i^{(n,k)}}=0$ if $j\leq i$ for $n=0,1$ and if $j\leq i+1$ for $n\geq 2$. 

\noindent - \emph{Initialization}. We now prove that $S_2$ is homogeneous of degree $(2,-\gamma-g')$, and that $\frac{\partial S_2}{\partial b_i^{(n,k)}}=0$ if $2\leq i$ for $n=0,1$ and if $1\leq i$ for $n\geq 2$. We claim that $\Phi_2$ is homogeneous of degree $(2,-\gamma-g'-2)$ and that $\frac{\partial \Phi_2}{\partial b_i^{(n,k)}}=0$ if $2\leq i$ for $n=0,1$ and if $1\leq i$ for $n\geq 2$. To prove this, we prove that these two properties are true for every term in the right hand side of \fref{cons:eq:def Phi2}. \\
\noindent From Lemma \ref{cons:lem:Tni}, $\Theta_1^{(0,1)}$ is simple admissible of degree $(0,-\gamma+2-g')$ in the sense of Definition \ref{cons:eq:fonctions admissibles}. $(b_1^{(0,1)})^2$ can be written under the form $J^{(0,1)}_1=2$ and $J^{(n,k)}_i=0$ otherwise and one has $|J|_2=2$ and $|J|_3=2$. Therefore, $(b_1^{(0,1)})^2 \Theta_1^{(0,1)}$ is homogeneous of degree $(|J|_3,-\gamma+2-g'-2|J|_2)=(2,-\gamma-g'-2)$. The same reasoning applies for $b_1^{(0,1)}b_1^{(1,k)} \Theta_1^{(1,k)}$ for $1\leq k \leq d$. \\
\noindent For $1\leq j \leq d$, $T_1^{(0,1)}$ is admissible of degree $(0,-\gamma+2)$ from Lemma \ref{cons:lem:proprietes fonctions admissibles} so $\partial_{x_j}T_1^{(0,1)}$ is admissible of degree $(-\gamma+1)$ from Lemma \ref{cons:lem:Tni}. $b_1^{(1,j)}b_1^{(0,1)}$ can be written under the form $b^J$  with $J^{(0,1)}_1=1$, $J^{(1,j)}_1=1$ and $J^{(n,k)}_i=0$ otherwise, therefore $|J|_3=2$ and $|J|_2=1+\frac{\gamma-\gamma_1}{2}+1=2+\frac{\alpha-1}{2}$ from \fref{intro:eq:def gamman}. Thus $b_1^{(1,j)}b_1^{(0,1)}\partial_{x_j}T_1^{(0,1)}$ is homogeneous of degree $(|J|_3,-\gamma1+1-2|J|_2)=(2,-\gamma-2-\alpha)$. As $g'<\alpha$, it is then homogeneous of degree $(2,-\gamma-g'-2)$. The same reasoning applies for $1\leq j,k\leq d$ to the term $b_1^{(1,j)}b_1^{(1,k)}\partial_{x_j}T_1^{(1,k)}$. \\
\noindent We now examine for $(n,k,0)\in \mathcal I$ the profile:
$$
b_1^{(0,1)}b_0^{(n,k)}\Theta_0^{(n,k)}+\sum_{j=1}^d b_1^{(1,j)}b_0^{(n,k)}\partial_{x_j} T_0^{(n,k)} .
$$
$\Theta_0^{(n,k)}$ is simple admissible of degree $(n,-\gamma_n-g')$ from Lemma \ref{cons:lem:Tni}. $b_1^{(0,1)}b_0^{(n,k)}$ can be written under the form $b^J$ for $J^{(0,1)}_1=1$, $J^{(n,k)}_0=1$ and $J^{(n',k')}_i=0$ otherwise, and one then has $|J|_3=2$ and $|J|_2=1+\frac{\gamma-\gamma_n}{2}$. Therefore, $b_1^{(0,1)}b_0^{(n,k)}\Theta_0^{(n,k)}$ is homogeneous of degree $(|J|_3,-\gamma_n-g'-2|J|_2)=(2,-\gamma-g'-2)$. Similarly the terms in the sum in the above identity are homogeneous of degree $(2,-\gamma-g'-2)$. \\
\noindent We now look at the non-linear term $P_2$. As for $2\leq i\leq L+2$ the profile $S_i$ involves polynomials of $b$ under the form $b^J$ with $|J|_3=i$, from its definition \fref{cons:eq:def Pi} $P_2$ does not depend on the profiles $S_i$ for $2\leq i \leq L+2$ and can be written as:
$$
P_2= C Q^{p-2} \left(b_1^{(0,1)} T_1^{(0,1)}+\sum_{k=1}^d b_1^{(1,k)}T_1^{(1,k)}+\sum_{(n,k,0)\in \mathcal I} b_0^{(n,k)} T^{(n,k)}_0 \right)^2
$$
for a constant $C$. We have to prove that all the mixed terms that are produced by this formula are homogeneous of degree $(2,\gamma-g'-2)$. We write it only for one term, and apply the same reasonning to the others. For all $((n,k,0),(n',k',0))\in \mathcal I^2$, from Lemmas \ref{cons:lem:Tni} and \ref{cons:lem:proprietes fonctions homogenes} and \fref{cons:eq:asymptotique Q}, the profile $b_0^{(n,k)}b_0^{(n',k')} Q^{p-2}T^{(n,k)}_0 T^{(n',k')}_0$ is homogeneous of degree $(2,-\gamma-2-\alpha)$ and then of degree $(2,-\gamma-2-g')$. As we said, similar considerations yield that all the other terms are homogeneous of degree $(2,\gamma-g'-2)$. This implies that $P_2$ is homogeneous of degree $(2,-\gamma-g'-2)$. \\
\noindent We have examined all terms in \fref{cons:eq:def Phi2} and consequently proved that $\Phi_2$ is homogeneous of degree $(2,-\gamma-2-g')$. By a direct check at all the terms in the right hand side of \fref{cons:eq:def Phi2}, with $P_2$ given by the above identity, one has that $\frac{\partial \Phi_2}{\partial b_i^{(n,k)}}=0$ if $2\leq i$ for $n=0,1$ and if $1\leq i$ for $n\geq 2$. We now check that we can apply $(iii)$ in Lemma \ref{cons:lem:proprietes fonctions homogenes} to invert $\Phi_2$ and to propagate the homogeneity. For all $\# \mathcal I$-tuple $J$ with $|J|_3=2$, one has indeed for all integer $n$ that $2|J|_2-\gamma_n-2-g'>\gamma_n-d$ as the sequence $(\gamma_n)_{n\in \mathbb N}$ is decreasing and $d-2\gamma-2>0$. For the second condition required by the Lemma, we notice that $g'$ is not a "fixed" constant in our problem, as its definition \fref{intro:eq:def g} involves a parameter $\epsilon$. The purpose of the parameter $\epsilon$ is the following: by choosing it appropriately, we can suppose that for every $0\leq n\leq n_0$ and $\#\mathcal I$-tuple $J$ with $|J|_3=2$ there holds:
$$
-2|J|_2+\gamma+g'-\gamma_n\notin 2\mathbb{N}.
$$
This allows us to apply $(iii)$ in Lemma \ref{cons:lem:proprietes fonctions homogenes}: $S_2$ is homogeneous of degree $(2,-\gamma-g')$. We also get that $\frac{\partial S_2}{\partial b_i^{(n,k)}}=0$ if $2\leq i$ for $n=0,1$ and if $1\leq i$ for $n\geq 2$ as this is true for $\Phi_2$. This proves the initialization of our induction.

\noindent - \emph{Heredity}. Suppose $3\leq i\leq L+1$, and that for $2\leq i'\leq i$, $S_{i'}$ is homogeneous of degree $(i',-\gamma-g')$, and that $\frac{\partial S_i'}{\partial b_j^{(n,k)}}=0$ if $i'\leq j$ for $n=0,1$ and if $i'-1\leq j$ for $n\geq 2$. We claim that $\Phi_i$ is homogeneous of degree $(i,-\gamma-g'-2)$ and that $\frac{\partial \Phi_i}{\partial b_j^{(n,k)}}=0$ if $i\leq j$ for $n=0,1$ and if $i-1\leq j$ for $n\geq 2$. We prove it by looking at all the terms in the right hand side of \fref{cons:eq:def Phii}. With the same reasoning we used for the initialization, we prove that
$$
\begin{array}{r c l}
&b_1^{(0,1)}b^{(0,1)}_{i-1}\Theta_{i-1}^{(0,1)}+\sum_{k=1, \ (1,k,i-1)\in \mathcal I}^d b_1^{(0,1)}b_{i-1}^{(1,k)}\Theta_{i-1}^{(1,k)} \\
+&\sum_{j=1}^d \left(b_1^{(1,j)}b_{i-1}^{(0,1)}\partial_{x_j}T_{i-1}^{(0,1)}+\sum_{k=1, \ (1,k,i-1)\in \mathcal I}^d b_1^{(1,j)}b_{i-1}^{(1,k)} \partial_{x_j}T_1^{(1,k)} \right)\\
+&\sum_{(n,k,i-2)\in \mathcal I, \ n\geq 2} \left( b_1^{(0,1)}b_{i-2}^{(n,k)}\Theta_{i-2}^{(n,k)}+\sum_{j=1}^d b_1^{(1,j)}b_{i-2}^{(n,k)}\partial_{x_j}T^{(n,k)}_{i-2} \right)
\end{array}
$$
is homogeneous of degree $(i,\gamma-g'-2)$. From the induction hypothesis, $b_1^{(0,1)}\Lambda S_{i-1}$ is homogeneous of degree $(i,-\gamma-g'-2)$. From Lemma \ref{cons:lem:proprietes fonctions admissibles}, for $1\leq j \leq d$, $\partial_{x_j}S_{i-1}$ is homogeneous of degree $(i-1,-\gamma-g'-1)$, so that  $b_1^{(1,j)}\partial_{x_j}S_{i-1}$ is homogeneous of degree $(i,-\gamma-g'-2-\alpha)$, $\alpha$ being positive, it is then homogeneous of degree $(i,-\gamma-g'-2)$. Still from the induction hypothesis, for all $(n,k,i')\in \mathcal I$, $(-(2i'-\alpha_n)b_1^{(0,1)}b_{i'}^{(n,k)}+b_{i'+1}^{(n,k)})\frac{\partial S_{i-1}}{\partial b_{i'}^{(n,k)}}$ is homogeneous of degree $(i,-\gamma-g'-2)$. The last term to be consider is $P_i$. As for $2\leq j\leq L+2$ the profile $S_j$ involves polynomials of $b$ under the form $b^J$ with $|J|_3=i$, from its definition \fref{cons:eq:def Pi} $P_i$ does not depend on the profiles $S_j$ for $i\leq j \leq L+2$ and can be written as:
$$
P_i= \sum_{k=2}^p C_k Q^{p-k} \left(\sum_{|J|=k,|J|_3=i} C_J \prod_{(n,k,i)\in \mathcal I} (b_i^{(n,k)})^{J_i^{(n,k)}}(T^{(n,k)}_k)^{J_i^{(n,k)}} \prod_{j=2}^{i-1} S_j^{J_j}\right)
$$
Let $k$ be an integer $2\leq k \leq p$, let $J$ be a $\# \mathcal I+L$-tuple with $|J|_3=i$. Then from the induction hypothesis, 
$$
Q^{p-k} \prod_{(n,k,i)\in \mathcal I} (b_i^{(n,k)})^{J_i^{(n,k)}}(T^{(n,k)}_k)^{J_i^{(n,k)}} \prod_{j=2}^{i-1} S_j^{J_j}
$$
is homogeneous of degree $(i,-\gamma-2-(k-1)\alpha-g'\sum_{j=2}^{i-1} J_j)$. As $k\geq 2$ and $\alpha>g'$, it is homogeneous of degree $(i,\gamma-2-g')$.\\ \noindent We just proved that $\Phi_i$ is homogeneous of degree $(i,-\gamma-2-g')$. By a direct check at all the terms in the right hand side of \fref{cons:eq:def Phii}, with $P_i$ given by the above formula, one has that $\frac{\partial \Phi_i}{\partial b_j^{(n,k)}}=0$ if $i\leq j$ for $n=0,1$ and if $i-1\leq j$ for $n\geq 2$. We now check that we can apply $(iii)$ from Lemma \ref{cons:lem:proprietes fonctions homogenes} to get the desired properties for $S_i=-H^{-1}\Phi_i$. For all $\# \mathcal I$-tuple $J$ with $|J|_3=i$ and integer $n$, the first condition $|J|_2-\gamma-2-g'>\gamma_n-d$ is fulfilled since $-2\gamma_n-d\geq -2\gamma-d>2$. For the second condition, again as in the initialization, as $g'$ is not a "fixed" constant in our problem (its definition \fref{intro:eq:def g} involving a parameter $\epsilon$), we can choose it such that for every $0\leq n\leq n_0$ and $\#\mathcal I$-tuple $J$ with $|J|_3=i$:
$$
-2|J|_2+\gamma+g'-\gamma_n\notin 2\mathbb{N}.
$$
We thus can apply $(iii)$ in Lemma \ref{cons:lem:proprietes fonctions homogenes}: $S_i$ is homogeneous of degree $(i,-\gamma-g')$. One also obtains that $\frac{\partial S_i}{\partial b_j^{(n,k)}}=0$ if $i\leq j$ for $n=0,1$ and if $i-1\leq j$ for $n\geq 2$ as this is true for $\Phi_i$. This proves the heredity in our induction. \\
\noindent The last step, that it is the heredity from $L+1$ to $L+2$, can be proved verbatim the same way and we do not write it here.\\

\noindent \textbf{step 4} Bounds for the error term. In Step $2$ we have computed the expression \fref{cons:eq:expression psib 2} of the error term $\psi_b$. In Step $3$ we proved that the profiles $S_i$ were well defined and homogeneous of degree $(i,-\gamma-g')$. We can now prove the bounds on $\psi_b$ claimed in the Proposition. In the sequel we always assume the bounds $|b_i^{(n,k)}|\lesssim |b_1^{(0,1)}|^{\frac{\gamma-\gamma_n}{2}+i}$ and $|b_1^{(0,1)}|\ll 1$.

\noindent - \emph{Homogeneity of $\psi_b$}. We claim that $\psi_b$ is a finite sum of homogeneous functions of degree $(i,-\gamma-g'-2)$ for $i\geq L+3$. For this we consider all terms in the right hand side of \fref{cons:eq:expression psib 2}. As $S_{L+2}$ is homogeneous of degree $(L+2,-\gamma-g')$ from Step $3$, the function $b_1^{(0,1)} \Lambda S_{L+2}$ is homogeneous of degree $(L+3,-\gamma-g'-2)$ from Lemma \ref{cons:lem:proprietes fonctions homogenes}. Similarly for $1\leq j \leq d$, $b_1^{(1,j)}\partial_{x_j}S_{L+2}$ is homogeneous of degree $(L+3,-\gamma-g'-2-\alpha)$ (and then homogeneous of degree $(L+3,-\gamma-g'-2)$ as $\alpha>0$), and for $(n,k,i)\in \mathcal I$, $(-(2i-\alpha_n)b_1^{(0,1)}b_i^{(n,k)}+b_{i+1}^{(n,k)})\frac{\partial S_{L+2}}{\partial b_i^{(n,k)}}$ is homogeneous of degree $(L+3,-\gamma-g'-2)$. From its definition \fref{cons:eq:def R}, and as for $2\leq i \leq L+2$, $S_i$ is homogeneos of degree $(i,-\gamma-g')$, $R$ is a finite sum of homogeneous profiles of degree $(i,-\gamma-\alpha-2)$ with $i\geq L+3$. All this implies that $\psi_b$ is a finite sum of homogeneous functions of degree $(i,-\gamma-g'-2)$ for $i\geq L+3$.

\noindent - \emph{Proof of an intermediate estimate}. We claim that there exists an integer $A\geq L+3$ such that for $\mu$ a $d$-tuple of integers, $j\in \mathbb N$ and $B>1$ there holds:
\be \label{cons:eq:bound intermediaire}
\int_{|y|\leq B} \frac{|\partial^{\mu}\psi_b|^2}{1+|y|^{2j}}dy\leq C(L)\sum_{i=L+3}^A |b_1^{(0,1)}|^{2i}B^{\text{max}(4i+4(m_0-\frac{|\mu|+j}{2})+4(\delta_0-1)-2g',0)}.
\ee
We now prove this bound. We proved earlier that $\psi_b$ is a finite sum of homogeneous functions of degree $(i,-\gamma-g'-2)$ for $i\geq L+3$. Consequently, it suffices to prove this bound for an homogeneous function $b^Jf(y)$ of degree $(|J|_3,-\gamma-g'-2) $ with $|J|_3\geq L+3$. One then computes as $f$ is admissible of degree $(2|J|_2-\gamma-g'-2)$:
$$
\begin{array}{r c l}
\int_{|y|\leq B} \frac{|b^J\partial^{\mu}f|^2}{1+|y|^{2j}}  & \leq  & C(f)|b_1^{(0,1)}|^{2|J|_2} \int_0^B (1+r)^{4|J|_2-2\gamma-2g'-4-2j-2|\mu|}r^{d-1}dr\\
&\leq & C(f) |b_1^{(0,1)}|^{2|J|_2} B^{\text{max}(4|J|_2+4(m_0+\frac{j+|\mu|}{2})+4(\delta_0-1)-2g',0)}
\end{array}
$$
(we avoid the logarithmic case in the integral by changing a bit the value of g' defined in \fref{intro:eq:def g}, by changin a bit the value of $\epsilon$). This concludes the proof of \fref{cons:eq:bound intermediaire}.

\noindent - \emph{Proof of the local bounds for the error}. Let $j$ be an integer, and $\mu\in \mathbb N^d$ with $|\mu|=j$. From \fref{cons:eq:bound intermediaire}, $|b_1^{(0,1)}|\ll 1$ and $B>1$ we obtain from \fref{cons:eq:bound intermediaire}:
$$
\int_{|y|\leq B}|\partial^{\mu}\psi_b|^2dy \leq C(L) |b_1^{(0,1)}|^{2L+6}B^{\text{max}(4A+4(m_0-\frac{|\mu|+j}{2})+4(\delta_0-1)-2g',0)}
$$
which gives the desired bound \fref{cons:eq:estimations locales psib}.

\noindent - \emph{Proof of the global bounds for the error}. Let $j\leq 2s_L$, and $\mu\in \mathbb N^d$ with $|\mu|=j$. Using \fref{cons:eq:bound intermediaire}, we notice that for $L+3\leq i \leq A$ one has 
$$
\text{max}(4i+4(m_0-\frac{|\mu|+j}{2})+4(\delta_0-1)-2g',0)=4i+4(m_0-\frac{|\mu|+j}{2})+4(\delta_0-1)-2g'
$$
This implies:
$$
\begin{array}{r c l}
\int_{|y|\leq B_1} \frac{|\partial^{\mu}\psi_b|^2}{1+|y|^{2j}}dy & \leq  & C(L)\sum_{i=L+3}^A |b_1^{(0,1)}|^{2i}B_1^{4i+4(m_0-\frac{|\mu|+j}{2})+4(\delta_0-1)-2g'}\\
&\leq & C(L)  |b_1^{(0,1)}|^{2(\frac{j}{2}-m_0)+2(1-\delta_0)+g'-C(L)\eta}.
\end{array}
$$
which is the desired bound \fref{cons:eq:estimation globale nablapsib}. Let $j$ be an integer, $j\leq s_L$. Now, as $H=-\Delta+V$ where $V$ is a smooth potential satisfying $|\partial^{\mu} V|\leq C(\mu)(1+|y|)^{-2-|\mu|}$ from \fref{cons:eq:asymptotique V} one obtains using \fref{cons:eq:bound intermediaire}:
$$
\begin{array}{r c l}
&\int_{|y|\leq B_1} |H^j\psi_b|^2dy \leq  C(L)\sum_{j'+|\mu|_1=2j} \int_{|y|\leq B_1} \frac{|\partial^{\mu}\psi_b|^2}{1+|y|^{2j'}} dy\\
\leq & C(L) \sum_{j'+|\mu|=2j} \sum_{i=L+3}^A |b_1^{(0,1)}|^{2i}B_1^{\text{max}(4i+4(m_0-j)+4(\delta_0-1)-2g',0)} \\
\leq & C(L)|b_1^{(0,1)}|^{2(j-m_0)+2(1-\delta_0)+g'-C(L)\eta}
\end{array}
$$
(because again $4i+4(m_0-j)+4(\delta_0-1)-2g'>0$ as $i\geq L+3$ and $j\leq s_L$). This proves the last estimate \fref{cons:eq:estimation globale psib}.

\end{proof}

We now localize the perturbation built in Proposition \ref{cons:pr:Qb} in the zone $|y|\leq B_1$ and estimate error generated by the cut. We also include the time dependance of the parameters following Remark \ref{cons:re:Qb}. We recall that $s_L$ is defined by \fref{intro:eq:def sL}

\begin{proposition}[Localization of the perturbation] \label{cons:pr:tildeQb}

$\chi$ is a cut-off defined by \fref{intro:eq:def chi}. We keep the notations from Proposition \ref{cons:pr:Qb}. $I=(s_0,s_1)$ is an interval, and 
$$
\begin{array}{r c l}b:I &\rightarrow& \mathbb{R}^{\# \mathcal I} \\ s &\mapsto& (b_i^{(n,k)}(s))_{(n,k,i)\in \mathcal I} \end{array} 
$$
is a $C^1$ function with the following a priori bounds\footnote{This means that under the bounds $|b_i^{(n,k)}|\leq K |b_1^{(0,1)}|^{\frac{\gamma-\gamma_n}{2}+i}$ for some $K>0$, there exists $b^*(K)$ such that the estimates that follow hold if $b_1^{(0,1)}\leq b^*(K)$ with constants depending on $K$. $K$ will be fixed independently of the other important constants in what follows.}:
\be \la{cons:bd bs}
|b^{(n,k)}_i|\lesssim |b_1^{(0,1)}|^{\frac{\gamma-\gamma_n}{2}+i}, \ \ \ 0<b_1^{(0,1)}\ll 1, \ \ \  |b_{1,s}^{(0,1)}|\lesssim |b_1^{(0,1)}|^2 .
\ee
We define the profile $\tilde{Q}_b$ as:
\be \label{cons:eq:def Qbtilde}
\tilde{Q}_b:=Q+\tilde{\alpha}_b=Q+\chi_{B_1} \alpha_b, \ \ \ \tilde \alpha_b:=\chi_{B_1}\alpha_b .
\ee
Then one has the following identity ($\text{Mod}(s)$ being defined by \fref{cons:eq:def Mod}):
\be \label{cons:eq:partialstildeQb}
\partial_s \tilde{Q}_b-F(\tilde{Q}_b)+b_1^{(0,1)} \Lambda \tilde{Q}_b+b_1^{(1,\cdot)}.\nabla \tilde{Q}_b = \tilde{\psi }_b +\chi_{B_1}\text{Mod}(s)
\ee
with, for $0<\eta\ll 1$ small enough, an error term $\tilde{\psi}_b$ satisfying the following bounds:
\begin{itemize}
\item[(i)]\emph{Global bounds:} For any integer $j$ with $1\leq j\leq s_L-1$ there holds:
\be \label{cons:eq:bound globale tildepsib jleqL-1}
\int_{\mathbb R^d} |H^j \tilde{\psi}_b|^2dy \leq C(L) |b_1^{(0,1)}|^{2(j-m_0)+2(1-\delta_0)-C_j\eta}.
\ee
For any real number $s_c\leq j<2s_L-2$:
\be \label{cons:eq:bound globale nablatildepsib}
\int_{\mathbb R^d} |\nabla^j \tilde{\psi}_b|^2dy \leq C(L) |b_1^{(0,1)}|^{2(\frac{j}{2}-m_0)+2(1-\delta_0)-C_j\eta}.
\ee
And for $j=s_L$ one has the improved bound:
\be \label{cons:eq:bound globale tildepsib L}
\int_{\mathbb R^d} |H^{s_L} \tilde{\psi}_b|^2dy  \leq C(L) |b_1^{(0,1)}|^{2L+2+2(1-\delta_0)+2\eta(1-\delta_0')}.
\ee
\item[(ii)]\emph{Local bounds:} one has that ($\psi_b$ being defined by \fref{cons:eq:F(Qb)}):
\be \la{cons:id tildepsib leqB1}
\forall |y|<B_1, \ \ \tilde \psi_b (y)=\psi_b,
\ee
and for any $1\leq B\leq B_1$ and $j\in \mathbb N$:
\be \label{cons:eq:estimation locale tildepsib}
\int_{|y|\leq  B} |\nabla^j \tilde{\psi}_b|^2dy \leq C(L,j) B^{C(L,j)} |b_1^{(0,1)}|^{2L+6} .
\ee
\end{itemize}

\end{proposition}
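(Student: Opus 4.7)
The plan is to derive an explicit formula for $\tilde\psi_b$ by comparing the left-hand side of \eqref{cons:eq:partialstildeQb} for $\tilde Q_b = Q + \chi_{B_1}\alpha_b$ with the un-cut identity \eqref{cons:eq:partialsQb}. Using the equivalent splitting $\tilde Q_b = \chi_{B_1} Q_b + (1-\chi_{B_1}) Q$ together with $F(Q) = 0$ and the Leibniz rule for $\chi_{B_1}$ under $\partial_s$, $\Delta$, $\Lambda$ and $\nabla$, plus the binomial expansion of $f(Q + \chi_{B_1}\alpha_b)$ compared with $\chi_{B_1} f(Q+\alpha_b) + (1-\chi_{B_1}) f(Q)$, I obtain
$$\tilde\psi_b = \chi_{B_1}\psi_b + (1-\chi_{B_1})\bigl(b_1^{(0,1)}\Lambda Q + b_1^{(1,\cdot)}\cdot\nabla Q\bigr) + E_{\mathrm{cut}},$$
where $E_{\mathrm{cut}}$ is supported in the annulus $\{B_1\le|y|\le 2B_1\}$ and collects all cutoff commutators: $(\partial_s\chi_{B_1})\alpha_b$, $-(\Delta\chi_{B_1})\alpha_b - 2\nabla\chi_{B_1}\cdot\nabla\alpha_b$, the nonlinear remainder $-\sum_{k=2}^p C^p_k Q^{p-k}\chi_{B_1}(\chi_{B_1}^{k-1}-1)\alpha_b^k$, and the transport commutators $b_1^{(0,1)}(y\cdot\nabla\chi_{B_1})\alpha_b + b_1^{(1,\cdot)}\cdot(\nabla\chi_{B_1})\alpha_b$. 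This identity makes the local statement \eqref{cons:id tildepsib leqB1} immediate since for $|y| < B_1$ all three pieces reduce to $\psi_b$; combined with \eqref{cons:eq:estimations locales psib} this proves the local bound \eqref{cons:eq:estimation locale tildepsib}.

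For the global bounds I estimate the three pieces of $\tilde\psi_b$ separately. The contribution of $\chi_{B_1}\psi_b$ is controlled by Proposition \ref{cons:pr:Qb}: the bulk inherits \eqref{cons:eq:estimation globale psib}--\eqref{cons:eq:estimation globale nablapsib}, while the commutators $[H^j,\chi_{B_1}]\psi_b$ are of the same structural type as $E_{\mathrm{cut}}$ and are estimated jointly with it. The external piece relies on the decisive cancellation from Lemma \ref{lem:Q}, namely $\Lambda Q = -\alpha a_1 r^{-\gamma} + O(r^{-\gamma-g})$ at infinity, which gives $\int_{|y|\ge B_1}|\nabla^j(b_1^{(0,1)}\Lambda Q + b_1^{(1,\cdot)}\cdot\nabla Q)|^2 \lesssim |b_1^{(0,1)}|^2 B_1^{d-2j-2\gamma}$; substituting $B_1 = |b_1^{(0,1)}|^{-(1+\eta)/2}$ and using $d/2 - \gamma = 2m_0 + 2\delta_0$ produces the exponent $2+(1+\eta)(j-2m_0-2\delta_0)$, which comfortably beats the target $j-2m_0+2-2\delta_0-C_j\eta$ as soon as $j\ge s_c > 2m_0+2\delta_0$. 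For the $H^j$-norms the estimate is even sharper: $H\Lambda Q = 0$ and $H\partial_{y_k}Q = 0$ by scale and translation invariance of \eqref{intro:def Q}, so the external $H^j$-contribution vanishes pointwise and only the commutators with $(1-\chi_{B_1})$ survive, which are again absorbed into the annulus analysis.

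The main technical effort goes into estimating $E_{\mathrm{cut}}$. On the annulus $|y|\sim B_1$, admissibility of $T_i^{(n,k)}$ of degree $(n,-\gamma_n+2i)$ combined with the a priori bound $|b_i^{(n,k)}|\lesssim|b_1^{(0,1)}|^{(\gamma-\gamma_n)/2+i}$ and $B_1 = |b_1^{(0,1)}|^{-(1+\eta)/2}$ gives $|b_i^{(n,k)}T_i^{(n,k)}|\lesssim |b_1^{(0,1)}|^{\gamma/2+O(\eta)}$, while the homogeneity \eqref{cons:eq:degre Si} of $S_i$ yields $|S_i|\lesssim|b_1^{(0,1)}|^{(\gamma+g')/2+O(\eta)}$; each derivative of $\chi_{B_1}$ costs $B_1^{-1}$, and the a priori bound $|b_{1,s}^{(0,1)}|\lesssim |b_1^{(0,1)}|^2$ from \eqref{cons:bd bs} supplies exactly the extra power of $|b_1^{(0,1)}|$ needed to absorb the factor $B_1^{-1}$ coming from the $b_1^{(0,1)}$-dependence of $\partial_s\chi_{B_1}$. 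Integrating over the annulus of volume $B_1^d$ and substituting $d=2\gamma+4m_0+4\delta_0$ and $s_L = m_0+L+1$ reduces each bound to elementary power-of-$|b_1^{(0,1)}|$ arithmetic, and the smallness of $\eta$ absorbs the polynomial losses $C_j\eta$ and $C(L)\eta$ in \eqref{cons:eq:bound globale tildepsib jleqL-1}--\eqref{cons:eq:bound globale nablatildepsib}. The hardest point is the improved top-order estimate \eqref{cons:eq:bound globale tildepsib L} at $j=s_L$: there the $g'$-gain already present in \eqref{cons:eq:estimation globale psib} for $\chi_{B_1}\psi_b$ must be traded against the weaker $2\eta(1-\delta_0')$ gain allowed in \eqref{cons:eq:bound globale tildepsib L}, which is possible only after choosing $\eta$ small relative to $g'$ and relative to the finite set of values $\{1-\delta_n:\,0\le n\le n_0\}$ selected through the non-degeneracy condition \eqref{intro:eq:condition deltan}.
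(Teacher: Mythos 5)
Your proposal is correct and follows essentially the same route as the paper's proof: the same explicit formula for $\tilde\psi_b$ (the paper's \fref{cons:eq:def tildepsib}, i.e.\ $\chi_{B_1}\psi_b$ plus the external tails $(1-\chi_{B_1})(b_1^{(0,1)}\Lambda Q+b_1^{(1,\cdot)}.\nabla Q)$ plus the cutoff commutators supported in $B_1\leq |y|\leq 2B_1$), the same immediate local bounds, and the same term-by-term annulus estimates based on admissibility/homogeneity, $|b_{1,s}^{(0,1)}|\lesssim |b_1^{(0,1)}|^2$, $B_1=|b_1^{(0,1)}|^{-\frac{1+\eta}{2}}$ and $d=2\gamma+4m_0+4\delta_0$. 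The only nuance is that at $j=s_L$ the limiting $2\eta(1-\delta_0')$ gain is dictated by the cutoff commutators acting on $b^{(n,k)}_{L_n}T^{(n,k)}_{L_n}$ (which carry no $g'$ gain and yield exactly $2\eta(1-\delta_n)$), rather than by a trade-off in the $\chi_{B_1}\psi_b$ piece, but the annulus arithmetic you set up produces precisely this.
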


\begin{proof}[Proof of Proposition \ref{cons:pr:tildeQb}]

First, we compute the expression of the new error term by rewriting the left hand side of \fref{cons:eq:partialstildeQb} using \fref{cons:eq:partialsQb} and the fact that $F(Q)=0$:
\be \label{cons:eq:def tildepsib}
\begin{array}{r c l}
\tilde{\psi}_b &=&  \chi_{B_1}\psi_b + \partial_s(\chi_{B_1})\tilde{\alpha_b} -\left[F(Q+\chi_{B_1}\alpha_b)-F(Q)-\chi_{B_1}\left(F(Q+\alpha_b)-F(Q)\right)\right]\\
&&+b_1^{(0,1)}(\Lambda Q-\chi_{B_1}\Lambda Q)+ b_1^{(0,1)}(\Lambda (\chi_{B_1}\alpha_b)-\chi_{B_1}\Lambda \alpha_b)\\
&&+b_1^{(1,\cdot)}.(\nabla Q-\chi_{B_1}\nabla Q)+b_1^{(0,1)}.(\nabla(\chi_{B_1}\alpha_b)-\chi_{B_1}\nabla \alpha_b).
\end{array} 
\ee

\noindent \textbf{Local bounds}. In the previous identity, one clearly sees that all the terms, except $\chi_{B_1}\psi_b$, have their support in $B_1\leq|y|$. Thus, for $B\leq B_1$, the bound \fref{cons:eq:estimation locale tildepsib} is a direct consequence of the local bound \fref{cons:eq:estimations locales psib} for $\psi_b$.\\

\noindent \textbf{Global bounds}. Let $m_1+1\leq j\leq s_L$. We will prove the bounds \fref{cons:eq:bound globale tildepsib jleqL-1} and \fref{cons:eq:bound globale tildepsib L} by proving that this estimate holds for all terms in the right hand side of \fref{cons:eq:def tildepsib}. The reasoning to prove the estimates will be similar from one term to another. For this reason, we shall go quickly whenever an argument has already been used earlier.

\noindent - \emph{The $\chi_{B_1}\psi_b$ term}. As $H=-\Delta+V$ for $V$ a smooth potential with $\partial^{\mu}V\lesssim (1+|y|)^{-2-|\mu|}$ from \fref{cons:eq:asymptotique V}, and as $(\partial_r^k (\chi_{B_1}))(r)=B_1^{-k}\partial_r^k \chi (\frac{r}{B_1})$  there holds the identity:
$$
H^j(\chi_{B_1}\psi_b)=\chi_{B_1}H^{j}\psi_b +\sum_{\mu\in \mathbb N^d, \ 0\leq |\mu| \leq 2j-1}^j f_{\mu} \partial^{\mu} \psi_b
$$
where for each $\mu\in \mathbb N^d, \ 0\leq |\mu| \leq j-1$, $f_{\mu}$ has its support in $B_1\leq |x|\leq 2B_1$ and satisfies: $|f_{\mu}|\leq C(L) B_1^{-(2j-|\mu|)}$. Using \fref{cons:eq:estimation globale psib} and \fref{cons:eq:estimation globale nablapsib} we obtain:
\be \label{cons:eq:bound chiB1psib}
\begin{array}{r c l}
\int_{\mathbb R^d} |H^j(\chi_{B_1}\psi_b)|^2dy &\leq &C(L)|b_1^{(0,1)}|^{2(j-m_0)+2(1-\delta_0)+g'-C(L)\eta} \\
&&+\sum_{\mu\in \mathbb N^d, \ 0\leq |\mu| \leq 2j-1}^j B_1^{-(4j-2|\mu|)} b_1^{2(\frac{|\mu|}{2}-m_0+2(1-\delta_0)+g'-C(L)\eta)}\\
&\leq & C(L) |b_1^{(0,1)}|^{2(j-m_0)+2(1-\delta_0)+g'-C(L)\eta}.
\end{array}
\ee
Similarly, one obtains for any integer $j'$ with $0\leq j' \leq 2s_L-2$:
\be  \label{cons:eq:bound nablachiB1psib}
\int_{\mathbb R^d} |\nabla^{j'}(\chi_{B_1}\psi_b)|^2\leq  C(L) |b_1^{(0,1)}|^{2(\frac{j'}{2}-m_0)+2(1-\delta_0)+g'-C(L)\eta}.
\ee
Using interpolation, this estimate remains true for any real number $j'$ with $0\leq j' \leq 2s_L-2$.

\noindent - \emph{The $\partial_s(\chi_{B_1}) \alpha_b$ term}. We first split from \fref{cons:eq:def Qb}:
\be \label{cons:eq:bound partialschiB1alphab 1}
\partial_s(\chi_{B_1}) \alpha_b= \partial_s(\chi_{B_1}) \left(\sum_{(n,k,i)\in \mathcal I}b_i^{(n,k)}T_i^{(n,k)}+\sum_{i=2}^{L+2}S_i\right)
\ee
We compute $\partial_s(\chi_{B_1})=(b_1^{(0,1)})^{-1}b_{1,s}^{(0,1)}\frac{|y|}{B_1}(\partial_r \chi_{B_1})(\frac{y}{B_1})$. One first treat the $S_i$ terms. As we already explained in the study of the $\chi_{B_1}\psi_b$ term one has:
$$
H^{j}( \partial_s(\chi_{B_1}) S_i)=\sum_{\mu \in \mathbb N^d, \ |\mu|\leq 2j} f_{\mu} \partial^{\mu}S_i
$$
with $f_{\mu}$ a smooth function, with support in $B_1\leq |x|\leq 2B_1$ and satisfying $|f_{\mu}|\leq C(L) b_1^{(0,1)} B_1^{-(2j-|\mu|_1)}$ (because $|b_{1,s}^{(0,1)}|\lesssim |b_1^{(0,1)}|^2$ from \fref{cons:bd bs}). As $S_i$ is homogeneous of degree $(i,-\gamma-g')$ in the sense of Definition \ref{cons:def:fonctions homogenes} from \fref{cons:eq:degre Si} and $|b_i^{(n,k)}|\lesssim |b_1^{(0,1)}|^{\frac{\gamma-\gamma_n}{2}+i}$ we get using Lemma \ref{cons:lem:proprietes fonctions homogenes}:
\be \label{cons:eq:bound partialschiB1Si}
\int_{\mathbb{R}^d}|H^{j}( \partial_s(\chi_{B_1}) S_i)|^2dy \leq C(L) |b_1^{(0,1)}|^{2(j-m_0)+2(1-\delta_0)+g'-C(L)\eta}.
\ee
Now we treat the $T_i^{(n,k)}$ terms in the identity \fref{cons:eq:bound partialschiB1alphab 1}. Let $(i,n,k)\in \mathcal I$. Then again one has the decomposition:
$$
H^{j}[\partial_s(\chi_{B_1}) b^{(n,k)}_iT^{(n,k)}_i]= b_i^{(n,k)}\sum_{\mu \in \mathbb N^d, \ |\mu|\leq 2j} f_{\mu} \partial^{\mu}T_i^{n,k}
$$
with $f_{\mu}$ a smooth function, with support in $B_1\leq |y|\leq 2B_1$ and satisfying $|f_{\mu}|\leq C(L) b_1^{(0,1)} B_1^{-(2j-|\mu|)}$. As $T^{(n,k)}_i$ is an admissible profile of degree $(-\gamma_n+2i)$ in the sense of Definition \ref{cons:eq:fonctions admissibles} from \fref{cons:eq:def Tnki} and Lemma \ref{cons:lem:Tni}, $\partial^{\mu}T_i^{n,k}$ is admissible of degree $(-\gamma_n+2i-|\mu|)$ from Lemma \ref{cons:lem:proprietes fonctions admissibles} and we compute:
$$
\begin{array}{r c l}
\int_{\mathbb R^d} |b_i^{(n,k)} f_{\mu} \partial^{\mu}T_i^{n,k}|^2dy & \leq & \frac{C(L) |b_1^{(0,1)}|^{\gamma-\gamma_n+2i+2}}{B_1^{2(2j-|\mu_|1)}} \int_{B_1}^{2B_1} r^{-2\gamma_n+4i-2|\mu|_1}r^{d-1}dr \\
&\leq & C(L) |b_1^{(0,1)}|^{2(j-m_0)+2(1-\delta_0)+\eta(2j-2i-2\delta_n-2m_n)} 
\end{array}
$$
As $(i,n,k)\in \mathcal I$, $i\leq L_n$ so if $j=s_L$ one has: $2j-2i-2\delta_n-2m_n\geq 2-2\delta_n$. Therefore we have proved the bound (we recall that $\delta_0'=\underset{0\leq n \leq n_0}{\text{max}}\delta_n\in (0,1)$):
\be \label{cons:eq:bound partialschiB1Tnki}
\ba{r c l}
&\int_{\mathbb R^d} |H^j(\partial_s(\chi_{B_1}) b_i^{(n,k)}T_i^{(n,k)})|^2dy \\
\leq & \left\{ \begin{array}{l l} C(L)|b_1^{(0,1)}|^{2(j-m_0)+2(1-\delta_0)-C(L)\eta } \ \  \text{if} \ m_0+1\leq j<s_L, \\ C(L)|b_1^{(0,1)}|^{2L+2+2(1-\delta_0)+\eta (1-\delta_0')} \ \ \text{if} \ j=s_L. \end{array} \right.
\ea
\ee
From the decomposition \fref{cons:eq:bound partialschiB1alphab 1}, the bounds \fref{cons:eq:bound partialschiB1Si} and \fref{cons:eq:bound partialschiB1Tnki}, we deduce the bound:
\be \label{cons:eq:bound partialschiB1alphab}
\ba{r c l}
&\int_{\mathbb R^d} |H^j(\partial_s(\chi_{B_1}) \alpha_b|^2dy \\
\leq & \left\{ \begin{array}{l l} C(L)|b_1^{(0,1)}|^{2(j-m_0)+2(1-\delta_0)-C(L)\eta } \ \ \text{if} \ 0\leq j<s_L, \\ C(L)|b_1^{(0,1)}|^{2L+2+2(1-\delta_0)}(|b_1^{(0,1)}|^{2\eta (1-\delta_0')}+|b_1^{(0,1)}|^{g'-C(L)\eta}) \ \ \text{if} \ j=s_L. \end{array} \right.
\ea
\ee
Using verbatim the same arguments, one gets that for any integer $0\leq j'\leq 2s_L-2$:
\be \label{cons:eq:bound nablapartialschiB1alphab}
\int_{\mathbb R^d} |\nabla^{j'}(\partial_s(\chi_{B_1}) \alpha_b|^2dy\leq  C(L)|b_1^{(0,1)}|^{2(\frac{j'}{2}-m_0)+2(1-\delta_0)-C(L)\eta }.
\ee
which remains true for any real number $j'$ with $0\leq j'\leq 2s_L-2$ from interpolation.

\noindent - \emph{The $F(Q+\chi_{B_1}\alpha_b)-F(Q)-\chi_{B_1}(F(Q+\alpha_b)-F(Q))$ term}. It writes:
\be \label{cons:eq:bound F1}
\begin{array}{r c l}
&F(Q+\chi_{B_1}\alpha_b)-F(Q)-\chi_{B_1}(F(Q+\alpha_b)-F(Q)) \\
=&\Delta (\chi_{B_1}\alpha_b)-\chi_{B_1}\Delta \alpha_b +(Q+\chi_{B_1}\alpha_b)^p-Q^p-\chi_{B_1}((Q+\alpha_b)^p-Q^p).
\end{array}
\ee
We now prove the bound for the two terms that have appeared. From the identity:
$$
\Delta (\chi_{B_1}\alpha_b)-\chi_{B_1}\Delta \alpha_b= \Delta (\chi_{B_1}) \alpha_b+2\nabla \chi_{B_1}.\nabla \alpha_b,
$$
as $\chi$ is radial and as $(\partial_r^k (\chi_{B_1}))(r)=B_1^{-k}\partial_r^k \chi (\frac{r}{B_1})$, one sees that this term can be treated exactly the same we treated the previous term: $\partial_s(\chi_{B_1}) \alpha_b $. This is why we claim the following estimates that can be proved using exactly the same arguments:
\be \label{cons:eq:bound DeltachiB1alphab}
\ba{r c l}
& \int_{\mathbb R^d} |H^j(\Delta (\chi_{B_1}\alpha_b)-\chi_{B_1}\Delta \alpha_b)|^2 dy\\
\leq & \left\{ \begin{array}{l l} C(L)|b_1^{(0,1)}|^{2(j-m_0)+2(1-\delta_0)-C(L)\eta } \ \  \text{if} \ m_0+1\leq j<s_L, \\ C(L)|b_1^{(0,1)}|^{2L+2+2(1-\delta_0)}(|b_1^{(0,1)}|^{2\eta (1-\delta_0')}+|b_1^{(0,1)}|^{g'-C(L)\eta}) \ \ \text{if} \ j=s_L. \end{array} \right.
\ea
\ee
We now turn to the other term in \fref{cons:eq:bound F1} that can be rewritten as:
$$
(Q+\chi_{B_1}\alpha_b)^p-Q^p-\chi_{B_1}((Q+\alpha_b)^p-Q^p)=\sum_{k=2}^p C_k^pQ^{p-k}\chi_{B_1}(\chi_{B_1}^{k-1}-1)\alpha_b^k.
$$
All the terms are localized in the zone $B_1\leq |y|\leq 2B_1$. From the definition \fref{cons:eq:def Qb} of $\alpha_b$, \fref{cons:eq:degre Si}, \fref{cons:eq:asymptotique Q} and Lemma \ref{cons:lem:proprietes fonctions homogenes}, for each $2\leq k\leq p$ one has that $Q^{p-k}\alpha_b^k$ is a finite sum of homogeneous profiles of degree $(i,-\gamma-\alpha-2)$ for $i\geq k$, yielding:
\be \label{cons:eq:bound nonlin}
\ba{r c l}
&\int_{\mathbb R^d}  |H^j((Q+\chi_{B_1}\alpha_b)^p-Q^p-\chi_{B_1}((Q+\alpha_b)^p-Q^p))|^2dy \\
\leq & C(L) |b_1^{(0,1)}|^{2(j-m_0)+2(1-\delta_0)+\alpha-C(L)\eta}.
\ea
\ee
From the decomposition \fref{cons:eq:bound F1} and the estimates \fref{cons:eq:bound DeltachiB1alphab} and \fref{cons:eq:bound nonlin} one gets:
\be \label{cons:eq:bound F}
\ba{r c l}
& \int_{\mathbb R^d} |H^j(F(Q+\chi_{B_1}\alpha_b)-F(Q)-\chi_{B_1}(F(Q+\alpha_b)-F(Q)))|^2dy\\
\leq & C(L) \left\{ \begin{array}{l l} |b_1^{(0,1)}|^{2(j-m_0)+2(1-\delta_0)-C(L)\eta } \ \ \text{if} \ m_0+1\leq j<s_L, \\ |b_1^{(0,1)}|^{2L+2+2(1-\delta_0)}(|b_1^{(0,1)}|^{2\eta (1-\delta_0')}+|b_1^{(0,1)}|^{\alpha-C(L)\eta}) \ \ \text{if} \ j=s_L. \end{array} \right.
\ea
\ee
As for the study of the two previous terms the same methods yield the analogue estimate for $\nabla^{j'}[F(Q+\chi_{B_1}\alpha_b)-F(Q)-\chi_{B_1}(F(Q+\alpha_b)-F(Q))]$ for any integer $0\leq j'\leq 2s_L-2$, and by interpolation, we obtain for any real number $j'$ with $0\leq j'\leq 2s_L-2$:
\be \label{cons:eq:bound nablaF}
\ba{r c l}
&\int_{\mathbb R^d} |\nabla^{j'}(F(Q+\chi_{B_1}\alpha_b)-F(Q)-\chi_{B_1}(F(Q+\alpha_b)-F(Q)))|^2dy \\
\leq & C(L) |b_1^{(0,1)}|^{2(\frac{j'}{2}-m_0)+2(1-\delta_0)-C(L)\eta }.
\ea
\ee

\noindent - \emph{The $b_1^{(0,1)}(\Lambda Q-\chi_{B_1}\Lambda Q)$ term}. As $\partial^{\mu} (\Lambda Q) \leq C(\mu)(1+|y|)^{-\gamma-|\mu|}$ for all $\mu \in \mathbb N^d$ from \fref{cons:eq:asymptotique T0n} and $H\Lambda Q=0$ one computes:
\be \label{cons:eq:bound LambdaQ}
\begin{array}{r c l}
\int_{\mathbb R^d} |H^j(b_1^{(0,1)}(\Lambda Q-\chi_{B_1}\Lambda Q)) |^2dy &\leq &C(j)|b_1^{(0,1)}|^2 \int_{B_1}^{2B_1} r^{-2\gamma-4j}r^{d-1}dr\\
&\leq& C(j) |b_1^{(0,1)}|^{2(j-m_0)+2(1-\delta_0)+2\eta(j-m_0-\delta_0)}
\end{array}
\ee
with for $j=s_L$, $s_L-m_0-\delta_0=L+ 1-\delta_0>1-\delta_0$. For any integer $j'$ with $ E[s_c]\leq j'\leq 2s_L-2$, similar reasonings yield the estimate:
$$
\int_{\mathbb R^d} |\nabla^{j'}(b_1^{(0,1)}(\Lambda Q-\chi_{B_1}\Lambda Q)) |^2dy \leq C(j') |b_1^{(0,1)}|^{2(\frac{j'}{2}-m_0)+2(1-\delta_0)-C(j')\eta}.
$$
By interpolation, one has for any real number $j'$ with $ E[s_c]\leq j'\leq 2s_L-2$:
\be \label{cons:eq:bound nablaLambdaQ}
\int_{\mathbb R^d} |\nabla^{j'}(b_1^{(0,1)}(\Lambda Q-\chi_{B_1}\Lambda Q)) |^2dy \leq C(j') |b_1^{(0,1)}|^{2(\frac{j'}{2}-m_0)+2(1-\delta_0)-C(j')\eta}.
\ee

\noindent - \emph{The $b_1^{(0,1)}(\Lambda (\chi_{B_1}\alpha_b)-\chi_{B_1}\Lambda \alpha_b)$ term}. First we write this term as:
$$
b_1^{(0,1)}(\Lambda (\chi_{B_1}\alpha_b)-\chi_{B_1}\Lambda \alpha_b = b_1^{(0,1)}(y.\nabla \chi_{B_1})\alpha_b.
$$
Now, we notice that $b_1^{(0,1)}(y.\nabla \chi_{B_1})=b_1^{(0,1)}\frac{|y|}{B_1}(\partial_r \chi)(\frac{|y|}{B_1})$ is very similar to $\partial_s(\chi_{B_1})=(b_1^{(0,1)})^{-1}b_{1,s}^{(0,1)}\frac{|y|}{B_1}(\partial_r \chi_{B_1})(\frac{y}{B_1})$, in the sense that it enjoys the same estimates, as $|b_{1,s}^{(0,1)}|\lesssim (b_1^{(0,1)})^2$ from \fref{cons:bd bs}. Thus, we can get exactly the same estimates for the term $b_1^{(0,1)}(\Lambda (\chi_{B_1}\alpha_b)-\chi_{B_1}\Lambda \alpha_b)$ that we obtained previously for the term $\partial_s(\chi_{B_1}) \alpha_b$ with verbatim the same methodology, yielding:
\be \label{cons:eq:bound Lambdaalphab}
\ba{r c l}
&\int_{\mathbb R^d} |H^j(b_1^{(0,1)}(\Lambda (\chi_{B_1}\alpha_b)-\chi_{B_1}\Lambda \alpha_b))|^2dy\\
\leq & \left\{ \begin{array}{l l} C(L)|b_1^{(0,1)}|^{2(j-m_0)+2(1-\delta_0)-C(L)\eta } \ \ \text{if} \ 0\leq j<s_L, \\ C(L)|b_1^{(0,1)}|^{2L+2+2(1-\delta_0)}(|b_1^{(0,1)}|^{2\eta (1-\delta_0')}+|b_1^{(0,1)}|^{g'-C(L)\eta}) \ \ \text{if} \ j=s_L, \end{array} \right. 
\ea
\ee
and for any integer $j'$ with $0\leq j'\leq 2s_L-2$:
\be \label{cons:eq:bound nablaLambdaalphab}
\int_{\mathbb R^d} |\nabla^{j'}(b_1^{(0,1)}(\Lambda (\chi_{B_1}\alpha_b)-\chi_{B_1}\Lambda \alpha_b))|^2dy\leq  C(L)|b_1^{(0,1)}|^{2(\frac{j'}{2}-m_0)+2(1-\delta_0)-C(L)\eta }.
\ee

\noindent - \emph{The $b_1^{(1,\cdot)}.(\nabla Q-\chi_{B_1}\nabla Q)$ term}. First we rewrite:
\be \label{cons:eq:bound nablaQ 1}
b_1^{(1,\cdot)}.(\nabla Q-\chi_{B_1}\nabla Q)=\sum_{i=1}^d b_1^{(1,i)}(1-\chi_{B_1})\partial_{y_i} Q.
\ee
Now let $i$ be an integer, $1\leq i \leq d$. From the asymptotic \fref{cons:eq:asymptotique Q} of the ground state $|\partial^{\mu} Q|\leq C(\mu) (1+|y|)^{-\frac{2}{p-1}-|\mu|}$ and the fact that $H\partial_{x_i}Q=0$ we deduce:
$$
\begin{array}{r c l}
\int_{\mathbb R^d} |H^j(b_1^{(1,i)}((1-\chi_{B_1})\partial_{y_i} Q)|^2dy & \leq & C(j) |b_1^{(0,1)}|^{\gamma-\gamma_1+2} \int_{B_1}^{2B_1} r^{-2\gamma_1-4j}r^{d-1}dr \\
&\leq & C(j)|b_1^{(0,1)}|^{2(j-m_0)-2(1-\delta_0)+2\eta(j-m_1-\delta_1)}.
\end{array}
$$
with for $j=s_L$, $s_L-m_1-\delta_1=L+m_0-m_1+1-\delta_1>1-\delta_1$. So we finally get, putting together the two previous equations:
\be \label{cons:eq:bound nablaQ}
\begin{array}{r c l}
\int_{\mathbb R^d} |H^j(b_1^{(1,\cdot)}.(\nabla Q-\chi_{B_1}\nabla Q)) |^2dy &\leq &C(j)|b_1^{(0,1)}|^2 \int_{B_1}^{+\infty} r^{-2\gamma-4j}r^{d-1}dr\\
&\leq& C(j) |b_1^{(0,1)}|^{2(j-m_0)-2(1-\delta_0)+2\eta(1-\delta_1)}.
\end{array}
\ee
Now, for any integer $j'$ with $ E[s_c]\leq j'\leq 2s_L-2$, as $E[s_c]>s_c-1$, similar reasonings yield the estimate:
$$
\int_{\mathbb R^d} |\nabla^{j'}(b_1^{(1,\cdot)}.(\nabla Q-\chi_{B_1}\nabla Q)) |^2dy \leq C(j') |b_1^{(0,1)}|^{2(\frac{j'}{2}-m_0)+2(1-\delta_0)-C(j')\eta}.
$$
By interpolation, one has for any real number $j'$ with $ E[s_c]\leq j'\leq 2s_L-2$:
\be \label{cons:eq:bound nablanablaQ}
\int_{\mathbb R^d} |\nabla^{j'}(b_1^{(1,\cdot)}.(\nabla Q-\chi_{B_1}\nabla Q)) |^2dy  \leq C(j') |b_1^{(0,1)}|^{2(\frac{j'}{2}-m_0)+2(1-\delta_0)-C(j')\eta}.
\ee

\noindent - \emph{The $b_1^{(0,1)}.(\nabla(\chi_{B_1}\alpha_b)-\chi_{B_1}\nabla \alpha_b)$ term}. We first rewrite:
$$
b_1^{(0,1)}.(\nabla(\chi_{B_1}\alpha_b)-\chi_{B_1}\nabla \alpha_b)=\sum_{i=1}^d b_1^{(1,i)} \partial_{y_i}(\chi_{B_1})\alpha_b.
$$
Let $i$ be an integer, $1\leq i \leq d$. For all $\mu\in \mathbb N^d$, $\partial^{\mu} (\chi_{B_1})\leq C(\mu)B_1^{-|\mu|}$. From \fref{cons:eq:def Qb} and \fref{cons:eq:degre Si}, $\alpha_b$ is a sum of homogeneous profiles of degree $(i,-\gamma)$. Using Lemma \ref{cons:lem:proprietes fonctions homogenes} one computes:
$$
\int_{\mathbb R^d} |H^j(b_1^{(1,i)} \partial_{y_i}(\chi_{B_1})\alpha_b)|^2dy \leq C(L) |b_1^{(0,1)}|^{2(j-m_0)+2(1-\delta_0)+\alpha-C(L)\eta}.
$$
With the two previous equations one has proved that:
\be \label{cons:eq:bound nablachiB1}
\int_{\mathbb R^d}|H^j(b_1^{(0,1)}.(\nabla(\chi_{B_1}\alpha_b)-\chi_{B_1}\nabla \alpha_b))|^2dy \leq C(L) |b_1^{(0,1)}|^{2(j-m_0)+2(1-\delta_0)+\alpha-C(L)\eta}.
\ee
Using verbatim the same arguments, one can prove that for any integer $0\leq j'\leq 2s_L-2$, the analogue estimate for $\nabla^{j'}(b_1^{(0,1)}.(\nabla(\chi_{B_1}\alpha_b)-\chi_{B_1}\nabla \alpha_b))$ holds. By interpolation, it gives that for any real number $0\leq j'\leq 2s_L-2$ there holds:
\be \label{cons:eq:bound nablanablachiB1}
\int_{\mathbb R^d}|\nabla^{j'}(b_1^{(0,1)}.(\nabla(\chi_{B_1}\alpha_b)-\chi_{B_1}\nabla \alpha_b))|^2dy\leq C(L) |b_1^{(0,1)}|^{2(\frac{j'}{2}-m_0)+2(1-\delta_0)+\alpha-C(L)\eta}.
\ee

\noindent - \emph{End of the proof}. For the estimate concerning the operator $H$ (resp. the operator $\nabla$), we have estimated all terms in the right hand side of \fref{cons:eq:def tildepsib} in \fref{cons:eq:bound chiB1psib}, \fref{cons:eq:bound partialschiB1alphab}, \fref{cons:eq:bound F}, \fref{cons:eq:bound LambdaQ}, \fref{cons:eq:bound Lambdaalphab}, \fref{cons:eq:bound nablaQ} and \fref{cons:eq:bound nablachiB1} (resp. the right hand side of \fref{cons:eq:def tildepsib} in \fref{cons:eq:bound nablachiB1psib}, \fref{cons:eq:bound nablapartialschiB1alphab}, \fref{cons:eq:bound nablaF}, \fref{cons:eq:bound nablaLambdaQ}, \fref{cons:eq:bound nablaLambdaalphab}, \fref{cons:eq:bound nablanablaQ} and \fref{cons:eq:bound nablanablachiB1}). Adding all these estimates, as $0<b_1^{(0,1)}\ll 1$ is a very small parameter, one sees that there exists $\eta_0:=\eta_0(L)$ such that for $0<\eta<\eta_0$, the bounds \fref{cons:eq:bound globale tildepsib jleqL-1} and \fref{cons:eq:bound globale tildepsib L} hold (resp. the bound \fref{cons:eq:bound globale nablatildepsib} holds).

\end{proof}

%%%%%%%%%%%%%%%%%%%%%%%%%%%%%%%%%%%%%%%%%%%%
%%%%%%%%%%%%%%%%%%%%%%%%%%%%%%%%%%%%%%%%%%%%

\subsection{Study of the approximate dynamics for the parameters}

In Proposition \ref{cons:pr:tildeQb} we have stated the existence of a profile $\tilde{Q}_b$ such that the force term $F(\tilde{Q}_b)$ generated by (NLH) has an almost explicit formulation in terms of the parameters $b=(b_i^{(n,k)})_{(n,k,i)\in \mathcal I}$ up to an error term $\tilde{\psi}_b$. Suppose that for some time, the solution that started at $\tilde{Q}_{b(0)}$ stays close to this family of approximate solutions, up to scaling and translation invariances, meaning that it can be written approximately as $\tau_{z(t)}\left(\tilde{Q}_{b(t),\frac{1}{\lambda(t)}}\right)$. Then $\tilde{Q}_{b(s)}$ is almost a solution of the renormalized flow \fref{cons:eq:flot renormalise} associated to the functions of time $\lambda(t)$ and $z(t)$, meaning that:
$$
\partial_s(\tilde{Q}_b)-\frac{\lambda_s}{\lambda}\Lambda \tilde{Q}_b-\frac{z_s}{\lambda}.\nabla \tilde{Q}_b-F(\tilde{Q}_b)\approx 0.
$$
Using the identity \fref{cons:eq:partialstildeQb} this means:
$$
-\left(b_1^{(0,1)}+\frac{\lambda_s}{\lambda}\right)\Lambda \tilde{Q}_b-(b_1^{1,\cdot)}+\frac{z_s}{\lambda}).\nabla \tilde{Q}_b+\chi_{B_1}\text{Mod}(s)\approx 0.
$$
From the very definition \fref{cons:eq:def Mod} of the modulation term $\text{Mod}(s)$, projecting the previous relation onto the different modes that appeared\footnote{This will be done rigorously in the next section.} yields:
\be \label{cons:eq:bs}
\left\{ \begin{array}{l l}
\frac{\lambda_s}{\lambda}=-b_1^{(0,1)},\\
\frac{z_s}{\lambda}=-b_1^{(1,\cdot)}, \\
b_{i,s}^{(n,k)}=-(2i-\alpha_n)b_1^{(0,1)}b_i^{(n,k)}+b_{i+1}^{(n,k)}, \ \  \forall (n,k,i)\in \mathcal I
\end{array}
\right.
\ee
with the convention $b_{L_n+1}^{(n,k)}=0$. The understanding of a solution starting at $\tilde{Q}_{b(0)}$ then relies on the understanding of the solutions of the finite dimensional dynamical system \fref{cons:eq:bs} driving the evolution of the parameters $b_i^{(n,k)}$. First we derive some explicit solutions such that $\lambda(t)$ touches $0$ in finite time, signifying concentration in finite time.

\begin{lemma}[Special solutions for the dynamical system of the parameters] \label{cons:lem:sol}

We recall that the renormalized time $s$ is defined by \fref{cons:eq:def s}. Let $\ell\leq L$ be an integer such that $2\alpha<\ell$. We define the functions:
\be \label{cons:eq:def barb}
\left\{ \begin{array}{l l}
\bar{b}_i^{(0,1)}(s)=\frac{c_i}{s^i} \ \ \text{for} \ \ 1\leq i \leq \ell ,\\
\bar{b}^{(0,1)}_i = 0 \ \ \text{for} \ \ \ell<i \leq L, \\
\bar{b}^{(n,k)}_i = 0 \ \ \text{for} \ \ \ (n,k,i)\in \mathcal{I} \ \text{with} \ n\geq 1,
\end{array} \right.
\ee
with $(c_i)_{1\leq i\leq \ell}$ being $\ell$ constants defined by induction as follows:
\be \label{cons:eq:def ci}
c_1=\frac{\ell}{2 \ell-\alpha} \ \ \text{and} \ \ c_{i+1}=-\frac{\alpha(\ell-i)}{2\ell-\alpha}c_i \ \ \text{for} \ \ 1\leq i \leq \ell-1 .
\ee
Then $\bar{b}=(\bar{b}_i^{(n,k)})_{(n,k,i)\in \mathcal I}$ is a solution of the last equation in \fref{cons:eq:bs}. Moreover, the solutions $\lambda(s)$ and $z(s)$ of the first two equations in \fref{cons:eq:bs} starting at $\lambda(0)=1$ and $z(0)=0$, taken in original time variable $t$ are $z(t)=0$ and:
\be \la{cons:eq:lambda}
\lambda (t)=\left( \frac{\alpha}{(2\ell-\alpha)s_0}\right)^{\frac{\ell}{\alpha}}\left( \frac{(2\ell-\alpha)}{\alpha}s_0-t \right)^{\frac{\ell}{\alpha}}.
\ee

\end{lemma}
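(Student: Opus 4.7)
The proof splits naturally into two independent verifications: showing that $\bar b$ solves the ODE system \fref{cons:eq:bs} for the $b$-parameters, and then integrating the resulting equation for $\lambda$ (the $z$-equation being trivial).

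My plan is to verify the $b$-equations directly, mode by mode. For every $(n,k,i)\in\mathcal I$ with $n\geq 1$ both sides of the ODE are identically zero, since $\bar b^{(n,k)}_i\equiv 0$ and the only coupling to the nonzero mode $\bar b_1^{(0,1)}$ is through the prefactor multiplying $\bar b^{(n,k)}_i$, which vanishes, while the inhomogeneous term $\bar b^{(n,k)}_{i+1}=0$ as well. For the radial modes $(0,1,i)$ with $\ell<i\leq L$ the same triviality holds, using the convention $\bar b^{(0,1)}_{L+1}=0$. The only non-trivial identities are for $1\leq i\leq \ell$. Plugging $\bar b^{(0,1)}_i(s)=c_i/s^i$ into $b_{i,s}^{(0,1)}=-(2i-\alpha)\bar b^{(0,1)}_1\bar b^{(0,1)}_i+\bar b^{(0,1)}_{i+1}$ and matching coefficients of $s^{-(i+1)}$ yields:
\begin{itemize}
\item for $i=\ell$ (using $\bar b^{(0,1)}_{\ell+1}=0$): $-\ell c_\ell=-(2\ell-\alpha)c_1 c_\ell$, which is exactly the defining equation $c_1=\ell/(2\ell-\alpha)$;
\item for $1\leq i<\ell$: $-ic_i=-(2i-\alpha)c_1c_i+c_{i+1}$, i.e. $c_{i+1}=\bigl((2i-\alpha)c_1-i\bigr)c_i$, and a direct computation gives $(2i-\alpha)c_1-i=-\alpha(\ell-i)/(2\ell-\alpha)$, which is the recursion \fref{cons:eq:def ci}.
\end{itemize}
So $\bar b$ solves the system; as a byproduct $c_\ell\neq 0$ since each factor $-\alpha(\ell-i)/(2\ell-\alpha)$ with $1\leq i\leq \ell-1$ is nonzero.

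For the translation equation, $z_s=-\lambda\bar b_1^{(1,\cdot)}=0$ with $z(0)=0$ gives $z\equiv 0$. For the scale, I integrate $\lambda_s/\lambda=-c_1/s$ with $\lambda(s_0)=1$ to get $\lambda(s)=(s_0/s)^{c_1}$. I then convert back to $t$ via $dt/ds=\lambda^2=(s_0/s)^{2c_1}$. The condition $2\ell>\alpha$ ensures $c_1>0$ and $2c_1-1=\alpha/(2\ell-\alpha)>0$, so the integral converges at $s=+\infty$:
\begin{equation*}
t=\int_{s_0}^s\Bigl(\tfrac{s_0}{s'}\Bigr)^{2c_1}ds'
=\tfrac{(2\ell-\alpha)s_0}{\alpha}-\tfrac{(2\ell-\alpha)}{\alpha}s_0^{2c_1}s^{-\alpha/(2\ell-\alpha)},
\end{equation*}
which identifies the blow-up time $T=(2\ell-\alpha)s_0/\alpha$ and gives
\begin{equation*}
s^{-\alpha/(2\ell-\alpha)}=\tfrac{\alpha}{(2\ell-\alpha)s_0^{2c_1}}(T-t).
\end{equation*}
Raising to the power $c_1(2\ell-\alpha)/\alpha=\ell/\alpha$, using $c_1-2c_1\ell/\alpha=-\ell/\alpha$, produces exactly \fref{cons:eq:lambda}.

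No real obstacle is expected: both parts are elementary verifications. The only step requiring mild care is the algebraic simplification $(2i-\alpha)c_1-i=-\alpha(\ell-i)/(2\ell-\alpha)$ in the recursion, together with tracking the exponents when converting the $s$-law for $\lambda$ into the $t$-law. The structure of the argument is transparent: the explicit form of $\bar b$ is engineered precisely so that the linear term $-(2i-\alpha)b_1^{(0,1)}b_i^{(0,1)}$ absorbs the time-derivative $\dot{\bar b}^{(0,1)}_i$ and the coupling $\bar b^{(0,1)}_{i+1}$ in a compatible way, which is why the recursion \fref{cons:eq:def ci} arises naturally.
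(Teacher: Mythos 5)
Your verification is correct, and it is exactly the "direct computation" that the paper leaves to the reader: the mode-by-mode check of the recursion for the $c_i$ and the integration of $\lambda_s/\lambda=-c_1/s$ followed by the change of time variable all match the intended argument and the exponent bookkeeping is right. Nothing further is needed.
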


\begin{proof}[Proof of Lemma \ref{cons:lem:sol}]

It is a direct computation that can safely be left to the reader.

\end{proof}

As $s_0>0$ and $2\ell>\alpha$, \fref{cons:eq:lambda} can be interpreted as: there exists $T>0$ with $\lambda(t)\approx (T-t)^{\frac{\ell}{\alpha}}$ as $t\rightarrow T$. Now, given $\frac{\alpha}{2}<\ell\leq L$, we want to know the exact number of instabilities of the particular solution $\bar{b}$. In addition, in Propositions \ref{cons:pr:Qb} and \ref{cons:pr:tildeQb}, we needed the a priori bounds $|b_i^{(n,k)}|\lesssim |b_1^{(0,1)}|^{\frac{\gamma-\gamma_n}{2}+i}$ to show sufficient estimates for the errors $\psi_b$ and $\tilde{\psi}_b$. Around the solution $\bar b$ defined by \fref{cons:eq:def barb},  $b_1^{(0,1)}$ is of order $s^{-1}$, and so the a priori bounds we need become\footnote{One notices that this bound holds for $\bar{b}_i^{(n,k)}$.} $b_i^{(n,k)}\lesssim s^{\frac{\gamma_n-\gamma}{2}-i}$. Therefore, by "stability" of $\bar b$ we mean stability with respect to this size and introduce the following renormalization for a solution of \fref{cons:eq:bs} close to $\bar{b}$:
\be \label{cons:eq:def Unki}
b_i^{(n,k)}= \bar{b}_i^{(n,k)}+\frac{U^{(n,k)}_i}{s^{\frac{\gamma-\gamma_n}{2}+i}}.
\ee
It defines a $\#\mathcal I$-tuple of real numbers $U=(U_i^{(n,k)})_{(n,k,i)\in \mathcal I}$, and we order the parameters as in \fref{cons:eq:def b} by
\be \label{cons:eq:ordre U}
U=(U_1^{(0,1)},...,U_L^{(0,1)},U_1^{(1,1)},...,U_{L_1}^{(1,1)},...,U_0^{(n_0,k(n_0))},...,U_{L_{n_0}}^{(n_0,k(n_0))})
\ee
In the following lemma we state the linear stability result for the renormalized perturbation $(U_i^{(n,k)})_{(n,k,i)\in \mathcal I}$.

\begin{lemma}(Linear stability of special solutions) \label{cons:lem:linearisation}

Suppose $b$ is a solution of the last equation in \fref{cons:eq:bs}. Define $U=(U_i^{(n,k)})_{(n,k,i)\in \mathcal I}$ by \fref{cons:eq:def Unki} and order it as in \fref{cons:eq:ordre U}. 
\begin{itemize}
\item[(i)] \emph{Linearized dynamics}: the time evolution of $U$ is given by:
\be \label{cons:eq:Us}
\partial_s U=\frac{1}{s}AU +O\left( \frac{|U|^2}{s}\right) ,
\ee
where $A$ is the bloc diagonal matrix:
$$
A =\begin{pmatrix} A_{\ell}& & &(0) \\ &\tilde{A}_1 & & \\ & &... & \\ (0) & & & \tilde{A}_{n_0}\end{pmatrix}.
$$
The matrix $A_{\ell}$ is defined by:
\be \label{cons:eq:def Aell}
A_{\ell}=\begin{pmatrix} -(2-\alpha)c_1 +\alpha \frac{\ell-1}{2\ell-\alpha} & 1 &  & & & & & & &  \\ . & . & . & & & & & & &  \\ -(2i-\alpha)c_i & & \alpha\frac{\ell-i}{2\ell-\alpha} & 1 & & & & & &  \\ . &  & & . &. & & & (0)& &  \\ -(2\ell-\alpha)c_{\ell} & & & & 0 & 1 & & & &  \\ 0 & & & & & -\alpha\frac{1}{2\ell-\alpha} & . & & &  \\  . & & & & & & . & 1 & &  \\ 0 & & (0) & & & & & -\alpha\frac{i-\ell}{2\ell-\alpha} &. &  \\ . & & & & & & & &  . & 1  \\ 0 & & & & & & & & & -\alpha \frac{(L-\ell)}{2\ell-\alpha}   \end{pmatrix},
\ee
The matrix $\tilde{A}_1$ is a bloc diagonal matrix constituted of $d$ matrices $\tilde{A}_1'$:
\be \label{cons:eq:def tildeA1}
\tilde{A}_1 =\begin{pmatrix} \tilde{A}_1'& & (0) \\  & . &  \\  (0) & & \tilde{A}_1' \end{pmatrix}, \ \ \tilde{A}_1'=\begin{pmatrix}  \alpha \frac{\ell-\frac{\alpha-1}{2}-1}{2\ell-\alpha} & 1 &  & & & (0) \\  & . & . & & &   \\  & & \alpha\frac{\ell-\frac{\alpha-1}{2}-i}{2\ell-\alpha} & 1 & &   \\  &  & & . &. &   \\  & & & & . & 1  \\ (0) & & & & & \alpha \frac{\ell-\frac{\alpha-1}{2}-L_1}{2\ell-\alpha}   \end{pmatrix},
\ee
and for $2\leq n\leq n_0$ the matrix $\tilde{A}_n$ is a bloc diagonal matrix constituted of $k(n)$ times the matrix $\tilde{A}_n'$:
\be \label{cons:eq:def tildeAn}
\tilde{A}_n =\begin{pmatrix} \tilde{A}_n'& & (0) \\  & . &  \\  (0) & & \tilde{A}_n' \end{pmatrix}, \ \  \tilde{A}_n'=\begin{pmatrix}  \alpha \frac{\ell-\frac{\gamma-\gamma_n}{2}}{2\ell-\alpha} & 1 &  & & & (0) \\  & . & . & & &   \\  & & \alpha\frac{\ell-\frac{\gamma-\gamma_n}{2}-i}{2\ell-\alpha} & 1 & &   \\  &  & & . &. &   \\  & & & & . & 1  \\ (0) & & & & & \alpha \frac{\ell-\frac{\gamma-\gamma_n}{2}-L_n}{2\ell-\alpha}   \end{pmatrix}.
\ee
\item[(ii)] \emph{Diagonalization, stability and instability}: $A$ is diagonalizable because $A_{\ell}$ and $\tilde{A}_n$ for $1\leq n\leq n_0$ are. $A_{\ell}$ is diagonalizable into the matrix 

\noindent $\text{diag}(-1, \frac{2\alpha}{2\ell-\alpha},.,\frac{i\alpha}{2\ell-\alpha},., \frac{\ell\alpha}{2\ell-\alpha},\frac{-1}{2\ell-\alpha},.,\frac{\ell-L}{2\ell-\alpha})$. We denote the eigenvector of $A$ associated to the eigenvalue $-1$ by $v_1$ and the eigenvectors associated to the unstable modes $\frac{2\alpha}{\ell-\alpha},...,\frac{\ell \alpha}{\ell-\alpha}$ of $A$ by $v_2,...,v_{\ell}$. They are a linear combination of the $\ell$ first components only. That is to say there exists a $\# \mathcal I \times \#\mathcal I$ matrix coding a change of variables:
\be \label{linearized:eq:def P}
P_{\ell}:=\begin{pmatrix} P_{\ell}' & 0 \\ 0 & \text{Id}_{\# \mathcal I-\ell} \end{pmatrix},
\ee
with $P_{\ell}'$ an invertible $\ell \times \ell$ matrix and $\text{Id}_{\# \mathcal I-\ell}$ the $(\# \mathcal I-\ell ) \times (\# \mathcal I-\ell)$ identity matrix such that:
\be \label{cons:eq:diagonalisation}
P_{\ell}A P_{\ell}^{-1}=\begin{pmatrix} A'_{\ell}& & &(0) \\ &\tilde{A}_1 & & \\ & &... & \\ (0) & & & \tilde{A}_{n_0}\end{pmatrix}
\ee
\be \label{linearized:eq:diagonalisation}
A'_{\ell}=\begin{pmatrix} -1 &  & (0)  & & q_1 & & &  \\  & \frac{2\alpha}{ 2\ell-\alpha} &  &  & q_2 & & &  \\  & & . &  & & & &  \\  &  & & \frac{ \ell \alpha}{2\ell-\alpha} &  q_{\ell} & &  (0) &  \\  & & & & \frac{-\alpha}{ 2 \ell-\alpha} & 1 & &  \\   & & & & & . & . &  \\  & & (0) & & & & .& 1   \\  & &  & & & &  & \alpha\frac{\ell-L}{ 2\ell-\alpha}  \end{pmatrix}.
\ee
with $(q_i)_{1\leq i \leq \ell}\in \mathbb R^{\ell}$ being some fixed coefficients. $\tilde{A}_1'$ has $\text{max}(E[i_1],0)$ non negative eigenvalues and $L_1-\text{max}(E[i_1],0)$ strictly negative eigenvalues ($i_n$ being defined by \fref{intro:eq:def in}). For $2\leq n\leq n_0$, $\tilde{A}_n'$ has $\text{max}(E[i_n]+1,0)$ non negative eigenvalues and $L_n+1-\text{max}(E[i_n]+1,0)$ strictly negative eigenvalues.
\end{itemize}

\end{lemma}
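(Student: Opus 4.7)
My plan splits into three steps: deriving the linearized system \fref{cons:eq:Us} with the correct block structure for $A$, analyzing the easy (bidiagonal) blocks, and then tackling the main obstacle, namely the spectrum of the upper-left $\ell\times\ell$ block of $A_\ell$.

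First I substitute \fref{cons:eq:def Unki} into the ODE \fref{cons:eq:bs}, subtract the equation satisfied by $\bar b$, and read off the terms linear in $U$ after multiplying through by $s^{(\gamma-\gamma_n)/2+i}$; quadratic and higher order contributions produce the $O(|U|^2/s)$ remainder in \fref{cons:eq:Us}. Using $c_1=\ell/(2\ell-\alpha)$ together with the algebraic identity $\gamma-\gamma_n+\alpha_n=\alpha$ (direct from $\alpha_n=\gamma_n-2/(p-1)$ and $\alpha=\gamma-2/(p-1)$), the coefficient of $U_i^{(n,k)}/s$ in $\partial_s U_i^{(n,k)}$ simplifies to $\alpha(\ell-(\gamma-\gamma_n)/2-i)/(2\ell-\alpha)$, producing the diagonal entries of $\tilde A_n'$ (and of the diagonal portion of $A_\ell$); the term $b_{i+1}^{(n,k)}$ gives the $+1$ superdiagonal within each $(n,k)$ block; and the cross term $-(2i-\alpha_n)\bar b_i^{(n,k)}U_1^{(0,1)}$ contributes only in the $n=0$ sector for $i\le\ell$ (since $\bar b_i^{(n,k)}\equiv 0$ for $n\ge 1$ and for $n=0$, $i>\ell$), producing the first-column entries $-(2i-\alpha)c_i$ characteristic of $A_\ell$. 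The absence of any other cross-$(n,k)$ coupling then yields the block-diagonal structure $\mathrm{diag}(A_\ell,\tilde A_1,\ldots,\tilde A_{n_0})$ claimed in (i).

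For the easy blocks, each $\tilde A_n'$ is strictly upper bidiagonal with pairwise distinct diagonal entries $\alpha(\ell-(\gamma-\gamma_n)/2-i)/(2\ell-\alpha)$, so its spectrum coincides with its diagonal and eigenvectors are determined by back-substitution. Using \fref{intro:eq:def in}, nonnegativity of the diagonal entry is equivalent to $i\le i_n$, giving $\max(E[i_1],0)$ nonnegative eigenvalues for $\tilde A_1'$ (indices $i=1,\ldots,L_1$) and $\max(E[i_n]+1,0)$ for $\tilde A_n'$ with $n\ge 2$ (indices $i=0,\ldots,L_n$). Diagonalizability of $\tilde A_n$ follows since it is a block-diagonal stack of copies of $\tilde A_n'$. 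Similarly, the lower-right $(L-\ell)\times(L-\ell)$ sub-block of $A_\ell$ is strictly upper bidiagonal with distinct negative diagonal entries $-\alpha j/(2\ell-\alpha)$, $j=1,\ldots,L-\ell$; one checks that any vector with vanishing first $\ell$ coordinates automatically satisfies the equations of rows $1,\ldots,\ell-1$ of $A_\ell$, so the $L-\ell$ associated eigenvectors can be chosen supported in the last $L-\ell$ coordinates of $A_\ell$.

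The main obstacle is the spectrum of the upper-left $\ell\times\ell$ block $B$ of $A_\ell$. My strategy is to seek eigenvectors $v\in\mathbb R^L$ supported in the first $\ell$ coordinates (so $v_{\ell+1}=\cdots=v_L=0$): rows $1,\ldots,\ell-1$ of $(A_\ell-\mu I)v=0$ yield the recurrence
\begin{equation*}
v_{i+1}=\Bigl[\mu-\tfrac{\alpha(\ell-i)}{2\ell-\alpha}\Bigr]v_i+(2i-\alpha)\,c_i\,v_1,
\end{equation*}
while row $\ell$ together with $v_{\ell+1}=0$ enforces the closure $\mu\,v_\ell+(2\ell-\alpha)\,c_\ell\,v_1=0$. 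Setting $v_1=1$ and iterating, the spectral problem reduces to a polynomial identity of degree $\ell$ in $\mu$; the key computation, by induction on $\ell$ and making essential use of the defining recurrence $c_{i+1}=-\alpha(\ell-i)/(2\ell-\alpha)\,c_i$ from \fref{cons:eq:def ci}, is to verify the factorization
\begin{equation*}
P_\ell(\mu)=(\mu+1)\prod_{k=2}^{\ell}\Bigl(\mu-\tfrac{k\alpha}{2\ell-\alpha}\Bigr).
\end{equation*}
As a consistency check, the eigenvector $(-c_1,-2c_2,\ldots,-\ell c_\ell,0,\ldots,0)$ associated to $\mu=-1$ is precisely the infinitesimal generator of the time-translation symmetry $s\mapsto s+\tau$ of \fref{cons:eq:bs} (computed via $s^i\partial_s \bar b_i^{(0,1)}=-ic_i/s$); the remaining eigenvalues have no obvious dynamical interpretation but emerge algebraically. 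With the $\ell$ eigenvectors of $B$ thus in hand, $P_\ell'$ is constructed by placing them as columns, and $P_\ell$ of \fref{linearized:eq:def P} acts as the identity on the remaining $\#\mathcal I-\ell$ coordinates; since $P_\ell$ does not touch indices $>\ell$, conjugation preserves the lower bidiagonal block of $A_\ell$ and transforms the coupling column at position $\ell+1$ into the coefficients $q_i$ (which are exactly the entries of the $\ell$-th column of $P_\ell'$), producing \fref{linearized:eq:diagonalisation}. Full diagonalizability of $A$ then follows from distinctness of the eigenvalues identified above (and, in the event of an accidental coincidence between an active eigenvalue and a stable bidiagonal one, from the explicit availability of two linearly independent eigenvectors for the coincident value).
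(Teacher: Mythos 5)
Your proof follows the same route as the paper's. Part (i) is derived identically (substitute \fref{cons:eq:def Unki} into \fref{cons:eq:bs}, use $\bar b_1^{(0,1)}=c_1/s$ with $c_1=\tfrac{\ell}{2\ell-\alpha}$ and $\gamma-\gamma_n+\alpha_n=\alpha$ for the diagonal entries, and note that the only coupling comes from $\bar b_i^{(0,1)}\neq 0$, $1\leq i\leq \ell$), and your treatment of the bidiagonal blocks $\tilde A_n'$ via distinct diagonal entries is fine (indeed cleaner than the paper's "upper triangular hence diagonalizable"). For the central point, the spectrum of the $\ell\times\ell$ corner of $A_\ell$, you arrive at exactly the factorization $(\mu+1)\prod_{k=2}^{\ell}\bigl(\mu-\tfrac{k\alpha}{2\ell-\alpha}\bigr)$ that the paper obtains as the characteristic polynomial; your eigenvector back-substitution closed by row $\ell$ is equivalent to the paper's determinant expansion along the last row. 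However, verifying that factorization is where essentially all of the paper's work lies (the telescoping identities among the polynomials $A_i,B_i,C_i$ built from the recurrence \fref{cons:eq:def ci}); announcing it as "to verify by induction" leaves the main computational content of the proof undone. The remark that the $-1$ mode is the time-translation generator $s^i\partial_s\bar b_i^{(0,1)}$ is a nice check absent from the paper, but it accounts for only one root.

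One step as written is false: the stable eigenvectors of $A_\ell$ cannot be chosen supported in the last $L-\ell$ coordinates. Rows $1,\dots,\ell-1$ are indeed automatically satisfied, but row $\ell$ of $(A_\ell-\mu)v=0$ reads $-(2\ell-\alpha)c_\ell v_1-\mu v_\ell+v_{\ell+1}=0$, which for such a $v$ forces $v_{\ell+1}=0$, and back-substitution through the bidiagonal block then annihilates $v$ entirely; equivalently, the bidiagonal eigenvector for $-\tfrac{j\alpha}{2\ell-\alpha}$ has nonvanishing first entry, so it cannot lift trivially. The correct construction, for $\mu$ not in the spectrum of the corner block $B$, is $v=\bigl(-(B-\mu)^{-1}Ew,\,w\bigr)$ with $E$ the single coupling entry and $w$ the bidiagonal eigenvector; this also invalidates your parenthetical fallback for an accidental coincidence $-1=-\tfrac{j\alpha}{2\ell-\alpha}$, since in that case you do not have two explicit independent eigenvectors at hand. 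The damage is limited because, as in the paper, only the diagonalization of the $\ell\times\ell$ corner (hence \fref{linearized:eq:def P}--\fref{linearized:eq:diagonalisation}) is used later, but the full-diagonalizability claim in (ii) needs either the generic distinctness of the eigenvalues or a different argument in the coincident case. Finally, a convention slip: with the lemma's normalization $P_\ell A P_\ell^{-1}$, the matrix $P_\ell'$ must be the inverse of your matrix of eigenvectors, and the $q_i$ are the entries of the $\ell$-th column of that inverse, not of the eigenvector matrix itself.
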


\begin{proof}[Proof of Lemma \ref{cons:lem:linearisation}]

\textbf{Proof of (i)}. as $b$ and $\bar b$ are solutions of \fref{cons:eq:bs}, we compute (with the convention $\bar{b}_{L_n+1}^{(n,k)}=0$ and $U_{L_n+1}^{(n,k)}=0$):
$$
\ba{r c l}
U^{(n,k)}_{i,s}&=&\frac{1}{s} \Bigl{[} \left(\frac{\gamma-\gamma_n}{2}+i -(2i-\alpha_n)\bar{b}_1^{(0,1)} s\right) U_i^{(n,k)}-(2i-\alpha_n)\bar{b}_i^{(n,k)}s^{\frac{\gamma-\gamma_n}{2}+i}U_1^{(0,1)}\\
&&-(2k-\alpha_n) U_1^{(0,1)}U_i^{(n,k)}+U_{i+1}^{(n,k)}\Bigr{]}.
\ea
$$
As $\bar{b}_1^{(0,1)}=\frac{\ell}{2\ell-\alpha}$, we obtain $\frac{\gamma-\gamma_n}{2}+i -(2i-\alpha_n)\bar{b}_1^{(0,1)}=\alpha\frac{\ell-\frac{\gamma-\gamma_n}{2}-i}{2\ell-\alpha}$. We then get \fref{cons:eq:def Aell} by noticing that $\bar{b}_i^{(0,1)}=0$ for $i\geq \ell+1$ and because by definition $\gamma=\gamma_0$. We get \fref{cons:eq:def tildeA1} and \fref{cons:eq:def tildeAn} by noticing that $\bar{b}_i^{(n,k)}=0$ for $i\geq 1$.\\

\noindent \textbf{Proof of (ii)}. $\tilde{A}_n$ for $1\leq n \leq n_0$ is diagonalizable because it is upper triangular. Their eigenvalues are then the values on the diagonal, and the last statement in $(ii)$, about the stability and instability directions comes from the very definition \fref{intro:eq:def in} of the real number $i_n$ for $1\leq n \leq n_0$. It remains to prove that $A_{\ell}$ is diagonalizable. We will do it by calculating its characteristic polynomial.

\noindent - \emph{Computation of the characteristic polynomial for the top left corner matrix}: we let $A_{\ell}'$ be the $\ell \times \ell$ matrix:
$$
A_{\ell}'=\begin{pmatrix} -(2-\alpha)c_1 +\alpha \frac{\ell-1}{2\ell-\alpha} & 1 &  & & (0)  \\ . & . & . & &   \\ -(2i-\alpha)c_i & & \alpha\frac{\ell-i}{2\ell-\alpha} & 1 &   \\ . & (0) & & . &1  \\ -(2\ell-\alpha)c_l & & & & 0  \end{pmatrix} ,
$$
We recall that as $\alpha>2$, $\ell\geq 2$ so $A_{\ell}'$ has at least $2$ rows and $2$ lines. We let $\mathcal{P}_{\ell}(X)=\text{det}(A_{\ell}'-XId)$. We compute this determinant by developing with respect to the last row and iterating by doing that again for the sub-determinant appearing in the process. Eventually we obtain an expression of the form:
\be \la{cons:id Pl 1}
\begin{array}{r c l}
\mathcal{P}_{\ell}=(-1)^{\ell}(2\ell-\alpha)c_{\ell} &+&(-X)\Bigl{[}(-1)^{\ell+1}(2\ell-2-\alpha)c_{\ell-1}+(\frac{\alpha}{2\ell-\alpha}-X)\\
&\times&\left[(-1)^{\ell}(2\ell-4-\alpha)c_{\ell-2}+(\frac{2\alpha}{2\ell-\alpha}-X)[...]\right]\Bigr{]} .
\end{array}
\ee
We define the polynomials $(A_i)_{1\leq i \leq \ell}$ and $(B_i)_{1\leq i \leq \ell}$ and $(C_i)_{1\leq i \leq \ell-1}$ as:
\be \la{cons:id Ai}
A_i:=(-1)^{\ell-i+1}(2\ell+2-2i-\alpha)c_{\ell+1-i} \ \ \ \text{and} \ \  \ B_i:=(i-1)\frac{\alpha}{2\ell-\alpha}-X ,
\ee
\be \la{cons:id Ci}
C_i:=(-1)^{\ell+1-i}(X(2\ell-2i-\alpha)c_{\ell-i}+\frac{2\ell-\alpha}{i}c_{\ell-i+1}) .
\ee
This way, the determinant $\mathcal P_{\ell}$ given by \fref{cons:id Pl 1} can be rewritten as:
\be \la{cons:id Pi 2}
\mathcal{P}_{\ell}=A_1+B_1\left(A_2+B_2\left[A_3+B_3\left[...]\right]\right]\right) .
\ee
We notice by a direct computation from \fref{cons:id Ai} and \fref{cons:id Ci} that:
$$
A_1+B_1A_2=C_1 .
$$
Moreover, this identity propagates by induction and we claim that for $1\leq i \leq \ell-2$:
$$
C_i+B_1B_2A_{i+2}=B_{i+2}C_{i+1} .
$$
Indeed, from \fref{cons:eq:def ci} one has $\frac{2\ell-\alpha}{i+1}c_{\ell-i}=-\alpha c_{\ell-i-1}$, and from \fref{cons:id Ai} and \fref{cons:id Ci}:
$$
\begin{array}{r c l}
&B_{i+2}C_{i+1}-C_i \\
=& ((i+1)\frac{\alpha}{2\ell-\alpha}-X)(-1)^{\ell-i}(X(2\ell-2i-2-\alpha)c_{\ell-i-1}+\frac{2\ell-\alpha}{i+1}c_{\ell-i})\\
&-(-1)^{\ell+1-i}(X(2\ell-2i-\alpha)c_{\ell-i}+\frac{2\ell-\alpha}{i}c_{\ell-i+1})\\
=&(-1)^{\ell-i}\Bigl{(}((i+1)\frac{\alpha}{2\ell-\alpha}-X)(X(2\ell-2i-2-\alpha)c_{\ell-i-1}-\alpha c_{\ell-i-1})\\
&-X(2\ell-2i-\alpha)\alpha\frac{i+1}{2\ell-\alpha}c_{\ell-i-1}+\alpha^2\frac{i+1}{2\ell-\alpha}c_{\ell-i-1}\Bigr{)}\\
=& (-1)^{\ell-i} c_{\ell-i-1}X \Bigl{(} \alpha \frac{i+1}{2\ell-\alpha}(2\ell-2i-2-\alpha)+\alpha-X(2\ell-2i-2-\alpha)\\
&-\frac{2\ell-2i-\alpha}{2\ell-\alpha}\alpha(i+1) \Bigr{)}\\
=& (-1)^{\ell-i} c_{\ell-i-1}X (2\ell-2i-2-\alpha) (\frac{\alpha}{2\ell-\alpha}-X)\\
=& A_{i+2}B_1B_i
\end{array}
$$
From the above identity we can rewrite $\mathcal P_{\ell}$ given by \fref{cons:id Pi 2} as:
\be \la{cons:id Pi 3}
\begin{array}{r c l}
\mathcal{P}_{\ell}&=&A_1+B_1A_2+B_1B_2A_3+B_1B_2B_3(A_4+B_4(...))\\
&=& C_1+B_1B_2A_3+B_1B_2B_3(A_4+B_4(...)) \\
&=& B_3(C_2+B_1B_2(A_4+B_4(...)) \\
&=& B_3B_4(C_3+B_1B_2(A_5+B_5(...)) \\
&...& \\
&=& B_3...B_{\ell}(C_{\ell-1}+B_1B_2) .\\
\end{array}
\ee
The last polynomial that appeared is from \fref{cons:id Ai} and \fref{cons:id Ci}:
$$
C_{\ell-1}+B_1B_2=X(2-\alpha)c_1+\frac{2\ell-\alpha}{\ell-1}c_2-X\left(\frac{\alpha}{2\ell-\alpha}-X\right)=(X+1)\left(X-\frac{\alpha \ell}{2\ell-\alpha}\right) 
$$
and so we end up from \fref{cons:id Pi 3} with the final identity for $\mathcal P_{\ell}$:
$$
\mathcal{P}_{\ell}=(X+1)\prod_{i=2}^{\ell}\left(\frac{i\alpha}{2\ell-\alpha}-X\right) .
$$
This means that $A_{\ell}'$ is diagonalizable with eigenvalues $(1,-\frac{2\alpha}{2\ell-\alpha},...,\frac{\ell}{2\ell-\alpha})$: there exists an invertible $\ell\times\ell$ matrix $\tilde{P}_{\ell}$ such that $\tilde{P}_{\ell}A_{\ell}\tilde{P}_{\ell}^{-1}=\text{diag}(-1,\frac{2}{2\ell-\alpha},...,\frac{\ell}{2\ell-\alpha})$. We denote the by $P_{\ell}$ the matrix:
$$
P_{\ell}':=\begin{pmatrix} \tilde{P}_{\ell} & \\ & \text{Id}_{L-\ell} \end{pmatrix}
$$
Then, from \fref{cons:eq:def Aell}, there exists $\ell$ real numbers $(q_i)_{1\leq i\leq n}\in \mathbb R^{\ell}$ such that:
$$
P_{\ell}'A_{\ell} (P_{\ell}^{'})^{-1}  =  \begin{pmatrix} -1 &  & (0)  & & q_1 & & &  \\  & \frac{2\alpha}{ 2\ell-\alpha} &  &  & q_2 & & &  \\  & & . &  & & & &  \\  &  & & \frac{ \ell \alpha}{2\ell-\alpha} &  q_{\ell} & &  (0) &  \\  & & & & \frac{-\alpha}{ 2 \ell-\alpha} & 1 & &  \\   & & & & & . & . &  \\  & & (0) & & & & .& 1   \\  & &  & & & &  & \alpha\frac{\ell-L}{ 2\ell-\alpha}  \end{pmatrix}.
$$
This implies that $A_{\ell}$ can be diagonalized and that its eigenvalues are of simple multiplicity given by $(-1,\frac{2\alpha}{2\ell-\alpha},...,\alpha \frac{\ell}{2\ell-\alpha}, -\frac{\alpha}{2\ell-\alpha},.., -\alpha \frac{L-\ell}{2\ell-\alpha})$, and that the eigenvectors associated to the eigenvalues $-1$, and $\alpha\frac{2}{2\ell-\alpha},...,\alpha \frac{\ell}{2\ell-\alpha}$ are linear combinations of the $\ell$ first components only. This concludes the proof of Lemma.

\end{proof}

%%%%%%%%%%%%%%%%%%%%%%%%%%%%%%%%%%%%%%%%%%%%%%%%%%%%%%%%%%%%%%%%
%%%%%%%%%%%%%%%%%%%%%%%%%%%%%%%%%%%%%%%%%%%%%%%%%%%%%%%%%%%%%%%%
%%%%%%%%%%%%%%%%%%%%%%%%%%%%%%%%%%%%%%%%%%%%%%%%%%%%%%%%%%%%%%%%

\section{Main proposition and proof of Theorem \ref{thmmain}} \la{sec:main}

We recall that the approximate blow up profile $ \tau_z(\tilde{Q}_{\bar{b},\frac{1}{\lambda}})$ was designed for a blow up on the whole space $\mathbb R^d$. In this section, we state in the main Proposition \ref{trap:pr:bootstrap} of this paper the existence of solutions staying in a trapped regime (defined in Definition \ref{trap:def:trapped solution}) close to the cut approximate blow up profile $\chi \tau_z(\tilde{Q}_{\bar{b},\frac{1}{\lambda}})$. We then end the proof of Theorem \ref{thmmain} by proving that such a solution will blow up as described in the theorem.

\subsection{The trapped regime and the main proposition}

\label{trap:subsection:main pr}

\subsubsection{Projection of the solution on the manifold of approximate blow up profiles}

\label{trap:subsubsection:projection}

The following reasoning is made for a blow up on the whole space $\mathbb R^d$. As in this case our blow up solution should stay close to the manifold of approximate blow up profiles $(\tau_z(\tilde{Q}_{b,\lambda}))_{b,z,\lambda}$ we want to decompose it as a sum $\tau_z(\tilde{Q}_{b,\lambda}+\varepsilon_{\lambda})$ for some parameters $b,z,\lambda$ such that $\varepsilon$ has "minimal" size. The tangent space of $(\tau_z(\tilde{Q}_{b,\lambda}))_{b,z,\lambda}$ at the point $Q$ is $\text{Span}((T^{(n,k)}_i)_{(n,k,i)\in \mathcal I \cup \{(0,1,0),(1,1,0),...,(1,d,0)\}}$. One could then think of an orthogonal projection at the linear level, i.e. $\langle T^{(n,k)}_i,\varepsilon\rangle=0$. The profiles $T_i^{(n,k)}$'s are however not decaying quickly enough at infinity so that this duality bracket would make sense in the functional space where $\varepsilon$ lies. For these grounds we will approximate such orthogonality conditions by smooth profiles that are compactly supported.

\begin{definition}[Generators of orthogonality conditions] \label{bootstrap:def:PhiM}

For a very large scale $M \gg 1$, for $n\leq n_0$ and $1\leq k \leq k(n)$ we define:
\be \label{bootstrap:eq:def PhiknM}
\Phi_M^{(n,k)}=\sum_{i=0}^{L_n}c_{i,n,M} (-H)^i(\chi_M T^{(n,k)}_0)=\sum_{i=0}^{L_n}c_{i,n,M} (-H^{(n)})^i(\chi_M T^{(n)}_0)Y^{(n,k)} ,
\end{equation}
($L_n$ and $T^{(n,k)}_0$ being defined by \fref{intro:eq:def Ln} and \fref{cons:eq:def Tnki}) where:
\begin{equation} \label{bootstrap:eq:def cpnm}
c_{0,n,M}=1 \ \ \text{and} \ \ c_{i,n,M}=-\frac{\sum_{j=0}^{i-1} c_{j,n,M} \langle (-H)^j(\chi_M T^{(n,k)}_0),T_{i}^{(n,k)} \rangle}{\langle \chi_M T^{(n)}_0, T^{(n)}_0 \rangle} .
\ee

\end{definition}

\begin{lemma}[Generation of orthogonality conditions] \label{bootstrap:lem:conditions dorthogonalite}

For $n\leq n_0$, $1\leq k \leq k(n)$, $0\leq i\leq L_n $, $j\in \mathbb{N}$, $n'\in \mathbb{N}$ and $1\leq k'\leq k(n')$ there holds for $c>0$:
\be \label{bootstrap:eq:orthogonalite PhiM}
\ba{r c l}
\langle (-H)^j \Phi_M^{(n,k)}, T_i^{(n',k')} \rangle &=& \delta_{(n,k,i),(n',k',j)} \int_0^{+\infty} \chi_M |T^{(n)}_0|^2r^{d-1} \\
&\sim & cM^{4m_n+4\delta_n}\delta_{(n,k,i),(n',k',j)}, \ \ \ c>0.
\ea
\ee

\end{lemma}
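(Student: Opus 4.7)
The plan is to exploit three structural ingredients in order: the decomposition into spherical harmonics, the self-adjointness of $H$, and the generalized kernel relations $H T_0^{(n,k)}=0$, $H T_{i+1}^{(n,k)}=-T_i^{(n,k)}$ from \fref{intro:id Tnki}. First, by \fref{intro:id Hn} and \fref{intro:id Tnki}, each iterated operator $(-H)^l$ preserves the angular sector spanned by $Y^{(n,k)}$, so from \fref{bootstrap:eq:def PhiknM} both $\Phi_M^{(n,k)}$ and $(-H)^j\Phi_M^{(n,k)}$ remain located on the single spherical harmonic $Y^{(n,k)}$. The orthonormality of the $Y^{(n,k)}$'s in $L^2(\mathcal S^{d-1})$ then disposes of the case $(n,k)\neq(n',k')$ immediately, and reduces all remaining work to a one-dimensional radial computation on the fixed angular mode.

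For $(n,k)=(n',k')$, I would commute $(-H)^j$ onto $T_i^{(n,k)}$ using self-adjointness (Lemma \ref{lem:factorisation}). The recursion \fref{intro:id Tnki} gives by induction $(-H)^j T_i^{(n,k)}=T_{i-j}^{(n,k)}$ when $j\le i$ and $(-H)^j T_i^{(n,k)}=0$ when $j>i$ (since $HT_0^{(n,k)}=0$). In particular the inner product vanishes whenever $j>i$, and the claim is reduced to proving
\[
\langle \Phi_M^{(n,k)}, T_{i'}^{(n,k)}\rangle = \delta_{i',0}\,\langle \chi_M T_0^{(n)},T_0^{(n)}\rangle, \qquad 0\le i'\le L_n.
\]

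To establish this I would expand $\Phi_M^{(n,k)}$ using \fref{bootstrap:eq:def PhiknM} and again move $(-H)^l$ onto $T_{i'}^{(n,k)}$ by self-adjointness. The terms with $l>i'$ vanish because $(-H)^l T_{i'}^{(n,k)}=0$, leaving only the sum over $0\le l\le i'$. For $i'=0$ only $l=0$ survives and yields the normalization $\int\chi_M|T_0^{(n)}|^2 r^{d-1}\,dr$ after the angular integration (using $c_{0,n,M}=1$ and $\int_{\mathcal S^{d-1}}|Y^{(n,k)}|^2=1$). For $1\le i'\le L_n$, using the identity $\langle (-H)^{i'}(\chi_M T_0^{(n,k)}),T_{i'}^{(n,k)}\rangle=\langle \chi_M T_0^{(n,k)},T_0^{(n,k)}\rangle$, the defining recursion \fref{bootstrap:eq:def cpnm} is precisely the linear equation ensuring $\sum_{l=0}^{i'}c_{l,n,M}\langle(-H)^l(\chi_M T_0^{(n,k)}),T_{i'}^{(n,k)}\rangle=0$, so the inner product vanishes as required. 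The $c_{l,n,M}$'s are well-defined by induction on $l$ because $\langle\chi_M T_0^{(n)},T_0^{(n)}\rangle>0$ for $M$ large, thanks to the strict positivity of $T_0^{(n)}$ in \fref{cons:eq:asymptotique T0n}.

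The asymptotic $\sim cM^{4m_n+4\delta_n}$ follows from the behavior $T_0^{(n)}(r)\sim C_n r^{-\gamma_n}$ at infinity given by \fref{cons:eq:asymptotique T0n}: the integrand $|T_0^{(n)}|^2 r^{d-1}$ behaves like $C_n^2 r^{d-1-2\gamma_n}$ in the cut-off annulus $\{M\le r\le 2M\}$, and from \fref{intro:eq:def deltan} the exponent $d-2\gamma_n=4m_n+4\delta_n$ is strictly positive under \fref{intro:eq:condition deltan}, giving an integral of order $M^{4m_n+4\delta_n}$ with positive constant. There is no serious obstacle here — the main point of care is purely bookkeeping: matching the triangular structure of the recursion \fref{bootstrap:eq:def cpnm} with the tower $HT_{i+1}^{(n,k)}=-T_i^{(n,k)}$ so that the two annihilation mechanisms (in $l>i'$ via the kernel of $H$, and in $l\le i'$ via the recursive choice of $c_{l,n,M}$) fit together cleanly.
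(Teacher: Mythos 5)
Your proposal is correct and follows essentially the same route as the paper's proof: orthogonality of spherical harmonics for $(n,k)\neq(n',k')$, self-adjointness of $H$ to transfer the powers onto $T_i^{(n,k)}$, the tower $HT_0^{(n,k)}=0$, $HT_{i+1}^{(n,k)}=-T_i^{(n,k)}$ to kill the case $j>i$, the recursive definition \fref{bootstrap:eq:def cpnm} of the $c_{l,n,M}$'s to kill the case $j<i$, and the asymptotic \fref{cons:eq:asymptotique T0n} together with \fref{intro:eq:def deltan} for the size $M^{4m_n+4\delta_n}$. The only difference is organizational — you first reduce $(-H)^jT_i^{(n,k)}$ to $T_{i-j}^{(n,k)}$ and then expand $\Phi_M^{(n,k)}$, whereas the paper keeps the two exponents together and splits into the cases $j>i$, $j=i$, $j<i$ — which is immaterial.
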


\begin{proof}[Proof of Lemma \ref{bootstrap:lem:conditions dorthogonalite}]

The scalar product is zero if $(n,k)\neq (n',k')$ because by construction $\Phi_M^{(n,k)}$ (resp. $H^j(T_i^{(n',k')}$) lives on the spherical harmonic $Y^{(n,k)}$ (resp. $Y^{n',k'}$). We now suppose $(n,k)=(n',k')$ and compute from \fref{bootstrap:eq:def PhiknM}:
$$
\langle (-H)^j\Phi_M^{(n,k)}, T^{(n,k)}_i \rangle=\sum_{l=0}^{L_n}c_{l,n,M}\langle T^{(n)}_0\chi_M,(-H^{(n)})^{l+j}T^{(n)}_i\rangle.
$$
If $j>i$ for all $l$, $(H^{(n)})^{l+j}T^{(n)}_i=0$ and then $\langle (-H)^j\Phi_M^{(n,k)}, T^{(n,k)}_i \rangle=0$. If $j=i$ then only the first term in the sum is not zero since $(-H^{(n)})^{i}T^{(n)}_i=T^{(n,k)}_0$ and:
$$
\sum_{l=0}^{L_n}c_{l,n,M}\langle T^{(n)}_0\chi_M,(-H^{(n)})^{l+j}T^{(n)}_i\rangle= \langle T^{(n)}_0\chi_M,T^{(n)}_0\rangle \sim cM^{4m_n+4\delta_n}
$$
from the asymptotic behavior \fref{cons:eq:asymptotique T0n} of $T^{(n)}_0$. If $j<i$ then:
$$
\ba{r c l}
&\sum_{l=0}^{L_n}c_{l,n,M}\langle T^{(n)}_0\chi_M,(-H^{(n)})^{l+j}T^n_i\rangle\\
=& c_{i-j,n,M} \langle T^{(n)}_0\chi_M,T^{(n)}_0\rangle+\sum_{l=0}^{i-j-1}c_{l,n,M}\langle T^{(n)}_0\chi_M,(-H^{(n)})^{l+j}T^{(n)}_i\rangle =0 \\
\ea
$$
from the definition \fref{bootstrap:eq:def cpnm} of the constant $c_{i-j,n,M}$ which ends the proof.

\end{proof}

%%%%%%%%%%%%%%%%%%%%%%%%%%%%%%%%%%%%%%%%%%%%

\subsubsection{Geometrical decomposition}

\label{thetrapped:subsubsection:modulation}

First we describe here how we decompose a solution of \fref{eq:NLH} on the unit ball $\mathcal B^d(1)$ onto the set $(\tau_z(\tilde{Q}_{b,\lambda}))_{b,|z|\leq \frac 1 8,0<\lambda<\frac{1}{8M}}$ of concentrated ground states, using the orthogonality conditions provided by Lemma \ref{bootstrap:lem:conditions dorthogonalite}. This provides a decomposition for any domain containing $\mathcal B^d(1)$. Let $0<\kappa\ll 1$ to be fixed latter on. We study the set of functions close to $(\tau_z(\tilde{Q}_{b,\lambda}))_{b,|z|\leq \frac 1 8,0<\lambda<\frac{1}{8M}}$ such that the projection onto the first element in the generalized kernel dominates\footnote{Note that $(\tau_{-\tilde z}u)_{\tilde \lambda}$ is defined on $\frac{1}{\tilde \lambda}(\mathcal B^d(1)-\tilde z)$ which contains $\mathcal B^d(7M)$ as $|\tilde z|< \frac{1}{8}$ and $0<|\tilde \lambda|<\frac{1}{8M}$, thus the second estimate makes sense.}:
\be \la{trap:def projection T011}
\exists (\tilde \lambda,\tilde z)\in \left(0,\frac{1}{8M}\right)\times \mathcal B^d\left(\frac 1 8\right),  \left| \ba{l l}
\para u-Q_{\tilde z, \frac{1}{\tilde \lambda}}\para_{L^{\infty}(\mathcal B^d (1))}< \frac{\kappa}{\tilde \lambda^{\frac{2}{p-1}}} \ \text{and}  \\
\para (\tau_{-\tilde z}u)_{\tilde \lambda}-Q \para_{L^{\infty}(\mathcal B^d (3M))}< \langle (\tau_{-\tilde z}u)_{\tilde \lambda}-Q,H \Phi^{(0,1)}_M\rangle 
\ea \right.
\ee

\begin{lemma}[Decomposition] \la{trap:lem:projection}

There exist $\kappa,K>0$ such that for any solution $u\in \mathcal C^1([0,T),\times \mathcal B^d(1))$ of \fref{eq:NLH} satisfying \fref{trap:def projection T011} for all $t\in [0,T)$ there exist a unique choice of the parameters $\lambda:[0,T)\rightarrow (0,\frac{1}{4M})$, $z:[0,T)\rightarrow \mathcal B^d\left(\frac 1 4 \right)$ and $b:[0,T)\rightarrow \mathbb R^{\mathcal I}$ such that $b^{(0,1)}_1>0$ and
$$
u=(\tilde Q_b+v)_{z,\lambda} \ \ \text{on} \ \mathcal B^d(1), \ \ \sum_{(n,k,i)\in \mathcal I} |b^{(n,k)}_i|+\para v\para_{L^{\infty}\left(\frac{1}{\lambda}(\mathcal B^d(0,1)-\{z\}) \right)} \leq K\kappa
$$
with $v=(\tau_{-z}u)_{\lambda}-\tilde Q_b$ satisfying the orthogonality conditions:
$$
\langle v , H^{i} \Phi_M^{(n,k)} \rangle=0, \ \ \text{for} \ 0\leq n \leq n_0, \ 1\leq k \leq k(n), \ 0\leq i \leq L_n 
$$
Moreover, $\lambda$, $b$ and $z$ are $\mathcal C^1$ functions.

\end{lemma}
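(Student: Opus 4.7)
The plan is to apply the implicit function theorem to the map
\begin{equation*}
G^{(n',k',i')}(\lambda, z, b; u) := \langle (\tau_{-z}u)_{\lambda} - \tilde Q_b,\, H^{i'}\Phi_M^{(n',k')}\rangle, \quad (n',k',i')\in \mathcal J,
\end{equation*}
where $\mathcal J := \mathcal I\cup\{(0,1,0)\}\cup\{(1,j,0)\}_{1\leq j\leq d}$, so that $\#\mathcal J = \#\mathcal I+d+1$ matches the number of unknowns $(\lambda, z_1,\dots, z_d, (b^{(n,k)}_i)_{(n,k,i)\in\mathcal I})$. The test profiles $H^{i'}\Phi_M^{(n',k')}$ are compactly supported in $\mathcal B^d(2M)$, which lies inside the renormalized domain $\lambda^{-1}(\mathcal B^d(1)-z)$ for $\lambda<1/(4M)$ and $|z|<1/4$, so the scalar products are well defined. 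A zero of $G$ with $b^{(0,1)}_1>0$ yields the desired decomposition.

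The reference point is $u_* := Q_{\tilde z, 1/\tilde\lambda}$ with $(\lambda_0, z_0, b_0) := (\tilde\lambda, \tilde z, 0^+)$, at which $G = 0$. The key step is the Jacobian $D_{(\lambda,z,b)}G$ here. Using $\Lambda Q = T^{(0,1)}_0$ and $\partial_{y_j}Q \propto T^{(1,j)}_0$ from \fref{cons:eq:def T0},
\begin{equation*}
\partial_\lambda G^{(n',k',i')} = \tilde\lambda^{-1}\langle T^{(0,1)}_0, H^{i'}\Phi_M^{(n',k')}\rangle, \quad \partial_{z_j}G^{(n',k',i')} \propto \tilde\lambda^{-1}\langle T^{(1,j)}_0, H^{i'}\Phi_M^{(n',k')}\rangle.
\end{equation*}
For the $b$-derivatives, since $\alpha_b$ vanishes at $b=0$, the profiles $S_j$ in \fref{cons:eq:def Qb} have vanishing linearization at $b=0$ by homogeneity, and $\chi_{B_1}\equiv 1$ on $\mathcal B^d(2M)$ as soon as $b^{(0,1)}_1<(2M)^{-2/(1+\eta)}$, one obtains
\begin{equation*}
\partial_{b^{(n,k)}_i}G^{(n',k',i')}\big|_{b=0^+} = -\langle T^{(n,k)}_i,\, H^{i'}\Phi_M^{(n',k')}\rangle.
\end{equation*}
The sharp orthogonalities \fref{bootstrap:eq:orthogonalite PhiM} then make the Jacobian diagonal with nonzero entries of orders $\tilde\lambda^{-1}M^{4m_0+4\delta_0}$, $\tilde\lambda^{-1}M^{4m_1+4\delta_1}$, and $M^{4m_n+4\delta_n}$; hence invertible.

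Since $G$ is $\mathcal C^\infty$ in $(\lambda,z,b)$ on $\{b^{(0,1)}_1>0\}$ and Lipschitz in $u\in L^\infty(\mathcal B^d(1))$, the implicit function theorem furnishes $\kappa, K > 0$ and a unique $\mathcal C^1$ map $u\mapsto (\lambda(u), z(u), b(u))$ near $u_*$ solving $G=0$, with $|\lambda-\tilde\lambda|+|z-\tilde z|+\sum|b^{(n,k)}_i|\leq K\|u-u_*\|_{L^\infty(\mathcal B^d(1))}\leq K\kappa/\tilde\lambda^{2/(p-1)}$. This delivers the quantitative bounds $\lambda<1/(4M)$, $|z|<1/4$, $\sum|b^{(n,k)}_i|+\|v\|_{L^\infty}\leq K\kappa$ for $\kappa$ small. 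The sign $b^{(0,1)}_1>0$ is forced by the second inequality in \fref{trap:def projection T011}: at leading order, $G^{(0,1,1)}=0$ reads $b^{(0,1)}_1 \cdot cM^{4m_0+4\delta_0} = \langle (\tau_{-\tilde z}u)_{\tilde\lambda}-Q, H\Phi^{(0,1)}_M\rangle$, positive by assumption. Time regularity of $(\lambda(t), z(t), b(t))$ follows from composition with $t\mapsto u(t)\in \mathcal C^1([0,T), L^\infty(\mathcal B^d(1)))$.

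The main obstacle will be the mild singularity of $\tilde Q_b$ at $b^{(0,1)}_1 = 0$, induced by $B_1 = |b^{(0,1)}_1|^{-(1+\eta)/2}\to\infty$; this is bypassed by applying the implicit function theorem on the open half-space $\{b^{(0,1)}_1>0\}$, on which all relevant one-sided derivatives are well defined and uniformly controlled as $b^{(0,1)}_1\to 0^+$, as just computed.
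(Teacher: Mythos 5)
Your existence step is essentially the paper's: the paper (Lemma \ref{trap:lem:decomposition} in Appendix \ref{sec:decomposition}) inverts by the implicit function theorem the map $(\tilde\lambda,\tilde z,\tilde b)\mapsto\left(\langle(\tau_{\tilde z}u)_{1/\tilde\lambda}-Q-\alpha_{\tilde b},H^{i}\Phi_M^{(n,k)}\rangle\right)$, whose Jacobian at $(Q,1,0,\dots,0)$ is the same diagonal matrix you compute from \fref{bootstrap:eq:orthogonalite PhiM}, and passes to $\tilde Q_b$ only afterwards, by your own observation that $\chi_{B_1}\equiv 1$ on $\mathcal B^d(2M)\supset\supp\Phi_M^{(n,k)}$. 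Note that this is also the clean fix for your base point $b=0$: that point lies on the boundary of the half-space $\{b^{(0,1)}_1>0\}$, so the implicit function theorem as you invoke it does not literally apply there; one inverts with $Q+\alpha_b$, which is smooth through $b=0$, and restricts to $b^{(0,1)}_1>0$ afterwards (forced, as you say, by the second inequality in \fref{trap:def projection T011}). The genuine gap is the uniqueness claim. The lemma asserts uniqueness of $(\lambda,z,b)$ in the whole range $\lambda\in(0,\frac{1}{4M})$, $|z|\le\frac 1 4$, $\sum|b^{(n,k)}_i|+\para v\para_{L^{\infty}}\le K\kappa$, not merely among parameters close to the reference $(\tilde\lambda,\tilde z,0)$ attached to one admissible pair in \fref{trap:def projection T011}; your IFT gives only the latter. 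The paper supplies the missing step (Step 3 of the proof of Lemma \ref{trap:lem:decomposition O}): if $(\hat\lambda,\hat z,\hat b)$ is any other admissible choice, comparing $Q_{\hat z,1/\hat\lambda}$ with $Q_{\tilde z,1/\tilde\lambda}$ in $L^{\infty}(\mathcal B^d(1))$ and using that $Q$ is radial, decreasing and maximal at the origin forces $\hat\lambda=\tilde\lambda(1+O(\kappa))$ and $\hat z=\tilde z+\tilde\lambda\,O(\kappa)$, which places $(\hat\lambda,\hat z,\hat b)$ inside the local uniqueness regime. Without this, the parameters are not even well defined as functions of $u$ (they could a priori depend on the chosen $(\tilde\lambda,\tilde z)$), so your final assertion that $t\mapsto(\lambda,z,b)$ is $\mathcal C^1$ by composition is also not justified: the local IFT branches attached to different admissible $(\tilde\lambda(t),\tilde z(t))$ must first be identified with one another.

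There is a second, quantitative, flaw. Your Lipschitz bound $|\lambda-\tilde\lambda|+|z-\tilde z|+\sum|b^{(n,k)}_i|\le K\para u-u_*\para_{L^{\infty}(\mathcal B^d(1))}\le K\kappa\,\tilde\lambda^{-2/(p-1)}$ does not deliver the stated conclusions, since $\kappa\,\tilde\lambda^{-2/(p-1)}$ is not small (it blows up as $\tilde\lambda\to 0$); in particular it gives neither $\lambda<\frac{1}{4M}$, nor $|z|<\frac 1 4$, nor $\sum|b^{(n,k)}_i|\le K\kappa$. Moreover the Jacobian entries and the admissible size of the $u$-neighbourhood in your IFT degenerate as $\tilde\lambda\to 0$, so $\kappa$ and $K$ cannot be chosen uniformly in $(\tilde\lambda,\tilde z)$ along this route. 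The paper avoids this by running the IFT entirely at scale one: it sets $w:=(\tau_{-\tilde z}u)_{\tilde\lambda}$, for which \fref{trap:def projection T011} rescales to $\para w-Q\para_{L^{\infty}}<\kappa$ on a ball of radius $O(M)$, finds $(\lambda',z',b')$ near $(1,0,0)$ with constants independent of $(\tilde\lambda,\tilde z)$, and only then sets $\lambda=\tilde\lambda\lambda'$, $z=\tilde z+\tilde\lambda z'$, $b=b'$. Your argument should be recast in these renormalized variables (the correct output being $|\lambda/\tilde\lambda-1|+|z-\tilde z|/\tilde\lambda+\sum|b^{(n,k)}_i|\le K\kappa$) and then completed with the global uniqueness step above.
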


\begin{proof}[Proof of Lemma \ref{trap:lem:projection}]

It is a direct consequence of Lemma \ref{trap:lem:decomposition O} from the appendix.

\end{proof}

%%%%%%%%%%%%%%%%%%%%%%%%%%%%%%%%%%%%%%%%%%%%

\subsubsection*{Decomposition and adapted norms for the remainder inside a bounded domain}

Let $u$ be a solution of (NLH) in $C^1([0,T),\Omega)$ with Dirichlet boundary condition, such that the restriction\footnote{We recall that $\Omega$ contains $\mathcal B^d(7)$} of $u$ to $\mathcal B^d(1)$ satisfy the conditions of Lemma \ref{trap:lem:projection}. Then from this Lemma, for all $t\in [0,T)$ we can decompose $u$ according to:
\be \la{trap:eq:decomposition}
u:=\chi \tau_z\left(\tilde{Q}_{b,\frac 1 \lambda}\right)+w,
\ee
cutting the approximate blow-up profile in the zone $1\leq |x| \leq 2$, and $w$ is a remainder term satisfying $w_{|\partial \Omega}=0$ as $\mathcal B^d(7)\subset \Omega$ and $u_{|\partial \Omega}=0$. To study $w$ inside and outside the blow-up zone we decompose it according to:
\be \la{trap:eq:def w}
w_{\text{int}}:= \chi_3w, \ w_{\text{ext}}:= (1-\chi_3)w, \ \varepsilon:= (\tau_{-z(t)}w_{\text{int}})_{\lambda(t)}
\ee
$w_{\text{int}}$ and $w_{\text{ext}}$ are the remainder cut in the zone $3\leq |x|\leq 6$, $\varepsilon$ is the renormalized remainder at the blow up area, and is adapted to the renormalized flow. We notice that the support of $w_{\text{ext}}$ does not intersect the support of the approximate blow up profile $\chi \tau_z\left(\tilde{Q}_{b,\frac 1 \lambda}\right)$, that the supports of $w_{\text{int}}$ and $w_{\text{ext}}$ overlap, and that $(w_{\text{ext}})_{|\partial \Omega}=0$. From Lemma \ref{trap:lem:projection} and its definition, $\varepsilon$ is compactly supported and satisfies the orthogonality conditions \fref{trap:eq:ortho}. We measure $\varepsilon$ through the following norms:
\begin{itemize}
\item[(i)] \emph{High order Sobolev norm adapted to the linearized flow:} We define
\be \label{trap:eq:def mathcalE2sL}
\mathcal{E}_{2s_L}:= \int_{\mathbb R^d} |H^{s_L}\varepsilon |^2 .
\ee
This norm controls the $L^2$ norms of all smaller order derivatives with appropriate weight from Lemma \ref{annexe:lem:coercivite norme adaptee} since $\varepsilon$ satisfy the orthogonality conditions \fref{trap:eq:ortho}, and the standard $\dot{H}^{2s_L}$ Sobolev norm:
$$
\mathcal{E}_{2s_L} \geq C \sum_{|\mu|\leq 2s_L} \int_{\mathbb{R}^d} \frac{|\partial^{\mu}\varepsilon|^2}{1+|x|^{4i-2\mu+}}+C \parallel \varepsilon \parallel^2_{\dot H^{2s_L}}
$$
\item[(ii)] \emph{Low order slightly supercritical Sobolev norm:} Let $\sigma$ be a slightly supercritical regularity:
\be \label{trap:eq:def sigma}
0< \sigma -s_c\ll 1 .
\ee
We then define the following second norm for the remainder:
\be \label{trap:eq:def mathcalEsigma}
\mathcal{E}_{\sigma}:=\parallel \varepsilon \parallel_{\dot H^{\sigma}}^2 .
\ee
\end{itemize}

%%%%%%%%%%%%%%%%%%%%%%%%%%%%%%%%%%%%%%%%%%%%

\subsubsection*{Existence of a solution staying in a trapped regime close to the approximate blow up solution}

From now on we focus on solutions that are close to an approximate blow-up profile in the sense of the following definition.

\begin{definition}[Solutions in the trapped regime] \label{trap:def:trapped solution}

We say that a solution $u$ of \fref{eq:NLH} in $C^1([0,T),\Omega)$ is trapped on $[0,T)$ if it satisfies all the following. First, it satisfies the condition \fref{trap:def projection T011} and then can be decomposed via Lemma \ref{trap:lem:projection} according to \fref{trap:eq:decomposition} and \fref{trap:eq:def w}:
\be \la{trap:id u}
u:=\chi \tau_z\left(\tilde{Q}_{b,\frac 1 \lambda}\right)+w, \ w_{\text{int}}:= \chi_3w, \ w_{\text{ext}}:= (1-\chi_3)w, \ \varepsilon:= (\tau_{-z(t)}w_{\text{int}})_{\lambda(t)}
\ee
with $\varepsilon$ satisfying the orthogonality conditions:
\be \la{trap:eq:ortho}
\langle \varepsilon , H^{i} \Phi_M^{(n,k)} \rangle=0, \ \ \text{for} \ 0\leq n \leq n_0, \ 1\leq k \leq k(n), \ 0\leq i \leq L_n 
\ee
To the scale $\lambda$ given by this decomposition we associate the renormalized time $s$ defined by \fref{cons:eq:def s} with $s_0>0$. The $\# \mathcal I$-tuple of parameters $b$ is represented as a perturbation of the solution $\bar{b}$ of the dynamical system \fref{cons:eq:bs} given by \fref{cons:eq:def barb}:
\be \la{trap:eq:def Ui}
b_i^{(n,k)}(s)=\bar{b}_i^{(n,k)}(s)+\frac{U^{(n,k)}_i(s)}{s^{\frac{\gamma-\gamma_n}{2}+i}}
\ee
and we let $U:=(U_i^{(n,k)})_{(n,k,i)\in \mathcal I}$. To use the eigenvectors of the linearized dynamics, Lemma \fref{cons:lem:linearisation}, we define:
\be \label{trap:eq:def Vi}
V_i:=(P_{\ell} U)_i \ \ \text{for} \ 1\leq i \leq \ell 
\ee
where $P_{\ell}$ is defined by \fref{linearized:eq:def P}. All these parameters must satisfy the following estimates, where $0<\tilde \eta\ll 1$, $0<\epsilon^{(n,k)}_i\ll 1$ for $(n,k,i)\in \mathcal I$ with $(n,k,i)\notin \{1,...,\ell\}\times \{0\}\times \{1\}$, $K_1$ and $K_2$ will be fixed later on.

-\emph{Initial conditions}. At time $t=0$ (or equivalently $s=s_0$):
\begin{itemize}
\item[(i)] Control of the unstable modes on the radial component: 
\be \label{trap:eq:bound instable01}
|V_i(0)| \leq s_0^{-\tilde \eta} \ \ \text{for} \ 2\leq i \leq \ell
\ee
\item[(ii)] Control of the unstable modes on the other spherical harmonics:
\be \label{trap:eq:bound instable02}
|(U_i^{(n,k)}(0)) |\leq \epsilon^{(n,k)}_i \ \ \text{for} \ (n,k,i)\in \mathcal I \ \text{with} \ 1\leq n, \ 0\leq i< i_n
\ee
\item[(ii)] Control of the stable modes:
\be \label{trap:eq:bound stable01}
V_1(0)\leq\frac{1}{10s_0^{\tilde{\eta}}} , \  \ |U_i^{(0,1)}(0)|\leq \frac{\epsilon_i^{(0,1)}}{10s_0^{\tilde{\eta}}} \ \text{for} \ \ell+1 \leq i \leq L ,
\ee
\be \label{trap:eq:bound stable02}
|U_i^{(n,k)}(0)|\leq \frac{\epsilon_i^{(n,k)}}{10s_0^{\tilde \eta}} \ \text{for} \ (n,k,i)\in \mathcal I, \ \text{with} \ 1\leq n \ \text{and} \ i_n< i \leq L_n  ,
\ee
\be \label{trap:eq:bound stable03}
|U_i^{(n,k)}(0)|\leq \frac{\epsilon_i^{(n,k)}}{10} \ \text{for} \ (n,k,i)\in \mathcal I, \ \text{with} \ 1\leq n \ \text{and} \ i=i_n  .
\ee
\item[(iii)] Smallness of the remainder:
\begin{equation}\label{trap:eq:bounds varepsilon0}
\parallel w \parallel_{H^{2s_L}}^2< \frac{1}{s_0^{\frac{2\ell}{2\ell-\alpha}(2s_L-s_c)}} .
\end{equation}
\item[(iv)] Compatibility conditions at the border\footnote{We make an abuse of notations here. The identities given for the time derivatives of $w$ are only true close to the border of $\Omega$, but which is enough as the required conditions are trace type conditions, see \cite{Ev}.}:
\be \label{trap:eq:compatibilite}
\left\{ \ba{l l}
\tilde{w}_0:=w(0)\in H^1_0(\Omega), \ \tilde{w}_1:=\partial_t w(0)=\Delta w(0)+w(0)^p\in H^1_0(\Omega),\\
\tilde{w}_2:=\partial_t^2 w(0)= \Delta^2 w(0)+\Delta (w(0)^p)+pw(0)^{p-1}(\Delta w(0)+w(0)^p)\in H^1_0(\Omega), ...\\
..., \ \tilde{w}_{s_L-1}:=\partial_t^{s_L-1}w(0)\in H^1_0(\Omega)
\ea \right.
\ee
\item[(v)] Initial scale and initial blow-up point:
\be \la{trap:eq:lambdas0}
\lambda (0)=s_0^{-\frac{\ell}{2\ell-\alpha}} \ \ \text{and} \ \ z(0)=0.
\ee
\end{itemize}

-\emph{Pointwise in time estimates}. The following bounds hold on $(0,T)$:
\begin{itemize}
\item[(i)] Parameters on the first spherical harmonics:
\be \label{trap:eq:bound instable}
|V_i(s)|\leq s^{-\tilde{\eta}} \ \ \text{for} \ \ 1\leq i \leq \ell, \ \  |U_i^{(0,1)}(s)|\leq \epsilon_i^{(0,1)}s^{-\tilde{\eta}} \ \ \text{for} \ \ell+1 \leq i \leq L
\ee
\item[(ii)] Parameters on the other spherical harmonics: for $(n,k,i)\in \mathcal I$ with $n\geq 1$:
\be \label{trap:eq:bound instable2}
|(U_i^{(n,k)}(s)) |\leq 1 \ \ \text{if} \  \ 0\leq i< i_n,
\ee
\be \label{trap:eq:bound stable}
|U_i^{(n,k)}(s)|\leq \frac{\epsilon_i^{(n,k)}}{s^{\tilde{\eta}}}, \ \  \text{if} \ \ i_n< i \leq L_n \ \ \text{and} \ \ |U_i^{(n,k)}(s)|\leq \epsilon_i^{(n,k)}, \ \  \text{if} \ i=i_n.
\ee
\item[(iii)] Control of the remainder:
\be \label{trap:eq:bounds varepsilon}
\ba{l l}
\mathcal{E}_{s_L}(s) \leq \frac{K_2}{s^{2L+2(1-\delta_0)+2(1-\delta_0')\eta}} \ , \ \ \mathcal{E}_{\sigma}(s) \leq \frac{K_1}{s^{2(\sigma-s_c)\frac{\ell}{2\ell-\alpha}}},\\
\parallel w_{\text{ext}} \parallel_{H^{2s_L}}^2 \leq \frac{K_2}{\lambda^{2(2s_L-s_c)}s^{2L+2(1-\delta_0)+2(1-\delta_0')\eta}} \ , \ \ \parallel w_{\text{ext}} \parallel_{H^{\sigma}}^2\leq K_1.
\ea
\ee
\item[(iv)] Estimates on the scale and the blow-up point:
\be \la{trap:eq:lambda hp}
\lambda \leq 2s^{-\frac{\ell}{2\ell-\alpha}} \ \ \text{and} \ \ |z|\leq \frac{1}{10}.
\ee
\end{itemize}

\end{definition}

\begin{remark} \la{trap:re:regularite}

For a trapped solution one has the above estimates on the parameters from \fref{cons:eq:def barb}, \fref{trap:eq:def Ui}, \fref{trap:eq:def Vi}, \fref{trap:eq:bound instable}, \fref{trap:eq:bound instable2} and \fref{trap:eq:bound stable}:
\be \la{trap:bd bnki}
|b^{(n,k)}_i|\leq \frac{C}{s^{\frac{\gamma-\gamma_n}{2}+i}} , \ \ \ b^{(0,1)}_1=\frac{\ell}{2\ell-\alpha}\frac 1 s +O(s^{-1-\tilde \eta})
\ee
for $C$ independent independent of the other constants. The bounds \fref{trap:eq:bounds varepsilon} on the remainders for the solution described by Proposition \fref{trap:pr:bootstrap}, because of the the coercivity estimate \fref{annexe:lem:coercivite norme adaptee} implies that 
\be \la{trap:bd w}
\parallel w \parallel_{H^{\sigma}(\Omega)}\leq CK_1, \ \ \parallel w \parallel_{H^{2s_L}}(\Omega)\leq \frac{C(K_1,K_2,M)}{\lambda^{2s_L-s_c}s^{L+1-\delta_0+\eta(1-\delta_0')}} .
\ee
A trapped solution must first satisfy the condition \fref{trap:def projection T011} in order to apply the decomposition Lemma \ref{trap:lem:decomposition}, and then the variables of this decomposition must satisfy suitable bounds. However, these additional bounds in turn provide a much stronger estimate than  \fref{trap:def projection T011}. Indeed, one has from \fref{trap:id u}, \fref{cons:eq:def Qbtilde}, \fref{cons:eq:def Qb}, \fref{trap:bd bnki}, \fref{an:eq:bound Linfty}:
$$
\ba{r c l}
& \underset{(\tilde \lambda,\tilde z)\in \left(0,\frac{1}{8M}\right)\times \mathcal B^d\left(\frac 1 8\right)}{\text{inf}} \tilde \lambda^{\frac{2}{p-1}}\para u-Q_{\tilde z, \frac{1}{\tilde \lambda}}\para_{L^{\infty}(\mathcal B^d (1))} \leq  \lambda^{\frac{2}{p-1}}\para u-Q_{z, \frac{1}{\lambda}}\para_{L^{\infty}(\mathcal B^d (1))} \\
 = & \para \tilde Q_b+\varepsilon-Q\para_{L^{\infty}\left(\frac{1}{\lambda}(\mathcal B^d(0,1)-\{z\}) \right)}  =  \para \chi_{B_1}\alpha_b +\varepsilon\para_{L^{\infty} \left(\frac{1}{\lambda}(\mathcal B^d(0,1)-\{z\}) \right)} \\
\leq & \para \chi_{B_1}\alpha_b\para_{L^{\infty}(\mathbb R^d)} +\para \varepsilon \para_{L^{\infty}(\mathbb R^d)} \leq  \frac{C}{s}+\frac{C}{s^{\frac{d}{4}-\frac{\sigma}{2}}} \ll \kappa ,
\ea 
$$
$$
\para (\tau_{-z})u_{\lambda}-Q\para_{L^{\infty}(\mathcal B^d(3M))}\leq \para \alpha_b\para_{L^{\infty}(\mathcal B^d(3M))}+\para \varepsilon \para_{L^{\infty}(\mathcal B^d(3M))}\leq \frac{C}{s}+\frac{C}{s^2}.
$$
Using \fref{trap:id u}, \fref{trap:eq:ortho}, \fref{cons:eq:def Qbtilde}, \fref{cons:eq:def Qb}, \fref{trap:bd bnki}, \fref{bootstrap:eq:orthogonalite PhiM} and \fref{cons:eq:asymptotique T0n} one gets
$$
\ba{r c l}
&=\langle (\tau_{-z})u_{\lambda}-Q,H\Phi^{(0,1)}_M\rangle = \langle \alpha_b,H\Phi^{(0,1)}_M\rangle\\
=&b^{(0,1)}_1\langle T^{(0,1)}_0,\chi_MT^{(0,1)}_0\rangle+O(s^{-2})\sim \frac{c}{s} =\frac{c_1}{s} cM^{d-2\gamma}+O(s^{-2})
\ea
$$
for some $c>0$, which, combined with the above estimate gives:
$$
\para (\tau_{-z})u_{\lambda}-Q\para_{L^{\infty}(\mathcal B^d(3M))}\ll \langle (\tau_{-z})u_{\lambda}-Q,H\Phi^{(0,1)}_M\rangle
$$
for $M$ large enough as $d-2\gamma>0$. Therefore, a solution cannot exit the trapped regime because the condition \fref{trap:def projection T011} fails: the estimates on the parameters and the remainder have to be violated first. We thus forget about this condition in the following.

\end{remark}

The key result of this paper is the existence of solutions that are trapped on their whole lifespan.

\begin{proposition}[Existence of fully trapped solutions:] \label{trap:pr:bootstrap}

There exists a choice of universal constants for the analysis\footnote{The interdependence of the constants is written here so that the reader knows, for example, that $s_0$ is chosen after all the other constants.}:
\be \la{trap:def constantes}
\ba{l l}
L=L(\ell,d,p)\gg 1, \ \ 0<\eta=\eta(d,p,L)\ll 1, \ \ M=M(d,p,L)\gg 1, \\
\sigma=\sigma(L,d,p), \ K_1=K_1(d,p,L)\gg 1, \ \ K_2=K_2(d,p,L)\gg 1,\\
0<\epsilon_i^{(0,1)}=\epsilon_i^{(0,1)}(L,d)\ll 1 \ \text{for} \ \ell+1\leq i \leq L, \  0<\epsilon_1=\epsilon_1(L,d)\ll 1,\\
0<\epsilon_i^{(n,k)}=\epsilon_i^{(n,k)}(L,d)\ll 1 \ \text{for} \ (n,k,i) \in \mathcal I \ \text{with} \ 1\leq n, i_n+1\leq i \leq L_n \\
0<\tilde{\eta}=\tilde{\eta}(\ell,L,d,p,\eta)\ll 1 \ \text{and} \ s_0=s_0(\ell,d,p,L,M,K_1,K_2,\epsilon_i^{(n,k)},\tilde{\eta})\gg 1, 
\ea
\ee
such that the following fact holds close to $\chi \tilde{Q}_{\bar{b}(s_0),\frac{1}{\lambda(s_0)}}$ where $\bar{b}$ is given by \fref{cons:eq:def barb} and $\lambda(s_0)$ satisfies \fref{trap:eq:lambdas0}. Given a perturbation along the stable directions, represented by $w (s_0)$, decomposed in \fref{trap:eq:decomposition}, satisfying \fref{trap:eq:bounds varepsilon0} and \fref{trap:eq:ortho}, and $V_1(s_0)$, $\left(U^{(0,1)}_{\ell+1}(s_0),...,U_L^{(0,1)}(s_0)\right)$, $\left(U^{(n,k)}_i(s_0)\right)_{(n,k,i)\in \mathcal I, \ n\geq 1, \ i_n \leq i }$ satisfying \fref{trap:eq:bound stable01}, \fref{trap:eq:bound stable02} and \fref{trap:eq:bound stable03}, there exists a correction along the unstable directions represented by $(V_2(s_0),...V_{\ell}(s_0))$ and $(U_i^{(n,k)}(s_0))_{(n,k,i)\in \mathcal I, 1\leq n, \  i < i_n}$ satisfying \fref{trap:eq:bound instable01} and \fref{trap:eq:bound instable02} such that the solution $u(t)$ of \fref{eq:NLH} with initial datum $u(0)=\chi \tilde{Q}_{b(s_0),\frac{1}{\lambda(s_0)}}+w(s_0)$ with: 
\be \la{trap:eq:binitial}
b(s_0)=\left(\bar{b}^{(n,k)}_i+\frac{U^{(n,k)}_i(s_0)}{s_0^{\frac{\gamma-\gamma_n}{2}+i}}\right)_{(n,k,i)\in \mathcal I}
\ee
is trapped until its maximal time of existence in the sense of Definition \ref{trap:def:trapped solution}.

\end{proposition}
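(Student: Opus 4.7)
The plan is a combination of a bootstrap argument in the trapped regime and a Brouwer-type topological argument to select the initial unstable directions. Fix all constants as in \fref{trap:def constantes}. For each choice of initial unstable coefficients $\mathbf{V}_{\text{in}} := (V_2(s_0),\ldots,V_\ell(s_0))$ and $\mathbf{U}_{\text{in}} := (U^{(n,k)}_i(s_0))_{(n,k,i)\in \mathcal I,\, n\geq 1,\, 0\leq i < i_n}$ in the closed box $\overline{\mathcal{B}}$ whose coordinates satisfy \fref{trap:eq:bound instable01}--\fref{trap:eq:bound instable02}, the initial datum $u_0(\mathbf{V}_{\text{in}},\mathbf{U}_{\text{in}}) := \chi \tilde{Q}_{b(s_0),1/\lambda(s_0)} + w(s_0)$ defined via \fref{trap:eq:binitial} satisfies the compatibility conditions \fref{trap:eq:compatibilite} and the Cauchy problem admits a unique maximal $H^{2s_L}$-solution $u(\cdot;\mathbf{V}_{\text{in}},\mathbf{U}_{\text{in}})$. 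The decomposition of Lemma \ref{trap:lem:projection} holds on a maximal interval $[0,T_{\text{exit}})$ where all the bounds of Definition \ref{trap:def:trapped solution} hold, and the goal is to show that for at least one choice of $(\mathbf{V}_{\text{in}},\mathbf{U}_{\text{in}})$ one has $T_{\text{exit}} = T$.

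The bootstrap part (to be carried out in Section \ref{sec:proof}) consists in improving strictly the bounds \fref{trap:eq:bounds varepsilon} on the remainder and the bounds \fref{trap:eq:bound instable}--\fref{trap:eq:bound stable} for the stable modes. For the remainder, one differentiates the quantities $\lambda^{-2(\sigma-s_c)}\mathcal{E}_\sigma + \|w_{\text{ext}}\|_{H^\sigma}^2$ and $\lambda^{-2(2s_L-s_c)}\mathcal{E}_{2s_L} + \|w_{\text{ext}}\|_{H^{s_L}}^2$ in time; using the evolution equations \fref{strat:evolution wint}--\fref{strat:evolution wext}, the coercivity of $H^{s_L}$ under the orthogonality conditions \fref{trap:eq:ortho} (Lemma \ref{annexe:lem:coercivite norme adaptee}), the sharp global bounds \fref{cons:eq:bound globale tildepsib L} on the error $\tilde\psi_b$, and a careful control of the interaction, cutoff and purely nonlinear terms via interpolation between $\mathcal{E}_\sigma$ and $\mathcal{E}_{2s_L}$, one obtains Lyapunov-type differential inequalities that reintegrated in renormalized time $s$ give
\[
\mathcal{E}_\sigma + \|w_{\text{ext}}\|_{H^\sigma}^2 \ll 1,\qquad \mathcal{E}_{2s_L} + \|w_{\text{ext}}\|_{H^{s_L}}^2 \ll \frac{1}{\lambda^{2(2s_L-s_c)} s^{2L+2(1-\delta_0)+2(1-\delta_0')\eta}},
\]
which strictly improves \fref{trap:eq:bounds varepsilon}. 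For the parameters, the modulation equations obtained by taking scalar products of the equation for $w_{\text{int}}$ with $(H^i \Phi^{(n,k)}_M)_{z,1/\lambda}$ yield \fref{strat:modulation}, which substituted into the renormalization \fref{trap:eq:def Ui} gives $\partial_s U = \frac{1}{s} A U + O(s^{-1-\kappa})$ with $A$ the matrix of Lemma \ref{cons:lem:linearisation}. The stable modes are then controlled by a simple Gronwall argument, and the improved bounds on $\lambda$ and $z$ follow from the first two equations of \fref{cons:eq:bs} integrated in time.

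The unique way for a solution to leave the trapped regime is therefore through saturation of one of the unstable bounds \fref{trap:eq:bound instable}--\fref{trap:eq:bound instable2}. The diagonalization \fref{cons:eq:diagonalisation} and the block structure of $\tilde A_n$ show that on the unstable subspace, $A$ has only strictly positive eigenvalues (namely $\frac{2\alpha}{2\ell-\alpha},\ldots,\frac{\ell\alpha}{2\ell-\alpha}$ for $V_2,\ldots,V_\ell$, and $\alpha\frac{\ell - (\gamma-\gamma_n)/2-i}{2\ell-\alpha} > 0$ for $i < i_n$), so at any candidate exit time the flow is strictly outgoing in the corresponding rescaled coordinate. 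This transversality ensures that the exit time $T_{\text{exit}}(\mathbf{V}_{\text{in}},\mathbf{U}_{\text{in}})$ depends continuously on the initial data and that the exit-position map
\[
\Phi: \overline{\mathcal{B}} \longrightarrow \partial \mathcal{B},\qquad (\mathbf{V}_{\text{in}},\mathbf{U}_{\text{in}}) \longmapsto \text{rescaled unstable coordinates at time } T_{\text{exit}},
\]
(defined on the subset where $T_{\text{exit}} < T$) would be continuous and restrict to the identity on $\partial \mathcal{B}$. Since $\overline{\mathcal{B}}$ is homeomorphic to a closed ball and $\partial \mathcal{B}$ to its boundary sphere, Brouwer's no-retraction theorem gives a contradiction, so for some choice of $(\mathbf{V}_{\text{in}},\mathbf{U}_{\text{in}})$ the trapped regime persists up to $T$.

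The hard part is the high-regularity Lyapunov monotonicity. The difficulty is threefold: (i) closing the $H^{2s_L}$ estimate with a strictly better power of $s$ than the source term produced by $\tilde\psi_b$, which forces an extremely precise choice of the truncation scales $B_0, B_1$ and of the orthogonality profiles $\Phi_M^{(n,k)}$ so that the modulation-generated terms are absorbed; (ii) handling the boundary/cutoff terms arising from the partition $w = w_{\text{int}} + w_{\text{ext}}$ and the Dirichlet condition on $\partial \Omega$, which requires the dissipation of the heat equation to dominate transport across the overlap region $\{3 \leq |x| \leq 6\}$; and (iii) controlling the nonlinear terms in the slightly supercritical $H^\sigma$ norm despite the absence of any scale-invariant bound, which is precisely why the couple $(\mathcal{E}_\sigma, \mathcal{E}_{2s_L})$ is used jointly and why $\sigma - s_c$ must be taken small. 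These estimates are the content of Propositions \ref{pro:pr:mathcalEsigma}, \ref{pro:pr:lowsobowext}, \ref{pro:pr:mathcalE2sL} and \ref{pro:pr:highsobowext} and occupy most of Section \ref{sec:proof}.
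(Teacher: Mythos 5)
Your proposal follows essentially the same route as the paper: Lyapunov monotonicity for the low ($\mathcal E_\sigma$, $\|w_{\text{ext}}\|_{H^\sigma}$) and high ($\mathcal E_{2s_L}$, $\|w_{\text{ext}}\|_{H^{2s_L}}$) regularity norms combined with the modulation equations to show that the trapped regime can only be left through saturation of the unstable bounds, then the outgoing-flux property on the unstable box giving continuity of the exit map and a contradiction with Brouwer's no-retraction theorem. This is exactly the structure of Section \ref{sec:proof} (Lemmas \ref{pro:lem:modulation bLn}, \ref{pro:lem:exit}, \ref{pro:lem:f} and Propositions \ref{pro:pr:mathcalEsigma}--\ref{pro:pr:highsobowext}), the only point passed over lightly being the improved modulation equation for the last parameters $b^{(n,k)}_{L_n}$ and the associated integration by parts in time (radiation term $\xi$) needed to close the $\mathcal E_{2s_L}$ estimate.
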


\begin{proof}[Proof of Proposition \ref{trap:pr:bootstrap}]

The proof is relegated to Section \ref{sec:proof}.

\end{proof}

%%%%%%%%%%%%%%%%%%%%%%%%%%%%%%%%%%%%%%%%%%%
%%%%%%%%%%%%%%%%%%%%%%%%%%%%%%%%%%%%%%%%%%%

\subsection{End of the proof of Theorem \ref{thmmain} using Proposition \ref{trap:pr:bootstrap}} \la{sub:end}

In this subsection we end the proof of the main Theorem \ref{thmmain} by proving that the solutions given by Proposition \ref{trap:pr:bootstrap} lead to a finite time blow up with the properties described in Theorem \ref{thmmain}. The proof of Theorem \ref{thmmain} is a direct consequence of Proposition \ref{trap:pr:bootstrap}, Lemmas \ref{trap:lem:concentration} and \ref{trap:lem:normes2}. Until the end of this subsection, $u$ will denote a solution that is trapped in the sense of Definition \ref{trap:def:trapped solution}) on its maximal interval of existence. First, we describe the time evolution equation for $\varepsilon$. It then allows us to compute how the time evolution law for the parameters $\lambda$ and $z$ related to the decomposition \fref{trap:eq:decomposition} depends on the other parameters. The bounds on the parameters and the remainder for a trapped solution then imply that $\lambda$ goes to zero with explicit asymptotic in finite time, that $z$ converges, and that the solution undergoes blow up by concentration with a control on the asymptotic behavior for Sobolev norms.

%%%%%%%%%%%%%%%%%%%%%%%%%%%%%%%%%%%%%%%%%%%%%%

\subsubsection{Time evolution for the error}

Let $u$ be a trapped solution. From the decomposition \fref{trap:eq:decomposition} we compute that the time evolution of the remainder is:
\be \la{trap:eq:evolution w}
\ba{r c l}
w_t&=& -\frac{1}{\lambda^2} \chi \tau_z(\tilde{\te{Mod}}(t)_{\frac 1 \lambda}+\tilde{\psi}_{b,\frac 1 \lambda})+\Delta w+\sum_{k=1}^p C^p_k (\chi\tau_z\tilde{Q}_{b,\frac 1 \lambda})^{p-k}w^k\\
&&+\Delta \chi \tau_z Q_{\frac 1 \lambda}+2\nabla \chi .\nabla \tau_z Q_{\frac 1 \lambda}+\chi \tau_z Q_{\frac 1 \lambda}^p(\chi^{p-1}-1).
\ea
\ee
with the new modulation term being defined as:
\be \label{trap:eq:def tildeMod}
\tilde{\text{Mod}}(t):= \chi_{B_1}\text{Mod}(t)-\left(\frac{\lambda_s}{\lambda}+b_1^{0,1)} \right)\Lambda \tilde{Q}_b-\left(\frac{z_s}{\lambda}+b_1^{(1,\cdot)} \right). \nabla \tilde{Q}_b,
\ee

\noindent From \fref{trap:eq:evolution w} and \fref{trap:eq:def w}, as the support of $w_{\te{ext}}$ is outside $\mathcal B^d(2)$ and as $\tau_z(\tilde Q_{b,\lambda})$ is cut in the zone $1\leq |x|\leq 2$, the time evolution of $w_{\te{ext}}$ is:
\be \la{trap:eq:evolution wext}
\partial_t w_{\te{ext}}=\Delta w_{\te{ext}}+\Delta \chi_3 w+2\nabla \chi_3.\nabla w+(1-\chi_3)w^p.
\ee
The excitation of the solitary wave $\tau_z(\tilde{\alpha}_{b,\frac 1 \lambda})$ has support in the zone $|x-z|\leq 2\lambda B_1$ and from \fref{trap:eq:lambda hp}, $|z|+\lambda B_1\ll 1$, so it does not see the cut by $\chi$ of the approximate blow up profile. From this, \fref{trap:eq:evolution w} and \fref{trap:eq:def w} the time evolution of $w_{\te{int}}$ is therefore given by:
\be \la{trap:eq:evolution wint}
\partial_t w_{\te{int}}+H_{z,\frac 1 \lambda}w_{\te{int}}=-\frac{1}{\lambda^2}\chi \tau_z(\tilde{\te{Mod}(t)_{\frac 1 \lambda}}+\tilde{\psi}_{b,\frac 1 \lambda})+L(w_{\te{int}})+NL(w_{\te{int}})+\tilde L +\tilde{NL}+\tilde{R}
\ee
where $H_{z,\frac{1}{\lambda}}$, $\text{NL}(w_{\te{int}})$, $L(w_{\te{int}})$ are the linearized operator, the non linear term and the small linear terms resulting from the interaction between $w_{\te{int}}$ and a non cut approximate blow up profile $\tau_z(\tilde{Q}_{b,\frac 1 \lambda})$:
\be \la{trap:def Hzlambda}
H_{z,\frac{1}{\lambda}}:=-\Delta -p\left(\tau_z(\tilde{Q}_{\frac{1}{\lambda}})\right)^{p-1}, \ \ \ H_{b,z,\frac{1}{\lambda}}:=-\Delta -p\left(\tau_z(\tilde{Q}_{b,\frac{1}{\lambda}})\right)^{p-1}
\ee
\be \la{trap:def Lwint}
\ba{l l}
\text{NL}(w_{\te{int}}):= F\left(\tau_z(\tilde{Q}_{b,\frac{1}{\lambda}})+w_{\te{int}}\right)-F\left(\tau_z(\tilde{Q}_{b,\frac{1}{\lambda}})\right)+H_{b,\frac{1}{\lambda}}(w_{\te{int}}), \\
 L(w_{\te{int}}):= H_{z,\frac{1}{\lambda}} w_{\te{int}} -H_{b,z,\frac{1}{\lambda}} w_{\te{int}}=\frac{p}{\lambda^2}\tau_z (\chi_{B_1}^{p-1}\alpha_b^{p-1})_{\frac{1}{\lambda}}.
\ea
\ee
The last terms in \fref{trap:eq:evolution wint} are the corrective terms induced by the cut of the approximate blow up profile and the cut of the error term\footnote{Again, the excitation of the solitary wave $\tau_z(\tilde{\alpha}_{b,\frac{1}{\lambda}})$ is not present here as its support is in the zone $|x|\ll 1$, see \fref{trap:eq:lambda hp}}:
\be \la{trap:def tildeL}
\tilde L:=-\Delta \chi_3w-2\nabla \chi_3.\nabla w+p\tau_z Q_{\frac 1 \lambda}^{p-1}(\chi ^{p-1}-\chi_3)w,
\ee
\be \la{trap:def tildeNL}
\tilde{NL}:=\sum_{k=2}^p C^p_k\tau_z Q^{p-k}_{\frac 1 \lambda}(\chi ^{p-k}-\chi_3^{k-1})\chi_3w^k,
\ee
\be \la{trap:def tildeR}
\tilde R:= \Delta \chi \tau_z Q_{\frac 1 \lambda}+2\nabla \chi \nabla \tau_z Q_{\frac 1 \lambda}+\chi \tau_z Q_{\frac 1 \lambda}^p(\chi ^{p-1}-1),
\ee
and one notices that their support is in the zone $1 \leq |x|\leq 6$. Using the definition of the renormalized flow \fref{cons:eq:flot renormalise} and the decomposition \fref{trap:eq:decomposition} we compute from \fref{trap:eq:evolution w}: 
\be \label{trap:eq:evolution varepsilon}
\ba{r c l}
\partial_s \varepsilon - \frac{\lambda_s}{\lambda} \Lambda \varepsilon-\frac{z_s}{\lambda}.\nabla \varepsilon +H \varepsilon & = &  -\chi(\lambda y+z)(\tilde{\text{Mod}(s)} + \tilde{\psi}_b) \\
&&+ \text{NL}(\varepsilon) + L(\varepsilon)+\lambda^2[\tau_{-z}(\tilde L + \tilde R + \tilde{NL})]_{\lambda} ,
\ea
\ee
with the the purely non linear term and the small linear term in adapted renormalized variables being defined as:
\be \label{trap:eq:def NL L}
\ba{l l}
\text{NL}(\varepsilon):= F(\tilde{Q}_b+\varepsilon)-F(\tilde{Q}_b)+H_b(\varepsilon), \ \ L(\varepsilon):= H \varepsilon -H_b \varepsilon,
\ea
\ee
where $H_b:=-\Delta-p\tilde{Q}_b^{p-1}$ is the linearized operator near $\tilde{Q}_b$. One notices that the extra terms induced by the cut, $\lambda^2[\tau_{-z}(\tilde L + \tilde R + \tilde{NL})]_{\lambda}$, have support in the zone $\frac{1}{2\lambda}\leq |y|\leq \frac{7}{\lambda}$ (from \fref{trap:eq:lambda hp}).

%%%%%%%%%%%%%%%%%%%%%%%%%%%%%%%%%%%%%%%%%%%

\subsubsection{Modulation equations}

We now quantify how the evolution of one parameter $b_i^{(n,k)}$, $\lambda$ or $z$ depends on all the parameters $(b_i^{(n,k)})_{(n,k,i)\in \mathcal I}$ and the remainder $\varepsilon$.

\begin{lemma}[Modulation] \label{trap:lem:modulation}

Let all the constants of the analysis described in Proposition \ref{trap:pr:bootstrap} be fixed except $s_0$. Then for $s_0$ large enough, for any solution $u$ that is trapped on $[s_0,s')$ in the sense of Definition \ref{trap:def:trapped solution} there holds for $s_0\leq s<s'$:
\be \label{trap:eq:modulation leqL-1}
\begin{array}{r c l}
&\left|\frac{\lambda_s}{\lambda}+b_1^{(0,1)}\right|+\left|\frac{z_s}{\lambda}+b_1^{(1,\cdot)}\right| +\underset{(n,k,i)\in \mathcal I, \ i\neq L_n}{\sum} |b_{i,s}^{(n,k)}+(2i-\alpha_n)b_1^{(0,1)}b_i^{(n,k)}+b_{i+1}^{(n,k)} |  \\ 
\leq& \frac{C(L,M)}{s^{L+3}}+\frac{C(L,M)}{s} \sqrt{\mathcal E_{2s_L}},
\end{array}
\ee
\be \label{trap:eq:modulation L}
\sum_{(n,k,i)\in \mathcal I, \ i= L_n}|b_{i,s}^{(n,k)}+(2i-\alpha_n)b_1^{(0,1)}b_i^{(n,k)}|  \leq \frac{C(M,L)}{s^{L+3}}+C(M,L)\sqrt{\mathcal{E}_{2s_L}} .
\ee

\end{lemma}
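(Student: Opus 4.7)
\textbf{Proof proposal for Lemma \ref{trap:lem:modulation}.}

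The plan is to differentiate the orthogonality conditions \fref{trap:eq:ortho} in time and extract the modulation equations by projecting the evolution equation \fref{trap:eq:evolution varepsilon} onto the profiles $H^j \Phi_M^{(n,k)}$ for $0\leq n\leq n_0$, $1\leq k\leq k(n)$, $0\leq j\leq L_n$. Since $\langle \varepsilon, H^j \Phi_M^{(n,k)}\rangle=0$ and the test profile is time-independent, we get $\langle \partial_s \varepsilon, H^j \Phi_M^{(n,k)}\rangle =0$, so that after taking the scalar product of \fref{trap:eq:evolution varepsilon} with $H^j \Phi_M^{(n,k)}$ and rearranging, the contribution of $\chi(\lambda y+z)\tilde{\text{Mod}}(s)$ balances the other terms. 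Using the definition \fref{trap:eq:def tildeMod} of $\tilde{\text{Mod}}$ together with the shape \fref{cons:eq:def Mod} of $\text{Mod}(s)$ and the key orthogonality identity \fref{bootstrap:eq:orthogonalite PhiM}, the leading contribution is
$$\bigl[b_{j,s}^{(n,k)}+(2j-\alpha_n)b_1^{(0,1)}b_j^{(n,k)}-b_{j+1}^{(n,k)}\bigr]\int \chi_M |T_0^{(n)}|^2 r^{d-1}\,dr \sim c M^{4m_n+4\delta_n}\bigl[\,\cdot\,\bigr],$$
and similarly the projections onto $\Phi_M^{(0,1)}$ with $j=0$, resp. onto $\Phi_M^{(1,k)}$ with $j=0$, single out $\frac{\lambda_s}{\lambda}+b_1^{(0,1)}$, resp. $\frac{z_s}{\lambda}+b_1^{(1,k)}$, up to corrections produced by the dependence of $\tilde Q_b$ on the parameters through the $S_i$ profiles (which are of higher order in $b$ by Proposition \ref{cons:pr:Qb}, see \fref{cons:eq:bound partialSi}).

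Next I would estimate each remaining term on the right-hand side of \fref{trap:eq:evolution varepsilon} after pairing with $H^j\Phi_M^{(n,k)}$. The profile $H^j \Phi_M^{(n,k)}$ is smooth, compactly supported in $|y|\lesssim M$, so every scalar product reduces to a local estimate with a constant $C(L,M)$; by Cauchy--Schwarz with the local coercivity from Lemma \ref{annexe:lem:coercivite norme adaptee}, the terms $\langle \Lambda\varepsilon,\cdot\rangle$, $\langle \nabla \varepsilon,\cdot\rangle$, $\langle \text{NL}(\varepsilon),\cdot\rangle$ and $\langle L(\varepsilon),\cdot\rangle$ are all bounded by $\frac{C(L,M)}{s}\sqrt{\mathcal E_{2s_L}}$ (the factor $1/s$ coming from the smallness of $b_1^{(0,1)}\sim 1/s$ in the linear tail $L(\varepsilon)$ and from $|\lambda_s/\lambda|\sim |b_1^{(0,1)}|$ for the $\Lambda\varepsilon$ term). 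The contribution $\langle \chi(\lambda y+z)\tilde\psi_b, H^j \Phi_M^{(n,k)}\rangle$ is handled by the local bound \fref{cons:eq:estimation locale tildepsib} on $\tilde \psi_b$, which gives $O(s^{-L-3})$ after dividing by $M^{4m_n+4\delta_n}$. Finally, the tail remainder $\lambda^2[\tau_{-z}(\tilde L+\tilde R+\tilde{NL})]_{\lambda}$ is supported in the region $|y|\gtrsim \lambda^{-1}$, where $H^j\Phi_M^{(n,k)}$ vanishes for $s_0$ large, hence this contribution is zero.

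The key difficulty, and the reason for the split between \fref{trap:eq:modulation leqL-1} and \fref{trap:eq:modulation L}, is the treatment of $\langle H\varepsilon, H^j \Phi_M^{(n,k)}\rangle = \langle \varepsilon, H^{j+1}\Phi_M^{(n,k)}\rangle$. When $j<L_n$, the orthogonality conditions \fref{trap:eq:ortho} include $H^{j+1}\Phi_M^{(n,k)}$ and this term vanishes identically, leading to the cleaner bound \fref{trap:eq:modulation leqL-1}. When $j=L_n$, however, $H^{L_n+1}\Phi_M^{(n,k)}$ is no longer in the orthogonality family, so one can only estimate $|\langle \varepsilon, H^{L_n+1}\Phi_M^{(n,k)}\rangle|\leq C(L,M)\sqrt{\mathcal E_{2s_L}}$ by Cauchy--Schwarz (with no $1/s$ gain), which produces the weaker estimate \fref{trap:eq:modulation L}. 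The main obstacle is thus bookkeeping: one must invert the near-identity linear system obtained by projection, showing that it is indeed diagonally dominant up to $O(1/s)$ corrections coming from the $\frac{\partial S_j}{\partial b_i^{(n,k)}}$ contributions, so that after inversion each time derivative $b_{i,s}^{(n,k)}$, $\frac{\lambda_s}{\lambda}$, $\frac{z_s}{\lambda}$ is individually controlled by the stated right-hand side, the smallness coming from the a priori bounds \fref{trap:bd bnki} in the trapped regime.
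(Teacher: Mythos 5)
Your proposal follows essentially the same route as the paper's proof: project the evolution equation \fref{trap:eq:evolution varepsilon} onto $(-H)^i\Phi_M^{(n,k)}$, use the orthogonality relations \fref{bootstrap:eq:orthogonalite PhiM} and \fref{trap:eq:ortho} so that $\langle H\varepsilon,H^i\Phi_M^{(n,k)}\rangle$ vanishes for $i<L_n$ and is only $O(\sqrt{\mathcal E_{2s_L}})$ for $i=L_n$ (explaining the two different bounds), control $\tilde\psi_b$ by its local bound to get the $s^{-L-3}$ term, discard the cut terms by support, and resolve the apparent circularity in $\lambda_s/\lambda$, $z_s/\lambda$ by inverting the near-identity system — which is exactly the paper's device of first bounding the full modulation vector $D(s)$ crudely and then re-injecting. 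The argument is correct and no essential step is missing.
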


\begin{proof}[Proof of Lemma \ref{trap:lem:modulation}] 

We let:
\be \la{trap:def D}
D(s)=\left| \frac{\lambda_s}{\lambda}+b_1^{(0,1)} \right|+\left| \frac{z_s}{\lambda}+b_1^{(1,\cdot)} \right|+\sum_{(n,k,i)\in \mathcal I} |b_{i,s}^{(n,k)}+(2i-\alpha_n)b_1^{(0,1)}b_i^{(n,k)}-b_{i+1}^{(n,k)}| .
\ee
with the convention that $b_{L_n+1}^{(n,k)}=0$. Taking the scalar product of \fref{trap:eq:evolution varepsilon} with $(-H)^i \Phi_M^{(n,k)}$, using \fref{bootstrap:eq:orthogonalite PhiM}, gives \footnote{We do not see the extra terms $\tilde L$, $\tilde R$ and $\tilde{NL}$ because their support is in the zone $\frac{1}{2\lambda}\leq |y|$ (from \fref{trap:eq:lambda hp}) which is very far away from the support of $\Phi_M^{(n,k)}$, in the zone $|y|\leq 2M$ ($s_0$ being chosen large enough so that this statement holds).}:
\be \label{trap:eq:modulation}
\begin{array}{r c l}
\langle \tilde{\text{Mod}}(s), (-H)^i \Phi_M^{(n,k)} \rangle &=& \langle -H \varepsilon, (-H)^i \Phi_M^{(n,k)} \rangle   - \langle \tilde{\psi}_b,(-H)^i \Phi_M^{(n,k)} \rangle \\
&& + \langle \frac{\lambda_s}{\lambda}\Lambda \varepsilon+\frac{z_s}{\lambda}.\nabla \varepsilon+\text{NL}(\varepsilon)+L(\varepsilon),(-H)^i \Phi_M^{(n,k)}\rangle  .
\end{array}
\ee
Now we look closely at each one of the terms of this identity.

\noindent - \emph{The modulation term}. From the expression \fref{cons:eq:def Qbtilde} of $\tilde{Q}_b$, the bound \fref{cons:eq:bound partialSi} on $\frac{\partial S_j}{\partial b_i^{(n,k)}}$, the bounds \fref{trap:bd bnki} on the parameters, one has:
$$
\tilde{Q}_b=Q+\chi_{B_1}\alpha_b=Q+O(s^{-1}), \ \ \ \text{and} \ \frac{\partial S_j}{\partial b_i^{(n,k)}}=O(s^{-1}) \ \text{on} \ \mathcal B^{d}(0,2M).
$$
From \fref{cons:eq:def Mod}, \fref{trap:eq:def tildeMod} and \fref{trap:def D} the modulation term can then be rewritten as:
$$
\begin{array}{r c l}
&\text{Mod}(s) \\
=& \chi_{B_1}\underset{(n,k,i)\in \mathcal I}{\sum} [b_{i,s}^{(n,k)}+(2i-\alpha_n)b_1^{(0,1)}b_i^{(n,k)}-b_{i+1}^{(n,k)}]\left[ T_i^{(n,k)}+\sum_{j=i+1+\delta_{n\geq 2}}^{L+2}\frac{\partial S_j}{\partial b_i^{(n,k)}} \right]\\
&-\left(\frac{\lambda_s}{\lambda}+b_1^{0,1)} \right)\Lambda \tilde{Q}_b-\left(\frac{z_s}{\lambda}+b_1^{(1,\cdot)} \right). \nabla \tilde{Q}_b \\
=& \chi_{B_1} \sum_{(n,k,i)\in \mathcal I} [b_{i,s}^{(n,k)}+(2i-\alpha_n)b_1^{(0,1)}b_i^{(n,k)}-b_{i+1}^{(n,k)}] T_i^{(n,k)} \\
&-\left(\frac{\lambda_s}{\lambda}+b_1^{0,1)} \right)\Lambda Q-\left(\frac{z_s}{\lambda}+b_1^{(1,\cdot)} \right). \nabla Q+O(\frac{|D(s)|}{s})
\end{array}
$$
where the $O(\frac{|D(s)|}{s})$ is valid in the zone $|y|\leq 2M$. From the orthogonality relations \fref{bootstrap:eq:orthogonalite PhiM} we then get:
\be \label{trap:eq:modulation mod}
\ba{r c l}
&\langle \tilde{\text{Mod}}(s), (-H)^i \Phi_M^{(n,k)} \rangle +O\left(\frac{|D(s)|}{s} \right)\\
=&  \left\{ \begin{array}{l l} - C \langle \chi_{M}\Lambda Q, \Lambda Q \rangle \left(\frac{\lambda_s}{\lambda}+b^{(0,1)}_1 \right) \ \text{for} \ (n,k,i)=(0,1,0) \\
- C' \langle \chi_{M}\nabla Q, \nabla Q \rangle \left(\frac{z_{j,s}}{\lambda}+b^{(1,k)}_1 \right) \ \text{for} \ (n,i)=(1,0), \ 1\leq k \leq d \\
 \langle \chi_{M}T^{(n,k)}_0, T^{(n,k)}_0 \rangle \left(b_{i,s}^{(n,k)}+(2i-\alpha_n)b_1^{(0,1)}b_i^{(n,k)}-b_{i+1}^{(n,k)} \right) \ \text{otherwise} \\
 \end{array} \right.
\ea
\ee
where $C$ and $C'$ are two positive renormalization constants.

\noindent - \emph{The main linear term}. The coercivity estimate \fref{annexe:eq:coercivite norme adaptee} and H\"older inequality imply:
$$
\int_{|y|\leq 2M} |\varepsilon| dy \lesssim C(M)\sqrt{\mathcal E_{2s_L}}.
$$
Hence, from the orthogonality \fref{trap:eq:ortho} for $\varepsilon$ we obtain for $0\leq n \leq n_0$, $1\leq k \leq k(n)$:
\be \label{trap:eq:modulation Hvarepsilon}
\left| \langle H \varepsilon, H^i \Phi_M^{(n,k)} \rangle \right|  = \left\{ \begin{array}{l l} 0 \ \ \text{for} \ i<L_n \\
\left| \langle \varepsilon, (-H)^{i+1} \Phi_M^{(n,k)} \rangle \right|=O(\sqrt{\mathcal{E}_{2s_L}}) \ \ \text{for} \ i=L_n. \\
\end{array} \right.
\ee

\noindent - \emph{The error term}. Using the local bound \fref{cons:eq:estimation locale tildepsib} for $\tilde{\psi}_b$ and \fref{trap:bd bnki}:
\be \label{trap:eq:modulation tildepsib}
\left| \langle \tilde{\psi}_b,H^i \Phi_M^{(n,k)} \rangle \right|  \leq \frac{C(L,M)}{s^{L+3}} .
\ee

\noindent - \emph{The extra terms}. From \fref{trap:bd bnki}, the coercivity estimate \fref{annexe:eq:coercivite norme adaptee}, the bound \fref{trap:eq:bounds varepsilon} on $\mathcal E_{2s_L}$ and \fref{trap:def D} one obtains:
$$
\left| \left\langle \frac{\lambda_s}{\lambda}\Lambda \varepsilon+\frac{z_s}{\lambda}.\nabla \varepsilon,H^i \Phi_M^{(n,k)}\right\rangle \right|\leq \frac{C(L,M)}{s}\sqrt{\mathcal E_{2s_L}}+\frac{|D(s)|}{s^{L+1-\delta_0+\eta(1-\delta_0')}}.
$$
Now, as $Q^{p-1}-\tilde{Q}_b^{p-1}=O(s^{-1})$ on the set $|y|\leq 2M$ from \fref{cons:eq:def Qb} and \fref{trap:bd bnki}, using the estimate \fref{an:eq:bound Linfty} on $\parallel \varepsilon \parallel_{L^{\infty}}$, from the definition \fref{trap:eq:def NL L} of $NL(\varepsilon)$ and $L(\varepsilon)$ and the coercivity \fref{annexe:eq:coercivite norme adaptee} one gets for $s_0$ large enough:
$$
\left| \langle \text{NL}(\varepsilon)+L(\varepsilon),H^i \Phi_M^{(n,k)}\rangle \right|\leq C(L,M) \mathcal E_{2s_L}+C(L,M)\frac{\sqrt{\mathcal E_{2s_L}}}{s}\leq C(L,M) \frac{\sqrt{\mathcal E_{2s_L}}}{s}.
$$
Putting together the last two estimates yields:
\be \label{trap:eq:modulation extra}
\left| \left\langle \frac{\lambda_s}{\lambda}\Lambda \varepsilon+\frac{z_s}{\lambda}.\nabla \varepsilon + \text{NL}(\varepsilon)+L(\varepsilon), H^i \Phi_M^{(n,k)} \right\rangle \right| \leq \frac{C(L,M)\sqrt{\mathcal E_{2s_L}}}{s}+\frac{C(L,M)|D(s)|}{s^{L+1-\delta_0+\eta(1-\delta_0')}}.
\ee

\noindent - \emph{Final bound on $|D(s)|$}. Summing the previous estimates we performed on each term of \fref{trap:eq:modulation} in \fref{trap:eq:modulation mod}, \fref{trap:eq:modulation Hvarepsilon}, \fref{trap:eq:modulation tildepsib} and \fref{trap:eq:modulation extra} yields: 
$$
|D(s)|\leq C(L,M) \sqrt{\mathcal{E}_{s_L}}+\frac{C(L,M)}{s^{L+3}}.
$$
We now come back to \fref{trap:eq:modulation}, inject again \fref{trap:eq:modulation mod} with the above bound on $|D|$, \fref{trap:eq:modulation Hvarepsilon}, \fref{trap:eq:modulation tildepsib} and \fref{trap:eq:modulation extra}, yielding  the desired bounds \fref{trap:eq:modulation leqL-1} and \fref{trap:eq:modulation L} of the lemma. 

\end{proof}

%%%%%%%%%%%%%%%%%%%%%%%%%%%%%%%%%%%%%%%%%%%

\subsubsection{Finite time blow up}

We now reintegrate in time the time evolution of $\lambda$ and $z$ we found in Lemma \ref{trap:lem:modulation} to obtain their behavior and show the blow up.

\begin{lemma}[Concentration and asymptotic of the blow up point] \label{trap:lem:concentration}

Let $u$ be a solution that is trapped on its maximal interval of existence. Then it blows up in finite time $T>0$ with $s(t)\rightarrow +\infty$ as $t\rightarrow T$ and:
\begin{itemize}
\item[(i)] \emph{Concentration speed}: $\lambda \underset{t\rightarrow T}{\sim} C(u(0))(T-t)^{\frac{\ell}{\alpha}}$, \ \ \ C(u(0))>0.
\item[(ii)] \emph{Behavior of the blow up point}: there exists $z_0$ such that $ \underset{t\rightarrow T}{\text{lim}} z(t)=z_0$ and for all times $s\geq s_0$:
\be \la{trap:eq:bound z}
|z(s)|=O(s_0^{-\tilde \eta})
\ee
\end{itemize}

\end{lemma}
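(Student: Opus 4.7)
The plan is to read the evolution of $\lambda$ and $z$ off the modulation equations of Lemma \ref{trap:lem:modulation} and integrate. From \fref{trap:eq:modulation leqL-1}, \fref{trap:bd bnki} and the bootstrap bound \fref{trap:eq:bounds varepsilon} on $\mathcal E_{2s_L}$ one gets
\be \la{plan:eq:lambdas}
\frac{\lambda_s}{\lambda}=-b_1^{(0,1)}+O\!\left(\frac{1}{s^{1+\tilde\eta'}}\right)=-\frac{\ell}{2\ell-\alpha}\cdot\frac{1}{s}+O\!\left(\frac{1}{s^{1+\tilde\eta'}}\right)
\ee
for some $\tilde\eta'>0$ (one checks that both $\sqrt{\mathcal E_{2s_L}}/s$ and $s^{-L-3}$ are absorbed in the remainder thanks to $L\gg 1$). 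Integrating \fref{plan:eq:lambdas} in $s$ from $s_0$ yields
$$
\log\lambda(s)=-\tfrac{\ell}{2\ell-\alpha}\log s+\log c(u_0)+o(1),\quad \text{i.e.}\quad \lambda(s)=c(u_0)\,s^{-\frac{\ell}{2\ell-\alpha}}(1+o(1))
$$
with $c(u_0)>0$, where the $o(1)$ comes from integrating the $O(s^{-1-\tilde\eta'})$ remainder.

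Next I convert back to original time using $ds/dt=\lambda^{-2}$. Because $2\ell/(2\ell-\alpha)>1$ (since $\alpha>0$), the integral
$$
T-t(s)=\int_s^{+\infty}\lambda(s')^2\,ds'=c(u_0)^2\int_s^{+\infty}s'^{-\frac{2\ell}{2\ell-\alpha}}(1+o(1))\,ds'
$$
converges, which forces $T<+\infty$, and it gives the asymptotic
$$
T-t \sim \tfrac{(2\ell-\alpha)c(u_0)^2}{\alpha}\, s^{-\frac{\alpha}{2\ell-\alpha}}\quad\text{as }s\to+\infty,
$$
hence $s\sim C'(u_0)(T-t)^{-\frac{2\ell-\alpha}{\alpha}}$ and, inserting back,
$$
\lambda(t)\sim c(u_0)\cdot(C'(u_0))^{-\frac{\ell}{2\ell-\alpha}}(T-t)^{\ell/\alpha},
$$
which is $(i)$. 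To know that $s\to+\infty$ (and not some finite value), one argues that $\lambda$ stays bounded below on any interval where $s$ is bounded, so the $L^\infty$ norm stays bounded and the solution could be extended, contradicting that $T$ is the maximal time; alternatively one simply uses that the above integral computation is consistent only with $s\to+\infty$ at the finite limit $T$.

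Finally for $z$, the modulation equation \fref{trap:eq:modulation leqL-1} together with \fref{trap:bd bnki} gives $|z_s|\lesssim \lambda(|b_1^{(1,\cdot)}|+\text{error})$. Using $|b_1^{(1,k)}|\lesssim s^{-\frac{\gamma-\gamma_1}{2}-1}=s^{-\frac{\alpha+1}{2}}$ and the asymptotic $\lambda\lesssim s^{-\frac{\ell}{2\ell-\alpha}}$ obtained above,
$$
|z_s(s)|\lesssim s^{-\frac{\ell}{2\ell-\alpha}-\frac{\alpha+1}{2}}+\text{(faster decaying terms)}.
$$
Since $\alpha>2$, the exponent satisfies $\frac{\ell}{2\ell-\alpha}+\frac{\alpha+1}{2}>\frac{3}{2}+\tilde\eta$ so $z_s$ is integrable on $[s_0,+\infty)$ and $z(s)$ converges to some $z_0$. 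Moreover, since $z(s_0)=0$ by \fref{trap:eq:lambdas0}, a brute force estimate of the integral $\int_{s_0}^{+\infty}|z_s|\,ds$ produces a bound $O(s_0^{-\tilde\eta})$ for $s_0$ large, giving \fref{trap:eq:bound z} and $|z_0|\leq\epsilon$, which will be needed for the blow-up point statement of Theorem \ref{thmmain}.

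The main obstacle is purely bookkeeping: one must verify that all the error terms appearing in \fref{trap:eq:modulation leqL-1} are genuinely integrable after multiplying by $\lambda$ (for $z_s$) or dividing by $\lambda$ (for $\lambda_s/\lambda$), which amounts to the strict inequalities $2\ell>\alpha$, $\alpha>2$, and $L\gg 1$, all of which are built into the construction.
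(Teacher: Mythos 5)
Your proposal is correct and follows essentially the same route as the paper: integrate the modulation equation for $\lambda_s/\lambda$ using \fref{trap:bd bnki} and \fref{trap:eq:bounds varepsilon} to get $\lambda\sim c\,s^{-\frac{\ell}{2\ell-\alpha}}$, convert via $ds/dt=\lambda^{-2}$ (the paper solves for $s(t)$ explicitly rather than writing $T-t$ as a tail integral, but the computation is identical and both hinge on $\frac{2\ell}{2\ell-\alpha}>1$), and then integrate $z_s=\lambda\,(-b_1^{(1,\cdot)}+O(\cdot))$, whose exponent $-1-\frac{\alpha}{2}(1+\frac{1}{2\ell-\alpha})$ gives convergence of $z$ and the $O(s_0^{-\tilde\eta})$ bound. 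The only cosmetic difference is your explicit extension argument for $s\to+\infty$, which the paper compresses into an appeal to the $L^\infty$ Cauchy theory together with \fref{cons:eq:def s} and \fref{trap:eq:lambda hp}.
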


\begin{proof}[Proof of Lemma \ref{trap:lem:concentration} ]

From the Cauchy theory in $L^{\infty}$, \fref{cons:eq:def s} and \fref{trap:eq:lambda hp}, if $T\in (0,+\infty]$ denotes the maximal time of existence of $u$, one necessarily have $\underset{s\rightarrow T}{\text{lim}} s(t)=+\infty$. From the estimate \fref{trap:bd bnki} on $b^{(0,1)}_1$, the modulation \fref{trap:eq:modulation leqL-1} and \fref{trap:eq:bounds varepsilon} one has:
$$
\frac{\lambda_s}{\lambda}=-\frac{c_1}{s}+O(s^{-1-\tilde{\eta}}).
$$
We reintegrate using \fref{trap:eq:lambdas0} (we recall that $c_1=\frac{\ell}{2\ell-\alpha}$ from \fref{cons:eq:def barb}):
\be \label{trap:eq:lambda}
\lambda= \frac{(1+O(s_0^{-\tilde{\eta}}))}{s^{\frac{\ell}{2\ell-\alpha}}}
\ee
which is valid as long as the solution $u$ is trapped. In addition, if the solution is trapped on its maximal interval of existence, then the function represented by the $O()$ that admits a limit as $s\rightarrow +\infty$. In turn, from $\frac{ds}{dt}=\frac{1}{\lambda^2}$ we obtain:
$$
s=\frac{s_0}{\left(1-\frac{\alpha s_0^{\frac{\alpha}{2\ell-\alpha}}}{2\ell-\alpha}\int_0^t (1+O(s_0^{-\tilde{\eta}}))dt')\right)^{\frac{2\ell-\alpha}{\alpha}}}
$$
Hence there exists $T>0$ with:
\be \label{trap:eq:s}
s\underset{t\rightarrow T}{\sim} C(u(0))(T-t)^{-\frac{2\ell-\alpha}{\alpha}}.
\ee
Injecting this identity in \fref{trap:eq:lambda} then gives $\lambda \underset{t\rightarrow T}{\sim} C(u(0))(T-t)^{\frac{\ell}{\alpha}}$. Now we turn to the asymptotic behavior of the point of concentration $z$. From \fref{trap:eq:modulation leqL-1}, using $b_1^{(1,i)}=O(s^{-\frac{\alpha+1}{2}})$ from \fref{trap:eq:bound instable2} for $1\leq i \leq d$, one gets:
\be \label{trap:eq:zs}
|z_{i,s}|=O(s^{-c_1-\frac{\alpha+1}{2}})=O(s^{-1-\frac{\alpha}{2}(1+\frac{1}{2\ell-\alpha})}).
\ee
As $\alpha>0$ this implies the convergence and the estimate of $z$ claimed in the lemma.

\end{proof}

%%%%%%%%%%%%%%%%%%%%%%%%%%%%%%%%%%%%%%%%%%%%

\subsubsection{Behavior of Sobolev norms near blow up time}

\label{trap:subsection:normes}

From Lemma \ref{trap:lem:concentration}, the $L^{\infty}$ bound on the error \fref{an:eq:bound Linfty} and the bounds on the parameters \fref{trap:bd bnki}, any solution that is trapped on its maximal interval of existence indeed blows up at the time T given by Lemma \ref{trap:lem:concentration} because $\underset{t\rightarrow T}{\text{lim}}\parallel u \parallel_{L^{\infty}}=+\infty$. The behavior of the Sobolev norms is the following.

\begin{lemma}[Asymptotic behavior for subcritical norms] \label{trap:lem:normes2}

Let $u$ be a solution that is trapped for all times $s\geq s_0$ and $T$ be its finite maximal lifespan\footnote{$T$ is finite from Lemma \ref{trap:lem:concentration}.}. Then
\begin{itemize}
\item[(i)] \emph{Behavior of subcritical norms:}
$$
\underset{t\rightarrow T}{\text{limsup}} \parallel u \parallel_{H^m(\Omega)}<+\infty, \ \ \ \text{for} \ 0\leq m<s_c.
$$
\item[(ii)] \emph{Behavior of the critical norm:}
$$
\parallel u \parallel_{H^{s_c}(\Omega)} \underset{t\rightarrow T}{=} C(d,p)\sqrt{\ell}\sqrt{|\text{log}(T-t)|}(1+o(1)).
$$
\item[(iii)] \emph{Boundedness of the perturbation in slightly supercritical norms}
$$
\underset{t\rightarrow T}{\text{limsup}}  \parallel u-\chi\tau_z(Q_{\frac{1}{\lambda}}) \parallel_{H^m(\Omega)} <+\infty ,\ \ \ \text{for} \ s_c<m\leq \sigma.
$$
\end{itemize}

\end{lemma}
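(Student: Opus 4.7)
The plan is to exploit the decomposition \fref{trap:id u}, $u = \chi\tau_z(\tilde{Q}_{b,1/\lambda}) + w$, together with the concentration law $\lambda \sim C(T-t)^{\ell/\alpha}$ from Lemma \ref{trap:lem:concentration} (equivalently $|\log\lambda| \sim (\ell/\alpha)|\log(T-t)|$), the parameter bounds \fref{trap:bd bnki}, and the bootstrap estimates \fref{trap:eq:bounds varepsilon}. The cornerstone is the computation, for the radial cut-off $\chi$ and the ground state $Q$ with tail $Q(x) \sim c_\infty |x|^{-2/(p-1)}$ from \fref{intro:eq:def cinfty}, that as $\lambda \to 0$
\begin{equation*}
\|\chi\tau_z(Q_{1/\lambda})\|_{\dot H^m}^2 =
\begin{cases} O(1) & 0 \leq m < s_c, \\ C(d,p)\,|\log\lambda|\,\bigl(1+o(1)\bigr) & m = s_c, \\ O\bigl(\lambda^{-2(m-s_c)}\bigr) & s_c < m \leq \sigma, \end{cases}
\end{equation*}
with an explicit positive constant $C(d,p)$ determined by $c_\infty$. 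I would derive this by splitting the integral into dyadic frequencies (or physical dyadic shells) between scales $1$ and $1/\lambda$ and identifying the scale-invariant logarithmic density at $m = s_c$.

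To prove all three assertions I decompose $u = \chi\tau_z(Q_{1/\lambda}) + \chi\tau_z(\tilde\alpha_{b,1/\lambda}) + w$ and estimate each piece in $H^m$. The second piece $\chi_{B_1}\alpha_b = \chi_{B_1}\sum_{(n,k,i)\in\mathcal I} b_i^{(n,k)} T_i^{(n,k)} + \chi_{B_1}\sum_{i=2}^{L+2} S_i$, using admissibility of $T_i^{(n,k)}$ and the homogeneity properties of the $S_i$'s from Proposition \ref{cons:pr:Qb} together with $|b_i^{(n,k)}| \lesssim s^{-(\gamma-\gamma_n)/2 - i}$, has $\dot H^m$ norm bounded by direct integration on $|y| \lesssim B_1$; the polynomial losses $B_1 \sim s^{(1+\eta)/2}$ from localization are compensated by the decay of the coefficients $b$, yielding an $O(1)$ bound for all $0 \leq m \leq \sigma$. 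The remainder $w$ is controlled through the rescaling identity $\|w_{\text{int}}\|_{\dot H^\sigma}^2 = \lambda^{-2(\sigma-s_c)}\mathcal E_\sigma$ combined with $\mathcal E_\sigma \lesssim s^{-2(\sigma-s_c)\ell/(2\ell-\alpha)} \sim \lambda^{2(\sigma-s_c)}$ and $\|w_{\text{ext}}\|_{H^\sigma} \lesssim 1$ from \fref{trap:eq:bounds varepsilon}, yielding $\|w\|_{H^\sigma} \lesssim 1$; interpolation with $\|w\|_{L^2}\lesssim 1$ then gives $\|w\|_{H^m} \lesssim 1$ for all $0 \leq m \leq \sigma$.

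Assertion (i) for $m < s_c$ and assertion (iii) for $s_c < m \leq \sigma$ then follow immediately by the triangle inequality, since in (iii) the subtraction of $\chi\tau_z(Q_{1/\lambda})$ leaves only the $O(1)$ contributions. For (ii), the same decomposition together with Cauchy--Schwarz yields
\begin{equation*}
\|u\|_{H^{s_c}(\Omega)}^2 = \|\chi\tau_z(Q_{1/\lambda})\|_{\dot H^{s_c}}^2 + O\bigl(\sqrt{|\log\lambda|}\bigr),
\end{equation*}
the cross terms being dominated by the square root of the divergent main term times the bounded $\dot H^{s_c}$ norms of $\chi\tau_z(\tilde\alpha_{b,1/\lambda})$ and $w$. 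Inserting the cornerstone display together with $|\log\lambda| = (\ell/\alpha)|\log(T-t)|\bigl(1+o(1)\bigr)$ gives $\|u\|_{H^{s_c}}^2 = C(d,p)(\ell/\alpha)|\log(T-t)|\bigl(1+o(1)\bigr)$, proving the claim with $c(d,p) := \sqrt{C(d,p)/\alpha}$.

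The main obstacle is establishing the cornerstone computation with the sharp logarithmic coefficient at $m = s_c$: one needs to extract the exact scale-invariant density from the localized homogeneous tail $c_\infty |x|^{-2/(p-1)}$, and to check that the deviation of $Q$ from this tail, as well as the contribution of the more regular part of $Q$, produce only $O(1)$ corrections. Once this is done, the remainder of the proof is direct bookkeeping with the bounds already established in the trapped regime.
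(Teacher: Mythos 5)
Your proposal is correct and follows essentially the same route as the paper: decompose $u=\chi\tau_z(Q_{1/\lambda})+\tau_z(\tilde\alpha_{b,1/\lambda})+w$, show the excitation and the remainder are uniformly bounded in $H^{\sigma}(\Omega)$ (hence in all $H^m$, $m\leq\sigma$), which gives (iii) and (i), and then read off (ii) from the logarithmically divergent $\dot H^{s_c}$ norm of the concentrating ground-state tail together with $\lambda\sim C(T-t)^{\ell/\alpha}$. The only difference is that you spell out the ``cornerstone'' dyadic computation of $\|\chi\tau_z(Q_{1/\lambda})\|_{\dot H^m}$, which the paper leaves implicit by invoking the asymptotics \fref{cons:eq:asymptotique Q}.
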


\begin{proof}[Proof of Lemma \ref{trap:lem:normes2}]

The trapped solution $u$ can be written as:
$$
u=\chi \tau_z(\tilde{Q}_{b,\frac{1}{\lambda}})+w=\chi \tau_z(Q_{\frac{1}{\lambda}})+\tau_z(\tilde{\alpha}_{b,\frac 1 \lambda})+w
$$
We first look at the second term $\tau_z(\tilde{\alpha}_{b,\frac 1 \lambda})$, being the excitation of the ground state. It has compact support in the zone $|x|\leq 2B_1\lambda$. From \fref{intro:eq:def B1}, \fref{trap:eq:lambda}, one gets $2B_1\lambda\ll 1$ as $s_0\gg 1$, so that $\tau_z(\tilde{\alpha}_{b,\frac 1 \lambda})$ has compact support inside $\mathcal B^d(1)$. This implies that $\parallel \tau_z(\tilde{\alpha}_{b,\frac 1 \lambda}) \parallel_{H^{\sigma}(\Omega)} \leq C \parallel \tau_z(\tilde{\alpha}_{b,\frac 1 \lambda}) \parallel_{\dot{H}^{\sigma}(\mathbb R^d)}$, this later norm being easier to compute. Indeed by renormalizing one has:
$$
\parallel \tau_z(\tilde{\alpha}_{b,\frac 1 \lambda}) \parallel_{\dot H^{\sigma}(\mathbb R^d)}=\frac{1}{\lambda^{\sigma-s_c}} \parallel \tilde{\alpha}_b \parallel_{\dot H^{\sigma}(\mathbb R^d)}.
$$
As $\tilde{\alpha}_b=\chi_{B_1}\left(\sum_{(n,k,i)\in \mathcal I}b_i^{(n,k)}T^{(n,k)}_i+\sum_{i=2}^{L+2}S_i\right)$ from \fref{cons:eq:def Qbtilde} and \fref{cons:eq:def Qb}, the bounds \fref{trap:bd bnki} on the parameters $b_i^{(n,k)}$, together with the asymptotic at infinity of the profiles $T^{(n,k)}_i$ and $S_i$ described in Lemma \ref{cons:lem:Tni} and Proposition \ref{cons:pr:tildeQb} imply that $\parallel \tilde{\alpha}_b \parallel_{\dot H^{\sigma}}\leq \frac{C}{s}$. Hence $\parallel \tau_z(\tilde{\alpha}_{b,\frac 1 \lambda}) \parallel_{H^{\sigma}}\leq \frac{C}{s^{1-\frac{\ell(\sigma-s_c)}{2\ell-\alpha}}}\rightarrow 0$ as $t\rightarrow T$ as $\sigma-s_c\ll1$.\\

\noindent Now, following the second paragraph of Remark \ref{trap:re:regularite}, we get that $\parallel w \parallel_{H^{\sigma}}\leq CK_1$ is uniformly bounded till the blow up time. Combined with what was just said about the boundedness of $\tau_z(\tilde{\alpha}_{b,\frac 1 \lambda})$, we get that (iii) holds for all $0\leq m\leq \sigma$. This, together with the asymptotic of the ground state \fref{cons:eq:asymptotique Q} then gives (i) and (ii).

\end{proof}

%%%%%%%%%%%%%%%%%%%%%%%%%%%%%%%%%%%%%%%%%%%%
%%%%%%%%%%%%%%%%%%%%%%%%%%%%%%%%%%%%%%%%%%%%
%%%%%%%%%%%%%%%%%%%%%%%%%%%%%%%%%%%%%%%%%%%%

\section{Proof of Proposition \ref{trap:pr:bootstrap}} \la{sec:proof}

This section is devoted to the proof of this latter proposition, which will then end the proof of the main theorem. For all trapped solution $u$ in the sense of Definition \ref{trap:def:trapped solution} we let $s^*= s^*(u(0))$ be the exit time from the trapped regime:
\be
s^*= \text{sup}\left\{ s\geq s_0 \ \text{such} \ \text{that} \ \fref{trap:eq:bound instable}, \ \fref{trap:eq:bound instable2}, \ \fref{trap:eq:bound stable}, \ \fref{trap:eq:bounds varepsilon} \ \text{and} \ \fref{trap:eq:lambda hp} \ \text{hold} \ \text{on} [s_0,s)\right\}
\ee
If $s^*<+\infty$, after $s^*$, one of the bounds \fref{trap:eq:bound instable}, \fref{trap:eq:bound instable2}, \fref{trap:eq:bound stable}, \fref{trap:eq:bounds varepsilon} or \fref{trap:eq:lambda hp} must then be violated. The result of the first part of this section is a refinement of this exit condition. In Lemma \ref{pro:lem:modulation bLn}, Propositions \ref{pro:pr:mathcalEsigma}, \ref{pro:pr:lowsobowext}, \ref{pro:pr:mathcalE2sL} and \ref{pro:pr:highsobowext} we quantify accurately the time evolution of the parameters and the remainder in the trapped regime. Combined with the modulation equations of Lemma \ref{trap:lem:modulation}, this allows us to show that in the trapped regime, all the components of the solution along the stable directions of perturbation are under control, see Lemma \ref{pro:lem:exit}. Moreover, from \fref{trap:eq:lambda}, \fref{trap:eq:lambda hp} is always fulfilled as long as the other bounds hold. As a consequence, the exit time of the trapped regime is in fact characterized by the following condition: just after $s^*$, one of the bounds in \fref{trap:eq:bound instable} and \fref{trap:eq:bound instable2} regarding the unstable parameters is violated.\\

\noindent Proposition \ref{trap:pr:bootstrap} is then proven by contradiction. Suppose that given a stable perturbation of $\chi \tilde{Q}_{\bar{b}(s_0),\frac{1}{\lambda(s_0)}}$ as described in Proposition \ref{trap:pr:bootstrap}, for all initial corrections $(V_2(s_0),...V_{\ell}(s_0))$ and $(U_i^{(n,k)}(s_0))_{(n,k,i)\in \mathcal I, 1\leq n, \  i < i_n}$ along the unstable directions, the solution starting from $\chi \tilde{Q}_{b(s_0),\frac{1}{\lambda(s_0)}}+w(s_0)$ leaves the trapped regime in finite time. This means from the previous paragraph that the trajectory of $(V_2(s),...V_{\ell}(s),(U_i^{(n,k)}(s))_{(n,k,i)\in \mathcal I, 1\leq n, \  i < i_n})$ leaves the set\footnote{here K is the number of directions of instabilities on the spherical harmonics of degree greater than $0$, $K=d(E[i_1]-\delta_{i_1\in \mathbb N})+\sum_{2\leq n \leq n_0}k(n)(E[i_n]+1-\delta_{i_n\in \mathbb N})$, $\mathcal B^{a}_{\infty}(r)$ is the ball of radius $r$ of $\mathbb R^a$ for the usual $|\cdot|_{\infty}$ norm.} $\mathcal B_{\infty}^{\ell-1}(s^{-\tilde{\eta}})\times \mathcal B_{\infty}^{K}(1)$ in finite time. But at the leading order, the dynamics of this trajectory is a linear repulsive one. In Lemma \ref{pro:lem:f} we show how the fact that all the trajectories leave this ball is a contradiction to Brouwer's fixed point theorem.

%%%%%%%%%%%%%%%%%%%%%%%%%%%%%%%%%%%%%%%%%%%%
%%%%%%%%%%%%%%%%%%%%%%%%%%%%%%%%%%%%%%%%%%%%

\subsection{Improved modulation for the last parameters $b_{L_n}^{(n,k)}$}

In Lemma \ref{trap:lem:modulation}, the modulation estimates \fref{trap:eq:modulation leqL-1} for the first parameters are better than the ones for the last parameters $b_{L_n}^{(n,k)}$, \fref{trap:eq:modulation L}. When looking at the proof of Lemma \ref{trap:lem:modulation}, we see that this is a consequence of the fact that the projection of the linearized dynamics onto the profile generating the orthogonality conditions, $\langle H\varepsilon,H^{i}\Phi_M^{(n,k)}\rangle$ cancels only for $i< L_n$. However, as we explained in the introduction of Lemma \ref{bootstrap:lem:conditions dorthogonalite}, $H^{i}\Phi_M^{(n,k)}$ has to be thought as an approximation of $T^{(n,k)}_i$, and in that case the previous term would cancel also for $i=L_n$. It is therefore natural to look for a better modulation estimate for $b_{L_n}^{(n,k)}$. In the next Lemma we find a better bound by, roughly speaking, integrating by part in time the projection of $\varepsilon$ onto $T^{(n,k)}_{L_n}$ in the self similar zone.

\begin{lemma}[Improved modulation equation for $b_{L_n}^{(n,k)}$] \label{pro:lem:modulation bLn}

Suppose all the constants in Proposition \ref{trap:pr:bootstrap} are fixed except $s_0$. Then for $s_0$ large enough, for any solution that is trapped on $[s_0,s')$, for $0\leq n \leq n_0$, $1\leq k \leq k(n)$ there holds for $s\in[s_0,s')$:
\be \la{pro:eq:modulation Ln}
\ba{r c l}
&& \left| b_{L_n,s}^{(n,k)}+(2L_n-\alpha_n)b_1^{(0,1)} b_{L_n}^{(n,k)} -\frac{d}{ds}\left[ \frac{\langle H^{L_n} (\varepsilon -\sum_2^{L+2}S_i) , \chi_{B_0} T_0^{(n,k)} \rangle}{\Bigl\langle  \chi_{B_0} T^{(n,k)}_0, T^{n,k}_0 \Bigr\rangle}\right]\right| \\
&\leq& \frac{C(L,M)\sqrt{\mathcal E_{2s_L}}}{s^{\delta_n}} +\frac{C(L,M)}{s^{L+\frac{g'}{2}+\delta_n-\delta_0+1}} .
\ea
\ee

\end{lemma}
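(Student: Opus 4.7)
The plan is to replace the test profile $H^{L_n}\Phi_M^{(n,k)}$ used in Lemma \ref{trap:lem:modulation}, which is supported on $|y|\leq 2M$, by a much more extended one at the self-similar scale $B_0=|b_1^{(0,1)}|^{-1/2}\sim\sqrt s$. The key observation is that $HT_0^{(n,k)}=0$ by Lemma \ref{cons:lem:noyau H}, so $H(\chi_{B_0}T_0^{(n,k)})=[H,\chi_{B_0}]T_0^{(n,k)}$ is localized in the annulus $B_0\leq|y|\leq 2B_0$ with amplitude $O(B_0^{-2})\sim 1/s$. Pairing this against $H^{L_n}\varepsilon$ creates the desired gain over \fref{trap:eq:modulation L}, at the price of accumulating a weight $|y|^{-2\gamma_n}$ on the annulus that is precisely compensated by the normalizing denominator $\langle\chi_{B_0}T_0^{(n,k)},T_0^{(n,k)}\rangle\sim B_0^{4m_n+4\delta_n}$ via the identity $d=2\gamma_n+4m_n+4\delta_n$ from \fref{intro:eq:def deltan}.

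\textbf{Execution.} I would set $J^{(n,k)}(s):=\langle H^{L_n}(\varepsilon-\sum_{i=2}^{L+2}S_i),\chi_{B_0}T_0^{(n,k)}\rangle/\langle \chi_{B_0}T_0^{(n,k)},T_0^{(n,k)}\rangle$ and differentiate in $s$ using the evolution equation \fref{trap:eq:evolution varepsilon} for $\partial_s\varepsilon$, together with $\partial_s S_i=\sum_{(m,k',j)\in\mathcal I}(\partial S_i/\partial b_j^{(m,k')})b_{j,s}^{(m,k')}$. After applying $H^{L_n}$ and pairing against $\chi_{B_0} T_0^{(n,k)}$, the $H\varepsilon$ contribution becomes $-\langle H^{L_n}\varepsilon,[H,\chi_{B_0}]T_0^{(n,k)}\rangle$ by self-adjointness. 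By Cauchy--Schwarz, the decay $|T_0^{(n,k)}|\lesssim|y|^{-\gamma_n}$ from \fref{cons:eq:asymptotique T0n}, and the weighted coercivity of $H^{s_L}$ from Lemma \ref{annexe:lem:coercivite norme adaptee}, one obtains a bound of order $\sqrt{\mathcal E_{2s_L}}\,B_0^{2m_n+2\delta_n-1}$, which after normalization gives precisely the desired $\sqrt{\mathcal E_{2s_L}}/s^{\delta_n}$. The modulation term $-\chi(\lambda y+z)\tilde{\mathrm{Mod}}(s)$ contributes, via \fref{cons:eq:def Mod} and the identities $H^{L_n}T_{L_n}^{(n,k)}=T_0^{(n,k)}$, $H^{L_n}T_i^{(n,k)}=0$ for $i<L_n$, and $\chi_{B_1}\chi_{B_0}=\chi_{B_0}$ (since $B_0\ll B_1$), the main term $[b_{L_n,s}^{(n,k)}+(2L_n-\alpha_n)b_1^{(0,1)}b_{L_n}^{(n,k)}]\langle\chi_{B_0}T_0^{(n,k)},T_0^{(n,k)}\rangle$ together with mixed contributions of type $[b_{i,s}^{(n,k)}+\cdots-b_{i+1}^{(n,k)}]\cdot (\partial S_j/\partial b_i^{(n,k)})$ for $i<L_n$, which are to be absorbed by the $-\partial_s\sum_{i=2}^{L+2}S_i$ term, up to remainders controlled by the first-order modulation \fref{trap:eq:modulation leqL-1} applied to the lower-index parameters.

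\textbf{Error terms.} The local bound \fref{cons:eq:estimation locale tildepsib} applied on $|y|\leq 2B_0$ controls the contribution of $\tilde\psi_b$ by $C(L)B_0^{C(L)}/s^{L+3}$, which is absorbed into $1/s^{L+g'/2+\delta_n-\delta_0+1}$ for $L$ sufficiently large and $\eta$ small. The nonlinear and small linear terms $NL(\varepsilon)$ and $L(\varepsilon)$ are handled using the pointwise bound \fref{an:eq:bound Linfty} on $\varepsilon$ together with the estimate $\tilde Q_b-Q=O(s^{-1})$ on the support of $\chi_{B_0}T_0^{(n,k)}$; the cut-off tail terms $\tilde L,\tilde R,\tilde{NL}$ are supported in $|y|\gtrsim 1/\lambda\gg B_0$ and therefore vanish against $\chi_{B_0}T_0^{(n,k)}$. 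The boundary contribution from $\partial_s\chi_{B_0}$, of amplitude $B_0^{-1}|b_{1,s}^{(0,1)}|/b_1^{(0,1)}\sim 1/s$ on the annulus, is absorbed into the same commutator estimate.

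\textbf{Main obstacle.} The most delicate step is the exact bookkeeping of the cancellation between the $\partial S_j/\partial b_i^{(n,k)}$ contributions appearing in \fref{cons:eq:def Mod} and the $\partial_s\sum_{i=2}^{L+2}S_i$ term: one has to exploit the homogeneous structure of the profiles $S_i$ from Proposition \ref{cons:pr:Qb} and verify, index by index along $\mathcal I$, that the dominant pieces cancel and that the remainders are of size at most $1/s^{L+g'/2+\delta_n-\delta_0+1}$. A secondary technical point is the weight accounting in the commutator estimate, where the dyadic integration over $B_0\leq|y|\leq 2B_0$ must produce exactly the power of $B_0$ needed to convert into $s^{-\delta_n}$; this relies on the precise relation \fref{intro:eq:def deltan} between $d$, $\gamma_n$, $m_n$ and $\delta_n$, and is the reason why the technical non-degeneracy condition \fref{intro:eq:condition deltan} is imposed.
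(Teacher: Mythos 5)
Your overall strategy coincides with the paper's: differentiate the same quotient $J^{(n,k)}$, use $HT_0^{(n,k)}=0$ so that $H^{L_n}(\chi_{B_0}T_0^{(n,k)})$ is localized in $\{B_0\leq|y|\leq 2B_0\}$ with the self-similar gain, let the modulation term produce the main bracket plus the $\partial_s\sum_j S_j$ correction (which is why the $S_j$ are subtracted in the numerator), and normalize by $\langle\chi_{B_0}T_0^{(n,k)},T_0^{(n,k)}\rangle\sim s^{2m_n+2\delta_n}$. The genuine gap is your treatment of the error term: you invoke the crude local bound \fref{cons:eq:estimation locale tildepsib} at $B=2B_0\sim\sqrt{s}$ and claim the resulting $C(L)B_0^{C(L)}/s^{L+3}$ is absorbed ``for $L$ sufficiently large''. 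This cannot work: the exponent $C(L,j)$ in that bound is itself at least of order $4(L+3)$ (it records the spatial growth of the homogeneous pieces of $\psi_b$, whose degrees start at $L+3$), so at the self-similar scale the factor $B_0^{C(L)}$ is at least $s^{2L+6}$ and cancels the $|b_1^{(0,1)}|^{2L+6}$ gain entirely; increasing $L$ increases the loss at the same rate as the gain, so no choice of $L$ rescues the estimate. The bound \fref{cons:eq:estimation locale tildepsib} is designed for $B=O(1)$ (as in Lemma \ref{trap:lem:modulation}); at scale $B_0$ one must instead use that $\tilde\psi_b=\psi_b$ on $|y|\leq B_1$ and that $\psi_b$ is a finite sum of homogeneous functions of degree $(i,-\gamma-g'-2)$, $i\geq L+3$, which together with $|b^J|\lesssim s^{-|J|_2}$ gives the pointwise bound $|\psi_b(y)|\leq C(L)s^{-\frac{\gamma+2+g'}{2}}$ on the annulus (the powers $s^{-|J|_2}$ and $|y|^{2|J|_2}$ cancel there); pairing this with $|H^{L_n}(\chi_{B_0}T_0^{(n,k)})|\lesssim s^{-\frac{\gamma_n}{2}-L_n}$ over the annulus and dividing by the denominator is what produces the second term $s^{-(L+\frac{g'}{2}+\delta_n-\delta_0+1)}$ — this is how the paper argues.

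A secondary, minor point: your commutator bookkeeping announces the intermediate bound $\sqrt{\mathcal E_{2s_L}}\,B_0^{2m_n+2\delta_n-1}$, which is not consistent with the normalization you then claim. The correct count (weighted Cauchy--Schwarz against the coercivity of Lemma \ref{annexe:lem:coercivite norme adaptee}, using $s_L=m_n+L_n+1$ and $d=2\gamma_n+4m_n+4\delta_n$) gives $\sqrt{\mathcal E_{2s_L}}\,B_0^{4m_n+2\delta_n}=\sqrt{\mathcal E_{2s_L}}\,s^{2m_n+\delta_n}$, which divided by $s^{2m_n+2\delta_n}$ yields the stated $\sqrt{\mathcal E_{2s_L}}/s^{\delta_n}$; the exponent you wrote would not produce it, even though the final balance you assert is the right one.
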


\begin{remark}

From \fref{pro:eq:denominateur}, we see that the denominator is not zero. From \fref{pro:eq:denominateur} and \fref{pro:eq:bound numerateur} one has the following bound for the new quantity that appeared when comparing this new modulation estimate to the former one \fref{trap:eq:modulation L}:
\be \la{pro:eq:gain modulation Ln}
\ba{r c l}
&\left| \frac{\langle H^{L_n} (\varepsilon -\sum_2^{L+2}S_i) , \chi_{B_0} T_0^{(n,k)} \rangle}{\Bigl\langle  \chi_{B_0} T^{(n,k)}_0, T^{n,k}_0 \Bigr\rangle}\right| \\
\leq& C(L,M)s^{-L-\frac{g'}{2}+\delta_0-\delta_n}+C(L,M,K_2)s^{-L+\delta_0-\delta_n+\eta(1-\delta_0')}.
\ea
\ee
This is a better bound compared to the required bound \fref{trap:eq:bound stable} on $b_{L_n}^{(n,k)}$ in the trapped regime that is: $|b_{L_n}^{(n,k)}|\leq Cs^{-\frac{\gamma-\gamma_n}{2}-L_n}=Cs^{-L-\delta_n+\delta_0}$.

\end{remark}

\begin{proof}[Proof of Lemma \ref{pro:lem:modulation bLn}]

First, from the fact that $HT^{(n,k)}_0=0$, the asymptotic \fref{cons:eq:asymptotique T0n} of $T^{(n,k)}_0$ and \fref{trap:bd bnki} we obtain: 
\be \la{pro:bd HLnTnk0}
\text{supp}[H^{L_n} ( \chi_{B_0}T^{(n,k)}_0)]\subset \{B_0\leq |y|\leq 2B_0\}, \ \text{and}  \ |H^{L_n}(\chi_{B_0}T^{(n,k)}_0)|\leq \frac{C(L)}{s^{\frac{\gamma_n}{2}+L_n}}.
\ee 

\noindent \textbf{step 1} Computation of a first identity. We claim the following identity:
\be \la{pro:eq:numerateur}
\ba{r c l}
\frac{d}{ds} \left( \langle H^{L_n} \varepsilon, \chi_{B_0}T^{(n,k)}_0 \rangle \right)&=& (b_{L_n,s}^{(n,k)}+(2L_n-\alpha_n)b_1^{(0,1)}b_{L_n}^{(n,k)}) \langle T^{(n,k)}_0,\chi_{B_0}T^{(n,k)}_0\rangle\\
&&+\frac{d}{ds} \left( \sum_{j=2}^{L+2} \langle S_j,H^{L_n}(\chi_{B_0}T^{(n,k)}_0) \rangle \right) \\
&& + O(\sqrt{\mathcal E_{2s_L}} B_0^{4m_n+2\delta_n})+O\left(\frac{C(L)}{s^{L+1+\frac{g'}{2}-\delta_0-\delta_n-2m_n}}\right) 
\ea
\ee
what we are going to prove now. From the evolution equation \fref{trap:eq:evolution varepsilon} and the fact that $H$ is self adjoint we obtain:
\be \la{pro:eq:expression numerateur}
\ba{r c l}
\frac{d}{ds} \left( \langle H^{L_n} \varepsilon, \chi_{B_0}T^{(n,k)}_0 \rangle \right)&=& \langle \varepsilon,H^{L_n} (\partial_s \chi_{B_0}T^{(n,k)}_0)\rangle + \Big\langle -\tilde{\text{Mod}}(s)- \tilde{\psi}_b+ \frac{\lambda_s}{\lambda} \Lambda \varepsilon \\
&& + \frac{z_s}{\lambda}.\nabla \varepsilon -H\varepsilon+\text{NL}(\varepsilon)+L(\varepsilon),H^{L_n} (\chi_{B_0}T^{(n,k)}_0)\Big\rangle.
\ea
\ee
The terms created by the cut of the solitary wave $\lambda^2\tau_{-z}[(\tilde L+\tilde R +\tilde{NL})_{\lambda}]$ do not appear because they have their support in the zone $\frac{1}{2\lambda}\leq |y|$ which is far away from the zone $|y|\leq 2B_0$ as $B_0\ll \frac 1 \lambda$ in the trapped regime from \fref{trap:eq:lambda}. We now look at all the terms in the above equation.

\noindent - \emph{The $\partial_s(\chi_{B_0})$ term}. From the modulation equation \fref{trap:eq:modulation leqL-1} and the bound \fref{trap:eq:bounds varepsilon} one has $|b_{1,s}^{(0,1)}|\leq Cs^{-2}$. Hence, using the asymptotic \fref{cons:eq:asymptotique T0n} of $T^{(n,k)}_0$ and the fact that $HT^{(n,k)}_0=0$ and \fref{trap:bd bnki} we get that $H^{L_n} (\partial_s \chi_{B_0}T^{(n,k)}_0)$ has support in $B_0\leq |y|\leq 2B_0$ and satisfies the bound $|H^{L_n} (\partial_s \chi_{B_0}T^{(n,k)}_0)|\leq \frac{C(L)}{s^{\frac{\gamma_n}{2}+L_n+1}}$. Using the coercivity estimate \fref{annexe:eq:coercivite norme adaptee} we obtain:
\be \la{pro:eq:partialschiB0}
\left| \langle \varepsilon,H^{L_n} (\partial_s \chi_{B_0}T^{(n,k)}_0)\rangle \right| \leq C(L) \sqrt{\mathcal E_{2s_L}} s^{2m_n+\delta_n}.
\ee

\noindent - \emph{The error term}. For $|y|\leq 2B_0$ one has $\tilde{\psi}_b=\psi_b$ from \fref{cons:id tildepsib leqB1}. As $\psi_b$ is a finite sum of homogeneous profiles of degree $(i,-\gamma-2-g')$ for some $i\in \mathbb N$ (what was proved in Step 4 of the proof of Proposition \ref{cons:pr:Qb}), the bounds on the parameters \fref{trap:bd bnki} imply that $|\psi_b(y)|\leq C(L)s^{-\frac{\gamma+2+g}{2}}$ for $B_0\leq |y|\leq 2B_0$. Combined with \fref{pro:bd HLnTnk0} this yield:
\be \la{pro:eq:tildepsib}
\left| \left\langle \tilde{\psi}_b,H^{L_n} (\chi_{B_0}T^{(n,k)}_0)\right\rangle \right| \leq C(L) B_0^{d-\gamma_n-2L_n-\gamma-g'-2}\leq \frac{C(L)}{s^{L+1+\frac{g'}{2}-\delta_0-\delta_n-2m_n}}.
\ee

\noindent - \emph{The remainder's contribution}. Using \fref{pro:bd HLnTnk0}, the bounds $|\frac{\lambda_s}{\lambda}|\leq Cs^{-1}$ and $|\frac{z_s}{\lambda}|\leq Cs^{-\frac{\alpha+1}{2}}$ (which are consequences of the modulation estimate \fref{trap:eq:modulation leqL-1} and \fref{trap:eq:bounds varepsilon}) and the coercivity estimate \fref{annexe:lem:coercivite norme adaptee} one gets:
\be \la{pro:eq:remainder1}
\left| \left\langle \frac{\lambda_s}{\lambda} \Lambda \varepsilon + \frac{z_s}{\lambda}.\nabla \varepsilon -H\varepsilon,H^{L_n} (\chi_{B_0}T^{(n,k)}_0)\right\rangle \right|\leq C(L) \sqrt{\mathcal E_{2s_L}} s^{2m_n+\delta_n}.
\ee
The small linear term writes $L(\varepsilon)=(pQ^{p-1}-p\tilde{Q}_b^{p-1})$, hence from the form of $\tilde{Q}_b$, see \fref{cons:eq:def Qbtilde}, one has $|(pQ^{p-1}-p\tilde{Q}_b^{p-1})|\leq C(L)s^{-1-\frac{\alpha}{2}}$. It's contribution is then of smaller order using \fref{pro:bd HLnTnk0}:
\be \la{pro:eq:remainder2}
\left| \langle L(\varepsilon),H^{L_n} (\chi_{B_0}T^{(n,k)}_0)\rangle \right|\leq C(L) \sqrt{\mathcal E_{2s_L}} s^{2m_n+\delta_n-\frac{\alpha}{2}}.
\ee
The nonlinear term writes: $\text{NL}(\varepsilon)=\sum_{k=2}^p C^p_k \varepsilon^k \tilde{Q}_b^{p-k}$. From the coercivity estimate \fref{annexe:lem:coercivite norme adaptee} we get:
$$
\int_{B_0\leq |y|\leq 2B_0} \frac{\varepsilon^2}{|y|^{\gamma_n+2L_n}}dy\leq C(L,M) \mathcal E_{2s_L} s^{2s_L-\frac{\gamma_n}{2}-L_n}.
$$
One computes using the bootstrap bounds \fref{trap:eq:bounds varepsilon} and \fref{trap:bd bnki}:
$$
\sqrt{\mathcal E_{2s_L}} s^{2s_L-\frac{\gamma_n}{2}-L_n}\leq K_2s^{\delta_n+2m_n-(\frac{\gamma-2}{4}+\frac{\eta(1-\delta_0')}{2})}\leq B_0^{\delta_n+2m_n}
$$
for $s_0$ large enough (because $\gamma>2$). For $2\leq k \leq p$, $|\varepsilon^{k-2}\tilde{Q}^{p-k}_b|\leq C$ is bounded from \fref{an:eq:bound Linfty}, so one gets using the two previous equations and \fref{pro:bd HLnTnk0}:
\be \la{pro:eq:remainder3}
\left| \langle \text{NL}(\varepsilon),H^{L_n} (\chi_{B_0}T^{(n,k)}_0)\rangle \right|\leq\sqrt{\mathcal E_{2s_L}} s^{2m_n+\delta_n}
\ee
for $s_0$ large enough. Gathering \fref{pro:eq:remainder1}, \fref{pro:eq:remainder2} and \fref{pro:eq:remainder3} we have found the following upper bound for the remainder's contribution:
\be \la{pro:eq:remainder}
\ba{r c l}
&\left| \left\langle \frac{\lambda_s}{\lambda} \Lambda \varepsilon + \frac{z_s}{\lambda}.\nabla \varepsilon -H\varepsilon+\text{NL}(\varepsilon)+L(\varepsilon),H^{L_n} (\chi_{B_0}T^{(n,k)}_0)\right\rangle \right| \\
 \leq& C(L,M) \sqrt{\mathcal E_{2s_L}} s^{2m_n+\delta_n}.
\ea
\ee

\noindent - \emph{The modulation term}. For $(n',k',i)\in \mathcal I$, one has
$$
\langle T^{(n,k)}_i,H^{L_n}( \chi_{B_0}T^{(n,k)}_0)\rangle=\langle H^{L_n}T^{(n,k)}_i, \chi_{B_0}T^{(n,k)}_0\rangle=0 
$$
if $(n',k',i)\neq (n,k,L_n)$. Indeed, if $(n',k')\neq (n,k)$ then the two functions are located on different spherical harmonics and their scalar product is $0$. If $i\neq L_n$ then $i<L_n$ and $H^{L_n}T^{(n,k)}_i=0$. This implies the identity from \fref{trap:eq:def tildeMod} since $B_1\gg B_0$:
\be \la{pro:eq:modulation expression}
\ba{r c l}
&\langle \tilde{\text{Mod}}(s),H^{L_n} ( \chi_{B_0}T^{(n,k)}_0)\rangle \\
=& (b_{L_n,s}^{(n,k)}+(2L_n-\alpha_n)b_1^{(0,1)}b_{L_n}^{(n,k)}) \langle T^{(n,k)}_0,\chi_{B_0}T^{(n,k)}_0\rangle \\
& +\sum_{j=2}^{L+2} \sum_{(n',k',i)\in \mathcal I \}} (b_{i,s}^{(n',k')}+(2i-\alpha_{n'})b_1^{(0,1)}b_i^{(n',k')}) \langle \frac{\partial S_j}{\partial b^{(n',k')}_i},H^{L_n}(\chi_{B_0}T^{(n,k)}_0)\rangle \\
& -(\frac{\lambda_s}{\lambda}+b_1^{(1,0)}) \langle \Lambda \tilde{Q}_b,H^{L_n}(\chi_{B_0}T^{(n,k)}_0)\rangle- \langle (\frac{z_s}{\lambda}+b_1^{(1,\cdot)}). \nabla \tilde{Q}_b,H^{L_n}(\chi_{B_0}T^{(n,k)}_0)\rangle
\ea
\ee
For $2\leq j\leq L+2$, and $(n',k',i)\in \mathcal I$ there holds, as $S_i$ is homogeneous of degree $(i,-\gamma-g')$, using \fref{trap:bd bnki} and \fref{pro:bd HLnTnk0}:
\be \la{pro:eq:modulation 1}
\left| (2i-\alpha_{n'})b_1^{(0,1)}b_i^{(n',k')}) \left\langle \frac{\partial S_j}{\partial b^{(n',k')}_i},H^{L_n}(\chi_{B_0}T^{(n,k)}_0) \right\rangle \right| \leq \frac{C(L,M)}{s^{L-\delta_0-\delta_n+2m_n+1+\frac{g'}{2}}}.
\ee
Using the modulation bound \fref{trap:eq:modulation leqL-1}, the asymptotics \fref{cons:eq:asymptotique Q} and \fref{cons:eq:asymptotique T0n} of $Q$ and $\Lambda Q$, \fref{trap:bd bnki} and \fref{pro:bd HLnTnk0} we find that:
\be \la{pro:eq:modulation 3}
\ba{r c l}
&\left|  (\frac{\lambda_s}{\lambda}+b_1^{(1,0)}) \langle \Lambda \tilde{Q}_b,H^{L_n}(\chi_{B_0}T^{(n,k)}_0)\rangle- \langle (\frac{z_s}{\lambda}+b_1^{(1,\cdot)}). \nabla \tilde{Q}_b,H^{L_n}(\chi_{B_0}T^{(n,k)}_0)\rangle  \right| \\ \leq& \frac{C(L,M)}{s^{2L+\frac{3-\alpha}{2}-2m_n-\delta_n}}
\ea
\ee
is very small as $L\gg 1$. Moreover for $2\leq j\leq L+2$ one has:
$$
\ba{r c l}
\sum_{(n',k',i)\in \mathcal I} b_{i,s}^{(n',k')}\left\langle \frac{\partial S_j}{\partial b_i^{(n',k')}},H^{L_n}(\chi_{B_0}T^{(n,k)}_0)\right\rangle&=&\frac{d}{ds}\left(\langle S_j,H^{L_n}(\chi_{B_0}T^{(n,k)}_0) \rangle \right)\\
&&-\langle S_j,H^{L_n}(\partial_s\chi_{B_0}T_0^{(n,k)}) \rangle.
\ea
$$
From similar arguments we used to derive \fref{pro:eq:modulation 1} one has the similar bound for the last term, yielding:
\be \la{pro:eq:modulation 2}
\ba{r c l}
\sum_{(n',k',i)\in \mathcal I} b_{i,s}^{(n',k')}\left\langle \frac{\partial S_j}{\partial b_i^{(n',k')}},H^{L_n}(\chi_{B_0}T^{(n,k)}_0)\right\rangle&=&\frac{d}{ds}\left(\langle S_j,H^{L_n}(\chi_{B_0}T^{(n,k)}_0) \rangle \right)\\
&&+O(s^{-L+\delta_0+\delta_n+2m_n-1-\frac{g'}{2}}).
\ea
\ee
Coming back to the decomposition \fref{pro:eq:modulation expression}, and injecting \fref{pro:eq:modulation 1} and \fref{pro:eq:modulation 2} gives:
\be \la{pro:eq:modulation}
\ba{r c l}
\langle \tilde{\text{Mod}}(s),H^{L_n} ( \chi_{B_0}T^{(n,k)}_0)\rangle &=& (b_{L_n,s}^{(n,k)}+(2L_n-\alpha_n)b_1^{(0,1)}b_{L_n}^{(n,k)}) \langle T^{(n,k)}_0,\chi_{B_0}T^{(n,k)}_0\rangle \\
&& + \frac{d}{ds} \left( \sum_{j=2}^{L+2} \langle S_j,H^{L_n}(\chi_{B_0}T^{(n,k)}_0) \rangle \right)\\
&&+O(s^{-L+\delta_0+\delta_n+2m_n-1-\frac{g'}{2}}) 
\ea
\ee
In the decomposition \fref{pro:eq:expression numerateur} we examined each term in \fref{pro:eq:partialschiB0}, \fref{pro:eq:tildepsib}, \fref{pro:eq:remainder} and \fref{pro:eq:modulation}, yielding the identity \fref{pro:eq:numerateur} we claimed in this first step.\\

\noindent \textbf{step 2} End of the proof. From \fref{pro:eq:numerateur} one obtains:
\be \la{pro:eq:modulation expression deux}
\ba{r c l}
&\frac{d}{ds} \left(\frac{\left( \langle H^{L_n} (\varepsilon -\sum_2^{L+2}S_i), \chi_{B_0}T^{(n,k)}_0 \rangle \right)}{\langle \chi_{B_0}T^{(n,k)}_0,T^{(n,k)}_0\rangle} \right) \\
=& b_{L_n,s}^{(n,k)}+(2L_n-\alpha_n)b_1^{(0,1)}b_{L_n}^{(n,k)} + \frac{O(\sqrt{\mathcal E_{2s_L}} B_0^{4m_n+2\delta_n})+O\left(\frac{C(L)}{s^{L+1+\frac{g'}{2}-\delta_0-\delta_n-2m_n}}\right)}{\langle \chi_{B_0}T^{(n,k)}_0,T^{(n,k)}_0\rangle } \\
&+\langle H^{L_n} (\varepsilon -\sum_2^{L+2}S_i), \chi_{B_0}T^{(n,k)}_0 \rangle \frac{d}{ds} \left(\frac{1}{\langle \chi_{B_0}T^{(n,k)}_0,T^{(n,k)}_0\rangle} \right).
\ea
\ee
The size of the denominator is, from the asymptotic \fref{cons:eq:asymptotique T0n} of $T^{(n,k)}_0$ and \fref{trap:bd bnki}:
\be \la{pro:eq:denominateur}
\langle \chi_{B_0}T^{(n,k)}_0,T^{(n,k)}_0\rangle \sim cs^{2m_n+2\delta_n}
\ee
for some constant $c>0$. As the denominator just depends on $b_1^{(0,1)}$, using the bound $|b_{1,s}^{(0,1)}|\leq Cs^{-2}$ and the asymptotics \fref{cons:eq:asymptotique T0n} of $T^{(n,k)}_0$ we obtain:
$$
\left| \frac{d}{ds} \left(\frac{1}{\langle \chi_{B_0}T^{(n,k)}_0,T^{(n,k)}_0\rangle} \right) \right|\leq \frac{C(L,M)}{s^{2m_n+2\delta_n+1}}.
$$
Also, using again the coercivity estimate \fref{annexe:lem:coercivite norme adaptee}, \fref{pro:bd HLnTnk0} and the fact that for $2\leq j \leq L+2$, $S_j$ is homogeneous of degree $(j,-\gamma-g')$ we obtain:
\be \label{pro:eq:bound numerateur}
\left| \langle H^{L_n} (\varepsilon -\sum_2^{L+2}S_i), \chi_{B_0}T^{(n,k)}_0 \rangle \right| \leq C(L,M)(\sqrt{\mathcal E_{2s_L}}s^{2m_n+\delta_n}+s^{-L-\frac{g'}{2}+\delta_0+\delta_n+2m_n}).
\ee
Hence, plugging the three previous identities in \fref{pro:eq:modulation expression deux} gives the identity \fref{pro:eq:gain modulation Ln} claimed in the Lemma.\\

\end{proof}

%%%%%%%%%%%%%%%%%%%%%%%%%%%%%%%%%%%%%%%%%%%%
%%%%%%%%%%%%%%%%%%%%%%%%%%%%%%%%%%%%%%%%%%%%

\subsection{Lyapunov monotonicity for low regularity norms of the remainder}

The key estimate concerning the remainder $w$ is the bound on the high regularity adapted Sobolev norm at the blow up area: $\mathcal{E}_{2s_L}$. However, the nonlinearity can transfer energy from low to high frequencies, and consequently to control $\mathcal{E}_{2s_L}$ we need to control the low frequencies. This is the purpose of the following two propositions \ref{pro:pr:mathcalEsigma} and \ref{pro:pr:lowsobowext} where we find an upper bound for the time evolution of $\parallel w_{\te{int}}\parallel_{\dot{H}^{\sigma}(\mathbb R^d)}$ and $\parallel w_{\te{ext}}\parallel_{H^{\sigma}(\Omega)}$.

\begin{proposition}[Lyapunov monotonicity for the low Sobolev norm of the remainder in the blow up zone] \la{pro:pr:mathcalEsigma}

Suppose all the constants involved in Proposition \ref{trap:pr:bootstrap} are fixed except $s_0$ and $\eta$. Then for $s_0$ large enough and $\eta$ small enough, for any solution $u$ that is trapped on $[s_0,s')$ there holds for $0\leq t< t(s')$:
\be \la{pro:eq:mathcalEsigma}
\frac{d}{dt}\left\{ \frac{\mathcal{E}_{\sigma}}{\lambda^{2(\sigma-s_c)}} \right\} \leq \frac{\sqrt{\mathcal{E}_{\sigma}}}{\lambda^{2(\sigma-s_c)+2}s^{\frac{(\sigma-s_c)\ell}{2\ell-\alpha}+1}}   \frac{1}{s^{\frac{\alpha}{4L}}}\left[1+\sum_{k=2}^p\left( \frac{\sqrt{\mathcal{E}_{\sigma}}}{s^{-\frac{\sigma-s_c}{2}}}\right)^{k-1}\right] 
\ee
where the norm $\mathcal{E}_{\sigma}$ is defined in \fref{trap:eq:def mathcalEsigma}.

\end{proposition}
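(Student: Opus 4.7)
The proof will be a standard energy estimate at the slightly supercritical Sobolev level $\sigma$. The starting point is the scaling identity $\mathcal E_\sigma/\lambda^{2(\sigma-s_c)}=\|w_{\rm int}\|_{\dot H^\sigma}^2$, which follows from $\varepsilon=(\tau_{-z}w_{\rm int})_\lambda$ and the definition of $s_c$. I would then differentiate this in $t$ using the evolution equation \fref{trap:eq:evolution wint} for $w_{\rm int}$, testing against $(-\Delta)^\sigma w_{\rm int}$:
\[
\tfrac12\tfrac{d}{dt}\|w_{\rm int}\|_{\dot H^\sigma}^2=-\langle H_{z,\frac1\lambda}w_{\rm int},w_{\rm int}\rangle_{\dot H^\sigma}+\text{(source terms)}.
\]
The dissipative contribution $-\|w_{\rm int}\|_{\dot H^{\sigma+1}}^2$ from $-\Delta$ will be used to absorb the potential contribution $\langle pQ^{p-1}_{z,1/\lambda}w_{\rm int},w_{\rm int}\rangle_{\dot H^\sigma}$, exploiting the pointwise bound $Q^{p-1}\lesssim (1+|y|)^{-2}$ from \fref{cons:eq:positivite H} together with the fractional Hardy/Rellich estimates of Section \ref{sec:hardy}. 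The choice $\sigma>s_c$ is exactly what makes a weighted $\dot H^\sigma$-Hardy inequality turn the potential into a small fraction of the dissipation.

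For the remaining source terms I would proceed by duality: $\langle f,w_{\rm int}\rangle_{\dot H^\sigma}\leq \|f\|_{\dot H^{\sigma-1}}\|w_{\rm int}\|_{\dot H^{\sigma+1}}$, then apply Young to split off half of the dissipation and absorb it on the left. The modulation term $\lambda^{-2}\chi\tau_z(\tilde{\mathrm{Mod}}_{1/\lambda})$ will be handled through Lemma \ref{trap:lem:modulation} together with the a priori sizes \fref{trap:bd bnki}; the error term $\lambda^{-2}\chi\tau_z(\tilde\psi_{b,1/\lambda})$ is controlled by the global bound \fref{cons:eq:bound globale nablatildepsib} of Proposition \ref{cons:pr:tildeQb} at level $j=2\sigma-1$; the small linear term $L(w_{\rm int})$ is of amplitude $\sim (b_1^{(0,1)})^{p-1}\sim s^{-(p-1)}$ by \fref{trap:def Lwint} and the bounds on $\alpha_b$; and the localized border terms $\tilde L,\tilde R,\tilde{NL}$ have support in $1\leq |x|\leq 6$, where $Q_{1/\lambda}$ is of order $\lambda^{2/(p-1)}$ and therefore give negligible contributions. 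Gathering these estimates yields a bound of the form $\sqrt{\mathcal E_\sigma}\cdot(\text{size of source})$, and choosing $L\gg 1$ produces the extra smallness factor $s^{-\alpha/(4L)}$ that appears in the claim.

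The main technical difficulty lies in the nonlinear term $NL(w_{\rm int})=\sum_{k=2}^p C_k^p (\chi\tau_z\tilde Q_{b,1/\lambda})^{p-k}w_{\rm int}^k$. For each $k$, I would apply the fractional Leibniz rule to distribute the $\sigma$ derivatives between the factor $(\chi\tau_z\tilde Q_{b,1/\lambda})^{p-k}$ (which is of size $\lambda^{-2(p-k)/(p-1)}$ at scale $\lambda$) and the $k$ copies of $w_{\rm int}$, use Sobolev embedding $\dot H^\sigma\hookrightarrow L^q$ -- valid since $\sigma>s_c$ -- on $k-1$ of them, and spend one final factor of $\|w_{\rm int}\|_{\dot H^{\sigma+1}}$ obtained via Gagliardo--Nirenberg interpolation between $\dot H^\sigma$ and $\dot H^{\sigma+1}$, which is then absorbed into the dissipation. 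This produces precisely the polynomial tail $\sum_{k=2}^p\bigl(\sqrt{\mathcal E_\sigma}\,s^{(\sigma-s_c)/2}\bigr)^{k-1}$ in the bound, the weight $s^{(\sigma-s_c)/2}$ encoding the gap to scaling invariance. Converting $d/ds$ back to $d/dt$ through $ds/dt=\lambda^{-2}$ and rewriting the resulting powers of $s$ in terms of $\lambda$ using $\lambda\sim s^{-\ell/(2\ell-\alpha)}$ gives the final form \fref{pro:eq:mathcalEsigma}.
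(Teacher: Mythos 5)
There is a genuine gap, and it concerns precisely the mechanism that produces the gain $s^{-\frac{\alpha}{4L}}$. Your plan handles the borderline terms (the potential part of $-H_{z,\frac1\lambda}w_{\te{int}}$, the nonlinear term, and the cut terms) by absorbing a factor $\parallel w_{\te{int}}\parallel_{\dot H^{\sigma+1}}$ into the dissipation. But the potential term is \emph{not} a small fraction of the dissipation at the $\dot H^\sigma$ level: the pointwise bound \fref{cons:eq:positivite H} gives subcriticality with respect to Hardy only at the $L^2\to\dot H^1$ level, and at the $\dot H^\sigma\to\dot H^{\sigma+1}$ level the relevant sharp constant is different and no smallness is available (the condition $\sigma>s_c$ governs scaling/integrability, not the size of the constant; moreover for $p$ large the weighted Hardy estimate \fref{annexe:eq:hardyfrac} does not even apply at the level $\sigma+1>\frac d2$ you would need). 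Without absorption, this term scales like $\lambda^{-2}\mathcal E_\sigma/\lambda^{2(\sigma-s_c)}$, i.e.\ like $\frac{ds}{dt}$ times the quantity you are estimating with no extra decay in $s$, and the estimate cannot be reintegrated. The paper closes this differently: it simply drops the dissipation and pays two extra derivatives, $\parallel\nabla^{\sigma+2}\varepsilon\parallel_{L^2}$, which it interpolates between $\mathcal E_\sigma$ and the high-regularity norm $\mathcal E_{2s_L}$; the bootstrap bound \fref{trap:eq:bounds varepsilon} on $\mathcal E_{2s_L}$ (which is smaller than its scaling-consistent size by $s^{-2(1-\delta_0)-2\eta(1-\delta_0')}$) is exactly what generates the factor $s^{-\frac{\alpha}{2L}+O(\frac{\eta+\sigma-s_c}{L})}$, both in the linear term and in the nonlinear term (via $\parallel\nabla^{\sigma+2-(k-1)(\sigma-s_c)}\varepsilon\parallel_{L^2}$). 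Your proposal never invokes $\mathcal E_{2s_L}$ for the interior terms, and "choosing $L\gg1$" by itself produces no smallness; this is the missing idea.

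Two further points would also fail as written. First, the cut terms: your argument that $\tilde L,\tilde R,\tilde{NL}$ are negligible because $Q_{\frac1\lambda}\sim\lambda^{\frac2{p-1}}$ on $1\leq|x|\leq6$ covers only $\tilde R$ and the potential part of $\tilde L$; the terms $\Delta\chi_3 w+2\nabla\chi_3.\nabla w$ and $(1-\chi_3)w^p$ involve $w$ alone (and $w$, not $w_{\te{int}}$, so the interior dissipation does not see them) and in the paper they are controlled by interpolating $\parallel w\parallel_{H^{\sigma+1}}$ and $\parallel w^p\parallel_{H^\sigma}$ between $\parallel w\parallel_{H^\sigma}\lesssim K_1$ and $\parallel w\parallel_{H^{2s_L}}$ via \fref{trap:bd w} and \fref{nonlin:bd estimation nonlineaire} — again the high norm is indispensable. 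Second, two smaller inaccuracies: the small linear term $L(w_{\te{int}})$ has amplitude $\sim s^{-1}$ with improved spatial decay $(1+|y|)^{-2-\alpha}$ (it is linear in $\alpha_b$), not $s^{-(p-1)}$; and the error term should be measured through $\parallel\nabla^{\sigma}\tilde\psi_b\parallel_{L^2}$ (or $\sigma-1$ in a duality pairing), not at level $2\sigma-1$ of \fref{cons:eq:bound globale nablatildepsib}.
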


\begin{remark} 

\fref{pro:eq:mathcalEsigma} should be interpreted as follows. The term $\frac{\sqrt{\mathcal{E}_{\sigma}}}{\lambda^{2(\sigma-s_c)+2}s^{\frac{(\sigma-s_c)\ell}{2\ell-\alpha}+1}}$ is from \fref{trap:eq:bounds varepsilon} and \fref{trap:eq:lambda} of order $\frac{1}{s}\frac{ds}{dt}$ (as $\frac{ds}{dt}=\lambda^{-2}$). The $\frac{1}{s^{\frac{\alpha}{4L}}}$ then represents a gain: it gives that the right hand side of \fref{pro:eq:mathcalEsigma} is of order $\frac{1}{s^{1+\frac{\alpha}{4L}}}\frac{ds}{dt}$, which when reintegrated in time is convergent and arbitrarily small for $s_0$ large enough. The third term shows that one needs to have $\sqrt{\mathcal{E}_{\sigma}}\lesssim s^{-\frac{\sigma-s_c}{2}}$ to control the non linear terms, which holds because of the bootstrap bound \fref{trap:eq:bounds varepsilon}.

\end{remark}

\begin{proof}[Proof of Proposition \ref{pro:pr:mathcalEsigma}]

To show this result, we compute the left hand side of \fref{pro:eq:mathcalEsigma} and we upper bound it using all the bounds that hold in the trapped regime. The time evolution $w_{\text{int}}$ given by \fref{trap:eq:evolution wint} yields:
\be \la{pro:eq:expression low}
\ba{r c l}
\frac{d}{dt}\left\{ \frac{\mathcal{E}_{\sigma}}{\lambda^{2(\sigma-s_c)}} \right\} &=& \frac{d}{dt} \left\{ \int |\nabla^{\sigma}w_{\te{int}}|^2 \right\} \\
&=& \int \nabla^{\sigma}w_{\te{int}}.\nabla^{\sigma}(-H_{z,\frac{1}{\lambda}}w_{\te{int}}-\frac{1}{\lambda^2}\chi \tau_z(\tilde{\text{Mod}}(t)_{\frac{1}{\lambda}}+\tilde{\psi_b}_{\frac{1}{\lambda}})\\
&&+\text{NL}(w_{\te{int}})+L(w_{\te{int}})+\tilde{L}+\tilde{NL}+\tilde{R}). \\
\ea
\ee
We now give an upper bound for each term in \fref{pro:eq:expression low}. As all the terms involve functions that are compactly supported in $\Omega$ since $w_{\text{int}}$ is, all integrations by parts are legitimate and all computations and integrations are performed in $\mathbb R^d$ (e.g. $L^2$ denotes $L^2(\mathbb R^d)$).

\noindent \textbf{step 1} Inside the blow-up zone (all terms except the three last ones in \fref{pro:eq:expression low}).

\noindent - \emph{The linear term:} We first compute from \fref{trap:def Hzlambda} using dissipation:
$$
\ba{r c l}
\int \nabla^{\sigma}w_{\te{int}}.\nabla^{\sigma}(-H_{z,\frac{1}{\lambda}}w_{\te{int}})&=&\int \nabla^{\sigma}w_{\te{int}}.\nabla^{\sigma}(\Delta w_{\te{int}}+p(\tau_z(Q_{\frac{1}{\lambda}}))^{p-1}w_{\te{int}})\\
&\leq & \int \nabla^{\sigma}w_{\te{int}}.\nabla^{\sigma}(p(\tau_z(Q_{\frac{1}{\lambda}}))^{p-1}w_{\te{int}}) 
\ea
$$
which becomes after an integration by parts and using Cauchy-Schwarz inequality:
$$
\int \nabla^{\sigma}w_{\te{int}}.\nabla^{\sigma}(p(\tau_z(Q_{\frac{1}{\lambda}}))^{p-1}w_{\te{int}}) \leq \parallel \nabla^{\sigma+2}w_{\te{int}} \parallel_{L^2} \parallel \nabla^{\sigma-2}(p(\tau_z(Q_{\frac{1}{\lambda}}))^{p-1}w_{\te{int}}) \parallel_{L^2}
$$ 
Using interpolation, the coercivity estimate \fref{annexe:eq:coercivite norme adaptee} and the bounds of the trapped regime \fref{trap:eq:bounds varepsilon} on $\varepsilon$, one has for the first term (performing a change of variables to go back to renormalized variables):
$$
\ba{r c l}
& \parallel \nabla^{\sigma+2}w_{\te{int}} \parallel_{L^2} = \frac{1}{\lambda^{\sigma+2-s_c}}\parallel \nabla^{\sigma+2}\varepsilon  \parallel_{L^2} \leq \frac{C}{\lambda^{\sigma+2-s_c}} \parallel \nabla^{\sigma}\varepsilon \parallel_{L^2}^{1-\frac{2}{2s_L-\sigma}} \parallel \varepsilon \parallel_{\dot{H}^{2s_L}}^{\frac{2}{2s_L-\sigma}}\\
\leq&   \frac{C(L,M)}{\lambda^{\sigma+2-s_c}} \sqrt{\mathcal E_{\sigma}}^{1-\frac{2}{2s_L-\sigma}} \sqrt{\mathcal E_{2s_L}}^{\frac{2}{2s_L-\sigma}} \leq  \frac{C(L,M,K_1,K_2)}{\lambda^{\sigma+2-s_c}s^{\frac{(\sigma-s_c)\ell}{2\ell-\alpha}+\frac{2}{2s_L-\sigma}(L+1-\delta_0+\eta(1-\delta_0')-\frac{(\sigma-s_c)\ell}{2\ell-\alpha})}} \\
= & \frac{C(L,M,K_1,K_2)}{\lambda^{\sigma+2-s_c}s^{\frac{(\sigma-s_c)\ell}{2\ell-\alpha}+1+\frac{\alpha}{2L}+O\left( \frac{\eta+\sigma-s_c}{L} \right)}}
\ea
$$
As $Q^{p-1}=O((1+|y|)^{-2})$ from \fref{cons:eq:asymptotique V}, using the Hardy inequality \fref{annexe:eq:hardyfrac} we get for the second term after a change of variables:
$$
\ba{r c l}
\parallel \nabla^{\sigma-2}(p(\tau_z(Q_{\frac{1}{\lambda}}))^{p-1}w) \parallel_{L^2}&=&\frac{p}{\lambda^{\sigma-s_c}}\parallel \nabla^{\sigma-2}(Q^{p-1}\varepsilon) \parallel_{L^2} \leq \frac{C}{\lambda^{\sigma-s_c}}\parallel \nabla^{\sigma}\varepsilon \parallel_{L^2} \\
&=& \frac{C}{\lambda^{\sigma-s_c}}\sqrt{\mathcal E_{\sigma}}.
\ea
$$
Combining the four above identities we obtain:
\be \la{pro:eq:low linear}
\int \nabla^{\sigma}w_{\te{int}}.\nabla^{\sigma}(-H_{z,\frac{1}{\lambda}}w_{\te{int}}) \leq  \frac{C(L,M,K_1,K_2)\sqrt{\mathcal E_{\sigma}}}{\lambda^{2(\sigma-s_c)+2}s^{\frac{(\sigma-s_c)\ell}{2\ell-\alpha}+1+\frac{\alpha}{2L}+O\left( \frac{\eta+\sigma-s_c}{L} \right)}}.
\ee

\noindent - \emph{The modulation term:} To treat the error induced by the cut separately, we decompose as follows, going back to renormalized variables using Cauchy-Schwarz:
\be \la{pro:eq:low mod expression}
\ba{r c l}
&\left| \int \nabla^{\sigma}w.\nabla^{\sigma} (\frac{1}{\lambda^2}\chi \tau_z (\tilde{\text{Mod}(t)}_{\frac{1}{\lambda}})) \right|\\
 \leq & \left| \int \nabla^{\sigma}w.\nabla^{\sigma} (\frac{1}{\lambda^2}(1+(\chi-1)))\tau_z (\tilde{\text{Mod}(t)}_{\frac{1}{\lambda}})) \right|\\
\leq & \frac{1}{\lambda^{2(\sigma-s_c)+2}} \sqrt{\mathcal{E}_{\sigma}} \left[ \parallel \nabla^{\sigma}\tilde{\text{Mod}(s)}\parallel_{L^2}+\parallel \nabla^{\sigma} (\frac{1}{\lambda^2}(\chi-1)\tau_z (\tilde{\te{Mod}}(t)_{\frac{1}{\lambda}})) \parallel_{L^2} \right] .
\ea
\ee
For the first term in the above equation, using \fref{trap:eq:def tildeMod} and the modulation estimates \fref{trap:eq:modulation leqL-1} and  \fref{trap:eq:modulation L} we get:
$$
\ba{r c l}
&\parallel \nabla^{\sigma}\tilde{\text{Mod}(s)}\parallel_{L^2} \\
\leq& \underset{(n,k,i)\in \mathcal I}{\sum} |b^{(n,k)}_{i,s}+(2i-\alpha_n)b_1^{(0,1)}b^{(n,k)}_i-b_{i+1}^{(n,k)}| \parallel  \nabla^{\sigma}(\chi_{B_1}(T^{(n,k)}_i+\underset{2}{\overset{L+2}{\sum}} \frac{\partial S_j}{\partial b^{(n,k)}_i})) \parallel_{L^2} \\
&+ |\frac{\lambda_s}{\lambda}+b_1^{(0,1)}|\parallel  \nabla^{\sigma}(\Lambda \tilde{Q}_b) \parallel_{L^2} + |\frac{z_s}{\lambda}+b_1^{(1,\cdot)}|\parallel  \nabla^{\sigma+1}( \tilde{Q}_b) \parallel_{L^2}\\
\leq & C(L,M)(\sqrt{\mathcal E_{2s_L}}+s^{-L-3})\Bigl{[} \parallel  \nabla^{\sigma}(\Lambda \tilde{Q}_b) \parallel_{L^2} + \parallel  \nabla^{\sigma+1}( \tilde{Q}_b) \parallel_{L^2} \\
&+\sum_{(n,k,i)\in \mathcal I} \parallel  \nabla^{\sigma}(\chi_{B_1}T^{(n,k}_i) \parallel_{L^2}+\sum_2^{L+2}\parallel \nabla^{\sigma} (\chi_{B_1}\frac{\partial S_j}{\partial b^{(n,k)}_i})\parallel_{L^2} \Bigr{]}.
\ea
$$
Under the trapped regime bound \fref{trap:eq:bounds varepsilon} one has $\sqrt{\mathcal E_{2s_L}}+s^{-L-3}\leq s^{-L-1+\delta_0-\eta(1-\delta_0')}$. Moreover, from the asymptotic of $Q$, $\Lambda Q$, $T^{(n,k)}_i$ and $S_j$ (\fref{cons:eq:asymptotique Q}, \fref{cons:eq:asymptotique T0n}, Lemma \ref{cons:lem:Tni} and \fref{cons:eq:degre Si}), and the bounds on the parameters \fref{trap:bd bnki} one has:
$$
\parallel  \nabla^{\sigma}(\Lambda \tilde{Q}_b) \parallel_{L^2}\leq C, \ \ \ \parallel  \nabla^{\sigma+1}( \tilde{Q}_b) \parallel_{L^2}\leq C,
$$
$$
\ba{r c l}
&\sum_{(n,k,i)\in \mathcal I} \parallel  \nabla^{\sigma}(\chi_{B_1}T^{(n,k}_i) \parallel_{L^2}+\sum_2^{L+2}\parallel \nabla^{\sigma} (\chi_{B_1}\frac{\partial S_j}{\partial b^{(n,k)}_i})\parallel_{L^2} \leq C(L)\\
\leq &C(L) s^{L+\underset{0\leq n \leq n_0}{\text{sup}}\delta_n-\delta_0-\frac{\alpha}{2}-\frac{(\sigma-s_c)}{2}+C(L)\eta}+C(L) s^{L+\underset{0\leq n \leq n_0}{\text{sup}}\delta_n-\delta_0-\frac{\alpha}{2}-\frac{(\sigma-s_c)}{2}+C(L)\eta-\frac{g'}{2}}
\ea
$$
All these bounds then imply that for the modulation term that is located at the blow up zone in \fref{pro:eq:low mod expression} there holds:
$$
\ba{r c l}
\frac{1}{\lambda^{2(\sigma-s_c)+2}} \sqrt{\mathcal{E}_{\sigma}} \parallel \nabla^{\sigma}\tilde{\text{Mod}(s)}\parallel_{L^2} & \leq & \frac{C(L,M)\sqrt{\mathcal E_{\sigma}}s^{L+\underset{0\leq n \leq n_0}{\text{sup}}\delta_n-\delta_0-\frac{\alpha}{2}-\frac{(\sigma-s_c)}{2}+C(L)\eta}}{\lambda^{2(\sigma-s_L)+2}s^{L+1-\delta_0+(1-\delta_0')\eta}}\\
&\leq& \frac{C(L,M)\sqrt{\mathcal E_{\sigma}}}{\lambda^{2(\sigma-s_c)+2}s^{1+\left(\frac{\alpha}{2}-\underset{0\leq n \leq n_0}{\text{sup}}\delta_n\right)+\frac{\sigma-s_c}{2}-C(L)\eta}}
\ea
$$
We now turn to the second term in \fref{pro:eq:low mod expression}. The blow up point $z$ is arbitrarily close to $0$ from \fref{trap:eq:bound z} and from the expression of the modulation term \fref{trap:eq:def tildeMod}, all the terms except $\tau_z ([\frac{\lambda_s}{\lambda}+b_1^{(0,1)}]\Lambda Q+[b_1^{(1,\cdot}+\frac{z_s}{\lambda}].\nabla Q)_{\frac{1}{\lambda}}$ have support in the zone $\{|x-z|\leq 2B_1\lambda \}\subset B(0,\frac 1 2)$ because $B_1\lambda\ll 1$. This means that from the modulation estimates \fref{trap:eq:modulation leqL-1}:
$$
\ba{r c l}
&\parallel \nabla^{\sigma} (\frac{1}{\lambda^2}(\chi-1)\tau_z (\tilde{\te{Mod}}(t)_{\frac{1}{\lambda}})) \parallel_{L^2} \\
 =& \parallel \nabla^{\sigma} (\frac{1}{\lambda^2}(\chi-1)\tau_z ([\frac{\lambda_s}{\lambda}+b_1^{(0,1)}]\Lambda Q+[b_1^{(1,\cdot}+\frac{z_s}{\lambda}].\nabla Q)_{\frac{1}{\lambda}})) \parallel_{L^2} \\
 \leq & \frac{C[|\frac{\lambda_s}{\lambda}+b_1^{(0,1)}|+|\frac{z_s}{\lambda}+b_1^{(1,\cdot)}|]}{\lambda^2} \leq \frac{C}{\lambda^2s^{L+1}}
\ea 
$$
We inject the two previous equations in the expression \fref{pro:eq:low mod expression}, yielding:
\be \la{pro:eq:low mod}
\ba{r c l}
\left| \int \nabla^{\sigma}w_{\text{int}}.\nabla^{\sigma} (\frac{1}{\lambda^2}\chi \tau_z (\tilde{\text{Mod}(t)}_{\frac{1}{\lambda}})) \right|\leq \frac{C(L,M)\sqrt{\mathcal E_{\sigma}}}{\lambda^{2(\sigma-s_c)+2}s^{1+\left(\frac{\alpha}{2}-\underset{0\leq n \leq n_0}{\text{sup}}\delta_n\right)+\frac{\sigma-s_c}{2}-C(L)\eta}}
\ea
\ee

\noindent - \emph{The error term:} as $|z|\ll 1$ from \fref{trap:eq:bound z} and $B_1\lambda\ll 1$ from \fref{trap:bd bnki} and \fref{trap:eq:lambda}, from the expression of the error term \fref{cons:eq:def tildepsib}, all the terms except $\tau_z (b_1^{(0,1)}\Lambda Q+b_1^{(1,\cdot)}.\nabla Q)_{\frac{1}{\lambda}}$ have support in the zone $\{|x-z|\leq 2B_1\lambda \}\subset B(0,\frac 1 2)$. Therefore, one computes, making the following decomposition and coming back to renormalized variables, using the estimates\fref{cons:eq:bound globale nablatildepsib} and \fref{trap:eq:modulation leqL-1}:
\be \la{pro:eq:low error}
\ba{r c l}
& \left| \int \nabla^{\sigma}w_{\text{int}}.\nabla^{\sigma}(\frac{1}{\lambda^2}\chi \tau_z(\tilde{\psi_b}_{\frac{1}{\lambda}})) \right| \\
\leq& \frac{\parallel \nabla^{\sigma}\varepsilon \parallel_{L^2}}{\lambda^{\sigma-s_c+2}}(\frac{\parallel \nabla^{\sigma}\tilde{\psi}_b \parallel_{L^2}}{\lambda^{2(\sigma-s_c)+2}}+\parallel \nabla^{\sigma}((\chi-1)\tau_z(\tilde{\psi_b}_{\frac{1}{\lambda}})) \parallel_{L^2}) \\
\leq & \frac{C(L)\sqrt{\mathcal E_{\sigma}}}{\lambda^{2(\sigma-s_c)+2}s^{1+\frac{\alpha}{2}+\frac{\sigma-s_c}{2}-C(L)\eta}}+\frac{\parallel \nabla^{\sigma}\varepsilon \parallel_{L^2}}{\lambda^{\sigma-s_c+2}}\parallel \nabla^{\sigma}(\chi -1)(\tau_z (b_1^{(0,1)}\Lambda Q+b_1^{(1,\cdot}.\nabla Q)_{\frac{1}{\lambda}}) \parallel_{L^2}\\
\leq & \frac{C(L)\sqrt{\mathcal E_{\sigma}}}{\lambda^{2(\sigma-s_c)+2}s^{1+\frac{\alpha}{2}+\frac{\sigma-s_c}{2}-C(L)\eta}}+C\frac{\parallel \nabla^{\sigma}\varepsilon \parallel_{L^2}}{\lambda^{2(\sigma-s_c)+2}}(|b_1^{(0,1)}|\lambda^{\alpha+\sigma-s_c}+|b_1^{(1,\cdot)}|\lambda^{1+\sigma-s_c})\\
\leq & \frac{C(L)\sqrt{\mathcal E_{\sigma}}}{\lambda^{2(\sigma-s_c)+2}s^{1+\frac{\alpha}{2}+\frac{\sigma-s_c}{2}-C(L)\eta}}
\ea
\ee

\noindent - \emph{The non linear term:} First, coming back to renormalized variables, as $\text{NL}(\varepsilon)=\sum_{k=2}^pC^p_k\tilde{Q}_b^{p-k}\varepsilon^k$, and performing an integration by parts we write:
\be \la{pro:eq:low nonlinear expression}
\ba{r c l}
&\left| \int \nabla^{\sigma}w_{\text{int}}.\nabla^{\sigma}(\text{NL}(w_{\text{int}}))\right| \leq C \underset{k=2}{\overset{p}{\sum}} \frac{\parallel \nabla^{\sigma+2-(k-1)(\sigma-s_c)}\varepsilon \parallel_{L^2}\parallel \nabla^{\sigma-2+(k-1)(\sigma-s_c)}(\tilde{Q}_b^{p-k}\varepsilon^k)\parallel_{L^2}}{\lambda^{2(\sigma-s_c)+2}}
\ea
\ee
We fix $k$, $2\leq k \leq p$ and focus on the $k$-th term in the sum. The first term is estimated using interpolation, the coercivity estimate \fref{annexe:eq:coercivite norme adaptee} and the bound \fref{trap:eq:bounds varepsilon}:
\be \la{pro:eq:low nonlinear 1}
\ba{r c l}
\parallel \nabla^{\sigma+2-(k-1)(\sigma-s_c)}\varepsilon \parallel_{L^2} &\leq & C \parallel \nabla^{\sigma}\varepsilon \parallel_{L^2}^{1-\frac{2-(k-1)(\sigma-s_c)}{2s_L-\sigma}}  \parallel \nabla^{2s_L}\varepsilon \parallel_{L^2}^{\frac{2-(k-1)(\sigma-s_c)}{2s_L-\sigma}} \\
&\leq & C(L,M) \sqrt{\mathcal E_{\sigma}}^{1-\frac{2-(k-1)(\sigma-s_c)}{2s_L-\sigma}}  \sqrt{\mathcal E_{2s_L}}^{\frac{2-(k-1)(\sigma-s_c)}{2s_L-\sigma}}\\
&\leq & \frac{C(L,M,K_1,K_2)}{s^{\frac{(\sigma-s_c)\ell}{2\ell-\alpha}+1-\frac{(k-1)(\sigma-s_c)}{2}+\frac{\alpha}{2L}+O\left(\frac{|\sigma-s_c|+|\eta|}{L} \right)}}.
\ea
\ee
For the second term in \fref{pro:eq:low nonlinear expression}, as $\tilde Q_b=O((1+|y|)^{-2})$ from \fref{cons:eq:def Qbtilde} and \fref{trap:bd bnki} we first use the Hardy inequality \fref{annexe:eq:hardyfrac}:
\be \la{pro:eq:low nonlin21}
\parallel \nabla^{\sigma-2+(k-1)(\sigma-s_c)}(\tilde{Q}_b^{p-k}\varepsilon^k)\parallel_{L^2} \leq C \parallel \nabla^{\sigma-2+(k-1)(\sigma-s_c)+\frac{2(p-k)}{p-1}}(\varepsilon^k)\parallel_{L^2}.
\ee
We write 
$$
\sigma-2+(k-1)(\sigma-s_c)+\frac{2(p-k)}{p-1}=\sigma(n,k)+\delta (n,k)
$$
where $\sigma(n,k):=E[\sigma-2+(k-1)(\sigma-s_c)+\frac{2(p-k)}{p-1}]\in \mathbb N$ and $0\leq \delta(n,k)<1$. Developing the entire part of the derivative yields:
\be \la{pro:eq:low nonlin22}
\ba{r c l}
& \parallel \nabla^{\sigma-2+(k-1)(\sigma-s_c)+\frac{2(p-k)}{p-1}}(\varepsilon^k)\parallel_{L^2} \\
\leq & \underset{(\mu_i)_{1\leq i \leq k}\in \mathbb N^{kd}, \ \sum_i |\mu_i|=\sigma(n,k)}{\sum} \parallel \nabla^{\delta (\sigma,k)}( \prod_1^k \partial^{\mu_i} \varepsilon ) \parallel_{L^2}.
\ea
\ee
Fix $(\mu_i)_{1\leq i \leq k}\in \mathbb N^{kd}$ satisfying $\sum_{i=1}^k |\mu_i|=\sigma(n,k)$ in the above sum. We define the following family of Lebesgue exponents (that are well-defined since $\sigma<\frac d 2$):
$$
\frac{1}{p_i}:=\frac{1}{2}-\frac{\sigma-|\mu_i|_1}{d}, \ \ \frac{1}{p_i'}:=\frac{1}{2}-\frac{\sigma-|\mu_i|-\delta(\sigma,k)}{d} \ \ \ \text{for} \ 1\leq i \leq k.
$$
One has $p_i>2$ and a direct computation shows that
$$
\frac{1}{p_j'}+ \sum_{i\neq j} \frac{1}{p_i}=\frac{1}{2}.
$$
We now recall the commutator estimate:
$$
\parallel \nabla^{\delta_{\sigma}}(uv)\parallel_{L^q}\leq C \parallel \nabla^{\delta_{\sigma}}u\parallel_{L^{p_1}}\parallel v\parallel_{L^{p_2}}+C\parallel \nabla^{\delta_{\sigma}}v\parallel_{L^{p_1'}}\parallel u\parallel_{L^{p_2'}} ,
$$
for $\frac{1}{p_1}+\frac{1}{p_2}=\frac{1}{p_1'}+\frac{1}{p_2'}=\frac{1}{q}$, provided $1<q,p_1,p_1'<+\infty$ and $1\leq p_2,p_2'\leq +\infty$. This estimate, combined with the H\"older inequality allows us to compute by iteration:
$$
\ba{r c l}
&\parallel \nabla^{\delta (\sigma,k)}( \prod_{i=1}^k \partial^{\mu_i} \varepsilon ) \parallel_{L^2} \\
\leq & C \parallel \partial^{\mu_1+\delta(\sigma,k)} \varepsilon \parallel_{L^{p'_1}} \parallel  \prod_2^k \partial^{\mu_i} \varepsilon  \parallel_{L^{(\sum_2^k \frac{1}{p_i})^{-1}}}\\
&+C\parallel \partial^{\mu_1} \varepsilon \parallel_{L^{p_1}} \parallel  \nabla^{\delta(\sigma,k)}(\prod_2^k \partial^{\mu_i} \varepsilon)  \parallel_{L^{(\frac{1}{2}-\frac{1}{p_1})^{-1}}}\\
\leq & C\parallel \partial^{\mu_1+\delta(\sigma,k)} \varepsilon \parallel_{L^{p'_1}} \prod_2^k \parallel \partial^{\mu_i} \varepsilon  \parallel_{L^{p_i}}\\
& + C \parallel \partial^{\mu_1} \varepsilon \parallel_{L^{p_1}} \parallel \partial^{\mu_2+\delta (\sigma,k)} \varepsilon \parallel_{L^{p_2'}}  \parallel  \prod_3^k \partial^{\mu_i} \varepsilon  \parallel_{L^{(\sum_3^k \frac{1}{p_i})^{-1}}}\\
& + C \parallel \partial^{\mu_1} \varepsilon \parallel_{L^{p_1}} \parallel \partial^{\mu_2} \varepsilon \parallel_{L^{p_2}}  \parallel  \nabla^{\delta(\sigma,k)}(\prod_3^k \partial^{\mu_i} \varepsilon )  \parallel_{L^{(\frac{1}{2}-\frac{1}{p_1}-\frac{1}{p_2})^{-1}}}\\
\leq & C\parallel \partial^{\mu_1+\delta(\sigma,k)} \varepsilon \parallel_{L^{p'_1}} \prod_2^k \parallel \partial^{\mu_i} \varepsilon  \parallel_{L^{p_i}}+ C \parallel \partial^{\mu_2+\delta (\sigma,k)} \varepsilon \parallel_{L^{p_2'}}  \prod_{i\neq 2} \parallel  \partial^{\mu_i} \varepsilon  \parallel_{L^{p_i}}\\
& + C \parallel \partial^{\mu_1} \varepsilon \parallel_{L^{p_1}} \parallel \partial^{\mu_2} \varepsilon \parallel_{L^{p_2}}  \parallel  \nabla^{\delta(\sigma,k)}(\prod_3^k \partial^{\mu_i} \varepsilon )  \parallel_{L^{(\frac{1}{2}-\frac{1}{p_1}-\frac{1}{p_2})^{-1}}}\\
\leq & ... \\
\leq& C\sum_{i=1}^k \parallel \partial^{\mu_i+\delta (\sigma,k)} \varepsilon \parallel_{L^{p_i'}}  \prod_{j=1, \ j\neq i}^k \parallel  \partial^{\mu_i} \varepsilon  \parallel_{L^{p_j}}.
\ea
$$
From Sobolev embedding, one has on the other hand that:
$$
\parallel \partial^{\mu_i+\delta (\sigma,k)} \varepsilon \parallel_{L^{p_i'}}+\parallel \partial^{\mu_i} \varepsilon \parallel_{L^{p_i}} \leq C \parallel \nabla^{\sigma}\varepsilon \parallel_{L^2}=C\sqrt{\mathcal E_{\sigma}}.
$$
Therefore (the strategy was designed to obtain this):
$$
\left\Vert \nabla^{\delta (\sigma,k)}( \prod_{i=1}^k \partial^{\mu_i} \varepsilon ) \right\Vert_{L^2} \leq \sqrt{\mathcal E_{\sigma}}^k.
$$
Plugging this estimate in \fref{pro:eq:low nonlin21} using \fref{pro:eq:low nonlin22} gives:
$$
\parallel \nabla^{\sigma-2+(k-1)(\sigma-s_c)}(\tilde{Q}_b^{p-k}\varepsilon^k)\parallel_{L^2} \leq C \sqrt{\mathcal E_{\sigma}}^k.
$$
Injecting this bound and the bound \fref{pro:eq:low nonlinear 1} in the decomposition \fref{pro:eq:low nonlinear expression} yields:
\be \la{pro:eq:low nonlinear}
\ba{r c l}
&\left| \int \nabla^{\sigma}w_{\text{int}}.\nabla^{\sigma}(\text{NL}(w_{\text{int}})) \right| \\
\leq & \frac{C(L,M,K_1,K_2)\sqrt{\mathcal E_{\sigma}}}{\lambda^{2(\sigma-s_c)+2}s^{\frac{(\sigma-s_c)\ell}{2\ell-\alpha}+1+\frac{\alpha}{2L}+O\left(\frac{|\eta|+|\sigma-s_c|}{L} \right)}}\sum_{k=2}^p \left( \frac{\sqrt{\mathcal E_{\sigma}}}{s^{-\frac{\sigma-s_c}{2}}} \right)^{k-1}.
\ea
\ee

\noindent - \emph{The small linear term:} One has: $L(\varepsilon)=-p(Q^{p-1}-\tilde{Q}^{p-1})\varepsilon$. The potential here admits the asymptotic $Q^{p-1}-\tilde{Q}^{p-1}\lesssim |y|^{-2-\alpha}$ at infinity which is better than the asymptotic of the potential appearing in the linear term $Q^{p-1}\sim |y|^{-2}$ we used previously to estimate it. Hence using verbatim the same techniques one can prove the same estimate:
\be \la{pro:eq:low L}
\left| \int  \nabla^{\sigma}w_{\text{int}}.\nabla^{\sigma}(L(w_{\text{int}})) \right| \leq  \frac{C(L,M,K_1,K_2)\sqrt{\mathcal E_{\sigma}}}{\lambda^{2(\sigma-s_c)+2}s^{\frac{(\sigma-s_c)\ell}{2\ell-\alpha}+1+\frac{\alpha}{2L}+O\left( \frac{|\eta|+|\sigma-s_c|}{L} \right)}}.
\ee

\noindent - \emph{End of Step 1:} We come back to the first identity we derived \fref{pro:eq:expression low} and inject the bounds we found for each term in \fref{pro:eq:low linear}, \fref{pro:eq:low mod}, \fref{pro:eq:low error}, \fref{pro:eq:low nonlinear} and \fref{pro:eq:low L} to obtain:
\be \la{pro:eq:low step1}
\ba{r c l}
&|\int \nabla^{\sigma}w_{\te{int}}.\nabla^{\sigma}(-H_{z,\frac{1}{\lambda}}w_{\te{int}}-\frac{1}{\lambda^2}\chi \tau_z(\tilde{\text{Mod}}(t)_{\frac{1}{\lambda}}+\tilde{\psi_b}_{\frac{1}{\lambda}})+\text{NL}(w_{\te{int}})+L(w_{\te{int}}))| \\
 \leq&  \frac{\sqrt{\mathcal E_{\sigma}}}{\lambda^{2(\sigma-s_c)+2}s^{\frac{(\sigma-s_c)\ell}{2\ell-\alpha}+1}}\Bigl{[} \frac{C(L,M,K_1,K_2)}{s^{\frac{\alpha}{2L}+O\left(\frac{\eta+\sigma-s_c}{L} \right)}}+\frac{C(L,M,K_2)}{s^{-\frac{(\sigma-s_c)\alpha}{2\ell-\alpha}+(\frac{\alpha}{2}-\underset{0\leq n \leq n_0}{\text{sup}}\delta_n)-C(L)\eta}}\\
&+\frac{C(L)}{s^{-\frac{(\sigma-s_c)\alpha}{2\ell-\alpha}+\frac{\alpha}{2}-C(L)\eta}}+\frac{C(L,M,K_1,K_2)}{s^{\frac{\alpha}{2L}+O\left( \frac{\eta+\sigma-s_c}{L}\right)}}\sum_{k=2}^p\left(\frac{\sqrt{\mathcal E_{\sigma}}}{s^{-\frac{\sigma-s_c}{2}}} \right)^{k-1}\Bigr{]}.
\ea
\ee

\noindent \textbf{step 2} The last three terms outside the blow up zone in \fref{pro:eq:expression low}. By a change of variables we see that the extra error term \fref{trap:def tildeR} is bounded:
$$
\parallel \nabla^{\sigma}\tilde R \parallel_{L^2(\mathbb R^d)}\leq C.
$$
Then, the extra linear term in \fref{pro:eq:expression low} is estimated directly via interpolation using the bound \fref{trap:bd w}:
$$
\ba{r c l}
&\parallel \nabla^{\sigma} (-\Delta \chi_{B(0,3)}w-2\nabla \chi_{B(0,3)}.\nabla w+p\tau_z Q_{\frac 1 \lambda}^{p-1}(\chi_{B(0,1)}^{p-1}-\chi_{B(0,3)})w) \parallel_{L^2(\mathbb R^d)} \\
 \leq & \parallel w \parallel_{H^{\sigma+1}} \leq \parallel w \parallel_{H^{\sigma}}^{1-\frac{1}{2s_L-\sigma}}    \parallel w \parallel_{H^{2s_L}}^{\frac{1}{2s_L-\sigma}} \leq  C(K_1,K_2)\left( \frac{1}{\lambda^{2s_L-\sigma}s^{L+1-\delta_0+\eta(1-\delta_0')}} \right)^{\frac{1}{2s_L-\sigma}} \\
\leq & C(K_1,K_2) \left( \frac{1}{\lambda^{2s_L-\sigma}s^{L+1-\delta_0+\eta(1-\delta_0')}} \right)^{\frac{2}{2s_L-\sigma}} = \frac{C(K_1,K_2)}{\lambda^2s^{1+\frac{\alpha}{2L}+O\left(\frac{\sigma-s_c+\eta}{L} \right)}}
\ea
$$
because $ \frac{1}{\lambda^{2s_L-\sigma}s^{L+1-\delta_0+\eta(1-\delta_0')}} \gg 1$ in the trapped regime. For the last non linear in \fref{pro:eq:expression low} one has using \fref{nonlin:bd estimation nonlineaire} and \fref{trap:bd w}:
$$
\ba{r c l}
& \parallel \tilde{NL} \parallel_{H^{\sigma}} \leq  C \parallel w \parallel_{H^{\sigma}} \parallel w \parallel_{H^{\frac d 2+\sigma-s_c}}^{p-1} \leq C(K_1) \parallel w \parallel_{H^{2s_L}}^{(p-1)\frac{\frac d 2 +\sigma-s_c-\sigma}{2s_L-\sigma}} \\
\leq & C(K_1,K_2) \left( \frac{1}{\lambda^{2s_L-s_c}s^{L+1-\delta_0+\eta(1-\delta_0')}} \right)^{\frac{2}{2s_L-\sigma}} \leq C(K_1,K_2)\frac{1}{\lambda^2s^{1+\frac{\alpha}{2L}+O\left(\frac{\sigma-s_c+\eta}{L} \right)}}.
\ea
$$
The three previous estimates imply that for the terms created by the cut in \fref{pro:eq:expression low} there holds the estimate (we recall that $\frac{\lambda^{\sigma-s_c}}{s^{\frac{\ell(\sigma-s_c)}{2\ell-\alpha}}}=1+O(s_0^{-\tilde{\eta}})$ from \fref{trap:eq:lambda}):
\be \la{pro:eq:low step2}
\left| \int \nabla^{\sigma}w_{\te{int}}.\nabla^{\sigma}(\tilde L+\tilde R+\tilde{NL}) \right| \leq  \frac{\sqrt{\mathcal E_{\sigma}}}{\lambda^{2(\sigma-s_c)+2}s^{\frac{(\sigma-s_c)\ell}{2\ell-\alpha}+1}} \frac{C(L,M,K_1,K_2)}{s^{\frac{\alpha}{2L}+O\left(\frac{\eta+\sigma-s_c}{L} \right)}}.
\ee

\noindent \textbf{step 3} Conclusion. We now come back to the first identity we derived \fref{pro:eq:expression low} and inject the bounds \fref{pro:eq:low step1} and \fref{pro:eq:low step2}, yielding:
$$
\ba{r c l}
\frac{d}{dt}\left\{ \frac{\mathcal{E}_{\sigma}}{\lambda^{2(\sigma-s_c)}} \right\} \leq \frac{\sqrt{\mathcal E_{\sigma}}}{\lambda^{2(\sigma-s_c)+2}s^{\frac{(\sigma-s_c)\ell}{2\ell-\alpha}+1}}\Bigl{[} \frac{C(L,M,K_1,K_2)}{s^{\frac{\alpha}{2L}+O\left(\frac{\eta+\sigma-s_c}{L} \right)}}+\frac{C(L,M,K_2)}{s^{-\frac{(\sigma-s_c)\alpha}{2\ell-\alpha}+(\frac{\alpha}{2}-\underset{0\leq n \leq n_0}{\text{sup}}\delta_n)-C(L)\eta}}\\
+\frac{C(L)}{s^{-\frac{(\sigma-s_c)\alpha}{2\ell-\alpha}+\frac{\alpha}{2}-C(L)\eta}}+\frac{C(L,M,K_1,K_2)}{s^{\frac{\alpha}{2L}+O\left( \frac{\eta+\sigma-s_c}{L}\right)}}\sum_{k=2}^p\left(\frac{\sqrt{\mathcal E_{\sigma}}}{s^{-\frac{\sigma-s_c}{2}}} \right)^{k-1}\Bigr{]}.
\ea
$$
As the constants never depends on $s_0$ or on $\eta$, as $L\gg 1$ is an arbitrary large integer, $0<\sigma-s_c\ll 1$, $\frac{\alpha}{2}-\underset{0\leq n \leq n_0}{\text{sup}}\delta_n>0$, we see that for $s_0$ sufficiently large and $\eta$ sufficiently small, the terms in the right hand side of the previous equation can be as small as we want, and \fref{pro:eq:mathcalEsigma} is obtained.

\end{proof}

\begin{proposition}[Lyapunov monotonicity for the low Sobolev norm of the remainder outside the blow up area] \la{pro:pr:lowsobowext}

Suppose all the constants involved in Proposition \ref{trap:pr:bootstrap} are fixed except $s_0$ and $\eta$. Then for $s_0$ large enough and $\eta$ small enough, for any solution $u$ that is trapped on $[s_0,s')$ there holds for $t\in[0,t(s'))$:
\be \la{pro:eq:lowsobowext}
\frac{d}{dt}\left[ \parallel w_{\te{ext}}  \parallel_{H^{\sigma}}^2 \right]\leq \frac{C(K_1,K_2)}{s^{1+\frac{\alpha}{2L}+O\left(\frac{\eta+\sigma-s_c}{L} \right)}\lambda^2}\parallel w_{\te{ext}}  \parallel_{H^{\sigma}}.
\ee

\end{proposition}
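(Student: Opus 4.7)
The plan is to perform an energy estimate in $H^\sigma(\Omega)$ directly on the equation \fref{trap:eq:evolution wext} governing $w_{\te{ext}}$, using that $w_{\te{ext}}$ vanishes on $\partial\Omega$ (so integrations by parts are legitimate), and that all the source terms are either dissipative, supported in the bounded annulus $\{3\leq |x|\leq 6\}$ where $\nabla\chi_3$ and $\Delta\chi_3$ live, or nonlinear with a small $L^\infty$ factor. Computing
\[
\tfrac12 \tfrac{d}{dt}\|w_{\te{ext}}\|_{H^\sigma}^2 = \langle w_{\te{ext}},\partial_t w_{\te{ext}}\rangle_{H^\sigma}
= \int (I-\Delta)^\sigma w_{\te{ext}}\cdot\bigl(\Delta w_{\te{ext}} +\Delta\chi_3 w+2\nabla\chi_3.\nabla w+(1-\chi_3)w^p\bigr),
\]
the $\Delta w_{\te{ext}}$ contribution produces after integration by parts the nonpositive dissipative term $-\|\nabla w_{\te{ext}}\|_{H^\sigma}^2$, which we simply drop. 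It then suffices to bound the three remaining scalar products by Cauchy-Schwarz in the form $\|w_{\te{ext}}\|_{H^\sigma}\cdot(\text{small})$, where the small factor must match the right hand side of \fref{pro:eq:lowsobowext}.

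The key observation is that the two commutator terms $\Delta\chi_3\, w$ and $\nabla\chi_3.\nabla w$ are supported in the fixed annulus $\{3\leq |x|\leq 6\}$, so by interpolation between the two bootstrap bounds \fref{trap:eq:bounds varepsilon} on $\|w\|_{H^\sigma}$ and $\|w\|_{H^{2s_L}}$ (via \fref{trap:bd w}), one has
\[
\|\Delta\chi_3\, w+2\nabla\chi_3.\nabla w\|_{H^\sigma}\lesssim \|w\|_{H^{\sigma+1}(\Omega)}\lesssim \|w\|_{H^\sigma}^{1-\theta}\|w\|_{H^{2s_L}}^\theta, \qquad \theta=\frac{1}{2s_L-\sigma},
\]
which when the bootstrap bounds are injected and the asymptotic $\lambda\sim s^{-\ell/(2\ell-\alpha)}$ from \fref{trap:eq:lambda} is used yields exactly the factor $\lambda^{-2}s^{-1-\alpha/(2L)+O((\eta+\sigma-s_c)/L)}$ demanded by \fref{pro:eq:lowsobowext}; this is the same calculation already performed in Step 2 of Proposition \ref{pro:pr:mathcalEsigma}. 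For the nonlinear term, we apply the Moser/product estimate in $H^\sigma$ and the Sobolev embedding $H^{d/2+\sigma-s_c}\hookrightarrow L^\infty$ to get
\[
\|(1-\chi_3)w^p\|_{H^\sigma}\lesssim \|w\|_{H^\sigma}\,\|w\|_{H^{d/2+\sigma-s_c}}^{p-1}\lesssim K_1\cdot\|w\|_{H^\sigma}^{1-\theta'(p-1)}\|w\|_{H^{2s_L}}^{\theta'(p-1)},
\]
with $\theta'=(d/2-s_c)/(2s_L-\sigma)$, which by the same interpolation argument produces a factor of the same order.

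The main obstacle, though a mild one, is handling the coupling between the interior and exterior pieces: in the annulus $\{3\leq|x|\leq 6\}$ the function $w = w_{\te{int}}+w_{\te{ext}}$ mixes both components, so the commutator term cannot be expressed in terms of $w_{\te{ext}}$ alone. This forces us to invoke the global $H^{2s_L}$ bound on $w$ itself (coming from coercivity \fref{annexe:lem:coercivite norme adaptee} applied to $\varepsilon$ combined with the $w_{\te{ext}}$ bootstrap) rather than either piece separately, so that one loses a derivative only by interpolating to $H^{\sigma+1}$, paying the polynomial-in-$s$ price accounted for in \fref{pro:eq:lowsobowext}. Putting the three bounds together, dividing by $\|w_{\te{ext}}\|_{H^\sigma}$ (or rather bounding $2\|w_{\te{ext}}\|_{H^\sigma}\cdot(\text{RHS})$), and choosing $s_0$ large and $\eta$ small so the $O((\eta+\sigma-s_c)/L)$ corrections remain strictly smaller than $\alpha/(2L)$ yields \fref{pro:eq:lowsobowext}.
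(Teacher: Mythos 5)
Your proposal is correct and follows essentially the same energy-estimate strategy as the paper: Cauchy--Schwarz in $H^{\sigma}(\Omega)$ on the evolution equation \fref{trap:eq:evolution wext}, the commutator terms $\Delta\chi_3 w+2\nabla\chi_3.\nabla w$ bounded through $\para w\para_{H^{\sigma+1}(\Omega)}$ by interpolating between the bootstrap bounds \fref{trap:bd w}, and the nonlinear term handled by the product estimate \fref{nonlin:bd estimation nonlineaire} together with interpolation, all of which reproduces the factor $\lambda^{-2}s^{-1-\frac{\alpha}{2L}+O((\eta+\sigma-s_c)/L)}$. The single point where you deviate is the linear term: you integrate by parts to extract a dissipative term $-\para\nabla w_{\text{ext}}\para_{H^{\sigma}}^2$ and drop it, whereas the paper does not use dissipation here at all and simply bounds $\para\Delta w_{\text{ext}}\para_{H^{\sigma}(\Omega)}\leq \para w_{\text{ext}}\para_{H^{\sigma+2}(\Omega)}$ and interpolates with the $H^{2s_L}$ bootstrap bound, which already yields a contribution of the admissible size. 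The paper's variant is preferable at this spot: on a bounded domain with non-integer $\sigma$, making sense of the fractional inner product $\langle w_{\text{ext}},\Delta w_{\text{ext}}\rangle_{H^{\sigma}}$ and of the sign of the resulting term requires either spectral Dirichlet powers (and then an equivalence with the standard $H^{\sigma}(\Omega)$ norm, which for $\sigma>s_c\gg 1$ relies on iterated boundary compatibility conditions of the type \fref{trap:eq:compatibilite}) or a careful treatment of boundary terms; since the gain is not needed, the interpolation route avoids this technicality entirely. Your remark about the annulus $\{3\leq|x|\leq 6\}$ mixing $w_{\text{int}}$ and $w_{\text{ext}}$, and the resulting need to use the global bound on $w$ rather than on either piece, matches exactly what the paper does.
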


\begin{proof}

From the evolution equation of $w_{\te{ext}}$ \fref{trap:eq:evolution wext} we deduce:
\be \la{lowext:expression}
\frac{d}{dt} \parallel w_{\te{ext}}  \parallel_{H^{\sigma}(\Omega)}^2 \leq C \para w_{\te{ext}} \para_{H^{\sigma}(\Omega)} \para \Delta w_{\te{ext}}+\Delta \chi_3w+2\nabla \chi_3.\nabla w+(1-\chi_3)w^p\para_{H^{\sigma}(\Omega)} .
\ee
For the linear terms, using interpolation and the bounds \fref{trap:eq:bounds varepsilon} and \fref{trap:bd w} one finds:
$$
\ba{r c l}
& \para \Delta w_{\te{ext}}+\Delta \chi_3w+2\nabla \chi_3.\nabla w \para_{H^{\sigma}(\Omega)} \leq C \para w_{\te{ext}} \para_{H^{\sigma+2}(\Omega)}+C\para w \para_{H^{\sigma+1}(\Omega)}\\
\leq & C \parallel  w_{\te{ext}} \parallel_{H^{\sigma}(\Omega)}^{1-\frac{2}{2s_L-\sigma}} \parallel  w_{\te{ext}} \parallel_{H^{2s_L}(\Omega)}^{\frac{2}{2s_L-\sigma}}+C \parallel  w \parallel_{H^{\sigma}(\Omega)}^{1-\frac{1}{2s_L-\sigma}} \parallel  w \parallel_{H^{2s_L}(\Omega)}^{\frac{1}{2s_L-\sigma}} \\
\leq & C(K_1,K_2)\left[\left( \frac{1}{\lambda^{2s_L-s_c} s^{L+1-\delta_0+\eta(1-\delta_0')}} \right)^{\frac{1}{2s_L-\sigma}}+\left( \frac{1}{\lambda^{2s_L-s_c} s^{L+1-\delta_0+\eta(1-\delta_0')}} \right)^{\frac{2}{2s_L-\sigma}}\right] \\
\leq & C(K_1,K_2)\left( \frac{1}{\lambda^{2s_L-s_c} s^{L+1-\delta_0+\eta(1-\delta_0')}} \right)^{\frac{2}{2s_L-\sigma}}\leq C(K_1,K_2)\frac{1}{\lambda^2s^{1+\frac{\alpha}{2L}+O\left(\frac{\eta+\sigma-s_c}{L} \right)}}
\ea
$$
because $\frac{1}{\lambda^{2s_L-s_c} s^{L+1-\delta_0+\eta(1-\delta_0')}} \gg 1 $ in the trapped regime from \fref{trap:eq:lambda}. For the nonlinear term, using, using \fref{nonlin:bd estimation nonlineaire}, interpolation and then the bootstrap bound \fref{trap:bd w}:
$$
\ba{r c l}
& \parallel (1-\chi_3)w^p \parallel_{H^{\sigma}} \leq   C\parallel w^p \parallel_{H^{\sigma}(\Omega)} \leq  C \parallel w \parallel_{H^{\sigma}(\Omega)} \parallel w \parallel_{H^{\frac{d}{2}+\sigma-s_c}(\Omega)}^{p-1} \\
\leq &  C(K_1) \parallel w \parallel_{H^{2s_L}(\Omega)}^{(p-1)\frac{\frac d 2 +\sigma-s_c-\sigma}{2s_L-\sigma}} \leq   C(K_1) \parallel w \parallel_{H^{2s_L}(\Omega)}^{\frac{2}{2s_L-\sigma}}  \leq  \frac{C(K_1,K_2)}{s^{1+\frac{\alpha}{2L}+O\left(\frac{\eta+\sigma-s_c}{L} \right)}\lambda^2} \\
\ea
$$
Injecting the two above estimates in \fref{lowext:expression} yields the desired identity \fref{pro:eq:lowsobowext}.

\end{proof}

%%%%%%%%%%%%%%%%%%%%%%%%%%%%%%%%%%%%%%%%%%%%
%%%%%%%%%%%%%%%%%%%%%%%%%%%%%%%%%%%%%%%%%%%%

\subsection{Lyapunov monotonicity for high regularity norms of the remainder}

We derive Lyapunov type monotonicity formulas for the high regularity norms of the remainder inside and outside the blow-up zone, $\mathcal E_{2s_L}$ and $\parallel w_{\text{ext}}\parallel_{H^{2s_L}}$, in Propositions \ref{pro:pr:mathcalE2sL} and \ref{pro:pr:highsobowext}. In our general strategy, we have to find a way to say that $w$ is of smaller order compared to the excitation $\chi \tau_z(\tilde \alpha_{b,\frac 1 \lambda})$ and does not affect the blow up dynamics induced by this latter. This is why we study the quantity $\mathcal E_{2s_L}$: it controls the usual Sobolev norm $H^{2s_L}$ and any local norm of lower order derivative which is useful for estimates, and is adapted to the linear dynamics as it undergoes dissipation. Finally, for this norm one sees that the error $\tilde{\psi}_b$ is of smaller order compared to the main dynamics of $\chi \tau_z(\tilde{Q}_{b,\frac 1 \lambda})$ (this is the $\eta(1-\delta_0')$ gain in \fref{cons:eq:bound globale tildepsib L}).\\

\begin{proposition}[Lyapunov monotonicity for the high regularity adapted Sobolev norm of the remainder inside the blow up area] \label{pro:pr:mathcalE2sL}

Suppose all the constants of Proposition \ref{trap:pr:bootstrap} are fixed, except $s_0$ and $\eta$. Then there exists a constant $\delta>0$, such that for any constant $N\gg 1$, for $s_0$ large enough and $\eta$ small enough, for any solution $u$ that is trapped on $[s_0,s')$ there holds for $0\leq t<t(s')$:
\be \la{pro:eq:mathcalE2sL}
\ba{r c l}
&\frac{d}{dt}  \left\{ \frac{\mathcal E_{2s_L}}{\lambda^{2(2s-L-s_c)}}  +O_{(L,M)}\left(\frac{1}{\lambda^{2(2s_L-s_c)}s^{L+1-\delta_0+\eta(1-\delta_0')}}(\sqrt{\mathcal E_{2s_L}}+\frac{1}{s^{L+1-\delta_0+\eta(1-\delta_0')}}) \right)  \right\} \\
\leq& \frac{1}{\lambda^{2(2s_L-s_c)+2}s}\Bigl{[} \frac{C(L,M)}{s^{2L+2-2\delta_0+2(1-\delta_0')}} + \frac{C(L,M)\sqrt{\mathcal E_{2s_L}}}{s^{L+1-\delta_0+\eta(1-\delta_0')}}+\frac{C(L,M)}{N^{2\delta}}\mathcal E_{2s_L} \\
&+\mathcal E_{2s_L}\sum_2^p \left(\frac{\sqrt{\mathcal E_{\sigma}}^{1+O\left(\frac{1}{L} \right)}}{s^{-\frac{\sigma-s_c}{2}}} \right)^{k-1}\frac{C(L,M,K_1,K_2)}{s^{\frac{\alpha}{L}+O\left(\frac{\eta+\sigma-s_c}{L} \right)}} +\frac{C(L,M,K_1,K_2)\sqrt{\mathcal E_{2s_L}}}{s^{L+1-\delta_0+\eta(1-\delta_0')+\frac{\alpha}{2L}+O\left(\frac{\sigma-s_c+\eta}{L} \right)}}\Bigr{]},
\ea
\ee
where $O_{L,M}(f)$ denotes a function depending on time such that $|O_{L,M}(f)(t)|\leq C(L,M)f$ for a constant $C(L,M)>0$, and where $\mathcal E_{\sigma}$ and $\mathcal E_{2s_L}$ are defined in \fref{trap:eq:def mathcalEsigma} and \fref{trap:eq:def mathcalE2sL}.

\end{proposition}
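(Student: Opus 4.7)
The plan is to differentiate $\mathcal{E}_{2s_L}/\lambda^{2(2s_L-s_c)}$ in time and to estimate the contributions of every term appearing in the evolution equation \fref{trap:eq:evolution varepsilon} for $\varepsilon$, along the same general lines as Proposition \ref{pro:pr:mathcalEsigma} but now using the operator $H^{s_L}$ rather than $\nabla^{\sigma}$, which is much better adapted to the linearized flow. Passing to renormalized time via $ds/dt = 1/\lambda^2$, I would write
\[
\frac{d}{dt}\frac{\mathcal{E}_{2s_L}}{\lambda^{2(2s_L-s_c)}} = \frac{1}{\lambda^{2(2s_L-s_c)+2}}\Bigl[\,2\!\int H^{s_L}\varepsilon\cdot H^{s_L}\partial_s\varepsilon\;-\;2(2s_L-s_c)\tfrac{\lambda_s}{\lambda}\,\mathcal{E}_{2s_L}\,\Bigr],
\]
and substitute $\partial_s\varepsilon$ from \fref{trap:eq:evolution varepsilon}. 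The gain comes from the linear piece $-H\varepsilon$: the term $-2\int H^{s_L}\varepsilon\cdot H^{s_L+1}\varepsilon$ is nonpositive thanks to \fref{cons:eq:positivite H2}, and in fact, using the Rellich/Hardy estimates of Section \ref{sec:hardy} together with the coercivity of Lemma \ref{annexe:lem:coercivite norme adaptee}, it dominates a weighted version of $\mathcal E_{2s_L}$ on $|y|\geq N$; this dissipation is what produces the gain $\mathcal E_{2s_L}/N^{2\delta}$ in \fref{pro:eq:mathcalE2sL} and what absorbs several perturbative terms below.

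Next I would estimate each non-dissipative term in turn. For the modulation $-\chi\,\tilde{\mathrm{Mod}}(s)$, the orthogonality relations \fref{bootstrap:eq:orthogonalite PhiM} kill the contribution of each $T^{(n,k)}_i$ with $i<L_n$ after pairing with $H^{s_L}\Phi_M^{(n,k)}$-type terms, and the standard bound \fref{trap:eq:modulation leqL-1} then yields the size $\lesssim s^{-L-3}+\sqrt{\mathcal E_{2s_L}}/s$; the residual contribution from $i=L_n$, where \fref{trap:eq:modulation L} is too weak, is handled by the improved identity \fref{pro:eq:modulation Ln} of Lemma \ref{pro:lem:modulation bLn}. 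The exact time derivative that appears in the right-hand side of \fref{pro:eq:modulation Ln} is precisely what I would transfer to the left-hand side of the Lyapunov formula, producing the correction $O_{L,M}(\cdots)$ inside the braces in \fref{pro:eq:mathcalE2sL}; the residual $(L,M)$-dependent errors from Lemma \ref{pro:lem:modulation bLn} account for the last displayed term. The error $-\chi\tilde{\psi}_b$ is estimated by Cauchy--Schwarz with $\sqrt{\mathcal E_{2s_L}}$ using the sharp global bound \fref{cons:eq:bound globale tildepsib L}, whose $\eta(1-\delta_0')$-gain is exactly what yields the size $s^{-2L-2+2\delta_0-2(1-\delta_0')\eta}$ appearing on the right-hand side.

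For the nonlinear term $\mathrm{NL}(\varepsilon)=\sum_{k=2}^p C^p_k\tilde Q_b^{p-k}\varepsilon^k$, I would bound $\|H^{s_L}\mathrm{NL}(\varepsilon)\|_{L^2}$ by distributing $2s_L$ derivatives using fractional Leibniz estimates, absorbing the potential $\tilde Q_b^{p-k}=O(\langle y\rangle^{-2(p-k)/(p-1)})$ into weights via Hardy's inequality (Lemma \ref{annexe:lem:hardy frac a poids}), and interpolating between $\mathcal E_\sigma$ and $\mathcal E_{2s_L}$ exactly as in Step~1 of Proposition \ref{pro:pr:mathcalEsigma}; this is where the factor $\sum_{k=2}^p(\sqrt{\mathcal E_\sigma}^{1+O(1/L)}s^{(\sigma-s_c)/2})^{k-1}$ is generated. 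The small linear term $L(\varepsilon)=-p(Q^{p-1}-\tilde Q_b^{p-1})\varepsilon$ is treated identically but with the improved decay $O(|y|^{-2-\alpha})$, so it contributes only to the $s^{-\alpha/L+\cdots}$ type gain. The scaling/translation terms $(\lambda_s/\lambda)\Lambda\varepsilon$ and $(z_s/\lambda)\cdot\nabla\varepsilon$ yield, after integration by parts in the angular/radial variables, a boundary term that cancels with $-2(2s_L-s_c)(\lambda_s/\lambda)\mathcal E_{2s_L}$ up to an order-$1/s$ remainder once $\lambda_s/\lambda=-b_1^{(0,1)}+O(\sqrt{\mathcal E_{2s_L}}+s^{-L-3})$ from \fref{trap:eq:modulation leqL-1} is used; the leftover commutators $[H^{s_L},\Lambda]$ and $[H^{s_L},\nabla]$ are operators of order $2s_L$ whose contributions are bounded by splitting space at scale $N$ and using dissipation on $|y|\geq N$ while invoking Lemma \ref{annexe:lem:coercivite norme adaptee} on $|y|\leq N$. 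Finally, the cutoff-generated terms $\lambda^2[\tau_{-z}(\tilde L+\tilde R+\tilde{NL})]_\lambda$, supported in $\{1/(2\lambda)\leq|y|\leq 7/\lambda\}$, are negligible at all orders of derivatives by interpolation of $\|w\|_{H^{2s_L}}$ and $\|w\|_{H^{\sigma}}$ and the bounds \fref{trap:bd w}, producing the $C(L,M,K_1,K_2)\sqrt{\mathcal E_{2s_L}}/s^{L+1-\delta_0+\cdots+\alpha/(2L)+\cdots}$ contribution.

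The main obstacle will be the commutator $[H^{s_L},\Lambda]$ arising from the $(\lambda_s/\lambda)\Lambda\varepsilon$ term: since $\Lambda$ is a first-order operator with unbounded coefficient and $H^{s_L}$ has order $2s_L$, naive integration by parts loses a full factor of $|y|$ and cannot be closed by $\mathcal E_{2s_L}$ alone. The resolution is to exploit the algebraic identity $[H,\Lambda]=2H+(\text{lower order potential})$ from \fref{cons:eq:commutateur}, iterate it $s_L$ times to rewrite $H^{s_L}\Lambda$ as $\Lambda H^{s_L}+2s_LH^{s_L}+R_{s_L}$ with $R_{s_L}$ controlled via the decay \fref{cons:eq:asymptotique V} of $V$, and then combine the $2s_L\,H^{s_L}\varepsilon$ piece with the correction $-2(2s_L-s_c)(\lambda_s/\lambda)\mathcal E_{2s_L}$; the remaining $\Lambda H^{s_L}\varepsilon$ contribution integrates by parts into a lower-order term that is bounded by the weighted coercivity of Lemma \ref{annexe:lem:coercivite norme adaptee} and the separation-of-scales argument at radius $N$. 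Once this commutator structure is in place and all exact time derivatives have been transferred to the left-hand side, assembling the estimates above yields the Lyapunov formula \fref{pro:eq:mathcalE2sL} for $s_0$ large and $\eta$ small.
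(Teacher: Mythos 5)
Your overall architecture matches the paper's: energy identity for $\mathcal E_{2s_L}$, dissipation coming from the fact that $-pQ^{p-1}$ sits strictly above the Hardy threshold (\fref{cons:eq:positivite H}), a splitting at radius $N$ combined with the coercivity of Lemma \ref{annexe:lem:coercivite norme adaptee} to produce the $N^{-2\delta}$ gain, the nonlinear term via Leibniz/Hardy/interpolation between $\mathcal E_\sigma$ and $\mathcal E_{2s_L}$, the error via \fref{cons:eq:bound globale tildepsib L}, and an integration by parts in time for the $i=L_n$ modulation term. Your treatment of the scaling term through the iterated commutator \fref{cons:eq:commutateur} is the legitimate renormalized-variables analogue of what the paper does (the paper instead works with $H^{s_L}_{z,1/\lambda}w_{\rm int}$ in original variables, so the same difficulty appears as the harmless term $\frac{d}{dt}(H^{s_L}_{z,1/\lambda})w_{\rm int}$, estimated brutally with the decay of $Q^{p-2}\Lambda Q$, $Q^{p-2}\nabla Q$). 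However, there are two concrete gaps. First, the exterior cut term $\tilde L=-\Delta\chi_3 w-2\nabla\chi_3\cdot\nabla w+\dots$ is \emph{not} ``negligible at all orders of derivatives by interpolation of $\|w\|_{H^{2s_L}}$ and $\|w\|_{H^\sigma}$'': $H^{s_L}_{z,1/\lambda}\tilde L$ carries $2s_L+1$ derivatives of $w$, which no interpolation between the two bootstrap norms controls. The paper closes this by one integration by parts and an $\epsilon$-Young absorption of $\epsilon\int|\nabla H^{s_L}\varepsilon|^2$ into the full gradient dissipation (see \fref{pro:eq:high tildeL}); as written, your step fails, and your dissipation discussion only invokes the weighted $|y|^{-2}$ piece on $|y|\geq N$, not the gradient piece that is actually needed here.

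Second, your handling of the $b^{(n,k)}_{L_n,s}$ modulation term deviates from the paper and is under-specified at precisely the delicate point. The paper does not use Lemma \ref{pro:lem:modulation bLn} inside this proof (that lemma is used later, in the exit analysis); instead it constructs the radiation $\xi$ of \fref{pro:def xi}, writes $H^{s_L}(\tilde{\rm Mod}(s))=\partial_s\xi+R_1+R_2+R_3$, and runs the energy estimate on the modified functional $\int(H^{s_L}_{z,1/\lambda}w_{\rm int}+\lambda^{-2s_L}\tau_z(\xi_{1/\lambda}))^2$ — this is where the $O_{L,M}$ correction in the braces of \fref{pro:eq:mathcalE2sL} comes from. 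Your alternative, transferring the exact derivative of Lemma \ref{pro:lem:modulation bLn} to the left-hand side, is an integration by parts in time on the coefficient rather than on the profile: it leaves over a term $G\cdot\frac{d}{ds}\langle H^{s_L}\varepsilon,H^{s_L}(\chi_{B_1}(T^{(n,k)}_{L_n}+\sum_j\partial S_j/\partial b^{(n,k)}_{L_n}))\rangle$, in which $\partial_s\varepsilon$ (hence the same problematic modulation coefficient) reappears and must be re-estimated, and it does not address the $\partial S_j/\partial b^{(n,k)}_{L_n}$ contributions at all. Those are exactly the pieces for which a plain Cauchy–Schwarz pairing is insufficient: the paper isolates them in $R_2$, uses the explicit structure of $H\sum_j\bar S_j$ (in terms of $\Lambda T^{(n,k)}_{L_n}$, etc.) to get the weighted bound \fref{pro:bd R2}, and pairs against $H^{s_L-1}\varepsilon/(1+|y|^{2+\delta})$ via the weighted coercivity. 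Without an argument of this type, your scheme does not visibly close for the values of $\delta_n$ close to $1$, which is the regime that motivated the integration by parts in the first place.
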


\begin{remark}

\fref{pro:eq:mathcalE2sL} has to be understood the following way. The $O()$ in the time derivative is a corrective term coming from the refinement of the last modulation equations, see \fref{trap:eq:modulation L} and \fref{pro:eq:modulation Ln}, it is of smaller order for our purpose so one can "forget" it. In the right hand side of \fref{pro:eq:mathcalE2sL}, the first two terms come from the error $\tilde{\psi}_b$ made in the approximate dynamics. The third one results from the competition of the dissipative linear dynamics and the lower order linear terms that are of smaller order (the motion of the potential in the operator $H_{z,\frac{1}{\lambda}}$ involved in $\mathcal E_{2s_L}$, and the difference between the potentials $\tau_z(\tilde{Q}_{b,\frac{1}{\lambda}})^{p-1}$ and $\tau_z(Q_{\frac{1}{\lambda}})^{p-1}$). The penultimate represents the effect of the main nonlinear term, and shows that one needs $\mathcal E_{\sigma}$ smaller than $s^{s_c-\sigma}$ to control the energy transfer from low to high frequencies. The last one results from the cut of $w$ at the border of the blow up zone.

\end{remark}

\begin{proof}[Proof of Proposition \ref{pro:pr:mathcalE2sL}]

From \fref{trap:eq:evolution varepsilon} one has the identity:
\be \la{pro:eq:high expression}
\ba{l l l l}
&\frac{d}{dt}\left( \frac{\mathcal{E}_{2s_L}}{\lambda^{2(2s_L-s_c)}} \right) = \frac{d}{dt}\left( \int |H_{z,\frac{1}{\lambda}}^{s_L}w_{\te{int}}|^2 \right) \\
=& -2 \int H_{z,\frac{1}{\lambda}}^{s_L}w_{\te{int}}H^{s_L+1} _{z,\frac{1}{\lambda}}w_{\te{int}}+ \int H_{z,\frac{1}{\lambda}}^{s_L}w_{\te{int}}H^{s_L}_{z,\frac 1 \lambda}( \frac{1}{\lambda^2}\chi \tau_z(-\tilde{\text{Mod}}(t)_{\frac{1}{\lambda}}))  \\
&+2 \int H_{z,\frac{1}{\lambda}}^{s_L}w_{\te{int}}\left[ H^{s_L} _{z,\frac{1}{\lambda}}[\frac{1}{\lambda^2}\chi \tau_z(-\tilde{\psi_b}_{\frac{1}{\lambda}})+\text{NL}(w_{\te{int}})+L(w_{\te{int}})]+\frac{d}{dt}(H_{z,\frac{1}{\lambda}}^{s_L})w_{\te{int}}\right]\\
&+2\int H_{z,\frac 1 \lambda}^{s_L}w_{\te{int}}H_{z,\frac 1 \lambda}(\tilde L +\tilde{NL}+\tilde R).
\ea
\ee
The proof is organized as follows. For the terms appearing in this identity: for some (those on the second line) we find direct upper bounds (step 1), then we integrate by part in time some modulation terms that are problematic to treat the second term in the right hand side (step 2), and eventually we prove that the terms created by the cut of the solitary wave (the last line) are harmless and use some dissipation property at the linear level (produced by the first term in the right hand side) to improve the result (step 3). Throughout the proof, the estimates are performed on $\mathbb R^d$ as $w_{\text{int}}$ has compact support inside $\Omega$, and we omit it in the notations.\\

\noindent \textbf{step 1} Brute force upper bounds. We claim that the non linear term, the error term, the small linear term and the term involving the time derivative of the linearized operator in \fref{pro:eq:high expression} can be directly upper bounded, yielding:
\be \la{pro:eq:high direct}
\ba{r c l}
& \parallel  H^{s_L} _{z,\frac{1}{\lambda}}[\text{NL}(w_{\te{int}})-\frac{1}{\lambda^2}\chi\tau_z(\tilde{\psi}_{b,\frac{1}{\lambda}})+L(w_{\te{int}})]+\frac{d}{dt}(H^{s_L}_{z,\frac 1 \lambda})w_{\te{int}} \parallel_{L^2} \\
\leq & \frac{1}{\lambda^{(2s_L-s_c)+2}s} \Bigl{[}  \sqrt{\mathcal E_{2s_L}}\sum_2^{p} \left( \frac{\sqrt{\mathcal E_{\sigma}}}{s^{-\frac{\sigma-s_c}{2}}} \right)^{k-1}\frac{C(L,M,K_1,K_2)}{s^{\frac{\alpha}{L}+O\left(\frac{\eta+\sigma-s_c+L^{-1}}{L} \right)}} + \frac{C(L)}{s^{L+1-\delta_0+\eta(1-\delta_0)'}} \\
&+C(L,M)\left( \int \frac{|H^{s_L}\varepsilon |^2}{1+|y|^{2\delta}}  \right)^{\frac 1 2} \Bigr{]}
\ea
\ee
for some constant $\delta>0$. We now analyse these four terms separately.

\noindent - \emph{The error term}. We decompose between the main terms and the terms created by the cut. The cut induced by $\tilde \chi:=\chi (\lambda y+z)$ only sees the terms $b_1^{(0,1)}\Lambda Q+b_1^{(1,\cdot)}.\nabla Q $ because all the other terms in the expression \fref{cons:eq:def tildepsib} of $\tilde{\psi}_b$ have support inside $\mathcal B^d(2B_1)$, and that $|z|\ll 1$ \fref{trap:eq:bound z} and $B_1\ll \frac{1}{\lambda}$ from \fref{trap:eq:lambda}. For the main term we use the estimate \fref{cons:eq:bound globale tildepsib L} and for the second the bound on the parameters \fref{trap:bd bnki} and the asymptotics \fref{cons:eq:asymptotique T0n} and \fref{cons:eq:asymptotique Q} of $\Lambda Q$ and $\partial Q$:
\be \la{pro:eq:high error}
\ba{r c l}
& \parallel  H^{s_L} _{z,\frac{1}{\lambda}}(\frac{1}{\lambda^2}\chi \tau_z\tilde{\psi}_{b,\frac{1}{\lambda}}) \parallel_{L^2} \\
 \leq & C\parallel  H^{s_L} _{z,\frac{1}{\lambda}}(\frac{1}{\lambda^2}\tau_z\tilde{\psi}_{b,\frac{1}{\lambda}}) \parallel_{L^2}+ C\parallel  H^{s_L} _{z,\frac{1}{\lambda}}(\frac{1}{\lambda^2}(1-\chi)\tau_z\tilde{\psi}_{b,\frac{1}{\lambda}}) \parallel_{L^2}.  \\
\leq&  \frac{\para  H^{s_L} \tilde{\psi}_b \para_{L^2}}{\lambda^{2s_L-s_c}} + \frac{1}{\lambda^{2(2s_L-s_c)+4}}\int |H^{s_L}[(1-\tilde{\chi})(b_1^{(0,1)}\Lambda Q+b_1^{(1,\cdot)}.\nabla Q ]|^2 \\
\leq & \frac{C(L)}{\lambda^{2s_L-s_c+2}s^{L+2-\delta_0+\eta(1-\delta_0')}}+\frac{C\lambda^{2(\alpha-1)}}{s}+\frac{C}{s^{\frac{\alpha+1}{2}}} \leq  \frac{C(L)}{\lambda^{2s_L-s_c+2}s^{L+2-\delta_0+\eta(1-\delta_0')}}
\ea
\ee
since $\alpha>1$, hence $\frac{\lambda^{2(\alpha-1)}}{s}+\frac{1}{s^{\frac{\alpha+1}{2}}}\ll 1$, and since $\frac{1}{\lambda^{2s_L-s_c+2}s^{L+2-\delta_0+\eta(1-\delta_0')}}\gg 1$ in the trapped regime from \fref{trap:eq:lambda}.

\noindent - \emph{The non linear term:} We begin by coming back to renormalized variables:
\be \la{pro:eq:high nonlinear expression}
\ba{r c l}
\parallel H_{z,\frac{1}{\lambda}}^{s_L} (\text{NL}(w_{\te{int}}))\parallel_{L^2} & \leq & \frac{\parallel H^{s_L}(\text{NL}(\varepsilon)) \parallel_{L^2}}{\lambda^{(2s_L-s_c)+2}}\\
&\leq & C\sum_{k=2}^p \frac{\parallel H^{s_L}(\tilde{Q}_b^{p-k} \varepsilon^k) \parallel_{L^2}}{\lambda^{(2s_L-s_c)+2}}
\ea
\ee
because $\text{NL}(\varepsilon)=\sum_{k=2}^p C^p_k \tilde{Q}_b^{p-k}\varepsilon^k$. We fix $k$ with $2\leq k \leq p$ and study the corresponding term in the above sul. One has $H=-\Delta-pQ^{p-1}$, and $Q$ is a smooth profile satisfying the estimate $Q= O((1+|y|)^{-\frac{2}{p-1}})$ which propagates to its derivatives from \fref{cons:eq:asymptotique Q}. Similarly, from \fref{trap:bd bnki} and \fref{cons:eq:def Qbtilde} one has: $\tilde{Q}_b= O((1+|y|)^{-\frac{2}{p-1}})$ and it propagates for the derivatives. The Leibniz rule for derivation then yields:
\be \la{pro:eq:high nonlinear1}
\ba{r c l}
\parallel H^{s_L}(\tilde{Q}_b^{p-k} \varepsilon^k) \parallel_{L^2}^2 & \leq & C(L) \underset{\mu \in \mathbb N^d, \ 0\leq |\mu|\leq 2s_L}{\sum} \int \frac{|\partial^{\mu}(\varepsilon^k)|^2}{1+|y|^{\frac{4(p-k)}{p-1}+4s_L-2|\mu|}} \\
&\leq & C(L) \underset{(\mu_i)_{1\leq i \leq k} \in \mathbb N^{kd}, \ \sum_i^k |\mu_i|\leq 2s_L}{\sum} \int \frac{\prod_1^k |\partial^{\mu_i}\varepsilon|^2}{1+|y|^{\frac{4(p-k)}{p-1}+4s_L-2\sum_1^k |\mu_i|}}.
\ea
\ee
We fix $\mu_i\in \mathbb N^{kd}$ with $\sum |\mu_i|_1\leq 2s_L$ and focus on the corresponding term in the above equation. Without loss of generality we order by increasing length: $|\mu_1|\leq...\leq |\mu_k|$. We now distinguish between two cases.\\

\noindent \emph{Case 1:} if $|\mu_k|+\frac{2(p-k)}{p-1}+2s_L-\sum_1^k |\mu_i|\leq 2s_L$. As one has $|\mu_k|_1+\frac{(p-k)}{p-1}+2s_L-\sum_1^k |\mu_i|_1\geq \sigma$ because the $|\mu_i|_1$'s are increasing and $\sum |\mu_i|_1\leq 2s_L$, using \fref{an:eq:bound interpolated hardy}:
$$
\int \frac{ |\partial^{\mu_k} \varepsilon|^2}{1+|y|^{\frac{4(p-k)}{p-1}+4s_L-2\sum_1^k |\mu_i|_1}}\leq C(M) \mathcal E_{\sigma}^{\frac{\sum |\mu_i|-|\mu_k|_1-\frac{2(p-k)}{p-1}}{2s_L-\sigma}} \mathcal E_{2s_L}^{\frac{2s_L-\sigma-\sum |\mu_i|+|\mu_k|_1+\frac{2(p-k)}{p-1}}{2s_L-\sigma}}.
$$
As the coefficients are in increasing order and $L$ is arbitrarily very large, for $1\leq j < k$ there holds $|\mu_i|+\frac{d}{2}\leq 2s_L$. We then recall the  $L^{\infty}$ estimate \fref{an:eq:bound Linfty2}:
$$
\parallel \partial^{\mu_i} \varepsilon \parallel_{L^{\infty}} \leq \sqrt{\mathcal E_{\sigma}}^{\frac{2s_L-|\mu_i|_1-\frac{d}{2}}{2s_L-\sigma}+O\left(\frac 1 L^2 \right)}    \sqrt{\mathcal E_{2s_L}}^{\frac{|\mu_i|_1+\frac{d}{2}-\sigma}{2s_L-\sigma}+O\left(\frac 1 {L^2} \right)}.
$$
The two previous estimates imply that:
\be \label{pro:eq:high case1}
\ba{r c l}
& \int \frac{\prod_1^k |\partial^{\mu_i}\varepsilon|^2}{1+|y|^{\frac{4(p-k)}{p-1}+4s_L-2\sum_1^k |\mu_i|_1}} \ \leq \ \int \frac{ |\partial^{\mu_k} \varepsilon|^2}{1+|y|^{\frac{4(p-k)}{p-1}+4s_L-2\sum_1^k |\mu_i|_1}} \prod_1^{k-1}\parallel \partial^{\mu_i} \varepsilon \parallel_{L^{\infty}}^2 \\
\leq & \mathcal E_{\sigma}^{\frac{2(k-1)s_L-(k-1)\frac{d}{2}-2\frac{p-k}{p-1}}{2s_L-\sigma}+O\left( \frac{1}{L^2}\right)}  \mathcal E_{2s_L}^{\frac{(k-1)\frac{d}{2}-k\sigma+2s_L+2\frac{p-k}{p-1}}{2s_L-\sigma}+O\left( \frac{1}{L^2}\right)}\\
\leq & \mathcal E_{\sigma}^{k-1+\frac{-2+(k-1)(\sigma-s_c)}{2s_L-\sigma}+O\left(\frac{1}{L^2} \right)}  \mathcal E_{2s_L}^{1+\frac{2-(k-1)(\sigma-s_c)}{2s_L-\sigma}+O\left(\frac{1}{L^2} \right)}\\
\leq & \mathcal E_{2s_L} \left( \frac{\mathcal E_{\sigma}^{1+O\left(\frac{1}{L} \right)}}{s^{-\frac{\sigma-s_c}{2}}} \right)^{k-1}\frac{C(L,M,K_1,K_2)}{s^{1+\frac{\alpha}{L}+O\left(\frac{\eta+\sigma-s_c+L^{-1}}{L} \right)}}.
\ea
\ee

\noindent  \emph{Case 2:} if $|\mu_k|+\frac{2(p-k)}{p-1}+2s_L-\sum_1^k |\mu_i|> 2s_L$. This means $\frac{2(p-k)}{p-1}-\sum_1^{k-1}|\mu_i|>0$. Hence, there are two subcases: the subcase $|\mu_i|=0$ for $1\leq i \leq k-1$ and the subcase $|\mu_{k-1}|=1$ (because the $\mu_i$'s are ordered by increasing size $|\mu_i|$). If $|\mu_i|=0$ for $1\leq i \leq k-1$, then, using the weighted $L^{\infty}$ estimate \fref{an:eq:bound Linfty}, the coercivity estimate \fref{annexe:eq:coercivite norme adaptee} and the bound \fref{trap:eq:bounds varepsilon} we obtain:
$$
\ba{r c l}
& \int \frac{\prod_1^k |\partial^{\mu_i}\varepsilon|^2}{1+|y|^{\frac{4(p-k)}{p-1}+4s_L-2\sum_1^k |\mu_i|}} = \int \frac{|\varepsilon|^{2(k-1)}}{1+|y|^{\frac{4(p-k)}{p-1}+4s_L-2 |\mu_k|}}  \\
\leq & \parallel \frac{\varepsilon}{1+|y|^{\frac{2(p-k)}{p-1}}} \parallel_{L^{\infty}}^2 \parallel \varepsilon \parallel_{L^{\infty}}^{2(k-2)} \mathcal E_{s_L} \leq \left( \frac{\mathcal E_{\sigma}^{1+O\left(\frac{1}{L} \right)}}{s^{-(\sigma-s_c)}} \right)^{(k-1)}\frac{C(L,M,K_1,K_2)\mathcal E_{s_L}}{s^{1+\frac{\alpha}{L}+O\left(\frac{\eta+\sigma-s_c+L^{-1}}{L} \right)}}.
\ea
$$
If $|\mu_{k-1}|=1$, then, using the weighted $L^{\infty}$ estimate \fref{an:eq:bound Linfty} for $\nabla \varepsilon$, the coercivity estimate  \fref{annexe:eq:coercivite norme adaptee} and the bound \fref{trap:eq:bounds varepsilon} we obtain:
$$
\ba{r c l}
&\int \frac{\prod_1^k |\partial^{\mu_i}\varepsilon|^2}{1+|y|^{\frac{4(p-k)}{p-1}+4s_L-2\sum_1^k |\mu_i|}} = \int \frac{|\partial^{\mu_{k-1}}\varepsilon|^2|\varepsilon|^{2(k-2)}}{1+|y|^{\frac{4(p-k)}{p-1}+4s_L-2 |\mu_k|}-2}  \\
\leq& \parallel \frac{\partial^{\mu_{k-1}}\varepsilon}{1+|y|^{\frac{2(p-k)}{p-1}}-1} \parallel_{L^{\infty}}^2 \parallel \varepsilon \parallel_{L^{\infty}}^{2(k-2)} \mathcal E_{s_L}  \leq  \left( \frac{\mathcal E_{\sigma}^{1+O\left(\frac{1}{L} \right)}}{s^{-(\sigma-s_c)}} \right)^{(k-1)}\frac{C(L,M,K_1,K_2)\mathcal E_{s_L}}{s^{1+\frac{\alpha}{L}+O\left(\frac{\eta+\sigma-s_c+L^{-1}}{L} \right)}}.
\ea
$$
In both subcases there holds:
\be \la{pro:eq:high case2}
\int \frac{\prod_1^k |\partial^{\mu_i}\varepsilon|^2}{1+|y|^{\frac{4(p-k)}{p-1}+4s_L-2\sum_1^k |\mu_i|}} \leq \left( \frac{\mathcal E_{\sigma}^{1+O\left(\frac{1}{L} \right)}}{s^{-(\sigma-s_c)}} \right)^{(k-1)}\frac{C(L,M,K_1,K_2)\mathcal E_{s_L}}{s^{1+\frac{\alpha}{L}+O\left(\frac{\eta+\sigma-s_c+L^{-1}}{L} \right)}}.
\ee
Now we come back to \fref{pro:eq:high nonlinear expression}, which we reformulated in \fref{pro:eq:high nonlinear1} where we estimated the terms appearing in the sum in \fref{pro:eq:high case1} and \fref{pro:eq:high case2}, obtaining the following bound for the nonlinear term's contribution in \fref{pro:eq:high expression}:
\be \la{pro:eq:high NL}
\parallel H^{s_L} _{z,\frac{1}{\lambda}}(\text{NL}(w_{\te{int}})) \parallel_{L^2} \leq \frac{\sqrt{\mathcal E_{2s_L}}}{\lambda^{(2s_L-s_c)+2}} \sum_{k=2}^p \left( \frac{\sqrt{\mathcal E_{\sigma}}^{1+O\left(\frac{1}{L} \right)}}{s^{-\frac{\sigma-s_c}{2}}} \right)^{k-1}\frac{C(L,M,K_1,K_2)}{s^{1+\frac{\alpha}{L}+O\left(\frac{\eta+\sigma-s_c+L^{-1}}{L} \right)}}.
\ee

\noindent - \emph{The small linear term and the term involving the time derivative of the linearized operator:} we claim that there exists a constant $\delta:=\delta (d,L,p)>0$ such that:
\be \la{pro:eq:high L}
\parallel H^{s_L} _{z,\frac{1}{\lambda}}(L(w_{\te{int}}))+\frac{d}{dt}(H^{s_L} _{z,\frac{1}{\lambda}})w_{\te{int}}\parallel_{L^2} \leq \frac{C(L,M)}{\lambda^{2s_L-s_c+2}s} \left( \int \frac{|H^{s_L} \varepsilon |^2}{1+|y|^{2\delta}} \right)^{\frac 1 2}.
\ee
We now prove this estimate. The small linear term is in renormalized variables from \fref{trap:def Lwint}:
 $$
\int |H^{s_L} _{z,\frac{1}{\lambda}}(L(w_{\te{int}}))|^2=\frac{p^2}{\lambda^{2(2s_L-s_c)+4}} \int (H^{s_L}((Q^{p-1}-\tilde{Q}_b^{p-1})\varepsilon))^2.
 $$
For $\mu \in \mathbb N^s$, one has the following asymptotic behavior for the potential that appeared, from the bounds on the parameters \fref{trap:bd bnki} and the expression of $\tilde{Q}_b$ \fref{cons:eq:def Qbtilde}:
$$
| \partial^{\mu} (Q^{p-1}-\tilde{Q}_b^{p-1}) | \leq \frac{1}{s} \frac{C(\mu)}{1+|y|^{\alpha-C(L)\eta+|\mu|}} \leq \frac{1}{s}\frac{C(\mu)}{1+|y|^{\delta+|\mu|}}
$$
for $\eta$ small enough, because $\alpha>2$, and for some constant $\delta$ that can be chosen small enough so that:
\be \la{pro:eq:def delta}
0< \delta \ll 1, \ \ \text{with} \ \delta<\underset{0\leq n \leq n_0}{\text{sup}} \delta_n \ \ \text{and} \ \ \delta< \frac{d}{4}-\frac{\gamma_{n_0+1}}{2}-s_L
\ee
(this technical condition is useful to apply a coercivity estimate for the next equation, all the terms appearing are indeed strictly positive from \fref{intro:eq:condition deltan}). We recall that $H=-\Delta-pQ^{p-1}$ where $Q$ is a smooth potential satisfying $| \partial^{\mu} Q | \leq \frac{C(\mu)}{1+|y|^{\frac{2}{p-1}+|\mu|}}$. Using the Leibniz rule this implies:
\be \la{pro:eq:high Lbrut}
\ba{r c l}
\int (H^{s_L}((Q^{p-1}-\tilde{Q}_b^{p-1})\varepsilon))^2 & \leq & \frac{C(L)}{s^2} \sum_{\mu_i \in \mathbb N^d, \ |\mu_i|\leq 2s_L, \ i=1,2}  \int \frac{|\partial^{\mu_1} \varepsilon | |\partial^{\mu_2}\varepsilon|}{1+|y|^{4s_L+2\delta-2|\mu_1|-2|\mu_2|}} \\
&\leq& \frac{C(L)}{s^2} \int \frac{|H^{s_L} \varepsilon |^2}{1+|y|^{2\delta}}
\ea
\ee
where we used for the last line the weighted coercivity estimate \fref{annexe:eq:coercivite norme adaptee}, which we could apply because $\delta$ satisfies the technical condition \fref{pro:eq:def delta}. We now turn to the term involving the time derivative of the linearized operator in \fref{pro:eq:high expression}. Going back to renormalized variables it can be written as:
$$
\int |\frac{d}{dt}H^{s_L} _{z,\frac{1}{\lambda}}w_{\te{int}}|^2=\frac{p^2(p-1)^2}{\lambda^{2(2s_L-s_c)+4}} \sum_{i=1}^{s_L}\int  (H^{i-1}[(Q^{p-2}\frac{z_s}{\lambda}.\nabla Q+\frac{\lambda_s}{\lambda}Q^{p-2}\Lambda Q)H^{s_L-i}\varepsilon])^2.
$$
For $\mu \in \mathbb N^d$, one has the following asymptotic behavior for the two potentials that appeared (from the asymptotic \fref{cons:eq:asymptotique Q} and \fref{cons:eq:asymptotique T0n} of $Q$ and $\Lambda Q$):
$$
| \partial^{\mu} (Q^{p-2}\partial_{y_i}Q)| \leq  \frac{C(\mu)}{1+|y|^{2+1+|\mu|}} \ \text{for} \ 1\leq i \leq d, \ \ \text{and} \ \ | \partial^{\mu} (Q^{p-2}\Lambda Q)| \leq  \frac{C(\mu)}{1+|y|^{2+\alpha}}. 
$$
Therefore, as $H=-\Delta-pQ^{p-1}$ where $Q$ is a smooth potential satisfying $| \partial^{\mu} Q | \leq \frac{C(\mu)}{1+|y|^{\frac{2}{p-1}+|\mu|_1}}$, using the Leibniz rule and the two above identities:
\be \la{pro:eq:high ddtHbrut}
\ba{r c l}
&\left| \int H_{z,\frac{1}{\lambda}}^{s_L}w_{\te{int}} \frac{d}{dt}(H^{s_L} _{z,\frac{1}{\lambda}})w_{\text{int}} \right| \\
 \leq & \frac{C(L)(|\frac{\lambda_s}{\lambda}|^2+|\frac{z_s}{\lambda}|^2)}{\lambda^{2(2s_L-s_c)+4}} \sum_{\mu_i \in \mathbb N^d, \ |\mu_i|_1\leq 2s_L, \ i=1,2}  \int \frac{|\partial^{\mu_1} \varepsilon | |\partial^{\mu_2}\varepsilon|}{1+|y|^{4s_L+2-2|\mu_1|-2|\mu_2|}}. \\
\leq &  \frac{C(L)}{\lambda^{2(2s_L-s_c)+4}s^2} \sum_{\mu_i \in \mathbb N^d, \ |\mu_i|_1\leq 2s_L, \ i=1,2}  \int \frac{|H^{s_L} \varepsilon |^2}{1+|y|^{2\delta}} 
\ea
\ee
for $\delta<\alpha,1$ being defined by \fref{pro:eq:def delta}, where we used the weighted coercivity estimate \fref{annexe:eq:coercivite norme adaptee} and the fact that $|\frac{\lambda_s}{\lambda}|\sim s^{-1}$ and $|\frac{z_s}{\lambda}|\sim s^{-1-\frac{\alpha-1}{2}}$ from \fref{trap:eq:modulation leqL-1} and \fref{trap:bd bnki}. We now combine the estimates we have proved, \fref{pro:eq:high Lbrut} and \fref{pro:eq:high ddtHbrut}, to obtain the estimate \fref{pro:eq:high L} we claimed.

\noindent - \emph{End of the proof of Step 1:} we now gather the brute force upper bounds we have found for the terms we had to treat in \fref{pro:eq:high error}, \fref{pro:eq:high NL} and \fref{pro:eq:high L}, yielding the bound \fref{pro:eq:high direct} we claimed in this first step.\\

\noindent \textbf{step 2} Integration by part in time to treat the modulation term. We now focus on the modulation term in \fref{pro:eq:high expression} which requires a careful treatment. Indeed, the brute force upper bounds on the modulation \fref{trap:eq:modulation leqL-1} are not sufficient and we need to make an integration by part in time to treat the problematic term $b_{L_n,s}^{(n,k)}$. We do this in two times. First we define a radiation term. Next we use it to prove a modified energy estimate.\\

\noindent - \emph{Definition of the radiation}. We recall that $\alpha_b=\sum_{(n,k,i)\in \mathcal I}b^{(n,k)}_iT^{(n,k)}_i+\sum_2^{L+2}S_i$, where $T^{(n,k)}_i$ is defined by \fref{cons:eq:def Tnki} and $S_i$ is homogeneous of degree $(i,-\gamma-g')$ in the sense of Definition \ref{cons:def:fonctions homogenes}, see \fref{cons:eq:degre Si}. We want to split $\alpha_b$ in two parts to distinguish the problematic terms involving the parameters $b^{(n,k)}_{L_n}$. For $i=2,...,L+2$, as $S_i$ is homogeneous of degree $(i,-\gamma-g')$ it is a finite sum:
\be \la{pro:eq:highsobo S}
S_i=\sum_{J\in \mathcal J(i)} b^J f_J, \ \ \text{with} \ b^J=\prod_{(n,k,i)\in \mathcal I} (b^{(n,k)}_i)^{J^{(n,k)}_i}
\ee
where $\mathcal J(i)$ is a finite subset of $\mathbb N^{\# \mathcal I}$ and for all $J\in \mathcal J(i)$, $|J|_3=i$ and $f_J$ is admissible of degree $(2|J|_2-\gamma-g')$ in the sense of Definition \ref{cons:eq:fonctions admissibles}. We then define the following partition of $\mathcal J(i)$:
\be \la{pro:def J1i}
\ba{l l}
\mathcal J_1(i):=\{ J\in \mathcal J(i), \ J^{(n,k)}_{L_n}=0 \ \text{for} \ \text{all} \ 0\leq n \leq n_0, \ 1\leq k \leq k(n)\}, \\
\mathcal J_2(i):= \{ J\in \mathcal J(i), \ |J|=2 \ \text{and} \ \exists (n,k,L_n)\in \mathcal I, \ J_{L_n}^{(n,k)}\geq 1 \}, \\
\mathcal J_3(i):= \mathcal J(i) \backslash [\mathcal J_1(i)\cup \mathcal J_2(i)], \\
\bar S_i:=\sum_{J\in \mathcal J_2(i)} b^J f_J, \ \ \bar S_i':=\sum_{J\in \mathcal J_3(i)} b^J f_J ,
\ea
\ee
and the following radiation term:
\be \la{pro:def xi}
\ba{r c l}
\xi & := & H^{s_L}\left( \chi_{B_1} \left[ \underset{0\leq n \leq n_0, \ 1\leq k \leq k(n)}{\sum} b^{(n,k)}_{L_n} T^{(n,k)}_{L_n}+\sum_{i=2}^{L+2}\bar S'_i \right]\right) \\
&&+\sum_{i=2}^{L+2} H^{s_L}\left( \chi_{B_1}\bar S_i\right)- \chi_{B_1}H^{s_L} \bar S_i .
\ea
\ee
From \fref{pro:def J1i}, for all $J\in \mathcal J_3(i)$ there exists $n$ with $0\leq n\leq n_0$ such that $J^{(n,k)}_{L_n}\geq 1$ and $|J|\geq 3$. As $\delta_{n'}>0$ this implies:
\be \la{pro:bd J2 mathcal J3}
\forall J \in \mathcal J_3(i), \ \ |J|_2>L+2-\delta_0 .
\ee
Using this fact, \fref{cons:eq:asymptotique T0n}, the fact that $H^{s_L}T^{(n,k)}_{L_n}=0$ since $s_L> L_n$ for all $0\leq n\leq n_0$, \fref{pro:def J1i} and \fref{trap:bd bnki} the radiation satisfies:
\be \la{pro:eq:high modulation xi1}
\parallel \xi \parallel_{L^2} \leq \frac{C(L,M)}{s^{L+1-\delta_0+\eta(1-\delta_0')}}, \ \ \parallel H\xi \parallel_{L^2} \leq \frac{C(L,M)}{s^{L+2-\delta_0+\eta(2-\delta_0')}},
\ee
\be \la{pro:eq:high modulation xi2}
\parallel \nabla \xi \parallel_{L^2} \leq \frac{C(L,M)}{s^{L+\frac 3 2-\delta_0+\eta(\frac 3 2-\delta_0')}}, \ \ \parallel \Lambda \xi \parallel_{L^2} \leq \frac{C(L,M)}{s^{L+1-\delta_0+\eta(1-\delta_0')}}.
\ee
We eventually introduce the following remainders:
$$
\ba{r c l}
R_1&:=&H^{s_L}\Big(\chi_{B_1} \underset{(n,k,i)\in \mathcal I, \ i\neq L_n}{\sum} (b_{i,s}^{(n,k)}+(2i-\alpha_n)b_i^{(n,k)}b_1^{(0,1)}-b_{i+1}^{(n,k)})\\
&&(T^{(n,k)}_i+\sum_2^{L+2}\frac{\partial S_j}{\partial b^{(n,k)}_i}) \Big) -( \frac{\lambda_s}{\lambda}+b^{(0,1)}_1 )H^{s_L}\Lambda \tilde Q_b-(\frac{z_s}{\lambda}+b^{(1,\cdot)}_1 ).H^{s_L}\nabla \tilde Q_b \\
&&+H^{s_L}\Big(\chi_{B_1} \underset{(n,k,L_n)\in \mathcal I}{\sum} (2L_n-\alpha_n)b_{L_n}^{(n,k)}b_1^{(0,1)} (T^{(n,k)}_{L_n}+\sum_2^{L+2} \frac{\partial \bar S_j'}{\partial b^{(n,k)}_{L_n}})   ) \Big) \\
&& +\underset{(n,k,L_n)\in \mathcal I}{\sum} (2L_n-\alpha_n)b_{L_n}^{(n,k)}b_1^{(0,1)} \Big( \underset{j=2}{\overset{L+2}{\sum}} H^{s_L}(\chi_{B_1}\frac{\partial \bar S_j}{\partial b^{(n,k)}_{L_n}})-\chi_{B_1}H^{s_L}\frac{\partial \bar S_j}{\partial b^{(n,k)}_{L_n}}  \Big) \\
\ea
$$
$$
R_2 :=\underset{(n,k,L_n)\in \mathcal I}{\sum} (b_{L_n,s}^{(n,k)} +(2L_n-\alpha_n)b_{L_n}^{(n,k)}b_1^{(0,1)}) \Big(\sum_2^{L+2} \chi_{B_1}H^{s_L}\frac{\partial \bar S_j}{\partial b^{(n,k)}_{L_n}}  \Big) ,
$$
$$
R_3 := \sum_{(n,k,i)\in \mathcal I, \ i\neq L_n} b_{i,s}^{(n,k)} \frac{\partial }{\partial_{b^{(n,k)}_i}} \xi ,
$$
so that they produce from \fref{pro:def xi} and \fref{trap:eq:def tildeMod} the identity:
\be \la{pro:eq:high xis}
H^{s_L}(\tilde{\text{Mod}}(s))=\partial_s \xi+R_1+R_2+R_3 .
\ee
The remainder $R_1$ enjoys the following bounds from \fref{trap:eq:modulation leqL-1}, \fref{cons:prop Tnki}, \fref{cons:eq:degre Si}, \fref{pro:def J1i}, \fref{pro:bd J2 mathcal J3} and \fref{trap:bd bnki}:
\be \la{pro:bd R1}
\para R_1 \para_{L^2}\leq \frac{C(L,M)}{s^{L+2-\delta_0+(1-\delta_0')\eta}}+\frac{C(L,M)\mathcal E_{2s_L}}{s^2}.
\ee
From the definition \fref{pro:def J1i} of $\mathcal S_j$ and the construction \fref{cons:eq:expression Si} of $S_j$ one has:
$$
\ba{r c l}
\sum_{j=2}^{L+2} H \bar S_j &=& -\sum_{(n,k,L_n)\in \mathcal I} b_1^{(0,1)}b_{L_n}^{(n,k)}\left(\Lambda T^{(n,k)}_{L_n}-(2L_n-\alpha_n)T^{(n,k)}_{L_n} \right) \\
&& -\sum_{(n,k,L_n)\in \mathcal I} b_{L_n}^{(n,k)}b_1^{(1,\cdot)}.\nabla \Lambda T^{(n,k)}_{L_n} \\
&&+p(p-1)Q^{p-2} \left( \underset{(n,k,L_n)\in \mathcal I}{\sum} b_{L_n}^{(n,k)} T^{(n,k)}_{L_n} \right) \left( \underset{(n',k',i)\in \mathcal I}{\sum} b_i^{(n',k')} T^{(n',k')}_i \right).
\ea
$$
As $H^{s_L}T^{(n,k)}_{L_n}=0$ since $s_L>L_n$ for all $0\leq n \leq n_0$, using the commutator identity \fref{cons:eq:commutateur}, the asymptotic  \fref{cons:prop Tnki} of $T^{(n,k)}_i$, \fref{trap:bd bnki} and \fref{cons:eq:asymptotique V} (as $\alpha>2$) one has:
$$
\int (1+|y|^{4+2\delta})\left(\chi_{B_1} H^{s_L+1}\sum_{j=2}^{L+2}  \frac{\partial \bar S_j}{\partial_{b^{(n,k)}_{L_n}}}\right)^2 \leq \frac{C(L)}{s}
$$
where $\delta$ is defined by \fref{pro:eq:def delta} from what we deduce using \fref{trap:eq:modulation L}:
\be \la{pro:bd R2}
\para (1+|y|)^{2+\delta} HR_2 \para_{L^2} \leq \frac{C(L,M)}{s^{L+4}}+\frac{C(L,M)\sqrt{\mathcal E_{2s_L}}}{s}.
\ee
Finally for the last remainder one has the estimate from \fref{pro:def xi}, \fref{trap:eq:modulation leqL-1}, \fref{trap:bd bnki}, \fref{trap:eq:bounds varepsilon}, \fref{cons:prop Tnki} and \fref{pro:def J1i} for $s_0$ large enough:
\be \la{pro:bd R3}
\para R_3 \parallel_{L^2} \leq \frac{C(L,M)}{s^{L+2-\delta_0+\eta(1-\delta_0')}}
\ee

\noindent - \emph{Modified energy estimate:} we claim that the following modified energy estimate (compared to \fref{pro:eq:high expression}) holds:
\be \la{pro:eq:high modulation modified}
\ba{r c l}
&\frac{d}{dt} \left\{ \int (H^{s_L}_{z,\frac{1}{\lambda}}w_{\te{int}}+\frac{1}{\lambda^{2s_L}}\tau_z(\xi_{\frac 1 \lambda}))^2  \right\} \\
\leq& \frac{1}{\lambda^{2(2s_L-s_c)+2}s}\Bigl{[} \frac{C(L,M)}{s^{2L+2-2\delta_0+2(1-\delta_0')}} + \frac{C(L,M)\sqrt{\mathcal E_{2s_L}}}{s^{L+1-\delta_0+\eta(1-\delta-0')}}+C(L,M)\sqrt{\mathcal E_{2s_L}}\left( \int \frac{|H^{s_L}\varepsilon|^2}{1+|y|^{2\delta}} \right)^{\frac 1 2} \\
&+\mathcal E_{2s_L}\sum_{k=2}^p \left(\frac{\sqrt{\mathcal E_{\sigma}}^{1+O\left(\frac{1}{L} \right)}}{s^{-\frac{\sigma-s_c}{2}}} \right)^{k-1}\frac{C(L,M,K_1,K_2)}{s^{\frac{\alpha}{L}+O\left(\frac{\eta+\sigma-s_c}{L} \right)}} \Bigr{]}-2\int H_{z,\frac 1 \lambda}^{s_L}w_{\te{int}}H^{s_L+1}_{z,\frac 1 \lambda}w_{\te{int}}\\
&+2\int H_{z,\frac 1 \lambda}^{s_L}w_{\te{int}}H_{z,\frac 1 \lambda}^{s_L}(\tilde L+\tilde R+\tilde{NL}),
\ea
\ee
what we are going to prove now. From the time evolution \fref{pro:eq:high xis}, \fref{trap:eq:evolution w} of $\xi$ and $w$ and because the support of $\tau_z(\xi_{\frac 1 \lambda})$ is disjoint from the one of $\tilde L$, $\tilde R$, and $\tilde{NL}$ one gets the following expression for the left hand side of the previous equation \fref{pro:eq:high modulation modified}:
\be \la{pro:eq:high modulation expression}
\ba{r c l}
&\frac{d}{dt} \left\{ \int (H^{s_L}_{z,\frac{1}{\lambda}}w_{\te{int}}+\frac{1}{\lambda^{2s_L}}\tau_z(\xi_{\frac 1 \lambda}))^2  \right\} \\
=& -2\int H^{s_L}_{z,\frac{1}{\lambda}}w_{\te{int}}H^{s_L+1}_{z,\frac 1 \lambda}w_{\te{int}}     - \frac{2}{\lambda^{2s_L+2}}\int H^{s_L}_{z,\frac{1}{\lambda}}w_{\te{int}} \tau_z(R_{2,\frac 1 \lambda})\\
&-\frac{2}{\lambda^{2s_L}}\int \tau_z(\xi_{\frac 1 \lambda})H^{s_L+1}_{z,\frac 1 \lambda}w_{\te{int}} +2 \int \Bigl{[}H^{s_L}_{z,\frac{1}{\lambda}}w_{\te{int}}+\frac{1}{\lambda^{2s_L}}\tau_z(\xi_{\frac 1 \lambda})\Bigr{]}\\
&\times \Bigl{[}H^{s_L}_{z,\frac{1}{\lambda}}(\text{NL}(w_{\te{int}})-\frac{1}{\lambda^2}\tau_z(\tilde{\psi}_{b,\frac{1}{\lambda}}+(\chi-1)\tilde{\te{Mod}}(t)_{\frac 1 \lambda})+L(w_{\te{int}}))\\
&+\frac{d}{dt}(H^{s_L}_{z,\frac 1 \lambda})w_{\te{int}}-\frac{1}{\lambda^{2+2s_L}}\tau_z((R_1+R_3+\frac{\lambda_s}{\lambda}\Lambda \xi +2s_L\frac{\lambda_s}{\lambda} \xi -\frac{z_s}{\lambda}.\nabla \xi)_{\frac{1}{\lambda}})\Bigr{]} \\
&-\frac{2}{\lambda^{4s_L+2}}\int \tau_z (\xi_{\frac 1 \lambda})\tau_z(R_{2,\frac 1 \lambda})+2\int H_{z,\frac 1 \lambda}^{s_L}w_{\te{int}}H_{z,\frac 1 \lambda}^{s_L}(\tilde L+\tilde{NL}+\tilde R).
\ea
\ee
We now analyse all the terms in this identity except the first one and the last one that we will study in the next step. Using the estimate \fref{pro:bd R2} on the remainder $R_2$, going back in renormalized variables and using the coercivity \fref{annexe:eq:coercivite norme adaptee} one gets for the second term in \fref{pro:eq:high modulation expression}:
$$
\ba{r c l}
\left| \frac{2}{\lambda^{2s_L+2}}\int H^{s_L}_{z,\frac{1}{\lambda}}w_{\te{int}} \tau_z(R_{2,\frac 1 \lambda}) \right|&\leq & C\int \frac{|H^{s_L-1}\varepsilon|}{1+|y|^{2+\delta}} (1+|y|^{2+\delta})|H R_2| \\
&\leq &\frac{C(L,M)\sqrt{\mathcal E_{2s_L}}}{\lambda^{2(2s_L-s_c)+2}s}\left(\left(\int \frac{|H^{s_L}\varepsilon|^2}{1+|y|^{2\delta}} \right)^{\frac 1 2}+\frac{1}{s^{L+3}} \right).
\ea
$$
Going back to renormalized variables, integrating by parts and using the estimate \fref{pro:eq:high modulation xi1} on $H\xi$ gives for the third term in \fref{pro:eq:high modulation expression}:
$$
\left| \frac{2}{\lambda^{2s_L}}\int \tau_z(\xi_{\frac 1 \lambda})H^{s_L+1}_{z,\frac 1 \lambda}w_{\te{int}} \right|\leq \frac{C(L,M)}{\lambda^{2(2s_L-s_c)+2}}\frac{\sqrt{\mathcal E_{2s_L}}}{s^{L+2-\delta_0+\eta(2-\delta_0')}}.
$$
To upper bound the fourth and the fifth terms in \fref{pro:eq:high modulation expression}, we go back to renormalized variables and use the bound \fref{pro:eq:high direct} on the error, the nonlinear term, the small linear term and the term involving the time derivative of the linearized operator we derived in Step 1, together with the bounds \fref{pro:eq:high modulation xi1} and \fref{pro:eq:high modulation xi2} on $\xi$, $\Lambda \xi$, $\nabla \xi$ and the fact that $|\frac{\lambda_s}{\lambda}|\leq Cs^{-1}$ and $|\frac{z_s}{\lambda}|\leq Cs^{-1-\frac{\alpha-1}{2}}$ in the trapped regime, and the bound \fref{pro:bd R1} and \fref{pro:bd R3} on the remainders $R_1$ and $R_3$, yielding:
$$
\ba{r c l}
&\Bigl{|} \int \Bigl{[}H^{s_L}_{z,\frac{1}{\lambda}}w_{\te{int}}+\frac{1}{\lambda^{2s_L}}\tau_z(\xi_{\frac 1 \lambda})\Bigr{]}\Bigl{[}H^{s_L}_{z,\frac{1}{\lambda}}(\text{NL}(w_{\te{int}})-\frac{1}{\lambda^2}\tau_z(\tilde{\psi}_{b,\frac{1}{\lambda}}+(\chi-1)\tilde{\te{Mod}}(t)_{\frac 1 \lambda})\\
&+L(w_{\te{int}}))+\frac{d}{dt}(H^{s_L}_{z,\frac 1 \lambda})w-\frac{1}{\lambda^{2+2s_L}}\tau_z((R_1+R_3+\frac{\lambda_s}{\lambda}\Lambda \xi +2s_L\frac{\lambda_s}{\lambda} \xi -\frac{z_s}{\lambda}.\nabla \xi)_{\frac{1}{\lambda}})\Bigr{]} \\
&-\frac{2}{\lambda^{4s_L+2}}\int \tau_z (\xi_{\frac 1 \lambda})\tau_z(R_{1,\frac 1 \lambda}) \Bigr{|} \\
\leq &\frac{1}{\lambda^{2(2s_L-s_c)+2}s}\Bigl{[} \frac{C(L,M)}{s^{2L+2-2\delta_0+2(1-\delta_0')}} + \frac{C(L,M)\sqrt{\mathcal E_{2s_L}}}{s^{L+1-\delta_0+\eta(1-\delta-0')}}+C(L,M)\sqrt{\mathcal E_{2s_L}}\left( \int \frac{|H^{s_L}\varepsilon|^2}{1+|x|^{2\delta}} \right)^{\frac 1 2} \\
&+\mathcal E_{2s_L}\sum_{k=2}^p \left(\frac{\sqrt{\mathcal E_{\sigma}}^{1+O\left(\frac{1}{L} \right)}}{s^{-\frac{\sigma-s_c}{2}}} \right)^{k-1}\frac{C(L,M,K_1,K_2)}{s^{\frac{\alpha}{L}+O\left(\frac{\eta+\sigma-s_c}{L} \right)}}\Bigr{]}.
\ea
$$ 
We finish the proof of the bound \fref{pro:eq:high modulation modified} by injecting in the identity \fref{pro:eq:high modulation expression} the three previous bounds we proved on the second, third, fourth and fifth terms.\\

\noindent \textbf{step 3} Use of dissipation. We put an upper bound for the last terms in \fref{pro:eq:high modulation modified} and improve the energy estimate using the coercivity of the quantity $-\int H^{s_L+1}\varepsilon H^{s_L}\varepsilon$. 

\noindent - \emph{The dissipation estimate:} we recall that $H=-\Delta-pQ^{p-1}$, the potential $-pQ^{p-1}$ being below the Hardy potential, $pQ^{p-1}<\frac{(d-2)^2-4\delta (p)}{4|y|^2}$ for some constant $\delta (p)>0$ from \fref{cons:eq:positivite H}. Hence, using the standard Hardy inequality one gets for the linear term:
\be \la{pro:eq:high H}
\ba{r c l}
&-\int H_{z,\frac{1}{\lambda}}^{s_L} w_{\te{int}} H_{z,\frac{1}{\lambda}} H_{z,\frac{1}{\lambda}}^{s_L}w_{\te{int}} = -\frac{1}{\lambda^{2(2s-L-s_c)+2}} \int H^{s_L} \varepsilon H H^{s_L}\varepsilon \\
=& \frac{1}{\lambda^{2(2s-L-s_c)+2}}\left(- \int |\nabla H^{s_L} \varepsilon |^2 + \int pQ^{p-1}|H^{s_L} \varepsilon |^2\right) \\
=& \frac{1}{\lambda^{2(2s-L-s_c)+2}}\left( \left[\frac{(d-2)^2-\frac{\delta(p)}{2}}{(d-2)^2}+\frac{\delta(p)}{2(d-2)^2}\right] \int |\nabla H^{s_L} \varepsilon |^2 + \int pQ^{p-1}|H^{s_L} \varepsilon |^2\right) \\
\leq & \frac{1}{\lambda^{2(2s-L-s_c)+2}}\Big(- \frac{(d-2)^2-\frac{\delta(p)}{2}}{4}  \int \frac{|H^{s_L}\varepsilon|^2}{|y|^2}-\frac{\delta(p)}{2(d-2)^2} \int |\nabla H^{s_L} \varepsilon |^2\\
&+\frac{(d-2)^2-\delta (p)}{4} \int \frac{|H^{s_L}\varepsilon|^2}{|y|^2}\Big) \\
= & -\frac{\delta(p)}{8\lambda^{2(2s-L-s_c)+2}} \int \frac{|H^{s_L}\varepsilon|^2}{|y|^2}-\frac{\delta(p)}{2(d-2)^2\lambda^{2(2s-L-s_c)+2}} \int |\nabla H^{s_L} \varepsilon |^2.
\ea
\ee

\noindent - \emph{Bounds for the terms created by the cut}. We study the last terms in \fref{pro:eq:high modulation modified}. From its definition \fref{trap:def tildeR}, and as $\lambda +|z|\ll 1$ from \fref{trap:eq:lambda} and \fref{trap:eq:bound z}, the remainder $\tilde R$ is bounded by a constant independent of the others:
\be \la{pro:eq:high tildeR}
\parallel H^{s_L}_{z,\frac 1 \lambda}\tilde R \parallel_{L^2} \leq C.
\ee
For the non linear term, for any very small $\kappa>0$, from \fref{nonlin:bd estimation nonlineaire}, \fref{trap:def tildeNL} and \fref{trap:bd w}:
\be \la{pro:eq:high tildeNL}
\ba{r c l}
& \parallel H^{s_L}_{z,\frac 1 \lambda}\tilde{NL} \parallel_{L^2} \leq C \sum_{k=2}^{p} \parallel w^k\parallel_{H^{2s_L}}  \leq C \parallel w \parallel_{H^{2s_L}} \sum_{k=2}^{p} \parallel w \parallel_{H^{\frac d 2+\kappa}}^{k-1} \\
 \leq & C \parallel w \parallel_{H^{2s_L}} \sum_{k=2}^{p} \parallel w \parallel_{H^{\sigma}}^{(k-1)(1-\frac{\frac d 2 +\kappa-\sigma}{2s_L-\sigma})}\parallel w \parallel_{H^{2s_L}}^{(k-1)(\frac{\frac d 2 +\kappa-\sigma}{2s_L-\sigma})} \\
\leq& C(K_1,K_2) \left(\frac{1}{\lambda^{2s_L-s_c}s^{L+1-\delta_0+\eta(1-\delta_0')}} \right)^{1+(p-1)\frac{\frac d 2 +\kappa-\sigma}{2s_L-\sigma}}\\
=& C(K_1,K_2) \left(\frac{1}{\lambda^{2s_L-s_c}s^{L+1-\delta_0+\eta(1-\delta_0')}} \right)^{1+(p-1)\frac{\frac{2}{p-1}-\sigma-s_c+\kappa}{2s_L-\sigma}}\\
\leq& C(K_1,K_2) \left(\frac{1}{\lambda^{2s_L-s_c}s^{L+1-\delta_0+\eta(1-\delta_0')}} \right)^{1+\frac{2}{2s_L-\sigma}}\\
=& \frac{C(K_1,K_2)}{\lambda^{2s_L-s_c+2}s^{L+2-\delta_0+\eta(1-\delta_0')+\frac{\alpha}{2L}+O\left(\frac{\sigma-s_c+\eta}{L} \right)}}.
\ea
\ee
because $\frac{1}{\lambda^{2s_L-s_c}s^{L+1-\delta_0+\eta(1-\delta_0')}}\gg 1$ from \fref{trap:eq:lambda}, if $\kappa$ has been chosen small enough. For the extra linear term in \fref{pro:eq:high modulation modified}, performing an integration by parts, using Young's inequality for any $\epsilon>0$, \fref{trap:eq:bounds varepsilon} and \fref{trap:bd w}:
\be \la{pro:eq:high tildeL}
\ba{r c l}
&\left| \int H_{z,\frac 1 \lambda}^{s_L}w_{\te{int}} H_{z,\frac 1 \lambda}^{s_L} \tilde L\right| \\
 = & \left| \int H_{z,\frac 1 \lambda}^{s_L}w_{\te{int}} H_{z,\frac 1 \lambda}^{s_L}[-\Delta \chi_3w-2\nabla \chi_3.\nabla w+p\tau_z Q_{\frac 1 \lambda}^{p-1}(\chi_1^{p-1}-\chi_3)w] \right|  \\
\leq & C\parallel H_{z,\frac 1 \lambda}^{s_L}w_{\te{int}} \parallel_{L^2}\parallel w \parallel_{H^{2s_L}}+C \epsilon \parallel \nabla H_{z,\frac 1 \lambda}^{s_L}w_{\te{int}} \parallel_{L^2}^2+\frac{C}{\epsilon}\parallel w_{\te{int}} \parallel_{H^{2s_L}}^2\\
\leq & C \epsilon \parallel \nabla H_{z,\frac 1 \lambda}^{s_L}w_{\te{int}} \parallel_{L^2}^2 +\frac{C(K_1,K_2,\epsilon)}{\lambda^{2(2s_L-s_c)}s^{L+1-\delta_0+\eta(1-\delta_0')}} \\
\leq & \frac{C\epsilon}{\lambda^{2(2s-L-s_c)+2}}\int |\nabla H^{s_L}\varepsilon|^2 +\frac{C(K_1,K_2,\epsilon)}{\lambda^{2(2s_L-s_c)+2}s^{L+2-\delta_0+\eta(1-\delta_0')+\frac{\alpha}{2\ell-\alpha}}}
\ea
\ee
because in the trapped regime $\lambda^2s\sim s^{-\frac{\alpha}{2\ell-\alpha}}$ from \fref{trap:eq:lambda}.

\noindent - \emph{Conclusion} we inject in the modified energy estimate \fref{pro:eq:high modulation modified} the bounds \fref{pro:eq:high H}, \fref{pro:eq:high tildeR}, \fref{pro:eq:high tildeNL} and \fref{pro:eq:high tildeL}, yielding:
\be \la{presque}
\ba{r c l}
&\frac{d}{dt} \left\{ \int (H^{s_L}_{z,\frac{1}{\lambda}}w_{\te{int}}+\frac{1}{\lambda^{2s_L}}\tau_z(\xi_{\frac 1 \lambda}))^2  \right\} \\
\leq& \frac{1}{\lambda^{2(2s_L-s_c)+2}s}\Bigl{[} \frac{C(L,M)}{s^{2L+2-2\delta_0+2(1-\delta_0')}} + \frac{C(L,M)\sqrt{\mathcal E_{2s_L}}}{s^{L+1-\delta_0+\eta(1-\delta-0')}}+C(L,M)\sqrt{\mathcal E_{2s_L}}\left( \int \frac{|H^{s_L}\varepsilon|^2}{1+|y|^{2\delta}} \right)^{\frac 1 2} \\
&+\mathcal E_{2s_L}\sum_2^p \left(\frac{\sqrt{\mathcal E_{\sigma}}}{s^{-\frac{\sigma-s_c}{2}}} \right)^{k-1}\frac{C(L,M,K_1,K_2)}{s^{\frac{\alpha}{L}+O\left(\frac{\eta+\sigma-s_c}{L} \right)}}-\frac{s\delta(p)}{8}\int \frac{|H^{s_L}\varepsilon|^2}{|y|^2}-\frac{s\delta(p)}{2(d-2)^2}\int |\nabla H^{s_L}\varepsilon|^2\\
&+C\epsilon s \int |\nabla H^{s_L}\varepsilon|^2+\frac{C(K_1,K_2,M,L)\sqrt{\mathcal E_{2s_L}}}{s^{L+1-\delta_0+\eta(1-\delta_0')+\frac{\alpha}{2L}+O\left(\frac{\sigma-s_c+\eta}{L} \right)}}  \Bigr{]}.
\ea
\ee
For any $N\gg 1$, using Young's inequality and splitting the weighted integrals in the zone $|y|\leq N^2$ and $|y|\geq N^2$ gives for $\epsilon$ small enough and $s_0$ large enough:
$$
\ba{r c l}
&C(L,M)\sqrt{\mathcal E_{2s_L}}\left( \int \frac{|H^{s_L}\varepsilon|^2}{1+|y|^{2\delta}} \right)^{\frac 1 2} -\frac{s\delta (p)-sC\epsilon}{8}\int \frac{|H^{s_L}}{|y|^2} \\
 \leq &\frac{C(L,M)\mathcal E_{2s_L}}{N^{2\delta}}+C(L,M)N^{2\delta}\int_{|y|\leq N^2} \frac{|H^{s_L}\varepsilon |^2}{1+|y|^{2\delta}}-\frac{s\delta (p)}{16}\int \frac{|H^{s_L}\varepsilon |^2}{|y|^2} \leq  \frac{C(L,M)\mathcal E_{2s_L}}{N^{2\delta}}
\ea
$$
Finally, from the bound \fref{pro:eq:high modulation xi1} on the size of $\xi$ one has:
$$
\ba{r c l}
& \frac{d}{dt} \left\{ \int (H^{s_L}_{z,\frac{1}{\lambda}}w+\frac{1}{\lambda^{2s_L}}\tau_z(\xi_{\frac 1 \lambda}))^2  \right\} \\
=& \frac{d}{dt}  \left\{ \frac{\mathcal E_{2s_L}}{\lambda^{2(2s-L-s_c)}}  \right\} + \frac{d}{dt}  \left\{ \int \frac{2}{\lambda^{2s_L}}H^{s_L}_{z,\frac{1}{\lambda}}w\tau_z(\xi_{\frac 1 \lambda})+\frac{1}{\lambda^{4s_L}}(\tau_z(\xi_{\frac 1 \lambda}))^2 \right\} \\
=& \frac{d}{dt}  \left\{ \frac{\mathcal E_{2s_L}}{\lambda^{2(2s-L-s_c)}}  \right\}\\
&+ \frac{d}{dt}  \left\{ O_{(L,M)}\left(\frac{1}{\lambda^{2(2s_L-s_c)}s^{L+1-\delta_0+\eta(1-\delta_0')}}(\sqrt{\mathcal E_{2s_L}}+\frac{1}{s^{L+1-\delta_0+\eta(1-\delta_0')}}) \right)  \right\}
\ea
$$ 
where denotes $O_{L,M}(\cdot)$ the usual $O(\cdot)$ for a constant in the upper bound that depends only on $L$ and $M$ only. Plugging the two previous identities in the modified energy estimate \fref{presque} yields the bound \fref{pro:eq:mathcalE2sL} we claimed in this proposition.

\end{proof}

\begin{proposition}[Lyapunov monotonicity for the high regularity Sobolev norm of the remainder outside the blow up zone] \la{pro:pr:highsobowext}

Suppose all the constants of Proposition \ref{trap:pr:bootstrap} are fixed except $s_0$. Then for $s_0$ large enough, for any solution $u$ that is trapped on $[s_0,s')$ there holds for $0\leq t<t(s')$:
\be \la{pro:eq:highsobowext}
\ba{r c l}
\parallel w_{\te{ext}}  \parallel_{H^{2s_L}}^2  &\leq & \parallel \partial_t^{s_L}w_{\te{ext}}(0) \parallel_{L^2}^2+\int_0^{t} \frac{C(K_1,K_2)}{\lambda^{2(2s_L-s_c)+2}s^{2L+3-2\delta_0+2\eta(1-\delta_0')+\frac{\alpha}{2\ell-\alpha}}}dt'\\
&&+\int_0^t \frac{C(K_1,K_2) \parallel \partial_t^{s_L} w_{\te{ext}}(t') \parallel_{L^2}}{\lambda^{2s_L-s_c+2}s^{L+2+1-\delta_0+\eta(1-\delta_0')+\frac{\alpha}{2L}+O\left(\frac{\eta+\sigma-s_c}{L} \right)}}dt'\\
&&+ \frac{C(K_1,K_2)}{\lambda^{2(2s_L-s_c)}s^{2L+2-2\delta_0+2\eta(1-\delta_0')+\frac{\alpha(p-1)(\sigma-s_c)}{2(2\ell-\alpha)}+O\left(\frac{\sigma-s_c+\eta}{L} \right)}}.
\ea
\ee

\end{proposition}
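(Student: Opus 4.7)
The key observation is that $w_{\te{ext}}$ satisfies the dissipative parabolic equation \fref{trap:eq:evolution wext} with homogeneous Dirichlet condition on $\partial\Omega$, and that the compatibility conditions \fref{trap:eq:compatibilite} ensure $\tilde{w}_k := \partial_t^k w_{\te{ext}}(0) \in H^1_0(\Omega)$ for $0 \leq k \leq s_L-1$. This lets one trade spatial regularity for time regularity: by applying $\partial_t^{s_L}$ to \fref{trap:eq:evolution wext} one obtains the heat equation
$$
\partial_t\bigl(\partial_t^{s_L} w_{\te{ext}}\bigr) = \Delta\bigl(\partial_t^{s_L} w_{\te{ext}}\bigr) + \partial_t^{s_L}\bigl[\Delta\chi_3\, w + 2\nabla\chi_3\cdot\nabla w + (1-\chi_3)w^p\bigr],
$$
still with zero Dirichlet trace. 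Testing against $\partial_t^{s_L} w_{\te{ext}}$ and dropping the resulting $\|\nabla \partial_t^{s_L} w_{\te{ext}}\|_{L^2}^2$ dissipation term, I get the basic energy identity
$$
\frac{d}{dt}\|\partial_t^{s_L} w_{\te{ext}}\|_{L^2}^2 \leq 2\|\partial_t^{s_L} w_{\te{ext}}\|_{L^2}\,\bigl\|\partial_t^{s_L}[\Delta\chi_3 w + 2\nabla\chi_3\cdot\nabla w + (1-\chi_3)w^p]\bigr\|_{L^2}.
$$
Reintegrating from $0$ to $t$ produces the first term and, via the nonlinear source, the third term on the right of \fref{pro:eq:highsobowext}.

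The second step is to recover the spatial $H^{2s_L}$ bound on $w_{\te{ext}}$ from this $L^2$ control of $\partial_t^{s_L} w_{\te{ext}}$. Iterating \fref{trap:eq:evolution wext} and using standard elliptic regularity for the Dirichlet Laplacian gives
$$
\|w_{\te{ext}}\|_{H^{2s_L}}^2 \lesssim \|\partial_t^{s_L} w_{\te{ext}}\|_{L^2}^2 + \sum_{j=0}^{s_L-1}\bigl\|\Delta^j[\Delta\chi_3 w + 2\nabla\chi_3\cdot\nabla w + (1-\chi_3)w^p]\bigr\|_{L^2}^2,
$$
and the second sum, once bounded in the trapped regime, contributes the last (non-integrated) term of \fref{pro:eq:highsobowext}. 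Thus the proof reduces to estimating the source terms and their time/space derivatives in the trapped regime.

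For the source terms, one splits into the linear cut-off contributions $\Delta\chi_3 w + 2\nabla\chi_3\cdot\nabla w$, supported in the annulus $\{3 \leq |x| \leq 6\}$, and the purely nonlinear piece $(1-\chi_3)w^p$. On the annular support the approximate blow-up profile $\chi\tau_z \tilde{Q}_{b,1/\lambda}$ vanishes, so $w = u$ there and all spatial derivatives are controlled by $\|w\|_{H^{2s_L}}$ through the bootstrap bounds \fref{trap:eq:bounds varepsilon}, \fref{trap:bd w}. Time derivatives are converted to spatial ones via repeated use of \fref{trap:eq:evolution wext} at the cost of polynomial expressions in $w$; each trade costs two spatial derivatives but introduces no loss because the bootstrap supplies $\|w\|_{H^{2s_L}}$, and the $s$-gains come from the relation $\lambda^2 s \sim s^{-\alpha/(2\ell-\alpha)}$ of \fref{trap:eq:lambda}. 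The nonlinear term $(1-\chi_3)w^p$ is handled by Leibniz, H\"older, Sobolev embedding and interpolation between $H^\sigma$ and $H^{2s_L}$, reproducing the gain $\alpha(p-1)(\sigma-s_c)/(2(2\ell-\alpha))$ of \fref{pro:eq:highsobowext}, exactly as in the estimate \fref{pro:eq:high tildeNL} for $\tilde{NL}$ in the interior analysis.

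The main technical obstacle will be the bookkeeping of $s$- and $\lambda$-powers while iterating time derivatives through \fref{trap:eq:evolution wext}: each $\partial_t$ produces, besides $\Delta$, an additional $p w^{p-1}\partial_t w$-type term whose $s_L$-fold iteration generates a combinatorially growing family of multilinear expressions in $w$ and its derivatives. Controlling all of them simultaneously in $L^2$ at the precise rates $s^{-(L+1-\delta_0+\eta(1-\delta_0'))}\lambda^{-(2s_L-s_c)}$ (up to the documented polynomial corrections in $L^{-1}$) is what fixes the final exponents in \fref{pro:eq:highsobowext}, and the verification that no time-integrated term is worse than stated is the delicate point, resolved by combining the modulation equations \fref{trap:eq:modulation leqL-1}--\fref{trap:eq:modulation L} with the supercritical interpolation gains already established for the interior analysis.
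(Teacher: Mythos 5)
Your overall architecture coincides with the paper's proof: estimate the force terms by rewriting $\partial_t^k w$ in the exterior zone as multilinear expressions in spatial derivatives of $w$ and closing them with H\"older, Sobolev embedding and $H^{\sigma}$--$H^{2s_L}$ interpolation (the paper's Step 1, formula \fref{pro:eq:low ext partialtkw expression} and the bounds \fref{pro:eq:highsobowext partialtsLw}--\fref{pro:eq:highsobowext partialtkwp}); then an $L^2$ energy estimate for $\partial_t^{s_L}w_{\te{ext}}$ (Step 2); then an induction on elliptic regularity, legitimate thanks to the compatibility conditions \fref{trap:eq:compatibilite}, to recover the $H^{2s_L}$ bound (Step 3). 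So the route is the same.

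There is, however, one concrete step that fails as written: in your ``basic energy identity'' you drop the dissipation term and bound the right-hand side by $\bigl\Vert\partial_t^{s_L}[\Delta\chi_3 w+2\nabla\chi_3\cdot\nabla w+(1-\chi_3)w^p]\bigr\Vert_{L^2}$. The middle piece equals $2\nabla\chi_3\cdot\nabla\partial_t^{s_L}w$, and on the annulus $3\leq|x|\leq 6$ the function $\partial_t^{s_L}w$ already carries up to $2s_L$ spatial derivatives of $w$ (its leading part is $\Delta^{s_L}w$ plus multilinear corrections), so its gradient requires $2s_L+1$ derivatives of $w$ locally — one more than the trapped regime controls through $\parallel w\parallel_{H^{2s_L}}$, and no interpolation between $H^{\sigma}$ and $H^{2s_L}$ can produce it. This is exactly why the paper keeps the dissipation: in the substep ``Use of dissipation'' of its Step 2, the term $\int \partial_t^{s_L}w_{\te{ext}}\,\nabla\chi_3\cdot\nabla\partial_t^{s_L}w$ is integrated by parts so that the extra derivative lands on $\partial_t^{s_L}w_{\te{ext}}$, and the resulting $\epsilon\parallel\nabla\partial_t^{s_L}w_{\te{ext}}\parallel_{L^2}^2$ is absorbed by Young's inequality into the retained term $-2\int|\nabla\partial_t^{s_L}w_{\te{ext}}|^2$, leaving only quantities controlled by \fref{pro:eq:highsobowext partialtsLw}. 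Your argument goes through once you keep the dissipation and perform this integration by parts; as stated, the claimed Cauchy--Schwarz bound invokes a norm that the bootstrap does not provide.
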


\begin{proof}

From the time evolution \fref{trap:eq:evolution wext} of $w_{\te{ext}}$ we get that :
\be \la{highsobowext:eq:expression}
\partial_t^{k+1} w_{\te{ext}} = \Delta \partial_t^k w_{\te{ext}} +(1-\chi_3)\partial_t^k (w^p)+\Delta \chi_3\partial_t^k w+2\nabla \chi_3.\nabla \partial_t^k w.
\ee
We make an energy estimate for $\partial_t^{s_L}w_{\te{ext}}$ and propagate this bound via elliptic regularity by iterations, what is a standard in the study of parabolic problems. All computations, unless mentioned, are performed on $\Omega$, and we forget about this in the notations to ease writing.

\noindent \textbf{step 1} Estimate on the force terms. We first prove some estimates on the force terms in the right hand side of \fref{highsobowext:eq:expression}. From the decomposition \fref{trap:id u} and the evolution \fref{trap:eq:evolution w} of $w$, in the exterior zone $\Omega \backslash \mathcal B^d(2)$, $\partial_t^k w$ can be written as:
\be \la{pro:eq:low ext partialtkw expression}
\partial_t^{k}w=\sum_{j=0}^{k} \sum_{\mu=(\mu_i)_{1\leq i \leq 1+j(p-1)}\in \mathbb N^{dk(p-1)}, \ \sum_{i=1}^{1+j(p-1)}|\mu_i|_1=2(k-j)} C(\mu)\prod_{i=1}^{1+j(p-1)} \partial^{\mu_i} w.
\ee
for some constants $C(\mu)$. Fix $k\leq s_L$, an integer $j$, with $0\leq j \leq k$ and a sequence of $d$-tuples $(\mu_i)_{1\leq i \leq 1+k(p-1)}\in \mathbb N^{dk(p-1)}$ satisfying $ \sum_{i=1}^{1+j(p-1)}|\mu_i|=2(k-j)$. One can assume that the $d$-tuples $\mu_i$ are order by decreasing length: $|\mu_1|\geq |\mu_2|\geq ...$.

\noindent - \emph{The case $k=s_L$}. We want to estimate the above term in the zone $\Omega \backslash \mathcal B^d(2)$.

\noindent \emph{Subcase 1:} if $|\mu_1|\geq \sigma$. Using H\"older, Sobolev embedding (since in that case $\mu_i<2s_L-\frac d 2$ for $2\leq i \leq 1+j(p-1)$), interpolation and \fref{trap:bd w}, for $\kappa>0$ small enough:
\be \la{pro:eq:highsoboext force 1}
\ba{r c l}
&\parallel \prod_{i=1}^{1+j(p-1)} \partial^{\mu_i} w \parallel_{L^2} \leq \para \partial^{\mu_1}w\para_{L^2} \prod_{i=2}^{1+j(p-1)}  \para \partial^{\mu_i} w\para_{L^{\infty}} \\
\leq & \ \parallel w \parallel_{H^{|\mu_1|}} \prod_{i=2}^{1+j(p-1)} \parallel w \parallel_{H^{\frac{d}{2}+\kappa+|\mu_i|}} \\
\leq & C(K_1,K_2) \left(\frac{1}{\lambda^{2s_L-s_c}s^{L+1-\delta_0+\eta(1-\delta_0')}} \right)^{\frac{|\mu_1|-\sigma+\sum_{i=2}^{1+j(p-1)}|\mu_i|+\frac d 2+\kappa-\sigma}{2s_L-\sigma}}\\
=& C(K_1,K_2) \left(\frac{1}{\lambda^{2s_L-s_c}s^{L+1-\delta_0+\eta(1-\delta_0')}} \right)^{1-\frac{(j(p-1)-1)(\sigma-s_c-\kappa)}{2s_L-\sigma}} \leq  \frac{C(K_1,K_2)}{\lambda^{2s_L-s_c}s^{L+1-\delta_0+\eta(1-\delta_0')}}
\ea
\ee
as $\frac{1}{\lambda^{2s_L-s_c}s^{L+1-\delta_0+\eta(1-\delta_0')}}\gg 1$ from \fref{trap:eq:lambda}.

\noindent \emph{Subcase 2:} if $|\mu_1|< \sigma$. Then $\mu_i<\sigma$ for all $1\leq i \leq j(p-1)$ and $\partial^{\mu_i}w\in L^{p_i}$ with $p_i$ given by $\frac{1}{p_i}=\frac 1 2 -\frac{\sigma-|\mu_i|}{d}$ from Sobolev embedding. We define $i_0$ as the integer $2\leq i_0\leq 1+j(p-1)$ such that $\sum_{i=1}^{i_0-1} \frac{1}{p_i}< \frac 1 2$ and $\sum_{i=1}^{i_0} \frac{1}{p_i}\geq \frac 1 2$. $i_0$ exists since $\frac{1}{p_1}<\frac 1 2$ and $\sum_{i=1}^{1+j(p-1)} \frac{1}{p_i}\gg \frac{1}{2}$. We define $\tilde{p}_{i_0}>2$ by $\frac{1}{\tilde{p}_{i_0}}=\frac 1 2 -\sum_{i=1}^{i_0-1} \frac{1}{p_i}$ and $\tilde s\geq \sigma$ as the regularity giving the Sobolev embedding $H^{\tilde s-|\mu_{i_0}|}\rightarrow L^{\tilde{p}_{i_0}}$:
$$
\tilde{s}=\sum_{i=1}^{i_0}|\mu_i|+(i_0-1)\left(\frac{d}{2}-\sigma\right).
$$
This implies that $\prod_{i=1}^{i_0}\partial^{\mu_i}w\in L^2$ with the estimate (from H\"older inequality):
$$
\ba{r c l}
\parallel \prod_{i=1}^{i_0}\partial^{\mu_i}w \parallel_{L^2}&\leq& C\parallel \partial^{\mu_{i_0}}w \parallel_{L^{\tilde{p}_{i_0}}}\prod_{i=1}^{i_0-1} \parallel \partial^{\mu_i} w\parallel_{L^{p_i}} \leq  \parallel w \parallel_{H^{\tilde{s}}}\prod_{i=1}^{i_0-1} \parallel w\parallel_{H^{\sigma}} \\
&\leq & C(K_1) \parallel w\parallel_{H^{2s_L}}^{\frac{\tilde{s}-\sigma}{2s_L-\sigma}}
\ea
$$
where we used interpolation and \fref{trap:eq:bounds varepsilon}. Therefore, for $\kappa>0$ small enough, using Sobolev embedding, the above estimate, interpolation and \fref{trap:eq:bounds varepsilon}:
\be \la{pro:eq:highsoboext force 2}
\ba{r c l}
&\parallel \prod_{i=1}^{1+j(p-1)} \partial^{\mu_i} w \parallel_{L^2} \ \leq \ \parallel \prod_{i=1}^{i_0}\partial^{\mu_i}w \parallel_{L^2} \prod_{i=i_0+1}^{1+j(p-1)} \parallel w \parallel_{H^{\frac{d}{2}+\kappa+|\mu_i|}} \\
\leq &C(K_1) \parallel w\parallel_{H^{2s_L}}^{\frac{\tilde{s}-\sigma}{2s_L-\sigma}} \prod_{i=i_0+1}^{1+j(p-1)} \parallel w \parallel_{H^{\sigma}}^{1-\frac{\frac{d}{2}+\kappa+|\mu_i|-\sigma}{2s_L-\sigma}}  \parallel w \parallel_{H^{2s_L}}^{\frac{\frac{d}{2}+\kappa+|\mu_i|-\sigma}{2s_L-\sigma}} \\
\leq & C(K_1,K_2) \left(\frac{1}{\lambda^{2s_L-s_c}s^{L+1-\delta_0+\eta(1-\delta_0')}} \right)^{\frac{2s_L-\sigma-j(p-1)(\sigma-s_c)+(j(p-1)-i_0+1)\kappa}{2s_L-\sigma}}\\
\leq & C(K_1,K_2) \frac{1}{\lambda^{2s_L-s_c}s^{L+1-\delta_0+\eta(1-\delta_0')}}
\ea
\ee
as $\frac{1}{\lambda^{2s_L-s_c}s^{L+1-\delta_0+\eta(1-\delta_0')}}\gg 1$ from \fref{trap:eq:lambda}.

\noindent \emph{End of substep 1:} injecting \fref{pro:eq:highsoboext force 1} and \fref{pro:eq:highsoboext force 2} in the identity we obtain:
\be \la{pro:eq:highsobowext partialtsLw}
\parallel \partial_t^{s_L} w \parallel_{L^2(\Omega\backslash \mathcal B^d(2))}\leq  \frac{C(K_1,K_2)}{\lambda^{2s_L-s_c}s^{L+1-\delta_0+\eta(1-\delta_0')}}.
\ee

\noindent \emph{Estimate for the nonlinear term in \fref{highsobowext:eq:expression}}. With the very same arguments used in the first substep one obtains the following bound:
\be \la{pro:eq:highsobowext partialtsLwp}
\parallel   \partial_t^{s_L} w^p \parallel_{L^2(\Omega \backslash \mathcal B^d(2))}\leq  \frac{C(K_1,K_2)}{\lambda^{2s_L-s_c+2}s^{L+2-\delta_0+\eta(1-\delta_0')+\frac{\alpha}{2L}+O\left(\frac{\sigma-s_c+\eta}{L} \right)}}.
\ee

\noindent - \emph{The case $k<s_L$}. Again, the verbatim same methods yields for $0\leq k<s_L$:
\be \la{pro:eq:highsobowext partialtkw}
\parallel \partial_t^{k} w \parallel_{H^{2(s_L-1-k)}(\Omega \backslash \mathcal B^d(2))}\leq  \frac{C(K_1,K_2)}{\lambda^{2s_L-s_c}s^{L+1-\delta_0+\eta(1-\delta_0')+\frac{\alpha}{2\ell-\alpha}+O\left(\frac 1 L \right)}}.
\ee
\be \la{pro:eq:highsobowext partialtknablaw}
\parallel  \nabla  \partial_t^{k} w \parallel_{H^{2(s_L-1-k)}(\Omega \backslash \mathcal B^d(2))}\leq  \frac{C(K_1,K_2)}{\lambda^{2s_L-s_c}s^{L+1-\delta_0+\eta(1-\delta_0')+\frac{\alpha}{2(2\ell-\alpha)}+O\left(\frac 1 L \right)}}.
\ee
\be \la{pro:eq:highsobowext partialtkwp}
\parallel   \partial_t^{k} w^p \parallel_{H^{2(s_L-1-k)}(\Omega \backslash \mathcal B^d(2))} \leq \frac{C(K_1,K_2)}{\lambda^{2s_L-s_c}s^{L+1-\delta_0+\eta(1-\delta_0')+\frac{\alpha (p-1) (\sigma-s_c)}{2(2\ell-\alpha)}+O\left(\frac{\sigma-s_c+\eta}{L} \right)}}.
\ee

\noindent \textbf{step 2} Energy estimate for $\partial_t^{s_L}w_{\te{ext}}$. We claim that for $0\leq t<t'$:
\be \la{eq:highsobo partialtsLwext}
\ba{r c l}
\parallel \partial_t^{s_L}w_{\te{ext}}\parallel_{L^2}^2 & \leq & \parallel \partial_t^{s_L}w_{\te{ext}}(0) \parallel_{L^2}^2+\int_0^{t} \frac{C(K_1,K_2)}{\lambda^{2(2s_L-s_c)+2}s^{2L+3-2\delta_0+2\eta(1-\delta_0')+\frac{\alpha}{2\ell-\alpha}}}dt'\\
&&+\int_0^t \frac{C(K_1,K_2) \parallel \partial_t^{s_L} w_{\te{ext}}(t') \parallel_{L^2}}{\lambda^{2s_L-s_c+2}s^{L+2+1-\delta_0+\eta(1-\delta_0')+\frac{\alpha}{2L}+O\left(\frac{\eta+\sigma-s_c}{L} \right)}}dt'
\ea
\ee
and we now prove this estimate. From \fref{highsobowext:eq:expression} one has the identity:
\be \la{highsobo partialtsLwext expression}
\ba{r c l}
\partial_t (\parallel \partial_t^{s_L} w_{\te{ext}} \parallel_{L^2}^2) & = & -2\int |\nabla \partial_t^{s_L} w_{\te{ext}}|^2+4 \int \partial_t^{s_L} w_{\te{ext}} \nabla \chi_3.\nabla \partial_t^{s_L} w \\
&&+2\int \partial_t^{s_L} w_{\te{ext}} \partial_t^{s_L}((1-\chi_3)w^p+\Delta \chi_3w)
\ea 
\ee
and we are now going to study the right hand side of this equation.

\noindent - \emph{Use of dissipation}. We study all the terms except the nonlinear one in \fref{highsobo partialtsLwext expression}. After an integration by parts, using Cauchy-Schwarz, Young's and Poincare's inequalities:
$$
\ba{r c l}
&\left| \int \partial_t^{s_L} w_{\te{ext}} \nabla \chi_3.\nabla \partial_t^{s_L} w+ \int \partial_t^{s_L} w_{\te{ext}} \partial_t^{s_L}(\Delta \chi_3w)\right|\\
 &= \left| -\int \Delta \chi_3 \partial_t^{s_L} w\partial_t^{s_L} w_{\te{ext}}- \nabla \chi_3.\nabla \partial_t^{s_L} w_{\te{ext}}\partial_t^{s_L} w +\int \partial_t^{s_L} w_{\te{ext}} \partial_t^{s_L}(\Delta \chi_3w)\right| \\
\leq &C[ \parallel (1-\chi_2) \partial_t^{s_L} w \parallel_{L^2} \parallel \partial_t^{s_L} w_{\te{ext}} \parallel_{L^2}+\parallel (1-\chi_2)\partial_t^{s_L} w \parallel_{L^2} \parallel \nabla \partial_t^{s_L} w_{\te{ext}} \parallel_{L^2}]\\
\leq& C(\epsilon) \parallel (1-\chi_2)\partial_t^{s_L} w \parallel_{L^2} +\epsilon \parallel \nabla \partial_t^{s_L} w \parallel_{H^1}^2,
\ea
$$
for any $\epsilon>0$. Adding the dissipation term in \fref{highsobo partialtsLwext expression}, taking $\epsilon$ small enough and using the bound \fref{pro:eq:highsobowext partialtsLw} on the force term $\partial_t^{s_L} w$  gives:
\be \la{pro:eq:highsobowext energy 1}
\ba{r c l}
& -\int |\nabla \partial_t^{s_L} w_{\te{ext}}|^2 +4 \int \nabla \chi_3.\nabla \partial_t^{s_L} w\partial_t^{s_L} w_{\te{ext}} +\int \partial_t^{s_L} w_{\te{ext}} \partial_t^{s_L}(\Delta \chi_{B(0,3)}w)\\
\leq & C\parallel (1-\chi_2)\partial_t^{s_L} w \parallel_{L^2} ^2 \leq C\parallel \partial_t^{s_L} w \parallel_{L^2} ^2 \leq  \frac{C(K_1,K_2)}{\lambda^{2(2s_L-s_c)}s^{2L+2-2\delta_0+2\eta(1-\delta_0')}} \\
\leq & \frac{C(K_1,K_2)}{\lambda^{2(2s_L-s_c)+2}s^{2L+3-2\delta_0+2\eta(1-\delta_0')+\frac{\alpha}{2\ell-\alpha}}}
\ea
\ee
because in the trapped regime, $\lambda^2s\sim s^{-\frac{\alpha}{2\ell-\alpha}}$.

\noindent - \emph{Estimate for the non linear term}. We now turn to the non linear term in \fref{highsobo partialtsLwext expression}, and use the estimate \fref{pro:eq:highsobowext partialtsLwp} for $\partial_t^{s_L}w^p$ we found in the first step, yielding:
\be \la{pro:eq:highsobowext energy 2}
\left| \int \partial_t^{s_L} w_{\te{ext}} \partial_t^{s_L}((1-\chi_3)w^p\right| \leq \frac{C(K_1,K_2) \parallel \partial_t^{s_L} w_{\te{ext}} \parallel_{L^2}}{\lambda^{2s_L-s_c+2}s^{L+2+1-\delta_0+\eta(1-\delta_0')+\frac{\alpha}{2L}+O\left(\frac{\eta+\sigma-s_c}{L} \right)}} .
\ee

\noindent - \emph{End of Step 2}: we collect the estimates \fref{pro:eq:highsobowext energy 1} and \fref{pro:eq:highsobowext energy 2} found in the previous substeps, what gives the desired bound \fref{eq:highsobo partialtsLwext} we claimed in this Step.\\

\noindent \textbf{step 3} Iteration of elliptic regularity. We claim that for $i=0...s_L$:
\be \la{pro:eq:highsobowext partialtiwext}
\ba{r c l}
\parallel \partial_t^iw_{\te{ext}}\parallel_{H^{2(s_L-i)}}^2 &\leq& \parallel \partial_t^{s_L}w_{\te{ext}}(0) \parallel_{L^2}^2+\int_0^{t} \frac{C(K_1,K_2)}{\lambda^{2(2s_L-s_c)+2}s^{2L+3-2\delta_0+2\eta(1-\delta_0')+\frac{\alpha}{2\ell-\alpha}}}dt'\\
&&+\int_0^t \frac{C(K_1,K_2) \parallel \partial_t^{s_L} w_{\te{ext}}(t') \parallel_{L^2}}{\lambda^{2s_L-s_c+2}s^{L+2+1-\delta_0+\eta(1-\delta_0')+\frac{\alpha}{2L}+O\left(\frac{\eta+\sigma-s_c}{L} \right)}}dt'\\
&&+ \frac{C(K_1,K_2)}{\lambda^{2(2s_L-s_c)}s^{2L+2-2\delta_0+2\eta(1-\delta_0')+\frac{\alpha(p-1)(\sigma-s_c)}{2(2\ell-\alpha)}+O\left(\frac{\sigma-s_c+\eta}{L} \right)}}.
\ea
\ee
We are going to show this estimate by induction. This is true for $i=s_L$ from the result \fref{eq:highsobo partialtsLwext} of the last step, and because of the compatibility conditions \fref{trap:eq:compatibilite} at the border. Now suppose it is true for $i$, with $1\leq i \leq s_L$. Then as $\partial_t^{i-1}w_{\te{ext}}$ solves \fref{highsobowext:eq:expression}, from elliptic regularity one gets (again because of the compatibility conditions \fref{trap:eq:compatibilite} at the border), from the induction hypothesis and the bounds \fref{pro:eq:highsobowext partialtkwp}, \fref{pro:eq:highsobowext partialtkwp} and \fref{pro:eq:highsobowext partialtkwp} on the force terms:
$$
\ba{r c l}
&\parallel \partial_t^{i-1} w_{\te{ext}} \parallel_{H^{2(s_L-i)+2}}^2\\
\leq &\parallel (1-\chi_{B(0,4)})\partial_t^{i-1}(w^p) +\Delta \chi_{B(0,4)}\partial_t^{i-1} w+2\nabla \chi_{B(0,4)}.\nabla \partial_t^{i-1} w \parallel_{H^{2(s_L-i)}}^2\\
&+\parallel \partial_t^{i} w_{\te{ext}} \parallel_{H^{2(s_L-i)}}^2 \\
\leq & \parallel \partial_t^{s_L}w_{\te{ext}}(0) \parallel_{L^2}^2+\int_0^{t} \frac{C(K_1,K_2)}{\lambda^{2(2s_L-s_c)+2}s^{2L+3-2\delta_0+2\eta(1-\delta_0')+\frac{\alpha}{2\ell-\alpha}}}dt'\\
&+\int_0^t\frac{C(K_1,K_2) \parallel \partial_t^{s_L} w_{\te{ext}}(t') \parallel_{L^2}}{\lambda^{2s_L-s_c+2}s^{L+2+1-\delta_0+\eta(1-\delta_0')+\frac{\alpha}{2L}+O\left(\frac{\eta+\sigma-s_c}{L} \right)}}dt'\\
&+ \frac{C(K_1,K_2)}{\lambda^{2(2s_L-s_c)}s^{2L+2-2\delta_0+2\eta(1-\delta_0')+\frac{\alpha(p-1)(\sigma-s_c)}{2(2\ell-\alpha)}+O\left(\frac{\sigma-s_c+\eta}{L} \right)}}
\ea
$$
This shows that the inequality \fref{pro:eq:highsobowext partialtiwext} is true for $i-1$. Hence, by iterations, the inequality \fref{pro:eq:highsobowext partialtiwext} is true for $i=0$, what gives the estimate \fref{pro:eq:highsobowext} we had to prove.

\end{proof}

%%%%%%%%%%%%%%%%%%%%%%%%%%%%%%%%%%%%%%%%%%%%
%%%%%%%%%%%%%%%%%%%%%%%%%%%%%%%%%%%%%%%%%%%%

\subsection{End of the proof of Proposition \ref{trap:pr:bootstrap}}

Proposition \ref{trap:pr:bootstrap} states that, once the constants of involved in the analysis that are listed at its beginning are well chosen, given an initial data of \fref{eq:NLH} that is a perturbation of the approximate blow up profile along the stable directions of perturbation, there is a way to perturb it along the instable directions of perturbation to produce a solution that stays trapped for all time in the sense of Definition \ref{trap:def:trapped solution}. The strategy of the proof is the following. We argue by contradiction and suppose that for all perturbations along the instable directions the corresponding solution will eventually escape from the trapped regime. First, we characterize the exit of the trapped regime through a condition on the size of the instable parameters, and then we show that arguing by contradiction would amount to go against Brouwer's fixed point theorem.\\

\noindent We fix $\lambda(s_0)$ satisfying \fref{trap:eq:lambdas0}, $w (s_0)$ decomposed in \fref{trap:eq:decomposition} satisfying \fref{trap:eq:bounds varepsilon0} and \fref{trap:eq:ortho}, $V_1(s_0)$, $\left(U^{(0,1)}_{\ell+1}(s_0),...,U_L^{(0,1)}(s_0)\right)$ and $\left(U^{(n,k)}_i(s_0)\right)_{(n,k,i)\in \mathcal I \ \text{with} \ 1\leq n, \ i_n \leq i }$ satisfying \fref{trap:eq:bound stable01}, \fref{trap:eq:bound stable02} and \fref{trap:eq:bound stable03}. For any $(V_2(s_0),...V_{\ell}(s_0))$ and $(U_i^{(n,k)}(s_0))_{(n,k,i)\in \mathcal I, 1\leq n, \  i < i_n}$ satisfying \fref{trap:eq:bound instable01} and \fref{trap:eq:bound instable02}, let $u$ denote the solution of \fref{eq:NLH} with initial datum $u(0)=\chi \tilde{Q}_{b(s_0),\frac{1}{\lambda(s_0)}}+w(s_0)$ with $b(s_0)$ given by \fref{trap:eq:binitial}. We define the renormalized exit time $s^*=s^*((V_2(s_0),...V_{\ell}(s_0)),(U_i^{(n,k)}(s_0))_{(n,k,i)\in \mathcal I, 1\leq n, \  i < i_n})$: 
\be \la{pro:def s*}
s^*:= \text{sup}\{ s\geq s_0, \ u \ \text{is} \ \text{trapped} \ \text{in} \ \text{the} \ \text{sense} \ \text{of} \ \text{Definition} \ \ref{trap:def:trapped solution} \ \text{on} \ [s_0,s) \}
\ee
From a continuity argument, one always have $s^*>s_0$.

\begin{lemma}[Characterization of the exit of the trapped regime] \la{pro:lem:exit}

For $L$ and $M$ large enough and $\sigma$ close enough to $s_c$, there exists a choice of the other constants in \fref{trap:def constantes}, except $s_0$ and $\eta$, such that for any $s_0$ large enough and $\eta$ small enough, if $s^*<+\infty$, at least one of the following two scenarios hold:
\begin{itemize}
\item[(i)] \emph{Exit via instabilities on the first spherical harmonics:}
$$
V_i(s^*)=(s^*)^{-\tilde{\eta}} \ \ \text{for} \ \text{some} \ 1\leq i \leq \ell.
$$
\item[(ii)] \emph{Exit via instabilities on the other spherical harmonics:}
$$
U_i^{(n,k)}(s^*)=1 \ \ \text{for} \ \text{some} \ (n,k,i)\in \mathcal I, \ \text{with} \ 1\leq n \ \text{and} \ i<i_n.
$$
\end{itemize}

\end{lemma}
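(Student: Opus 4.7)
The plan is to argue by contrapositive: assume that at time $s^*$ neither (i) nor (ii) holds, so that the bounds \fref{trap:eq:bound instable} for $2\leq i\leq\ell$ and \fref{trap:eq:bound instable2} on the unstable directions remain strict at $s^*$. I will then show that all the other defining conditions of the trapped regime---the bounds on the stable modes (the $V_1$ and $(U_i^{(0,1)})_{\ell+1\leq i\leq L}$ parts of \fref{trap:eq:bound instable}, and \fref{trap:eq:bound stable} for $n\geq 1$), the remainder bounds \fref{trap:eq:bounds varepsilon}, and the scale/position bounds \fref{trap:eq:lambda hp}---are in fact \emph{strictly} improved at $s^*$. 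By continuity this contradicts the maximality of $s^*$ in \fref{pro:def s*}, forcing one of (i), (ii) to hold.

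\textbf{Strict improvement of the remainder bounds.} Plug the bootstrap assumptions of Definition \ref{trap:def:trapped solution} into the Lyapunov estimates. For the slightly supercritical norm, combine \fref{pro:eq:mathcalEsigma} with \fref{pro:eq:lowsobowext}; all terms on the right carry an extra $s^{-\alpha/(4L)}$ or a power of $\sqrt{\mathcal{E}_\sigma}/s^{-(\sigma-s_c)/2}\ll 1$, so after integration in time from $s_0$ and use of the initial bound \fref{trap:eq:bounds varepsilon0}, one gets $\mathcal{E}_\sigma/\lambda^{2(\sigma-s_c)}+\|w_{\text{ext}}\|_{H^\sigma}^2\leq K_1/2$ once $K_1$ is chosen large and $s_0$ large. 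For the high regularity norm, integrate the modified energy identity \fref{pro:eq:mathcalE2sL} together with \fref{pro:eq:highsobowext}; the $O_{(L,M)}(\cdot)$ corrective term is subleading, the localized term $\mathcal E_{2s_L}/N^{2\delta}$ is absorbed by choosing $N$ large, the nonlinear factor $\sqrt{\mathcal E_\sigma}^{k-1}/s^{-(\sigma-s_c)(k-1)/2}$ is $\ll 1$ in the trapped regime, and the source $s^{-2L-2+2\delta_0-2\eta(1-\delta_0')}$ is precisely at the threshold, so that choosing $K_2$ large compared to the bootstrap initial data yields the strict improvement of \fref{trap:eq:bounds varepsilon}.

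\textbf{Strict improvement of $\lambda$, $z$ and the stable modes.} The modulation \fref{trap:eq:modulation leqL-1} together with the now-improved $\sqrt{\mathcal E_{2s_L}}$ control gives $\lambda_s/\lambda=-b_1^{(0,1)}+O(s^{-L-2})$; using \fref{trap:bd bnki} this reads $\lambda_s/\lambda=-c_1/s+O(s^{-1-\tilde\eta})$, which integrates from \fref{trap:eq:lambdas0} to $\lambda(s)=(1+O(s_0^{-\tilde\eta}))s^{-\ell/(2\ell-\alpha)}$, strictly improving $\lambda\leq 2s^{-\ell/(2\ell-\alpha)}$. Similarly, $|z_s|\lesssim\lambda s^{-(\alpha+1)/2}\lesssim s^{-1-\alpha/(2(2\ell-\alpha))-(\alpha-1)/2}$ integrates into $|z(s)|=O(s_0^{-\tilde\eta})\ll 1/10$. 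For the stable modes, apply the change of variable $(U_1,\ldots,U_\ell)\mapsto P_\ell'\cdot$ of \fref{linearized:eq:def P}: in these coordinates the modulation equation \fref{trap:eq:modulation leqL-1} and Lemma \ref{cons:lem:linearisation} give, for each stable eigenvalue $\mu<0$ of $A$, an ODE $\partial_s V=\mu V/s+O(s^{-1-\delta'})$ with $\delta'>0$ coming from the quadratic remainder plus the improved $\sqrt{\mathcal E_{2s_L}}$ gain; integration yields $|V(s)|\leq |V(s_0)|(s_0/s)^{|\mu|}+Cs^{-\delta'}$, which combined with the smallness \fref{trap:eq:bound stable01}--\fref{trap:eq:bound stable03} of the initial data strictly improves the bootstrap for $V_1$ and for every $U_i^{(n,k)}$ with $i>i_n$. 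For the borderline case $i=i_n$, where the corresponding eigenvalue is zero, the bootstrap \fref{trap:eq:bound stable} only asks $|U_{i_n}^{(n,k)}|\leq\epsilon_{i_n}^{(n,k)}$ without the $s^{-\tilde\eta}$ factor, and this is preserved because the drift integrates to $O(s^{-\delta'})$, far less than $\epsilon_{i_n}^{(n,k)}/10$.

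\textbf{Main obstacle.} The delicate point is the last-index modes $i=L_n$: the modulation estimate \fref{trap:eq:modulation L} loses a power of $s$ compared to the sharp law \fref{trap:eq:modulation leqL-1}, and so it does \emph{not} directly close the bootstrap on $U_{L_n}^{(n,k)}$. This is precisely the purpose of the improved modulation Lemma \ref{pro:lem:modulation bLn}: the exact time derivative $\tfrac{d}{ds}[\langle H^{L_n}(\varepsilon-\sum_2^{L+2}S_j),\chi_{B_0}T_0^{(n,k)}\rangle/\langle\chi_{B_0}T_0^{(n,k)},T_0^{(n,k)}\rangle]$ in \fref{pro:eq:modulation Ln} can be integrated in time absorbing this borderline contribution, and the bound \fref{pro:eq:gain modulation Ln} shows that the net correction is of size $s^{-L-g'/2+\delta_0-\delta_n}$ or $s^{-L+\delta_0-\delta_n+\eta(1-\delta_0')}$, strictly smaller than the required $s^{-L-\delta_n+\delta_0}$ for the corresponding $U_{L_n}^{(n,k)}$, thus closing the bootstrap on this last stable direction as well. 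Once all bounds other than the unstable-mode ones are strictly improved, the exit condition at $s^*$ must come from (i) or (ii), concluding the proof.
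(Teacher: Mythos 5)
Your overall strategy is the paper's: integrate the Lyapunov estimates of Propositions \ref{pro:pr:mathcalEsigma}, \ref{pro:pr:lowsobowext}, \ref{pro:pr:mathcalE2sL}, \ref{pro:pr:highsobowext} to improve strictly the remainder bounds, re-integrate the modulation equations to improve strictly the scale, the position and the stable parameters (with the improved modulation Lemma \ref{pro:lem:modulation bLn} handling the last indices $i=L_n$), and conclude that the exit can only occur through the unstable directions. The contrapositive phrasing versus the paper's direct one is immaterial. There is however a genuine gap in your treatment of the stable modes.

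After the change of variables $P_\ell$ the linearized system is \emph{not} decoupled: the matrix in \fref{cons:eq:diagonalisation} retains the couplings $q_iU_{\ell+1}^{(0,1)}/s$ in the first block and the superdiagonal $1$'s in each block $\tilde A_n'$, i.e.\ the equation for $U_i^{(n,k)}$ contains the term $U_{i+1}^{(n,k)}/s$. Under the bootstrap this term has size $\epsilon_{i+1}^{(n,k)}s^{-1-\tilde\eta}$, and after integration against the integrating factor of the strictly negative diagonal entry (equivalently, against $g_i^{(n,k)}$ as in \fref{pro:eq:end def g}) it contributes $C(L)\,\epsilon_{i+1}^{(n,k)}s^{-\tilde\eta}$ to $U_i^{(n,k)}(s^*)$ --- exactly the same power $s^{-\tilde\eta}$ as the bound $\epsilon_i^{(n,k)}s^{-\tilde\eta}$ you need to recover with a factor $1/2$, and similarly $q_1U_{\ell+1}^{(0,1)}$ contributes $C\epsilon_{\ell+1}^{(0,1)}s^{-\tilde\eta}$ against the target $\tfrac12 s^{-\tilde\eta}$ for $V_1$. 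This is not an ``$O(s^{-1-\delta'})$ error integrating to $Cs^{-\delta'}$'' that can be beaten by taking $s_0$ large: no choice of $s_0$ reduces it, so the scalar ODE reduction as you state it does not close the bootstrap for $V_1$ nor for $U_i^{(n,k)}$ with $i_n<i<L_n$. The paper closes it through a hierarchical choice of the smallness constants, made in a precise order consistent with \fref{trap:def constantes}: first $\epsilon_{\ell+1}^{(0,1)}$ small (to absorb $q_1U_{\ell+1}^{(0,1)}$ in \fref{pro:eq:end V1}), then $\epsilon_{\ell+2}^{(0,1)}$ small relative to $\epsilon_{\ell+1}^{(0,1)}$, and so on, see \fref{pro:eq:end bi}, \fref{pro:eq:end bin} where the right hand side contains $\tfrac{3\epsilon_i^{(n,k)}}{20}+C(L)\epsilon_{i+1}^{(n,k)}$; the cascade terminates at $i=L_n$, where Lemma \ref{pro:lem:modulation bLn} (which you do invoke) provides an unconditional gain since there is no $b_{L_n+1}^{(n,k)}$. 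This cascade is precisely why the statement reads ``there exists a choice of the other constants''; your write-up omits it, and the same coupling also enters your borderline case $i=i_n$, whose drift contains $U_{i_n+1}^{(n,k)}/s$ and is controlled through $\epsilon_{i_n+1}^{(n,k)}$ (and $s_0$), not by $s_0$ alone as claimed. A minor additional point: the quantity you quote from \fref{pro:eq:gain modulation Ln}, $s^{-L+\delta_0-\delta_n+\eta(1-\delta_0')}$, is not literally smaller than $s^{-L+\delta_0-\delta_n}$; the actual gain in the paper's Step 2 comes from comparing $g_{L_n}^{(n,k)}$-weighted integrals with $\tilde\eta<\eta(1-\delta_0')$, so this step should be argued as in \fref{pro:eq:end bLn} rather than by the rough size comparison you give.
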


\begin{proof}[Proof of Lemma \ref{pro:lem:exit}]

A solution $u$ is trapped if the parameters and the error involved in its decomposition \fref{trap:id u} satisfy the bounds \fref{trap:eq:bound instable}, \fref{trap:eq:bound instable2}, \fref{trap:eq:bound stable}, \fref{trap:eq:bounds varepsilon} and \fref{trap:eq:lambda}. At time $s^*$, the bound \fref{trap:eq:lambda} is strict at from \fref{trap:eq:bound z} and \fref{trap:eq:lambda}, and we are going to prove that \fref{trap:eq:bounds varepsilon} is strict in step 1 and that \fref{trap:eq:bound stable} is strict in step 2. Thus, \fref{trap:eq:bound instable} or \fref{trap:eq:bound instable2} must be violated at the time $s^*$ and the Lemma is proved.

\noindent \textbf{step 1} Improved bounds for the remainder $w$. We claim that:
\be \la{pro:eq:end varepsilon}
\ba{l l}
\mathcal E_{\sigma}(s^*)\leq \frac{K_1}{2(s^*)^{\frac{2(\sigma-s_c)\ell}{2\ell-\alpha}}}† , \ \ \mathcal E_{2s_L}(s^*)\leq \frac{K_2}{2(s^*)^{2L+2-2\delta_0+2\eta(1-\delta_0')}}, \\
\parallel w_{\te{ext}}(s^*) \parallel_{H^{\sigma}}^2\leq \frac{K_1}{2} \ \ \text{and} \ \ \parallel w_{\te{ext}}(s^*)\parallel_{H^{2s_L}}^2\leq \frac{K_2}{2\lambda^{2(2s_L-s_c)s^{2L+2(1-\delta_0)+2\eta(1-\delta_0')}}}
\ea
\ee
and we now prove these estimates.

\noindent - \emph{Bound on $\mathcal E_{\sigma}$}: Let $K_1$ and $K_2$ be any strictly positive real numbers. Then from Proposition \ref{pro:pr:mathcalEsigma} there holds for $s_0$ and $\eta$ large enough:
$$
\frac{d}{dt}\left\{ \frac{\mathcal{E}_{\sigma}}{\lambda^{2(\sigma-s_c)}} \right\} \leq \frac{\sqrt{\mathcal{E}_{\sigma}}}{\lambda^{2(\sigma-s_c)+2}s^{\frac{(\sigma-s_c)\ell}{2\ell-\alpha}+1}}   \frac{1}{s^{\frac{\alpha}{4L}}}\left[1+\sum_{k=2}^p\left( \frac{\sqrt{\mathcal{E}_{\sigma}}}{s^{-\frac{\sigma-s_c}{2}}}\right)^{k-1}\right].
$$
On $[s_0,s^*]$ one has $\frac{\sqrt{\mathcal{E}_{\sigma}}}{s^{-\frac{\sigma-s_c}{2}}}\leq K_1s^{-\frac{\alpha(\sigma-s_c)}{4\ell-2\alpha}}$ from \fref{trap:eq:bounds varepsilon}, hence for $s_0$ large enough:
$$
\frac{d}{dt}\left\{ \frac{\mathcal{E}_{\sigma}}{\lambda^{2(\sigma-s_c)}} \right\} \leq \frac{\sqrt{\mathcal{E}_{\sigma}}}{\lambda^{2(\sigma-s_c)+2}s^{\frac{(\sigma-s_c)\ell}{2\ell-\alpha}+1}}\frac{1}{s^{\frac{\alpha}{8L}}}.
$$
One has $\lambda=\left(\frac{s_0}{s} \right)^{\frac{\ell}{2\ell-\alpha}}(1+O(s_0^{-\tilde{\eta}}))$ from \fref{trap:eq:lambda} and we assume that $|O(s_0^{-\tilde{\eta}})|\leq\frac{1}{2}$. We reintegrate the above equation using \fref{trap:eq:bounds varepsilon} and \fref{trap:eq:bounds varepsilon0}:
$$
\mathcal E_{\sigma}(s^*)\leq \frac{1}{(s^*)^{\frac{2\ell(\sigma-s_c)}{2\ell-\sigma}}}\left( \left(\frac{3}{2}\right)^{2\sigma-s_c}+s_0^{\frac{2\ell(\sigma-s_c)}{2\ell-\alpha}}\frac{2^{2(\sigma-s_c)+3}L}{\alpha s_0^{\frac{\alpha}{8L}}}\sqrt{K_1}  \right).
$$
Therefore, once $L$ is fixed we choose $\sigma$ close enough to $s_c$ so that $\frac{\alpha}{8L}>\frac{2\ell (\sigma-s_c)}{2\ell-\alpha}$ and then for $s_0$ large enough one has $s_0^{\frac{2\ell(\sigma-s_c)}{2\ell-\alpha}}\frac{2^{2(\sigma-s_c)+3}L}{\alpha s_0^{\frac{\alpha}{8L}}}\leq 1$. For any choice of the constants $K_1>10$ there then holds:
\be \la{pro:eq:end mathcalEsigma}
\mathcal E_{\sigma}(s^*) \leq \frac{1}{(s^*)^{\frac{2\ell(\sigma-s_c)}{2\ell-\sigma}}}\left( \left(\frac{3}{2}\right)^{2\sigma-s_c}+\sqrt{K_1}  \right)\leq \frac{K_1}{2(s^*)^{\frac{2\ell(\sigma-s_c)}{2\ell-\sigma}}}.
\ee

\noindent - \emph{Bound on $\mathcal E_{2s_L}$}: Let $K_1$ and $K_2$ be any strictly positive real numbers. From Proposition \ref{pro:pr:mathcalE2sL}, for any $N\gg 1$ there holds for $s_0$ and $\eta$ large enough:
$$
\ba{r c l}
&\frac{d}{dt}  \left\{ \frac{\mathcal E_{2s_L}}{\lambda^{2(2s-L-s_c)}}  +O_{(L,M)}\left(\frac{1}{\lambda^{2(2s_L-s_c)}s^{L+1-\delta_0+\eta(1-\delta_0')}}(\sqrt{\mathcal E_{2s_L}}+\frac{1}{s^{L+1-\delta_0+\eta(1-\delta_0')}}) \right)  \right\} \\
\leq& \frac{1}{\lambda^{2(2s_L-s_c)+2}s}\Bigl{[} \frac{C(L,M)}{s^{2L+2-2\delta_0+2(1-\delta_0')}} + \frac{C(L,M)\sqrt{\mathcal E_{2s_L}}}{s^{L+1-\delta_0+\eta(1-\delta_0')}}+\frac{C(L,M)}{N^{2\delta}}\mathcal E_{2s_L} \\
&+\mathcal E_{2s_L}\sum_2^p \left(\frac{\sqrt{\mathcal E_{\sigma}}^{1+O\left(\frac{1}{L} \right)}}{s^{-\frac{\sigma-s_c}{2}}} \right)^{k-1}\frac{C(L,M,K_1,K_2)}{s^{\frac{\alpha}{L}+O\left(\frac{\eta+\sigma-s_c}{L} \right)}} +\frac{C(L,M,K_1,K_2)\sqrt{\mathcal E_{2s_L}}}{s^{L+1-\delta_0+\eta(1-\delta_0')+\frac{\alpha}{2L}+O\left(\frac{\sigma-s_c+\eta}{L} \right)}}\Bigr{]},
\ea
$$
In the trapped regime, from \fref{trap:eq:bounds varepsilon} one has: $\frac{\sqrt{\mathcal{E}_{\sigma}}}{s^{-\frac{\sigma-s_c}{2}}}\leq K_1s^{-\frac{\alpha(\sigma-s_c)}{4\ell-2\alpha}}$. Consequently, for $N$ and $s_0$ large enough the previous identity becomes:
$$
\ba{r c l}
&\frac{d}{dt}  \left\{ \frac{\mathcal E_{2s_L}}{\lambda^{2(2s-L-s_c)}}  +O_{(L,M)}\left(\frac{1}{\lambda^{2(2s_L-s_c)}s^{L+1-\delta_0+\eta(1-\delta_0')}}(\sqrt{\mathcal E_{2s_L}}+\frac{1}{s^{L+1-\delta_0+\eta(1-\delta_0')}}) \right)  \right\} \\
\leq& \frac{1}{\lambda^{2(2s_L-s_c)+2}s}\Bigl{[} \frac{C(L,M)}{s^{2L+2-2\delta_0+2(1-\delta_0')}} + \frac{C(L,M)\sqrt{\mathcal E_{2s_L}}}{s^{L+1-\delta_0+\eta(1-\delta-0')}}+\frac{1}{N^{2\delta}}\mathcal E_{2s_L}\Bigr{]}. 
\ea
$$
As from \fref{trap:eq:lambda}, $\lambda=\left(\frac{s_0}{s} \right)^{\frac{\ell}{2\ell-\alpha}}(1+O(s_0^{-\tilde{\eta}}))$ one gets, when reintegrating in time the previous equation using the trapped regime bounds \fref{trap:eq:bounds varepsilon} and \fref{trap:eq:bounds varepsilon0}:
\be \la{pro:eq:end mathcalE2sL}
\ba{r c l}
\mathcal E_{2s_L}(s^*) & \leq & \lambda(s^*)^{2(2s_L-s_c)}\Bigl{[} O_{(L,M)}\left(\frac{1}{\lambda(s^*)^{2(2s_L-s_c)}(s^*)^{2L+2-2\delta_0+2\eta(1-\delta_0')}}(\sqrt{K_1}+1)\right)\\
&& +\mathcal E_{2s_L}(s_0)+O_{L,M}\left(\frac{1}{s_0^{L+1-\delta_0+\eta(1-\delta_0')}}(\sqrt{\mathcal E_{2s_L}(s_0)}+\frac{1}{s_0^{L+1-\delta_0+\eta(1-\delta_0')}})\right)\\
&&+\int_{s_0}^{s^*} \frac{1}{\lambda^{2(2s_L-s_c)}s^{2L+3-2\delta_0+\eta(1-\delta_0')}}\left(C(L,M)\sqrt{K_2}+C(L,M)+\frac{K_2}{N^{2\delta}} \right) \Bigr{]}\\
&\leq& \frac{1}{(s^*)^{2L+2-2\delta_0+2\eta(1-\delta_0')}}[ C(L,M)(1+\sqrt{K_2})+C(L)\frac{K_2}{N^{2\delta}}]\\
&\leq & \frac{1}{K_2(s^*)^{2L+2-2\delta_0+2\eta(1-\delta_0')}}
\ea
\ee
if $N$ and $K_1$ have been chosen large enough. 

\noindent - \emph{Bound on $\parallel w_{\te{ext}} \parallel_{H^{\sigma}}$}. We recall the estimate \fref{pro:eq:lowsobowext}:
$$
\frac{d}{dt}\left[ \parallel w_{\te{ext}}  \parallel_{H^{\sigma}}^2 \right]\leq \frac{C(K_1,K_2)}{s^{1+\frac{\alpha}{2L}+O\left(\frac{\eta+\sigma-s_c}{L} \right)}\lambda^2}\parallel w_{\te{ext}}  \parallel_{H^{\sigma}}.
$$
For any choice of the constants of the analysis in Proposition \ref{trap:pr:bootstrap} such that all the previous propositions and lemmas hold, then for $s_0$ large enough:
$$
\frac{d}{dt}\left[ \parallel w_{\te{ext}}  \parallel_{H^{\sigma}}^2 \right]\leq \frac{1}{s^{\frac{\alpha}{4L}} \lambda^2}\parallel w_{\te{ext}}  \parallel_{H^{\sigma}}.
$$
We reintegrate this equation in the bootstrap regime, by injecting the bounds \fref{trap:eq:bounds varepsilon} and \fref{trap:eq:bounds varepsilon0}  on $\parallel w_{\te{ext}}\parallel_{H^{\sigma}}$ (using the relation $\frac{ds}{dt}=\frac{1}{\lambda^2}$):
\be \la{pro:eq:end wextHsigma}
\parallel w_{\te{ext}}(s^*) \parallel_{H^{\sigma}} \leq \sqrt{K_2} \frac{C(L)}{s_0^{\frac{\alpha}{4L}}}+\frac{C}{s_0^{\frac{2\ell}{2\ell-\alpha}(2s_L-s_c)}}\leq \frac{K_2}{2}
\ee
For $K_2$ chosen large enough.

\noindent - \emph{Bound on $\parallel w_{\te{ext}} \parallel_{H^{2s_L}}$}. We recall the estimate \fref{pro:eq:highsobowext}:
$$
\ba{r c l}
\parallel w_{\te{ext}}  \parallel_{H^{2s_L}}^2  &\leq & \parallel \partial_t^{s_L}w_{\te{ext}}(0) \parallel_{L^2}^2+\int_0^{t} \frac{C(K_1,K_2)}{\lambda^{2(2s_L-s_c)+2}s^{2L+3-2\delta_0+2\eta(1-\delta_0')+\frac{\alpha}{2\ell-\alpha}}}dt'\\
&&+\int_0^t \frac{C(K_1,K_2) \parallel \partial_t^{s_L} w_{\te{ext}}(t') \parallel_{L^2}}{\lambda^{2s_L-s_c+2}s^{L+2-\delta_0+\eta(1-\delta_0')+\frac{\alpha}{2L}+O\left(\frac{\eta+\sigma-s_c}{L} \right)}}dt'\\
&&+ \frac{C(K_1,K_2)}{\lambda^{2(2s_L-s_c)}s^{2L+2-2\delta_0+2\eta(1-\delta_0')+\frac{\alpha(p-1)(\sigma-s_c)}{2(2\ell-\alpha)}+O\left(\frac{\sigma-s_c+\eta}{L} \right)}}.
\ea
$$
One has $w_{\te{ext}}=(1-\chi_3)w$, so $\partial^{s_L}_t w_{\te{ext}}=(1-\chi_3)\partial_t^{s_L}w$. We recall that we proved the bound \fref{pro:eq:highsobowext partialtsLw} in the trapped regime for $\partial_t^{s_L}w(t)$ outside the blow up zone in the proof of Proposition \ref{pro:pr:highsobowext}. The same proof gives for $s_0$ large enough, taking in account the bound \fref{trap:eq:bounds varepsilon0} on $w$ at initial time:
$$
\parallel \partial_t^{s_L}w_{\te{ext}}(0)\parallel_{L^2}\leq 1.
$$
Injecting this estimate and \fref{pro:eq:highsobowext partialtsLw} in the previous identity gives for $s_0$ large enough:
\be \la{pro:eq:end wextH2sL}
\ba{r c l}
\parallel w_{\te{ext}}  \parallel_{H^{2s_L}}^2  &\leq & 1+\int_0^{t} \frac{dt'}{\lambda^{2(2s_L-s_c)+2}s^{2L+3-2\delta_0+2\eta(1-\delta_0')}}+ \frac{1}{\lambda^{2(2s_L-s_c)}s^{2L+2-2\delta_0+2\eta(1-\delta_0')}} \\
&\leq &  \frac{2}{\lambda^{2(2s_L-s_c)}s^{2L+2-2\delta_0+2\eta(1-\delta_0')}}+ \int_0^{t} \frac{Cdt'}{s^{-\frac{\ell[2(2s_L-s_c)+2]}{2\ell-\alpha}}s^{2L+3-2\delta_0+2\eta(1-\delta_0')}} \\
&\leq &  \frac{2}{\lambda^{2(2s_L-s_c)}s^{2L+2-2\delta_0+2\eta(1-\delta_0')}}+  \frac{C(L)}{s^{-\frac{\ell 2(2s_L-s_c)}{2\ell-\alpha}}s^{2L+2-2\delta_0+2\eta(1-\delta_0')}} \\
&\leq & \frac{2}{\lambda^{2(2s_L-s_c)}s^{2L+2-2\delta_0+2\eta(1-\delta_0')}}+  \frac{C(L)}{\lambda^{2(2s_L-s_c)}s^{2L+2-2\delta_0+2\eta(1-\delta_0')}} \\
&\leq & \frac{K_2}{2\lambda^{2(2s_L-s_c)}s^{2L+2-2\delta_0+2\eta(1-\delta_0')}}
\ea
\ee
where we used the equivalence $\lambda \sim s^{-\frac{\ell}{2\ell-\alpha}}$ from \fref{trap:eq:lambda}, and where the last lines hold for $K_2$ large enough.

\noindent - \emph{End of step 1:} we have proven \fref{pro:eq:end mathcalEsigma}, \fref{pro:eq:end mathcalE2sL}, \fref{pro:eq:end wextHsigma} and \fref{pro:eq:end wextH2sL}, yielding the estimate we claimed \fref{pro:eq:end varepsilon}.\\

\noindent \textbf{step 2} Improved bounds for the stable parameters. We claim that once $L$, $M$, $\eta$, $K_1$ and $K_2$ have been chosen so that the result of step 1 hold, there exist $\tilde{\eta}>0$ and strictly positive constants $(\epsilon_i^{(0,1)})_{\ell+1\leq i \leq L}$, $(\epsilon_i^{(n,k)})_{(n,k,i)\in \mathcal I, \ 1\leq n, \ i_n\leq i}$ such that:
\be \la{pro:eq:end stable}
|V_1(s^*)|\leq \frac{1}{2(s^*)^{-\tilde{\eta}}}, \ \ |U_i^{(0,1)}(s^*)|\leq \frac{\epsilon^{(0,1)}_i}{2(s^*)^{\tilde{\eta}}} \ \ \text{for} \ \ell+1\leq i \leq L,
\ee
\be
\text{for} \ (n,k,i)\in \mathcal I, \ n\geq 1, \ \ |U_i^{(n,k)}(s^*)|\leq \frac{\epsilon^{(n,k)}_i}{2(s^*)^{\tilde{\eta}}} \ \text{if} \ i_n<i , \ |U_i^{(n,k)}(s^*)|\leq \frac{\epsilon^{(n,k)}_i}{2}  \ \text{if} \ i_n=i.
\ee
We now prove all these improved bounds: first we prove the one for $b_{L_n}^{(n,k)}$, then the one for the $U^{(n,k)}_i$, $i\neq L_n$, and finally the one for $V_1$. For technical reasons, we introduce for $(n,k,i)\in \mathcal I$ the function $g^{(n,k)}_i$ solution of the ODE:
\be \la{pro:eq:end def g}
\frac{\frac{d}{ds}g^{(n,k)}_i}{g^{(n,k)}_i}=(2i-\alpha_n)b_1^{(0,1)}, \ \ g(s_0)=s_0^{\frac{\ell(2i-\alpha_n)}{2\ell-\alpha}}.
\ee
As $b_1^{(0,1)}=\frac{\ell}{s(2\ell-\alpha)}+O(s^{-1-\tilde{\eta}})$, for $\tilde \eta$ small enough and $s_0$ large enough one has:
\be \la{pro:eq:end g}
g_i^{(n,k)}(s)=s^{\frac{\ell(2i-\alpha_n)}{2\ell-\alpha}}(1+O(s_0^{-\tilde \eta})) \ \ \text{with} \ \ |O(s_0^{-\tilde \eta})|\leq \frac{1}{2} .
\ee

\noindent - \emph{Improved bound for $b_{L_n}^{(n,k)}$:} first we notice that since $L$ is chosen after $\ell$ one can assume that for all $0\leq n \leq n_0$, $i_n<L$. We rewrite the improved modulation equation \fref{pro:eq:modulation Ln} for $b_{L_n}^{(n,k)}$, using the estimate \fref{pro:eq:gain modulation Ln} for the extra term in the time derivative and the function $g^{(n,k)}_{L_n}$ (satisfying \fref{pro:eq:end def g} and \fref{pro:eq:end g}), yielding:
$$
\ba{r c l}
& \left| \frac{d}{ds} \left[g^{(n,k)}_{L_n}b_{L_n}^{(n,k)}+O_{L,M,K_2}(s^{-L-\eta(1-\delta_0')+\delta_0-\delta_n+\frac{\ell(2L_n-\alpha_n)}{2\ell-\alpha}}) \right] \right|\\
\leq & C(L,M,K_2)s^{-1-L-\eta(1-\delta_0')+\delta_0-\delta_n+\frac{\ell(2L_n-\alpha_n)}{2\ell-\alpha}}
\ea
$$
as $\eta(1-\delta_0')<\frac{g'}{2}$ for $\eta$ small enough ($g'$ being fixed). The notation $O_{L,M,K_2}()$ is the usual $O()$ notation with a constant depending on $L$, $M$ and $K_2$. One has $2L_n-\alpha_n=2L-\frac{d}{2}-2\delta_n+2m_0+\frac{2}{p-1}$. Hence for $L$ large enough, the quantity $-L-\eta(1-\delta_0')+\delta_0-\delta_n+\frac{\ell(2L_n-\alpha_n)}{2\ell-\alpha}$ is strictly positive for all $0\leq n \leq n_0$. Therefore, reintegrating in time the previous identity yields using \fref{trap:eq:bound stable01} and \fref{trap:eq:bound stable02}:
$$
\ba{r c l}
|b_{L_n}^{(n,k)}(s^*)| & \leq & \frac{C(L,M,K_2)}{(s^*)^{L+\eta(1-\delta_0')+\delta_0-\delta_n}}\\
&&+\frac{1}{s^{L+\delta_0-\delta_n+\tilde{\eta}}}\frac{s_0^{\frac{\ell(2L_n-\alpha_n)}{2\ell-\alpha}-L-\delta_0+\delta_n-\tilde{\eta}}}{(s^*)^{\frac{\ell(2L_n-\alpha_n)}{2\ell-\alpha}-L-\delta_0+\delta_n-\tilde{\eta}}}\frac{3}{2}s_0^{L+\delta_0-\delta_n+\tilde{\eta}}|b_{L_n}^{(n,k)}(s_0)|\\
&\leq & \frac{C(L,M,K_2)}{(s^*)^{L+\eta(1-\delta_0')+\delta_0-\delta_n}}+\frac{3\epsilon_{L_n}^{(n,k)}}{20}\frac{1}{(s^*)^{L+\delta_0-\delta_n+\tilde{\eta}}}
\ea
$$
Therefore, if $\tilde{\eta}<\eta(1-\delta_0')$, for any $0<\epsilon^{(n,k)}_{L_n}<1$, for $s_0$ large enough there holds:
\be \la{pro:eq:end bLn}
|b_{L_n}^{(n,k)}(s^*)|\leq \frac{\epsilon^{(n,k)}_{L_n}}{2(s^*)^{L+\delta_0-\delta_n+\tilde{\eta}}}.
\ee

\noindent - \emph{Improved bound for $b_{i}^{(n,k)}$, $i_n<i<L_n$:} using the same methodology we used to study the parameter $b^{(n,k)}_{L_n}$, we take the modulation equation \fref{trap:eq:modulation leqL-1}, we  integrate it in time injecting the bounds \fref{trap:eq:bound instable}, \fref{trap:eq:bound instable2}, \fref{trap:eq:bound stable} and \fref{trap:eq:bounds varepsilon}, yielding:
$$
\ba{r c l}
|\frac{d}{ds}(g_i^{(n,k)}b_i^{(n,k)})| & \leq & \frac{3\epsilon^{(n,k)}_{i+1}s^{\frac{\ell}{2\ell-\alpha}(2i-\alpha_n)-\frac{\gamma-\gamma_n}{2}-i-\tilde{\eta}-1}}{2}\\
&&+C(L,M,K_1)s^{-L-1+\delta_0-\eta(1-\delta_0')+\frac{\ell}{2\ell-\alpha}(2i-\alpha_n)}.
\ea
$$
The condition $i_n<i$ ensures that $\frac{\ell}{2\ell-\alpha}(2i-\alpha_n)-\frac{\gamma-\gamma_n}{2}-i>0$. For $\tilde{\eta}$ small enough, we can then integrate in time the previous equation, the first term in the right hand side giving then a divergent integral, and inject the bound \fref{pro:eq:end g} on $g_i^{(n,k)}$ and the initial bound \fref{trap:eq:bound stable02} on $b^{(n,k)}_i$ one obtains:
\be \la{pro:eq:end bi}
\ba{r c l}
|b_i^{(n,k)}(s^*)| & \leq & \frac{1}{(s^*)^{\frac{\gamma-\gamma_n}{2}+i+\tilde{\eta}}}\Big(\frac{3\epsilon^{(n,k)}_i}{20}+C(L)\epsilon^{(n,k)}_{i+1}\\
&&+\frac{C(L,M)}{(s^*)^{\frac{\ell(2i-\alpha_n)}{2\ell-\alpha}-\frac{\gamma-\gamma_n}{2}-i-\tilde{\eta}}}\int_{s_0}^{s^*}s^{-L-1+\delta_0-\eta(1-\delta_0')+\frac{\ell(2i-\alpha)}{2\ell-\alpha}}ds\Big) \\
&\leq & \frac{\epsilon^{(n,k)}_i}{2(s^*)^{\frac{\gamma-\gamma_n}{2}+i}}
\ea
\ee
if $s_0$ is large enough and $\epsilon^{(n,k)}_{i+1}$ is small enough, because $L-\delta_0>\frac{\gamma-\gamma_n}{2}+i$.

\noindent - \emph{Improved bound for $b_{i}^{(n,k)}$ if $i_n=i$ and $1\leq n$:} in that case, $\frac{\ell(2i-\alpha_n)}{2\ell-\alpha}=\frac{\gamma-\gamma_n}{2}+i$, hence one has $\frac 1 2\leq \frac{g^{(n,k)}_i}{s^{\frac{\gamma-\gamma_n}{2}+i}}\leq \frac 3 2$. Integrating the modulation equation and making the same manipulations we made for $i_n<i$ then yields:
\be \la{pro:eq:end bin}
|b_i^{(n,k)}(s^*)| \leq \frac{1}{(s^*)^{\frac{\gamma-\gamma_n}{2}+i}}\left(\frac{3\epsilon^{(n,k)}_i}{20}+C(L)\epsilon^{(n,k)}_{i+1}+\frac{C(L,M)}{s_0^{L-\delta_0-\frac{\gamma-\gamma_n}{2}-i}}\right) \leq \frac{\epsilon^{(n,k)}_i}{2(s^*)^{\frac{\gamma-\gamma_n}{2}}+i}
\ee
if $\epsilon^{(n,k)}_{i+1}$ is small enough and $s_0$ is large enough.

\noindent - \emph{Improved bound for $V_1$:} we recall that from \fref{trap:eq:def Vi}, $V_1$ denotes the stable direction of perturbation for the dynamical system \fref{cons:eq:bs} contained in $\text{Span}((U_i^{(0,1)})_{1\leq i \leq \ell})$. From the quasi diagonalization \fref{cons:eq:diagonalisation} of the linearized matrix $A_{\ell}$ its time evolution is given by, under the bootstrap bounds \fref{trap:eq:bound instable}, \fref{trap:eq:bound instable2}, \fref{trap:eq:bound stable} and \fref{trap:eq:bounds varepsilon}:
$$
\ba{r c l}
V_{1,s}&=&-\frac{V_1}{s}+O\left( \frac{|(V_i)_{1\leq i \leq \ell}|^2}{s} \right)+O(C(L,M,K_2)s^{-L-\ell})+\frac{q_1}{s}U_{i+1}^{(0,1)} \\
&=& -\frac{V_1}{s}+O\left( \frac{1}{s^{1+2\tilde \eta}} +s^{-L-\ell}+\frac{\epsilon^{(0,1)}_{\ell+1}}{s^{1+\tilde \eta}}\right)
\ea
$$
which when reintegrated in time gives, if $\epsilon^{(0,1)}_{\ell+1}$ is small enough, $s_0$ is large enough and using \fref{trap:eq:bound stable01}:
\be \la{pro:eq:end V1}
|V_1(s^*)|\leq \frac{s_0 V_1(s_0)}{s^*}+\frac{C(L,M,K_1)}{(s^*)^{2\tilde \eta}}+\frac{C(L)\epsilon^{(0,1)}_{\ell+1}}{(s^*)^{\tilde{\eta}}}\leq \frac{1}{2s^{\tilde{\eta}}}
\ee

\noindent - \emph{End of Step 2:} We choose the constants of smallness in the following order so that all the improved bounds we proved, \fref{pro:eq:end bLn}, \fref{pro:eq:end bi}, \fref{pro:eq:end bin}, \fref{pro:eq:end V1}, hold together. For any choice of $K_1$, $K_2$, $L$, $M$, $\eta$ in their range, there exists $\tilde{\eta}>0$ such that $\tilde{\eta}<\eta(1-\delta_0')$ and $\frac{\gamma-\gamma_n}{2}+i+\tilde{\eta}<\frac{\ell(2i-\alpha_n)}{2\ell-\alpha}$ for all $(n,k,i)\in \mathcal I$ such that $i_n<i$. Then, we first choose the constant $\epsilon_{\ell+1}^{(0,1)}$ small enough so that the improved bounds \fref{pro:eq:end V1} for $V_1$ holds for $s_0$ large enough. Next we choose $\epsilon^{(0,1)}_{\ell+2}$ such that the improved bound \fref{pro:eq:end bi} for $U^{(0,1)}_{\ell+1}$ holds for $s_0$ large enough. By iteration we then choose $\epsilon^{(0,1)}_{\ell+3}$, ..., $\epsilon^{(0,1)}_{L}$ to make all the bounds \fref{pro:eq:end bi} hold till the one for $U_{L-1}^{(0,1)}$. The last one, \fref{pro:eq:end bLn}, for $U_L^{(0,1)}$, holds for $s_0$ large enough without any conditions on $\epsilon^{(0,1)}_i$ for $\ell+1\leq i \leq L-1$. The same reasoning applies for the stable parameters on the spherical harmonics of higher degree ($1\leq n\leq n_0$). We have proved \fref{pro:eq:end stable}.\\

\end{proof}

We fix all the constants of the analysis so that Lemma \ref{pro:lem:exit} holds, and we will just possibly increase the initial renormalized time $s_0$, which does not change its validity. The number of instability directions is:
$$
m=\ell-1+d(E[i_1]-\delta_{i_1\in \mathbb N})+\sum_{2\leq n \leq n_0} k(n)(E[i_n]+1-\delta_{i_n\in \mathbb N}).
$$
To prove Proposition \ref{trap:pr:bootstrap}, we have to prove that there exists an additional perturbation along the instable directions of perturbations such that the solution stays forever trapped. We prove it via a topological argument, by looking at all the solutions associated to the possible perturbations along the instable directions of perturbation. For this purpose we introduce the following set:
$$
\ba{r c l}
\mathcal B:= \{& (V_2(s_0),...,V_{\ell}(s_0),(U^{(n,k)}(s_0)_i)_{(n,k,i)\in \mathcal{I}, \ 1\leq n, \ i<i_n})\in \mathbb{R}^{m}, \ |V_i(s_0)|\leq s_0^{-\tilde{\eta}} \\
& \text{for} \ 2\leq i \leq \ell, \ |U^{(n,k)}(s_0)_i|\leq \epsilon^{(n,k)}_i \ \text{for} \ (n,k,i)\in \mathcal I, \ 1\leq n, \ i<i_n & \} 
\ea
$$
which represents all the possible values of the instable parameters so that the solution to \fref{eq:NLH} with initial data given by \fref{trap:eq:decomposition} and \fref{trap:eq:binitial} starts in the trapped regime. We then define the following application $f: \mathcal{D}(f)\subset \mathcal B  \rightarrow  \partial \mathcal B$ that gives the last value taken by the instable parameters before the solution leaves the trapped regime (when it does):
\be \la{pro:def f}
\ba{r c l}
&f\left(V_2(s_0),...,V_{\ell}(s_0),(U^{(n,k)}_i)_{(n,k,i)\in \mathcal I, \ 1\leq n, \ i<i_n}\right)\\
=& \left(\frac{(s^*)^{\tilde{\eta}}}{s_0^{\tilde{\eta}}}V_2(s^*),...,\frac{(s^*)^{\tilde{\eta}}}{(s_0)^{\tilde{\eta}}}V_{\ell}(s^*),(U^{(n,k)}_i(s^*))_{(n,k,i)\in \mathcal I, \ 1\leq n, \ i<i_n}\right) .
\ea
\ee
The domain $\mathcal{D}(f)$ of the application $f$ is the set of the $m$-tuples of real numbers $(V_2(s_0),...,V_{\ell}(s_0),(U^{(n,k)}_i)_{(n,k,i)\in \mathcal I, \ 1\leq n, \ i<i_n})$ in $\mathcal B$ such that the solution starting initially with a decomposition given by \fref{trap:eq:decomposition} and \fref{trap:eq:binitial} leaves the trapped regime in finite time $s^*$. The following lemma describes the topological properties of $f$.

\begin{lemma}[Topological properties of the exit application] \la{pro:lem:f}

There exists a choice of smallness constants $(\epsilon^{(n,k)}_i)_{(n,k,i)\in \mathcal I, \ 1\leq n, \ i<i_n+1}$ such that the following properties hold for $s_0$ large enough:
\begin{itemize}
\item[(i)] $\mathcal{D}(f)$ is non empty and open, and there holds the inclusion $\partial \mathcal B \subset \mathcal{D}(f)$.
\item[(ii)] $f$ is continuous and is the identity on the boundary $\partial \mathcal B$.
\end{itemize}

\end{lemma}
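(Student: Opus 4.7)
The plan is to exploit the strict repulsivity of the linearized dynamics along the unstable directions to establish transversality of the trajectory through $\partial \mathcal B$, from which continuity of $f$, the identity property on the boundary, and the openness of $\mathcal D(f)$ will all follow.

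First I would establish a strict outgoing flux estimate on $\partial \mathcal B$. By Lemma \ref{pro:lem:exit} and the improved bounds proved in its Step~2, the only way to leave the trapped regime is through one of the unstable coordinates: $|V_i(s^*)| = (s^*)^{-\tilde \eta}$ for some $2 \leq i \leq \ell$, or $|U_i^{(n,k)}(s^*)| = \epsilon_i^{(n,k)}$ for some $(n,k,i) \in \mathcal I$ with $n \geq 1$ and $i < i_n$. These are exactly the coordinates associated to the strictly positive eigenvalues of the diagonalized matrix in \fref{cons:eq:diagonalisation}, namely $\frac{i \alpha}{2\ell - \alpha}$ and $\alpha \frac{\ell - \frac{\gamma-\gamma_n}{2}-i}{2\ell-\alpha}$. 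Using the linearized ODE \fref{cons:eq:Us} on $(U_i^{(n,k)})$ combined with the modulation estimate \fref{trap:eq:modulation leqL-1} (to replace $b_{i,s}^{(n,k)}$ in terms of remainders that are $O(s^{-L-1})$ in the trapped regime), I would show that at any point of $\partial \mathcal B$, the functions $s \mapsto s^{2\tilde\eta}|V_i(s)|^2$ and $s\mapsto |U_i^{(n,k)}(s)|^2$ have strictly positive time derivative of order $s^{-1}$. This requires fixing $\tilde \eta$ small enough that the positive eigenvalue dominates the $-\tilde \eta$ correction, and requires fixing the hierarchy $\epsilon_{i+1}^{(n,k)} \ll \epsilon_i^{(n,k)}$ (descending induction on $i$) so that the coupling $U_{i+1}^{(n,k)}/s$ appearing in \fref{cons:eq:Us} cannot reverse the sign at the boundary. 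This hierarchy is compatible with, and refines, the one already used to improve the stable bounds in Lemma \ref{pro:lem:exit}.

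Second, the strict outgoing flux yields the remaining assertions quickly. If the initial unstable data already lie on $\partial \mathcal B$, then the solution leaves the trapped regime at $s^* = s_0$, so $f$ is the identity on $\partial \mathcal B$ and in particular $\partial \mathcal B \subset \mathcal D(f)$, which also makes $\mathcal D(f)$ non-empty. To obtain continuity and openness, I would combine two ingredients: the solution of \fref{eq:NLH} depends continuously on its initial datum in $H^{2s_L}(\Omega)$ by standard parabolic Cauchy theory, and the parameters $(\lambda, z, b)$ inherit this continuity through the implicit function theorem underlying the decomposition Lemma \ref{trap:lem:projection}. Hence $s \mapsto (V_i(s), U_i^{(n,k)}(s))$ is jointly continuous in $(s, u(0))$ on any compact sub-interval of the trapped regime. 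The transversality of the flow at $\partial \mathcal B$ then forces $s^*$ to depend continuously on the initial data wherever it is defined, and every datum in $\mathcal D(f)$ is surrounded by a small neighborhood still contained in $\mathcal D(f)$, yielding openness and continuity of $f$.

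The main obstacle is to quantify precisely the error terms in the linear ODE \fref{cons:eq:Us} at the boundary. The brute modulation remainder $\sqrt{\mathcal E_{2s_L}}$ is of size $s^{-L-1+\delta_0+\eta(1-\delta_0')}$, so much smaller than $s^{-1}$ and harmless. More delicate is the intrinsic quadratic coupling $O(|U|^2/s)$ in \fref{cons:eq:Us}: at a point where only one coordinate saturates, the potentially harmful contributions come from the adjacent modes $U_{i+1}^{(n,k)}$ and from the tuple $V_j$ via the transformation \fref{linearized:eq:def P}. Verifying that the hierarchy of smallness constants $\tilde \eta, \epsilon_i^{(n,k)}$ inherited from Lemma \ref{pro:lem:exit} can be simultaneously arranged so that the main positive eigenvalue dominates these couplings at every facet of $\partial \mathcal B$ is the technical heart of the argument, and is achieved by a descending induction on $i$ at fixed $n$.
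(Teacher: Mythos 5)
Your proposal follows essentially the same route as the paper: Step 1 of the paper's proof is exactly your strict outgoing-flux/transversality computation at each facet of $\partial \mathcal B$, obtained from the linearized system \fref{cons:eq:Us} together with the modulation estimates and the hierarchy $\epsilon^{(n,k)}_{i+1}\ll \epsilon^{(n,k)}_i$ (and $\epsilon^{(0,1)}_{\ell+1}$, $\tilde\eta$ small), and Step 2 concludes, as you do, that $\partial \mathcal B\subset \mathcal D(f)$ with $f=\text{Id}$ there, and that continuity of the dynamics in the unstable parameters plus transversality yield openness of $\mathcal D(f)$ and continuity of $s^*$ and $f$. The only differences are cosmetic: you express the flux through positivity of $\frac{d}{ds}\left(s^{2\tilde\eta}V_i^2\right)$ and $\frac{d}{ds}|U^{(n,k)}_i|^2$ rather than the sign of the derivative itself, and you make explicit the parabolic well-posedness and implicit-function-theorem continuity that the paper leaves implicit.
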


\begin{proof}[Proof of Lemma \ref{pro:lem:f}]

\textbf{step 1} The outgoing flux property. We prove in this step that one can choose the smallness constants $(\epsilon^{(n,k)}_i)_{(n,k,i)\in \mathcal I, \ 1\leq n, \ i<i_n+1}$ such that for any $(V_2(s_0),...,V_{\ell}(s_0),(U^{(n,k)}_i)_{(n,k,i)\in \mathcal I, \ 1\leq n, \ i<i_n})$ in $\mathcal B$ such that the solution starting initially with the decomposition given by \fref{trap:eq:decomposition} and \fref{trap:eq:binitial} is in the trapped regime on $[s_0,s]$ and satisfies at time $s$:
$$
\left(\frac{(s)^{\tilde{\eta}}}{s_0^{\tilde{\eta}}}V_2(s),...,\frac{(s)^{\tilde{\eta}}}{(s_0)^{\tilde{\eta}}}V_{\ell}(s),(U^{(n,k)}_i(s))_{(n,k,i)\in \mathcal I, \ 1\leq n, \ i<i_n}\right)\in \partial \mathcal B,
$$
then the exit time from the trapped regime is $s$. To prove this we compute the time derivative of the instable parameters when they are on $\partial \mathcal B$, and show that it points toward the exterior. Indeed from the modulation equation \fref{trap:eq:modulation leqL-1} and \fref{cons:eq:diagonalisation} (where we injected the bounds of the trapped regime \fref{trap:eq:bound instable}, \fref{trap:eq:bound instable2}, \fref{trap:eq:bound stable} and \fref{trap:eq:bounds varepsilon}):
$$
\ba{r c l}
V_{i,s}&=&\frac{i\alpha}{2\ell-\alpha}\frac{V_i}{s}+O(\frac{|(V_1(s),...,V_{\ell}(s))|^2}{s})+\frac{q_iU_{\ell+1}^{(0,1)}}{s}+O(s^{-L+\ell})\\
&=&\frac{i\alpha}{2\ell-\alpha}\frac{V_i}{s}+O(s^{-1-2\tilde{\eta}}+\frac{\epsilon_{\ell+1}^{(0,1)}}{s^{1+\tilde{\eta}}}),
\ea
$$
$$
\ba{r c l}
U_{i,s}^{(n,k)}&=&\alpha\frac{\ell-\frac{\gamma-\gamma_n}{2}-i}{(2\ell-\alpha)s}U_i^{(n,k)}+\frac{U_{i+1}^{(n,k)}}{s}+O(s^{-1-\tilde \eta})\\
&=&\alpha\frac{i_n-i}{(2\ell-\alpha)s}U_i^{(n,k)}+O(\frac{\epsilon^{(n,k)}_{i+1}}{s}+s^{-1-\tilde \eta}).
\ea
$$
Therefore, as $i<i_n$, by iterations (ie by choosing first $\epsilon^{(n,k)}_0$, then $\epsilon^{(n,k)}_{1}$, and so on till choosing $\epsilon^{(n,k)}_{\ell+1}$) we can choose all the smallness constants and $s_0$ large enough so that:
$$
\frac{i\alpha}{2\ell-\alpha}\frac{(-1)^j}{s^{1+\tilde \eta}}+O(s^{-1-2\tilde{\eta}}+\frac{\epsilon_{\ell+1}^{(0,1)}}{s^{1+\tilde{\eta}}})>0 \ (\text{resp}. \ <0) \ \text{if} \ j=0 \ (\text{resp}. \ j=1),
$$
$$
\alpha\frac{i_n-i}{(2\ell-\alpha)s}(-1)^j\epsilon^{(n,k)}_i+O(\frac{\epsilon^{(n,k)}_{i+1}}{s}+s^{-L+\ell})>0 \ (\text{resp}. \ <0) \ \text{if} \ j=0 \ (\text{resp}. \ j=1).
$$
Consequently, any solution that is trapped until $s$ such that at time $s$,
$$
\left(\frac{(s)^{\tilde{\eta}}}{s_0^{\tilde{\eta}}}V_2(s),...,\frac{(s)^{\tilde{\eta}}}{(s_0)^{\tilde{\eta}}}V_{\ell}(s),(U^{(n,k)}_i(s))_{(n,k,i)\in \mathcal I, \ 1\leq n, \ i<i_n}\right)\in \partial \mathcal B,
$$
leaves the trapped regime after $s$.\\

\noindent \textbf{step 2} End of the proof of the lemma. Step 1 directly implies that $\mathcal D (f)$ contains $\partial \mathcal B$, and that $f$ is the identity on $\partial \mathcal B$. If a solution $u$ leaves at time $s^*$, it also implies that it never hit the boundary before $s^*$. Consequently, as the trapped regime is characterized by non strict inequalities, and because everything in the dynamics of \fref{eq:NLH} is continuous with respect to variation on these instable parameters, we get that $\mathcal D(f)$ is open, and that the exit time $s^*$ and $f$ are continuous on $\mathcal D(f)$.

\end{proof}

We can now end the proof of Proposition \ref{trap:pr:bootstrap}.

\begin{proof}[Proof of Proposition \ref{trap:pr:bootstrap}] \la{pro:pro:main}

We argue by contradiction. If for any choice of initial perturbation along the instable directions of perturbation, the solution leaves the trapped regime, then it means that the domain of the exit application $f$ defined by \fref{pro:def f} is $\mathcal D(f)=\mathcal B$. But then from Lemma \ref{pro:lem:f}, $f$ would be a continuous application from $\mathcal B$ towards its boundary, being the identity on the boundary, which is impossible thanks to Brouwer's theorem, and the contradiction is obtained.

\end{proof}

%%%%%%%%%%%%%%%%%%%%%%%%%%%%%%%%%%%%%%%%%%%%%%%%%%%%%%%%%%%%%%%
%%%%%%%%%%%%%%%%%%%%%%%%%%%%%%%%%%%%%%%%%%%%%%%%%%%%%%%%%%%%%%%
%%%%%%%%%%%%%%%%%%%%%%%%%%%%%%%%%%%%%%%%%%%%%%%%%%%%%%%%%%%%%%%

\begin{appendix}

\section{Properties of the zeros of $H$}
\label{annexe:section:noyau H}

This section is devoted to the proof of Lemma \ref{cons:lem:noyau H}.

\begin{proof}[Proof of Lemma \ref{cons:lem:noyau H}]

The proof relies solely on ODE techniques (in the same spirit as \cite{GNW,YiLi}) and is as follows. First, we describe the asymptotics of the equation $H^{(n)}f=0$ at the origin and at infinity in Lemma \ref{annexe:lem:proprietes EDO asymptotiques}. Then we construct the special zeroes $T_0^{(n)}$ and $\Gamma^{(n)}$ in these asymptotic regimes using a perturbative argument and obtain their asymptotic behavior in Lemma \ref{an:lem:existence Tn0}. Finally we show that they are not equal via global invariance properties of the ODE in the phase space $(f,\partial_r f)$ in Lemma \ref{an:lem:w1n w4n}, yielding that they form indeed a basis of the set of solutions.\\

\noindent Let $f:(0,+\infty)$ be smooth such that $H^{(n)}f=0$. First we make the change of variables $f(r)=w(t)$ with $t=\text{ln}(r)\in (-\infty ,+\infty)$. $w$ then solves:
\be \la{an:ODEn}
w''+(d-2)w'-[e^{2t}V(e^t)+n(d+n-2)]w=0
\ee
where $V$ is defined by \fref{intro:eq:def V} and satisfies $e^{2t}V(e^t)=O(e^{2t})\rightarrow 0$ as $t\rightarrow -\infty$, and $e^{2t}V(e^t)=-pc_{\infty}^{p-1}+O(e^{-t\alpha})$ as $t\rightarrow +\infty$, from \fref{cons:eq:asymptotique V}. Hence \fref{an:ODEn} is similar to the following ODEs as $t\rightarrow \pm \infty$:
\bea
& \la{an:ODEn+infty}  w''+(d-2)w'+(pc_{\infty}^{p-1} -n(d+n-2))w=0,\\
&\la{an:ODEn-infty} w''+(d-2)w'-n(d+n-2)w=0. 
\eea
The first step in the proof of Lemma \ref{cons:lem:noyau H} is to describe their solutions.

\begin{lemma} \label{annexe:lem:proprietes EDO asymptotiques}

The set of solutions of \fref{an:ODEn+infty} (resp. \fref{an:ODEn-infty}) is $\text{Span}(e^{-\gamma_n t},e^{-\gamma'_n t})$ (resp. $\text{Span}(e^{nt},e^{(-n-d+2)t})$), where $\gamma_n$ is defined in \fref{intro:eq:def gamman} and
\be \la{an:def gamman'}
\gamma_n' :=\frac{d-2 +\sqrt{\triangle_n}}{2},
\ee
$\triangle_n>0$ being defined in \fref{intro:eq:def gamman}. These numbers satisfy:
\be \la{an:bd gamman}
\gamma_0=\gamma, \ \ \gamma_1=\frac{2}{p-1}+1 \ \ \text{and} \ \ \forall n\geq 2, \ \gamma_n<\frac{2}{p-1} \ \text{and} \ \gamma'_n > \frac{(d-2)}{2}
\ee
where $\gamma$ is defined in \fref{intro:eq:def gamma}.

\end{lemma}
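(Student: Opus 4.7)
Both ODEs \fref{an:ODEn+infty} and \fref{an:ODEn-infty} are linear second order with constant coefficients, so the plan is to compute the roots of their characteristic polynomials explicitly and then verify the three algebraic identities/inequalities on $\gamma_n,\gamma_n'$ by direct computation using the explicit value of $c_\infty^{p-1}$ from \fref{intro:eq:def cinfty}.

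For \fref{an:ODEn+infty}, the characteristic polynomial is $X^2+(d-2)X+(pc_\infty^{p-1}-n(d+n-2))$, whose discriminant equals exactly $(d-2)^2-4pc_\infty^{p-1}+4n(d+n-2)=\triangle_n$. Since $p>p_{JL}$ gives $\triangle_0=\triangle>0$ and $\triangle_n$ is strictly increasing in $n$, we have $\triangle_n>0$ for every $n\in\mathbb N$, so the two roots are real and, comparing with \fref{intro:eq:def gamman} and \fref{an:def gamman'}, they are precisely $-\gamma_n$ and $-\gamma_n'$. This yields the basis $(e^{-\gamma_n t},e^{-\gamma_n't})$. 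For \fref{an:ODEn-infty}, the characteristic polynomial $X^2+(d-2)X-n(d+n-2)$ has discriminant $(d-2)^2+4n(d+n-2)=(d+2n-2)^2$, giving the roots $n$ and $-(n+d-2)$ at once.

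It remains to verify \fref{an:bd gamman}. The identity $\gamma_0=\gamma$ is immediate from comparing \fref{intro:eq:def gamma} and \fref{intro:eq:def gamman} (since $\triangle_0=\triangle$). For $\gamma_1$, I would plug the candidate $-\gamma_1=-\tfrac{2}{p-1}-1$ into the $n=1$ characteristic polynomial $X^2+(d-2)X+(pc_\infty^{p-1}-(d-1))$, substitute $c_\infty^{p-1}=\tfrac{2}{p-1}(d-2-\tfrac{2}{p-1})$, and check that the result vanishes; setting $a=\tfrac{2}{p-1}$ and using $(p-1)a=2$ reduces this to the identity $-4/(p-1)-4(p-2)/(p-1)+4=0$, which clearly holds. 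Finally, for $n\ge 2$, the inequality $\gamma_n<\tfrac{2}{p-1}$ is equivalent (since $d-2-\tfrac{4}{p-1}>0$, itself a consequence of $s_c>2+\sqrt{d-1}$) to $(d-2-\tfrac{4}{p-1})^2<\triangle_n$; expanding both sides and again substituting $c_\infty^{p-1}$ reduces this to $2(d-2)-\tfrac{4}{p-1}<n(d+n-2)$, which fails at $n=1$ but holds for $n=2$ since the left-hand side is $\le 2(d-2)$ while the right-hand side is $2d$, and then propagates to all $n\ge 2$ by monotonicity of $n\mapsto n(d+n-2)$. The bound $\gamma_n'>\tfrac{d-2}{2}$ follows trivially from $\triangle_n>0$.

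The argument is entirely elementary; the only point requiring a little care is the algebraic reduction for the $\gamma_1$ identity and the $\gamma_n<\tfrac{2}{p-1}$ inequality, where one must substitute $c_\infty^{p-1}$ and keep track of the sign of $d-2-\tfrac{4}{p-1}$ using the supercritical condition $s_c>2+\sqrt{d-1}$. There is no analytic obstacle.
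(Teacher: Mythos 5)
Your proof is correct and follows essentially the same route as the paper: explicit characteristic roots with discriminant $\triangle_n$ for the two constant-coefficient ODEs, positivity $\triangle_n\geq \triangle_0>0$ from $p>p_{JL}$, and direct algebra (substituting $c_\infty^{p-1}$) for $\gamma_0=\gamma$, $\gamma_1=\frac{2}{p-1}+1$ and $\gamma_n'>\frac{d-2}{2}$. The only minor difference is the last inequality: you square $\gamma_n<\frac{2}{p-1}$ directly (legitimate since $d-2-\frac{4}{p-1}>0$ under $2+\sqrt{d-1}<s_c$) and reduce it to $2(d-2)-\frac{4}{p-1}<n(d+n-2)$, checked at $n=2$ and propagated by monotonicity of $n(d+n-2)$, whereas the paper computes $\triangle_1,\triangle_2$ explicitly, shows $\sqrt{\triangle_2}>\sqrt{\triangle_1}+2$ so that $\gamma_2<\gamma_1-1=\frac{2}{p-1}$, and uses that $(\gamma_n)_n$ is decreasing; both computations are equally elementary and equally valid.
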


\begin{proof}

From the standard theory of second order differential equations with constant coefficients, the set of solutions of \fref{an:ODEn+infty} (resp. \fref{an:ODEn-infty}) is $\text{Span}(e^{-\gamma_n t},e^{-\gamma'_n t})$ (resp. $\text{Span}(e^{nt},e^{(-n-d+2)t})$), where $\gamma_n$ and $\gamma_n'$ are defined by  \fref{intro:eq:def gamman} and \fref{an:def gamman'}. For any $n\in \mathbb N$, one computes from its definition in \fref{intro:eq:def gamman} that the number $\triangle_n$ used in the definitions \fref{intro:eq:def gamman} and \fref{an:def gamman'} of $\gamma_n$ and $\gamma_n'$ is strictly positive: $\triangle_n>0$. Indeed, $\triangle_n\geq \triangle_0$ from \fref{intro:eq:def gamman}, and $\triangle_0>0$ if and only if $p>p_{JL}$ where $p_{JL}$ is defined in \fref{intro:eq:def pJL}, and the present paper is concerned with the case $p>p_{JL}$.\\

\noindent From the formula \fref{intro:eq:def gamman} one computes that $\gamma_0=\gamma$ and $\gamma_1=\frac{2}{p-1}+1$ where $\gamma$ is defined in \fref{intro:eq:def gamma}. For all $n\in \mathbb N$, from the definition \fref{an:def gamman'} of $\gamma_n'$ and since $\triangle_n>0$, one gets that $\gamma_n'>\frac{d-2}{2}$. Eventually we compute from \fref{intro:eq:def gamman} that
$$
\triangle_1=(d-4-\frac{4}{p-1})^2, \ \ \triangle_2=(d-4-\frac{4}{p-1})^2+4d+4
$$
which implies in particular that
$$
\begin{array}{r c l}
\triangle_2-\triangle_1-4\sqrt{\triangle_1}-4=4d+4-4(d-4-\frac{4}{p-1})-4= 16+\frac{16}{p-1}>0.
\end{array}
$$
giving $\sqrt{\triangle_2}>\sqrt{\triangle_1}+2$. This, from \fref{intro:eq:def gamman}, implies:
$$
\gamma_2=\frac{d-2-\sqrt{\triangle_2}}{2}<\frac{d-2-\sqrt{\triangle_1}-2}{2}=\gamma_1-1=\frac{2}{p-1}+1-1=\frac{2}{p-1}.
$$
This implies that $\gamma_n<\frac{2}{p-1}$ for all $n\geq 2$ because the sequence $(\gamma_n)_{n\in \mathbb{N}}$ is decreasing from its definition \fref{intro:eq:def gamman}.

\end{proof}

\begin{lemma} \la{an:lem:existence Tn0}

There exist $w_1^{(n)}$, $w_2^{(n)}$, $w_3^{(n)}$ and $w_4^{(n)}$ solving \fref{an:ODEn} such that:
\be \la{an:def w1n}
w_1^{(n)}\underset{t\rightarrow -\infty}{=} \sum_{i=0}^q c_i e^{(n+2i)t}+O(e^{(n+2q+2)t}), \ w_2^{(n)}\underset{t\rightarrow -\infty}\sim \tilde c_1 e^{(-n-d+2)t}, 
\ee
\be \la{an:def w3n}
w_3^{(n)}\underset{t\rightarrow +\infty}= \tilde c_2 e^{-\gamma_n t}+O(e^{(-\gamma_n-g)t}) \ \text{and} \ w_4^{(n)}\underset{t\rightarrow +\infty}\sim \tilde c_3 e^{-\gamma_n ' t}=O(e^{(-\gamma_n -g)t}),
\ee
with constants $c_1,\tilde c_1,\tilde c_2,\tilde c_3\neq 0$. Moreover the asymptotics hold for the derivatives.

\end{lemma}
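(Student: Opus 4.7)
The plan is to construct the four solutions via Duhamel-type contraction arguments, viewing \fref{an:ODEn} as an exponentially small perturbation of the limiting constant-coefficient ODEs identified in Lemma \ref{annexe:lem:proprietes EDO asymptotiques}. At $-\infty$, I rewrite \fref{an:ODEn} as $L_-w=F_-(t)w$ with
$$L_-w:=w''+(d-2)w'-n(d+n-2)w,\qquad F_-(t):=e^{2t}V(e^t)=O(e^{2t}),$$
the bound on $F_-$ following from smoothness of $V$ at $0$, with the sharper even-power expansion $V(r)=\sum_j a_j r^{2j}$ coming from radiality and smoothness of $Q$. The kernel of $L_-$ is $\text{Span}(e^{nt},e^{-(n+d-2)t})$, its Wronskian is proportional to $e^{-(d-2)t}$, and the spectral gap $2n+d-2>0$ keeps the Green's function under control. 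At $+\infty$, the analogous rewriting is $L_+w=F_+(t)w$ with
$$L_+w:=w''+(d-2)w'+(pc_\infty^{p-1}-n(d+n-2))w,\qquad F_+(t):=e^{2t}V(e^t)+pc_\infty^{p-1}=O(e^{-\alpha t})$$
from \fref{cons:eq:asymptotique V}, $\ker L_+=\text{Span}(e^{-\gamma_n t},e^{-\gamma'_n t})$, and spectral gap $\gamma'_n-\gamma_n=\sqrt{\triangle_n}>0$.

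For each target profile $w_\ast\in\{e^{nt},e^{-(n+d-2)t},e^{-\gamma_n t},e^{-\gamma'_n t}\}$ variation of parameters produces an integral equation of the form
$$w(t)=w_\ast(t)+\int G_\pm(t,s)\,F_\pm(s)\,w(s)\,ds,$$
where the two components of the Green's kernel $G_\pm$ are integrated from endpoints $\pm\infty$ or from a finite reference point $t_0$, the choice being dictated by the decay of the integrand and the requirement that no spurious contribution along the non-target mode overwhelms $w_\ast$. On the weighted Banach space $\{w:\sup|w/w_\ast|\leq 2\}$ over a half-line $(-\infty,t_0]$ or $[t_0,+\infty)$ with $|t_0|$ large, the integral operator is a contraction because the exponential smallness of $F_\pm$ together with the spectral gaps bounds its norm by $o(1)$. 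The leading constants $c_0=1$ and $\tilde c_1,\tilde c_2,\tilde c_3\neq 0$ are read off the zeroth Picard iterate, which is exactly $w_\ast$. The full expansion $w_1^{(n)}(t)=\sum_{i=0}^q c_i e^{(n+2i)t}+O(e^{(n+2q+2)t})$ then follows by iterating the Duhamel formula against the series for $V$, the recursive denominators $(n+2i)(n+2i+d-2)-n(d+n-2)=2i(2n+2i+d-2)$ being nonzero for $i\geq 1$. The remainder $O(e^{(-\gamma_n-g)t})$ in \fref{an:def w3n} uses $F_+=O(e^{-\alpha t})$ and $g<\alpha$, while the bound $w_4^{(n)}=O(e^{(-\gamma_n-g)t})$ uses $\gamma'_n-\gamma_n=\sqrt{\triangle_n}\geq \sqrt{\triangle}>g$; both facts hold by \fref{intro:eq:def g}. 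Extension from the half-line to $(-\infty,+\infty)$ is immediate by ODE uniqueness, and the asymptotics for derivatives follow by differentiating the Duhamel formulas once and closing higher derivatives via the equation itself.

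The main technical obstacle lies in the cases $w_2^{(n)}$ and $w_4^{(n)}$, where the prescribed leading profile is the \emph{faster-growing} (resp.\ \emph{faster-decaying}) mode of $\ker L_\pm$. A naive Duhamel scheme integrated from $\mp\infty$ would either produce a divergent integral or inject into the solution a constant multiple of the slow mode comparable in size with $w_\ast$, thereby destroying the prescribed leading behavior. The remedy is a careful splitting of the Green's kernel: the coefficient of the slow mode is integrated from a finite reference point $t_0$, so that the constant it produces is of the slow-mode size and hence strictly subordinate to the faster target, while the coefficient of the fast mode is integrated from $\mp\infty$ where the integrand actually decays. The exponential smallness of $F_\pm$ together with the spectral gaps $2n+d-2$ and $\sqrt{\triangle_n}$ then delivers a genuine small correction and a contraction, yielding the four announced asymptotics with the stated nonvanishing constants.
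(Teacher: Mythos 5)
Your overall route is the paper's: view \fref{an:ODEn} as an exponentially small perturbation of the limit ODEs \fref{an:ODEn-infty} and \fref{an:ODEn+infty}, construct each solution by a Duhamel fixed point in a weighted space on a half-line, get the full expansion of $w_1^{(n)}$ by matching against the even Taylor series of $V$, and propagate to derivatives via the equation. The parts concerning $w_1^{(n)}$, $w_2^{(n)}$ and $w_3^{(n)}$ are sound. The genuine gap is your construction of $w_4^{(n)}$. At $+\infty$ the target $e^{-\gamma_n' t}$ is the \emph{subordinate} mode (it decays faster, $\gamma_n'>\gamma_n$), so the case is not analogous to $w_2^{(n)}$, whose target $e^{(2-d-n)t}$ is the \emph{dominant} mode as $t\to-\infty$; it is analogous to $w_1^{(n)}$. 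Your prescription (slow-mode coefficient integrated from a finite $t_0$, fast-mode coefficient from $+\infty$) reads, with $F_+(t)=e^{2t}V(e^t)+pc_\infty^{p-1}=O(e^{-\alpha t})$,
\[
w(t)=e^{-\gamma_n' t}+\frac{e^{-\gamma_n t}}{\gamma_n'-\gamma_n}\int_{t_0}^{t}e^{\gamma_n t'}F_+(t')\,w(t')\,dt'+\frac{e^{-\gamma_n' t}}{\gamma_n'-\gamma_n}\int_{t}^{+\infty}e^{\gamma_n' t'}F_+(t')\,w(t')\,dt' ,
\]
up to signs. For $w=O(e^{-\gamma_n' t})$ the first integrand is $O(e^{(\gamma_n-\gamma_n'-\alpha)t'})$, so the first integral converges as $t\to+\infty$ to a generically nonzero constant $C$, and the scheme injects $C e^{-\gamma_n t}$ into the solution. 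Since $e^{-\gamma_n t}/e^{-\gamma_n' t}=e^{(\gamma_n'-\gamma_n)t}\to+\infty$, this contribution \emph{dominates} the prescribed profile: your sentence ``the constant it produces is of the slow-mode size and hence strictly subordinate to the faster target'' is exactly backwards at $+\infty$. Concretely, the correction is not $o(e^{-\gamma_n' t})$, the map does not send the ball $\{\sup_{t\geq t_0}|w|e^{\gamma_n' t}\leq 2\}$ into itself, so the contraction does not close; and even if a solution were produced this way, its leading behavior would be $\sim C e^{-\gamma_n t}$ (essentially a copy of $w_3^{(n)}$), not $\tilde c_3 e^{-\gamma_n' t}$.

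The fix is the opposite splitting: for $w_4^{(n)}$ integrate \emph{both} kernel components from $+\infty$, i.e.\ use $\int_t^{+\infty}$ in both places. Both integrands then decay, being $O(e^{(\gamma_n-\gamma_n'-\alpha)t'})$ and $O(e^{-\alpha t'})$ respectively, the corrections are $O(e^{(-\gamma_n'-\alpha)t})$, and the fixed point closes in the weighted space; the bound $w_4^{(n)}=O(e^{(-\gamma_n-g)t})$ then follows as you say from $\gamma_n'-\gamma_n\geq\gamma_0'-\gamma_0>g$. This mirrors the construction of $w_1^{(n)}$ at $-\infty$ (where both components are integrated from $-\infty$), whereas the $t_0$-based splitting you describe is legitimate only when the target is the dominant mode, i.e.\ for $w_2^{(n)}$, and for the fast-mode component in the refinement of $w_3^{(n)}$, where the contamination it creates lies along the subordinate mode and is harmless.
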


\begin{proof}[Proof of Lemma \ref{an:lem:existence Tn0}]

\textbf{step 1} Existence of $w^{(n)}_1$. For $n=0$, we take the explicit solution $w^{(0)}_1=\Lambda Q(e^t)$, which satisfies indeed \fref{an:def w1n} from \fref{cons:eq:asymptotique Q}. Let now $n\geq 1$. Using the Duhamel formula for solutions of \fref{an:ODEn}, the fundamental set of solutions for the constant coefficient ODE \fref{an:ODEn-infty}  being provided by Lemma \ref{annexe:lem:proprietes EDO asymptotiques}, a solution of \fref{an:ODEn} satisfying the condition on the left in \fref{an:def w1n} with $c_0=1$ can be written as:
\be \la{an:duhamel w1n}
w^{(n)}_1(t)=e^{nt}+\frac{1}{2n+d-2}\int_{-\infty}^t (e^{n(t-t')}-e^{(-n-d+2)(t-t')}) w^{(n)}_1(t') e^{2t'}V(e^{t'})dt'.
\ee
We now use a standard contraction argument. For $t_0\in \mathbb R$ we endow the space $X:=\left\{u\in C((-\infty,t_0],\mathbb R), \ \underset{t\leq t_0}{\text{sup}} \ |u(t)|e^{-t}<+\infty \right\}$ with the norm:
\be \la{an:def X}
\para u \para_X := \underset{t\leq t_0}{\text{sup}} |u(t)|e^{-(n+1)t}.
\ee
For $u\in X$ we define the function $\Phi u:(-\infty,t_0]\rightarrow \mathbb R$ by:
\be \la{an:def Phi}
(\Phi u)(t):= \frac{1}{2n+d-2}\int_{-\infty}^t (e^{n(t-t')}-e^{(-n-d+2)(t-t')}) [e^{nt'}+u(t')]e^{2t'}V(e^{t'})dt'.
\ee
$\Phi$ maps $X$ into itself. Indeed as the potential $V$ is bounded from \fref{cons:eq:asymptotique V} a brute force bound on the above equation yields that:
$$
\ba{r c l}
|(\Phi u)(t)|\leq C \para V \para_{L^{\infty}}(e^{t}+\para u \para_X e^{2t})e^{(n+1)t}.
\ea
$$
and therefore $\para \Phi u \para_X\leq C \para V \para_{L^{\infty}}(e^{t_0}+\para u \para_X e^{2t_0})$. The same brute force bound for the difference of two images under $\Phi$ of two elements gives:
$$
|(\Phi u)(t)-(\Phi v)(t)|\leq C\para V \para_{L^{\infty}} e^{2t} \para u-v \para_X e^{(n+1)t}.
$$
Hence $\para \Phi u-\Phi v\para_X\leq C\para V \para_{L^{\infty}} e^{2t_0} \para u-v \para_X$ and $\Phi$ is a contraction for $t_0\ll 0$ small enough. Therefore, $\Phi$ admits a fixed point in $X$, denoted by $u_1$. From the Duhamel formula \fref{an:duhamel w1n} and the definition \fref{an:def Phi} of $\Phi$, $w_1^{(n)}:=e^{nt}+u_1(t)$ is then a solution of \fref{an:ODEn} on $(-\infty,t_0]$ which satisfies from the definition \fref{an:def X} of $X$:
\be \la{an:existence w1}
w_1^{(n)}=e^{nt}+O(e^{(n+1)t}) \ \ \text{as} \ t \rightarrow -\infty .
\ee
We extend it to a solution of \fref{an:ODEn} on $\mathbb R$ (\fref{an:ODEn} being linear with smooth coefficients), still naming it $w^{(n)}_0$.\\

\noindent \textbf{step 2} Asymptotics of $w^{(n)}_1$. At present, we will refine the asymptotics \fref{an:existence w1}. We reason by induction. We claim that if for $k\in \mathbb N$ and $(c_i)_{0\leq i \leq k}\in \mathbb R^{k+1}$ one has:
\be \la{an:as w1n hp}
w_1^{(n)}=\sum_{i=0}^k c_i e^{(n+2i)t}+O(e^{(n+2k+2)t}) \ \ \text{as} \ t \rightarrow -\infty 
\ee
then there exists $c_{k+1}\in \mathbb R$ such that:
\be \la{an:as w1n cc}
w_1^{(n)}=\sum_{i=0}^{k+1} c_i e^{(n+2i)t}+O(e^{(n+2k+4)t}) \ \ \text{as} \ t \rightarrow -\infty .
\ee
We now prove this fact. Fix $k\geq 1$ and assume that $w^{(n)}_1$ satisfies \fref{an:as w1n hp}. As $V$ is a smooth radial profile, one has that $\partial_r^{2q+1} V(0)=0$ for any $q\in \mathbb N$, implying that there exists $(d_i)_{i\in \mathbb N}\in \mathbb R^{\mathbb N}$ such that 
\be \la{an:as V}
V(e^t)=\sum_{i=0}^k d_{i}e^{2it}+O(e^{(2k+2)t}) \ \  \text{as} \ t\rightarrow -\infty .
\ee
We inject this and \fref{an:as w1n hp} in \fref{an:duhamel w1n} and integrate to find:
$$
\ba{r c l}
w^{(n)}_1 & = & e^{nt}+\frac{1}{2n+d-2} \int_{-\infty}^t (e^{n(t-t')}-e^{(2-n-d)(t-t')}) \\
&& \times \left[ \sum_{i=0}^k \sum_{j=0}^i c_jd_{i-j}e^{(n+2i+2)t'}+O(e^{(n+2k+4)t'})\right]  dt' \\
&=& e^{nt} +\sum_{i=0}^k\frac{e^{(n+2i+2)t}}{2n+d-2} \left( \frac{1}{2i+2}-\frac{1}{2n+d+2i}\right) \sum_{j=0}^i c_jd_{i-j}+O(e^{(2+2k+4)t}).
\ea
$$
This asymptotics has to be coherent with the assumption \fref{an:as w1n hp}, hence for all $0\leq i \leq k-1$ one has $ \left( \frac{1}{2i+2}-\frac{1}{2n+d+2i}\right) \sum_{j=0}^i \frac{c_jd_{i-j}}{2n+d-2}=c_{i+1}$. The above identity is then the formula \fref{an:as w1n cc} one has to prove.

\noindent Thus, one has proven that the asymptotics in the left of \fref{an:def w1n} holds for $w^{(n)}_1$. It remains to show that it also holds for the derivatives. Differentiating \fref{an:duhamel w1n} gives:
$$
(w^{(n)}_1)'(t)=ne^{nt}+\frac{1}{2n+d-2}\int_{-\infty}^t [ne^{n(t-t')}+(n+d-2)e^{(2-n-d)(t-t')}] w_1^{(n)}e^{2t'}V.
$$
We make the same reasoning we did for $w^{(n)}_1$: we inject the asymptotics \fref{an:as w1n hp} at any order for $w^{(n)}_1$ we just showed and \fref{an:as V} in the above formula, integrate in time and match the coefficients we find with \fref{an:as w1n hp}, yielding that:
$$
(w^{(n)}_1)'(t)=\sum_{i=0}^k (n+2i)c_i e^{(n+2i)t}+O(e^{(n+2k+2)t})
$$
for any $k\in \mathbb N$. Therefore, one has proven that the asymptotics in the left of \fref{an:def w1n} holds for $w^{(n)}_1$ and $(w^{(n)}_1)'$. As $w^{(n)}_1$ solves \fref{an:ODEn} its second derivatives is given by:
$$
(w^{(n)}_1)''=-(d-2)(w^{(n)}_1)'+[e^{2t}V(e^t)+n(d+n-2)]w^{(n)}_1
$$
and therefore from \fref{an:as V} the expansion also holds for $(w^{(n)}_1)''$. Differentiating the above equation, using again \fref{an:as V} and the expansions for $w^{(n)}_1$, $(w^{(n)}_1)'$ and $(w^{(n)}_1)''$, one obtains the expansion for $(w^{(n)}_1)'''$. By iterating this procedure we obtain the expansion in the left of \fref{an:def w1n} for any derivatives of $w^{(n)}_1$.\\

\noindent \textbf{step 3} Existence and asymptotics of $w^{(n)}_2$. Let $t_0 \in \mathbb R$. We use the Duhamel formula for \fref{an:ODEn}, the solutions of the underlying constant coefficient ODE \fref{an:ODEn-infty} being provided by Lemma \ref{annexe:lem:proprietes EDO asymptotiques}. For $t\leq t_0$ the  solution of \fref{an:ODEn} starting from $w^{(n)}_2(t_0)=e^{(2-d-n)t_0}$, $(w^{(n)}_2)'(t_0)=(2-d-n)e^{(2-d-n)t_0}$ can be written as:
\be \la{an:duhamel w2n}
w_2^{(n)}=e^{(2-d-n)t}-\frac{1}{2n+d-2}\int_t^{t_0} (e^{n(t-t')}-e^{(2-n-d)(t-t')}) V(e^{t'})e^{2t'} w_2^{(n)}(t')dt'.
\ee
We claim that for $t_0\ll 0$ small enough, there holds 
\be \la{an:bd w2n}
|w_2^{(n)}-e^{(2-d-n)t}|\leq \frac{e^{(2-d-n)}}{2}
\ee
for all $t\leq t_0$. To show that, let $\mathcal T$ be the set of times $t\leq t_0$ such that this inequality holds. $\mathcal T$ is closed via a continuity argument, and is non empty as it contains $t_0$. For $t\in \mathcal T$ we compute by brute force on the above identity:
$$
|w_2^{(n)}-e^{(2-d-n)t}|\leq C\para V \para_{L^{\infty}}e^{(2-n-d)t}e^{2t_0}.
$$
Hence, for $t_0\ll 0$ small enough, $|w_2^{(n)}-e^{(2-d-n)t}|\leq \frac{e^{(2-n-d)t}}{3}$ implying that $\mathcal T$ is open. Therefore, $\mathcal T=(-\infty,t_0]$ from a connectedness argument and $w_2^{(n)}$ satisfies \fref{an:bd w2n} for all $t\leq t_0$. We inject \fref{an:bd w2n} in \fref{an:duhamel w2n} to refine the asymptotics (the constant in the $O()$ depends on $\para V \para_{L^{\infty}}$):
$$
\ba{r c l}
w^{(n)}_2 & = & e^{(2-d-n)t}+\int_t^{t_0} (e^{n(t-t')}-e^{(2-d-n)(t-t')})O(e^{(4-n-d)(t-t')})dt' \\
&=& e^{(2-d-n)t} +e^{nt} \int_t^{t_0} O(e^{(4-2n-d)t'})dt'+e^{(2-n-d)t}\int_t^{t_0} O(e^{2t'})dt' \\
&=& e^{(2-d-n)t} +O(e^{(4-n-d)t})+e^{(2-n-d)t}\left(\int_{-\infty}^{t_0} O(e^{2t'})dt'-\int_{-\infty}^{t} O(e^{2t'})dt'\right)\\
&=& e^{(2-d-n)t}\left(1+\int_{-\infty}^{t_0}O(e^{2t'})dt' \right) +O(e^{(4-n-d)t}) \\
&=& \tilde c_1 e^{(2-d-n)t}+O(e^{(4-n-d)t})\\
\ea
$$
with $\tilde c_1\neq 0$ if $t_0\ll 0$ is chosen small enough. We just showed the asymptotic on the right of \fref{an:def w1n}.\\

\noindent \textbf{step 4} Existence and asymptotics of $w^{(n)}_3$ and $w^{(n)}_4$. Using verbatim the same techniques we used at $-\infty$ to construct $w^{(n)}_1$ and $w^{(n)}_2$ as perturbations of the solutions described by Lemma \ref{annexe:lem:proprietes EDO asymptotiques} of the asymptotic constant coefficients ODE \fref{an:ODEn-infty}, we can construct two solutions of \fref{an:ODEn}, $w^{(n)}_3$ and $w^{(n)}_4$, satisfying:
\be \la{an:as w3n 1}
w_3^{(n)} \sim \tilde c_2 e^{-\gamma_n t}, \ w_4^{(n)} \sim \tilde c_3 e^{-\gamma_n' t} \ \ \text{as} \ t\rightarrow +\infty
\ee
with $\tilde c_2,\tilde c_3\neq 0$, as perturbations of the solutions $e^{-\gamma_nt}$ and $e^{-\gamma_n't}$ of the asymptotic ODE \fref{an:ODEn+infty} at $+\infty$. We leave safely the proof of this fact to the reader. We now show why the second term in the asymptotic of $w_3^{(n)}$ is $O(e^{(-\gamma_n-g)t})$ where $g$ is defined in \fref{intro:eq:def g}. Using Duhamel's formula for \fref{an:ODEn}, with the set of fundamental solutions of the asymptotic equation \fref{an:ODEn+infty} described in Lemma \ref{annexe:lem:proprietes EDO asymptotiques}, $w^{(n)}_3$ can be written as
$$
\ba{r c l}
w^{(n)}_3 & = & a_1 e^{-\gamma_n t}+b_1e^{-\gamma_n ' t}\\
&&-\frac{1}{-\gamma_n+\gamma_n '}\int_0^t (e^{-\gamma_n (t-t')}-e^{-\gamma_n ' (t-t')}) e^{2t'}(V(e^{t'})+pc_{\infty}^{p-1}e^{-2t'})w^{(n)}_3(t')dt'.
\ea
$$
for $a_1$ and $b_1$ two coefficients. We use the bounds $V(e^{t'})+pc_{\infty}^{p-1}e^{-2t'}=O(e^{-\alpha t'})$ from \fref{cons:eq:asymptotique V} and \fref{an:as w3n 1} to find:
$$
w_3^{(n)}(t)=a_1e^{-\gamma_n t}+b_1e^{-\gamma_n ' t}-\frac{1}{-\gamma_n+\gamma_n '}\int_0^t (e^{-\gamma_n (t-t')}-e^{-\gamma_n ' (t-t')}) O(e^{(-\gamma_n -\alpha)t'}).
$$
After few computations we obtain two new coefficients $\tilde{a}_1$ and $\tilde{a}_2$ such that:
$$
w^{(n)}_3(t)=\tilde{a}_1 e^{-\gamma_n t}+\tilde{b}_1 e^{-\gamma_n' t}+O(e^{(-\gamma_n -\alpha)t}).
$$
The asymptotic \fref{an:as w3n 1}, as $-\gamma_n'<-\gamma_n$ from \fref{intro:eq:def gamman}  implies $\tilde{a}_1=\tilde c_2\neq 0$. From the definition \fref{intro:eq:def g} of $g$, this parameter is tailor made to produce $-\gamma_0-g > -\gamma_0'$ (from \fref{intro:eq:def gamma} and \fref{intro:eq:def gamman}). From \fref{intro:eq:def gamman} one then has: $-\gamma_n-g+\gamma_n' \geq -\gamma_0-g+\gamma_0'>0$. As $g$ satisfies also $g<\alpha$, the above identity then yields: 
$$
w_3^{(n)}(t)=\tilde{c}_2 e^{-\gamma_n t}+O(e^{(-\gamma_n -g)t}).
$$
Using exactly the same methods we use to propagate the asymptotic of $w^{(n)}_1$ to its derivatives in Step 2, the above identity propagates to the derivatives of $w_3^{(n)}$.

\end{proof}

\begin{lemma} \la{an:lem:w1n w4n}

The solutions $w^{(n)}_1$ and $w^{(n)}_4$ given by Lemma \ref{an:lem:existence Tn0} are not collinear. Moreover, $w^{(n)}_1$ has constant sign.

\end{lemma}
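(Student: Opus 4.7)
The plan is to deduce both statements from the strict positivity of $H$ on $\dot H^1(\mathbb R^d)$ given by \fref{cons:eq:positivite H2}, by associating to the ODE solution $w^{(n)}_j$ an actual function on $\mathbb R^d$ of the form $f(y) = w^{(n)}_j(|y|) Y^{(n,k)}(y/|y|)$ for some fixed $k$, and checking that $f \in \dot H^1(\mathbb R^d)$ in the two relevant situations. The single recurring idea is: if $Hf=0$ in the classical sense on $\mathbb R^d \setminus \{0\}$ (or on a ball with Dirichlet data), and $f \in \dot H^1$ with appropriate decay so that the boundary terms in the integration by parts vanish, then $0 = \int f\, Hf\, dy = \int |\nabla f|^2 + V f^2 \geq \int \delta(p) f^2/|y|^2 > 0$ from \fref{cons:eq:positivite H} and the usual Hardy inequality, forcing $f \equiv 0$.

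For the sign constancy of $w^{(n)}_1$, suppose for contradiction it admits a first zero at some $r_0 \in (0, +\infty)$, so that $w^{(n)}_1 > 0$ on $(0, r_0)$ after normalization (recall $c_0^{(n)}=1$ in \fref{an:def w1n}). Set $g := w^{(n)}_1 \mathbf 1_{[0,r_0]}$ and $f(y) := g(|y|) Y^{(n,k)}(y/|y|)$. Because $w^{(n)}_1 = O(r^n)$ near $0$ via Lemma \ref{an:lem:existence Tn0} and $Y^{(n,k)}$ is the restriction of a harmonic polynomial of degree $n$, $f$ is smooth near the origin; at $|y| = r_0$ it is continuous and has a jump in its normal derivative, hence $f \in H^1_0(B(0,r_0))$ extended by zero, in particular $f \in \dot H^1(\mathbb R^d)$. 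Integration by parts on $B(0,r_0)$, where the boundary term vanishes because $f|_{\partial B(0,r_0)} = 0$, yields $\int f\, Hf\, dy = \int_{B(0,r_0)} |\nabla f|^2 + V f^2 = \int_{B(0,r_0)} f\,(H^{(n)} w^{(n)}_1) Y^{(n,k)} = 0$, contradicting \fref{cons:eq:positivite H2}.

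For the non-collinearity of $w^{(n)}_1$ and $w^{(n)}_4$, suppose $w^{(n)}_1 = \alpha w^{(n)}_4$ for some $\alpha \neq 0$; then by \fref{an:def w3n} $w^{(n)}_1(r) = \alpha \tilde c_3 r^{-\gamma_n'} + o(r^{-\gamma_n'})$ as $r \to +\infty$, and the asymptotic propagates to the first derivatives (by applying the ODE as in Step 2 of the proof of Lemma \ref{an:lem:existence Tn0}). Set $f(y) := w^{(n)}_1(|y|) Y^{(n,k)}(y/|y|)$. Near $0$, $f$ is smooth (again $f \sim r^n Y^{(n,k)}$), and the computation $|\nabla f|^2 \lesssim r^{-2\gamma_n' - 2}$ at infinity combined with $\gamma_n' > (d-2)/2$ from \fref{an:bd gamman} gives $\int |\nabla f|^2 < +\infty$, so $f \in \dot H^1(\mathbb R^d)$ and $f \not\equiv 0$. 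The boundary term at infinity in the integration by parts is $\int_{|y|=R} f\,\partial_\nu f\, d\sigma = O(R^{d-2-2\gamma_n'}) \to 0$, and at the origin it vanishes from smoothness of $f$, so $\int f\,Hf\,dy = \int |\nabla f|^2 + V f^2 = \int f \cdot 0 = 0$, again contradicting \fref{cons:eq:positivite H2}.

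The main technical point throughout is not the positivity argument itself, which is standard once one has \fref{cons:eq:positivite H2}, but verifying that the candidate $f$ genuinely lies in $\dot H^1(\mathbb R^d)$ and that the integration by parts produces no boundary contribution. In case (1) this is a matter of bookkeeping at the internal interface $r=r_0$; in case (2) it requires the precise exponent $\gamma_n' > (d-2)/2$ supplied by Lemma \ref{annexe:lem:proprietes EDO asymptotiques}, which is exactly the critical scaling for $\dot H^1$-membership of a function decaying like $r^{-\gamma_n'}$, and is the one spot where the supercritical hypothesis $p > p_{JL}$ enters decisively.
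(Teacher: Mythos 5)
Your proof is correct, but it takes a genuinely different route from the paper. The paper argues entirely in the phase plane: it rewrites $H^{(n)}f=0$ as a planar system in $(w,w')$, places the trajectory of $w^{(n)}_1$ in the top right corner near $t\to-\infty$, and shows the region bounded by $\{0\}\times(0,+\infty)$ and the ray $(x,-\tfrac{d-2}{2}x)_{x\geq 0}$ is invariant, the key computation being that the vector field points inward there precisely because $e^{2t}V(e^t)>-\tfrac{(d-2)^2}{4}$, i.e.\ the pointwise bound \fref{cons:eq:positivite H}; positivity of $w^{(n)}_1$ and non-collinearity with $w^{(n)}_4$ (whose asymptotic direction $(1,-\gamma_n')$ lies outside the invariant cone) then follow at once, with no integrability to check. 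You instead promote each scenario to a nontrivial $\dot H^1$ function annihilating the quadratic form and contradict the energy positivity \fref{cons:eq:positivite H2} — truncation at the first zero for the sign statement, and the decay $r^{-\gamma_n'}$ with $\gamma_n'>\tfrac{d-2}{2}$ from Lemma \ref{annexe:lem:proprietes EDO asymptotiques} for the collinearity statement. Both mechanisms ultimately rest on the same input (\fref{cons:eq:positivite H}, the potential lying strictly above the Hardy potential), but yours requires the derivative asymptotics at infinity (available, since Lemma \ref{an:lem:existence Tn0} states the asymptotics propagate to derivatives), absolute convergence of $\int V f^2$ (from $|V|\lesssim|y|^{-2}$ and Hardy), and the vanishing of the boundary terms, all of which you handle; in exchange it makes transparent that the statement is exactly the absence of a zero-energy $\dot H^1$ state on each spherical harmonic, and pinpoints where $p>p_{JL}$ enters ($\gamma_n'>\tfrac{d-2}{2}$), whereas the paper's argument is more elementary and self-contained at the ODE level.
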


\begin{proof}[Proof of Lemma \ref{an:lem:w1n w4n}]

We see $(ODE_n)$ as a planar dynamical system:
$$
\frac{d}{dt} \begin{pmatrix}w^1 \\ w^2 \end{pmatrix}= \begin{pmatrix} 0 & 1 \\ n(d+n-2)+e^{2t}V(e^t) & -(d-2) \end{pmatrix} \begin{pmatrix}w^1 \\ w^2 \end{pmatrix}.
$$
with $w^1=w$ and $w^2=w'$. From their asymptotics from Lemma \ref{annexe:lem:proprietes EDO asymptotiques}:
$$
\begin{pmatrix} w^{(n)}_1(t) \\ (w^{(n)}_1)'(t) \end{pmatrix} = c_1e^{nt} \begin{pmatrix} 1 \\ n \end{pmatrix} +O(e^{(n+2)t}) \ \text{as} \ t\rightarrow -\infty,
$$
$$
\begin{pmatrix} w^{(n)}_4(t) \\ (w^{(n)}_4)'(t) \end{pmatrix} \sim \tilde c_3 e^{-\gamma_n' t} \begin{pmatrix} 1 \\ -\gamma_n' \end{pmatrix}  \ \text{as} \ t\rightarrow -\infty 
$$
and we may take $c_1,\tilde c_3>0$ without loss of generality. Therefore, close to $-\infty$, $\left( w^{(n)}_1(t),(w^{(n)}_1)'(t) \right)$ is in the top right corner of the plane. It cannot cross the ray $\{ 0 \} \times (0,+\infty)$ because there the vector field $\begin{pmatrix} w^2 \\ -(d-2)w^2\end{pmatrix}$ points toward the right. Neither can it go below the ray $(x,-\frac{d-2}{2}x)_{x\geq 0}$. To see that we compute the scalar product between the vector field and a vector that is orthogonal to this ray and that points toward north at any time $t\in \mathbb R$:
$$
\begin{array}{r c l}
&\left( \begin{pmatrix} 0 & 1 \\ n(d+n-2)+e^{2t}V(e^t) & -(d-2) \end{pmatrix} \begin{pmatrix}1\\ -\frac{d-2}{2} \end{pmatrix} \right).\begin{pmatrix} \frac{d-2}{2} \\ 1 \end{pmatrix} \\
=& \frac{(d-2)^2}{4}+e^{2t}V(e^t)+n(d+n-2)>0
\end{array}
$$
because $e^{2t}V(e^t)>\frac{(d-2)^2}{4}$, the potential $-V$ being below the Hardy potential (see \fref{cons:eq:positivite H}). Hence $\left( w^{(n)}_1(t),(w^{(n)}_1)'(t) \right)$ stays in the top right zone whose border is $\{ 0 \} \times (0,+\infty ) \cup (x,-\frac{d-2}{2}x)_{x\geq 0}$. In particular, $w^{(n)}_1>0$ for all times, which proves the positivity of $w^{(n)}_1$. As the trajectory $\left( w^{(n)}_4(t),(w^{(n)}_4)'(t) \right)$ is asymptotically collinear to the vector $\begin{pmatrix} 1 \\ -\gamma_n ' \end{pmatrix}$ which does not belong to this zone (from Lemma \ref{annexe:lem:proprietes EDO asymptotiques}) nor its opposite, one obtains that $w^{(n)}_1$ and $w_4^{(n)}$ are not collinear.\\

\end{proof}

We now end the proof of Lemma \ref{cons:lem:noyau H}. The fundamental set of solutions of \fref{an:ODEn} is provided by Lemma \ref{an:lem:existence Tn0}. As $w^{(n)}_1$ is not collinear to $w^{(n)}_4$, there exists $a_1\neq 0$ and $a_2$ such that $w^{(n)}_1=a_1w_3^{(n)}+a_2w^{(n)}_4$. From the asymptotics \fref{an:def w3n} and the positivity of $w^{(n)}_1$ shown in Lemma \ref{an:lem:w1n w4n} one then has:
$$
w^{(n)}_1=b e^{-\gamma_nt}+O(e^{(-\gamma_n-g)t}) \ \ \text{as} \ t\rightarrow +\infty, \ b>0.
$$ 
We call $T^n_0$ the profile associated to $w_1^{(n)}$ in the original space variable $r$: $T^n_0(r)=w^{(n)}_1(\text{ln}(r))$ which solves $H^{(n)}T_0^{(n)}=0$. The above identity means $T^n_0=a_1 r^{-\gamma_n}+O(r^{(-\gamma_n-g})$ as $r\rightarrow +\infty$, and \fref{an:def w1n} implies $T^n_0(r) \underset{r\rightarrow 0}{=} \sum_{i=0}^q b_i^n r^{n+2l}+O(r^{n+2+2q})$ as $r\rightarrow 0$, for some coefficients $(b_i)_{i\in \mathbb N}\in \mathbb R^{\mathbb N}$, for any $q\in \mathbb N$. These asymptotics propagate for the derivatives. This is the identity \fref{cons:eq:asymptotique T0n} we had to prove.\\

\noindent Now let us denote by $w$ another solution of \fref{an:ODEn} that is not collinear to $w^{(n)}_1$ and $w^{(n)}_4$. \fref{an:def w1n} and \fref{an:def w3n} imply that $w\sim ce^{(2-n-d)t}$ as $t\rightarrow -\infty$ and $w=de^{-\gamma_n t}+O(e^{(-\gamma_n-g)t})$ as $t\rightarrow +\infty$ with $c,d\neq 0$. These asymptotics propagate for higher derivatives. The solution of $H^{(n)}\Gamma^{(n)}=0$ given by $\Gamma^{(n)}(r)=w(\text{ln}(r))$ then satisfies the desired asymptotics \fref{cons:eq:asymptotique T0n} we had to prove. Eventually, the laplacian on spherical harmonics of degree $n$ is (for $f$ radial):
$$
\Delta (fY_{n,k})=\left( (\partial_{rr}+\frac{d-1}{r}\partial_r-\frac{n(d+n-2)}{r^2})f\right)Y_{n,k}
$$
meanings from the asymptotics  \fref{cons:eq:asymptotique T0n} that for any $j\in \mathbb N$, $\Delta^j (T^n_0(|x|)Y_{n,k}(\frac{x}{|x|}))$ is a continuous function near the origin. Therefore, $T^n_0Y_{n,k}$ is smooth close to the origin from elliptic regularity. It is also smooth outside as a product of smooth functions, and thus smooth everywhere, ending the proof Lemma \ref{cons:lem:noyau H}.

\end{proof}

%%%%%%%%%%%%%%%%%%%%%%%%%%%%%%%%%%%%%%%%%%%%
%%%%%%%%%%%%%%%%%%%%%%%%%%%%%%%%%%%%%%%%%%%%

\section{Hardy and Rellich type inequalities} \la{sec:hardy}

We recall in this section the Hardy and Rellich estimates to make this paper self contained. They are used throughout the paper, and especially to derive a fundamental coercivity property of the adapted high Sobolev norm in Appendix \ref{annexe:section:coercivite}. We now state a useful and very general Hardy inequality with possibly fractional weights and derivatives. A proof can be found in \cite{MRRod2}, Lemma B.2.

\begin{lemma}[Hardy type inequalities]\label{annexe:lem:hardy}

Let $\delta>0$, $q\geq 0$ satisfy $\left|q-(\frac{d}{2}-1)\right|\geq \delta$ and $u:[1,+\infty)\rightarrow \mathbb R$ be smooth and satisfy
$$
\int_1^{+\infty} \frac{|\partial_y u|^2}{y^{2q}}y^{d-1}dy+\int_1^{+\infty} \frac{u^2}{y^{2q+2}}y^{d-1}dy<+\infty.
$$
\begin{itemize}
\item[(i)] If $q> \frac{d}{2}-1+\delta$, then there holds:
\be \la{an:hardy souscritique}
C(d,\delta) \int_{y\geq 1} \frac{u^2}{y^{2q+2}}y^{d-1}dy-C'(d,\delta) u^2(1) \leq \int_{y\geq 1} \frac{|\partial_y u|^2}{y^{2q}}y^{d-1}dy\,
\ee
\item[(ii)] If $q< \frac{d}{2}-1-\delta$, then there holds:
\be \la{annexe:eq:hardy surcritique}
C(d,\delta) \int_{y\geq 1} \frac{u^2}{y^{2q+2}}y^{d-1}dy \leq \int_{y\geq 1} \frac{|\partial_y u|^2}{y^{2q}}y^{d-1}dy.
\ee
\end{itemize}

\end{lemma}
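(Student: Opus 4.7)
The approach is the standard integration-by-parts derivation of Hardy's inequality, exploiting the assumption $|q-(d/2-1)|\geq \delta$ which prevents the natural primitive from being logarithmic. Set $\alpha:=d-2-2q$, so that $|\alpha|=2|q-(d/2-1)|\geq 2\delta$ and the integrand on the left is
$$\frac{u^2}{y^{2q+2}}y^{d-1} = u^2 y^{\alpha-1} = \frac{u^2}{\alpha}\partial_y(y^\alpha).$$
Denote $I:=\int_1^\infty u^2 y^{\alpha-1}\,dy$ and $J:=\int_1^\infty (\partial_y u)^2 y^{\alpha+1}\,dy$; both are finite by hypothesis. Integration by parts on $[1,R]$ yields
$$\int_1^R u^2 y^{\alpha-1}\,dy = \frac{1}{\alpha}\bigl[u^2 y^\alpha\bigr]_1^R - \frac{2}{\alpha}\int_1^R u\,\partial_y u\, y^\alpha\,dy,$$
and Cauchy--Schwarz controls the cross term by $\frac{2}{|\alpha|}I^{1/2}J^{1/2}$ uniformly in $R$.

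The first step is to eliminate the boundary contribution at infinity. In case (i), $\alpha\leq -2\delta<0$, so $y^\alpha\to 0$ and we need only that $u^2(R)R^\alpha$ stays bounded; this follows because the identity above forces the sequence $u^2(R)R^\alpha$ to be Cauchy as $R\to\infty$ (the right-hand side converges). Passing to the limit gives
$$I = \frac{u^2(1)}{|\alpha|} + \frac{2}{|\alpha|}\int_1^\infty u\,\partial_y u\, y^\alpha\,dy \leq \frac{u^2(1)}{|\alpha|} + \frac{2}{|\alpha|}I^{1/2}J^{1/2},$$
and Young's inequality $\frac{2}{|\alpha|}I^{1/2}J^{1/2}\leq \tfrac12 I+\frac{2}{|\alpha|^2}J$ followed by absorption yields
$$\frac{|\alpha|^2}{4}I - \frac{|\alpha|}{2}u^2(1) \leq J,$$
which is \eqref{an:hardy souscritique} with $C(d,\delta)\geq \delta^2$ and $C'(d,\delta)\geq\delta$.

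The subtle point is case (ii), where $\alpha\geq 2\delta>0$ and the weight $y^\alpha$ blows up at infinity, so the boundary term at $R$ is not automatically controlled. Here one has to prove the limit $\lim_{R\to\infty}u^2(R)R^\alpha=0$ from the integrability assumptions. I will argue as follows: the identity $f(R):=u^2(R)R^\alpha = u^2(1)+\int_1^R(2uu'y^\alpha+\alpha u^2 y^{\alpha-1})\,dy$ shows that $f(R)$ has a finite limit $L\geq 0$ as $R\to\infty$, since both integrals on the right converge (the first by Cauchy--Schwarz between $I$ and $J$, the second by definition of $I$). If $L>0$, then $u^2(y)\sim Ly^{-\alpha}$ for large $y$, forcing $u^2 y^{\alpha-1}\sim L/y$, contradicting the finiteness of $I$; hence $L=0$. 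With the boundary term gone,
$$I = -\frac{u^2(1)}{\alpha} - \frac{2}{\alpha}\int_1^\infty u\,\partial_y u\, y^\alpha\,dy \leq \frac{2}{\alpha}I^{1/2}J^{1/2},$$
so $I\leq \frac{4}{\alpha^2}J$, which is \eqref{annexe:eq:hardy surcritique} with $C(d,\delta)\geq \delta^2$. The identification of this vanishing-at-infinity property, rather than any hard estimate, is the only genuine obstacle; everything else is bookkeeping.
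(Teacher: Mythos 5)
Your proof is correct and takes essentially the same route as the paper: integration by parts against the weight, Cauchy--Schwarz plus absorption, with the boundary term at infinity eliminated through the assumed integrability (the paper does this by extracting a subsequence $R_n\to\infty$ along which $u^2(R_n)R_n^{\,d-2-2q}\to 0$, whereas you show the full limit exists from the identity itself and must vanish). One cosmetic nit: in case (i) your displayed formula for $I$ silently drops the nonnegative limit of $u^2(R)R^{\alpha}$ and should be an inequality $\le$ (or the limit should be argued to be zero exactly as in your case (ii)); since that term enters with a favorable sign, the conclusion is unaffected.
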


\begin{proof}[Proof of Lemma \ref{annexe:lem:hardy}]

Let $R>1$, the fundamental theorem of calculus gives:
$$
\frac{u^2(R)}{R^{2q+2-d}}-u^2(1)=2\int_1^R \frac{u\partial_y u}{y^{2q+2-d}}dy-(2q+2-d)\int_1^R \frac{u^2}{y^{2q+2-d}}dy.
$$
The integrability of $\frac{u^2}{y^{2q+3-d}}$ over $[1,+\infty)$ implies that $\frac{u^2(R_n)}{R_n^{2q+2-d}}\rightarrow 0$ along a sequence of radiuses $R_n\rightarrow +\infty$. Passing to the limit through this sequence we get:
$$
(2q+2-d)\int_1^{+\infty} \frac{u^2}{y^{2q+2-d}}dy-u^2(1)=2\int_1^{+\infty} \frac{u\partial_y u}{y^{2q+2-d}}dy .
$$
We apply Cauchy-Schwarz and Young inequalities to find:
$$
\begin{array}{r c l}
\left|2\int_1^{+\infty} \frac{u\partial_y u}{y^{2q+2-d}}dy\right| &\leq& 2\left( \int_1^{+\infty} \frac{u^2}{y^{2q+3-d}}dy \right)^{\frac{1}{2}} \left( \int_1^{+\infty} \frac{|\partial_yu|^2}{y^{2q+1-d}}dy \right)^{\frac{1}{2}}\\
&\leq & \epsilon \int_1^{+\infty} \frac{u^2}{y^{2q+3-d}}dy+\frac{1}{\epsilon}\int_1^{+\infty} \frac{|\partial_yu|^2}{y^{2q+3-d}}dy
\end{array}
$$
for any $\epsilon>0$. If $q>\frac{d}{2}-1+\delta$, then the two above identities give:
$$
\int_1^{+\infty} \frac{u^2}{y^{2q+2-d}}dy \leq \frac{u^2(1)}{2\delta}+\frac{\epsilon}{2\delta} \int_1^{+\infty} \frac{u^2}{y^{2q+3-d}}dy+\frac{1}{2\delta \epsilon}\int_1^{+\infty} \frac{|\partial_yu|^2}{y^{2q+3-d}}dy.
$$
Taking $\epsilon=\delta$, one gets $\int_1^{+\infty} \frac{u^2}{y^{2q+2-d}}dy \leq \frac{u^2(1)}{\delta}+\frac{1}{\delta^2}\int_1^{+\infty} \frac{|\partial_yu|^2}{y^{2q+3-d}}dy$ which is precisely the identity \fref{an:hardy souscritique} we had to prove. If $q< \frac{d}{2}-1-\delta$ then one obtains:
$$
\int_1^{+\infty} \frac{u^2}{y^{2q+2-d}}dy \leq -\frac{u^2(1)}{2(\frac d 2-1-q)}+\frac{\epsilon}{2\delta} \int_1^{+\infty} \frac{u^2}{y^{2q+3-d}}dy+\frac{1}{2\delta \epsilon}\int_1^{+\infty} \frac{|\partial_yu|^2}{y^{2q+3-d}}dy.
$$
Taking $\epsilon=\delta$, one gets $\int_1^{+\infty} \frac{u^2}{y^{2q+2-d}}dy \leq \frac{1}{\delta^2}\int_1^{+\infty} \frac{|\partial_yu|^2}{y^{2q+3-d}}dy$ which is precisely the second identity \fref{annexe:eq:hardy surcritique} we had to prove.

\end{proof}

\begin{lemma}[Rellich type inequalities]\label{annexe:lem:rellich}

For any $u\in H^2(\mathbb{R}^d)$ there holds
\be \label{annexe:eq:rellich}
\left( \frac{(d-4)d}{4}\right)^2 \int_{\mathbb{R}^d} \frac{u^2}{|x|^4} dx\leq \int_{\mathbb{R}^d} |\Delta u|^2dx, \ \ \ \  \frac{d^2}{4} \int_{\mathbb{R}^d} \frac{|\nabla u|^2}{|x|^2}dx\leq \int_{\mathbb{R}^d} |\Delta u|^2dx.
\ee
If $q\geq 0$ and $u:\mathbb R^d\rightarrow \mathbb R$ is a smooth function satisfying
$$
\int_{\mathbb{R}^d}\left( \frac{|\Delta u|^2}{1+|x|^{2q}}+\frac{|\nabla u|^2}{1+|x|^{2q+2}}+\frac{u^2}{1+|x|^{2q+4}}\right)dx<+\infty.
$$
then there holds:
\be \label{annexe:eq:rellich a poids}
C(d,q) \sum_{1\leq |\mu|\leq 2}\int_{\mathbb R^d} \frac{|\partial^{\mu} u|^2}{1+|x|^{2q+4-2\mu}}dx-C'(d,q) \int_{\mathbb{R}^d} \frac{u^2}{1+|x|^{2q+4}}dx\leq \int_{\mathbb R^d} \frac{|\Delta u|^2}{1+|x|^{2q}}dx.
\ee

\end{lemma}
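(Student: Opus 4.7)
\medskip

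\noindent\textbf{Proof proposal for Lemma \ref{annexe:lem:rellich}.}

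The plan is to treat the two unweighted inequalities \eqref{annexe:eq:rellich} first, since they are classical, and then derive the weighted estimate \eqref{annexe:eq:rellich a poids} by combining them with the weighted Hardy inequalities of Lemma \ref{annexe:lem:hardy}, together with a localization argument to absorb the near-origin region and the possible critical exponents.

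For the classical inequalities, I would start from the pointwise identity
\begin{equation*}
\sum_{i,j}|\partial_{ij}u|^2 = |\Delta u|^2 + \sum_{i,j}\partial_i\bigl(\partial_j u\,\partial_{ij}u - \partial_i u\,\partial_{jj}u\bigr),
\end{equation*}
so that for $u\in C^\infty_c(\mathbb{R}^d)$ an integration by parts gives $\int |D^2u|^2 = \int |\Delta u|^2$. Applying the standard Hardy inequality $\int |\nabla \partial_i u|^2 \geq \frac{(d-2)^2}{4}\int \frac{|\partial_i u|^2}{|x|^2}$ componentwise yields a non-sharp form of the second bound; to obtain the sharp constant $d^2/4$ I would expand $0\leq \int |\Delta u - \alpha\,\mathrm{div}(u\nabla \log|x|)|^2$ for a suitable $\alpha$, using $\Delta\log|x|=(d-2)|x|^{-2}$ and optimizing. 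For the first inequality, I would test with $\frac{u}{|x|^2}$: integrate by parts twice the cross term $\int \Delta u\,\frac{u}{|x|^2}$ using $\Delta(|x|^{-2}) = 2(4-d)|x|^{-4}$ and $\nabla|x|^{-2} = -2x/|x|^4$, then expand $\int \bigl|\Delta u - \beta\,u/|x|^2\bigr|^2\ge 0$ and optimize over $\beta$ to recover the sharp constant $(d(d-4)/4)^2$. Density then extends both to $H^2(\mathbb{R}^d)$.

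For the weighted estimate \eqref{annexe:eq:rellich a poids}, I would fix a smooth radial cutoff $\chi$ equal to $1$ on $|x|\leq 1$ and supported in $|x|\leq 2$, and treat $\chi u$ and $(1-\chi)u$ separately. On the compact piece, interior elliptic regularity and classical Calder\'on--Zygmund bounds give
\begin{equation*}
\sum_{1\le|\mu|\le 2}\int_{|x|\le 2}|\partial^\mu u|^2 \,dx \le C(d)\int_{|x|\le 3}|\Delta u|^2 \,dx + C(d)\int_{|x|\le 3} u^2 \,dx,
\end{equation*}
which fits into the right-hand side with the penalty term. On the exterior, I apply the weighted Hardy inequality \eqref{an:hardy souscritique} or \eqref{annexe:eq:hardy surcritique} of Lemma \ref{annexe:lem:hardy} component by component to each $\partial_i u$ with exponent $q+1$, producing control of $\int_{|x|\ge 1}|\partial_i u|^2/|x|^{2q+2}$ by $\int_{|x|\ge 1}|\nabla\partial_i u|^2/|x|^{2q}$ plus a boundary trace at $|x|=1$ (which is absorbed by elliptic regularity on a slightly larger annulus), and then a second application to $u$ itself with exponent $q+2$ yields the $u^2/|x|^{2q+4}$ term. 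Summing $\sum_i|\nabla\partial_i u|^2$ reconstructs $|D^2 u|^2$, which after a final integration by parts equals $|\Delta u|^2$ modulo lower-order terms with the same weight, absorbed again by the penalty.

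The main obstacle will be the critical case, namely values of $q$ for which $q+1$ or $q+2$ equals $d/2-1$: there Lemma \ref{annexe:lem:hardy} does not apply. The way around this is to exploit the weight $(1+|x|^{2q+4})^{-1}$, which is integrable near infinity regardless of the critical issue, and to allow $C'(d,q)$ in \eqref{annexe:eq:rellich a poids} to depend on $q$ so as to absorb a finite logarithmic loss. Concretely, I would replace the exponent $q$ by $q\pm\epsilon$ in the Hardy step (using two slightly off-critical Hardy inequalities in the annuli where the original $q$ causes a logarithmic divergence), then recombine: the price is an extra constant $C'(d,q)\int u^2/(1+|x|^{2q+4})$ coming from the trace/absorption, which matches the form stated in \eqref{annexe:eq:rellich a poids}. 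Once this technical point is handled, assembling the interior and exterior bounds and summing over the components of $D^2 u$ closes the proof.
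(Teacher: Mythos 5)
There is a genuine gap in your treatment of the weighted estimate \fref{annexe:eq:rellich a poids}: the exterior argument is circular. You control the weighted gradient $\int_{|x|\ge 1}|\partial_i u|^2|x|^{-2q-2}$ by Hardy, whose right-hand side is the weighted Hessian $\int |\nabla\partial_i u|^2|x|^{-2q}$; you then propose to convert the weighted Hessian into the weighted Laplacian "after a final integration by parts ... modulo lower-order terms with the same weight, absorbed again by the penalty". But the commutator produced by that integration by parts is not a $u^2$-weighted term: it is of the form
$$
\int \Bigl(\sum_i \partial_i u\,\nabla\partial_i u-\Delta u\,\nabla u\Bigr)\cdot\nabla\bigl(|x|^{-2q}\bigr)
\ \lesssim\ \int \frac{|\nabla u|\,\bigl(|\nabla^2 u|+|\Delta u|\bigr)}{|x|^{2q+1}},
$$
so it cannot be absorbed by $C'\int u^2/(1+|x|^{2q+4})$. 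Estimating it by Cauchy--Schwarz/Young reintroduces exactly the weighted gradient, with a factor $C_\epsilon$ that blows up as the Hessian absorption parameter $\epsilon\to 0$, while the Hardy constant feeding the gradient back into the Hessian is fixed and not small. The loop "gradient $\leftarrow$ Hessian $\leftarrow$ Laplacian $+$ gradient" therefore does not close as written. The repair is to obtain the weighted gradient bound \emph{directly} from $u$ and $\Delta u$, which is what the paper does: integrating by parts $\int u\,\Delta u\,(1+|x|^{2q+2})^{-1}$ gives $-\int |\nabla u|^2(1+|x|^{2q+2})^{-1}$ plus a pure $u^2$ term from $\Delta$ of the weight, and Cauchy--Schwarz/Young then yield \fref{annexe:eq:rellich ordre 1}; only afterwards is the Hessian treated, and the commutator is absorbed with small $\epsilon$ because the gradient term is already controlled.

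Beyond this, your architecture is considerably heavier than necessary. Because the lemma allows the penalty term and uses the nondegenerate weights $(1+|x|^{2k})^{-1}$, the paper's proof needs no cutoff, no elliptic regularity near the origin, and no appeal to Lemma \ref{annexe:lem:hardy} at all — just the two integrations by parts above, performed globally. In particular the critical-exponent issue you worry about is an artifact of replacing the smooth weight by the homogeneous one on $|x|\ge 1$; and note that your $q\pm\epsilon$ fix does not quite work as stated, since shifting the exponent improves one side of the Hardy inequality while worsening the other relative to the fixed target weights. Your discussion of the sharp constants in \fref{annexe:eq:rellich} is not objectionable (the paper simply cites these as standard and omits the proof), but it is not where the substance of the lemma lies.
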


\begin{proof}[Proof of Lemma \ref{annexe:lem:rellich}]

\fref{annexe:eq:rellich} is a standard inequality and we omit its proof. To prove We prove \fref{annexe:eq:rellich a poids} we reason with smooth and compactly supported functions, and then conclude by a density argument.\\

\noindent \textbf{step 1} Control of the first derivatives. Making integration by parts we compute
$$
\int_{\mathbb R^d}  \frac{u\Delta u}{1+|x|^{2q+2}}dx =-\int_{\mathbb R^d} \frac{|\nabla u|^2}{1+|x|^{2q+2}}dx+\frac{1}{2}\int_{\mathbb R^d}   u^2\Delta\left( \frac{1}{1+|x|^{2q+2}} \right)dx
$$
We then use Cauchy-Schwarz and Young's inequalities to obtain:
$$
\begin{array}{r c l}
& C \int_{\mathbb R^d}  \frac{|\nabla u|^2}{1+|x|^{2q+2}}dx-C' \int_{\mathbb R^d}  u^2\left(\Delta \left(\frac{1}{1+|x|^{2q+2}}\right)-\frac{1}{(1+|x|^{2q+2})(1+|x|)^2}  \right)dx \\
\leq & \int_{\mathbb R^d}  \frac{|\Delta u|^2}{(1+|x|^{2q+2})(1+|x|)^{-2}}dx
\end{array}
$$
It leads to the following estimate by noticing that $(1+|x|^{2q+2})(1+|x|)^{-1}\sim(1+|x|^{2q})$ and that $\left|\Delta \left(\frac{1}{1+|x|^{2q+2}}\right)-\frac{1}{(1+|x|^{2q+2})(1+|x|)^2}\right|\leq \frac{C}{1+|x|^{2q+4}}$:
\be \label{annexe:eq:rellich ordre 1}
C(d,p) \int_{\mathbb R^d} \frac{|\nabla u|^2}{1+|x|^{2q+2}}dx-C'(d,q) \int_{\mathbb R^d}  \frac{u^2}{1+|x|^{2q+4}}dx\leq \int_{\mathbb R^d} \frac{|\Delta u|^2}{1+|x|^{2q}}dx
\ee

\noindent \textbf{step 2} Control of the second order derivatives. Making again integrations by parts one finds:
$$
\int_{\mathbb R^d} \frac{|\Delta u|^2}{1+|x|^{2q}} = \int_{\mathbb R^d}  \frac{|\nabla^2 u|^2}{1+|x|^{2q}}+\sum_{1}^n \partial_{x_i} u \nabla \partial_{x_i}u.\nabla \left(\frac{1}{1+|x|^{2q}} \right)-\Delta u \nabla u.\nabla \left(\frac{1}{1+|x|^{2q}} \right)
$$ 
in which by using Cauchy-Schwarz and Young's inequalities for any $\epsilon>0$ we can control the last two terms by:
$$
\begin{array}{r c l}
&\left| \int_{\mathbb R^d}  \sum_{1}^n \partial_{x_i} u \nabla \partial_{x_i}u.\nabla \left(\frac{1}{1+|x|^{2q}} \right)-\Delta u \nabla u.\nabla \left(\frac{1}{1+|x|^{2q}} \right) \right| \\
\leq& C\epsilon \int_{\mathbb R^d} \frac{|\nabla^2 u|^2}{1+|x|^{2q}}dx+\frac{C}{\epsilon} \int_{\mathbb R^d} \frac{|\nabla u|^2}{1+|x|^{2q+2}}dx. 
\end{array}
$$
Therefore for $\epsilon$ small enough the two above identities yield:
$$
\int_{\mathbb R^d}  \frac{|\nabla^2 u|^2}{1+|x|^{2q}}dx \leq C\left( \int_{\mathbb R^d} \left( \frac{|\Delta u|^2}{1+|x|^{2q}}+\frac{|\nabla u|^2}{1+|x|^{2q+2}}+\frac{u^2}{1+|x|^{2q+4}}\right)dx \right)
$$
Combining this identity and \fref{annexe:eq:rellich ordre 1} one obtains the desired identity \fref{annexe:eq:rellich a poids}.

\end{proof}

\begin{lemma}[Weighted and fractional Hardy inequality]\label{annexe:lem:hardy frac a poids}

Let:
$$
0<\nu<1, \ k\in \mathbb{N} \ \text{and} \ 0<\mu \ \text{satisfying} \ \mu+\nu+k <\frac{d}{2} ,
$$
and let $f$ be a smooth function satisfying the decay estimates:
\begin{equation} \label{annexe:hardyfrac:eq:condition non radiale}
|\partial^{\kappa} f(x)|\leq \frac{C(f)}{1+|x|^{\mu+i}}, \ \text{for} \ \kappa\in \mathbb N^d, \ |\kappa|_1=i, \ i=0,1,...,k+1 , 
\end{equation}
then for $\varepsilon\in \dot{H}^{\mu+k+\nu}$, there holds $\varepsilon f\in \dot{H}^{\nu+k} $ with:
\begin{equation} \label{annexe:eq:hardyfrac}
\parallel \nabla^{\nu+k} (\varepsilon f)\parallel_{L^2} \leq C(C(f),\nu,k,\mu,d) \parallel \nabla^{\mu+k+\nu}\varepsilon \parallel_{L^2} .
\end{equation}
If $f$ is smooth and radial then \fref{annexe:hardyfrac:eq:condition non radiale} is equivalent to:
\be \label{annexe:hardyfrac:eq:condition radiale}
|\partial_{r}^i f(r)|\leq \frac{C(f)}{1+r^{\mu+i}}, \ i=0,1,...,k+1.
\ee

\end{lemma}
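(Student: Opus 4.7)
\medskip

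The plan is to treat the two assertions of the lemma separately, starting with the radial equivalence and then focusing on the main inequality \fref{annexe:eq:hardyfrac}, which is the substantive statement.

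For the radial equivalence, one direction is immediate: since $\partial_r = |x|^{-1} x \cdot \nabla$, an induction on $i$ gives $|\partial_r^i f(r)| \leq C_{d,i} \max_{|\kappa|=i} |\partial^\kappa f(x)|$ for $|x|=r$, and the bound for $r$ close to $0$ follows from smoothness of $f$. For the reverse direction, with $f(x)=F(|x|)$, Fa\`a di Bruno's formula writes
\begin{equation*}
\partial^\kappa f(x) = \sum_{j=1}^{|\kappa|} F^{(j)}(|x|)\, R_{\kappa,j}\!\left(\frac{x}{|x|},\frac{1}{|x|}\right),
\end{equation*}
where $R_{\kappa,j}$ is a polynomial expression homogeneous of degree $j-|\kappa|$ in $x$; the pointwise bound \fref{annexe:hardyfrac:eq:condition radiale} then yields $|\partial^\kappa f(x)| \leq C/(1+|x|^{\mu+|\kappa|})$ for $|x|\geq 1$, and smoothness handles $|x|\leq 1$.

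For the main inequality, I would first reduce to the case $k=0$ by the integer Leibniz rule. Writing
\begin{equation*}
\nabla^k(\varepsilon f) = \sum_{|\alpha|+|\beta|=k} c_{\alpha,\beta}\, \partial^\alpha\varepsilon \cdot \partial^\beta f
\end{equation*}
it suffices, since $|\partial^\beta f(x)| \leq C(f)/(1+|x|^{\mu+|\beta|})$ with the same decay for $\nabla(\partial^\beta f)$ (valid because $|\beta|+1\leq k+1$), to prove the model estimate
\begin{equation*}
\||\nabla|^\nu(u\, g)\|_{L^2} \leq C \|\nabla^{\tilde\mu+\nu} u\|_{L^2}
\end{equation*}
for a function $g$ with $|g(x)|\leq C/(1+|x|^{\tilde\mu})$ and $|\nabla g(x)|\leq C/(1+|x|^{\tilde\mu+1})$, where $\tilde\mu = \mu+|\beta|$ and $\tilde\mu+\nu+|\alpha| = \mu+k+\nu < d/2$. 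Applying this with $u=\partial^\alpha\varepsilon$ and $g=\partial^\beta f$ then assembles to the claim.

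To prove the model estimate, I would use the Gagliardo representation
\begin{equation*}
\||\nabla|^\nu(ug)\|_{L^2}^2 \sim \iint_{\mathbb R^{2d}} \frac{|u(x)g(x)-u(y)g(y)|^2}{|x-y|^{d+2\nu}}\, dx\, dy
\end{equation*}
and split $u(x)g(x)-u(y)g(y) = g(y)(u(x)-u(y)) + u(x)(g(x)-g(y))$. The first term is controlled using the pointwise bound on $g$ together with the weighted Stein--Weiss / fractional Hardy inequality $\|u/|x|^{\tilde\mu}\|_{L^2} \leq C\|\nabla^{\tilde\mu}u\|_{L^2}$ (which is available precisely because $\tilde\mu+\nu<d/2$). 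The second term uses a regime decomposition: in the diagonal region $|x-y|\leq \tfrac12 \max(|x|,|y|)$ one invokes the pointwise gradient bound on $g$ to write $|g(x)-g(y)| \leq C|x-y|/(1+\max(|x|,|y|))^{\tilde\mu+1}$, converting the contribution into another weighted Hardy integral for $u$; in the off-diagonal region one uses the bare bound $|g(x)|+|g(y)|\leq C(1+\min(|x|,|y|))^{-\tilde\mu}$ and a direct Schur-type estimate, once more closed by Stein--Weiss.

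The main obstacle is the off-diagonal regime for the $u(x)(g(x)-g(y))$ term, where the pointwise bound on $\nabla g$ is not directly useful and one has to carefully balance the polynomial decay of $g$ against the $|x-y|^{-d-2\nu}$ singularity of the Gagliardo kernel. The key point making this work is the strict sub-critical condition $\mu+k+\nu<d/2$ in the hypotheses, which ensures that every instance of the fractional Hardy inequality we invoke is in the admissible range of Lemma~\ref{annexe:lem:hardy}, so that no logarithmic losses appear and all the constants depend only on $d,\nu,k,\mu$ and $C(f)$.
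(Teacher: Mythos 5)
Your overall architecture coincides with the paper's: the paper also proves the lemma by (i) the base case $k=0$, (ii) a Leibniz-rule reduction of the case $k\geq 1$ to the case $k=0$ applied to the products $\partial^{\kappa}\varepsilon\,\partial^{\tilde\kappa}f$, and (iii) a separate argument for the radial equivalence (the paper writes $f(x)=\phi(|x|^2)$ instead of invoking Fa\`a di Bruno, which is cosmetic). The difference is that for the base case the paper simply cites \cite{MRRod2}, whereas you attempt a direct proof through the Gagliardo seminorm, and it is exactly there that your argument has a gap.

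In the splitting $u(x)g(x)-u(y)g(y)=g(y)(u(x)-u(y))+u(x)(g(x)-g(y))$, the term you dispose of in one line, $g(y)(u(x)-u(y))$, is the one that carries the whole difficulty, and the tool you invoke for it does not apply to it. In the near-diagonal region $|x-y|\leq \frac 1 2 |y|$ what must be proved is
$$
\int_{\mathbb R^d}\frac{1}{(1+|y|)^{2\tilde\mu}}\int_{|x-y|\leq \frac 1 2 |y|}\frac{|u(x)-u(y)|^2}{|x-y|^{d+2\nu}}\,dx\,dy\ \lesssim\ \parallel \nabla^{\tilde\mu+\nu}u\parallel_{L^2}^2,
$$
a \emph{weighted} bound on the difference quotient in which the decay $(1+|y|)^{-\tilde\mu}$ has to be traded against the extra $\tilde\mu$ derivatives on the right-hand side. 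The inequality $\parallel u/|x|^{\tilde\mu}\parallel_{L^2}\lesssim \parallel\nabla^{\tilde\mu}u\parallel_{L^2}$ concerns $u$ itself, not differences of $u$, and dropping the difference structure is only legitimate in the far region $|x-y|\gtrsim \max(|x|,|y|)$ — where, indeed, your treatment of both terms closes using only $\tilde\mu+\nu<\frac d 2$, so the regime you single out as ``the main obstacle'' is actually the easy one. Near the diagonal one needs a genuine scale or frequency decomposition of $u$ (e.g. Littlewood--Paley blocks split according to whether $2^j|x-y|\leq 1$ or not, each piece closed by Bernstein and weighted Hardy), or a representation of $u$ as a Riesz potential of $\nabla^{\tilde\mu+\nu}u$ with kernel estimates; this is precisely the content of the lemma of \cite{MRRod2} that the paper quotes, and it is not a consequence of the stated Hardy inequality. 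So either supply that argument or, as the paper does, cite \cite{MRRod2} for $k=0$; your Leibniz reduction and the radial equivalence are fine as written.
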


\begin{proof}[Proof of Lemma \ref{annexe:lem:hardy frac a poids}]

\textbf{step 1} The case $k=0$. A proof of the case $k=0$ can be found in \cite{MRRod2} for example.\\

\noindent \textbf{step 2} The case $k\geq 1$. Let $f$, $\varepsilon$, $\mu$, $\nu$ and $k$ satisfying the conditions of the lemma, with $k\geq 1$. Using Liebnitz rule for the entire part of the derivation:
\be \label{annexe:hardyfrac:eq:kgeq1 decomposition}
\parallel \nabla^{\nu+k}(\varepsilon f) \parallel_{L^2}^2\leq C \sum_{(\kappa,\tilde{\kappa})\in \mathbb N^{2d}, \ |\kappa|_1+|\tilde{\kappa}|_1=k} \parallel \nabla^{\nu} (\partial^{\kappa }\varepsilon\partial^{\tilde{\kappa}}f \parallel_{L^2}^2
\ee
We can now apply the result obtained for $k=0$ to the norms $\parallel \nabla^{\nu} (\partial^{\kappa_k}\varepsilon\partial^{\tilde{\kappa}_k}f \parallel_{L^2}^2$ in \fref{annexe:hardyfrac:eq:kgeq1 decomposition}. We have indeed that $\partial^{\kappa }\varepsilon\in \dot{H}^{\mu+k_2+\nu} $, and that $\partial^{\tilde{\kappa}}f$ satisfies the appropriate decay condition from \fref{annexe:hardyfrac:eq:condition non radiale}. It implies that for all $(\kappa,\tilde{\kappa})\in \mathbb N^{2d}$ with $|\kappa|_1+|\tilde{\kappa}|_1=k$:
$$
\parallel \nabla^{\nu} (\partial^{\kappa_k}\varepsilon\partial^{\tilde{\kappa}_k}f \parallel_{L^2}^2\leq C \parallel \nabla^{\nu+\mu+k} \varepsilon \parallel_{L^2}^2
$$
which implies the result: $\parallel \nabla^{\nu+k}(\varepsilon f) \parallel_{L^2}^2\leq C(C(f),\nu,d,k,\alpha) \parallel \nabla^{\nu+\mu+k} \varepsilon \parallel_{L^2}^2$.\\

\noindent \textbf{step 3} Equivalence between the decay properties. We want to show that  \fref{annexe:hardyfrac:eq:condition non radiale} and \fref{annexe:hardyfrac:eq:condition radiale} are equivalents for radial smooth functions. Suppose that $f$ is smooth, radial, and satisfies  \fref{annexe:hardyfrac:eq:condition non radiale}. Then one has:
$$
\partial_y^{i} f(y)= \frac{\partial f}{\partial^{i}_{x_1}}(|y|e_1)
$$
where $e_1$ stands for the unit vector $(1,...,0)$ of $\mathbb{R}^d$. From this formula, we see that the condition \fref{annexe:hardyfrac:eq:condition non radiale} on $\frac{\partial f}{\partial^{i}_{x_1}}(|y|e_1) $ implies the radial condition \fref{annexe:hardyfrac:eq:condition radiale}. We now suppose that $f$ is a smooth radial function satisfying the radial condition \fref{annexe:hardyfrac:eq:condition radiale}. Then there exists a smooth radial function $\phi$ such that:
$$
f(y)=\phi(y^2).
$$
With a proof by induction that can be left to the reader one has that the decay property \fref{annexe:hardyfrac:eq:condition radiale} for $f$ implies the following decay property for $\phi$:
$$
|\partial_y^i \phi(y)|\leq \frac{C(f)}{1+y^{\frac{\mu}{2}+i}}, \ i=0,1,...,k+1,
$$
Now the standard derivatives of $f$ are easier to compute with $\phi$. We claim that for all $\kappa\in \mathbb N^d$ there exists a finite number of polynomials $P_i(x):=C_ix_1^{i_1}...x_d^{i_d}$, for $1\leq i \leq l(\kappa)$, such that:
$$
\partial^{\kappa}f(x)= \sum_{i=1}^{l(\kappa)} P_i(x)\partial_{|x|}^{q(i)} \phi (|x|^2),
$$
with for all $i$, $2q(i)-\sum_{j=1}^d i_j=|\kappa|_1$. The proof by induction of this fact can also be left to the reader. The decay property for $\phi $ then implies:
$$
|P_i(x)\partial_{|x|}^{q(i)} \phi (|x|^2) |\leq \frac{C}{1+y^{\alpha+2q(i)-\sum_{j=1}^{d}i_j}}= \frac{C}{1+y^{\alpha+|\kappa|_1}},
$$
which in turn implies the property \fref{annexe:hardyfrac:eq:condition non radiale}.

\end{proof}

%%%%%%%%%%%%%%%%%%%%%%%%%%%%%%%%%%%%%%%%%%%%
%%%%%%%%%%%%%%%%%%%%%%%%%%%%%%%%%%%%%%%%%%%%

\section{Coercivity of the adapted norms}
\label{annexe:section:coercivite}

Here we prove coercivity estimates for the operator $H$ under suitable orthogonality conditions, following the techniques of \cite{RaphRod}. We recall that the profiles used as orthogonality directions, $\Phi_M^{(n,k)}$, are defined by \fref{bootstrap:eq:def PhiknM}. To perform an analysis on each spherical harmonics and to be able to track the constants, we will not study directly $A^{(n)}$ and $A^{(n)*}$, but the following asymptotically equivalent operators:
\be \la{an:def An}
\tilde{A}^{(n)}:u\mapsto -\partial_y u+\tilde{W}^{(n)}u, \ A^{(n)*}:u\mapsto \frac{1}{y^{d-1}}\partial_y(y^{d-1}u)+\tilde{W}^{(n)}u
\ee 
where:
\be
\tilde{W}^{(n)}=-\frac{\gamma_n}{y}.
\ee
From the definition \fref{intro:eq:def gamman} of $\gamma_n$ they factorize the following operator:
\be \label{annexe:eq:def tildeHn}
\tilde{H}^{(n)}:=-\partial_{yy}-\frac{d-1}{y}\partial_y-\frac{pc_{\infty}^{p-1}}{y^2}+\frac{n(d+n-2)}{y^2}=\tilde{A}^{(n)*}\tilde{A}^{(n)},
\ee
The strategy is the following. First we derive subcoercivity estimates for  $\tilde{A}^{(n)*}$, $\tilde{A}^{(n)}$ and $H^{(n)}$. A summation yields subcoercivity for $-\Delta-\frac{pc_{\infty}^{p-1}}{|x|^2}$, and hence for $H$ as they are asymptotically equivalent. Roughly, this subcoercivity implies that minimizing sequences of the functional $I(u)=\int uH^su$ are "almost compact" on the unit ball of $\dot H^s\cap \left(\text{Span}(\Phi_M^{(n,k)}) \right)^{\perp}$. In particular if the infimum of $I$ on this set were $0$ it would be attained, which is impossible from the orthogonality conditions, yielding the coercivity $\int uH^su\gtrsim \para u \para_{\dot H^s}^2$ via homogeneity.

\begin{lemma} \label{annexe:lem:subcoercivite tildeAn}

Let $n$ be an integer, $q\geq 0$ and $u:[1,+\infty )\rightarrow \mathbb R$ be smooth satisfying:
\be \la{an:bd u hp subcoercivite}
\int_1^{+\infty} \frac{|\partial_y u|^2}{y^{2q}}y^{d-1}dy+\int_1^{+\infty} \frac{u^2}{y^{2q+2}}y^{d-1}dy<+\infty.
\ee
\begin{itemize}
\item[(i)] There exist two constants $c,c'>0$ independent of $n$ and $q$ such that:
\be \la{annexe:eq:subcoercivite tildeAn*}
c \int_1^{+\infty}\frac{u^2}{y^{2q+2}}y^{d-1}dy-c' u^2(1)\leq \int_1^{+\infty} \frac{|\tilde{A}^{(n)*}u|^2}{y^{2q}}y^{d-1}dy.
\ee
\item[(ii)] Let $\delta >0$ and suppose $|q-(\frac{d}{2}-1-\gamma_n)|>\delta$. Then there exist two constants $c(\delta),c'(\delta)>0$ depending only on $\delta$ such that:
\be \label{annexe:eq:subcoercivite tildeAn}
c(\delta) \int_1^{+\infty}\frac{u^2}{y^{2q+2}}y^{d-1}dy-c'(\delta) u^2(1)\leq \int_1^{+\infty} \frac{|\tilde{A}^{(n)}u|^2}{y^{2q}}y^{d-1}dy.
\ee
\end{itemize}

\end{lemma}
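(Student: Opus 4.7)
The plan is to conjugate each of the two first-order operators by an appropriate power of $y$ so that it reduces to a pure radial derivative acting on a rescaled unknown, and then to invoke the one-dimensional Hardy inequality of Lemma~\ref{annexe:lem:hardy}.

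For (i), I would observe by direct computation from \fref{an:def An} the factorisation
$$\tilde A^{(n)*} u = y^{\gamma_n - d + 1}\,\partial_y w, \qquad w := y^{d-1-\gamma_n}\,u.$$
Substituting this into both sides of \fref{annexe:eq:subcoercivite tildeAn*} and matching exponents against the measure $y^{d-1}\,dy$ shows that the inequality to prove is equivalent to
$$c\int_1^{+\infty}\frac{w^2}{y^{2\tilde q + 2}}\,y^{d-1}\,dy - c'\,w^2(1) \;\leq\; \int_1^{+\infty}\frac{|\partial_y w|^2}{y^{2\tilde q}}\,y^{d-1}\,dy, \qquad \tilde q := d - 1 - \gamma_n + q,$$
and $w(1) = u(1)$ so that the boundary terms match. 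The key observation is the uniform bound
$$\tilde q - \left(\tfrac{d}{2} - 1\right) \;=\; \tfrac{d}{2} - \gamma_n + q \;\geq\; \tfrac{d}{2} - \gamma \;=\; 1 + \tfrac{\sqrt \Delta}{2} \;>\; 0,$$
valid for all $n \in \mathbb N$ and $q \geq 0$ by monotonicity of the sequence $(\gamma_n)$ (cf.\ \fref{intro:eq:proprietes gamman}) and the definition \fref{intro:eq:def gamma} of $\gamma$. Setting $\delta_0 := d/2 - \gamma > 0$, Lemma~\ref{annexe:lem:hardy}(i) applies with a constant depending only on $(d,\delta_0)$, hence only on $(d,p)$, and yields \fref{annexe:eq:subcoercivite tildeAn*}.

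For (ii), the analogous substitution is $v := y^{\gamma_n}\,u$, under which
$$\tilde A^{(n)} u = -\,y^{-\gamma_n}\,\partial_y v.$$
The same bookkeeping reduces \fref{annexe:eq:subcoercivite tildeAn} to the inequality of Lemma~\ref{annexe:lem:hardy} for $v$ with weight parameter $\tilde q := \gamma_n + q$. Now the key quantity
$$\tilde q - \left(\tfrac{d}{2} - 1\right) \;=\; q - \left(\tfrac{d}{2} - 1 - \gamma_n\right)$$
is precisely the one that the hypothesis $|q - (d/2 - 1 - \gamma_n)| > \delta$ is designed to bound away from zero. Depending on its sign, one invokes the subcritical case (i) or the supercritical case (ii) of Lemma~\ref{annexe:lem:hardy}, and in both situations the constants depend only on $\delta$ and $d$, producing \fref{annexe:eq:subcoercivite tildeAn}.

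The heart of the argument is thus the fortunate identity $d/2 - \gamma = 1 + \sqrt{\Delta}/2$, which places Part~(i) \emph{strictly} in the subcritical regime of Lemma~\ref{annexe:lem:hardy} uniformly in $n$ and $q$; by contrast Part~(ii) genuinely crosses the critical exponent as $q$ varies, which is why the $\delta$-dependence cannot be removed there. The only step requiring some real care, rather than pure exponent bookkeeping, is to check that the assumption \fref{an:bd u hp subcoercivite} transfers under the change of variables to sufficient integrability of $w$ (respectively $v$) at infinity to legitimise the integration by parts inside Lemma~\ref{annexe:lem:hardy} — that is, to discard the boundary term $w^2(R)\,R^{-(2\tilde q + 2 - d)}$ along a suitable sequence $R \to +\infty$. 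This follows by Cauchy--Schwarz exactly as in the proof of Lemma~\ref{annexe:lem:hardy} itself, once one notes that the conjugation multiplies both integrability assumptions by the same power of $y$.
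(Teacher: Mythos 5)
Your proposal is correct and follows essentially the same route as the paper: the conjugations $w=y^{d-1-\gamma_n}u$ and $v=y^{\gamma_n}u$ reduce $\tilde A^{(n)*}$ and $\tilde A^{(n)}$ to pure derivatives, the problem is then handed to Lemma \ref{annexe:lem:hardy} with shifted weight, and part (i) is uniform in $n,q$ precisely because $q+\frac d2-\gamma_n\geq \frac d2-\gamma>1$, while part (ii) uses the hypothesis $|q-(\frac d2-1-\gamma_n)|>\delta$ to select the sub- or supercritical case. Your closing remark about transferring the integrability assumption under the change of variables is exactly the check the paper performs (the weights match term by term), so nothing is missing.
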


\begin{proof}[Proof of Lemma \ref{annexe:lem:subcoercivite tildeAn}]

\textbf{Coercivity for $\tilde{A}^{(n)*}$}. We first compute:
$$
\int_1^{+\infty} \frac{|\tilde{A}^{(n)*}u|^2}{y^{2q}}y^{d-1}dy= \int_1^{+\infty} \frac{|\partial_y u+y^{-1}(d-1-\gamma_n)u|^2}{y^{2q}}y^{d-1}dy.
$$
We make the change of variable $u=vy^{\gamma_n+1-d}$. From \fref{an:bd u hp subcoercivite}, $\frac{v^2}{y^{2q-2\gamma_n+d+1}} $ and $\frac{|\partial_y v|^2}{y^{2q-2\gamma_n+d-1}} $ are integrable on $[1,+\infty)$. As $q+\frac{d}{2}-\gamma_n\geq\frac{d}{2}-\gamma > 1$ from \fref{intro:eq:def gamma} and \fref{intro:eq:def gamman}, we can apply apply \fref{annexe:eq:hardy surcritique} to the above identity and obtain \fref{annexe:eq:subcoercivite tildeAn*} via:
\bee
&&\int_1^{+\infty} \frac{|\tilde{A}^{(n)*}u|^2}{y^{2q}}y^{d-1}dy=\int_1^{+\infty} \frac{|\partial_y v|^2}{y^{2q-2\gamma_n+2d-2}}y^{d-1}dy \\
&\geq& C \int_1^{+\infty} \frac{v^2}{y^{2q-2\gamma_n+2d-2}}y^{d-1}dy-C' v^2(1) = C \int_1^{+\infty} \frac{u^2}{y^{2q+2}}y^{d-1}dy-C' u^2(1).
\eee
\textbf{Coercivity for $\tilde{A}^{(n)}$}. This time the integral we have to estimate is:
$$
\int_1^{+\infty} \frac{|\tilde{A}^{(n)}u|^2}{y^{2q}}y^{d-1}dy= \int_1^{+\infty} \frac{|\partial_y u+y^{-1}\gamma_n u|}{y^{2p}}y^{d-1}dy.
$$
We make the change of variable $u=vy^{-\gamma_n}$. From \fref{an:bd u hp subcoercivite}, $\frac{v^2}{y^{2p+2\gamma_n-d+1}}$ and $\frac{|\partial_y v|^2}{y^{2p+2\gamma_n+3-d}}$ are integrable on $[1,+\infty)$. As $|q-(\frac{d}{2}-1-\gamma_n)|>\delta$ one can apply \fref{an:hardy souscritique} or \fref{annexe:eq:hardy surcritique} to the above identity: there exists $c=c(\delta)$ and $c'=c'(\delta)$ such that:
$$
\begin{array}{r c l}
&&\int_1^{+\infty} \frac{|\tilde{A}^{(n)}u|^2}{y^{2q}}y^{d-1}dy=\int_1^{+\infty} \frac{|\partial_y v|^2}{y^{2q+2\gamma_n}}y^{d-1} \geq  c \int_1^{+\infty} \frac{v^2}{y^{2q+2\gamma_n+2}}y^{d-1}dy-c'v^2(1) \\
&= &c \int_1^{+\infty} \frac{u^2}{y^{2q+2}}y^{d-1}dy-c'u^2(1).
\end{array}
$$
which is precisely the identity \fref{annexe:eq:subcoercivite tildeAn}.

\end{proof}

\begin{lemma}[Coercivity of $H$ under suitable orthogonality conditions]\label{annexe:lem:coercivite H}

Let $\delta>0$ and $q\geq 0$ such that\footnote{We recall that $\gamma_n\rightarrow -\infty$, hence for $\delta$ small enough many $q$s satisfy this condition.} $|q-(\frac{d}{2}-2-\gamma_n)|\geq \delta$ for all $n\in \mathbb{N}$. Let $n_0\in \mathbb{N}\cup \{-1\}$ be the lowest number such that $q-(\frac{d}{2}-2-\gamma_{n_0+1})<0$. Then there exists a constant $c(\delta)>0$ such that for all $u\in H^2_{\text{loc}}(\mathbb{R}^d)$ satisfying the integrability condition:
$$
\int_{\mathbb{R}^d} \frac{|\Delta u|^2}{1+|x|^{2q}}+\frac{|\nabla u|^2}{1+|x|^{2q+2}} + \int \frac{u^2}{1+|x|^{2q+4}}<+\infty
$$
and the orthogonality conditions\footnote{With the convention that there is no orthogonality conditions required if $n_0=-1$.} ($\Phi^{(n,k)}_M$ being defined in \fref{bootstrap:eq:def PhiknM}):
\be \label{annexe:eq:conditions dorthogonalite}
\langle u,\Phi^{(n,k)}_M \rangle=0 \ \ \text{for} \ \ 0\leq n \leq n_0, \ 1\leq k \leq k(n),
\ee
one has the inequality:
\begin{equation} \label{annexe:eq:coercivite H}
c(\delta) \left( \int_{\mathbb{R}^d} \frac{|\Delta u|^2}{1+|x|^{2q}} + \frac{|\nabla u|^2}{|x|^2(1+|x|^{2q})} +\frac{u^2}{|x|^4(1+|x|^{2q})}\right) \leq \int_{\mathbb{R^d}} \frac{|Hu|^2}{1+|x|^{2q}}.
\end{equation}

\end{lemma}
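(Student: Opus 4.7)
The plan is to first establish a subcoercivity estimate \emph{without} using the orthogonality conditions, namely
\[
c(\delta)\int_{\mathbb R^d} \frac{|\Delta u|^2}{1+|x|^{2q}} + \frac{|\nabla u|^2}{|x|^2(1+|x|^{2q})}+\frac{u^2}{|x|^4(1+|x|^{2q})} \leq \int_{\mathbb R^d}\frac{|Hu|^2}{1+|x|^{2q}}+C\int_{|x|\leq 2}u^2 ,
\]
and then upgrade it to the desired coercivity \fref{annexe:eq:coercivite H} by a standard compactness-and-contradiction argument using the orthogonality directions $\Phi_M^{(n,k)}$. The subcoercivity is proven by working on each spherical harmonic $u=\sum u^{(n,k)}(r)Y^{(n,k)}$. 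On the exterior region $\{|x|\geq 1\}$, one uses the factorization $\tilde H^{(n)}=\tilde A^{(n)*}\tilde A^{(n)}$ of the asymptotic operator \fref{annexe:eq:def tildeHn} and applies Lemma \ref{annexe:lem:subcoercivite tildeAn} in cascade: first (i) with weight exponent $q$ to the function $\tilde A^{(n)}u^{(n,k)}$, then (ii) with weight exponent $q+1$ to $u^{(n,k)}$ itself, the assumption $|q-(\tfrac d2-2-\gamma_n)|\geq \delta$ being exactly the hypothesis needed to apply (ii). Summing over $(n,k)$ via Parseval and combining with the Rellich-type estimate \fref{annexe:eq:rellich a poids} upgrades the control to the full second-order norm on the LHS of \fref{annexe:eq:coercivite H}. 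Passing from $\tilde H^{(n)}$ to $H^{(n)}$ costs only the zeroth-order perturbation $R := V+pc_\infty^{p-1}/|x|^2=O(|x|^{-2-\alpha})$ from \fref{cons:eq:asymptotique V}; the contribution $\int |Ru|^2/(1+|x|^{2q})$ is absorbed on the LHS at infinity thanks to the extra $|x|^{-\alpha}$ decay, while its piece on any fixed compact set merges into the local remainder $\int_{|x|\leq 2}u^2$. On the interior region $\{|x|\leq 1\}$, standard elliptic interior regularity for $H$ (which is elliptic with smooth bounded potential there) closes the estimate at the cost of the same local term.

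To upgrade subcoercivity to coercivity, I would argue by contradiction: assume there exists a sequence $(u_j)$ satisfying the orthogonality conditions \fref{annexe:eq:conditions dorthogonalite}, normalized so that the LHS of \fref{annexe:eq:coercivite H} equals $1$, but with $\int |Hu_j|^2/(1+|x|^{2q})\to 0$. The subcoercivity forces $\int_{|x|\leq 2}u_j^2$ to stay bounded below, and the normalization combined with Rellich--Kondrachov compactness allows us to extract a strong $L^2_{loc}$ limit $u_\infty$ with $Hu_\infty =0$ distributionally, still orthogonal to the $\Phi_M^{(n,k)}$ and still enjoying the weighted integrability coming from the LHS. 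The decomposition of $u_\infty$ into spherical harmonics and Lemma \ref{cons:lem:noyau H} then force each radial component to be a multiple of $T_0^{(n)}$ (the other independent solution $\Gamma^{(n)}$ being singular at $0$), and the weight $|x|^{-2q-4}$ being integrable against $T_0^{(n)}\sim |x|^{-\gamma_n}$ at infinity \emph{exactly} when $q<\frac d 2-2-\gamma_n$, i.e.\ when $n>n_0$, we conclude that all components on spherical harmonics of degree $n>n_0$ vanish; thus $u_\infty\in\mathrm{Span}(T^{(n,k)}_0)_{0\leq n\leq n_0,\,1\leq k\leq k(n)}$. Applying the orthogonality $\langle u_\infty,\Phi_M^{(n,k)}\rangle=0$ together with Lemma \ref{bootstrap:lem:conditions dorthogonalite} (taking $i=j=0$, which produces a nonzero diagonal matrix in the basis $(T_0^{(n,k)})$) forces $u_\infty\equiv 0$, contradicting $\int_{|x|\leq 2}u_j^2\not\to 0$.

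The main obstacle is the careful bookkeeping in the cascade application of Lemma \ref{annexe:lem:subcoercivite tildeAn}: the boundary terms $u^2(1)$ and $(\tilde A^{(n)}u^{(n,k)})^2(1)$ produced at the interface $r=1$ must be reabsorbed by interior estimates uniformly in $n$, and the passage $\tilde H^{(n)}\rightsquigarrow H^{(n)}$ must be performed with constants independent of $n$ so that Parseval summation is legitimate. The other delicate point is showing that the weak limit $u_\infty$ still satisfies the orthogonality conditions against the compactly supported profiles $\Phi_M^{(n,k)}$, but this follows directly from strong $L^2_{loc}$ convergence since each $\Phi_M^{(n,k)}$ is supported in $\{|x|\leq 2M\}$.
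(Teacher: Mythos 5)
Your overall strategy coincides with the paper's: a subcoercivity estimate obtained by decomposing into spherical harmonics and applying the two estimates of Lemma \ref{annexe:lem:subcoercivite tildeAn} in cascade (the $\tilde A^{(n)*}$ bound with weight $q$ applied to $\tilde A^{(n)}u^{(n,k)}$, then the $\tilde A^{(n)}$ bound with weight $q+1$ applied to $u^{(n,k)}$, the hypothesis $|q-(\frac d2-2-\gamma_n)|\geq\delta$ entering exactly there), a perturbative passage from $\tilde H$ to $H$ using $V+pc_\infty^{p-1}|x|^{-2}=O(|x|^{-2-\alpha})$, Rellich-type inequalities to handle the origin and to recover the full second-order left-hand side, and finally a compactness/contradiction argument in which the limit $u_\infty$ solves $Hu_\infty=0$, is identified via Lemma \ref{cons:lem:noyau H} as a combination of the $T_0^{(n,k)}$ (the $\Gamma^{(n)}$ being excluded at the origin), and is killed on the low harmonics by the orthogonality conditions and \fref{bootstrap:eq:orthogonalite PhiM}. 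Your packaging of the lower-order terms is slightly different (you absorb the far-field perturbation and the $r=1$ boundary terms into a purely local remainder $\int_{|x|\leq 2}u^2$ before passing to the limit, whereas the paper keeps the sphere traces and a globally weighted term and disposes of them in the compactness step via the trace theorem); this is harmless provided you actually carry out the annulus trace/interpolation absorption you allude to, and uniformity in $n$ is not an issue since the constants in Lemma \ref{annexe:lem:subcoercivite tildeAn} depend only on $\delta$. Note also that near the origin ``interior elliptic regularity'' alone does not produce the singular weights $u^2/|x|^4$ and $|\nabla u|^2/|x|^2$: you need the Rellich/Hardy inequalities \fref{annexe:eq:rellich} there, as the paper uses.

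There is, however, one step that is wrong as written: the classification of the admissible harmonics of $u_\infty$. You assert that $|T_0^{(n)}|^2(1+|x|^{2q+4})^{-1}$ is integrable at infinity exactly when $q<\frac d2-2-\gamma_n$, i.e.\ for $n>n_0$, and you deduce from this ``integrability'' that the components of degree $n>n_0$ vanish. The inequality is inverted: since $T_0^{(n)}\sim C_n r^{-\gamma_n}$, the integrand behaves like $r^{d-5-2q-2\gamma_n}$, which is integrable at infinity if and only if $q>\frac d2-2-\gamma_n$, i.e.\ precisely for $n\leq n_0$. Moreover, integrability would give you nothing: it is the \emph{divergence} of the weighted norm of $T_0^{(n)}$ for $n\geq n_0+1$ (where $q-(\frac d2-2-\gamma_n)\leq-\delta$) that contradicts the finite weighted norm inherited by $u_\infty$ and forces those coefficients to vanish, while for $n\leq n_0$ the weighted norm of $T_0^{(n)}$ is finite and it is exactly there that the orthogonality conditions \fref{annexe:eq:conditions dorthogonalite} are required. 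This is the mechanism in the paper's proof; once the inequality and the corresponding logic are corrected, your argument goes through and is essentially the paper's.
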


\begin{proof}[Proof of Lemma \ref{annexe:lem:coercivite H}]

In what follows, $C(\delta)$ and $C'(\delta)$ denote strictly positive constants that may vary but only depends on $\delta$, $d$ and $p$.

\noindent \textbf{step 1} We claim the following subcoercivity estimate for $\tilde{H}:=-\Delta-\frac{pc_{\infty}^{p-1}}{|x|^2}$:
\begin{equation} \label{annexe:eq:subcoercivite tildeH}
\begin{array}{r c l}
\int_{\mathbb{R}^d\backslash \mathcal B ^d(1)} \frac{|\tilde{H}u|^2}{|x|^{2q}}dx &\geq & C(\delta) \int_{\mathbb{R} ^d \backslash \mathcal{B} ^d(1)} \frac{u^2}{|x|^{2q+4}}dx\\
&&-C'(\delta) \left( \parallel u_{|\mathcal S^{d-1}(1)} \parallel_{L^2}^2+ \parallel (\nabla u)_{|\mathcal S^{d-1}(1)} \parallel_{L^2}^2 \right)
\end{array}
\end{equation}
where $f_{|\mathcal S^{d-1}(1)}$ denotes the restriction of $f$ to the sphere. We now prove this inequality. We start by decomposing $u(x)=\sum_{n,1\leq k\leq k(n)} u^{(n,k)}(|x|)Y^{(n,k)}\left( \frac{x}{|x|}\right)$. We recall the link between $u$ and its decomposition ($\tilde{H}^{(n)}$ being defined by \fref{annexe:eq:def tildeHn}):
\bea
\la{an:id inttildeHu} && \int_{\mathbb{R}^d\backslash \mathcal B ^d(1)} \frac{|\tilde{H}u|^2}{|x|^{2q}}dx =\sum_{n,1\leq k \leq k(n)} \int_1^{+\infty} \frac{|\tilde{H}^{(n)} u^{(n,k)}|^2}{y^{2q}}y^{d-1}dy, \\
\la{an:id intu} && \int_{\mathbb{R}^d\backslash \mathcal B ^d(1)} \frac{u^2}{|x|^{2q+4}}dx = \sum_{n,1\leq k \leq k(n)} \int_1^{+\infty} \frac{|u^{(n,k)}|^2}{y^{2q+4}}y^{d-1}dy.
\eea
As $\tilde{H}^{(n)}=\tilde{A}^{(n)*}\tilde{A}^{(n)}$ and $|q-(\frac{d}{2}-2-\gamma_n)|>\delta$ for all $n\in \mathbb{N}$, we apply \fref{annexe:eq:subcoercivite tildeAn*} and \fref{annexe:eq:subcoercivite tildeAn} to obtain for each $n\in \mathbb N$:
\be \label{annexe:eq:subcoercivite tildeH intermediaire}
\ba{r c l}
\int_1^{+\infty} \frac{|\tilde{H}^{(n)} u^{(n,k)}|^2}{y^{2q}}y^{d-1}dy & \geq & C(\delta) \int_1^{+\infty} \frac{|u^{(n,k)}|^2}{y^{2q+4}}y^{d-1}dy\\
&&-C'(\delta)\left( (u^{(n,k)})^2(1)+\tilde{A}^{(n)}(u^{(n,k)})^2(1)\right).
\ea
\ee
We now sum on $n$ and $k$ this identity. The second term in the right hand side is:
$$
\sum_{n,1\leq k\leq k(n)} (u^{(n,k)})^2(1)=  \int_{\mathcal S^{d-1}} \left( \sum_{n,1\leq k\leq k(n)} u^{(n,k)}(1)Y^{(n,k)}(x)\right)^2 dx= \int_{\mathcal S ^{d-1}} u^2(x)dx
$$
because $(Y^{(n,k)})_{n,1\leq k\leq n}$ is an orthonormal basis of $L^2(\mathcal S^{d-1})$. From \fref{an:def An}, and as $\gamma_n\sim -n$ as $n\rightarrow +\infty$ from \fref{intro:eq:def gamman}, the last term in the right hand side of \fref{annexe:eq:subcoercivite tildeH intermediaire} is
$$
\begin{array}{r c l}
\sum_{n,1\leq k\leq n} |\tilde{A}^{(n)} u^{(n,k)}|^2(1) &\leq & C \sum_{n,1\leq k\leq k(n)} (1+n^2)|u^{(n,k)}|^2(1) +|\partial_y u^{(n,k)}|^2 \\
& \leq & C(\parallel u_{|\mathcal S^{d-1}(1)} \parallel_{H^1}^2+\parallel \nabla u_{|\mathcal S^{d-1}(1)}.\vec{n} \parallel_{L^2}^2) \\
&\leq & C\left(  \parallel u_{|\mathcal S ^{d-1}} \parallel_{L^2}^2 +\parallel \nabla u_{|\mathcal S^{d-1}(1)} \parallel_{L^2}^2\right)
\end{array}
$$
We inject the two above equations in \fref{annexe:eq:subcoercivite tildeH intermediaire} and obtain:
$$
\ba{r c l}
\sum_{n,1\leq k\leq n}  \int_1^{+\infty} \frac{|\tilde{H}^{(n)} u^{(n,k)}|^2}{y^{2q}}y^{d-1}dy & \geq & C(\delta) \sum_{n,1\leq k\leq n}  \int_1^{+\infty} \frac{|u^{(n,k)}|^2}{y^{2q+4}}y^{d-1}dy \\
&& -C'(\delta) \left( \parallel u_{|\mathcal S ^{d-1}} \parallel_{L^2}^2 +\parallel \nabla u_{|\mathcal S^{d-1}(1)} \parallel_{L^2}^2 \right) .
\ea
$$
In turn, we inject this identity in \fref{an:id inttildeHu} using \fref{an:id intu} to obtain the desired estimate \fref{annexe:eq:subcoercivite tildeH}.
 
\noindent \textbf{step 2} Subcoercivity for $H$. We claim the following estimate:
\begin{equation} \label{annexe:eq:subcoercivite H}
\begin{array}{r c l}
\int_{\mathbb{R}^d} \frac{|Hu|^2}{1+|x|^{2q}}dx &\geq& C(\delta) \left( \int_{\mathbb R^d} \frac{|\Delta u|^2}{1+|x|^{2q}}dx+\int_{\mathbb R^d} \frac{|\nabla u|^2}{|x|^2(1+|x|^{2q})}dx +\int_{\mathbb R^d} \frac{u^2}{|x|^4(1+|x|^{2q})}dx \right)\\
&&-C'(\delta)\Bigl(\parallel u_{|S^{d-1}(1)} \parallel_{L^2}^2+ \parallel (\nabla u)_{|S^{d-1}(1)} \parallel_{L^2}^2\\
&&+\int_{\mathbb{R}^d} \frac{u^2}{1+|x|^{2q+4+\alpha}}+\parallel u \parallel_{H^1(\mathcal B^{d-1}(1))}^2 \Bigr),
\end{array}
\end{equation}
which we now prove. Away from the origin, Cauchy-Schwarz and Young's inequalities, the bound $V+pc_{\infty}^{p-1}|x|^{-2}=O(|x|^{-2-\alpha})$ from \fref{cons:eq:asymptotique V} and \fref{annexe:eq:subcoercivite tildeH} give (for $C>0$):
$$
\begin{array}{r c l}
\int_{\mathbb{R}^d\backslash \mathcal B^d(1)}  \frac{|Hu|^2}{|x|^{2q}}dx &=& \int_{\mathbb{R}^d\backslash \mathcal B^d(1)}  \frac{|\tilde{H}u+(V+pc_{\infty}^{p-1}|x|^{-2})u|^2}{|x|^{2q}}dx \\
&\geq & C\int_{\mathbb{R}^d\backslash \mathcal B^d(1)}  \frac{|\tilde{H}u|^2}{|x|^{2q}}dx - C' \int_{\mathbb{R}^d\backslash \mathcal B^d(1)}  \frac{|u|^2}{|x|^{2q+4+2\alpha}}dx \\
&\geq & C(\delta) \int_{\mathbb{R}^d\backslash \mathcal B^d(1)}  \frac{u^2}{1+|x|^{2q+4}} -C'(\delta)\Bigl(\parallel u_{|\mathcal S^{d-1}(1)} \parallel_{L^2}^2 \\
&& + \parallel (\nabla u)_{|\mathcal S^{d-1}(1)} \parallel_{L^2}^2+  \int_{\mathbb{R}^d\backslash \mathcal B^d(1)}  \frac{|u|^2}{1+|x|^{2q+4+2\alpha}} \Bigr)
\end{array}
$$
Close to the origin, using Rellich's inequality \fref{annexe:eq:rellich}:
$$
\ba{r c l}
\int_{\mathcal B^d(1)} |Hu|^2dx & \geq & C \int_{\mathcal B^d(1)} |\Delta u|^2dx-\frac{1}{C} \int_{\mathcal B^d(1)} |u|^2dx \\
&\geq & C \int_{\mathcal B^d(1)} \frac{|u|^2}{|x|^4}dx-\frac{1}{C} \parallel u \parallel_{H^1(\mathcal B^{d-1}(1))}.
\ea
$$
Combining the two previous estimates we obtain the intermediate identity:
$$
\begin{array}{r c l}
\int_{\mathbb{R}^d} \frac{|Hu|^2}{1+|x|^{2q}}dx &\geq& C(\delta) \int_{\mathbb R^d} \frac{u^2}{|x|^4(1+|x|^{2q})}dx-C'(\delta)\Bigl(\parallel u_{|\mathcal S^{d-1}(1)} \parallel_{L^2}^2 \\
&&+\parallel (\nabla u)_{|S^{d-1}(1)} \parallel_{L^2}^2+\int_{\mathbb{R}^d} \frac{u^2}{1+|x|^{2q+4+2\alpha}}dx+\parallel u \parallel_{H^1(\mathcal B^{d-1}(1))}^2  \Bigr) .
\end{array}
$$
Now, as $H=-\Delta+V$ with $V=O((1+|x|)^{-2})$, using Young's inequality, the above identity and \fref{annexe:eq:rellich a poids}, for $\epsilon>0$ small enough (depending on $\delta$) one has:
$$
\ba{r c l}
& \int_{\mathbb{R}^d} \frac{|Hu|^2}{1+|x|^{2p}}dx = (1-\epsilon)\int_{\mathbb{R}^d} \frac{|Hu|^2}{1+|x|^{2p}}dx |Hu|^2dx+\epsilon  \int_{\mathbb{R}^d} \frac{|Hu|^2}{1+|x|^{2p}}dx \\
\geq & (1-\epsilon) C(\delta) \int_{\mathbb R^d} \frac{u^2}{|x|^4(1+|x|^{2q})}dx-C'(\delta)\Bigl(\parallel u_{|\mathcal S^{d-1}(1)} \parallel_{L^2}^2+\parallel (\nabla u)_{|S^{d-1}(1)} \parallel_{L^2}^2 \\
&+\int_{\mathbb{R}^d} \frac{u^2}{1+|x|^{2q+4+2\alpha}}dx+\parallel u \parallel_{H^1(\mathcal B^{d-1}(1))}\Bigr)+\frac{\epsilon}{2} \int_{\mathbb{R}^d} \frac{|\Delta u|^2}{1+|x|^{2q}}dx-\epsilon \int_{\mathbb R^d} \frac{|Vu|^2}{1+|x|^{2q}}dx \\
\geq & (1-\epsilon) C(\delta) \int_{\mathbb R^d} \frac{u^2}{|x|^4(1+|x|^{2q})}dx-C'(\delta)\Bigl(\parallel u_{|\mathcal S^{d-1}(1)} \parallel_{L^2}^2+\parallel (\nabla u)_{|S^{d-1}(1)} \parallel_{L^2}^2 \\
&+\int_{\mathbb{R}^d} \frac{u^2}{1+|x|^{2q+4+2\alpha}}dx+\parallel u \parallel_{H^1(\mathcal B^{d-1}(1))}\Bigr)+C(q) \frac{\epsilon}{2} \sum_{1\leq |\mu|\leq 2}\int_{\mathbb R^d} \frac{|\partial^{\mu} u|^2}{1+|x|^{2q+4-2\mu}}dx\\
&-\epsilon C'(q) \int_{\mathbb{R}^d} \frac{u^2}{1+|x|^{2q+4}}dx \\
\geq & C(\delta) \int_{\mathbb R^d} \frac{u^2}{|x|^4(1+|x|^{2q})}+\frac{C(q)\epsilon}{2} \sum_{1\leq |\mu|\leq 2}\int_{\mathbb R^d} \frac{|\partial^{\mu} u|^2}{1+|x|^{2q+4-2\mu}} -C'(\delta)\Bigl(\parallel u_{|\mathcal S^{d-1}(1)} \parallel_{L^2}^2 \\
&+\parallel (\nabla u)_{|S^{d-1}(1)} \parallel_{L^2}^2+\int_{\mathbb{R}^d} \frac{u^2}{1+|x|^{2q+4+2\alpha}}dx+\parallel u \parallel_{H^1(\mathcal B^{d-1}(1))}\Bigr)
\ea
$$
which is the identity \fref{annexe:eq:subcoercivite H} we claimed.\\

\noindent \textbf{step 3} Coercivity for $H$. We now argue by contradiction. Suppose that \fref{annexe:eq:coercivite H} does not hold. Up to a renormalization, this means that there exists a sequence of functions $(u_n)_{n\in \mathbb{N}}$ such that:
\be \label{annexe:eq:coercivite contradiction}
\int_{\mathbb{R^d}} \frac{|Hu_n|^2}{1+|x|^{2q}}\rightarrow 0, \ \  \int_{\mathbb{R}^d} \frac{|\Delta u_n|^2}{1+|x|^{2q}} + \frac{|\nabla u_n|^2}{|x|^2(1+|x|^{2q})} +\frac{|u_n|^2}{|x|^4(1+|x|^{2q})}=1 \ \forall n.
\ee
Up to a subsequence, we can suppose that $u_n\rightarrow u_{\infty}\in H^2_{\text{loc}}(\mathbb{R}^d)$, the local convergence in $L^2$ being strong for $(u_n)_{n\in \mathbb{N}}$ and $(\nabla u_n)_{n\in \mathbb{N}}$, and weak for $(\nabla^2 u_n)_{n \in \mathbb{N}}$. \fref{annexe:eq:coercivite contradiction} then implies:
$$
\parallel u_n \parallel_{H^1(\mathcal B^{d-1}(1))}^2+\int_{\mathbb{R}^d} \frac{|u_n|^2}{1+|x|^{2q+4+\alpha}}\ \rightarrow \parallel u_{\infty} \parallel_{H^1(\mathcal B^{d-1}(1))}^2+\int_{\mathbb{R}^d} \frac{|u_{\infty}|^2}{1+|x|^{2q+4+\alpha}}.
$$
$u_n$ converges strongly to $u_{\infty}$ in $H^s(\mathcal B^d(0,1))$ for any $0\leq s<2$. The trace theorem for Sobolev spaces ensures that:
$$
\parallel (u_n)_{|S^{d-1}(1)} \parallel_{L^2}^2+ \parallel (\nabla u_n)_{|S^{d-1}(1)} \parallel_{L^2}^2 \rightarrow \parallel (u_{\infty})_{|S^{d-1}(1)} \parallel_{L^2}^2+ \parallel (\nabla u_{\infty})_{|S^{d-1}(1)} \parallel_{L^2}^2.
$$
We inject the three previous identities in the subcoercivity estimate \fref{annexe:eq:subcoercivite H} yielding:
$$
\parallel (u_{\infty})_{|S^{d-1}(1)} \parallel_{L^2}^2+ \parallel (\nabla u_{\infty})_{|S^{d-1}(1)} \parallel_{L^2}^2+\int_{\mathbb{R}^d} \frac{|u_{\infty}|^2}{1+|x|^{2q+4+\alpha}}+\parallel u_{\infty} \parallel_{H^1(\mathcal B^{d}(1))}^2 \neq 0
$$
which means that $u_{\infty}\neq 0$. On the other hand the lower semicontinuity of norms for the weak topology and \fref{annexe:eq:coercivite contradiction} imply:
$$
Hu_{\infty}=0.
$$
Hence $u_{\infty}$ is a non trivial function in the kernel of $H$, hence smooth from elliptic regularity. It satisfies the integrability condition (still from lower semicontinuity):
$$
\int_{\mathbb{R}^d} \frac{|\Delta u_{\infty} |^2}{1+|x|^{2q}}dx+\frac{|\nabla u_{\infty}|^2}{1+|x|^{2q+2}}dx + \int \frac{|u_{\infty}|^2}{1+|x|^{2q+4}}dx<+\infty.
$$
We now decompose $u_{\infty}$ in spherical harmonics: $u_{\infty}=\sum_{n,1\leq k \leq k(n)} u_{\infty}^{(n,k)}Y_{(n,k)}$ and will show that for each $n,k$ one must have $u_{\infty}^{(n,k)}=0$ which will give a contradiction. For each $n,k$ the nullity $Hu_{\infty}=0$ implies $H^{(n)}u^{(n,k)}_{\infty}$ where $H^{(n)}$ is defined in \fref{intro:eq:def Hn}. From Lemma \ref{cons:lem:noyau H} this means $u_{\infty}=aT^{(n)}_0+b\Gamma^{(n)}$ for $a$ and $b$ two real numbers. The previous equation implies the following integrability for $u_{\infty}^{(n,k)}$:
$$
 \int \frac{|u_{\infty}^{(n,k)}|^2}{1+y^{2q+4}}y^{d-1}dy<+\infty.
$$
From \fref{cons:eq:asymptotique T0n}, as $\Gamma^{(n)}\sim y^{-d-n+2}$ does not satisfy this integrability at the origin whereas $T^{(n)}_0$ is regular, one must have $b=0$. Then, if $n\geq n_0+1$, $ \frac{|T^{(n)}_0|^2}{1+y^{2q+4}}y^{d-1}\sim y^{-2\gamma_n-2q-5+d}$. From the assumption on $n_0$ and \fref{intro:eq:def gamman}, one has:
$$
-2\gamma_n-2q-5+d=-1-2(q+2+\gamma_{n_0+1}-\frac{d}{2})+2(\gamma_{n_0+1}-\gamma_n)>-1
$$
implying that $\frac{|T^{(n)}_0|^2}{1+y^{2q+4}}y^{d-1}$ is not integrable on $[0,+\infty)$, hence $a=0$. If $n\leq n_0$ then the orthogonality condition \fref{annexe:eq:conditions dorthogonalite} goes to the limit as $\Phi^{(n,k)}_M$ is compactly supported and implies:
$$
\langle u^{\infty}, \Phi_M^{(n,k)}\rangle=0
$$
which, in spherical harmonics, can be rewritten as:
$$
0=\langle u_{\infty}^{(n,k)},\Phi_M^{(n,k)} \rangle =a\langle T^{(n)}_0, \Phi_M^{(n,k)}\rangle .
$$
However, from \fref{bootstrap:eq:orthogonalite PhiM} this in turn implies $a=0$. We have proven that for all $n,k$ $u^{(n,k)}_{\infty}=0$, hence $u_{\infty}=0$ which is the desired contradiction as we proved earlier that $u_{\infty}$ is non trivial. The coercivity \fref{annexe:eq:coercivite H} must then be true.

\end{proof}

If one adds analogous orthogonality conditions for the derivatives of $u$ and uses a bit more the structure of the Laplacian, one gets that the weighted norm $\parallel \frac{H^i}{1+|x|^p} u\parallel_{L^2}$ controls all derivatives of lower order with corresponding weights.

\begin{lemma}[Coercivity of the iterates of $H$] \label{annexe:lem:coercivite norme adaptee}

Let $i$ be an integer with $2i>\sigma$, such that for all $n\in \mathbb{N}$ satisfying $m_n+\delta_n\leq i$ one has $\delta_n\neq 0$. Let $n_0$ be the lowest integer such that $m_{n_0+1}+\delta_{n_0+1}>i$. Let $u\in \dot{H}^{2i}\cap \dot{H}^{\sigma}(\mathbb{R}^d)$ satisfy (where $\Phi^{(n,,k)}_M$ is defined in \fref{bootstrap:eq:def PhiknM})
\be \label{annexe:eq:conditions dorthogonalite 2}
\langle u,H^j\Phi_M^{n,k} \rangle=0 \ \ \text{for} \ \ 0\leq n \leq n_0, \ 0\leq j \leq i-m_n-1, \ 1\leq k\leq k(n).
\ee
Then there exists a constant $\delta>0$ such that for all $0\leq\delta'\leq \delta$ there holds:
\be \label{annexe:eq:coercivite norme adaptee}
C(\delta,i)\sum_{|\mu|\leq 2i} \int_{\mathbb{R}^d} \frac{|\partial^{\mu}u|^2}{1+|x|^{4i-2\mu+2\delta'}}dx \leq \int_{\mathbb{R}^d} \frac{|H^i u|^2}{1+|x|^{2\delta'}}dx 
\ee
which in particular implies that:
\begin{equation} \label{annexe:eq:coercivite norme adaptee 2}
\parallel u \parallel_{\dot{H}^{2i}} \leq C(\delta,i) \left(\int_{\mathbb{R^d}} |H^iu|^2dx \right)^{\frac{1}{2}}
\end{equation}

\end{lemma}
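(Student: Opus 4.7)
The strategy is to iterate the coercivity Lemma \ref{annexe:lem:coercivite H} on the sequence of functions $v_j:=H^j u$ for $j=0,1,\dots,i-1$, with a carefully chosen decreasing ladder of weights
$$ q_j := 2(i-j-1)+\delta', \qquad j=0,1,\dots,i-1,$$
so that $2q_j+4 = 2q_{j-1}$. This way the right-hand side of the inequality applied at $v_j$ reproduces the left-hand side of the inequality applied at $v_{j-1}$, and by composition I will obtain control of $u$ by $H^i u$ in the weight $(1+|x|^{2\delta'})^{-1}$.

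Before iterating I have to check that Lemma \ref{annexe:lem:coercivite H} applies at each level. First, using the identity $d/2-2-\gamma_n = 2m_n+2\delta_n-2$ coming from \fref{intro:eq:def deltan}, the non-resonance condition reads $|q_j-(d/2-2-\gamma_n)| = |2(i-j)+\delta'-2(m_n+\delta_n)|$, which is bounded below by some fixed $\delta>0$ uniformly in $n$, $j$ and small $\delta'$ precisely because the hypothesis forces $\delta_n>0$ for every $n$ with $m_n+\delta_n\leq i$. Next I must verify the orthogonality \fref{annexe:eq:conditions dorthogonalite} required to apply Lemma \ref{annexe:lem:coercivite H} to $v_j$. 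The threshold $n_0(q_j)$ is the largest integer with $m_{n}+\delta_n\leq i-j+\delta'/2$; since $\delta_n>0$ this forces $m_n\leq i-j-1$, so $j\leq i-m_n-1$, and by self-adjointness of $H$ together with hypothesis \fref{annexe:eq:conditions dorthogonalite 2} one has $\langle v_j,\Phi_M^{(n,k)}\rangle=\langle u,H^j\Phi_M^{(n,k)}\rangle=0$ in the required range. This is exactly where the hypothesis $\delta_n\neq 0$ is indispensable.

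With the applicability checked, Lemma \ref{annexe:lem:coercivite H} yields
$$ c(\delta)\int \left(\frac{|\Delta v_j|^2}{1+|x|^{2q_j}} + \frac{|\nabla v_j|^2}{|x|^2(1+|x|^{2q_j})} + \frac{v_j^2}{|x|^4(1+|x|^{2q_j})}\right) \leq \int \frac{|v_{j+1}|^2}{1+|x|^{2q_j}}$$
for each $j$. For $|x|\geq 1$ the weight $|x|^4(1+|x|^{2q_j})$ is equivalent to $1+|x|^{2q_j+4} = 1+|x|^{2q_{j-1}}$, so chaining these $i$ inequalities from $j=i-1$ down to $j=0$ gives an estimate of the form $\sum_{j=0}^{i}\int|v_j|^2 w_j \lesssim \int|H^i u|^2/(1+|x|^{2\delta'})$, controlling every intermediate iterate $H^j u$ in the appropriate weighted $L^2$ space for $|x|\geq 1$. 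Near the origin the weights $|x|^{-4}$ are stronger than $(1+|x|^{2q_j+4})^{-1}$, but the $\dot H^\sigma\cap \dot H^{2i}$ assumption (with $2i>\sigma$) together with a standard Sobolev/Hardy control on a fixed ball handles this local portion. Finally, upgrading the bounds from $\{v_j,\nabla v_j,\Delta v_j\}$ to bounds on all derivatives $\partial^\mu u$ with $|\mu|\leq 2i$ (as required in \fref{annexe:eq:coercivite norme adaptee}) is done by iterating the weighted Rellich inequality \fref{annexe:eq:rellich a poids} and using $H=-\Delta+V$ with $V=O((1+|x|)^{-2})$ to trade $H v_{j-1}$ against $\Delta v_{j-1}$ plus a lower-order term already controlled by the ladder.

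\textbf{Main obstacle.} The routine part is the chaining; the delicate bookkeeping is the matching of orthogonality ranges with the thresholds $n_0(q_j)$ at each level of the ladder. Concretely, one must verify that the single global hypothesis \fref{annexe:eq:conditions dorthogonalite 2} is exactly tight enough to supply the orthogonality required by Lemma \ref{annexe:lem:coercivite H} at every iterate $v_j$, without any gap, and this is precisely where the non-degeneracy condition $\delta_n\neq 0$ (which prevents $m_n+\delta_n$ from equaling an integer $i-j$ on the nose) is used — it is the one place where the argument would fail without the standing assumption \fref{intro:eq:condition deltan}.
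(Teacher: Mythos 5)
Your plan is essentially the paper's own proof: the same ladder of applications of Lemma \ref{annexe:lem:coercivite H} to $H^{j}u$ with weights $q=2(i-j-1)+\delta'$, the same verification of the non-resonance and orthogonality hypotheses from \fref{annexe:eq:conditions dorthogonalite 2} using $\delta_n\neq 0$, and the same final upgrade from control of the iterates $H^ju$ to all derivatives $\partial^{\mu}u$ via $H^{l}=(-\Delta)^{l}+\text{lower order}$ and the iterated weighted Rellich inequality \fref{annexe:eq:rellich a poids}. Two minor points: the chaining is in fact global, since $\frac{1}{1+|x|^{2q_j+4}}\lesssim \frac{1}{|x|^{4}(1+|x|^{2q_j})}$ everywhere (the $|x|^{-4}$ weight is stronger near the origin), so no separate local Sobolev/Hardy argument is needed there; and one should also record the routine check (the paper's Step 2, by fractional Hardy and interpolation from $u\in\dot H^{\sigma}\cap\dot H^{2i}$) that the weighted integrability hypothesis of Lemma \ref{annexe:lem:coercivite H} holds at each level of the ladder.
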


\begin{proof}[Proof of Lemma \ref{annexe:lem:coercivite norme adaptee}]

\textbf{step 1} Equivalence of weighted norms. We claim that for all integer $j$ there holds:
\be \label{annexe:eq:lien Delta H}
H^ju=(-\Delta)^j u +\sum_{|\mu |\leq 2j-2} f_{j,\mu}\partial^{\mu}u
\ee
for some smooth functions $f_{\mu}$ having the decay $|\partial^{\mu'}f_{j,\mu}|\leq C (1+|x|^{2j-|\mu|+|\mu '|})^{-1}$. This identity is true for $j=1$ because $Hu=-\Delta u+Vu$ with the potential $V$ being smooth and having the required decay from \fref{cons:eq:asymptotique V}. If the aforementioned identity holds true for $j\geq 1$ then:
$$
\ba{r c l}
H^{j+1}u & = & (-\Delta+V)\left((-\Delta)^{j} u +\sum_{|\mu |\leq 2j-2} f_{j,\mu}\partial^{\mu}u\right) \\
&=& (-\Delta)^{j+1} u +V(-\Delta)^j u+\sum_{|\mu |\leq 2j-2} (-\Delta+V)(f_{j,\mu}\partial^{\mu}u)
\ea
$$
and hence it is true for $j+1$ since $V$ is smooth and satisfies the decay \fref{cons:eq:asymptotique V}. By induction it is true for all $j\in \mathbb N$ and \fref{annexe:eq:lien Delta H} is proven. \fref{annexe:eq:lien Delta H} then implies that:
\be \label{annexe:eq:equivalence normes}
\int_{\mathbb{R}^d} \frac{|H^i u|^2}{1+|x|^{2\delta}}dx\leq C \sum_{|\mu|\leq 2i} \int_{\mathbb{R}^d} \frac{|\partial^{\mu}u|^2}{1+|x|^{4i-2|\mu|+2\delta'}}dx
\ee

\noindent \textbf{step 2} Weighted integrability in $\dot{H}^{2i}\cap \dot{H}^{\sigma}$. We claim that for all functions $u\in \dot{H}^{2i}\cap \dot{H}^{\sigma}(\mathbb{R}^d)$ and $\delta'>0$ there holds:
\be \label{annexe:eq:integrabilite u}
\sum_{|\mu|\leq 2i} \int_{\mathbb{R}^d} \frac{|\partial^{\mu}u|^2}{1+|x|^{4i-2|\mu |+2\delta'}}dx < +\infty.
\ee
Indeed, let $\mu$ be a $|\mu|$-tuple with $|\mu|\leq 2i$. We split in two cases. First if $|\mu|\leq \sigma$, as $\sigma<\frac{d}{2}$ and $2i>\sigma$ the Hardy inequality \fref{annexe:lem:hardy frac a poids} yields:
$$
 \int_{\mathbb{R}^d} \frac{|\partial^{\mu}u|^2}{1+|x|^{4i-2|\mu |+2\delta'}}dx\leq \int_{\mathbb R^d} \frac{|\partial^{\mu}u |^2}{1+|x|^{2(\sigma- |\mu|)}}dx \leq C\parallel u \parallel_{\dot{H}^{\sigma}}^2<+\infty
$$
and we are done. If $\sigma<\mu\leq 2i$ then by interpolation $u \in \dot{H}^{|\mu |}(\mathbb{R}^d)$ and then:
$$
\int_{\mathbb{R}^d} \frac{|\partial^{\mu}u|^2}{1+|x|^{4i-2|\mu |+2\delta'}}dx \leq \int |\partial^{\mu}u|^2dx<+\infty .
$$
Thus \fref{annexe:eq:integrabilite u} holds, which together with \fref{annexe:eq:equivalence normes} implies for all $\delta'\geq 0$:
\be \label{annexe:eq:integrabilite Hu}
\sum_{j=0}^i \int_{\mathbb{R}^d} \frac{|H^ju|^2}{1+|x|^{4i-4j+2\delta '}}dx+\frac{|\nabla H^{j-1}u|^2}{1+|x|^{4i-4j+2+2\delta '}}dx<+\infty
\ee

\noindent \textbf{step 3} Intermediate coercivity. Let $\delta=\text{min}(\delta_0,...,\delta_{n_0+1},\frac{1}{2})$ if $\delta_{n_0+1}\neq 0$ and $\delta=\text{min}(\delta_0,...,\delta_{n_0},\frac{1}{2})$ if $\delta_{n_0+1}=0$. The conditions on the $\delta_n$ of the lemma implies $\delta > 0$. We now claim that for all integer $1\leq l \leq i$ there holds:
\be \la{an:bd Hlu}
C(\delta) \int_{\mathbb R^d} \frac{|H^{l-1}u|^2}{1+|x|^{4i-4(l-1)+2\delta'}}+C(\delta) \int_{\mathbb R^d} \frac{|\nabla H^{l-1}u|^2}{1+|x|^{4i-4l+2+2\delta'}} \leq \int_{\mathbb R^d} \frac{|H^lu|^2}{1+|x|^{4i-4l+2\delta'}} .
\ee
We now prove this estimate. We want to apply Lemma \ref{annexe:lem:coercivite H} to the function $H^{l-1}u$ with weight $q=\delta '+2(i-l)$. To use it, we have to check the orthogonality and integrability conditions that are required, and the conditions on the weight. 

\noindent \emph{Integrability condition}. It is true because of \fref{annexe:eq:integrabilite Hu}.

\noindent \emph{Condition on the weight}. For the case $n \geq n_0+1$ one computes from \fref{intro:eq:def deltan}:
\be \la{an:id poids coercivite}
\ba{r c l}
&|\delta '+2(i-l)-(\frac{d}{2}-\gamma_n-2)| \\
=&|\delta '-2\delta_{n_0+1}-2(m_{n_0+1}-i)-2(l-1)-2(m_n+\delta_n-m_{n_0+1}-\delta_{n_0+1})| .
\ea
\ee
One has $2(l-1)\geq 0$ as $l\geq 1$ and $2(m_n+\delta_n-m_{n_0+1}-\delta_{n_0+1})\geq 0$ because $(m_n+\delta_n)_n$ is an increasing sequence from \fref{intro:eq:def mn} and \fref{intro:eq:def gamman}. For the subcase $\delta_{n_0+1}=0$, then as $m_{n_0+1}>i$ and $m_{n_0+1}$ is an integer, $2(m_{n_0+1}-i)>2$. Therefore $-2(m_{n_0+1}-i)-2(l-1)-2(m_n+\delta_n-m_{n_0+1}-\delta_{n_0+1})=-a$ for $a\geq 2$, and injecting it in the above identity as $0<\delta'<1$ gives:
$$
|\delta '+2(i-l)-(\frac{d}{2}-\gamma_n-2)|= |\delta '-a|\geq \delta'\geq \delta.
$$
For the subcase $\delta_{n_0+1}\neq 0$, then $\delta '-2\delta_{n_0+1}\leq \delta -2\delta_{n_0+1}\leq -\delta_{n_0+1}\leq -\delta$. Moreover, $m_{n_0+1}\geq i$ and $-2(m_{n_0+1}-i)-2(l-1)-2(m_n+\delta_n-m_{n_0+1}-\delta_{n_0+1})\leq 0$, implying:
$$
\delta '-2\delta_{n_0+1}-2(m_{n_0+1}-i)-2(l-1)-2(m_n+\delta_n-m_{n_0+1}-\delta_{n_0+1})\leq \delta '-2\delta_{n_0+1} \leq -\delta 
$$
and therefore from \fref{an:id poids coercivite} this yields in that case:
$$
|\delta '+2(i-l)-(\frac{d}{2}-\gamma_n-2)|\geq \delta.
$$
In both subcases one has: $|\delta '+2(i-l)-(\frac{d}{2}-\gamma_n-2)|\geq \delta$. For the case $n\leq n_0$:
$$
|\delta'+2(i-l)-(\frac{d}{2}-\gamma_n-2)| =|\delta'-2\delta_n +2(i-l+1-m_n)|.
$$
In the above identity, $2(i-l+1-m_n)$ is an even integer, and $\delta'-2\delta_n$ is a number satisfying $\delta'-2\delta_n\leq \delta-2\delta_n\leq -\delta$ and we recall that $\delta<1$, and $\delta'-2\delta_n\geq -2\delta_n \geq -1$. Therefore $|\delta'-2\delta_n +2(i-l+1-m_n)|\geq \delta$, yielding:
$$
|\delta'+2(i-l)-(\frac{d}{2}-\gamma_n-2)|\geq \delta .
$$
Therefore, for each $n\in \mathbb N$, $|\delta'+2(i-l)-(\frac{d}{2}-\gamma_n-2)|\geq \delta$.

\noindent \emph{Orthogonality conditions}. Let $n_0 '=n_0'(l)\in\mathbb{N}\cup\{-1\} $ be the lowest number such that $2(i-l+1)+\delta'-2(m_{n_0 '+1}+\delta_{n_0'+1})<0$. By construction one has $n_0'\leq n_0$. If $n_0'=-1$ then we are done because no orthogonality condition is required. If $n_0'\neq -1$, let $n$ be an integer, $0\leq n \leq n_0'$. By definition of $n_0'$ it means:
$$
2(i-l+1)+\delta'-2(m_n+\delta_n)>0
$$
which implies $0\leq l-1\leq i-m_n-1$ as $\delta'-2\delta_n\leq \delta-2\delta_n\leq -\delta_n\leq 0$. The orthogonality conditions \fref{annexe:eq:conditions dorthogonalite 2} then gives for any $1\leq k \leq k(n)$:
$$
\langle u,H^{l-1}\Phi_M^{(n,k)}  \rangle=0.
$$
We have then proved that for all $0\leq n\leq n_0'$, $1\leq k \leq k(n)$ there holds:
$$
\langle H^{l-1}u ,\Phi_M^{(n,)k}  \rangle=0
$$
which are the required orthogonality conditions. 

\noindent \emph{Conclusion}. One can apply Lemma \ref{annexe:lem:coercivite H} to $H^{l-1}u$ with weight $q=2i-2l+\delta'$, giving the desired coercivity estimate \fref{an:bd Hlu}.

\noindent \textbf{step 4} Iterations of coercivity estimates. We show the following bound by induction on $l=0,..., i$:
\be \label{annexe:eq:coercivite induction}
\int_{\mathbb{R}^d} \frac{|H^l u|^2}{1+|x|^{2\delta'}}dx\geq c(\delta,i) \sum_{0\leq |\mu| \leq 2l} \int_{\mathbb R^d} \frac{|\partial^{\mu} u|^2}{1+|x|^{4i-2\mu+2\delta'}} dx.
\ee
This property is naturally true for $l=0$. We now suppose it is true for $l-1$ with $0\leq l-1\leq i-1$. From the formula \fref{annexe:eq:lien Delta H} relating $\Delta ^l$ to $H^l$ we see that (using Cauchy-Schwarz and Young's inequalities):
$$
\begin{array}{r c l}
\int_{\mathbb R^d} \frac{|H^lu|^2}{1+|x|^{4(i-l)+2\delta'}} & \geq& C(i) \int_{\mathbb R^d} \frac{|\Delta^l u|^2}{1+|x|^{4(i-l)+2\delta'}}-C'(i) \sum_{0\leq |\mu|\leq 2l-2} \int_{\mathbb R^d} \frac{|\partial^{\mu}u|^2}{1+|x|^{4i-2|\mu|+2\delta '}} \\
& \geq& C(i) \int_{\mathbb R^d} \frac{|\Delta^l u|^2}{1+|x|^{4(i-l)+2\delta'}}-C'(i) \int_{\mathbb{R}^d} \frac{|H^i u|^2}{1+|x|^{2\delta'}}
\end{array}
$$
where we used the induction hypothesis \fref{annexe:eq:coercivite induction} for $l-1$ for the second line. We now use \fref{annexe:eq:coercivite induction} and \fref{annexe:eq:rellich a poids} to recover a control over all derivatives:
\bee
&&\int_{\mathbb R^d} \frac{|\Delta^l u|^2}{1+|x|^{4(i-l)+2\delta'}} \\
& \geq &  C(i) \sum_{1\leq |\mu|\leq 2}\int_{\mathbb R^d}  \frac{|\partial^{\mu} \Delta^{l-1} u|^2}{1+|x|^{4(i-l)+4-2|\mu|}}-C'(i) \int_{\mathbb R^d}  \frac{|\Delta^{l-1}u|^2}{1+|x|^{4(i-l)+4}} \\
&\geq & C(i) \sum_{0\leq |\mu|\leq 2}\int_{\mathbb R^d}  \frac{|\Delta^{l-1} \partial^{\mu} u|^2}{1+|x|^{4(i-(l-1))-2|\mu|}}-C'(\delta,i) \int_{\mathbb R^d} \frac{|H^{l-1} u|^2}{1+|x|^{2\delta'}} \\
&\geq & C(i) \sum_{0\leq |\mu|\leq 2}\sum_{1\leq |\mu '|\leq 2} \int_{\mathbb R^d}  \frac{|\partial^{\mu '}\Delta^{l-2} \partial^{\mu} u|^2}{1+|x|^{4(i-(l-1))+4-2|\mu|-2|\mu '|}}- C'(i) \int_{\mathbb R^d}  \frac{|\Delta^{l-2}u|^2}{1+|x|^{4(i-l)+8}} \\
&&-C'(\delta,i) \int_{\mathbb R^d} \frac{|H^{l-1} u|^2}{1+|x|^{2\delta'}} \\
&\geq & C(i) \sum_{0\leq |\mu|\leq 4} \int_{\mathbb R^d} \frac{|\Delta^{l-2} \partial^{\mu} u|^2}{1+|x|^{2p+4(i-(l-2))-2\mu}}-C'(i,\delta) \int_{\mathbb R^d} \frac{|H^{l-1} u|^2}{1+|x|^{2\delta'}} \\
&\geq& ... \\
&\geq & C(i) \sum_{0\leq |\mu|\leq 2l} \int_{\mathbb R^d} \frac{|\partial^{\mu} u|^2}{1+|x|^{2p+4-2\mu+2\delta '}}-C'(\delta,i) \int_{\mathbb R^d} \frac{|H^{l-1} u|^2}{1+|x|^{2\delta'}}.
\eee
Injecting this last equation in the previous one we obtain:
$$
\int_{\mathbb R^d} \frac{|H^lu|^2}{1+|x|^{4(i-l)+2\delta'}} \geq C(\delta,i) \sum_{0\leq |\mu|\leq 2l} \int_{\mathbb R^d} \frac{|\Delta^{l-2} \partial^{\mu} u|^2}{1+|x|^{2p+4-2\mu}}-C'(\delta,i) \int_{\mathbb{R^d}} \frac{|H^{l-1} u|^2}{1+|x|^{2\delta'}}.
$$
This, together with \fref{an:bd Hlu}, gives that \fref{annexe:eq:coercivite induction} is true for $l$. Hence by induction it is true for $i$, which is precisely the estimate \fref{annexe:eq:coercivite norme adaptee} we had to show and end the proof of the lemma.

\end{proof}

%%%%%%%%%%%%%%%%%%%%%%%%%%%%%%%%%%%%%%%%%%%%
%%%%%%%%%%%%%%%%%%%%%%%%%%%%%%%%%%%%%%%%%%%%

\section{Specific bounds for the analysis} \la{sec:bd}

This section is dedicated to the statement and the proof of several estimates used in the analysis.

\begin{lemma}[Specific bounds for the error in the trapped regime]\label{annexe:lem:varepsilon}

Let $\varepsilon$ be a function satisfying \fref{trap:eq:bounds varepsilon} and \fref{trap:eq:ortho}. We recall that $\mathcal E_{\sigma}$ and $\mathcal E_{2s_L}$ are defined by \fref{trap:eq:def mathcalEsigma} and \fref{trap:eq:def mathcalE2sL}. Then the following bounds hold:
\begin{itemize}
\item[(i)] \emph{Interpolated Hardy type inequality:} For $\mu \in \mathbb{N}^d$ and $q>0$ satisfying $\sigma\leq |\mu|+q\leq 2s_L$ there holds:
\be \la{an:eq:bound interpolated hardy}
\int \frac{|\partial^{\mu} \varepsilon |^2}{1+|y|^{2q}}dy \leq C(M) \mathcal{E}_{\sigma}^{\frac{2s_L-(|\mu|+q)}{2s_L-\sigma}} \mathcal{E}_{2s_L}^{\frac{|\mu|+q-\sigma}{2s_L-\sigma}} ,
\ee
\item[(ii)] \emph{Weighted $L^{\infty}$ bound for low order derivative:} for $0\leq a \leq 2$ and $\mu \in \mathbb N^d$ with $|\mu|\leq 1$ there holds
\be \la{an:eq:bound Linfty}
\left\Vert \frac{\partial^{\mu} \epsilon}{1+|y|^a} \right\Vert_{L^{\infty}} \leq C(K_1,K_2,M) \sqrt{\mathcal{E}_{\sigma}}^{1+O\left( \frac 1  {L^2} \right)}\frac{1}{s^{a+|\mu|_1+\left( \frac{d}{2}-\sigma\right)+\frac{(\frac{2}{p-1}+a+|\mu|_1)\alpha}{L}+ O\left( \frac{\sigma-s_c}{L} \right) }} .
\ee
\item[(iii)] \emph{$L^{\infty}$ bound for high order derivative:} for $\mu\in \mathbb N^d$ with $|\mu|\leq s_L$ there holds:
\be \la{an:eq:bound Linfty2}
\parallel \partial^{\mu}\varepsilon \parallel_{L^{\infty}}^2 \leq C(M) \mathcal{E}_{\sigma}^{\frac{2s_L-|\mu|_1-\frac{d}{2}}{2s_L-\sigma}+O\left( \frac 1 {L^2}\right)} \mathcal{E}_{2s_L}^{\frac{|\mu|_1+\frac d 2-\sigma}{2s_L-\sigma}+O\left( \frac 1 {L^2}\right)} .
\ee
\end{itemize}

\end{lemma}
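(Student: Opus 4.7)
All three bounds are Gagliardo--Nirenberg / weighted Sobolev interpolations between the two energies $\mathcal E_\sigma$ and $\mathcal E_{2s_L}$. The two tools are (a) the coercivity estimate \fref{annexe:eq:coercivite norme adaptee}, which gives both $\|\nabla^{2s_L}\varepsilon\|_{L^2}^2\lesssim \mathcal E_{2s_L}$ and, more importantly, the weighted controls $\int|y|^{-2(2s_L-|\mu|)}|\partial^\mu\varepsilon|^2\lesssim \mathcal E_{2s_L}$ for $|\mu|\leq 2s_L$; and (b) the fractional Hardy inequality of Lemma \ref{annexe:lem:hardy frac a poids}, which for $|\mu|+q=\sigma<d/2$ yields $\int(1+|y|)^{-2(\sigma-|\mu|)}|\partial^\mu\varepsilon|^2\lesssim \|\nabla^\sigma\varepsilon\|_{L^2}^2=\mathcal E_\sigma$.

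\textbf{Proof of (i).} The plan is to write $\frac{1}{1+|y|^{2q}}$ as a H\"older product of the two endpoint weights. Choosing
\[
\theta=\frac{|\mu|+q-\sigma}{2s_L-\sigma}\in[0,1]
\]
one checks the pointwise inequality
\[
\frac{1}{1+|y|^{2q}}\leq \frac{C}{(1+|y|^{2(2s_L-|\mu|)})^{\theta}(1+|y|^{2(\sigma-|\mu|)})^{1-\theta}}
\]
by comparing the large and small $|y|$ regimes (the exponent $\theta$ is tuned so that $\theta\cdot 2(2s_L-|\mu|)+(1-\theta)\cdot 2(\sigma-|\mu|)=2q$). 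I then apply H\"older with conjugate exponents $1/\theta$ and $1/(1-\theta)$ to the splitting $|\partial^\mu\varepsilon|^2=(|\partial^\mu\varepsilon|^2)^\theta(|\partial^\mu\varepsilon|^2)^{1-\theta}$ and estimate the two resulting weighted integrals by $\mathcal E_{2s_L}$ and $\mathcal E_\sigma$ respectively, via the coercivity Lemma and the fractional Hardy inequality.

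\textbf{Proof of (iii).} The plan is to combine the standard Sobolev embedding $H^{d/2+\kappa}\hookrightarrow L^\infty$ (valid for any $\kappa>0$) with interpolation between $\dot H^\sigma$ and $\dot H^{2s_L}$. Fixing $\kappa=\kappa(L)>0$ with $\kappa=O(1/L^2)$, one has
\[
\|\partial^\mu\varepsilon\|_{L^\infty}\leq C\|\varepsilon\|_{\dot H^{|\mu|+d/2+\kappa}}\leq C\|\varepsilon\|_{\dot H^\sigma}^{1-\theta'}\|\varepsilon\|_{\dot H^{2s_L}}^{\theta'}
\]
with $\theta'=(|\mu|+d/2+\kappa-\sigma)/(2s_L-\sigma)$; squaring and using $\|\varepsilon\|_{\dot H^{2s_L}}^2\leq C\mathcal E_{2s_L}$ and $\|\varepsilon\|_{\dot H^\sigma}^2=\mathcal E_\sigma$ yields exactly the stated exponents up to an $O(1/L^2)$ correction from the $\kappa$ term.

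\textbf{Proof of (ii) and main obstacle.} For the weighted $L^\infty$ bound the strategy is to apply the previous Sobolev trick to the function $w=\partial^\mu\varepsilon/(1+|y|^a)$. Expanding $\partial^\nu w$ by Leibniz, each term has the form $g_{\nu,\tilde\nu}(y)\partial^{\mu+\tilde\nu}\varepsilon$ where $|g_{\nu,\tilde\nu}(y)|\lesssim (1+|y|)^{-a-|\nu|+|\tilde\nu|}$, so I can apply the weighted interpolation (i) with $q=a+|\nu|-|\tilde\nu|$ and derivative order $|\mu|+|\tilde\nu|$, noting that the hypothesis $|\mu|+a+|\nu|\geq \sigma$ is automatic when $|\nu|\sim d/2$ since $\sigma<d/2$. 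Summing, I obtain an interpolated bound of the form $\mathcal E_\sigma^{1-\theta''}\mathcal E_{2s_L}^{\theta''}$ with $\theta''$ close to $(|\mu|+a+d/2-\sigma)/(2s_L-\sigma)$; the final $s$-power in the statement then arises by inserting the trapped regime bound \fref{trap:eq:bounds varepsilon} on $\mathcal E_{2s_L}$ together with $\lambda\sim s^{-\ell/(2\ell-\alpha)}$ from \fref{trap:eq:lambda}. The only delicate point --- and the genuine bookkeeping obstacle --- is to keep track of the $O(1/L)$ and $O(\kappa)=O(1/L^2)$ corrections through these interpolations so that they end up absorbed in the $O((\sigma-s_c)/L)$ and $O(1/L^2)$ terms of the statement; this is done by choosing $\kappa$ sufficiently small compared with $1/L$ and expanding the elementary identity $\ell(2s_L-s_c)=L+O(1)$ to rewrite the $\lambda$-dependent factor in $\mathcal E_{2s_L}$ as a pure $s^{-\cdot}$ power times a harmless perturbation.
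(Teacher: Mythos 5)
Your proofs of (ii) and (iii) are essentially the paper's route, but your proof of (i) has a genuine gap: it only covers the range $|\mu|\leq\sigma$, which is a small corner of the stated range. Indeed $\sigma$ is only slightly above $s_c<\frac d2$, while the lemma is invoked in the paper (e.g. in the nonlinear estimates of Proposition \ref{pro:pr:mathcalE2sL}) with $|\mu|$ as large as $2s_L$. When $|\mu|>\sigma$, your low endpoint $\int(1+|y|^{2(\sigma-|\mu|)})^{-1}|\partial^{\mu}\varepsilon|^2$ carries a weight that \emph{grows} like $|y|^{2(|\mu|-\sigma)}$ at infinity, and it is not controlled by $\mathcal E_{\sigma}=\parallel\nabla^{\sigma}\varepsilon\parallel_{L^2}^2$: there is no ``reverse Hardy'' inequality bounding $|\mu|>\sigma$ derivatives with a growing weight by a $\dot H^{\sigma}$ norm (translate a fixed bump to infinity: $\mathcal E_{\sigma}$ stays bounded while that integral blows up). Moreover your pointwise weight inequality also fails in this regime: for large $|y|$ the factor $(1+|y|^{2(\sigma-|\mu|)})^{1-\theta}$ tends to $1$, so the proposed denominator grows like $|y|^{2\theta(2s_L-|\mu|)}$, and $\theta(2s_L-|\mu|)-q=\frac{(|\mu|-\sigma)(2s_L-|\mu|-q)}{2s_L-\sigma}>0$ whenever $\sigma<|\mu|$ and $|\mu|+q<2s_L$. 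So both the H\"older splitting and the $\mathcal E_{\sigma}$-endpoint control break down exactly where the lemma is mainly needed.

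The paper avoids the fictitious pure-$\mathcal E_{\sigma}$ endpoint by a case distinction which you should adopt to repair (i): if $q<\frac d2$, the fractional Hardy inequality trades the weight for $q$ extra derivatives, $\int\frac{|\partial^{\mu}\varepsilon|^2}{1+|y|^{2q}}\lesssim \parallel\nabla^{|\mu|+q}\varepsilon\parallel_{L^2}^2$, and the derivative order $|\mu|+q\in[\sigma,2s_L]$ is then interpolated between $\parallel\nabla^{\sigma}\varepsilon\parallel_{L^2}$ and $\parallel\nabla^{2s_L}\varepsilon\parallel_{L^2}\leq C(M)\sqrt{\mathcal E_{2s_L}}$ (so the $\mathcal E_{\sigma}$ factor never carries more than $\sigma$ derivatives); if $q=2s_L-|\mu|$ the bound is the coercivity estimate \fref{annexe:eq:coercivite norme adaptee} alone; and for $\frac d2\leq q<2s_L-|\mu|$ one interpolates in the weight — your H\"older trick — but between a weight below $\frac d2$, whose bound is already a product $\mathcal E_{\sigma}^{x}\mathcal E_{2s_L}^{1-x}$ rather than pure $\mathcal E_{\sigma}$, and the coercivity endpoint. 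Concerning (ii)--(iii), your strategy (embedding near regularity $\frac d2$, weighted fractional Hardy, interpolation, then the bootstrap bounds with $z\sim 1/L$) matches the paper's, but the one-exponent embedding $\parallel\partial^{\mu}\varepsilon\parallel_{L^{\infty}}\leq C\parallel\varepsilon\parallel_{\dot H^{|\mu|+d/2+\kappa}}$ you invoke is false for a single homogeneous norm (and the inhomogeneous $H^{d/2+\kappa}$ norm would drag in an uncontrolled $L^2$ piece of $\varepsilon$); you need the two-regularity version $\parallel u\parallel_{L^{\infty}}\lesssim \parallel\nabla^{d/2-z}u\parallel_{L^2}+\parallel\nabla^{d/2+z}u\parallel_{L^2}$ used in the paper — a fixable slip, unlike the gap in (i).
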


\begin{proof}[Proof of Lemma \ref{annexe:lem:varepsilon}]

\textbf{Proof of (i)} We first recall that from the coercivity estimate \fref{annexe:eq:coercivite norme adaptee} one has:
$$
\parallel \nabla^{\sigma}\varepsilon\parallel_{L^2}^2=\mathcal E_{\sigma}, \ \ \parallel \nabla^{2s_L}\varepsilon\parallel_{L^2}^2\leq C(M) \parallel H^{s_L}\varepsilon\parallel_{L^2}^2=C(M)\mathcal E_{2s_L}.
$$
If the weight satisfies $q<\frac d 2$, then the inequality \fref{an:eq:bound interpolated hardy} claimed in the lemma is a consequence of the standard Hardy inequality, followed by an interpolation:
$$
\ba{r c l}
\parallel \frac{\partial^{\mu}\varepsilon}{1+|x|^{q}} \parallel_{L^2}^2 & \leq & C \parallel \nabla^{|\mu|_1+q}\varepsilon \parallel_{L^2}^2\leq C \parallel \nabla^{\sigma}\varepsilon \parallel_{L^2}^{2\frac{2s_L-(|\mu|_1+q)}{2s_L-\sigma}} \parallel \nabla^{2s_L}\varepsilon \parallel_{L^2}^{2\frac{|\mu|_1+q-\sigma}{2s_L-\sigma}}\\
&\leq & C(M)\mathcal{E}_{\sigma}^{\frac{2s_L-(|\mu|_1+q)}{2s_L-\sigma}} \mathcal{E}_{2s_L}^{\frac{|\mu|_1+q-\sigma}{2s_L-\sigma}}.
\ea
$$
If the potential satisfies $q=2s_L-|\mu|$, then the inequality \fref{an:eq:bound interpolated hardy} claimed in the lemma is a consequence of the coercivity estimate \fref{annexe:eq:coercivite norme adaptee}:
$$
\left\Vert \frac{\partial^{\mu}\varepsilon}{1+|x|^{q}} \right\Vert_{L^2}^2\leq C(M) \mathcal E_{2s_L}.
$$
For a weight that is in between, ie $\frac{d}{2}\leq q<2s_L-|\mu|_1$, the inequality \fref{an:eq:bound interpolated hardy} is then obtained by interpolating the two previous ones, as:
$$
\frac{|\varepsilon|^2}{1+|x|^{2b}}\sim \left( \frac{|\varepsilon|^2}{1+|x|^{2a}}\right)^{\frac{c-b}{c-a}} \left( \frac{|\varepsilon|^2}{1+|x|^{2c}}\right)^{\frac{b-a}{c-a}}.
$$

\noindent \textbf{Proof of (ii)}. As the dimension is $d\geq 11$ and $L\gg 1$ is big, one has $\frac{\partial^{\mu} \varepsilon}{1+|x|^a} \in L^{\infty}$ with the following bound (using the bound (i) we just derived):
$$
\ba{r c l}
\parallel \frac{\partial^{\mu} \varepsilon}{1+|x|^a} \parallel_{L^{\infty}} &\leq &C(z) (\parallel \nabla^{\frac{d}{2}-z} (\frac{\partial^{\mu} \varepsilon}{1+|x|^a}) \parallel_{L^2}+\parallel \nabla^{\frac{d}{2}+z} (\frac{\partial^{\mu} \varepsilon}{1+|x|^a}) \parallel_{L^2}) \\
&\leq & C(z) (\parallel \nabla^{\frac{d}{2}-z+a+|\mu|_1} \varepsilon \parallel_{L^2}+\parallel \nabla^{\frac{d}{2}+a+|\mu|_1+z} \varepsilon \parallel_{L^2}) \\
&\leq &C(M,z) \Bigl( \mathcal{E}_{\sigma}^{\frac{2s_L-(a+|\mu|_1+\frac d 2-z)}{2s_L-\sigma}} \mathcal{E}_{2s_L}^{\frac{a+|\mu|_1+\frac d 2-z-\sigma}{2s_L-\sigma}} \\
&&+\mathcal{E}_{\sigma}^{\frac{2s_L-(a+|\mu|_1+\frac d 2+z)}{2s_L-\sigma}} \mathcal{E}_{2s_L}^{\frac{a+|\mu|_1+\frac d 2+z-\sigma}{2s_L-\sigma}}\Bigr).
\ea
$$
for $z>0$ small enough. We then let $z_1$ be so close to $0$ (of order $L^{-1}$) that its impact when using the bootstrap bounds \fref{trap:eq:bounds varepsilon} is of order $s^{-\frac{1}{L^2}}$ (the constant $C(M,z_1)$ exploding as $z_1$ approches $0$ we cannot take $z_1=0$ but $z_1$ very close to $\frac d 2$ is enough for our purpose). Injecting the bootstrap bounds \fref{trap:eq:bounds varepsilon} then yields the desired result \fref{an:eq:bound Linfty}.\\

\noindent \textbf{Proof of (iii)}. It can be proved verbatim the same way we did for (ii).

\end{proof}

\begin{lemma}[A nonlinear estimate] \la{nonlin:lem:estimation nonlineaire}

Let $d\in \mathbb N$, $a\geq 0$ and $b>\frac d 2$. Let $\Omega \subset \mathbb R^d$ be a smooth bounded domain. There exists a constant $C>0$ such that for any $u,v\in H^{\text{max}(a,b)}(\Omega)$ there holds\footnote{The product $uv$ indeed belongs to $H^{a}(\Omega)$ as $H^{\text{max}(a,b)}(\Omega)$ is an algebra since $b>\frac d 2$.}:
\be \la{nonlin:bd estimation nonlineaire}
\para uv\para_{H^{a}(\Omega)}\leq C\left(\para u\para_{H^{a}(\Omega)}\para v\para_{H^{b}(\Omega)}+\para u\para_{H^{b}(\Omega)}\para v\para_{H^{a}(\Omega)}  \right).
\ee

\end{lemma}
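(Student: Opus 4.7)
The plan is to reduce to the case $\Omega=\mathbb R^d$ via a Sobolev extension operator and then invoke a fractional Leibniz rule. Since $\Omega$ is smooth and bounded, there exists a bounded linear extension operator $E\colon H^s(\Omega)\to H^s(\mathbb R^d)$ for every $0\le s\le\max(a,b)$, with operator norm depending only on $s$, $\Omega$, $d$ (Stein's extension theorem). Setting $\tilde u=Eu$ and $\tilde v=Ev$, and using the trivial restriction bound $\|uv\|_{H^a(\Omega)}\le\|\tilde u\tilde v\|_{H^a(\mathbb R^d)}$ together with $\|\tilde u\|_{H^s(\mathbb R^d)}\le C\|u\|_{H^s(\Omega)}$ for $s\in\{a,b\}$, it suffices to prove the analogous inequality on the whole space for $\tilde u$, $\tilde v$.

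On $\mathbb R^d$ the estimate follows from the Kato--Ponce fractional Leibniz rule: for any $a\ge0$ one has
\be\label{KP}
\|fg\|_{H^a(\mathbb R^d)}\lesssim \|f\|_{H^a(\mathbb R^d)}\|g\|_{L^\infty(\mathbb R^d)}+\|f\|_{L^\infty(\mathbb R^d)}\|g\|_{H^a(\mathbb R^d)}.
\ee
Since $b>d/2$, the Sobolev embedding $H^b(\mathbb R^d)\hookrightarrow L^\infty(\mathbb R^d)$ allows one to replace the $L^\infty$ norms on the right-hand side by $H^b$ norms, yielding the target bound on $\mathbb R^d$. Restricting back to $\Omega$ via the extension inequalities then gives \eqref{nonlin:bd estimation nonlineaire}.

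For the case when $a\in\mathbb N$ is a non-negative integer, which is in fact the only case actually used in the body of the paper (apart from $a=\sigma$ slightly above $s_c$, which can also be treated by Kato--Ponce), one can avoid the fractional machinery altogether by the classical Moser argument: expanding $\partial^\mu(uv)=\sum_{\nu\le\mu}\binom{\mu}{\nu}\partial^\nu u\,\partial^{\mu-\nu}v$, estimating each term by H\"older with exponents $p_j=2a/|\nu|$ and $q_j=2a/(|\mu|-|\nu|)$ (using $L^\infty$ in the endpoint cases $|\nu|=0$ or $|\nu|=a$), and interpolating via the Gagliardo--Nirenberg inequality
$$
\|\partial^j w\|_{L^{2a/j}(\mathbb R^d)}\lesssim \|w\|_{L^\infty(\mathbb R^d)}^{1-j/a}\|w\|_{H^a(\mathbb R^d)}^{j/a},\qquad 0\le j\le a,
$$
applied to $w=\tilde u$ and $w=\tilde v$, the Sobolev embedding $H^b\hookrightarrow L^\infty$ then produces \eqref{nonlin:bd estimation nonlineaire}.

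There is no genuine obstacle here: the statement is a classical Sobolev product estimate on a smooth bounded domain, and both ingredients (the extension theorem and either Kato--Ponce or Gagliardo--Nirenberg together with the Moser expansion) are standard. The only point requiring minor care is the fractional regularity when $a\notin\mathbb N$, which is handled by the Kato--Ponce inequality \eqref{KP}.
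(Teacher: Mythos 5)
Your proof is correct, and it reaches the estimate by a shorter route than the paper. Both arguments reduce to $\mathbb R^d$ in the same way, via a single extension operator bounded simultaneously on $H^a$ and $H^b$ (the paper cites Adams in its Step 4, you cite Stein; same device). The difference is in the whole-space estimate: you invoke the full-strength tame (Kato--Ponce/Moser) inequality $\|fg\|_{H^a(\mathbb R^d)}\lesssim \|f\|_{H^a}\|g\|_{L^\infty}+\|f\|_{L^\infty}\|g\|_{H^a}$ with the fractional order $a$ taken all at once, and then conclude by the embedding $H^b\hookrightarrow L^\infty$. The paper instead only takes as a black box the fractional Leibniz rule for the small residual order $0\le\delta_a<1$ left after an integer Leibniz expansion of $\nabla^a(uv)$ (its commutator estimate \fref{nonlin:id commutateurs nonlineaire}, with the derivative-bearing factors in finite Lebesgue exponents), and then reassembles the asymmetric right-hand side $\|u\|_{H^a}\|v\|_{H^b}+\|u\|_{H^b}\|v\|_{H^a}$ by a case analysis on the multi-indices, Sobolev embeddings, interpolation between $H^b$ and $H^a$, and the elementary scalar inequality $\lambda_1^{\nu_1}\lambda_2^{1-\nu_1}\lambda_3^{\nu_2}\lambda_4^{1-\nu_2}\le\lambda_1\lambda_4+\lambda_2\lambda_3$ (after first reducing to $\frac d2<b\le\frac d2+\frac14$). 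What your approach buys is brevity, at the cost of citing the endpoint $L^\infty$-factor form of Kato--Ponce at arbitrary order $a$ as a black box (which is indeed classical for the $L^2$-based norm used here, so there is no gap); what the paper's approach buys is self-containedness from a weaker off-the-shelf ingredient. Your Moser/Gagliardo--Nirenberg variant for integer $a$ is also fine as sketched (the exponents you write are tailored to the top-order terms $|\mu|=a$; the lower-order terms need the obvious adjustment, which is standard). One small point to keep in mind, which you implicitly handle correctly: for fractional $a$ the restriction bound $\|uv\|_{H^a(\Omega)}\le\|\tilde u\tilde v\|_{H^a(\mathbb R^d)}$ uses that the intrinsic and restriction norms on a smooth bounded domain are equivalent, exactly as in the paper's own Step 4.
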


\begin{proof}[Proof of Lemma \ref{nonlin:lem:estimation nonlineaire}]

Without loss of generality one assumes $\frac d 2<b\leq \frac d 2 +\frac 1 4$:
\be \la{nonlin:id b}
b:=\frac d 2 +\delta_b, \ \ \text{with} \ 0<\delta_b\leq \frac 1 4 .
\ee
Indeed, if \fref{nonlin:bd estimation nonlineaire} holds for all $b\in (\frac d 2,\frac d 2+\frac 1 4]$ then for any $b'>\frac d 2+\frac 1 4$, applying \fref{nonlin:bd estimation nonlineaire} for the couple of parameters $(a,\frac d 2+\frac 1 4)$ and using the fact that $\para f \para_{H^{\frac d 2+\frac 1 4}(\Omega)}\leq \para f \para_{H^b(\Omega)}$ for any $f\in H^{b}(\Omega)$ gives that \fref{nonlin:bd estimation nonlineaire} holds for the couple of parameters $(a,b')$. \\

\noindent \textbf{step 1} A scalar inequality. We claim that for all $(\nu_1,\nu_2)\in [0,1]^2$ with $\nu_1+\nu_2\geq 1$ and for all $(\lambda_1,\lambda_2,\lambda_3,\lambda_4)\in \mathbb [0,+\infty)$ satisfying $\lambda_1\leq \lambda_2$ and $\lambda_3\leq \lambda_4$ there holds:
\be \la{nonlin:bd nonlineaire scalaire}
\lambda_1^{\nu_1}\lambda_2^{1-\nu_1}\lambda_3^{\nu_2}\lambda_4^{1-\nu_2}\leq \lambda_1\lambda_4+\lambda_2\lambda_3 .
\ee
We now prove this estimate. Since $1-\nu_1-\nu_2\leq 0$ and $0\leq 1-\nu_2\leq 1$ one has:
$$
\forall (x,z)\in [1,+\infty)\times [0,+\infty), \ \ x^{1-\nu_1-\nu_2}z^{1-\nu_2}\leq z^{1-\nu_2}\leq 1+z .
$$
Let $(\lambda_1,\lambda_2,\lambda_3,\lambda_4)\in \mathbb [0,+\infty)$ satisfying $0<\lambda_1\leq \lambda_2$ and $0<\lambda_3\leq \lambda_4$. We apply the above estimate to $x=\frac{\lambda_2}{\lambda_1}\geq 1$ and $z=\frac{\lambda_1\lambda_4}{\lambda_2\lambda_3}$, and multiply both sides by $\lambda_2\lambda_3$, yielding the desired estimate \fref{nonlin:bd nonlineaire scalaire} after simplifications. If $\lambda_1=0$ or $\lambda_3=0$, \fref{nonlin:bd nonlineaire scalaire} always hold. Consequently, \fref{nonlin:bd nonlineaire scalaire} holds for all $(\lambda_1,\lambda_2,\lambda_3,\lambda_4)\in \mathbb [0,+\infty)$ satisfying $0<\lambda_1\leq \lambda_2$ and $0<\lambda_3\leq \lambda_4$.\\

\noindent \textbf{step 2} Proof in the case $\Omega=\mathbb R^d$ and $a\geq b$. We claim that for $u,v\in H^{a}(\mathbb R^d)$:
\be \la{nonlin:bd estimation produit Rd}
\para uv\para_{H^{a}(\mathbb R^d)}\leq C\left(\para u\para_{H^{a}(\mathbb R^d)}\para v\para_{H^{b}(\mathbb R^d)}+\para u\para_{H^{b}(\mathbb R^d)}\para v\para_{H^{a}(\mathbb R^d)}  \right).
\ee
We now show the above estimate. Let $u,v\in H^{s_2}(\mathbb R^d)$. First, one obtain a $L^2$ bound using H\"older and Sobolev embedding (as $b>\frac d 2$):
\be \la{nonlin:bd estimation produit Rd 1}
\para u v \para_{L^2(\mathbb R^d)}\leq \para u \para_{L^2(\mathbb R^d)} \para v \para_{L^{\infty}(\mathbb R^d)}\leq C \para u \para_{H^a(\mathbb R^d)} \para v \para_{H^b(\mathbb R^d)} .
\ee
Secondly, one decomposes $a=A+\delta_a$ where $A:=E[a]\in \mathbb N$ is the entire part of $a$ and $0\leq \delta_a<1$. Using Leibniz rule one has the identity:
\be \la{nonlin:id leibniz nonlineaire}
\para \nabla^{a} (uv) \para_{L^2(\mathbb R^d)}^2\leq C \sum_{(\mu_1,\mu_2) \in \mathbb N^{2d}, \ |\mu_1|+|\mu_2|=A} \para \nabla^{\delta_a}(\partial^{\mu_1}u\partial^{\mu_2}v) \para_{L^2(\mathbb R^d)}^2.
\ee
We fix $(\mu_1,\mu_2)\in \mathbb N^{2d}$ with $|\mu_1|+|\mu_2|=A$ in the sum and aim at estimating the corresponding term. We recall the commutator estimate:
\be \la{nonlin:id commutateurs nonlineaire}
\parallel \nabla^{\delta_a}(\partial^{\mu_1}u\partial^{\mu_2}v)\parallel_{L^2}\lesssim \parallel \nabla^{|\mu_1|+\delta_a}u\parallel_{L^{p_1}}\parallel \partial^{\mu_2}v\parallel_{L^{q_1}}+\parallel \nabla^{|\mu_2|+\delta_a}v\parallel_{L^{p_2}}\parallel \partial^{\mu_1} u\parallel_{L^{q_2}} ,
\ee
for $\frac{1}{p_1}+\frac{1}{p_2}=\frac{1}{p_1'}+\frac{1}{p_2'}=\frac{1}{2}$, provided $2\leq p_1,p_2<+\infty$ and $2\leq q_1,q_2\leq +\infty$. We now chose appropriate exponents $p_1$ and $p_2$ in several cases.

\noindent - \emph{Case 1} If $|\mu_2|=0$. Then $|\mu_1|+\delta_a=a$ and using Sobolev embedding (as $b>\frac d 2$):
\be \la{nonlin:bd nonlineaire cas 1}
\parallel \nabla^{|\mu_1|+\delta_a}u\parallel_{L^2(\mathbb R^d)}\parallel \partial^{\mu_2}v\parallel_{L^{\infty}(\mathbb R^d)}\leq C \para u \para_{H^a(\mathbb R^d)}\para v \para_{H^b(\mathbb R^d)} .
\ee

\noindent - \emph{Case 2} If $1\leq |\mu_2|<a-\frac d 2$ and $|\mu_1|+\delta_a<b$. Then $b<|\mu_2|+\frac d 2<a$ from \fref{nonlin:id b} and one computes using  Sobolev embedding:
\be \la{nonlin:bd nonlineaire cas 2}
\parallel \nabla^{|\mu_1|+\delta_a}u\parallel_{L^2(\mathbb R^d)}\parallel \partial^{\mu_2}v\parallel_{L^{\infty}(\mathbb R^d)}\leq C \para u \para_{H^b(\mathbb R^d)}\para v \para_{H^a(\mathbb R^d)}.
\ee

\noindent - \emph{Case 3} If $1\leq |\mu_2|<a-\frac d 2$ and $b\leq |\mu_1|+\delta_a$. Then $b<|\mu_2|+\frac d 2<a$ from \fref{nonlin:id b} and $b\leq |\mu_1|+\delta_a\leq a$. We let $x:=\text{min}(\frac{\delta_b}{2},a-|\mu_2|-\frac d 2)>0$. One computes using Sobolev embedding, interpolation and \fref{nonlin:bd nonlineaire scalaire} (since $b>\frac d 2+x$ and $|\mu_1|+|\mu_2|+\delta_a=a$):
\be \la{nonlin:bd nonlineaire cas 3}
\begin{array}{r c l}
&\parallel \nabla^{|\mu_1|+\delta_a}u\parallel_{L^2(\mathbb R^d)}\parallel \partial^{\mu_2}v\parallel_{L^{\infty}(\mathbb R^d)} \leq C \para u \para_{H^{|\mu_1|+\delta_a}(\mathbb R^d)} \para v \para_{H^{|\mu_2|+\frac d 2+x}(\mathbb R^d)} \\
\leq & C \para u \para_{H^b(\mathbb R^d)}^{\frac{a-|\mu_1|-\delta_a}{a-b}} \para u \para_{H^a(\mathbb R^d)}^{\frac{|\mu_1|+\delta_a-b}{a-b}} \para v \para_{H^b(\mathbb R^d)}^{\frac{a-|\mu_2|-\frac d 2-x}{a-b}} \para v \para_{H^a(\mathbb R^d)}^{\frac{|\mu_2|+\frac d 2+x-b}{a-b}} \\
\leq & C\left(\para u\para_{H^{a}(\mathbb R^d)}\para v\para_{H^{b}(\mathbb R^d)}+\para u\para_{H^{b}(\mathbb R^d)}\para v\para_{H^{a}(\mathbb R^d)}  \right).
\end{array}
\ee

\noindent - \emph{Case 4} If $a-\frac d 2\leq |\mu_2|<a$. Let $x:=\frac 1 2 \text{min}(a-|\mu|_2,\delta_b)>0$. We define $p_1$, $q_1$ and $s$ by $\frac{1}{q_1}:=\frac 1 2 -\frac{a-x-|\mu_2|}{d}$, $\frac{1}{p_1}=\frac 1 2-\frac{1}{q_1}$ and $s=\frac{d}{q_1}$. One has $|\mu_1|+\delta_a+s=\frac d 2 +x<b$, and, using Sobolev embedding:
\be \la{nonlin:bd nonlineaire cas 4}
\para \nabla^{|\mu_1|+\delta_a} u \para_{L^{p_1}} \para \partial^{\mu_2} v \para_{L^{q_1}}\leq C \para u \para_{H^{|\mu_1|+\delta_a+s}} \para  v \para_{H^{a-x}}\leq C \para u \para_{H^b} \para  v \para_{H^a}
\ee
and $\frac{1}{p_1}+\frac{1}{q_1}=\frac 1 2, \ \ p_1\neq +\infty$.

\noindent - \emph{Case 5} If $|\mu_2|=a$. Then $|\mu_1|+\delta_a=0$ and using Sobolev embedding (as $b>\frac d 2$):
\be \la{nonlin:bd nonlineaire cas 5}
\para \nabla^{|\mu_1|+\delta_a} u \para_{L^{\infty}(\mathbb R^d)} \para \partial^{\mu_2} v \para_{L^2(\mathbb R^d)}\leq C \para u \para_{H^b(\mathbb R^d)} \para  v \para_{H^a(\mathbb R^d)}.
\ee

\noindent - \emph{Conclusion} In all possible cases, from \fref{nonlin:bd nonlineaire cas 1}, \fref{nonlin:bd nonlineaire cas 2}, \fref{nonlin:bd nonlineaire cas 3}, \fref{nonlin:bd nonlineaire cas 4} and \fref{nonlin:bd nonlineaire cas 5}  there always exist $p_1,q_1,p_2,q_2\in [2,+\infty)$ with $p_1,p_2\neq +\infty$, $\frac{1}{p_1}+\frac{1}{q_1}=\frac 1 2$ and:
$$
\begin{array}{r c l}
& \para \nabla^{|\mu_1|+\delta_a} u \para_{L^{p_1}(\mathbb R^d)} \para \partial^{\mu_2} v \para_{L^{q_1}(\mathbb R^d)}+\para \nabla^{|\mu_1|} u \para_{L^{q_2}v}\para \nabla^{|\mu_2|+\delta_a} v \para_{L^{p_2(\mathbb R^d)}} \\
\leq & C \para u \para_{H^b(\mathbb R^d)} \para  v \para_{H^a(\mathbb R^d)}+C \para u \para_{H^a(\mathbb R^d)} \para  v \para_{H^b(\mathbb R^d)}.
\end{array}
$$
where the estimate for the second term in the left hand side of the above equation comes from a symmetric reasoning. We now come back to \fref{nonlin:id leibniz nonlineaire}, apply \fref{nonlin:id commutateurs nonlineaire} and the above identity to obtain:
$$
\para \nabla^{a} (uv) \para_{L^2(\mathbb R^d)}\leq C \para u \para_{H^b(\mathbb R^d)} \para  v \para_{H^a(\mathbb R^d)}+C \para u \para_{H^a(\mathbb R^d)} \para  v \para_{H^b(\mathbb R^d)}.
$$
The above estimate and \fref{nonlin:bd estimation produit Rd 1} imply the desired estimate \fref{nonlin:bd estimation produit Rd} by interpolation.\\

\noindent \textbf{step 3} Proof in the case $\Omega=\mathbb R^d$ and $a\leq b$. The proof is similar and simpler and we do not write it here. Therefore, \fref{nonlin:bd estimation produit Rd} holds for all $a\geq 0$ and $b>\frac d 2$.\\

\noindent \textbf{step 4} Proof in the case of a smooth bounded domain $\Omega$. There exists $\tilde C>0$ such that for any $f\in H^{\text{max}(a,b)}(\Omega)$ there exists an extension $\tilde f\in H^{\text{max}(a,b)}(\mathbb R^d)$ with compact support, satisfying $\tilde f=f$ on $\Omega$ and:
$$
\frac{1}{\tilde C} \para \tilde f \para_{H^c(\mathbb R^d)} \leq \para  f \para_{H^c(\Omega)} \leq \tilde C \para \tilde f \para_{H^c(\mathbb R^d)}, \ \ c=a,b ,
$$
see \cite{Ad}. Let $u,v\in H^{\text{max}(a,b)}(\Omega) $ and denote by $\tilde u$ and $\tilde v$ their respective extensions. Using \fref{nonlin:bd estimation produit Rd} and the above estimate then yields:
$$
\begin{array}{r c l}
\para uv \para_{H^a(\Omega)} & \leq & \para \tilde u \tilde v\para_{H^a(\mathbb R^d)} \\
& \leq & C\left(\para \tilde u\para_{H^{a}(\mathbb R^d)}\para \tilde v\para_{H^{b}(\mathbb R^d)}+\para \tilde u\para_{H^{b}(\mathbb R^d)}\para \tilde v\para_{H^{a}(\mathbb R^d)}  \right) \\
& \leq & C\tilde C^2 \left( \para u\para_{H^{a}(\Omega)}\para v\para_{H^{b}(\Omega)}+\para u\para_{H^{b}(\Omega)}\para v\para_{H^{a}(\Omega)}  \right)
\end{array}
$$
and \fref{nonlin:bd estimation nonlineaire} is obtained.

\end{proof}

%%%%%%%%%%%%%%%%%%%%%%%%%%%%%%%%%%%%%%%%%%%%%%%%%%%%%%%
%%%%%%%%%%%%%%%%%%%%%%%%%%%%%%%%%%%%%%%%%%%%%%%%%%%%%%%
%%%%%%%%%%%%%%%%%%%%%%%%%%%%%%%%%%%%%%%%%%%%%%%%%%%%%%%

\section{Geometrical decomposition} \la{sec:decomposition}

This section is devoted to the proof of Lemma \ref{trap:lem:projection} .

\begin{lemma} \label{trap:lem:decomposition}

Let $X$ denote the functional space
\be \la{trap:def X}
X:=\left\{ u\in L^{\infty}(\mathcal B^d(0,4M)), \ \langle u-Q,H \Phi_M^{(0,1)}   \rangle > \para u-Q\para_{L^{\infty}(\mathcal B^d(0,3M))} \right\}.
\ee
There exists $\kappa,K>0$ such that for all $u \in X\cap \{ \parallel u-Q \parallel_{L^{\infty}(\mathcal B^d(0,4M)))}<\kappa\}$, there exists a unique choice of parameters $b\in \mathbb R^{\mathcal I}$ with $b^{(0,1)}_1>0$, $\lambda>0$ and $z\in \mathbb R^d$ such that the function $v:=(\tau_{-z}u)_{\lambda}-\tilde{Q}_b$ satisfies:
\be \label{ande:eq:ortho}
\langle v , H^{i} \Phi_M^{(n,k)} \rangle=0, \ \ \text{for} \ 0\leq n \leq n_0, \ 1\leq k \leq k(n), \ 0\leq i \leq L_n 
\ee
and such that:
\be \la{trap:bd parametres decomposition}
|\lambda-1|+|z|+\sum_{(n,k,i)\in \mathcal I} |b^{(n,k)}_i|\leq K.
\ee
Moreover, $b$, $\lambda$ and $z$ are Fr\'echet differentiable\footnote{For the ambient Banach space $L^{\infty}(\mathcal B^d(0,3M))$.} and satisfy:
\be \la{trap:bd parametres decomposition 2}
|\lambda-1|+|z|+\sum_{(n,k,i)\in \mathcal I} |b^{(n,k)}_i|\leq K\parallel u-Q \parallel_{L^{\infty}(\mathcal B^d(0,3M)))} .
\ee

\end{lemma}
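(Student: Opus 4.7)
The plan is to apply the implicit function theorem to the nonlinear map
$$F(u, \lambda, z, b) := \left( \langle (\tau_{-z}u)_\lambda - \tilde{Q}_b,\, H^i \Phi^{(n,k)}_M \rangle \right)_{0 \leq n \leq n_0,\, 1 \leq k \leq k(n),\, 0 \leq i \leq L_n},$$
defined for $u \in L^\infty(\mathcal B^d(0, 4M))$ and $(\lambda, z, b)$ in a neighborhood of $(1, 0, 0) \in \mathbb R \times \mathbb R^d \times \mathbb R^{\mathcal I}$. Its target space $\mathbb R^N$ has dimension $N = \sum_{n=0}^{n_0} k(n)(L_n+1) = \#\mathcal I + d + 1$ by \fref{intro:eq:def diesemathcalI}, matching the dimension of the parameter space. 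At the reference point $(u, \lambda, z, b) = (Q, 1, 0, 0)$ one has $v := (\tau_{-z}u)_\lambda - \tilde{Q}_b = 0$, so $F = 0$.

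The central step is invertibility of $D_{(\lambda, z, b)} F$ at this reference point. First I would observe that each test function $H^i \Phi^{(n,k)}_M$ is supported in $\mathcal B^d(0, 2M)$ (by locality of $H$ and the support of $\chi_M$ in \fref{bootstrap:eq:def PhiknM}), while the cut-off $\chi_{B_1}$ appearing in $\tilde{Q}_b$ equals $1$ on this set as soon as $B_1 > 2M$; restricting to $|b^{(0,1)}_1|$ small enough, this cut-off therefore plays no role in the Jacobian computation. Using the identities $\Lambda Q = T^{(0,1)}_0$ and $\partial_{x_k} Q = T^{(1,k)}_0$ from \fref{cons:eq:def T0}, together with the fact established in Proposition \ref{cons:pr:Qb} that the correctors $S_j$ generate no linear-in-$b$ contribution at $b = 0$ (because $\partial_{b^{(n,k)}_i} S_j = 0$ whenever $j \leq i$ for $n = 0, 1$ or $j \leq i+1$ for $n \geq 2$, which rules out any monomial of degree $|J| = 1$), one computes $\partial_\lambda v|_0 = T^{(0,1)}_0$, $\partial_{z_k} v|_0 = T^{(1,k)}_0$ and $\partial_{b^{(n,k)}_i} v|_0 = -T^{(n,k)}_i$. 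Pairing these with $H^j \Phi^{(n',k')}_M$ and invoking \fref{bootstrap:eq:orthogonalite PhiM} then shows that the Jacobian is diagonal in the natural basis indexed by $(n, k, i)$, with diagonal entries of order $M^{4m_n + 4\delta_n}$ and hence nonzero.

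The implicit function theorem then yields Fr\'echet-differentiable maps $(\lambda, z, b)(u)$ defined on a neighborhood of $Q$ in $L^\infty(\mathcal B^d(0, 4M))$, satisfying $F = 0$ together with the reference-point values at $u = Q$. The quantitative bound \fref{trap:bd parametres decomposition 2} follows from the standard Lipschitz estimate $|(\lambda - 1, z, b)| \leq C \|F(u, 1, 0, 0)\|$, combined with $|F(u, 1, 0, 0)| \leq C \|u - Q\|_{L^\infty(\mathcal B^d(0, 2M))}$ arising from the compact support of the test functions; the cruder absolute bound \fref{trap:bd parametres decomposition} is then a corollary once $\kappa$ is chosen small enough. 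Finally, the sign constraint $b^{(0,1)}_1 > 0$ is read off the $(0,1,1)$-component of the orthogonality equation: to leading order the implicit function yields $b^{(0,1)}_1 \sim \pm c_M^{-1} \langle u - Q,\, H \Phi^{(0,1)}_M \rangle$ with a sign dictated by the Jacobian computation above, and the hypothesis $\langle u - Q,\, H \Phi^{(0,1)}_M \rangle > \|u - Q\|_{L^\infty(\mathcal B^d(0, 3M))}$ built into $X$ dominates the higher-order error and enforces the correct sign.

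The principal delicacy is the handling of the localization $\chi_{B_1}$, which depends singularly on $b^{(0,1)}_1$ as $b^{(0,1)}_1 \to 0^+$: by confining the analysis to parameters with $|b^{(0,1)}_1|$ small enough that $B_1 > 2M$, the cut-off becomes identically $1$ on the support of every relevant test function and can thus be harmlessly ignored both in the Jacobian computation and in the implicit function argument. A secondary point of care is the coincidence of dimensions $\#\mathcal I + d + 1$ between the parameter space and the set of orthogonality conditions, which is what allows the square Jacobian to be inverted in the first place.
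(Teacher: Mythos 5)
Your proposal is correct and follows essentially the same route as the paper: an implicit function theorem applied to the vector of pairings, with the Jacobian at $(Q,1,0,0)$ rendered diagonal and invertible by \fref{bootstrap:eq:orthogonalite PhiM}, the sign $b^{(0,1)}_1>0$ extracted from the defining condition of $X$, and the quantitative bounds read off the IFT Lipschitz estimate. The only cosmetic difference is that the paper defines the map with the uncut profile $Q+\alpha_b$ and transfers the orthogonality to $\tilde Q_b$ at the end via the support argument, whereas you keep $\tilde Q_b$ throughout and note $\chi_{B_1}\equiv 1$ on the support of the test functions; the two devices are equivalent.
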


\begin{proof}[Proof of Lemma \ref{trap:lem:decomposition}]

We define first the application $\xi$ as:
\be \la{trap:def xi}
\begin{array}{l l l l}
\xi :&L^{\infty}(\mathcal B^d(0,3M)) \times (0,+\infty)\times\mathbb R^{d+\#\mathcal I} \rightarrow  \mathbb R^{1+d+\#\mathcal I}\\
&(u,\tilde \lambda, \tilde z, \tilde b)\mapsto  (\langle (\tau_{\tilde z}u)_{\frac{1}{\tilde \lambda}}-Q-\alpha_{\tilde b} , H^{i} \Phi_M^{(n,k)} \rangle)_{\underset{ 1\leq k \leq k(n)}{0\leq n \leq n_0, 0\leq i \leq L_n}}
\end{array}
\ee
$\xi$ is $\mathcal C^{\infty}$. From the definition \fref{cons:eq:def Qb} of $\alpha_b$, and the orthogonality conditions \fref{bootstrap:eq:orthogonalite PhiM}, the differential of $\xi$ with respect to the second variable at the point $(Q,1,0,...,0)$ is the diagonal matrix:
\be \la{trap:id D2xi}
D^{(2)}\xi (Q,1,0,...,0)=-\begin{pmatrix} \langle T_0^{(0)},\chi_MT_0^{(0)}\rangle \text{Id}_{L+1} & & \\ & . & \\ & & \langle T_0^{(n_0)},\chi_MT_0^{(n_0)}\rangle \text{Id}_{L_{n_0}}  \end{pmatrix}
\ee
where $\text{Id}_{L_n}$ is the $L_n\times L_n$ identity matrix. $D^{(2)}\xi (Q,1,0,...,0)$ is invertible for $M$ large from \fref{bootstrap:eq:orthogonalite PhiM}. Consequently, from the implicit functions theorem, there exist $\kappa,K>0$, such that for all $u \in X\cap \{ \parallel u-Q \parallel_{L^{\infty}(\mathcal B^d(0,3M)))}<\kappa\}$, there exists a choice of the parameters $\tilde \lambda=\tilde \lambda (u)$, $\tilde z=\tilde z(u)$ and $\tilde b=\tilde b(u)$ such that:
\be \la{trap:bd blambdaz}
\xi (u,\tilde \lambda ,\tilde z,\tilde b)=0, \ \ |\tilde \lambda-1|+|\tilde z|+\sum_{(n,k,i)\in \mathcal I} |\tilde b^{(n,k)}_i|\leq K \parallel u-Q \parallel_{L^{\infty}(\mathcal B^d(3M)))}
\ee
and it is the unique solution of $\xi (u,\tilde \lambda ,\tilde z,\tilde b)=0$ in the range
$$
|\tilde \lambda-1|+|\tilde z|+\sum_{(n,k,i)\in \mathcal I} |\tilde b^{(n,k)}_i|\leq K.
$$
Moreover, they are Fr\'echet differentiable, again from the implicit function theorem. Now, defining $\lambda=\frac{1}{\tilde \lambda}$, $b=\tilde b$ and $z=-\tilde z$, this means from \fref{trap:def xi} that the function $w:=(\tau_{-z}u)_{\lambda}-Q-\alpha_{b}$ satisfies:
$$
\langle w , H^{i} \Phi_M^{(n,k)} \rangle=0, \ \ \text{for} \ 0\leq n \leq n_0, \ 1\leq k \leq k(n), \ 0\leq i \leq L_n ,
$$
Finally, still from the implicit function theorem, from the identity for the differential \fref{trap:id D2xi}, the definition \fref{trap:def X} of $X$ and \fref{bootstrap:eq:orthogonalite PhiM}:
$$
\ba{r c l}
b^{(0,1)}_1&=& -[D^{(2)}\xi (Q,1,0,...,0)]^{-1} (\xi (u,1,0,...,0))+o(\para u-Q\para_{L^{\infty}(\mathcal B^d(3M))})\\
&=& \frac{\langle u-Q,H^1\Phi_M^{(0,1)}\rangle}{\langle T_0^{(0)},\chi_MT^{(0)}_0\rangle} +o\left(\langle u-Q,H^1\Phi_M^{(0,1)}\rangle \right)>0
\ea
$$
where the $o()$ is as $\kappa\rightarrow 0$, and the strict positivity is then for $\kappa$ small enough. Consequently, in that case $\tilde{Q}_b=Q+\chi_{(b^{(0,1)}_1)^{-\frac{1+\eta}{2}}}\alpha_b$ is well defined, and one has $(b^{(0,1)}_1)^{-\frac{1+\eta}{2}}\gg 2M$ for $\kappa$ small enough. Thus, for $v :=(\tau_{-z}u)_{\lambda}-\tilde{Q}_{b}$ there holds:
$$
\langle v , H^{i} \Phi_M^{(n,k)} \rangle=\langle \tilde v , H^{i} \Phi_M^{(n,k)} \rangle=0, \ \ \text{for} \ 0\leq n \leq n_0, \ 1\leq k \leq k(n), \ 0\leq i \leq L_n
$$
because the support of $v-\tilde v$ is outside $\mathcal B^d(0,2M)$. One has found a choice of the parameters $\lambda$, $b$ and $z$ such that $b^{(0,1)}_1>0$ and \fref{ande:eq:ortho} and \fref{trap:bd parametres decomposition} hold. This choice is unique in the range \fref{trap:bd parametres decomposition} and the parameters are Fr\'echet differentiable since under \fref{trap:bd parametres decomposition}, they are equal to the parameters given by the above inversion of $\xi$.

\end{proof}

\begin{lemma} \la{trap:lem:decomposition O}

There exists $\kappa^*, \tilde K>0$ such that the following holds for all $0<\kappa<\kappa^*$. Let $\mathcal O$ be the open set of $L^{\infty}(\mathcal B^d(0,1))$ of functions $u$ satisfying \fref{trap:def projection T011}. For each $u\in \mathcal O$ there exists a unique choice of the parameters $\lambda \in \left( 0,\frac{1}{4M}\right)$, $z\in \mathcal B^d \left(0,\frac 1 4 \right)$ and $b\in \mathbb R^{\mathcal I}$ such that $b^{(0,1)}_1>0$, $ v=(\tau_{-z}u)_{\lambda}-\tilde Q_b\in L^{\infty}\left(\frac{1}{\lambda}(\mathcal B^d(0,1)-\{z\}) \right)$ satisfies\footnote{The following assertions make sense as $v$ is defined on $\frac{1}{\lambda}(\mathcal B^d(0,1)-\{z\})$ which indeed contains $\mathcal B^d(0,2M)$ since $0<\lambda<\frac{1}{4M}$ and $|z|\leq \frac 1 4$, and as $\Phi_M^{(n,k)}$ is compactly supported in $\mathcal B^d(0,2M)$ from \fref{bootstrap:eq:def PhiknM}.}:
\be \la{trap:ortho O}
\langle v , H^{i} \Phi_M^{(n,k)} \rangle=0, \ \ \text{for} \ 0\leq n \leq n_0, \ 1\leq k \leq k(n), \ 0\leq i \leq L_n 
\ee
and
\be \la{trap:bd bnki O}
\sum_{(n,k,i)\in \mathcal I} |b^{(n,k)}_i|+\para v \para_{L^{\infty}\left(\frac{1}{\lambda}(\mathcal B^d(0,1)-\{z\}) \right)}\leq \tilde K \kappa.
\ee
Moreover, the functions $\lambda$, $z$ and $b$ defined this way are Fr\'echet differentiable on $\mathcal O$.

\end{lemma}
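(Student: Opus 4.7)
The plan is to reduce Lemma~\ref{trap:lem:decomposition O} to the already established Lemma~\ref{trap:lem:decomposition} by a preliminary renormalization. Given $u\in \mathcal O$, I would first extract any pair $(\tilde\lambda,\tilde z)\in (0,\frac{1}{8M})\times \mathcal B^d(\frac{1}{8})$ realizing the condition \fref{trap:def projection T011}, and form the rescaled profile $\tilde u:=(\tau_{-\tilde z}u)_{\tilde\lambda}$. The constraints on $(\tilde\lambda,\tilde z)$ force the natural domain $\tilde\lambda^{-1}(\mathcal B^d(1)-\{\tilde z\})$ of $\tilde u$ to contain $\mathcal B^d(7M)$, hence in particular $\mathcal B^d(4M)$; the first bound in \fref{trap:def projection T011} rescales precisely to $\|\tilde u-Q\|_{L^\infty(\mathcal B^d(4M))}<\kappa$, and the second rewrites verbatim as $\tilde u\in X$ with $X$ defined in \fref{trap:def X}. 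Hence $\tilde u$ lies in the domain of validity of Lemma~\ref{trap:lem:decomposition}.

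Applying Lemma~\ref{trap:lem:decomposition} to $\tilde u$, for $\kappa<\kappa^*$ small enough I obtain unique parameters $(\lambda',z',b)$ with $b^{(0,1)}_1>0$ satisfying $|\lambda'-1|+|z'|+\sum_{(n,k,i)\in \mathcal I}|b^{(n,k)}_i|\leq K\kappa$ such that $\tilde v:=(\tau_{-z'}\tilde u)_{\lambda'}-\tilde Q_b$ obeys the orthogonality \fref{ande:eq:ortho}. Setting $\lambda:=\lambda'\tilde\lambda$ and $z:=\tilde z+\tilde\lambda z'$, a direct composition of the two rescalings gives $(\tau_{-z}u)_\lambda=(\tau_{-z'}\tilde u)_{\lambda'}$, so $v=\tilde v$ and the orthogonality \fref{trap:ortho O} is inherited from \fref{ande:eq:ortho}. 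The range conditions $\lambda<1/(4M)$ and $|z|<1/4$ are then immediate after shrinking $\kappa^*$ if necessary. For the $L^\infty$ bound on $v$ on its full domain $\lambda^{-1}(\mathcal B^d(1)-\{z\})$, I decompose $v=[(\tau_{-z}u)_\lambda-(\tau_{-z'}Q)_{\lambda'}]+[(\tau_{-z'}Q)_{\lambda'}-Q]-\chi_{B_1}\alpha_b$: the first bracket is controlled by $\kappa$ through the rescaled form of the first bound in \fref{trap:def projection T011}, the second by $O(|\lambda'-1|+|z'|)=O(\kappa)$ using the smoothness and decay \fref{cons:eq:asymptotique Q} of $Q$, and the last by the parameter estimate $\sum|b^{(n,k)}_i|\leq K\kappa$ together with the structure of the admissible profiles $T^{(n,k)}_i$ and $S_i$ described by Lemma~\ref{cons:lem:Tni} and Proposition~\ref{cons:pr:Qb}.

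For uniqueness, given two decompositions $(\lambda_j,z_j,b_j)$, $j=1,2$, satisfying the conclusions of the lemma, I apply Lemma~\ref{trap:lem:decomposition} to $\tilde u_1:=(\tau_{-z_1}u)_{\lambda_1}$: the two triples $(1,0,b_1)$ and $(\lambda_2/\lambda_1,(z_2-z_1)/\lambda_1,b_2)$ both produce orthogonal decompositions of $\tilde u_1$ in the sense of \fref{ande:eq:ortho}, and the uniqueness in Lemma~\ref{trap:lem:decomposition} forces them to coincide as soon as the second triple lies in its uniqueness range. Fr\'echet differentiability follows by observing that the conditions in \fref{trap:def projection T011} are $L^\infty(\mathcal B^d(1))$-open in $u$, so that around any $u_0\in \mathcal O$ a single choice $(\tilde\lambda_0,\tilde z_0)$ works for all $u$ in a neighborhood; the map $u\mapsto \tilde u=(\tau_{-\tilde z_0}u)_{\tilde\lambda_0}$ is then bounded affine, and composing the Fr\'echet differentiability of $(\lambda',z',b)$ provided by Lemma~\ref{trap:lem:decomposition} with the smooth reparametrization $(\lambda',z',b)\mapsto (\lambda'\tilde\lambda_0,\tilde z_0+\tilde\lambda_0 z',b)$ yields the claim for $(\lambda,z,b)$. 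The main obstacle will be the uniqueness step: one must verify that $|\lambda_2/\lambda_1-1|+|(z_2-z_1)/\lambda_1|$ stays within the uniqueness range of Lemma~\ref{trap:lem:decomposition}, which does not follow directly from the allowed ranges $\lambda_j<1/(4M)$, $|z_j|<1/4$ and ultimately relies on the additional rigidity provided by the positivity $b^{(0,1)}_{1,j}>0$ and by the $L^\infty$ smallness of $v_j$, which together pin down the concentration scale and center of $u$ up to a factor close to one.
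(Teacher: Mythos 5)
Your existence construction (rescale by a witness $(\tilde\lambda,\tilde z)$ of \fref{trap:def projection T011}, apply Lemma \ref{trap:lem:decomposition}, and compose the parameters as $\lambda=\lambda'\tilde\lambda$, $z=\tilde z+\tilde\lambda z'$) and your differentiability argument are essentially the paper's Steps 1 and 2, and they are fine. The problem is the uniqueness step: you explicitly name the verification that $|\lambda_2/\lambda_1-1|+|(z_2-z_1)/\lambda_1|+\sum|b_2^{(n,k)}|$ lies in the uniqueness range of Lemma \ref{trap:lem:decomposition} as ``the main obstacle'' and then only assert that it ``ultimately relies on the additional rigidity provided by the positivity $b^{(0,1)}_1>0$ and the $L^\infty$ smallness of $v_j$''. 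That assertion is exactly the statement to be proved, and it does not follow from anything you have written: since $\lambda_1$ may be arbitrarily small, the crude bounds $\lambda_j<\frac{1}{4M}$, $|z_j|\le\frac14$ give no control whatsoever on $\lambda_2/\lambda_1$ or on $(z_2-z_1)/\lambda_1$. The actual content of the paper's Step 3 is a quantitative rigidity argument: both decompositions force $\para Q_{z_1,\frac{1}{\lambda_1}}-Q_{z_2,\frac{1}{\lambda_2}}\para_{L^\infty(\mathcal B^d(1))}\lesssim \kappa\,(\lambda_1^{-\frac{2}{p-1}}+\lambda_2^{-\frac{2}{p-1}})$; evaluating at the centers and using that $Q$ is radially symmetric, radially decreasing and maximal at the origin yields first $\lambda_2=\lambda_1(1+O(\kappa))$, and then, reinjecting and renormalizing, $|z_2-z_1|\lesssim \lambda_1\kappa$. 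Without this (or an equivalent) argument your uniqueness proof, and hence also the differentiability claim that rests on it, is incomplete.

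A secondary point: you propose to apply the uniqueness of Lemma \ref{trap:lem:decomposition} to $\tilde u_1=(\tau_{-z_1}u)_{\lambda_1}$, but that lemma is stated only for functions in $X\cap\{\para\cdot-Q\para_{L^\infty(\mathcal B^d(4M))}<\kappa\}$, and membership of $\tilde u_1$ in $X$ (the dominance $\langle \tilde u_1-Q,H\Phi_M^{(0,1)}\rangle>\para\tilde u_1-Q\para_{L^\infty(\mathcal B^d(3M))}$) is not guaranteed by $b^{(0,1)}_{1,1}>0$ and \fref{trap:bd bnki O} alone, since $v_1$ may be of the same order as $b^{(0,1)}_{1,1}$. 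The paper sidesteps this by comparing both decompositions against the same reference function $(\tau_{-\tilde z}u)_{\tilde\lambda}$, which is in $X$ by hypothesis \fref{trap:def projection T011}; you should either do the same or justify why the uniqueness part of Lemma \ref{trap:lem:decomposition} persists without the $X$ condition.
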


\begin{proof}[Proof of Lemma \ref{trap:lem:decomposition O}]

Let $K$ and $\kappa_0$ be the numbers associated to Lemma \ref{trap:lem:decomposition}.

\noindent \textbf{step 1} Existence. Let 
\be \la{trap:bd tildelambda}
(\tilde \lambda,\tilde z)\in \left(0,\frac{1}{8M}\right)\times \mathcal B^d\left( 0,\frac{1}{8}\right)
\ee
be such that 
$$
\para u-Q_{\tilde z, \frac{1}{\tilde \lambda}}\para_{L^{\infty}(\mathcal B^d (1))}<\frac{\kappa}{\tilde \lambda ^{\frac{2}{p-1}}} ,
$$
$$
\para (\tau_{-\tilde z}u)_{\tilde \lambda}-Q \para_{L^{\infty}(\mathcal B^d (4M))}< \langle (\tau_{-\tilde z}u)_{\tilde \lambda}-Q,H \Phi^{(0,1)}_M\rangle ,
$$
which exists from \fref{trap:def projection T011}. We define $w:= (\tau_{-\tilde z}u)_{\tilde \lambda}$. It is defined on the set $\frac{1}{\tilde \lambda}\left( \mathcal B(1)-\tilde z\right)$ which contains $\mathcal B^d(7M)$ as $0<\tilde \lambda<\frac{1}{8M}$ and $|z|\leq \frac{1}{8}$. From this fact and the above estimates $w$ satisfies:
\be \la{trap:bd w'}
\para w-Q\para_{L^{\infty}(\mathcal B(7M))} < \kappa, \ \ \para w-Q \para_{L^{\infty}(\mathcal B^d (3M))}< \langle w-Q,H \Phi^{(0,1)}_M\rangle .
\ee
Thus for $\kappa$ small enough one can apply Lemma \ref{trap:lem:decomposition}: there exist a choice of the parameters $z'$, $b'$ and $\lambda'$ such that $v'=(\tau_{-z'} w)_{\lambda '}-\tilde Q_{b'}$ satisfies \fref{trap:ortho O} and $b^{'(0,1)}_1>0$. This choice is unique in the range
\be \la{trap:bd lambda'}
|\lambda'-1|+|z'|+\sum_{(n,k,i)\in \mathcal I} |b^{'(n,k)}_i|\leq K .
\ee
Moreover, there holds the estimate
$$
|\lambda'-1|+|z'|+\sum_{(n,k,i)\in \mathcal I} |b^{'(n,k)}_i|\leq K \parallel w-Q \parallel_{L^{\infty}(\mathcal B^d(0,3M)))}\leq K \kappa .
$$
Now we define 
\be \la{trap:def b z lambda}
b=b', \ z=\tilde z+\tilde \lambda z', \ \lambda=\tilde \lambda \lambda '
\ee
and $v=v'$. One has then $b^{(0,1)}_1>0$, and from \fref{trap:bd tildelambda} and the above estimate:
$$
\sum_{(n,k,i)\in \mathcal I} |b^{(n,k)}_i|\leq K\kappa, \ \ |z|\leq \frac 1 4, \ \ 0<\lambda <\frac{1}{4M}
$$
for $\kappa$ small enough. From the definition of $w$, $v'$ and $v$ one has the identity:
$$
u=(v+\tilde Q_b)_{z,\frac{1}{\lambda}}, \ \text{with} \ v \ \text{satisfying} \ \fref{trap:ortho O} .
$$
From \fref{cons:eq:def Qb}, \fref{cons:eq:def Qbtilde} and the above estimate:
$$
\ba{r c l}
 & \para v \para_{L^{\infty}\left(\frac{1}{\lambda}(\mathcal B^d(1)-z)\right)} = \lambda^{\frac{2}{p-1}}\para u-\tau_z(\tilde Q_{b,\frac{1}{\lambda}}) \para_{L^{\infty}(\mathcal B^d(1))} \\
\leq & \lambda^{\frac{2}{p-1}} \para u-\tau_{\tilde z}(Q_{\frac{1}{\tilde \lambda}}) \para_{L^{\infty}(\mathcal B^d(1))}+\lambda^{\frac{2}{p-1}} \para \tau_{\tilde z}(Q_{\frac{1}{\tilde \lambda}})-\tau_z(\tilde Q_{b,\frac{1}{\lambda}}) \para_{L^{\infty}(\mathcal B^d(1))} \leq CK\kappa
\ea
$$
for some constant $C>1$ independent of the others. Therefore, one takes $\tilde K=CK$, and the choice of parameters $\lambda$, $z$ and $b$ that we just found provide the decomposition claimed by the Lemma and the existence is proven. 

\noindent \textbf{step 2} Differentiability. We claim that the parameters $\lambda$, $b$ and $z$ found in step 1 are unique, this will be proven in the next step. Therefore, from their construction using the auxiliary variables $\tilde \lambda$ and $\tilde z$ in step 1, and since the parameters $\lambda'$, $z'$ and $b'$ provided by Lemma \ref{trap:lem:decomposition} are Fr\'echet differentiable, $\lambda$, $b$ and $z$ are Fr\'echet differentiable.

\noindent \textbf{step 3} Unicity. Let $\hat b$, $\hat \lambda$, $\hat z$ be another choice of parameters with $\hat b^{(0,1)}_1>0$, $0<\lambda<\frac{1}{4M}$ and $|z|\leq \frac 1 4$ such that \fref{trap:ortho O} and \fref{trap:bd bnki O} hold for $\hat v=(\tau_{-\hat z}u)_{\hat \lambda}-\tilde Q_b$. The function $(\tau_{-\tilde z}u)_{\tilde \lambda}$, where $\tilde \lambda$ and $\tilde z$ were defined in \fref{trap:bd tildelambda} in the first step, then satisfy the bound:
$$
\para (\tau_{-\tilde z}u)_{\tilde \lambda}-Q\para_{L^{\infty}(\mathcal B(3M))}< \kappa_0
$$
for $\kappa$ small enough from \fref{trap:bd w'}, and admits two decompositions:
$$
(\tau_{-\tilde z} u)_{\tilde \lambda}=(\tilde Q_{b'}+v')_{z',\frac{1}{\lambda'}}=(\tilde Q_{\hat b}+\hat v)_{\frac{\hat z-\tilde z}{\tilde \lambda},\frac{\tilde \lambda}{\hat \lambda}},
$$
such that $v$ and $v'$ satisfy \fref{trap:ortho O}. The first parameters satisfy from \fref{trap:bd lambda'}:
$$
|\lambda'-1|+|z'|+\sum_{(n,k,i)\in \mathcal I} |b^{'(n,k)}_i|\leq K\kappa_0 .
$$
We claim that the second parameters satisfy:
\be \la{trap:bd hatlambda}
|\frac{\tilde \lambda}{\hat \lambda}-1|+|\frac{\hat z-\tilde z}{\tilde \lambda}|+\sum_{(n,k,i)\in \mathcal I} |\hat b^{(n,k)}_i|\leq K\kappa_0 ,
\ee
which will be proven hereafter. Then, as such parameters are unique under the above bound from Lemma \ref{trap:lem:decomposition}, one obtains:
$$
\frac{\tilde \lambda}{\hat \lambda}=\frac{1}{\lambda'}, \ \frac{\hat z-\tilde z}{\tilde \lambda}=z', \ \hat b=b' ,
$$
implying that $\hat \lambda=\lambda$, $\hat z =z$ and $\hat b=b$ where $\lambda$, $z$ and $b$ are the choice of the parameters given by the first step defined by \fref{trap:def b z lambda}. The unicity is obtained.

\noindent - \emph{Proof of \fref{trap:bd hatlambda}}. From the assumptions on $\hat b$, $\hat \lambda$ and $\hat z$, the definition of $\tilde Q_b$ \fref{cons:eq:def Qbtilde} and \fref{trap:bd bnki O} there holds for $\kappa$ small enough:
$$
\para u-Q_{\hat z,\frac{1}{\hat \lambda}}\para_{L^{\infty}(\mathcal B^d(1))}\leq \frac{C\tilde K \kappa}{\hat \lambda^{\frac{2}{p-1}}} .
$$
From \fref{trap:bd tildelambda} one has also: 
$$
\para u-Q_{\tilde z,\frac{1}{\tilde \lambda}}\para_{L^{\infty}(\mathcal B^d(1))}\leq \frac{\kappa}{\tilde \lambda^{\frac{2}{p-1}}} .
$$
From the two above estimates one deduces that:
\be \la{trap:bd hatQ tildeQ}
\para Q_{\hat z,\frac{1}{\hat \lambda}}-Q_{\tilde z, \frac{1}{\tilde \lambda}}\para_{L^{\infty}(\mathcal B^d(1))}\leq \frac{\kappa}{\tilde \lambda^{\frac{2}{p-1}}} + \frac{C\tilde K \kappa}{\hat \lambda^{\frac{2}{p-1}}} .
\ee
Assume that $\hat \lambda \leq \tilde \lambda$. Then, since $Q$ is radially symmetric and attains its maximum at the origin, and $\hat z\in \mathcal B^d(0,1)$ because $|\hat z|\leq \frac 1 4$, the above inequality at $x=\hat z$ implies:
$$
\ba{r c l}
Q(0)\left( \frac{1}{\hat \lambda^{\frac{2}{p-1}}}-\frac{1}{\tilde \lambda^{\frac{2}{p-1}}}\right)&=& Q_{\hat z,\frac{1}{\hat \lambda}}(\hat z)-Q_{\tilde z,\frac{1}{\tilde \lambda}}(\tilde z) \\
&\leq & Q_{\hat z,\frac{1}{\hat \lambda}}(\hat z)-Q_{\tilde z,\frac{1}{\tilde \lambda}}(\hat z) \\
&= & |Q_{\hat z,\frac{1}{\hat \lambda}}(\hat z)-Q_{\tilde z,\frac{1}{\tilde \lambda}}(\hat z)|\\
&\leq & C \tilde K \kappa \left(\frac{1}{\tilde \lambda^{\frac{2}{p-1}}}+\frac{1}{\hat \lambda^{\frac{2}{p-1}}} \right)
\ea
$$
which gives $\left|\frac{1}{\hat \lambda^{\frac{2}{p-1}}}-\frac{1}{\tilde \lambda^{\frac{2}{p-1}}} \right|\leq  C \tilde K \kappa \left(\frac{1}{\tilde \lambda^{\frac{2}{p-1}}}+\frac{1}{\hat \lambda^{\frac{2}{p-1}}} \right)$. The symmetric reasoning works in the case $\hat \lambda \geq \tilde \lambda$ and one obtains that in both cases:
$$
\left|\frac{1}{\hat \lambda^{\frac{2}{p-1}}}-\frac{1}{\tilde \lambda^{\frac{2}{p-1}}} \right|\leq  C \tilde K \kappa \left(\frac{1}{\tilde \lambda^{\frac{2}{p-1}}}+\frac{1}{\hat \lambda^{\frac{2}{p-1}}} \right).
$$
Basic computations show that for $\kappa$ small enough the above identity implies:
$$
\left|1-\frac{\hat \lambda}{\tilde \lambda} \right|\leq C\tilde K \kappa \ \ \text{or} \ \ \hat \lambda =\tilde \lambda (1+O(\kappa)) .
$$
obtaining the first bound in \fref{trap:bd hatlambda} for $\kappa$ small enough. We inject the above estimate in \fref{trap:bd hatQ tildeQ}, yielding:
$$
\ba{r c l}
& \para Q_{\hat z,\frac{1}{\tilde \lambda}}-Q_{\tilde z,\frac{1}{\tilde \lambda}}\para_{L^{\infty}(\mathcal B^d(1))} \\
 \leq & \para Q_{\hat z,\frac{1}{\tilde \lambda}}-Q_{\hat z,\frac{1}{\hat \lambda}}\para_{L^{\infty}(\mathcal B^d(1))} \para+\para Q_{\hat z,\frac{1}{\hat \lambda}}-Q_{\hat z,\frac{1}{\hat \lambda}}\para_{L^{\infty}(\mathcal B^d(1))} \para \leq  \frac{C\tilde K \kappa}{\tilde \lambda^{\frac{2}{p-1}}} 
\ea
$$
which implies in renormalized variables (as $|\hat z|\leq \frac{1}{8}$ and $\tilde \lambda \leq \frac{1}{8M}$):
$$
\para Q-\tau_{\frac{\hat z-\tilde z}{\tilde \lambda}}Q \para_{L^{\infty}(\mathcal B^d(0,2M))} \leq C\tilde K \kappa .
$$
As $Q$ is smooth, radially symmetric and radially decreasing this implies:
$$
\left| \frac{\hat z-\tilde z}{\tilde \lambda} \right|\leq C\tilde K \kappa \ \ \text{or} \ \ \hat z=\tilde z+\tilde \lambda O(\kappa)
$$
and the second bound in \fref{trap:bd hatlambda} is obtained.

\end{proof}

\end{appendix}

 \frenchspacing
\bibliographystyle{plain}

\end{document}